\renewcommand*{\backrefalt}[4]{\ifcase #1 (Not cited).\or (Cited p.~#2).\else (Cited pp.~#2).\fi} 
\tikzset{snake it/.style={decorate, decoration=snake}}
 \newtheorem{theorem}{Theorem}[section]
  \newtheorem{proposition}[theorem]{Proposition}
  \newtheorem{corollary}[theorem]{Corollary}
  \newtheorem{lemma}[theorem]{Lemma}
  \newtheorem{question}[theorem]{Question}
  \newtheorem{introthm}{Theorem}
  \newtheorem{introcor}[introthm]{Corollary}
  \theoremstyle{definition}
  \newtheorem{definition}[theorem]{Definition}
  \newtheorem{claim}[theorem]{Claim}
  \newtheorem*{claim*}{Claim}
    \newtheorem{notation}[theorem]{Notation}
  \newtheorem*{question*}{Question}
  \newtheorem*{answer*}{Answer}
  \newtheorem*{application*}{Application}
  \theoremstyle{remark}
  \newtheorem{remark}[theorem]{Remark}
  \newtheorem*{remark*}{Remark}
\newcommand{\transverse}{\pitchfork}
\newcommand{\orth}{\perp}
\newcommand{\nest}{\sqsubset}
\newcommand{\hpi}{\widehat{\pi}}
\newcommand{\hrho}{\widehat{\rho}}
\newcommand{\symdiff}{\bigtriangleup}
\newcommand{\MCG}{\mathrm{MCG}}
\newcommand{\ep}{\epsilon}
\newcommand{\ltree}{\lambda}
\newcommand{\hT}{\widehat{T}}
\newcommand{\hd}{\hat{\delta}}
\newcommand{\hull}{\mathrm{hull}}
 \newcommand{\calA}{\mathcal{A}}
  \newcommand{\calB}{\mathcal{B}}
  \newcommand{\calC}{\mathcal{C}}
  \newcommand{\calD}{\mathcal{D}}
  \newcommand{\calE}{\mathcal{E}}
  \newcommand{\calF}{\mathcal{F}}
  \newcommand{\calG}{\mathcal{G}}
  \newcommand{\calH}{\mathcal{H}}
  \newcommand{\calI}{\mathcal{I}}
  \newcommand{\calN}{\mathcal{N}}
  \newcommand{\calQ}{\mathcal{Q}}
  \newcommand{\calR}{\mathcal{R}}
\newcommand{\calU}{\mathcal{U}}
    \newcommand{\calV}{\mathcal{V}}
  \newcommand{\calW}{\mathcal{W}}
  \newcommand{\calX}{\mathcal{X}}
    \newcommand{\calY}{\mathcal{Y}}
  \newcommand{\calZ}{\mathcal{Z}}
  \newcommand{\fg}{\mathfrak{g}}
  \newcommand{\ff}{\mathfrak{f}}
  \newcommand{\diff}{\mathrm{diff}}
  \newcommand{\bT}{\mathbb{T}}
  \newcommand{\wa}{\widehat{\alpha}}
      \newcommand{\diam}{\mathrm{diam}}
\newcommand{\Teich}{\mathrm{Teich}}
  \newcommand{\hb}{\hat{b}}
\newcommand{\ha}{\hat{a}}
    \newcommand{\hy}{\hat{y}}
\newcommand{\hx}{\hat{x}}
    \newcommand{\hO}{\widehat{\Omega}}
    \newcommand{\hpsi}{\hat{\psi}}
    \newcommand{\hPsi}{\widehat{\Psi}}
    \newcommand{\hf}{\hat{f}}
    \newcommand{\go}{\mathfrak{o}}
    \newcommand{\dc}{\mathbf{dc}}
    \newcommand{\spor}{\mathrm{spor}}
 \newcommand{\hh}{{\sf h}} 
    \newcommand{\PP}{\mathbf{P}}
    \newcommand{\fake}{\mathrm{fake}}
    \newcommand{\Rel}{\mathrm{Rel}}
\newcommand{\cuco}[1]{{\mathcal #1}}
    \newcommand{\ES}{E_{\mathfrak S}}
    \newcommand{\hg}{\hat{g}}
\newcommand{\propnest}{\sqsubsetneq}
\newtheorem{rem}{Remark}
\long\def\@savemarbox#1#2{\global\setbox#1\vtop{\hsize\marginparwidth 
  \@parboxrestore\tiny\raggedright #2}}
\begin{document}

\title[Asymp CAT(0) spaces, $\calZ$-structures, and the FJC]{Asymptotically CAT(0) spaces, $\calZ$-structures, and the Farrell--Jones Conjecture
}

 \author   {Matthew Gentry Durham}
 \address{Department of Mathematics, University of California, Riverside, CA }
 \email{mdurham@ucr.edu}
 
  \author   {Yair Minsky}
 \address{Department of Mathematics, Yale University, New Haven, CT }
 \email{yair.minsky@yale.edu}

\author{Alessandro Sisto}
    \address{Department of Mathematics, Heriot-Watt University and Maxwell Institute for Mathematical Sciences, Edinburgh, UK}
    \email{a.sisto@hw.ac.uk}

\begin{abstract}
We show that colorable hierarchically hyperbolic groups (HHGs) admit asymptotically CAT(0) metrics, that is, roughly, metrics where the CAT(0) inequality holds up to sublinear error in the size of the triangle.

We use the asymptotically CAT(0) metrics to construct contractible simplicial complexes and compactifications that provide $\calZ$-structures in the sense of Bestvina and Dranishnikov. It was previously unknown that mapping class groups are asymptotically CAT(0) and admit $\calZ$-structures. As an application, we prove that many HHGs satisfy the Farrell--Jones Conjecture, including extra large-type Artin groups.

To construct asymptotically CAT(0) metrics, we show that hulls of finitely many points in a colorable HHGs can be approximated by CAT(0) cube complexes in a way that adding a point to the finite set corresponds, up to finitely many hyperplanes deletions, to a convex embedding. 
\end{abstract}

\maketitle
\setcounter{tocdepth}{1}
\tableofcontents

\section{Introduction}

Hierarchically hyperbolic groups (HHGs) form a very large class of groups which includes hyperbolic groups, mapping class groups (of finite-type surfaces), and fundamental groups of compact special cube complexes, see e.g. \cite{HHS_II,HS:cubical, BR:combination,BR:graph_prods, BHMS:kill_twists,DDLS,MMS:Artin} for more examples. Many HHGs, including mapping class groups \cite[Theorem 4.2]{KL:actions}\cite[Theorem II.7.26]{bridson-haefliger}, are not CAT(0) groups. Nonetheless, in this paper we show that all natural examples of HHGs admit a coarse version of a CAT(0) metric, which following Kar \cite{Kar_asymp} we call an \emph{asymptotically CAT(0)} metric. Roughly, this means that triangles of rough geodesics satisfy the CAT(0) inequality up to a sublinear error in the size of the triangle, see Definition \ref{defn:asymp CAT0}. We note that our version of the definition does not require the space to be geodesic, but rather roughly geodesic, and in this regard it is more general than Kar's.

\begin{introthm}\label{thm:asymp CAT(0) intro}
    Every colorable HHG $G$ admits a $G$-invariant asymptotically CAT(0) metric equivariantly quasi-isometric to word metrics.
\end{introthm}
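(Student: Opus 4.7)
The plan is to define the required metric $d'$ on $G$ using the CAT(0) geometry of the cube complexes $Q_F$ that approximate hulls of finite subsets $F\subset G$, as advertised in the abstract. Each $Q_F$ equipped with its piecewise-Euclidean $\ell^2$ metric $d_2^F$ is genuinely CAT(0) by Gromov's link criterion. Provided the $Q_F$ have dimension uniformly bounded in $|F|$ (which ought to follow from the coloring hypothesis, since the number of pairwise-transverse walls is controlled by the number of colors), the metric $d_2^F$ is uniformly bi-Lipschitz to the combinatorial $\ell^1$ metric $d_1^F$, and $d_1^F$ is in turn uniformly close to the ambient HHG metric restricted to $\mathrm{hull}(F)$.

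First I would define $d'(g,h):=d_2^F(g,h)$ for a canonical $G$-equivariant choice of finite set $F\supset\{g,h\}$ --- for instance $F=\{g,h\}$ itself if the cubical construction is sufficiently functorial. Equivariance of $d'$ then follows from equivariance of the cubical approximation, and equivariant quasi-isometry with the word metric follows from the chain of comparisons $d_2^F\asymp d_1^F\asymp d_G|_{\mathrm{hull}(F)}$.

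To establish the asymptotic CAT(0) inequality on a rough geodesic triangle with vertices $x,y,z$ and rough midpoint $m$ of $yz$, I would enlarge to $F=\{x,y,z,m\}$ and invoke the exact CAT(0) inequality inside $(Q_F,d_2^F)$ at these four points. The defect between this exact inequality and the inequality required for $d'$ is then controlled by the four errors $|d'(a,b)-d_2^F(a,b)|$ with $a,b\in F$. By the stability property highlighted in the abstract, each smaller cube complex $Q_{\{a,b\}}$ embeds $\ell^2$-convexly into $Q_F$ after deletion of finitely many hyperplanes, so each such error is bounded in terms of the $d_2^F$-size of those deleted hyperplanes.

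The main obstacle will be proving that this hyperplane-deletion defect is genuinely sublinear in the triangle diameter $\Delta$: the contribution of the deleted hyperplanes to $d_2^F$ must be $o(\Delta)$. This is where the hyperbolicity of the HHS coordinate spaces should do the work --- the hyperplanes requiring deletion ought to correspond to walls sitting inside hyperbolic factors, where quasi-geodesics fellow-travel so that misalignment defects scale sublinearly in the hull size. A secondary task is choosing the rough geodesics in $(G,d')$, presumably hierarchy paths, and verifying that their rough midpoints are close in $d_2^F$ to the cubical $\ell^2$-midpoints used when applying the exact CAT(0) inequality inside $Q_F$; this should be a comparatively routine consequence of the same quasi-isometric comparison between $d'$ and $d_2^F$ on the relevant finite hulls.
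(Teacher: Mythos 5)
Your overall architecture matches the paper's: pull back the $\ell^2$ metric from the two-point cubical model, use the stability of cubulations under enlarging $F$ to compare distances across models, and invoke the exact CAT(0) inequality in the model for the enlarged set. But you have misidentified where the sublinear error actually comes from, and this is a genuine gap. The hyperplane-deletion defect is not the issue: the stability theorem deletes at most $N=N(|F'|,\mathfrak S)$ hyperplanes, and a single hyperplane deletion is a $(1,1)$-quasi-isometry for any $\ell^p$ metric, so all the comparisons $|d'(a,b)-d_2^{F'}(a,b)|$ for $a,b$ among the five relevant points are bounded by a \emph{constant} depending only on $\mathfrak S$ --- there is nothing sublinear to prove there, and no appeal to fellow-travelling in the coordinate hyperbolic spaces is needed. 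The step you dismiss as ``comparatively routine'' is where the sublinearity genuinely enters: after transferring to $\calQ_{F'}$ with $F'=\{x,y,z,p,q\}$, the image $\hat p$ of a point $p$ on a side of the triangle lies only on a $(1,C')$-\emph{quasi}-geodesic from $\hat x$ to $\hat y$, not on the CAT(0) geodesic $[\hat x,\hat y]$, and before you can use comparison points you must bound $d(\hat p,[\hat x,\hat y])$. In a CAT(0) space this distance is controlled by $3\sqrt{C\min\{d(\hat x,\hat p),d(\hat y,\hat p)\}+C^2}$ (an elementary Euclidean computation; this is the paper's Lemma \ref{lem:CAT0_1C}), and that square root is precisely the sublinear function $\kappa(t)=D\sqrt t+D$ in the final statement. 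Without this lemma your argument produces no sublinear bound at all, since a $(1,C)$-quasi-geodesic comparison by itself gives no control on the distance to the geodesic.

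A secondary but real gap: you assert that $d'$ is $G$-invariant and a metric ``by equivariance of the cubical approximation,'' but the cubical construction is only \emph{coarsely} equivariant (the diagrams commute up to error $K$), and $\hat d_2(x,y)=d_{\calQ_{\{x,y\}}}(\psi(x),\psi(y))$ satisfies the triangle inequality only up to an additive constant $C$. To get an honest $G$-invariant metric you must explicitly correct for both: the paper adds $C$ to all nonzero values and takes $\sup_{g\in G}\hat d_2(gx,gy)$, which changes distances by a bounded amount and so does not affect the asymptotic CAT(0) property. As written, your $d'$ need satisfy neither the triangle inequality nor exact $G$-invariance.
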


The colorability assumption is satisfied by all naturally occurring examples, with the only non-colorable known HHGs having been constructed specifically to fail this property \cite{Hagen:non-col}, see Definition \ref{defn:colorable}. Mapping class groups were not known to be asymptotically CAT(0), and prior to this work there were very few known examples of asymptotically CAT(0) spaces which were neither CAT(0) nor hyperbolic. The $G$-invariance is crucial for all our applications, but a non-$G$-invariant version of the theorem is already known since colorable HHGs are quasi-isometric to CAT(0) cube complexes \cite{petyt2021mapping}.

Theorem \ref{thm:asymp CAT(0) intro} can be compared to Haettel--Hoda--Petyt's result that HHGs act properly and coboundedly on injective spaces \cite{HHP} (see also \cite{petyt2024constructing}), another kind of non-positive curvature. However, while their result has several strong applications, we were not able to obtain any of the applications below using these metrics, which was our initial plan.

We will give more details on this later, but the construction of asymptotically CAT(0) metrics relies on a result on stability of cubulations of hulls of finitely many points (Theorem \ref{thm:LQC}) which improves on our previous work \cite{DMS_bary, Dur_infcube}.

We then import cubical metrics from these cubical approximations (Theorem \ref{thm:cubical metrics intro}), and importing the CAT(0) metric induces the required asymptotically CAT(0) metric.

Asymptotically CAT(0) spaces are amenable to standard techniques from CAT(0) and hyperbolic geometry, as they are a common generalization of both.  One such feature is coarse contractibility.  CAT(0) spaces are contractible, while Rips proved that any sufficiently deep Vietoris--Rips \cite{vietoris1927hoheren} complex over a hyperbolic group is contractible.  In \cite{Zaremsky_contract}, Zaremsky proved that asymptotically CAT(0) spaces are coarsely contractible in this sense (see also Theorem \ref{prop:Rips contract} where we deal with possibly non-geodesic spaces), hence we obtain the same for colorable HHGs with the metric from Theorem \ref{thm:asymp CAT(0) intro}.

We next prove that every asymptotically CAT(0) space admits a natural visual compactification (Theorem \ref{thm:compact_boundary}). In fact, for asymptotically CAT(0) groups of finite Assouad-Nagata dimension, we show that compactifying Vietoris--Rips complexes with this boundary yields $\calZ$-structures in the sense of Bestvina \cite{bestvina1996local, BestMess} and Dranishnikov \cite{Dra:BM_formula} (who extended Bestvina's notion to allow for groups with torsion). Roughly, this means that the compactified space is a Euclidean retract, that is, it can be embedded into some $\mathbb R^n$ as a retract, and the boundary can be ``locally homotoped'' inside the space. We recall the full definition in Definition \ref{defn:Z-space}. As a consequence of our results we obtain:

\begin{introthm}\label{thm:Z-boundary intro}
Colorable HHGs admit $\calZ$-structures.
\end{introthm}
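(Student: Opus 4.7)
The plan is to view Theorem B as the natural assembly of the two prior ingredients the introduction has already advertised: Theorem A, which produces a $G$-invariant asymptotically CAT(0) metric on any colorable HHG $G$, and the general recipe (alluded to just before the statement) for building a $\calZ$-structure out of an asymptotically CAT(0) group of finite Assouad--Nagata dimension via a Vietoris--Rips complex compactified by the visual boundary. So the proof should reduce Theorem B to verifying the hypotheses of that general recipe.

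First, I would invoke Theorem A to fix a $G$-invariant asymptotically CAT(0) metric $d$ on $G$, equivariantly quasi-isometric to the word metric. Then I would check that $G$ has finite Assouad--Nagata dimension. Since colorable HHGs are quasi-isometric to finite-dimensional CAT(0) cube complexes by Petyt's theorem cited in the introduction, and such cube complexes have finite Assouad--Nagata dimension, this follows from the quasi-isometry invariance of Assouad--Nagata dimension applied to the quasi-isometry provided by Theorem A (or directly to Petyt's).

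Next, I would form the Vietoris--Rips complex $P_R(G,d)$ for $R$ large enough that it is connected and the paper's coarse contractibility result for possibly non-geodesic asymp CAT(0) spaces (a strengthening of Zaremsky's theorem, referenced as Theorem \ref{prop:Rips contract}) applies; this gives a contractible, locally finite, finite-dimensional simplicial complex on which $G$ acts properly and cocompactly. Then I would compactify $P_R(G,d)$ by adjoining the visual boundary $\partial_{\mathrm{vis}} G$ of $(G,d)$ produced by Theorem \ref{thm:compact_boundary}, obtaining a compact space $\overline{P_R(G,d)} = P_R(G,d) \sqcup \partial_{\mathrm{vis}} G$. Finite Assouad--Nagata dimension is the input needed to ensure that this compactified space embeds as a retract in some $\mathbb{R}^n$ and is an ANR.

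The delicate point, and the step I expect to be the main obstacle, is the nullity condition in Definition \ref{defn:Z-space}: for every compact $K \subset P_R(G,d)$ and every open cover $\calU$ of $\overline{P_R(G,d)}$, all but finitely many $G$-translates $gK$ are contained in a single element of $\calU$. In the honest CAT(0) case this is a short consequence of convexity and the fellow-traveling of geodesic rays; here one only has the CAT(0) inequality up to sublinear error, so the argument has to be carried out directly on the asymptotically CAT(0) metric, using the construction of the visual topology from Theorem \ref{thm:compact_boundary} to show that a diverging sequence $g_n x_0$ concentrates near a single point of $\partial_{\mathrm{vis}} G$ together with its nearby compact set $g_n K$. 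Once nullity is in hand, together with the contractibility, ANR, and finite-dimensionality established above, the quadruple $(\overline{P_R(G,d)}, \partial_{\mathrm{vis}} G)$ is a $\calZ$-structure for $G$ in the sense of Bestvina--Dranishnikov, completing the proof.
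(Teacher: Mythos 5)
Your overall architecture matches the paper's: Theorem A gives the invariant asymptotically CAT(0) metric, finite Assouad--Nagata dimension is verified, a deep Vietoris--Rips complex is compactified by the boundary of Theorem \ref{thm:compact_boundary}, and the nullity condition is checked. (Your route to finite AN dimension via Petyt's cubulation differs from the paper's, which goes through quasi-isometric embeddings into products of BBF projection complexes plus Lang--Schlichenmaier or Hume, but that is a cosmetic difference.) However, there is a genuine gap at the heart of the argument, and you have also misplaced where the real difficulty lies.

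The gap is the claim that ``finite Assouad--Nagata dimension is the input needed to ensure that this compactified space embeds as a retract in some $\mathbb{R}^n$ and is an ANR.'' Finite AN dimension only yields finite covering dimension of the boundary (Theorem \ref{thm:finite_dim}) and hence, with compactness and metrizability, an embedding of $\overline{R_T(\calX)}$ into some $\mathbb{R}^n$. It does \emph{not} make the compactification an ANR, and being a retract of $\mathbb{R}^n$ requires being a contractible ANR. The paper obtains the ANR property (and simultaneously the $\calZ$-set property of the boundary) from the Bestvina--Mess criterion, Proposition \ref{prop:BestMess}, whose condition (3) — every boundary point has arbitrarily small neighborhoods $V\subset U$ such that spheres in $V-Z$ bound in $U-Z$ — is the genuinely delicate step. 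Verifying it requires the strengthened contractibility statement Proposition \ref{prop:Rips_combing}, which shows that a map of an $n$-complex into $R_T(\calX)$ can be nullhomotoped inside a \emph{sublinear hull} $hull_M(\cdot\,;p)$ of its image, together with a careful comparison-triangle computation in the proof of Theorem \ref{thm:Z} showing that such hulls of small boundary neighborhoods stay in prescribed larger neighborhoods. Your proposal never addresses why the compactification is locally contractible at boundary points, so as written it does not establish the ER or $\calZ$-set conditions. A secondary omission: the topology on $R_T(\calX)\cup\mathfrak{B}\calX$ must be defined with some care (the boundary was built as a compactification of the discrete orbit, not of the Rips complex), and its compactness, Hausdorffness and metrizability re-verified.

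By contrast, the step you single out as ``the main obstacle,'' the nullity condition, is comparatively easy in the paper: it is the ``null balls'' clause of Theorem \ref{thm:compact_boundary}, proved by combining a Lebesgue-number lemma for weak metrics with the observation that only finitely many $R$-balls have $d_\kappa$-diameter above any threshold; Corollary \ref{cor:Z} then just quotes it. So your proof sketch would be repaired by replacing the ANR assertion with an appeal to the Bestvina--Mess criterion and supplying the local filling argument that the paper carries out via Proposition \ref{prop:Rips_combing}.
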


In fact, the $\calZ$-structures we construct are $EZ$-structures as defined in \cite{FL:EZ} in the case of torsion-free groups (groups with torsion don't have a free action on the ``interior'').  In \cite[Subsection 3.1]{bestvina1996local}, Bestvina asks whether every group $G$ of type $F$, i.e. with a finite $K(G,1)$, admits a $\calZ$-structure, and given the more general definition of $\calZ$-structure \cite{Dra:BM_formula}, the same question can be asked for groups with a finite dimensional classifying space for proper actions.  Hence Theorem \ref{thm:Z-boundary intro} answers this decades old question for mapping class groups (and their finite-index subgroups), and at the same time for all colorable hierarchically hyperbolic groups. Beyond CAT(0) and hyperbolic groups, $\calZ$-structures were previously known to exist for systolic groups \cite{osajda2009boundaries}, Baumslag--Solitar groups \cite{guilbault2019boundaries}, torsion-free groups hyperbolic relative to a group admitting a finite classifying space \cite{dahmani2003classifying}, certain nonpositively curved complexes of groups \cite{martin2014non}, Helly groups \cite{CCGHO}, and groups acting geometrically on finite-dimensional spaces with suitable geodesic bicombings \cite{danielski2025boundaries}.

The existence of a $\calZ$-structure for a group is a powerful tool.  Our main application relates to the Farrell--Jones Conjecture \cite{farrell1993isomorphism} on $K$- and $L$-groups of group rings. This conjecture for specific groups has many applications towards the Borel Conjecture, classification of $h$-cobordisms, Kaplanski's conjecture, and several more.  We refer the reader to both Luck's and Bartels' ICM proceedings for more discussion and applications \cite{Lueck:ICM,Bartels:ICM}.

Using the axiomatic setup in \cite{BB:FJ_MCG} (which is the latest improvement on previous work on criteria to prove the Farrell--Jones Conjecture, see e.g. \cite{BL:hyp_CAT0,BLR:hyperbolic, BFL:lattices}), we show that, for a colorable HHG $G$, to prove the Farrell--Jones Conjecture for $G$ it suffices to proves it for the analogues of curve stabilizers in mapping class groups.

\begin{introthm}\label{thm:FJ intro}
    Let $(G,\mathfrak S)$ be a colorable HHG. Suppose that for all $U\propnest S$, where $S$ is the $\nest$-maximal element of the HHS structure, we have that $Stab(U)$ satisfies the Farrell--Jones Conjecture. Then $G$ satisfies the Farrell--Jones Conjecture.
\end{introthm}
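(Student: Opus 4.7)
The strategy is to apply the axiomatic criterion for the Farrell--Jones Conjecture developed in \cite{BB:FJ_MCG}, which distills FJC for a group $G$ into three ingredients: a sufficiently nice $G$-action on a compact, finite-dimensional metrizable space $\overline{X}$; a family of almost-equivariant open covers of $G\times \overline{X}$ of uniformly bounded multiplicity whose members are ``long'' in the $G$-direction; and FJC for a prescribed family of subgroups of $G$, which in the mapping class group setting of \cite{BB:FJ_MCG} are precisely the stabilizers of proper subsurfaces. Under the HHS dictionary these correspond to stabilizers of proper nested domains $U\propnest S$, so the designated family matches exactly the groups appearing in the hypothesis of Theorem~\ref{thm:FJ intro}, which makes this criterion the natural target.

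The first two ingredients are the ones this paper is designed to supply. For the ambient space I would take $\overline{X} = X\cup \partial X$ from the $\calZ$-compactification of Theorem~\ref{thm:Z-boundary intro}, with $X$ a Vietoris--Rips complex carrying the asymptotically CAT(0) metric of Theorem~\ref{thm:asymp CAT(0) intro}, so that $G$ acts properly and cocompactly on $X$ and by homeomorphisms on $\overline{X}$. For the required cover family, the asymptotically CAT(0) metric provides a coarse substitute for the CAT(0) geodesic flow used in \cite{BL:hyp_CAT0}: balls are quasi-convex and triangle comparison holds with sublinear error, so ``shadows'' cast along almost-geodesics toward boundary points are uniformly thin. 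Combined with the finite Assouad--Nagata dimension of colorable HHGs, this is exactly the structure needed to build wide covers in the style of \cite{BL:hyp_CAT0, BLR:hyperbolic, BB:FJ_MCG}.

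The main obstacle will be the third ingredient: identifying the stabilizers of boundary points in $\partial X$ and showing that they lie in the class controlled by the hypothesis. Since the $\calZ$-compactification is built from the cubical approximations of hulls furnished by Theorem~\ref{thm:LQC}, each $\xi\in\partial X$ should be detectable through a consistent asymptotic projection profile on some chain of domains in $\mathfrak{S}$. In analogy with Klarreich's description of the Gromov boundary of the curve complex that powers \cite{BB:FJ_MCG}, I would argue that $Stab(\xi)$ is either virtually cyclic (coming from a loxodromic/Morse direction, for which FJC is classical \cite{BLR:hyperbolic}) or virtually contained in $Stab(U)$ for some $U\propnest S$, in which case FJC holds by hypothesis. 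The delicate subcase is directions supported on several pairwise orthogonal domains at once, whose stabilizers factor virtually through finite products of $Stab(U)$'s; here the closure of the FJC class under finite products, virtually cyclic extensions, and wreath products handles the verification. Once all three ingredients are in place the Bartels--Bestvina criterion yields FJC for $G$.
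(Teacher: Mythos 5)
Your first two ingredients match the paper's strategy: the criterion applied is indeed the axiomatic one from \cite{BB:FJ_MCG}, the compact space is the $\calZ$-compactification $\bar G$ of a Vietoris--Rips complex over $G$ equipped with the asymptotically CAT(0) metric, and the required subgroup family is exactly the virtually cyclic subgroups together with the stabilizers of domains $U\propnest S$. However, your third ingredient rests on a misunderstanding of how that family enters the criterion, and this is where the proposal has a genuine gap. The Bartels--Bestvina criterion does \emph{not} ask you to classify stabilizers of boundary points and show they lie in $\mathcal F$; it asks for a \emph{finitely $\mathcal F$-amenable} action, i.e.\ for wide open $\mathcal F$-covers whose members have their ``supports'' controlled by subgroups in the prescribed family $\mathcal F$. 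The family is fixed in advance, and it enters through the construction of the covers (via the projection-complex and coarse-flow machinery of \cite{BB:FJ_MCG}), not through a Klarreich-type description of $\partial X$. No such classification of boundary-point stabilizers is available for this boundary, and attempting one would be both unnecessary and substantially harder than the actual argument; moreover boundary stabilizers need not be virtually cyclic or virtually contained in any $Stab(U)$, so the dichotomy you propose would fail.

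What the verification actually requires, and what the paper does, is to check the Bartels--Bestvina axioms directly for $\bar G$: the projection axioms (Axiom 1.4), whose only nontrivial point is that each $\pi_Y:G\to\calC(Y)$ extends ``coarsely continuously'' to the open set $\Delta(Y)\subseteq\bar G$ obtained by removing the limit set of the orthogonal factor $E_Y$ (this is Proposition \ref{prop:coarse_cont}, proved by a case analysis on whether the boundary point lies in the limit set of the product region $P_Y$, using hierarchy rays and Bounded Geodesic Image); and the flow axioms (1.5)--(1.6), which are verified using the cocompactness of the action to choose the ray families $\mathcal G_K$ tautologically, and the Morse property of these rays to get the small-at-infinity, fellow-travelling, and bi-infinite quasi-geodesic conditions. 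Your hypothesis on $Stab(U)$ then enters only at the very last step, via the inheritance results of \cite{BLR:hyperbolic,BL:hyp_CAT0,BFL:lattices}: a finitely $\mathcal F$-amenable action on a compact ER with all members of $\mathcal F$ satisfying FJC implies FJC for $G$. To repair your proposal you would need to replace the boundary-stabilizer analysis with a verification of the continuity of the extended projections and of the coarse flow axioms.
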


The theorem can be in fact used inductively in many cases. We can define an HHG to be \emph{decomposable} by declaring that hyperbolic groups are decomposable, HHGs that are virtual direct products and central extensions of decomposable HHGs are decomposable, and that an HHG is decomposable if it has a colorable HHG structure such that for all $U\propnest S$ we have that $Stab(U)$ is a decomposable HHG. Then, using well-known properties of the Farrell--Jones Conjecture from \cite{BLR:hyperbolic,BFL:lattices} similarly to \cite[Corollary 4.10]{BB:FJ_MCG} (e.g. that products of groups satisfying the Farrell--Jones Conjecture also satisfy the Farrell--Jones Conjecture), we get:

\begin{introcor}
\label{introcor:decomp}
    Decomposable HHGs satisfy the Farrell--Jones Conjecture.
\end{introcor}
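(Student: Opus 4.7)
The plan is to proceed by structural induction on the definition of \emph{decomposable}, with the induction measure being the minimum number of applications of the generating clauses (hyperbolic base case; virtual direct product / central extension of decomposable HHGs; colorable HHG with decomposable proper nested stabilizers) needed to certify $G$ as decomposable. The base case is that hyperbolic groups satisfy the Farrell--Jones Conjecture (FJC) by Bartels--Lück--Reich \cite{BLR:hyperbolic}, which handles the $K$- and $L$-theoretic versions with finite wreath products. So the bulk of the work is three inductive cases corresponding to the three rules.

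For the first inductive clause, suppose $G$ is virtually a direct product of HHGs $H_1,\dots,H_k$, each decomposable with strictly smaller certificate. By induction each $H_i$ satisfies FJC. The FJC is closed under finite direct products and under passage to and from finite-index subgroups and overgroups \cite{BFL:lattices}, so $G$ satisfies FJC. For the central-extension clause, if $1 \to N \to G \to Q \to 1$ is a central extension with $Q$ decomposable (satisfying FJC by induction) and $N$ virtually finitely generated abelian (as is forced by the central-extension framework in \cite{BB:FJ_MCG}), then the inheritance of FJC under central extensions with virtually poly-$\mathbb Z$ kernel \cite{BFL:lattices} finishes this case.

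For the third clause, $G$ is a colorable HHG with HHS structure $(G,\mathfrak S)$ such that $Stab(U)$ is decomposable for every $U \propnest S$, with each such stabilizer having strictly smaller decomposition certificate than $G$. By the inductive hypothesis, each $Stab(U)$ satisfies FJC, and Theorem \ref{thm:FJ intro} then yields FJC for $G$. This is exactly the mechanism already used in \cite[Corollary 4.10]{BB:FJ_MCG} for mapping class groups, now applied in the broader colorable HHG setting.

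The main obstacle is purely bookkeeping: ensuring the induction is genuinely well-founded. Since a priori the definition of decomposable is circular (an HHG structure is decomposable if stabilizers of nested subdomains are decomposable), one needs to interpret it as a least fixed point and take the induction variable to be the least ordinal/integer at which $G$ enters the class. With that convention, each rule produces certificates for $G$ strictly after certificates for all of its stabilizer/factor/quotient inputs, so the inductive hypothesis applies in all three clauses. Once this is in place, the corollary is immediate from Theorem \ref{thm:FJ intro} combined with the standard closure properties of FJC; no new analytic or geometric input is needed beyond the asymptotically CAT(0) technology that went into Theorem \ref{thm:FJ intro}.
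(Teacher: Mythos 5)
Your proof matches the paper's: the authors dispatch this corollary in a single sentence by exactly the induction you describe --- hyperbolic base case plus closure of the Farrell--Jones Conjecture under products, finite-index passage, and central extensions via \cite{BLR:hyperbolic,BFL:lattices}, with Theorem \ref{thm:FJ intro} handling the colorable clause, following \cite[Corollary 4.10]{BB:FJ_MCG}. Your only additions are the well-foundedness bookkeeping (sensible, and implicit in the paper) and the claim that the central kernel is ``forced'' to be virtually finitely generated abelian, which the paper never asserts and which is not needed anyway: for any abelian central kernel the preimage of a virtually cyclic subgroup of the quotient is virtually nilpotent, so the standard inheritance result still applies.
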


This recovers the Farrell--Jones Conjecture for mapping class groups, proven in \cite{BB:FJ_MCG}, but also implies the Farrell--Jones conjecture in many cases that were not previously known. For instance, in the case of extra-large type Artin groups (and more generally Artin groups of large and hyperbolic type) the relevant stabilizers are simply central extensions of virtually free groups, see the description of the HHS structure given in \cite{MMS:Artin}, and therefore:

\begin{introcor}
    Extra-large type Artin groups satisfy the Farrell--Jones Conjecture.
\end{introcor}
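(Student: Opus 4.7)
The plan is to reduce to Corollary \ref{introcor:decomp} by exhibiting an extra-large type Artin group $A$ as a decomposable HHG. First I would invoke the hierarchically hyperbolic structure on $A$ constructed in \cite{MMS:Artin}; a crucial input is that this HHG structure is colorable (which is what makes all of Theorems \ref{thm:asymp CAT(0) intro}, \ref{thm:Z-boundary intro} and \ref{thm:FJ intro} applicable). Given this, it suffices to understand the domain stabilizers $\mathrm{Stab}(U)$ for all $U\propnest S$ and show inductively that each is a decomposable HHG.

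Next I would recall, from the description of the HHS structure in \cite{MMS:Artin}, the explicit form of the proper domain stabilizers: as noted in the discussion preceding the corollary, they are central extensions of virtually free groups. Virtually free groups are hyperbolic, hence decomposable by the base case of the definition of decomposability. Since the class of decomposable HHGs is closed under central extensions, each such $\mathrm{Stab}(U)$ is itself decomposable. Combined with the colorable HHG structure on $A$, the inductive definition then gives that $A$ is decomposable, so that Corollary \ref{introcor:decomp} yields the Farrell--Jones Conjecture for $A$.

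The only non-routine step is the identification of the stabilizers of proper domains as central extensions of virtually free groups; this is the substantive geometric input and is where the extra-large type (or more generally large and hyperbolic type) hypothesis enters, and I would simply quote it from \cite{MMS:Artin}. Everything else is a direct unwinding of the definition of a decomposable HHG together with the general closure properties of the Farrell--Jones Conjecture already used in Corollary \ref{introcor:decomp}.
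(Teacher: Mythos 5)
Your argument is correct and is essentially the paper's own: the paper likewise cites \cite{MMS:Artin} for the colorable HHG structure on extra-large (indeed large and hyperbolic) type Artin groups, identifies the proper domain stabilizers as central extensions of virtually free groups, and concludes decomposability and hence the Farrell--Jones Conjecture via Corollary \ref{introcor:decomp}. No gaps.
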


Other new examples that are covered by Corollary \ref{introcor:decomp} include quotients of mapping class groups by large powers of Dehn twists (proven to be HHGs in \cite{BHMS:kill_twists}), as well as random quotients of decomposable HHGs in the sense of \cite{Random_quot}.

There are HHGs that are (colorable but) not decomposable, for instance Burger-Mozes groups \cite{burger2000lattices} (stabilizers of proper domains in their standard HHG structures coincide with the whole group, so Theorem \ref{thm:FJ intro} holds vacuously in that case).  These however do satisfy the Farrell--Jones Conjecture because they are CAT(0) \cite{BL:hyp_CAT0, Wegner:FJ}. In fact, we are not aware of a single concrete HHG for which the Farrell--Jones Conjecture is now not known to hold.

We note that, while we apply the same criterion for the Farrell--Jones Conjecture as in \cite{BB:FJ_MCG}, we apply it to a different space in the case of mapping class groups. Indeed, in \cite{BB:FJ_MCG} the authors use the compactification of Teichm\"uller space, while we apply the criterion to the compactification of a simplicial complex on which the mapping class group acts cocompactly.  Because of this, a simpler version of the criterion from \cite{BB:FJ_MCG} suffices for us.

We also note that, despite \cite{HHP}, the results on the Farrell--Jones Conjecture from \cite{Helly} do not apply to hierarchically hyperbolic groups, or even mapping class groups. In fact, the injective spaces from \cite{HHP} are not graphs and moreover the corresponding groups act coboundedly but not necessarily cocompactly. It is possible that one cannot improve their result, as a sufficiently strong version of the Flat Torus Theorem for injective spaces, yielding that centralizers virtually split, would prevent this for mapping class groups. Note that a version of the Flat Torus Theorem does hold for injective spaces \cite{DL:flat_torus}.

\subsection{Local quasi-cubicality and cubical metrics}\label{subsec:LQC intro}

Behrstock--Hagen--Sisto proved in \cite{BHS:quasi} that HHSs are locally approximated by CAT(0) cube complexes, a higher-rank generalization of the fact that Gromov hyperbolic spaces are locally approximated by simplicial trees.  This result has been the foundational for the resolution of a number of long-standing conjectures about mapping class groups, including Farb's Rank Conjecture \cite{BHS:quasi}, semi-hyperbolicity of mapping class groups and  bicombability of Teichm\"uller spaces \cite{DMS_bary, HHP, PZ_walls}, as well as uniqueness (up to bi-Lipschitz equivalence) of asymptotic cones for the former \cite{CRHK}.

Roughly speaking, \cite[Theorem F]{BHS:quasi} proved that the coarse convex hull of any finite set of points $F$ in an HHS $\calX$ is quasi-median, quasi-isometric to a CAT(0) cube complex $\calQ_F$, which we call a \emph{cubical model} for $F$.  As in the hyperbolic case, the quality of the quasi-isometry depends only on the number of points $\#F$ and the ambient HHS $\calX$ (e.g., the topology of $S$ when $\calX = \MCG(S)$).  This was later generalized to HHS-like coarse median spaces by Bowditch \cite{Bow:cubulation} and then completely rebuilt and extended to allow for modeling hulls of finitely-many interior points and hierarchy rays in \cite{Dur_infcube}.

In this paper, we prove a strong stabilization result for cubical approximations of any colorable HHS (Definition \ref{defn:colorable}), building on our previous work \cite{DMS_bary} and utilizing the refined construction from \cite{Dur_infcube}.  For motivation, we first turn to a discussion of how one might want to use cubical models to define metrics on HHSs.

Given a pair of points $a,b \in \calX$, their cubical model $\calQ_{a,b}$ is a CAT(0) cube complex with a uniform quasi-isometry $\hO_{a,b}:\calQ_{a,b} \to \hull_{\calX}(a,b)$ to their hierarchical hull.  The cube complex $\calQ_{a,b}$ has vertices $\widehat{a}, \widehat{b} \in \calQ^0_{a,b}$ so that $\hO_{a,b}(\widehat{a}) = a$ and $\hO_{a,b}(\widehat{b}) = b$.  In fact, $\calQ_{a,b}$ is the cubical convex hull of $\ha,\hb$.

Every CAT(0) cube complex admits a variety of $\ell^p$-metrics $d^p$ by extending the $\ell^p$-norm on each cube (for $1\leq p \leq \infty$); e.g., $d^2$ is the CAT(0) metric and $d^1$ is the combinatorial metric.  As such, one might hope that the function
\begin{equation}\label{eq:distance}
\hat{d}^p_{\calX}(a,b) = 
d^p_{\calQ_{a,b}}(\ha, \hb)
\end{equation}

defines a metric on $\calX$ for each choice of $1 \leq p \leq \infty$.

The main obstacle here is the triangle inequality.  When confirming the triangle inequality for a triple $a,b,c \in \calX$, one might hope to compare distances in the $2$-point models $\calQ_{a,b}, \calQ_{a,c}, \calQ_{b,c}$ with their respective distances in the $3$-point model $\calQ_{a,b,c}$.  However, while the constructions in \cite{BHS:quasi, Bow:cubulation, Dur_infcube} provide a quasi-isometric embedding of each of $\calQ_{a,b}, \calQ_{a,c}, \calQ_{b,c}$ into $\calQ_{a,b,c}$, the multiplicative constant creates an unbounded error in any triangle inequality calculation.

Our main technical result gives a construction of cubical models which is stable under addition of points, allowing us to remove the multiplicative error.  The following is somewhat informal, see Theorem \ref{thm:stabler cubulations} for a precise version:

\begin{introthm}[Local quasi-cubicality]\label{thm:LQC}
Given a colorable HHS $\calX$, one can $Aut(\calX)$-equivariantly assign to each finite subset $F\subseteq \calX$ a cubical model $\hO_F:\calQ_F \to \hull_{\calX}(F)$ in such a way that the following holds. Whenever we have finite sets $F\subseteq F'\subseteq \calX$, there exists an $L$-cubical convex embedding $\Phi:\calQ_F \to \calQ_{F'}$ so that $\hO_F$ and $\hO_{F'} \circ \Phi$ agree up to error $L = L(|F'|, \calX)>0$.
\end{introthm}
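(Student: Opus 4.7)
The plan is to build the cubical models $\calQ_F$ via a canonical, $\mathrm{Aut}(\calX)$-equivariant wall selection that is monotone under inclusion $F \subseteq F'$, refining the constructions of \cite{BHS:quasi, Dur_infcube} so that all choices depend coherently on the input set rather than being made once per $F$. Colorability is essential here: it partitions $\mathfrak{S}$ into boundedly many color classes of pairwise transverse or orthogonal domains, allowing a color-by-color construction that combines via Sageev's machinery without introducing non-canonical global choices such as orderings or basepoints.

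For each domain $U$ relevant to $\hull_{\calX}(F)$, I would extract a finite collection of walls in the hyperbolic coordinate space $\mathcal{C}U$ from the projection $\pi_U(F)$, for instance via coarse midpoints or approximating tree structures as in \cite{DMS_bary}. Because the input depends only on the unordered set $\pi_U(F)$, automorphisms of the HHS act compatibly on the wall collection, yielding equivariance. Monotonicity at each $U$ then follows from the inclusion $\pi_U(F) \subseteq \pi_U(F')$, provided the procedure is arranged so that larger inputs produce supersets of walls up to finitely many exceptions bounded in $|F'|$. Taking the union across all colors and domains and running Sageev's construction produces $\calQ_F$, and the wall inclusion dualizes to a natural cubical map $\Phi: \calQ_F \to \calQ_{F'}$.

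The ``$L$-convexity'' claim, i.e.\ convexity up to finitely many hyperplane deletions, reflects the fact that walls appearing in $\calQ_{F'}$ coming from points of $F'\setminus F$ may cross $\Phi(\calQ_F)$; collapsing exactly those hyperplanes of $\calQ_{F'}$ restores the image to a genuine cubically convex subcomplex. Their count is controlled by $|F'|$ times the HHS structural constants (boundedly many relevant domains per point, boundedly many walls per pair of projections). The agreement $\hO_F \approx \hO_{F'} \circ \Phi$ up to error $L(|F'|, \calX)$ then follows from the standard HHS realization machinery: both maps realize the same combinatorial data from $\calQ_F$ inside $\hull_{\calX}(F) \subseteq \hull_{\calX}(F')$, and the discrepancy is a realization error depending only on the HHS constants and the number of walls.

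The main obstacle will be balancing monotonicity against richness of the wall collection. A strictly canonical selection from $\pi_U(F)$ alone tends to shift walls as $F$ grows (for example, coarse medians of $\pi_U(F)$ generally differ from coarse medians of $\pi_U(F')$), so naive monotonicity fails. I would address this by distinguishing a canonical skeleton of walls, stable under any enlargement of $F$, from auxiliary walls needed to refine $\calQ_F$ into an honest cubical approximation of the hull, with the latter absorbed into the hyperplane-deletion bookkeeping and their count bounded in $|F'|$. A secondary technical challenge is verifying that the resulting color-by-color cube complex is still uniformly quasi-isometric to $\hull_{\calX}(F)$, which is where the finer construction from \cite{Dur_infcube} will be leveraged rather than reproved.
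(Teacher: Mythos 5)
Your proposal correctly identifies the overall architecture (a wall family in each coordinate space $\calC(U)$, dualized via Sageev, with the discrepancy between $F$ and $F'$ absorbed into hyperplane deletions) and even names the central difficulty, namely that any canonical wall selection shifts as $F$ grows. But the resolution you sketch has a genuine gap. First, the wall data in $\calC(U)$ cannot be a function of $\pi_U(F)$ alone: the cubical model must also encode the relative projections $\rho^V_U$ for every relevant domain $V \nest U$, and when one passes from $F$ to $F'$ the set of such $V$ can grow by an \emph{unbounded} amount, since points of $F'\setminus F$ may be arbitrarily far from $F$. Your count of exceptional walls as ``$|F'|$ times the HHS structural constants (boundedly many relevant domains per point)'' is therefore not available — there is no bound on the number of relevant domains per point, and obtaining a bounded hyperplane-deletion count is precisely the hard content of the theorem. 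The actual argument must show that all but boundedly many of the new nested domains have $\rho$-points clustering along the ``new branches'' corresponding to $F'\setminus F$ (the sporadic-domain control, which rests on a strong passing-up argument together with the exact Behrstock inequality that colorability buys via stable projections), and then must run a buffering construction inside each $\calC(U)$ — inserting artificial cluster points near the branch point of each new leaf — so that the tree model for $F$ embeds, after collapsing a bounded ``unstable'' part, as a convex subtree of the tree model for $F'$. Your ``canonical skeleton of walls, stable under any enlargement of $F$'' is the right intuition for what this achieves, but you give no mechanism for producing it, and that mechanism is where essentially all the work lies.

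A secondary point: colorability is not used to run Sageev color-by-color. Its role is to permit a perturbation of the projections $\pi_U$ and $\rho^V_U$ within each transverse family so that the Behrstock inequality holds with no additive error; this exactness is what makes the counting arguments for distinguished, involved, and sporadic domains close up. Without it, the bounds on the symmetric difference of the relevant combinatorial data — and hence on the number of deleted hyperplanes — do not follow.
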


By an $L$-\emph{cubical convex embedding}, we mean that the map $\Phi:\calQ_F \to \calQ_{F'}$ becomes a cubical convex embedding when we delete at most $L$-many hyperplanes from $\calQ_F$ and $\calQ_{F'}$.  In particular, $\Phi$ is a $(1,L)$-quasi-isometric embedding.

Perhaps the main upshot here is that the family of functions $\hat{d}^p_{\calX}$ in Equation \eqref{eq:distance} satisfy the triangle inequality up to a bounded additive error.  Hence equivariantly adding this error gives a genuine $\mathrm{Isom}(\calX)$-invariant metric $d^p_{\calX}$ on $\calX$ for each $p$; see Subsection \ref{subsec:new metrics} for details.

\begin{introthm}[Cubical metrics]\label{thm:cubical metrics intro}
    For each $1 \leq p \leq \infty$ and any colorable HHS $\calX$, there exists $C = C(\calX)>0$ and an $\mathrm{Isom}(\calX)$-invariant metric $d^p_{\calX}$ which satisfies:
    \begin{enumerate}
        \item The identity map $id_{\calX}:(\calX, d_{\calX}) \to (\calX, d^p_{\calX})$ is a $(C,C)$-quasi-isometry.
        \item For any finite subset $F \subset \calX$, the cubical model map $\hO_F:(\calQ_F, d^p_{\calQ_F}) \to (\calX, d^p_{\calX})$ is a $(1,C)$-quasi-isometric embedding.
        \item $(\calX, d^p_{\calX})$ is roughly geodesic: Every pair of points in $\calX$ is connected by a $(1,C)$-quasi-geodesic in $d^p_{\calX}$, which is furthermore a hierarchy path of uniform quality.
    \end{enumerate}
\end{introthm}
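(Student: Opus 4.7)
The plan is to define the metric by way of 2-point cubical models and then bootstrap to the full statement using the stability guaranteed by Theorem \ref{thm:LQC}. Set
\[
\hat{d}^p_\calX(a,b) \; := \; d^p_{\calQ_{a,b}}(\ha,\hb),
\]
where $F \mapsto \calQ_F$ is the $\mathrm{Aut}(\calX)$-equivariant assignment furnished by Theorem \ref{thm:LQC}. This function is symmetric, $\mathrm{Aut}(\calX)$-invariant, and vanishes on the diagonal. The principal task is to upgrade $\hat{d}^p_\calX$ to a genuine metric by proving an approximate triangle inequality; once that is in hand, define $d^p_\calX(a,b) = \hat{d}^p_\calX(a,b) + K$ for $a\neq b$ and $d^p_\calX(a,a)=0$, with $K$ a sufficiently large constant, to produce the desired $\mathrm{Isom}(\calX)$-invariant metric.

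For the approximate triangle inequality on a triple $\{a,b,c\}$, I would apply Theorem \ref{thm:LQC} to each of the three inclusions $\{a,b\},\{a,c\},\{b,c\} \subseteq \{a,b,c\}$, producing $L$-cubical convex embeddings of the pairwise models into $\calQ_{a,b,c}$, with $L = L(3,\calX)$ uniform. Because an $L$-cubical convex embedding becomes an isometric embedding after deleting at most $L$ hyperplanes from each side, and deletion of a hyperplane changes the $\ell^p$-distance between any two fixed vertices by at most a bounded amount (using the uniform dimension bound on the cubical models of colorable HHSs), each such $\Phi$ is a $(1,O(L))$-quasi-isometric embedding in the $\ell^p$ metric. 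Moreover, comparing $\hO_{a,b,c}$-images shows that $\Phi(\ha)$ lies within uniformly bounded distance of the canonical vertex $\widehat{a}_{abc} \in \calQ_{a,b,c}$, and likewise for $\hb$. Combining these two observations,
\[
\bigl|\hat{d}^p_\calX(a,b) - d^p_{\calQ_{a,b,c}}(\widehat{a}_{abc},\widehat{b}_{abc})\bigr| = O(1),
\]
and analogously for the other two pairs; the genuine triangle inequality for the $\ell^p$-metric on the CAT(0) cube complex $\calQ_{a,b,c}$ then yields $\hat{d}^p_\calX(a,c) \leq \hat{d}^p_\calX(a,b) + \hat{d}^p_\calX(b,c) + O(1)$.

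With $d^p_\calX$ in hand, properties (1)--(3) follow from further applications of Theorem \ref{thm:LQC}. For (1), the uniformly bounded dimension of $\calQ_{a,b}$ makes $d^p_{\calQ_{a,b}}$ bi-Lipschitz equivalent to $d^1_{\calQ_{a,b}}$, and the latter is quasi-isometric to $d_\calX$ via the pairwise cubical model map, giving the required $(C,C)$-quasi-isometry. For (2), given $u,v \in \calQ_F$ with images $a=\hO_F(u)$, $b=\hO_F(v)$, I would interpose the larger model $\calQ_{F'}$ for $F'=F\cup\{a,b\}$: Theorem \ref{thm:LQC} applied separately to $F\subseteq F'$ and to $\{a,b\}\subseteq F'$ identifies each of $d^p_{\calQ_F}(u,v)$ and $d^p_{\calQ_{a,b}}(\ha,\hb)=\hat{d}^p_\calX(a,b)$, up to bounded additive error, with the common quantity $d^p_{\calQ_{F'}}(\widehat{a}_{F'},\widehat{b}_{F'})$. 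For (3), I would take an $\ell^p$-geodesic in $\calQ_{a,b}$ from $\ha$ to $\hb$ and sample it at unit speed by vertices $\hv_0,\dots,\hv_T$; property (2) applied to $F=\{a,b\}$ forces the images $x_i := \hO_{a,b}(\hv_i)$ to form a $(1,C)$-quasi-geodesic in $(\calX,d^p_\calX)$, and this is a hierarchy path of uniform quality by the standard output of the cubical model construction from \cite{BHS:quasi,Dur_infcube}.

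The main obstacle is the uniform bookkeeping in the second paragraph: controlling in tandem the $\ell^p$-distortion produced by deletion of $O(L)$ hyperplanes and the displacement of $\Phi(\ha)$ from $\widehat{a}_{abc}$, uniformly across all triples. The colorability hypothesis, which provides a uniform dimension bound on the cubical models, together with the fact that Theorem \ref{thm:LQC} is invoked with the fixed bound $|F'|=3$ at this step (and $|F'|\leq |F|+2$ in the proof of (2)), is what keeps these constants tame.
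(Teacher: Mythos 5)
Your construction follows the paper's proof essentially step for step: define $\hat d^p$ via the two-point models, derive the approximate triangle inequality by comparing each pairwise model with $\calQ_{\{a,b,c\}}$ through the stabilization theorem (convex embeddings are $\ell^p$-isometric, hyperplane deletions are $(1,O(L))$-quasi-isometries, and the marked vertices match up via the exactly-commuting left side of the diagram), and then establish (1)--(3) by interposing the model for $F'=F\cup\{a,b\}$ and pushing forward $\ell^p$-geodesics. This is precisely Lemmas \ref{lem:convex_isometric}, \ref{lem:hyp_collapse_qi}, \ref{lem:change_cube_cplx}, \ref{lem:triangle_ineq} and Proposition \ref{prop:asymp_CAT0}.

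The one genuine gap is the invariance claim. You assert that $\hat d^p_\calX$ is $\mathrm{Aut}(\calX)$-invariant on the nose and conclude by adding a constant $K$; but the precise form of the stabilization theorem (Theorem \ref{thm:stabler cubulations}) does \emph{not} give $\calQ_{gF}\cong\calQ_F$ — even when $d^{Haus}(gF,F')\leq 1$ it only produces an isomorphism $\theta$ between the intermediate complexes $\calR_F$ and $\calR_{F'}$, reached from $\calQ_F$ and $\calQ_{F'}$ by deleting up to $N$ hyperplanes each. Consequently $\hat d^p(gx,gy)$ and $\hat d^p(x,y)$ agree only up to a uniform additive error (the ``moreover'' part of Lemma \ref{lem:triangle_ineq}), and $\hat d^p + K$ is not literally $G$-invariant. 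The fix is the one the paper uses in Definition \ref{defn:ell 2}: set $d^p(x,y)=\sup_{g\in G}\hat d^p(gx,gy)+C$ for $x\neq y$, which is exactly invariant and changes the function by only a bounded amount, so all your subsequent estimates survive.
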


In particular, our cubical metrics are all roughly geodesic.  As we discuss next in Subsection \ref{subsec:asymp CAT(0) intro}, the metric $d^2_{\calX}$ is asymptotically CAT(0).

The power of Theorems \ref{thm:LQC} and \ref{thm:cubical metrics intro} is that they allow us to make many arguments in a colorable HHS $\calX$, such as $\MCG(S)$ or $\Teich(S)$, by passing to appropriate cubical models and only paying the cost of an \emph{additive} error.  In this sense, any colorable HHS is \emph{locally quasi-cubical}, as one can interpolate between local cubical models for overlapping finite subsets of $\calX$ like local charts on a manifold which agree on the overlaps up to cubical almost-isomorphisms. 

Our proof of Theorem \ref{thm:LQC} depends on a substantial strengthening of our stabilization techniques from \cite{DMS_bary}, and crucially on the refined cubical model construction from \cite{Dur_infcube}. 

Finally, we note that while a large portion of this paper develops an array of powerful techniques within the framework of hierarchically hyperbolic spaces in order to prove Theorems \ref{thm:LQC} and \ref{thm:cubical metrics intro}, the initial part of the paper analyzes these new metrics, with the hierarchical techniques essentially in the background.

\subsection{Asymptotically CAT(0) metrics}\label{subsec:asymp CAT(0) intro}

Theorem \ref{thm:cubical metrics intro} allows us to construct a family of $\ell^p$-like metrics on colorable HHSs like $\MCG(S)$ and $\Teich(S)$ for $1 \leq p \leq \infty$.  Recent papers of Haettel--Hoda--Petyt \cite{HHP} and Petyt--Zalloum \cite{PZ_walls} also give constructions of $\ell^{\infty}$-like metrics on any HHS.  These metrics are \emph{coarsely injective}, a powerful non-positive curvature property.  While we expect that our metric $d^{\infty}_{\calX}$ is also coarsely injective, our main focus is on the intermediate values of $p$, of which \cite{HHP, PZ_walls} do not construct analogues.

The case of $p = 2$ is of the most immediate interest.  The metric $d^2$ on CAT(0) cube complex is CAT(0).  Hence one would hope that our new metric $d^2_{\calX}$ satisfies a weakened form of the CAT(0) property.  This is indeed the case.

By Theorem \ref{thm:cubical metrics intro}, any pair of points $a,b \in \calX$ can be connected by a $(1,C)$-quasi-geodesic in the metric $d^2_{\calX}$, where $C = C(\calX)>0$.  It is not hard to show (see Lemma \ref{lem:CAT0_1C}) that any triangle $\Delta$ of $(1,C)$-quasi-geodesics in a CAT(0) space satisfy the CAT(0) inequality up to an error roughly on the order of $C\sqrt{\diam(\Delta)} + C$, in particular sublinear in the size of $\Delta$.  Hence combining this fact with Theorem \ref{thm:LQC}, which says that the metric $d^2_{\calX}$ is roughly a metric on a CAT(0) space up to bounded additive error, we obtain:

\begin{introthm} \label{thm:asymp CAT(0) intro 2}
    For any colorable HHS $\calX$, $(\calX, d^2_{\calX})$ is asymptotically CAT(0).

\end{introthm}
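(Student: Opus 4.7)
The plan is to reduce the asymptotically CAT(0) inequality for $(\calX, d^2_\calX)$ to the genuine CAT(0) inequality in the cubical approximations $(\calQ_F, d^2_{\calQ_F})$, using the uniform $(1,C)$-quasi-isometric embeddings from Theorem \ref{thm:cubical metrics intro}(2) and Lemma \ref{lem:CAT0_1C}, which says that $(1,C)$-quasi-geodesic triangles in a CAT(0) space obey the CAT(0) inequality up to $O(C\sqrt{\diam} + C)$ error.

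First I would fix three points $a,b,c \in \calX$ and rough geodesic sides $\gamma_{ab}, \gamma_{ac}, \gamma_{bc}$ in $d^2_\calX$, which by Theorem \ref{thm:cubical metrics intro}(3) may be taken to be hierarchy paths of uniform quality. To check the comparison inequality at a point $m$ on, say, $\gamma_{bc}$, set $F = \{a,b,c\}$ and consider the cubical model $\hO_F : (\calQ_F, d^2_{\calQ_F}) \to (\calX, d^2_\calX)$; by Theorem \ref{thm:cubical metrics intro}(2) this is a $(1,C)$-quasi-isometric embedding, with $C = C(\calX)$ independent of $F$. Since the hierarchy paths $\gamma_{ab}, \gamma_{ac}, \gamma_{bc}$ lie uniformly close to $\hull_\calX(F)$, a coarse inverse $\hO_F^{-1}$ sends $a,b,c$ to vertices $\widehat{a}, \widehat{b}, \widehat{c} \in \calQ_F$ and the three sides to uniform $(1,C')$-quasi-geodesics $\widehat{\gamma}_{ab}, \widehat{\gamma}_{ac}, \widehat{\gamma}_{bc}$ in $(\calQ_F, d^2_{\calQ_F})$; let $\widehat{m}$ be the image of $m$, which lies within bounded distance of $\widehat{\gamma}_{bc}$.

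Next, apply Lemma \ref{lem:CAT0_1C} to the $(1,C')$-quasi-geodesic triangle $\widehat{\Delta}$ in the CAT(0) cube complex $(\calQ_F, d^2_{\calQ_F})$ (which is CAT(0) in its $\ell^2$-metric): the distance $d^2_{\calQ_F}(\widehat{a}, \widehat{m})$ exceeds the corresponding Euclidean comparison distance by at most $O(C'\sqrt{\diam \widehat{\Delta}} + C')$. Since $\hO_F$ preserves every pairwise distance up to additive $C$, the side lengths of $\Delta$ differ from those of $\widehat{\Delta}$ by $O(C)$, so their comparison triangles in $\mathbb{R}^2$ differ pointwise by $O(C)$ by continuity of the side-lengths-to-triangle assignment. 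Pushing forward through $\hO_F$ then yields the CAT(0) comparison for $(a,b,c,m)$ in $(\calX, d^2_\calX)$ with error $O(\sqrt{\diam \Delta}) + O(1)$, which is sublinear in $\diam \Delta$ as required by Definition \ref{defn:asymp CAT0}.

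The main obstacle is the verification that the rough geodesic sides genuinely pull back to uniform $(1,C')$-quasi-geodesics in the cubical model and that the comparison point transfers faithfully under $\hO_F$; this relies on the crucial point that the QI-embedding constant in Theorem \ref{thm:cubical metrics intro}(2) depends only on $\calX$ and not on $|F|$, together with the fact that hierarchy paths between points of $F$ are coarsely contained in $\hO_F(\calQ_F)$. Once this transfer is in place, the asymptotically CAT(0) property is essentially Lemma \ref{lem:CAT0_1C} applied inside the CAT(0) cube complex $\calQ_F$ and transported to $\calX$ with only bounded additive loss.
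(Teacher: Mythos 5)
Your overall strategy---transfer the triangle to a cubical model, use the genuine CAT(0) inequality there together with Lemma \ref{lem:CAT0_1C} to absorb the $(1,C)$-quasi-geodesic error, and pull back with only additive loss---is the same as the paper's. But there is a genuine gap in how you set up the transfer. You work in the three-point model $\calQ_F$ with $F=\{a,b,c\}$ and pull back the entire sides via a coarse inverse of $\hO_F$. This requires the sides, and in particular the comparison point $m$, to lie uniformly close to $\hull_\calX(F)$. That is true if you choose the sides to be the hierarchy-path quasi-geodesics of Theorem \ref{thm:cubical metrics intro}(3), but Definition \ref{defn:asymp CAT0} quantifies over \emph{every} triangle of $(1,C)$-quasi-geodesics, and an arbitrary point $m$ with $d^2_\calX(b,m)+d^2_\calX(m,c)\leq d^2_\calX(b,c)+3C$ need not be uniformly close to $\hull_\calX(b,c)$: even in the Euclidean plane such points fill an ellipse of width about $\sqrt{C\cdot d(b,c)}$, which is unbounded. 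So $\hO_F^{-1}(m)$ is not defined with uniform constants, and your argument only establishes the comparison inequality for the special hierarchy-path triangles --- a weaker statement than the theorem, and one that would not suffice for the later applications (e.g.\ Lemma \ref{lem:convex} and Lemma \ref{lem:weak divergence} apply the inequality to arbitrary segments and to Arzel\`a--Ascoli limits of them). Note also that trying to repair this by first showing every $(1,C)$-quasi-geodesic is sublinearly close to a hierarchy path is essentially circular, since that closeness is itself a consequence of the inequality you are proving.

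The paper's proof (Proposition \ref{prop:asymp_CAT0}\eqref{item:approx_CAT}) sidesteps this by enlarging the finite set to include the comparison points themselves: it sets $F'=\{x,y,z,p,q\}$ and works in the single model $\calQ_{F'}$ (this is why $k=5$ is fixed throughout that section). Lemma \ref{lem:change_cube_cplx} --- the distance-stability consequence of Theorem \ref{thm:stabler cubulations} under inclusions of finite sets --- guarantees that all ten pairwise $d^2_\calX$-distances among $\{x,y,z,p,q\}$ are preserved up to a uniform additive error by $\psi_{F'}$. Hence $\hat p$ automatically satisfies $d(\hat x,\hat p)+d(\hat p,\hat y)\leq d(\hat x,\hat y)+C'$ in the CAT(0) space $\calQ_{F'}$, regardless of whether $p$ was anywhere near $\hull_\calX(x,y,z)$, and Lemma \ref{lem:CAT0_1C} plus the genuine CAT(0) inequality in $\calQ_{F'}$ finish the argument. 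If you adopt that device --- put $p$ and $q$ into the finite set rather than pulling back whole sides --- the rest of your write-up goes through; just make sure the final error is expressed as $\kappa(\delta(p))+\kappa(\delta(q))$ with $\delta$ the distance to the nearer endpoint, as required by Definition \ref{defn:sub CAT0}, which is exactly the form Lemma \ref{lem:CAT0_1C} provides.
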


In \cite{Kar_asymp}, Kar introduced the notion of an asymptotically CAT(0) space as a simultaneous generalization of CAT(0) and Gromov hyperbolic spaces.  Importantly, she proved that cocompact lattices in $\widetilde{SL_2(\mathbb{R})}$ (one of Thurston's eight geometries) are asymptotically CAT(0).  We note that beyond these foundational examples, we are unaware of any other significant classes of asymptotically CAT(0) groups or spaces.  Hence Theorem \ref{thm:asymp CAT(0) intro 2} adds a wide variety of new examples---namely every colorable HHS, including mapping class groups and Teichm\"uller spaces. Incidentally, we note that $\widetilde{SL_2(\mathbb{R})}$ is also a colorable HHS.

In \cite{Kar_asymp}, Kar proved that asymptotically CAT(0) groups (namely those acting geometrically on asymptotically CAT(0) spaces) are of type $\mathrm{FP}_{\infty}$ and have finitely-many conjugacy classes of finite subgroups.  Hence Theorem \ref{thm:asymp CAT(0) intro 2} recovers \cite[Theorem G]{HHP} for colorable HHGs.  They are also strongly shortcut, in the sense of Hoda \cite[Theorem D]{hoda2024strongly}, recovering \cite[Corollary E]{HHP} for colorable HHSs. As a final remark, Kar's definition is equivalent, for geodesic spaces, to all asymptotic cones being CAT(0), but it is not hard to prove that being asymptotically CAT(0) for a rough geodesic space still implies that all asymptotic cones are CAT(0). In particular, mapping class groups and colorable HHGs admit metrics equivariantly quasi-isometric to word metrics and with CAT(0) asymptotic cones. This improves on a result of Bowditch (building on work of Behrstock--Drutu--Sapir \cite{behrstock2011median}), who showed that asymptotic cones of mapping class groups admit CAT(0) metrics bilipschitz equivalent to metrics coming from word metrics \cite{Bow:CAT0_cone}. Also, Theorem \ref{thm:asymp CAT(0) intro 2} answers \cite[Question 37.6]{CRHK}.

\begin{remark*}
    Chatterji and Petyt pointed out to us a promising potential construction of asymptotically CAT(0) metrics arising from coarsely median-preserving quasi-isometric embeddings into products of hyperbolic spaces (such embeddings are known to exist for colorable HHGs \cite{HagenPetyt}, and have been used in a similar spirit in \cite[Proposition 35.2]{CRHK}). We believe that this is worth further investigation, but we were not able to construct asymptotically CAT(0) metrics from quasi-isometric embeddings that, while coarsely preserving medians, might have image which is not median quasi-convex (as in the case of colorable HHGs).
\end{remark*}

\subsection{Outline of the paper} \label{subsec:proof sketches}

In Section \ref{sec:prelim}, after a brief discussion of generalities on HHSs, we state the precise version of Theorem \ref{thm:LQC}, which is Theorem \ref{thm:stabler cubulations}. The proof of this theorem takes up most of this paper, but we will first use the theorem in Sections \ref{sec:asymp CAT(0)}--\ref{sec:FJ} as a black-box.

In Section \ref{sec:asymp CAT(0)} we construct asymptotically CAT(0) metrics, as well as other cubical metrics, proving Theorems \ref{thm:asymp CAT(0) intro} and \ref{thm:cubical metrics intro}.

The main result in Section \ref{sec:contract} is Theorem \ref{prop:Rips contract} on contractibility of Vietoris--Rips complexes of asymptotically CAT(0) spaces. This was essentially proven by Zaremsky \cite{Zaremsky_contract}, whose results are however stated for geodesic spaces. We give a complete argument for another reason as well, which is that in later sections we need a technical improvement on contractibility, given by Proposition \ref{prop:Rips_combing}.

In Section \ref{sec:boundary} we construct boundaries for asymptotically CAT(0) spaces. The main result here is Theorem \ref{thm:compact_boundary}, which summarizes the properties of our compactifications. We note that to construct the topology we in fact construct a ``weak metric'', that is, a function on pairs of points which satisfies a weakening of the triangle inequality.

In Section \ref{sec:finite_dim} we show that our boundaries have finite covering dimension provided that the asymptotically CAT(0) space has finite Assouad-Nagata dimension, a controlled version of asymptotic dimension. We show this in Theorem \ref{thm:finite_dim}.

In Section \ref{sec:ER} we complete the construction of $\calZ$-structures from compactifications of Vietoris--Rips complexes, in Theorem \ref{thm:Z}. This applies to asymptotically CAT(0) spaces of finite Assouad-Nagata dimension and where balls are uniformly locally finite. 

In Section \ref{sec:FJ}, we obtain our applications to the Farrell--Jones Conjecture, see Theorem \ref{thm:rel_FJ}, by checking that the axiomatic setup of \cite{BB:FJ_MCG} applies to our compactifications of Vietoris--Rips complexes for colorable HHGs.

At this point of the paper, we start the proof of Theorem \ref{thm:stabler cubulations}. In Section \ref{sec:controlling domains} we consider finite subsets $F\subseteq F'$ of an HHS and study how the collection of hyperbolic spaces where $F$ and $F'$ have large diameter projections can differ. This is where we use colorability, as it allows us to perturb the HHS projections to minimize the difference between the two.

In Section \ref{sec:stable trees} we consider abstract setups in a hyperbolic space modeling the data coming from a finite set in an HHS via projections. Roughly, we explain what happens when changing additional data associated to a finite set in Theorem \ref{thm:stable tree}. We need to keep track of extensive amounts of data, see Definition \ref{defn:stable decomp}, and indeed the argument is rather involved, we refer the reader to the discussion in the section for more details and heuristics. 

In Section \ref{sec:stabler trees} we analyze what happens in a single hyperbolic space from the HHS structure when passing from a finite set $F$ to a larger finite set $F'$, see Theorem \ref{thm:stable tree}.

Finally, in Section \ref{sec:stabler cubulations}, we put the information we obtained in the various hyperbolic spaces together to prove Theorem \ref{thm:stabler cubulations}.

\subsection*{Acknowledgments}

We would like to thank Jason Behrstock, Mladen Bestvina, Indira Chatterji, Daniel Groves, Thomas Haettel, Mark Hagen, Nima Hoda, Marissa Loving, Harsh Patil, Harry Petyt, Sam Taylor, Brandis Whitfield, Wenyuan Yang, and Abdul Zalloum for interesting conversations and useful comments.  Durham was partially supported by NSF grant DMS-1906487.  Minsky was partially supported by NSF grant DMS-2005328.

\section{Preliminaries and statement of the stabler hull cubulation theorem}
\label{sec:prelim}

In this section, state the main technical result of the paper, the Stabler Cubulations Theorem \ref{thm:stabler cubulations}.  In order to do so, we give some preliminaries on HHSs.

\subsection{Generalities on HHSs}

We refer the reader to \cite{HHS_survey} for generalities on HHSs, here we simply recall the main features of an HHS structure on a metric space $\cuco X$. The key data required by an HHS structure is a family $\{\calC(U)\}_{U\in\mathfrak S}$ of uniformly hyperbolic spaces and uniformly Lipschitz maps $\pi_U:\calX\to \calC(U)$. The elements of the index set $\mathfrak S$ are called domains, and there are three relations on $\mathfrak S$, namely orthogonality, nesting, and transversality, denoted $\orth,\nest,\transverse$ respectively. An automorphism of an HHS is, roughly, a map of the HHS to itself that comes with a permutation of $\mathfrak S$ and is compatible with the maps $\pi_U$ as above, and preserves the three relations on $\mathfrak S$.

For the purposes of the statement of Theorem \ref{thm:stabler cubulations} we recall two further facts. Firstly, any HHS $(\calX,\mathfrak S)$ has a coarse median structure, which in particular means that there exists a particular coarsely Lipschitz map $\mu:\calX^3\to\calX$. The map $\mu$ is coarsely determined by requiring that for all $x,y,z\in\calX$ and $U\in\mathfrak S$ we have that $\pi_U(\mu(x,y,z))$ is a coarse center for the (thin) triangle with vertices $\pi_U(x),\pi_U(y),\pi_U(z)$. Secondly, in HHSs there is a notion of hull of subsets. In a hyperbolic space, the hull of a set is simply the union of all geodesics connecting points of the set, while for $A\subseteq \calX$ the hull $\hull_{\calX}(A)$ is the set of all $x\in\calX$ that project $\theta$-close to the hull of $\pi_U(A)$ for all $U\in\mathfrak S$. Here, $\theta$ is a sufficiently large constant, and for any HHS we automatically fix one such constant, see \cite{HHS_II} for more details.

We note that when working with an HHS $(\calX, \mathfrak S)$ in this paper, essentially all of the constants that appear in the various definitions and statements depend in part on the ambient HHS.  We will simply refer to a constant depending on $\mathfrak S$ when this is the case.

\subsection{Cubulation of hulls}

We can now state the full version of our main theorem on cubulations of hulls in HHSs. Recall from the introduction that it roughly says that hulls of finitely many points can be approximated by CAT(0) cube complexes in a way that inclusions of finite sets correspond to convex embeddings up to finitely many hyperplane deletions. In Sections \ref{sec:asymp CAT(0)}--\ref{sec:FJ} we will use the theorem, while the proof is given in the later sections, completed in Section \ref{sec:stabler cubulations}.

\begin{restatable}[]{thm}{stabler}
\label{thm:stabler cubulations}
  Let $(\calX,\mathfrak S)$ be a $G$-colorable HHS for $G < \mathrm{Aut}(\mathfrak S)$. Then for each $k$ there exist $K, N$ depending on $k, \mathfrak S$ with the following properties.  To each subset $F\subseteq \calX$ of cardinality at most $k$ one can assign a triple $(\calQ_F,\Phi_F,\psi_F)$ satisfying:
 \begin{enumerate}
  \item $\calQ_F$ is a CAT(0) cube complex of dimension at most the maximal number of pairwise orthogonal domains of $(\calX,\mathfrak S)$,
  \item $\Phi_F:\calQ_F \to \hull_{\calX}(F)$ is a $K$--median
    $(K,K)$--quasi-isometry,
  \item $\psi_F:F\to\mathcal (\calQ_F)^{(0)}$ satisfies $d_{\cuco X}(\Phi_F \circ \psi_F(f), f) \leq  K$ for each $f\in F$.
 \end{enumerate}
Moreover, suppose that $F'\subseteq \calX$ is another subset of cardinality at most $k$, and $gF\subseteq N_1(F')$ for some $g\in G$. Choose any map $\iota_{F}:F\to F'$ such that  $d_\calX(\iota_{F}(f),gf)\leq 1$ for all $f\in F$. Then the following holds. There are CAT(0) cube complexes $\mathcal R_F, \mathcal R_{F'}$, which fit into a diagram 
  \begin{equation}\label{eq:Phi diagram}
   \begin{tikzcd}[ampersand replacement=\&]
      F\arrow[r,"\psi_{F}"]\arrow[dddd,"\iota_F" left] \&\calQ_{F} \arrow[ddr,"g\circ\Phi_{F}"] \arrow[d,"\eta" left] \&  \\
         \& \mathcal R_F  \arrow[dr,"\Phi_0" below]\arrow[dd,"\theta" left] \& \\
      \& \& \calX \\    
      \& \mathcal R_{F'} \arrow[ur,"\Phi'_0"] \& \\
      F'\arrow[r,"\psi_{F'}"]\&\calQ_{F'}\arrow[uur,"\ \ \ \Phi_{F'}" below] \arrow[u,"\eta'"] \& \\
   \end{tikzcd}
  \end{equation}
which commutes up to error at most $K$, where  $\theta$ is a convex embedding, 
$\Phi_0$ and $\Phi'_0$ are  $K$--median $(K,K)$--quasi-isometric embeddings,
and $\eta$ and $\eta'$ are hyperplane deletion maps that delete at most $N$ hyperplanes. The left side commutes exactly, that is, we have $\theta\circ\eta\circ\psi_F=\eta'\circ\psi_{F'}\circ\iota_{F}$. Finally, $\theta$ is an isomorphism if $d^{Haus}_{\calX}(gF,F')\leq 1$.
\end{restatable}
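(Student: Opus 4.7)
The plan is to build the cube complex $\calQ_F$ from the hierarchical data of $(\calX,\mathfrak S)$, domain by domain, using walls coming from each hyperbolic factor $\calC(U)$. Existence of a triple $(\calQ_F,\Phi_F,\psi_F)$ satisfying items (1)--(3) essentially follows from the cubulation-of-hulls machinery of \cite{BHS:quasi,Bow:cubulation} refined in \cite{Dur_infcube}; what is new is the comparison diagram \eqref{eq:Phi diagram}. The overall strategy is to pass, in each relevant domain $U$, through intermediate \emph{tree} approximations of $\pi_U(F)$ and $\pi_U(F')$ that admit a common extension after deleting boundedly many edges, and then to lift the resulting picture from trees to cube complexes via the walls-to-cubulation correspondence.

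First, I would fix the family $\mathrm{Rel}(F)$ of domains where $\pi_U(F)$ has diameter above a chosen threshold, and control how $\mathrm{Rel}(gF)$ and $\mathrm{Rel}(F')$ can differ when $gF\subseteq N_1(F')$. This is the step that genuinely requires $G$-colorability: a $G$-coloring allows me to perturb the projections $\pi_U$ equivariantly so that the discrepancy between the two families of relevant domains, and the values of the associated projections, is bounded by a constant depending only on $k=|F'|$ and $\mathfrak S$, rather than growing with $|F|$ via accumulated orthogonal rearrangements. This is carried out in Section~\ref{sec:controlling domains}.

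Second, inside each $\calC(U)$ with $U\in\mathrm{Rel}(F)\cup\mathrm{Rel}(F')$, I would build a tree $T_U^F$ together with auxiliary decomposition data (joining points along Gromov products, bounded combinatorial bookkeeping) containing $\pi_U(F)$, and likewise $T_U^{F'}$. The central lemma, proven in Sections~\ref{sec:stable trees}--\ref{sec:stabler trees}, is that one can produce an intermediate tree $R_U$ together with edge-deletion maps $T_U^F\leftarrow R_U\to T_U^{F'}$ which commute with the embeddings of $F$ and $F'$ up to bounded error, and which delete at most $N=N(k,\mathfrak S)$ edges on each side. Passing from trees to walls and then to cube complexes, these edge deletions become hyperplane deletions, yielding $\mathcal R_F,\mathcal R_{F'}$ together with the maps $\eta,\eta'$, while the tree-level inclusion of $R_U$ into $T_U^{F'}$'s wall structure yields the convex embedding $\theta$; the maps $\Phi_0,\Phi'_0$ are inherited from the realization of the intermediate trees into $\hull_\calX(F')$.

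The main obstacle is the tree-stability step. Trees built from a finite subset of a hyperbolic space depend on many choices --- of branching structure, of joining points, and of where to place auxiliary vertices --- and these choices can shift dramatically when even a single new point is added, especially when the new projection lies close to a branch point of $T_U^F$. Keeping the bookkeeping coherent is what forces the construction to track a large stable decomposition datum rather than just the underlying tree, and is why Sections~\ref{sec:stable trees}--\ref{sec:stabler trees} occupy the bulk of the paper. Once these are in hand, assembling the tree-level picture across the (boundedly many, thanks to colorability) relevant domains and checking that diagram \eqref{eq:Phi diagram} commutes up to additive error $K$ is a careful but conceptually routine application of the walls-to-cubulation correspondence; the final clause, that $\theta$ is an isomorphism when $d^{Haus}_\calX(gF,F')\leq 1$, simply records that in that case the stable decomposition data can be chosen identical on both sides, so no deletions are required.
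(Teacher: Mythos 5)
Your proposal follows essentially the same route as the paper: colorability and stable projections to bound the discrepancy between the relevant-domain data of $F$ and $F'$, per-domain stable trees with heavy decomposition bookkeeping as the technical core, and the walls-to-cubulation correspondence to convert tree-level deletions and embeddings into hyperplane deletions and the convex embedding $\theta$. One correction: the set $\Rel_K(F)$ is \emph{not} boundedly many domains (it grows with $\diam(F)$); colorability only gives boundedly many BBF families and, via the stable projections, a bound on the number of \emph{distinguished}, \emph{involved}, and \emph{sporadic} domains where the two tree constructions actually differ --- the assembly step must therefore handle unboundedly many coordinates via the $0$-consistency/dual-cube-complex machinery rather than a finite product argument. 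Also, in the case $d^{Haus}_\calX(gF,F')\leq 1$ the conclusion is that $\theta$ is an isomorphism, not that $\eta,\eta'$ delete nothing.
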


Please see Definition \ref{defn:colorable} for the definition of a colorable HHS, and Definition \ref{defn:hyperplane deletion} for the definition of a hyperplane deletion map.

\section{Asymptotically CAT(0) metrics from stable cubulations}\label{sec:asymp CAT(0)}

The main result of this section is Theorem \ref{thm:asymp CAT0}, which shows the existence of asymptotically CAT(0) metrics (Definition \ref{defn:asymp CAT0}) on suitable HHSs.

\subsection{Preliminary lemmas on CAT(0) cube complexes and spaces}

We first collect some basic facts about CAT(0) cube complexes and spaces. We will consider $\ell^p$ metrics on CAT(0) cube complexes, with particular emphasis on $p=2$, the CAT(0) metric. A large part of the arguments work for any $p$, though, and in particular for $p=\infty$, the injective metric.  See \cite{schwer2324cat} for generalities of cube complexes.

\begin{lemma}
\label{lem:convex_isometric}
 A convex embedding (in the combinatorial sense) between CAT(0) cube complexes is an isometric embedding with respect to the $\ell^p$ metrics for any $p\in [1,\infty]$.
\end{lemma}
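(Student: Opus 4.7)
The plan is to exploit the gate (combinatorial nearest-point) projection from the ambient CAT(0) cube complex onto its convex subcomplex. Let $\iota \colon Y \hookrightarrow X$ denote the convex embedding in question, and fix $a,b \in Y$. The inequality $d^p_X(\iota(a), \iota(b)) \le d^p_Y(a,b)$ is automatic, since any $\ell^p$-rectifiable path in $Y$ is also such a path in $X$ of the same length; the entire content of the lemma lies in the reverse inequality.

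For the reverse direction, I would invoke the gate map $g \colon X \to \iota(Y)$, defined on vertices by sending $x$ to the unique vertex of $\iota(Y)$ minimizing combinatorial ($\ell^1$) distance, and extended cubically. The structural fact to establish, and the one where combinatorial convexity of $\iota(Y)$ enters essentially, is the following: on each cube $c$ of $X$, the restriction $g|_c$ is a coordinate projection onto a face of a cube of $\iota(Y)$. It collapses precisely those coordinate directions of $c$ corresponding to hyperplanes of $X$ meeting $c$ that do not cross $\iota(Y)$ (which are coherently oriented away from $\iota(Y)$ by convexity), and is the identity on the remaining directions.

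Granted this, the remainder is routine: a coordinate projection of a Euclidean cube is $1$-Lipschitz for every $\ell^p$-norm with $p \in [1, \infty]$, so $g$ is $1$-Lipschitz cube-by-cube, and hence globally $1$-Lipschitz as a map of path metric spaces $(X, d^p) \to (\iota(Y), d^p)$. Since $g \circ \iota = \mathrm{id}_Y$, applying $g$ to any $\ell^p$-rectifiable path in $X$ from $\iota(a)$ to $\iota(b)$ yields an $\ell^p$-path in $\iota(Y)$ of no greater length; taking the infimum produces $d^p_Y(a,b) \le d^p_X(\iota(a), \iota(b))$.

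The main obstacle is verifying the cube-by-cube description of $g$, i.e.\ that $g$ is indeed cubical and restricts on each cube to a coordinate projection onto a face of a cube of $\iota(Y)$. This is standard in the cube-complex literature (it is closely related to the Sageev/Haglund--Wise picture of convex subcomplexes as intersections of halfspaces), but it is the only place where combinatorial convexity is truly used: without it, hyperplanes not crossing $\iota(Y)$ need not be coherently oriented, so the putative projection fails to be well defined or cubical.
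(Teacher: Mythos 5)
Your proposal is correct and is essentially the paper's own argument: the paper's proof is the one-line observation that the combinatorial retraction (gate map) onto the image is $1$-Lipschitz for every $\ell^p$ metric, which is exactly the mechanism you describe. Your write-up simply fills in the cube-by-cube details (the gate map is a coordinate projection on each cube, using convexity to orient the uncrossed hyperplanes coherently) that the paper leaves to the reader.
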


\begin{proof}
 This follows from the fact that the (combinatorial) retraction onto the image of the embedding is easily seen to be 1-Lipschitz.
\end{proof}

The following is of interest to us because of the hyperplane deletion maps appearing in the stable cubulation theorem.

\begin{lemma}
\label{lem:hyp_collapse_qi}
 A hyperplane deletion map is a $(1,1)$-quasi-isometry with respect to any $\ell^p$ metric for $p\geq 1$.
\end{lemma}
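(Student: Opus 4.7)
The plan is to extract the $(1,1)$-quasi-isometry directly from the local description of a hyperplane deletion $\eta\colon X \to Y$. Writing $H$ for the deleted hyperplane, each cube of $X$ cut by $H$ has a product decomposition $c = c_H \times [-\tfrac12, \tfrac12]$ with $c_H = c \cap H$, and $\eta$ acts on $c$ as the coordinate projection $c \to c_H$, while on cubes disjoint from $H$ the map $\eta$ is the identity. In particular every fibre of $\eta$ is either a single point or a segment of length $1$ perpendicular to $H$, so fibres have diameter at most $1$ in any $\ell^p$-metric.

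For the Lipschitz direction, a coordinate projection $c_H \times [-\tfrac12, \tfrac12] \to c_H$ is $1$-Lipschitz in every $\ell^p$-norm, and since the $\ell^p$-metrics on $X$ and $Y$ are length metrics assembled from the $\ell^p$-norms on individual cubes, $\eta$ is globally $1$-Lipschitz; hence $d_Y(\eta x_1, \eta x_2) \leq d_X(x_1, x_2)$ for all $x_1, x_2 \in X$.

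For the reverse direction, fix $x_1, x_2 \in X$ and set $y_i = \eta(x_i)$. Given $\varepsilon > 0$, pick a path $\gamma$ in $Y$ from $y_1$ to $y_2$ of length at most $d_Y(y_1, y_2) + \varepsilon$. I would build an isometric lift $\widetilde{\gamma} \subset X$ of $\gamma$ starting at $x_1$, cube by cube: on each cube of $Y$ whose $\eta$-preimage is a product $c_H \times [-\tfrac12, \tfrac12]$ the lift follows the horizontal slice whose perpendicular $H$-coordinate is inherited from the preceding piece of $\widetilde{\gamma}$ (initialised by the $H$-coordinate of $x_1$); on cubes of $Y$ with unique $\eta$-preimage the lift is forced. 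The lift has the same length as $\gamma$ and ends at some $x_2' \in \eta^{-1}(y_2)$, so the triangle inequality together with the fibre-diameter bound $d_X(x_2', x_2) \leq 1$ gives
\[
d_X(x_1, x_2) \;\leq\; d_X(x_1, x_2') + d_X(x_2', x_2) \;\leq\; d_Y(y_1, y_2) + \varepsilon + 1,
\]
and letting $\varepsilon \to 0$ yields $d_X(x_1, x_2) \leq d_Y(y_1, y_2) + 1$. Since $\eta$ is surjective, together with the Lipschitz estimate this produces the desired $(1,1)$-quasi-isometry.

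The main technical obstacle is justifying the consistent cube-by-cube construction of the lift $\widetilde{\gamma}$, in particular checking that the $H$-coordinate can always be propagated across cube boundaries. This reduces to the product structure of the carrier of $H$ together with the observation that any transition between a collapsed region and a non-collapsed region happens through a face already lying on a definite horizontal slice of the carrier, so the choice of $H$-coordinate is either forced or can be continued from the previous piece of the lift.
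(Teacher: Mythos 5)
Your Lipschitz direction and the idea of lifting a near-geodesic of $Y$ back to $X$ match the paper's strategy, but the lift you describe does not work as stated, and the problem is exactly the transition issue you flag at the end. Write $H'$ for the image in $Y$ of the deleted hyperplane $H$, so that $\eta^{-1}(H')$ is the carrier $H\times[-\tfrac12,\tfrac12]$. A path $\gamma$ in $Y$ can enter $H'$ from a cube attached along the face $H\times\{-\tfrac12\}$ and leave into a cube attached along $H\times\{+\tfrac12\}$; at such a transition the $H$-coordinate of the lift is \emph{forced} to be $+\tfrac12$ on exit while your propagation rule has carried it in as $-\tfrac12$, so a horizontal-slice lift is discontinuous there and must be repaired by inserting a vertical segment of length $1$. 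Consequently the lift is not isometric, and since for $p\neq 2$ geodesics in the $\ell^p$ metric are far from unique (and you in any case work with an $\varepsilon$-approximate geodesic), $\gamma$ may switch sides of the deleted hyperplane arbitrarily many times, say $k$ times; your construction then yields only $d_X(x_1,x_2)\le d_Y(y_1,y_2)+\varepsilon+k+1$ with no control on $k$. This is not merely a loss in the additive constant: without bounding $k$ you do not get a quasi-isometry at all.

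The missing ingredient is the convexity of $H'$ in $Y$. The paper's proof takes the first and last points $\xi,\eta$ of $\gamma\cap H'$ and, instead of lifting the (possibly weaving) middle portion of $\gamma$, \emph{replaces} it by a path realizing $d_Y(\xi,\eta)$ entirely inside $H'$ — this is exactly what Lemma \ref{lem:convex_isometric} provides, since $H'$ is convexly embedded in $Y$. That replacement lifts into the carrier $H\times[-\tfrac12,\tfrac12]$ with at most one unit of additional length (a single traversal of the interval factor, needed only if the initial and terminal segments attach to opposite faces), giving the clean bound $\ell(\alpha)\le\ell(\gamma)+1$. If you want to salvage your cube-by-cube lift, you would need to first argue that $\gamma$ can be chosen with $\gamma\cap H'$ connected, which again comes down to the same convexity statement; as written, the step ``the lift has the same length as $\gamma$'' is false.
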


\begin{proof}
A hyperplane deletion map $q:X\to Y$ is clearly 1-Lipschitz. Let $H'$ be the image in $Y$ of the hyperplane $H$ that has been collapsed. If some geodesic $\gamma$ connecting points $q(x)$, $q(y)$ does not intersect $H'$, then it clearly comes from a geodesic of $X$. If not, we can consider the first and last points $\xi,\eta$ of $\gamma\cap H'$, and construct a path $\alpha$ from $x$ to $y$ by concatenating ``lifts'' of the initial and terminal segments of $\gamma$, and a geodesic in the carrier of $H$. Lemma \ref{lem:convex_isometric} guarantees that the distance between $\xi$ and $\eta$ is realized by a path entirely contained in $H'$, and using this it is readily seen that the length of $\alpha$ is at most the length of $\gamma$ plus 1, easily implying the required conclusion.
\end{proof}

Finally, the following lemma is the key fact about CAT(0) geometry that allows us to construct asymptotically CAT(0) metrics.

\begin{lemma}
\label{lem:CAT0_1C} 
 Let $X$ be a CAT(0) space, and fix $C\geq 0$. If $z$ lies on a $(1,C)$-quasi-geodesic from $x$ to $y$, then
 $$d(z,[x,y])\leq 3\sqrt{C \min\{d(x,z),d(y,z)\}+C^2}.$$
\end{lemma}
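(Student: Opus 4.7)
The strategy is to combine the CAT(0) projection inequality with the length bound coming from the $(1,C)$-quasi-geodesic hypothesis. I would set $a = d(x,z)$, $b = d(y,z)$, and assume without loss of generality $a \leq b$, so $\min\{a,b\} = a$. Let $p$ denote the nearest-point projection of $z$ onto the geodesic $[x,y]$ in $X$, and write $d = d(z,p) = d(z,[x,y])$. The degenerate case $a = 0$ forces $z = x$ and hence $d = 0$, so I may assume $a > 0$.

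The first step is the standard CAT(0) projection inequality applied to the convex geodesic $[x,y]$: for every $q$ on $[x,y]$ one has $d(z,q)^2 \geq d(z,p)^2 + d(p,q)^2$. Taking $q = x$ and $q = y$ and summing the two resulting bounds, and using that $p \in [x,y]$ so $d(x,p) + d(p,y) = d(x,y)$, gives
$$d(x,y) \leq \sqrt{a^2 - d^2} + \sqrt{b^2 - d^2}.$$

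The second step is a matching lower bound on $d(x,y)$ from the quasi-geodesic assumption. Parametrizing the $(1,C)$-quasi-geodesic as $\gamma \colon [0,L] \to X$ with $\gamma(0)=x$, $\gamma(L)=y$, $\gamma(t_0)=z$, the defining inequalities yield $t_0 \geq a - C$, $L - t_0 \geq b - C$, and $d(x,y) \geq L - C$, so combining these, $d(x,y) \geq a + b - 3C$.

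The last step is the elementary estimate $\sqrt{s^2 - d^2} \leq s - d^2/(2s)$ valid for $0 \leq d \leq s$ (since $(1-u/2)^2 \geq 1-u$ with $u = (d/s)^2$); note that $d \leq a \leq b$ holds automatically since $x,y \in [x,y]$. Applying this to both square roots in the first display and comparing with $d(x,y) \geq a+b-3C$ leaves $d^2(1/a + 1/b) \leq 6C$; dropping the nonnegative $1/b$ term yields $d^2 \leq 6C a = 6C \min\{a,b\}$, which is strictly stronger than the claimed $d \leq 3\sqrt{C\min\{a,b\}+C^2}$. There is no real obstacle in this argument; the only thing to track carefully is the additive constant $3C$ in the lower bound for $d(x,y)$, which accumulates the $C$-error of each of the three defining inequalities of the quasi-geodesic.
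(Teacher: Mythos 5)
Your argument is correct, and it takes a genuinely different route from the paper's. The paper reduces everything to the Euclidean plane via a comparison triangle for $x,y,z$ (the constraint $d(x,z)+d(z,y)\leq d(x,y)+3C$ passes to the comparison triangle, and the CAT(0) inequality transfers the distance to $[\bar x,\bar y]$ back to $X$), and then does an explicit coordinate computation with a case split according to whether the foot of the perpendicular from $\bar z$ lands inside the segment. You instead work intrinsically: you use the Pythagorean-type inequality $d(z,q)^2\geq d(z,p)^2+d(p,q)^2$ for the nearest-point projection $p$ of $z$ onto the convex set $[x,y]$ (valid since the angle at $p$ is at least $\pi/2$, combined with the CAT(0) law of cosines), pair it with the same $3C$-defect inequality $d(x,y)\geq a+b-3C$ that the paper uses, and close with the elementary estimate $\sqrt{s^2-d^2}\leq s-d^2/(2s)$. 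Your version avoids the case analysis entirely (the projection inequality handles all positions of $p$ on the segment uniformly) and in fact yields the sharper bound $d(z,[x,y])\leq\sqrt{6C\min\{d(x,z),d(y,z)\}}$, which implies the stated one since $3\sqrt{Ct+C^2}\geq\sqrt{9Ct}\geq\sqrt{6Ct}$. All the hypotheses you need are in place: $d\leq a\leq b$ guarantees both square-root estimates apply, and the degenerate case $a=0$ is handled separately. The only mild cost of your approach is that it invokes the projection theorem for convex subsets of CAT(0) spaces rather than just the bare comparison-triangle definition, but that is entirely standard.
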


\begin{proof}
It suffices to prove the statement for $X$ the Euclidean plane. In turn, in order to do so it suffices to consider points $x,y,z$ where $x$ is the origin, $y$ is of the form $(d,0)$ for some $d>0$, and $z$ is of the form $(a,\ell)$ for some $a$ and $\ell>0$, and we have to show that if
$$d(x,z)+d(z,y)\leq d(x,y)+3C\ \ \ (*)$$
then the inequality in the statement of the lemma holds. Here, the ``$3C$'' comes from the fact that points along a $(1,C)$-quasi-geodesic satisfy the triangle inequality up to an error of at most $3C$.

 First we treat the case where either $a<0$ or $a>d$. In fact, up to applying a reflection across the bisector of $x$ and $y$, we can reduce to the case $a<0$. In this case we have $d(z,[x,y])=d(x,z)$ and $d(y,z)\geq d(x,y)$. The latter and $(*)$ give $d(x,z)\leq 3C$, so that $d(z,[x,y])\leq 3C\leq 3\sqrt{C^2}$, and we are done.

Suppose now $0\leq a\leq d$, and set $b=d-a$. In this case $d(z,[x,y])=\ell$. Up to applying a reflection across the bisector of $x$ and $y$, we can assume $a\leq b$, and in particular $d(x,z)\leq d(y,z)$. Note that $(*)$ becomes $\sqrt{a^2+\ell^2}+\sqrt{b^2+\ell^2}\leq d+3C$, and in particular we have $\sqrt{a^2+\ell^2}+b\leq d+3C$, or $\sqrt{a^2+\ell^2}\leq a+3C$, since $d-b=a$. By taking squares and simplifying the ``$a^2$'' we get
$\ell^2\leq 6Ca+9C^2,$
leading to $\ell\leq 3\sqrt{Ca+C^2}$. We are done since $a\leq d(x,z)$.
\end{proof}

\subsection{Key lemma on hull inclusions}

From now and until the end of the section we fix a $G$-colorable HHS $(\cuco X,\mathfrak S)$, for $G<\mathrm{Aut}(\mathfrak S)$. We will often use the setup of Theorem \ref{thm:stabler cubulations}, in particular the CAT(0) cube complexes $\calQ_F$ and related objects.

The following lemma compares distances measured in the approximating CAT(0) cube complexes for two sets $F\subseteq F'$, and most of our uses of Theorem \ref{thm:stabler cubulations} factor through it.

\begin{lemma}
\label{lem:change_cube_cplx}
 For every $k\in\mathbb N$ there exists $C = C(\mathfrak S, k)>0$ with the following properties. Let $g\in G$ and $F,F'$ be subsets of $\cuco X$ with $gF\subseteq F'\subseteq \cuco X$ and $|F'|\leq k$. Let $x,y\in F$. Then, endowing both $\calQ_F$ and $\calQ_{F'}$ with the $\ell^p$ metric for some $p\in [1,\infty]$ (same $p$ for both), we have
$$|d_{\calQ_F}(\psi_F(x),\psi_F(y))- d_{\calQ_{F'}}(\psi_{F'}(gx),\psi_{F'}(gy))|\leq C.$$
\end{lemma}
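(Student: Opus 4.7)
The plan is to apply Theorem \ref{thm:stabler cubulations} directly, combined with the two $\ell^p$-metric cube-complex lemmas just established. Since $gF \subseteq F' \subseteq N_1(F')$, the hypothesis of the moreover clause is satisfied, and we may simply take $\iota_F : F \to F'$ defined by $\iota_F(f) = gf$ (which lies in $F'$), so $d_\calX(\iota_F(f), gf) = 0 \leq 1$. This invocation produces the diagram (\ref{eq:Phi diagram}) with CAT(0) cube complexes $\mathcal R_F, \mathcal R_{F'}$, hyperplane-deletion maps $\eta : \calQ_F \to \mathcal R_F$ and $\eta' : \calQ_{F'} \to \mathcal R_{F'}$ each deleting at most $N = N(k, \mathfrak S)$ hyperplanes, a convex embedding $\theta : \mathcal R_F \to \mathcal R_{F'}$, and crucially the \emph{exact} identity on the left-hand side, $\theta \circ \eta \circ \psi_F = \eta' \circ \psi_{F'} \circ \iota_F$.

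The core of the plan is to chain three $\ell^p$-distance comparisons along the composition $\calQ_F \xrightarrow{\eta} \mathcal R_F \xrightarrow{\theta} \mathcal R_{F'} \xleftarrow{\eta'} \calQ_{F'}$ evaluated at the pair $x, y$ (respectively $gx, gy$). First, I would iterate Lemma \ref{lem:hyp_collapse_qi} at most $N$ times to see that $\eta$ is a $(1, N)$-quasi-isometry in the chosen $\ell^p$ metric, so the distance $d_{\calQ_F}(\psi_F(x), \psi_F(y))$ agrees with $d_{\mathcal R_F}(\eta\psi_F(x), \eta\psi_F(y))$ up to additive error $N$. Second, Lemma \ref{lem:convex_isometric} gives that $\theta$, being a convex embedding, is an isometric embedding, so passing through $\theta$ preserves distances on the nose. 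Third, the same hyperplane-deletion reasoning applied to $\eta'$ transfers the resulting $\mathcal R_{F'}$-distance to $d_{\calQ_{F'}}(\psi_{F'}(gx), \psi_{F'}(gy))$ with another additive error of at most $N$.

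To close, I would use exact commutativity of the left half of the diagram at the two points: since $\iota_F(x) = gx$ and $\iota_F(y) = gy$, we have $\theta(\eta(\psi_F(x))) = \eta'(\psi_{F'}(gx))$ and similarly for $y$, so the intermediate points in the chain of comparisons match and the three additive errors simply accumulate. This gives the bound with $C = 2N$. There is no real obstacle here: all the content is carried by Theorem \ref{thm:stabler cubulations}, while Lemmas \ref{lem:convex_isometric} and \ref{lem:hyp_collapse_qi} reduce the $\ell^p$ computation to straightforward additive bookkeeping, uniform in $p \in [1, \infty]$.
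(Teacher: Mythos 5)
Your proposal is correct and matches the paper's proof essentially step for step: both invoke Theorem \ref{thm:stabler cubulations} with $\iota_F$ given by multiplication by $g$, use Lemmas \ref{lem:convex_isometric} and \ref{lem:hyp_collapse_qi} to make $\eta,\theta,\eta'$ additive-error $\ell^p$-quasi-isometries, and close via the exact commutativity $\theta\circ\eta\circ\psi_F=\eta'\circ\psi_{F'}\circ\iota_F$ so that $x$ and $y$ land at matching points of $\mathcal R_{F'}$.
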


\begin{proof}
Fix the setup of Theorem \ref{thm:stabler cubulations}, where we can take $\iota_F$ to be multiplication by $g$.

  By Lemma \ref{lem:convex_isometric} and Lemma \ref{lem:hyp_collapse_qi}, the maps $\eta,\theta,\eta'$ are $(1,C)$-quasi-isometries (for a uniform $C$) when all cube complexes in Diagram \ref{eq:Phi diagram} are endowed with their $\ell^p$ metrics. Hence, it suffices to observe that $\theta(\eta(\psi_F(x)))=\eta'(\psi_{F'}(x))$ and similarly for $y$ (since then, roughly, we can map $x$ and $y$ to the same CAT(0) cube complex $\mathcal R_{F'}$ using $(1,C)$-quasi-isometries). This holds since, the right-hand side can be written as $\eta'\circ\psi_{F'}\circ\iota_{F}(x)$, and we have  $\theta\circ\eta\circ\psi_F\circ\iota_F=\eta'\circ\psi_{F'}\circ\iota_{F}$ by Theorem \ref{thm:stabler cubulations}.
\end{proof}

\begin{remark}
     From now and until the end of the section, we fix $k=5$ for all applications of Theorem \ref{thm:stabler cubulations}, simply because this is the maximal number of points in the configurations that we will consider in this section. When we write, for instance, $\calQ_F$, $\psi_F$, etc., this should be interpreted as the output of the theorem for $k=5$.
\end{remark}

\subsection{Approximate comparison triangles and asymptotically CAT(0) spaces}

We now introduce approximate comparison triangles and asymptotically CAT(0) spaces.

Given 3 points $x,y,z$ in a metric space, we denote $\bar\Delta(x,y,z)$ a comparison triangle in $\mathbb E^2$, with vertices $\bar x,\bar y,\bar z$. Given a point $p$ on a $(1,C)$-quasi-geodesic $\gamma$ joining $x,y$, for a fixed $C$, we call a point $\bar p$ on the geodesic $[\bar x,\bar y]$ a \emph{comparison point} if $
|d(p,x)-d(\bar p, \bar x)|\leq C$.
 Also, we will denote $\delta_{x,y}(p)=\min\{d(x,p),d(p,y)\}-C$. When no confusion can arise, we simply use the notation $\delta(p)$.

 \begin{remark}\label{rmk:-C}
     The ``$-C$'' is for convenience only, and in particular it guarantees that if $\gamma:[0,a]\to \calX$ is a $(1,C)$-quasi-geodesic then $\delta(\gamma(t))=\delta_{\gamma(0),\gamma(a)}(\gamma(t))\leq t$.
 \end{remark}

\begin{definition}[Sublinear CAT(0)]\label{defn:sub CAT0}
    Given a sublinear and non-decreasing function $\kappa$, we say that a triangle $\Delta$ of $(1,C)$-quasi-geodesics satisfies the \emph{CAT(0) condition up to $\kappa$} if the following holds. Let $x,y,z$ be the vertices of the triangle, and let $p$ and $q$ be points on the triangle. Fixing a comparison triangle and comparison points, we have
$$d(p,q)\leq d(\bar p,\bar q)+\kappa(\delta(p))+\kappa(\delta(q)).$$
\end{definition}

For clarity, in the above if $p$ lies, for instance, on the side connecting $x$ to $y$ then by $\delta(p)$ we mean $\delta_{x,y}(p)$.

The following definition is a variation on Kar \cite[Definition 6]{Kar_asymp}:

\begin{definition}\label{defn:asymp CAT0}
    We say a metric space $X$ is \emph{asymptotically CAT(0)} if there exists $C>0$ and sublinear function $\kappa$ so that the following hold:
    \begin{enumerate}
        \item Every pair of points of $X$ is connected by a $(1,C)$-quasi-geodesic.
        \item Every triangle $\Delta$ of $(1,C)$-quasi-geodesics satisfies the CAT(0) condition up to $\kappa$.
    \end{enumerate}
\end{definition}

\subsection{Asymptotically CAT(0) metric for HHSs} \label{subsec:new metrics}

Given $x,y\in \cuco X$, and with the notation of Theorem \ref{thm:stabler cubulations} with $F=\{x,y\}$, denote
$$\hat{d}_2(x,y)=d_{\calQ_F}(\psi_F(x),\psi_F(y)),$$
where $d_{\calQ_F}$ is the $CAT(0)$ metric on $\calQ_F$.

The following lemma shows that $\hat{d}_2$ satisfies the triangle inequality up to an additive error, and is coarsely $G$-equivariant. The triangle inequality roughly follows from the fact that the approximating CAT(0) cube complexes for pairs of points almost embed in those for three points.

\begin{lemma}
\label{lem:triangle_ineq}
There exists $C = C(\cuco X)>0$ so that for all $x,y,z\in \cuco X$, we have $\hat{d}_2(x,y)\leq \hat{d}_2(x,z)+\hat{d}_2(z,y)+C$. Moreover, for all $x,y\in\cuco X$ and $g\in G$ we have $|\hat{d}_2(x,y)-\hat{d}_2(gx,gy)|\leq C$.
\end{lemma}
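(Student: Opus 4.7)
The plan is to reduce both assertions to a single invocation of Lemma~\ref{lem:change_cube_cplx}, once for each pair that needs to be compared. The key observation is that $\hat{d}_2(x,y)$ is by definition a distance measured in the cube complex $\calQ_{\{x,y\}}$, and Lemma~\ref{lem:change_cube_cplx} says precisely that when we pass from the cube complex associated to a small finite set to the cube complex associated to a larger finite set (possibly translated by an element of $G$), distances between images of the same pair change by at most a uniform additive constant (depending only on the common bound on the sizes of the sets, here $k=5$).

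For the triangle inequality, I would set $F=\{x,y,z\}$ and apply Lemma~\ref{lem:change_cube_cplx} three times, with $g=1$ and respective subsets $F_1=\{x,y\}$, $F_2=\{x,z\}$, $F_3=\{y,z\}$ of $F$. This gives a uniform constant $C_0$ such that for each of the three pairs $(u,v)\in\{(x,y),(x,z),(y,z)\}$,
\begin{equation*}
\bigl|\hat{d}_2(u,v)-d_{\calQ_F}(\psi_F(u),\psi_F(v))\bigr|\leq C_0.
\end{equation*}
Since $\calQ_F$ is a genuine metric space with respect to its CAT(0) metric $d_{\calQ_F}$, the triangle inequality holds exactly in $\calQ_F$ for the three points $\psi_F(x),\psi_F(y),\psi_F(z)$. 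Combining this with the three estimates above yields
\begin{equation*}
\hat{d}_2(x,y)\leq \hat{d}_2(x,z)+\hat{d}_2(z,y)+3C_0,
\end{equation*}
which is what we want.

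For the coarse $G$-equivariance, I would apply Lemma~\ref{lem:change_cube_cplx} directly with $F=\{x,y\}$, $F'=\{gx,gy\}$, and the given $g\in G$: then $gF\subseteq F'$ (in fact $gF=F'$), $|F'|=2\leq 5$, so the lemma produces
\begin{equation*}
\bigl|d_{\calQ_F}(\psi_F(x),\psi_F(y))-d_{\calQ_{F'}}(\psi_{F'}(gx),\psi_{F'}(gy))\bigr|\leq C_0,
\end{equation*}
which by definition of $\hat{d}_2$ is the claimed inequality $|\hat{d}_2(x,y)-\hat{d}_2(gx,gy)|\leq C_0$. Taking $C$ to be the maximum of the constants appearing in the two parts completes the proof.

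There is essentially no obstacle here: all the heavy lifting has already been done in Lemma~\ref{lem:change_cube_cplx}, which in turn is where Theorem~\ref{thm:stabler cubulations} is used (via the convex embedding $\theta$ and the bounded-error hyperplane deletions $\eta,\eta'$, both of which are $(1,C)$-quasi-isometries in the $\ell^2$ metric by Lemmas~\ref{lem:convex_isometric} and~\ref{lem:hyp_collapse_qi}). The only thing to double-check is that the cardinality bound $|F'|\leq k$ with $k=5$ accommodates every configuration used, which it does since we never take hulls of more than three points in this argument.
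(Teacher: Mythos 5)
Your proof is correct and follows essentially the same route as the paper: both parts reduce to Lemma~\ref{lem:change_cube_cplx}, comparing each pairwise distance in the two-point cube complex with the corresponding distance in $\calQ_{\{x,y,z\}}$ and then invoking the exact triangle inequality there, and applying the lemma with $F=\{x,y\}$, $F'=\{gx,gy\}$ for the equivariance claim. No gaps.
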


\begin{proof}
 
 Let $F = \{x,y\}$ and $F' = \{x,y,z\}$.  By Lemma \ref{lem:change_cube_cplx}, $d_{\calQ_F}(\psi_F(x),\psi_F(y))$ and $d_{\calQ_{F'}}(\psi_{F'}(x),\psi_{F'}(y))$ differ by a bounded amount, and similarly for the other pairs from $\{x,y,z\}$.  Thus the claim follows from the triangle inequality in $\calQ_{F'}$.

Regarding the ``moreover'' part, we let $F=\{x,y\}$ and $F'=\{gx,gy\}$ and the conclusion follows immediately from Lemma \ref{lem:change_cube_cplx}.
\end{proof}

By adding to the metric the error in the approximate triangle inequality we can obtain an actual metric on $\cuco X$. Since we want a $G$-equivariant metric, we also take a supremum over orbits (which affects the metric only a bounded amount by the ``moreover'' part of Lemma \ref{lem:triangle_ineq}).

\begin{definition}\label{defn:ell 2}
For any $x,y \in \cuco X$, set

$$d_2(x,y) = \begin{cases}
0 & \textrm{if } x = y\\
\sup_{g\in G}\hat{d}_2(gx,gy) + C & \textrm{if } x \neq y.
\end{cases}
$$
\end{definition}

The metric $d_2$ satisfies the following useful properties:

\begin{proposition}{}
\label{prop:asymp_CAT0}
 There exist $C,D>0 $ depending only on $\cuco X$ with the following properties.
 \begin{enumerate}
  \item\label{item:quasi_old} The identity map $\mathrm{id}_{\cuco X}: (\cuco X, d_2) \to (\cuco X, d)$ is a $(C,C)$-quasiisometry.
  \item\label{item:rough_embed} Let $F = \{x,y\}$.  The map $\Phi_{F}: (\calQ_F, d_{\calQ_F}) \to (\cuco X,d_2)$ from Theorem \ref{thm:stabler cubulations} is a $(1,C$)-quasiisometric embedding.
  \item\label{item:rough_geod} Any two points of $\cuco X$ are joined by a $(1,C)$-quasi-geodesic of $d_2$.
  \item\label{item:approx_CAT} Given a triangle $\Delta$ of $(1,C)$-quasi-geodesics, $\Delta$ satisfies the CAT(0) condition up to $D\sqrt{t}+D$.
 \end{enumerate}
\end{proposition}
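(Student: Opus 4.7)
The plan is to prove all four items by leveraging Lemma \ref{lem:change_cube_cplx} to transfer $d_2$-distances between points of a fixed finite configuration $\mathbf F\subseteq\calX$ of size at most $5$ to $\ell^2$-distances in the single cubical approximation $\calQ_{\mathbf F}$, up to a bounded additive error depending only on $\mathfrak S$. Once this is set up, each of the four statements reduces to an assertion about $\ell^2$-distances in a uniformly finite-dimensional CAT(0) cube complex, combined with standard CAT(0) comparison geometry via Lemma \ref{lem:CAT0_1C}.

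\textbf{Items (1) and (3).} For (\ref{item:quasi_old}), I would observe that the $\ell^2$ and $\ell^1$ metrics on $\calQ_F$ are bi-Lipschitz with a ratio depending only on the uniformly bounded cubical dimension provided by Theorem \ref{thm:stabler cubulations}(1). Since $\Phi_F$ is a $(K,K)$-quasi-isometry when $F=\{x,y\}$, the combinatorial distance between $\psi_F(x)$ and $\psi_F(y)$ is comparable to $d(x,y)$ up to multiplicative and additive constants; plugging this into Definition \ref{defn:ell 2} and using Lemma \ref{lem:triangle_ineq} to control the supremum then yields (1). For (\ref{item:rough_geod}), I would take the CAT(0) geodesic in $\calQ_F$ for $F=\{x,y\}$ connecting $\psi_F(x)$ to $\psi_F(y)$ and push it forward by $\Phi_F$; using (2), its image is a $(1,C)$-quasi-geodesic of $d_2$ between $\Phi_F\psi_F(x)$ and $\Phi_F\psi_F(y)$, and since those endpoints are within $K$ of $x$ and $y$ in $d$, hence within bounded distance in $d_2$ by (1), small adjustments near the endpoints produce a $(1,C')$-quasi-geodesic from $x$ to $y$.

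\textbf{Item (2).} Given $\alpha,\beta\in\calQ_F$ with $F=\{x,y\}$, set $a=\Phi_F(\alpha)$, $b=\Phi_F(\beta)$, and enlarge to $F'=\{x,y,a,b\}$. Applying Theorem \ref{thm:stabler cubulations} to the inclusion $F\subseteq F'$ yields a hyperplane deletion $\eta:\calQ_F\to\calR_F$, a convex embedding $\theta:\calR_F\to\calR_{F'}$, and a hyperplane deletion $\eta':\calQ_{F'}\to\calR_{F'}$. Lemmas \ref{lem:convex_isometric} and \ref{lem:hyp_collapse_qi} say that all three are $(1,O(1))$-quasi-isometries in every $\ell^p$ metric, so $d_{\calQ_F}(\alpha,\beta)$ matches $d_{\calQ_{F'}}(\alpha'',\beta'')$ up to additive constant, for suitable $\alpha'',\beta''\in(\eta')^{-1}(\theta\eta(\alpha)),(\eta')^{-1}(\theta\eta(\beta))$. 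The commutativity of Diagram \ref{eq:Phi diagram} combined with the $(K,K)$-quasi-isometry property of $\Phi_{F'}$ and the bounded dimensionality of $\calQ_{F'}$ forces $\alpha'',\beta''$ to be at bounded $\ell^2$-distance from $\psi_{F'}(a),\psi_{F'}(b)$. A final application of Lemma \ref{lem:change_cube_cplx} to $\{a,b\}\subseteq F'$ identifies $d_{\calQ_{F'}}(\psi_{F'}(a),\psi_{F'}(b))$ with $\hat d_2(a,b)$, and Definition \ref{defn:ell 2} with Lemma \ref{lem:triangle_ineq} identifies the latter with $d_2(a,b)$, all up to bounded additive error.

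\textbf{Item (4) (main obstacle).} Given a triangle of $(1,C)$-quasi-geodesics in $d_2$ with vertices $x,y,z$ and points $p,q$ on two of its sides, set $\mathbf F=\{x,y,z,p,q\}$ and transfer the entire configuration to $\calQ_{\mathbf F}$ via $\psi_{\mathbf F}$. Lemma \ref{lem:change_cube_cplx} ensures that all pairwise $d_2$-distances within $\mathbf F$ agree with the corresponding $d_{\calQ_{\mathbf F}}$-distances of the $\psi_{\mathbf F}$-images up to a uniform additive constant. In particular, if $p$ lies on a $(1,C)$-quasi-geodesic from $x$ to $y$ in $d_2$ then $\psi_{\mathbf F}(p)$ lies on a $(1,C')$-quasi-geodesic from $\psi_{\mathbf F}(x)$ to $\psi_{\mathbf F}(y)$ in the CAT(0) space $\calQ_{\mathbf F}$. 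Lemma \ref{lem:CAT0_1C} then bounds the distance from $\psi_{\mathbf F}(p)$ to the CAT(0) geodesic $[\psi_{\mathbf F}(x),\psi_{\mathbf F}(y)]$ by $3\sqrt{C'\delta(p)+C'^2}$, and similarly for $q$. Applying the CAT(0) inequality in $\calQ_{\mathbf F}$ to the foot points of $\psi_{\mathbf F}(p)$ and $\psi_{\mathbf F}(q)$ on the two sides of the geodesic triangle then controls their distance by the comparison distance in a Euclidean triangle. The step I expect to be the most delicate is to compare this Euclidean comparison triangle with the Euclidean comparison triangle for the $d_2$-triangle: their side lengths differ by $O(1)$, and the positions of the comparison points for $p,q$ along the sides differ by $O(\sqrt{\delta(p)}+\sqrt{\delta(q)}+1)$, so a Lipschitz continuity argument for points on Euclidean triangles with perturbed data shows that the two comparison distances differ by the same order. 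Collecting all errors and using $\sqrt{a+b}\leq\sqrt a+\sqrt b$ produces the claimed estimate with $\kappa(t)=D\sqrt t+D$.
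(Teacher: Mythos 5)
Your proposal is correct and follows essentially the same route as the paper's proof: items (1) and (3) are handled identically, item (2) uses the same diagram-chase through Theorem \ref{thm:stabler cubulations} for $F=\{x,y\}\subseteq F'=\{x,y,a,b\}$ (the key step in both being that the two candidate images of $\alpha$ in the common cube complex are forced close because both map, under a quasi-isometric embedding, near $a\in\calX$ — the quasi-isometric embedding property, not the bounded dimension, is what does the work there), and item (4) transfers the five-point configuration to $\calQ_{\{x,y,z,p,q\}}$ via Lemma \ref{lem:change_cube_cplx} and combines Lemma \ref{lem:CAT0_1C} with the genuine CAT(0) inequality there. The only difference is that you spell out the final Euclidean comparison-triangle perturbation estimate in (4), which the paper leaves implicit; your treatment of it is correct.
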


\begin{proof}
\eqref{item:quasi_old}: This is an immediate consequence of Theorem \ref{thm:stabler cubulations}.

\smallskip

\eqref{item:rough_embed}: Let $F = \{x,y\}$ and $a, b \in \calQ_F$. Set $a' = \Phi_F(a)$, $b' = \Phi_F(b)$, and $F' = \{x,y,a',b'\}$. We will consider the diagram from Theorem \ref{thm:stabler cubulations} for the inclusion $F\subseteq F'$ (with $g$ the identity).

For $F_0=\{a,b\}$, we have that $d_2(a',b')$ coarsely coincides with $d_{\calQ_{F_0}}(\psi_{F_0}(a'),\psi_{F_0}(b'))$. Applying Lemma \ref{lem:change_cube_cplx} to the inclusion $F_0\subseteq F'$ we get that $d_2(a',b')$ coarsely coincides with $d_{\calQ_{F'}}(\psi_{F'}(a'),\psi_{F'}(b'))$. On the other hand, $d_{\calQ_F}(a,b)$ coarsely coincides with $d_{\calR_{F'}}(\theta\circ\eta(a),\theta\circ\eta(b))$ since Lemma \ref{lem:hyp_collapse_qi} guarantees that $\eta$ is a $(1,C)$-quasi-isometric embedding for some uniform $C$ and Lemma \ref{lem:convex_isometric} says that $\theta$ is an isometric embedding. Since $\eta'$ is also a $(1,C)$-quasi-isometric embedding for some uniform $C$ (again by Lemma \ref{lem:hyp_collapse_qi}), to conclude it suffices to argue that $\bar a'=\eta'\circ\psi_{F'}(a')$ lies within bounded distance of $\bar a=\theta\circ\eta(a)$ in $\calR_{F'}$, and similarly for $b'$ and $b$. To do so, we argue that $\bar a$ and $\bar a'$ map uniformly close in $\calX$ under the quasi-isometric embedding $\Phi'_0$, as they both map uniformly close to $a'$. Indeed, by coarse commutativity of the diagram from Theorem \ref{thm:stabler cubulations}, $\Phi'_0(\bar a')=\Phi'_0\circ\eta'\circ\psi_{F'}(a')$ maps uniformly close to $\Phi_{F'}\circ\psi_{F'}(a')$, which coarsely coincides with $a'$ by Theorem \ref{thm:stabler cubulations}-(3). On the other hand, again by coarse commutativity we have that $\Phi'_0(\bar a)=\Phi'_0\circ\theta\circ\eta(a)$ coarsely coincides with $\Phi_F(a)=a'$, as required.

\smallskip

\eqref{item:rough_geod}: This is an immediate consequence of \eqref{item:rough_embed}, since for any $x, y\in \cuco X$, the CAT(0) geodesic in $\calQ_{\{x,y\}}$ between $x,y$ is sent via $\Phi_{\{x,y\}}$ to a $(1,C)$-quasigeodesic in $\cuco X$ between $x,y$.

\smallskip
\eqref{item:approx_CAT}: Let the vertices of the triangle as in the statement be $x,y,z$ and let $p,q$ be points on two of the sides, say on the quasi-geodesic between $x$ and $y$ and between $x$ and $z$ respectively. Denote $F'=\{x,y,z,p,q\}$ and let $\hat{x}=\psi_{F'}(x)$ and similarly for the others. By Lemma \ref{lem:change_cube_cplx} there exists a constant $C'$ depending on $\cuco X$ and $C$ such that all pairwise distances in $\calQ_{F'}$ between points in $\psi_{F'}(\{x,y,z,p,q\})$ coincide up to error at most $C'$ with the corresponding $d_2$-distance, that is, $|d_{\calQ_{F'}}(\hat x,\hat y)-d_2(x,y)|\leq C',|d_{\calQ_{F'}}(\hat x,\hat z)-d_2(x,z)|\leq C',$ etc. In particular, up to uniformly increasing $C'$ we have that $\hat{p}$ lies on a $(1,C')$-quasi-geodesic from $\hat{x}$ to $\hat{y}$, and similarly for $\hat{q}$. Applying Lemma \ref{lem:CAT0_1C} and the CAT(0) inequality in $\calQ_{F'}$ yields the desired conclusion.

\end{proof}

We can now state and complete the proof of the main theorem of this section.

\begin{theorem}\label{thm:asymp CAT0}
Let $(\cuco X,\mathfrak S)$ be a $G$-colorable hierarchically hyperbolic space for some $G<\mathrm{Aut}(\mathfrak S)$. Then $\cuco X$ admits a $G$-invariant asymptotically CAT(0) metric which is quasi-isometric to the original metric. Moreover, there exists $D$ such that any pair of points is joined by a $(1,C)$-quasi-geodesic which is a $D$-hierarchy path.
\end{theorem}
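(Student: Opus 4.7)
The theorem will follow almost immediately by combining the properties of the metric $d_2$ collected in Proposition \ref{prop:asymp_CAT0} with the $G$-invariance that was built into Definition \ref{defn:ell 2}. First, I would observe that $d_2$ is a genuine metric: symmetry and positivity are automatic, and the triangle inequality follows from the approximate triangle inequality for $\hat{d}_2$ in Lemma \ref{lem:triangle_ineq} together with the additive constant $C$ in Definition \ref{defn:ell 2} (note that the supremum over $G$ preserves the additive triangle inequality since the constant $C$ is uniform in the orbit). The $G$-invariance is then immediate from taking the supremum over the orbit: $d_2(gx,gy)=d_2(x,y)$ for all $g\in G$.

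Next, item \eqref{item:quasi_old} of Proposition \ref{prop:asymp_CAT0} gives that the identity map $(\cuco X,d)\to(\cuco X,d_2)$ is a quasi-isometry, establishing the quasi-isometry claim. To verify that $d_2$ is asymptotically CAT(0) in the sense of Definition \ref{defn:asymp CAT0}, I take the constant $C$ from Proposition \ref{prop:asymp_CAT0} and define the sublinear function $\kappa(t)=D\sqrt{t}+D$, which is non-decreasing and sublinear. Then condition (1) of Definition \ref{defn:asymp CAT0} is exactly Proposition \ref{prop:asymp_CAT0}\eqref{item:rough_geod}, and condition (2) is exactly Proposition \ref{prop:asymp_CAT0}\eqref{item:approx_CAT}.

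For the moreover statement, given $x,y\in\cuco X$, the $(1,C)$-quasi-geodesic produced in Proposition \ref{prop:asymp_CAT0}\eqref{item:rough_geod} is obtained as the $\Phi_{\{x,y\}}$-image of a CAT(0) geodesic in $\calQ_{\{x,y\}}$. Since $\Phi_{\{x,y\}}$ is a $K$-median $(K,K)$-quasi-isometry by Theorem \ref{thm:stabler cubulations} and the CAT(0) geodesic between $\psi_F(x)$ and $\psi_F(y)$ is a median path in the cube complex (any point on it is the median of itself with the endpoints), the image path is a uniform-quality hierarchy path in $\cuco X$; the required constant $D$ depends only on $\mathfrak S$ (through $K$) and on the standard passage from median quasi-isometries of cubical models to hierarchy paths in HHSs.

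The only mildly delicate step, which I would carry out in detail, is the last one: that median-preserving images of CAT(0) geodesics under $\Phi_F$ really are hierarchy paths of uniform quality. This is standard once one projects to each $\calC(U)$, using that $\pi_U\circ\Phi_F$ coarsely sends cubical geodesics to unparametrized quasi-geodesics (a feature of the cubical model construction), but it is the one point where a little more than a direct appeal to Proposition \ref{prop:asymp_CAT0} is needed.
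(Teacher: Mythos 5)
Your proposal is correct and follows essentially the same route as the paper: both deduce everything except the ``moreover'' part directly from Proposition \ref{prop:asymp_CAT0} and Definition \ref{defn:ell 2}, and both obtain the hierarchy-path claim from the fact that CAT(0) geodesics in CAT(0) cube complexes are median paths, so their images under the median quasi-isometry $\Phi_F$ are quasi-median paths and hence uniform hierarchy paths (the paper simply cites \cite{BHS:quasi, russell2023convexity} for this last implication rather than reproving it via projections to the $\calC(U)$).
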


\begin{proof}
    We proved most of the claims in Proposition \ref{prop:asymp_CAT0}, and in particular we are only left to show the moreover part. This follows easily from item \eqref{item:rough_embed} of Proposition \ref{prop:asymp_CAT0}, as CAT(0) geodesics in CAT(0) cube complexes are median paths. Indeed, Theorem \ref{thm:stabler cubulations} guarantees that the images of CAT(0) geodesics in any $\calQ_F$ are quasi-median paths in $\cuco X$, and quasi-median paths are hierarchy paths \cite{BHS:quasi, russell2023convexity}.
\end{proof}

\section{Contractibility of the Vietoris--Rips complex over $\calX$}\label{sec:contract}

In this section, we prove that any Vietoris--Rips complex (with sufficiently high threshold) over an asymptotically CAT(0) space is contractible.  This is a key piece of building a $\calZ$-structure for such a space $\calX$, which we will do later.

\subsection{Standing assumption}
In what follows, we will assume that $\calX$ is asymptotically CAT(0) as in Definition \ref{defn:asymp CAT0}, and we fix the corresponding parameters $C$ and $\kappa$.

\subsection{Diacenters}

The first step is to define a notion of barycenter, which we call diacenter. Roughly, the diacenter of a finite set is a coarse midpoint for an arbitrary choice of furthest pair of points in the set.

\begin{definition}[Diacenter]\label{defn:diacenter}

For any finite subset $A \subset \cuco X$, a point $c\in\calX$ is called a \emph{diacenter} of $A$ if there exist two points $a,b\in A$ at maximal distance (among points in $A$) such that 
\begin{itemize}
    \item $d(a,b)\geq d(a,c)+d(c,b)-3C$, and
    \item $|d(a,c)-d(a,b)/2|\leq C$.
\end{itemize}
we fix once and for all a choice of diacenter $\dc(A)$ for each finite $A\subset\calX$, which we denote ``the'' diacenter. We set $\dc(\{x\})=x$ for all $x$.
\end{definition}
\begin{remark}
    Notice that a diacenter exists because of the existence of $(1,C)$-quasi-geodesics connecting any pair of points.
\end{remark}

The following proposition says that diacenters have a kind of contraction property under taking subsets, up to an additive error. It can be regarded as a consequence of the \emph{strict} convexity of CAT(0) metrics.

\begin{proposition}
\label{prop:unif_conv}
Let $\calX$ be asymptotically CAT(0), with parameters $C,\kappa$.
   There exists $\epsilon = \epsilon(\calX )\geq 0$ with $\epsilon < 1$ and $C' = C'(\calX)\geq 0$ such that the following holds.
   \begin{itemize}
       \item If $A\subseteq B$ are finite, then
   $d(\dc(A),\dc(B))\leq (1-\epsilon)\diam(B)+C'.$
   \end{itemize}
\end{proposition}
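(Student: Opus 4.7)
The plan is to imitate the proof of the analogous CAT(0) statement, absorbing the deviations from CAT(0) via the asymptotic CAT(0) condition. Let $a,a'\in A$ realize $D_A:=\diam(A)$, so that $m_A:=\dc(A)$ is an approximate midpoint (up to $C$) of some $(1,C)$-quasi-geodesic $\gamma_A$ from $a$ to $a'$; define $b,b'\in B$, $D_B:=\diam(B)$, $m_B$, and $\gamma_B$ analogously. Since $A\subseteq B$, we have $D_A\leq D_B$ and $d(a,b),d(a,b'),d(a',b),d(a',b')\leq D_B$. The target bound is
\begin{equation*}
d(m_A,m_B)\leq \tfrac{\sqrt{3}}{2}D_B + (\textrm{sublinear in }D_B),
\end{equation*}
which is enough because $\sqrt{3}/2<1$. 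The key Euclidean input is the Apollonius median formula: in a planar triangle $\bar A\bar B\bar C$ with $M$ the midpoint of $\bar B\bar C$,
\begin{equation*}
|\bar A M|^{2}=\tfrac{1}{2}|\bar A\bar B|^{2}+\tfrac{1}{2}|\bar A\bar C|^{2}-\tfrac{1}{4}|\bar B\bar C|^{2}.
\end{equation*}

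First, bound $d(a,m_B)$ and $d(a',m_B)$ by $\frac{\sqrt{3}}{2}D_B$ plus a sublinear error. Apply the CAT(0) condition up to $\kappa$ to the triangle $(a,b,b')$ using $\gamma_B$ for the $[b,b']$-side; because $m_B$ is a near-midpoint of $\gamma_B$, the comparison point $\bar m_B$ lies within $O(C)$ of the Euclidean midpoint of $\bar b\bar{b'}$. Apollonius combined with $d(a,b),d(a,b')\leq D_B$ then gives $d(\bar a,\bar m_B)\leq \tfrac{\sqrt{3}}{2}D_B+O(C)$, and invoking the asymptotic CAT(0) inequality yields $d(a,m_B)\leq \tfrac{\sqrt{3}}{2}D_B+O(C+\kappa(D_B))$, and identically for $a'$.

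Next, apply the CAT(0) condition up to $\kappa$ to the triangle $(a,a',m_B)$ using $\gamma_A$ for the $[a,a']$-side. Again $\bar m_A$ lies within $O(C)$ of the midpoint of $\bar a\bar{a'}$, so by Apollonius combined with the previous bounds,
\begin{equation*}
d(\bar m_A,\bar m_B)^{2}\leq \tfrac{1}{2}d(a,m_B)^{2}+\tfrac{1}{2}d(a',m_B)^{2}-\tfrac{D_A^{2}}{4}+O(C\cdot D_B)\leq \tfrac{3}{4}D_B^{2}+O\bigl(D_B(C+\kappa(D_B))\bigr),
\end{equation*}
discarding the non-positive $-D_A^{2}/4$. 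Taking square roots (via $\sqrt{1+x}\leq 1+x/2$) and then invoking the asymptotic CAT(0) inequality one more time yields $d(m_A,m_B)\leq \tfrac{\sqrt{3}}{2}D_B+O(C+\kappa(D_B))$.

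Finally, sublinearity of $\kappa$ means that for every $\epsilon_0>0$ there is $C_{\epsilon_0}$ with $\kappa(t)\leq \epsilon_0 t+C_{\epsilon_0}$. Picking $\epsilon_0$ small enough that $\tfrac{\sqrt{3}}{2}$ plus the induced linear term is still less than $1$, and absorbing bounded terms into $C'$, produces the bound $d(\dc(A),\dc(B))\leq(1-\epsilon)\diam(B)+C'$ with $\epsilon=1-\tfrac{\sqrt{3}}{2}-O(\epsilon_0)>0$. The main obstacle is tracking how the $O(C)$ and $\kappa$-errors propagate through the squaring and square-rooting in the two Apollonius steps; the whole scheme succeeds because the Euclidean constant $\sqrt{3}/2$ (tight when $A$ is a single vertex of an equilateral-triangle configuration $B$) is strictly less than $1$, leaving room for the sublinear error to be absorbed.
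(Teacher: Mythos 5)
Your proof is correct and follows essentially the same route as the paper's: reduce to the two furthest pairs, use Euclidean comparison triangles together with the median (Apollonius) identity to gain the factor $\sqrt{3}/2<1$, and then absorb the $O(C+\kappa(\diam B))$ errors using sublinearity of $\kappa$. The only (cosmetic) difference is that the paper bounds the distances from $B$'s furthest pair to $\dc(A)$ and applies the median formula once on the triangle whose base has length exactly $\diam(B)$, whereas you apply it twice starting from $A$'s furthest pair; both extract the decisive $\sqrt{3}/2$ from the side of length $\diam(B)$.
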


\begin{proof}
    Considering furthest pairs in $A$ and $B$, the proposition reduces to the following statement involving 4 (possibly non-distinct) points:

    \par\medskip

    $(*)$ There exist $\epsilon>0$ and $C'\geq 0$ such that the following holds. Let $p,q,r,s\in \cuco X$ and let $R=d(p,q)$. Suppose that all distances between the 4 points are at most $R$. Then $d(\ \dc(\{p,q\})\ ,\ \dc(\{r,s\})\ )\leq (1-\epsilon)R+C$.

    \par\medskip

    Let $a=\dc(\{p,q\})$ and $b=\dc(\{r,s\})$. Note that a comparison triangle for $p,r,s$ has diameter at most $R$, by convexity of the Euclidean metric. Hence, by the approximate CAT(0) inequality
    we get $d(p,b)\leq R+\kappa(R)$ (where we used that $\kappa$ is non-decreasing and $d(b)\leq R$). Similarly, considering the triangle with vertices $q,r,s$, we get $d(q,b)\leq R+\kappa(R)$.

    We now consider a triangle with vertices $p,q,b$, with the point $a$ on the side connecting $p,q$, as well as a comparison triangle and comparison point $\bar a$ for $a$. We can in fact choose $\bar a$ to be the midpoint of the side of the comparison triangle joining $\bar p,\bar q$. 

    \par\medskip
    
    {\bf Claim:} For any $\epsilon\in (0,1-\sqrt{3}/2)$ there exists $R_0=R_0(\kappa)$ such that if $R\geq R_0$ then we have
$$d(\bar a,\bar b)\leq (1-\epsilon)R.$$

    \begin{proof}
We are interested in a Euclidean triangle where all sides have length at most $R+\kappa(R)$, and more specifically specifically in the distance between a vertex $\bar b$ and the midpoint of the opposite edge, connecting $\bar p$ to $\bar q$, which has length exactly $R$.

We can assume that $\bar b$ is the origin, so that $d(\bar a,\bar b)=|(\bar p+\bar q)/2|$. We have the following identity:
$$\left|\frac{\bar p+\bar q}{2}\right|^2=|\bar p|^2/2+|\bar q|^2/2-|\bar p-\bar q|^2/4.$$
The first two terms on the right-hand side are bounded above, by $(R+\kappa(R))^2/2$, while the third one is equal to $R^2/4$. Hence, 
$$d(\bar a,\bar b)^2=\left|\frac{\bar p+\bar q}{2}\right|^2\leq  \frac{4(1+\kappa(R)/R)^4-1}{4}\cdot  R^2,$$
which yields 
$$d(\bar a,\bar b)\leq \frac{\sqrt{4(1+\kappa(R)/R)^4-1}}{2}\cdot R.$$
For $R$ tending to infinity, the coefficient that multiplies $R$ tends to $\sqrt{3}/2<1$, so for all sufficiently large $R$ the right hand-side is bounded above by $(1-\epsilon)R$.
    \end{proof}
        
Fixing any $\epsilon$ as in the Claim, by the asymptotically CAT(0) inequality for the triangle with vertices $p,q,b$ as above, we have $d(a,b)\leq (1-\epsilon)R+\kappa(R)$, if $R$ is sufficiently large.
       
Up to decreasing $\epsilon$ an arbitrarily small amount and for a suitably large $C'$ (that takes care of small values of $R$) this quantity can be bounded above by $(1-\epsilon)R+C'$, as required.
\end{proof}

\subsection{Contractibility of the Vietoris--Rips complex}

Recall that $\cuco X$ is a fixed asymptotically CAT(0) space.  For a constant $T$ and $B\subseteq \cuco X$ we denote by $R_T(B)$ the corresponding Vietoris--Rips complex, that is, the simplicial complex where vertices are points of $B$ and a finite subset of $B$ forms a simplex whenever the pairwise distances are at most $T$.

Vietoris--Rips complexes were introduced  by Vietoris \cite{vietoris1927hoheren}, and Rips famously proved any sufficiently deep Vietoris--Rips complex over a hyperbolic group is contractible \cite{Gromov87}.  

\begin{remark}
    Establishing contractibility of Vietoris--Rips complexes for groups in general is a tricky problem, and this does not hold in general \cite{bux2013higher}.  Virk \cite{virk2024contractibility} only recently proved that sufficiently deep Vietoris--Rips complexes over $\mathbb Z^n$ for $n=1,2,3$ with its standard word metrics are contractible (see also \cite{zaremsky2024contractible}).  More generally (but not covering the standard word metric case for $\mathbb Z^n$), Vietoris--Rips complexes over locally finite Helly graphs are contractible \cite[Lemma 5.28]{CCGHO}. 
\end{remark}

We will deal with simplicial homotopies below, by which we mean simplicial maps from triangulations of the product of a simplicial complex with an interval. 

\begin{theorem}\label{prop:Rips contract}
    For $\cuco X\neq \emptyset$ asymptotically CAT(0) with parameters $C,\kappa$ and all sufficiently large $T$, $R_T(\cuco X)$ is contractible. Moreover, again for all sufficiently large $T$, for any ball $B$ in $\cuco X$ and simplicial complex $\PP$, any simplicial map $\theta:\PP\to R_T(B)\subseteq R_T(\cuco X)$ can be homotoped to a constant map inside $R_T(N_{T+C}(B))$.
\end{theorem}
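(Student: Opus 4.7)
The moreover statement implies the main one by exhausting $\cuco X$ with arbitrarily large balls, so I focus on the moreover. Following \cite{Zaremsky_contract}, the plan is to construct a simplicial contraction of $R_T(\cuco X)$ using diacenters as coarse barycenters. Fix $T$ large enough that $(1-\epsilon)T + C' \leq T$, where $\epsilon, C'$ are the constants from Proposition \ref{prop:unif_conv}. Then for any simplex $\sigma$ of $R_T(\cuco X)$ and vertex $v \in \sigma$, applying the proposition with $A = \{v\}$ and $B = \sigma$ gives $d(v, \dc(\sigma)) \leq (1-\epsilon)T + C' \leq T$. In particular $\sigma \cup \{\dc(\sigma)\}$ is again a simplex of $R_T$, so one can ``cone by diacenter''; this is the fundamental building block.

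Given $\theta : \PP \to R_T(B)$ with $B = B(x_0, R)$, I iterate diacenter subdivisions. Set $\PP_0 = \PP$, $\theta_0 = \theta$, and recursively let $\PP_{k+1}$ be the barycentric subdivision of $\PP_k$, with $\theta_{k+1}([\sigma]) = \dc(\theta_k(\sigma))$ for each vertex $[\sigma]$ of $\PP_{k+1}$. For each chain of simplices $\sigma_0 \subset \dots \subset \sigma_n$ (a simplex of $\PP_{k+1}$), the nested images $\theta_k(\sigma_i) \subseteq \theta_k(\sigma_n)$ together with Proposition \ref{prop:unif_conv} yield pairwise distances $\leq (1-\epsilon)T + C' \leq T$, so $\theta_{k+1}$ is simplicial. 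Moreover, the maximum diameter $D_k$ of an image simplex satisfies $D_{k+1} \leq (1-\epsilon)D_k + C'$, hence $D_k$ converges geometrically to $C'/\epsilon$. After finitely many iterations the images are small enough to cone to $x_0$ via one more application of Proposition \ref{prop:unif_conv}, yielding the desired constant map.

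To assemble the sequence $\theta_0, \theta_1, \ldots$ into an actual homotopy, I compare $\theta_{k+1}$ with $\theta_k \circ \pi_k$, where $\pi_k : \PP_{k+1} \to \PP_k$ is a simplicial approximation sending $[\sigma]$ to some vertex of $\sigma$. For a chain $\sigma_0 \subset \dots \subset \sigma_n$, contiguity requires all pairs among $\{\dc(\theta_k(\sigma_i))\} \cup \{\theta_k(\pi_k([\sigma_j]))\}$ to lie within $T$ of each other. The nested cases are directly controlled by Proposition \ref{prop:unif_conv}; comparing a diacenter $\dc(\theta_k(\sigma_j))$ with a vertex $\theta_k(\pi_k([\sigma_i]))$ for $j < i$ requires passing through a common supset $\theta_k(\sigma_i)$ and picks up a factor of $2(1-\epsilon)$.

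The main obstacle is that $\epsilon < 1 - \sqrt{3}/2$ forces $2(1-\epsilon) > 1$, so this contiguity is not automatic while $D_k$ is large. The resolution is twofold. Once $D_k \leq T/(2(1-\epsilon)) - O(C')$, which by the geometric decay happens after a bounded number of iterations (depending only on $\epsilon$, $C'$, $T$), contiguity holds and the $\theta_k$ are linked by honest simplicial homotopies. For the earlier high-diameter stages (or as a stand-alone bridge), I replace $\theta \mapsto \theta_1$ by a discrete ``radial flow'' toward $x_0$: for each vertex $v$ fix a $(1,C)$-quasi-geodesic from $\theta(v)$ to $x_0$ and sample it at times $t_j = j\Delta$ with $\Delta \leq T/4$, using the asymptotically CAT(0) inequality (Definition \ref{defn:asymp CAT0}) to control same-time distances (the strict slack in the Euclidean comparison triangle, together with sublinearity of $\kappa$, absorbs the error for $T$ large). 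All intermediate vertices lie on $(1,C)$-quasi-geodesics between $\theta(v), x_0 \in B$, hence within $N_C(B) \subseteq N_{T+C}(B)$, confirming the homotopy lives inside the target complex.
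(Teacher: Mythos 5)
There is a genuine gap in how your two mechanisms are assembled. The endgame of your second paragraph --- ``after finitely many iterations the images are small enough to cone to $x_0$'' --- does not work: iterated diacenter subdivision shrinks the diameters of the \emph{image simplices} (indeed $D_k\to C'/\epsilon$ geometrically, as you say), but it does nothing to move the image toward $x_0$. The vertex set of $\theta_k$ remains spread over a neighborhood of $B$, and coning to $x_0$ inside $R_T$ requires every vertex to lie within $T$ of $x_0$, which Proposition \ref{prop:unif_conv} cannot deliver (applied to $\{x_0\}\subseteq\sigma\cup\{x_0\}$ it only bounds $d(x_0,\dc(\sigma\cup\{x_0\}))$ by $(1-\epsilon)(R+T)+C'$, which is useless for large $R$). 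The actual contraction has to come from the radial flow, which you relegate to a ``bridge''. Relatedly, you cannot ``replace $\theta\mapsto\theta_1$ by the radial flow'': the single subdivision step is precisely what buys the slack needed for the flow to stay simplicial. If adjacent vertices are at distance exactly $T$, the Euclidean comparison triangle toward the common apex $x_0$ gives distance at most $T$ (by convexity, the endpoint values being $T$ and $0$), not $(1-\delta)T$; there is no ``strict slack'' there, so after adding the asymptotically CAT(0) error $2\kappa(t)+O(C)$ the dragged vertices can be at distance $>T$ and the intermediate maps fail to land in $R_T$. Sublinearity of $\kappa$ does not rescue this.

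The fix is to reorder and simplify: do \emph{one} subdivision step to homotope $\theta$ into $R_{T'}$ with $T'=(1-\epsilon)T+C'$, choose $T$ so that $T'+2\kappa(T+2C+2)+C\leq T$, then run the radial flow for time about $T+C$; this strictly decreases the radius of the ball containing the image, and one concludes by induction on the radius. This is the paper's argument, and it makes your multi-stage contiguity analysis (with the problematic factor $2(1-\epsilon)>1$) unnecessary. Your building blocks are the right ones; the logical roles of ``subdivide'' (threshold reduction, done once per round) and ``flow'' (the contraction) are swapped in your write-up, and as written neither the coning endgame nor the stand-alone flow is valid.
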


\begin{remark}
    Theorem \ref{prop:Rips contract} mildly generalizes work of Zaremsky \cite[Theorem 6.2]{Zaremsky_contract}, who developed a general criterion for geodesic spaces using Bestvina--Brady Morse theory \cite{bestvina1997morse}.  We expect an appropriate modification of his techniques  would recover Theorem \ref{prop:Rips contract}. 
\end{remark}

Note that it suffices to show the second half of the theorem, as it implies that all homotopy groups vanish.

For $A\leq B$ we will always regard $R_A(\cuco X)$ as a subcomplex of $R_B(\cuco X)$.

We first want to show that any simplicial map $\theta:\PP\to R_T(\cuco X)$ for $T$ sufficiently large can homotoped into $R_{T'}(\cuco X)$, for some $T'$ smaller than $T$, and moreover the homotopy only involves vertices of the Vietoris--Rips complex corresponding to points of $\cuco X$ in a controlled neighborhood of $\theta(\PP^{(0)})$.

From now on we identify the (geometric realization of the) barycentric subdivision of a simplicial complex with the complex itself as a topological space, so that in particular a map from a simplicial complex can be homotopic to a map from its barycentric subdivision.

\begin{lemma}\label{lem:subdivide_and_conquer}
Let $\epsilon,C'$ be as in Proposition \ref{prop:unif_conv}. For all sufficiently large $T$ the following holds.
 Let $\theta:\PP\to R_T(\cuco X)$, where $\PP$ is a finite simplicial complex, be a simplicial map. Then, for $T'=(1-\epsilon)T+C'$, $\theta$ is homotopic within $R_T(N_T(\theta(\PP^{(0)})))$ to a simplicial map $\hat\theta:\hat\PP\to R_{T'}(\cuco X)$, where $\hat\PP$ is the first barycentric subdivision of $\PP$.
\end{lemma}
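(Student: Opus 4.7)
The plan is to define $\hat\theta$ via diacenters and construct the homotopy along a standard prism triangulation of $|\PP|\times I$. First, I would set $\hat\theta(b_\sigma):=\dc(\theta(\sigma^{(0)}))$ for every simplex $\sigma$ of $\PP$, where $b_\sigma$ is the barycenter of $\sigma$; when $\sigma$ is a vertex $v$, this yields $\hat\theta(v)=\theta(v)$ by the convention $\dc(\{x\})=x$. A top-dimensional simplex of $\hat\PP$ corresponds to a flag $\tau_0\subsetneq\cdots\subsetneq\tau_l$ of simplices of $\PP$; since $\theta(\tau_i^{(0)})\subseteq\theta(\tau_l^{(0)})$ and $\diam(\theta(\tau_l^{(0)}))\leq T$ (because $\theta$ lands in $R_T$), Proposition \ref{prop:unif_conv} yields $d(\dc(\theta(\tau_i^{(0)})),\dc(\theta(\tau_j^{(0)})))\leq (1-\epsilon)T+C'=T'$ for all $i\leq j$, so $\hat\theta$ is simplicial into $R_{T'}(\cuco X)$.

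For the homotopy, I would use the standard prism triangulation of $|\sigma|\times I$ (glued across simplices of $\PP$ to triangulate $|\PP|\times I$): its top-dimensional simplices are indexed by pairs (chain $\tau_0\subsetneq\cdots\subsetneq\tau_l=\sigma$, face $\mu\subseteq\tau_0$), each giving a simplex of the form $\mu^{(0)}\times\{0\}\,\cup\,\{b_{\tau_0},\ldots,b_{\tau_l}\}\times\{1\}$, so that the bottom is $\sigma$ and the top is $\hat\sigma$. I would define $H$ to be $\theta$ on $|\PP|\times\{0\}$ and $\hat\theta$ on $|\PP|\times\{1\}$, extended simplicially. To confirm that $H$ lands in $R_T(N_T(\theta(\PP^{(0)})))$, I need to bound by $T$ the pairwise distances among the image vertices of each prism simplex. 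Distances among bottom images $\theta(v)$, $v\in\mu^{(0)}\subseteq\sigma^{(0)}$, are $\leq T$ by assumption, and distances among top images $\dc(\theta(\tau_i^{(0)}))$ are $\leq T'$ as in the previous paragraph.

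The delicate case is the cross distance $d(\theta(v),\dc(\theta(\tau_i^{(0)})))$ for $v\in\mu^{(0)}\subseteq\tau_0^{(0)}\subseteq\tau_i^{(0)}$. This is the main obstacle, since the naive triangle-inequality estimate gives roughly $T+T/2+C$, which is too large because $\epsilon<1/2$. The trick I would employ is to apply Proposition \ref{prop:unif_conv} to the one-element nested inclusion $\{\theta(v)\}\subseteq\theta(\tau_i^{(0)})$: because $\dc(\{\theta(v)\})=\theta(v)$, this directly yields $d(\theta(v),\dc(\theta(\tau_i^{(0)})))\leq (1-\epsilon)\diam(\theta(\tau_i^{(0)}))+C'\leq T'$. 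Choosing $T\geq C'/\epsilon$ ensures $T'\leq T$, so all pairwise distances are $\leq T$; moreover the same estimate shows that each diacenter lies in $N_T(\theta(\PP^{(0)}))$. Hence $H$ is simplicial into $R_T(N_T(\theta(\PP^{(0)})))$, realizing the required homotopy from $\theta$ to $\hat\theta:\hat\PP\to R_{T'}(\cuco X)$.
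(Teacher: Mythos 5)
Your proof is correct and follows essentially the same route as the paper: define $\hat\theta$ via diacenters, verify simpliciality into $R_{T'}$ using Proposition \ref{prop:unif_conv} on nested vertex sets, and build the homotopy on the standard prism/excision triangulation of $\PP\times[0,1]$. Your "trick" for the cross distances (applying Proposition \ref{prop:unif_conv} to the singleton inclusion $\{\theta(v)\}\subseteq\theta(\tau_i^{(0)})$) is exactly how the paper handles the mixed edges, phrased there as $\theta(v)=\hat\theta(\{v\})$ together with the fact that $\hat\theta$ is simplicial into $R_{T'}$.
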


\begin{proof}
    The map $\hat\theta$ maps each vertex of $\hat\PP$, which by definition is a collection of vertices $\{v_i\}$ of $\PP$ spanning a simplex, to $\dc(\theta(\{v_i\}))$ (seen as a vertex of $R_T(\cuco X)$). Then $\hat\theta$ extends to simplices provided that it maps vertex sets of simplices of $\hat\PP$ to vertex sets of simplices of $R_{T'}(\cuco X)$. In fact, since both simplicial complexes are flag complexes we only need to show this for edges. Now, the vertices $\{v_i\}$ and $\{w_j\}$ of $\hat\PP$ span an edge if only if one of the sets is contained in the other. But then the distance between their images via $\hat\theta$ is bounded above by $T'$ by Proposition \ref{prop:unif_conv}.

    We now have to show that $\hat\theta$ is homotopic to $\theta$, and in fact all simplices involved in the homotopy have vertex set contained in the union of the images of $\theta$ and $\hat\theta$, justifying the claim about the support of the homotopy (note that given a collection of vertices $\{v_i\}$ of $\PP$ spanning a simplex we have that $\dc(\theta(\{v_i\}))$ lies within $T$ of $\{v_i\}$ if $T$ is large). To do so, we define a simplicial map on a subdivision of $\PP\times [0,1]$ (the same one used in proofs of the excision axiom). The simplicial complex $\calQ$ homeomorphic to this subdivision is determined by the following: it has vertex set $\PP^{(0)}\sqcup \hat\PP^{(0)}$, it is flag, it contains the edges of $\PP$ and $\hat\PP$, and the vertex $\{v_i\}$ of $\hat\PP$ is connected by an edge to each $v_i$. Note that $\theta$ and $\hat\theta$ determine a map on the vertex set of $\calQ$, so we are left to show that the endpoints of each edge are mapped to vertices of $R_{T}(\cuco X)$ connected by an edge, when $T$ is large enough. This holds by assumption for the edges of $\PP$, while for the edges of $\hat\PP$ this follows from the facts that $\hat \theta$ maps $\hat\PP$ into $R_{T'}(\cuco X)$, and that $T'\leq T$ when $T$ is large enough. For the remaining type of edges, $\theta(v)=\hat\theta(\{v\})$, so again we can use the fact that $\hat \theta$ maps $\hat\PP$ into $R_{T'}(\cuco X)$.
\end{proof}

\begin{proof}[Proof of Theorem \ref{prop:Rips contract}]
We start by fixing constants.
\begin{itemize}
\item Let $\epsilon,C'$ be as in Proposition \ref{prop:unif_conv}.

\item Fix $T$ large enough that Lemma \ref{lem:subdivide_and_conquer} applies and, for $T'$ as in Lemma \ref{lem:subdivide_and_conquer}, we have $$T'+2\kappa(T+2C+2)+C\leq T.$$
\end{itemize}

Consider a simplicial map $\theta:\PP\to R_T(B)$ as in the statement. Applying Lemma \ref{lem:subdivide_and_conquer} we can homotope $\theta$ into $R_{T'}(N_{T}(B))$; we call the homotoped map $\theta'$.

We will now ``drag'' $\theta'$ along a choice of $(1,C)$-quasigeodesics towards the center $p$ of the ball $B$; call these segments for short. More formally, we will construct a homotopy between $\theta'$ and another map whose image is supported on a smaller ball.

For $v$ a vertex of $\PP$, let $\gamma_v$ be the corresponding segment, oriented so that $\gamma_v(0)=\theta'(v)$. Let $\ell=\lfloor T+C+2\rfloor$ (and if the domain of $\gamma_v$ is smaller than $[0,\ell]$, extend $\gamma_v$ by setting $\gamma_v(t)=p$ for larger values of $t$). We now define maps $\theta_i$, $i=0,\dots,\ell$, with $\theta_0=\theta'$. On a vertex $v$ of $\PP$ we define $\theta_i(v)=\gamma_v(i)$. We now claim that each $\theta_i$ extends to a simplicial map $\PP\to R_T(\cuco X)$ and that moreover $\theta_i$ is homotopic to $\theta_{i+1}$. Both facts hold provided that whenever vertices $v,w$ of $\PP$ are adjacent or coincide, and $|t-t'|\leq 1$ then $d(\gamma_v(t),\gamma_w(t'))\leq T$ (for the homotopy, note that this implies that each simplex of $\PP$ is mapped by both $\theta_i$ and $\theta_{i+1}$ inside a common simplex of $R_T(\cuco X)$). Using the asymptotically CAT(0) condition and convexity of the Euclidean metric, we indeed have $d(\gamma_v(t),\gamma_w(t'))\leq T'+C+\kappa(t)+\kappa(t')\leq T$, by the choice of $T$, as required.

Now, the map $\theta'$ is homotopic to $\theta_\ell$, and the image of the homotopy is contained in $R_T(\bigcup \gamma_v)$. We have that $\bigcup \gamma_v$ is contained in $N_C(N_T(B))$ (where recall that the image of $\theta'$ is contained in $R_T(N_T(B))$). Hence, for $R$ the radius of $B$, the image of $\theta_\ell$ is contained in the ball $B'$ with the same center as $B$ and radius $\max\{R+T-\ell-2C,0\}$. Since $\ell>T+2C+1$, we can homotope $\theta$ either to a point, or to the Vietoris--Rips complex of a ball with radius at least 1 smaller than that of $B$, and so a straightforward induction concludes the proof.
\end{proof}

We will also need another contractibility property of Vietoris--Rips complexes. Given a subset $\calY$ of $\calX$, a point $p$ of $\calX$, and a constant $M$, we consider the following subset of $\calX$. For each $x\in \calY$ choose a $(1,C)$-quasi-geodesic $\gamma_x$ from $p$ to $x$, and recall the quantity $\delta(q)=\delta_{p,x}(q)=\min\{d(p,q),d(x,q)\}-C$ associated to any point $q$ on such $\gamma_x$. We set
$$hull_M(\calY;p)=\{z\in\calX: \exists x\in \calY,q\in\gamma_x\ \ d(z,q)\leq M\kappa(\delta(q))\}.$$

\begin{proposition}\label{prop:Rips_combing}
    Let $\cuco X$ asymptotically CAT(0) with parameters $C,\kappa$ and suppose $\kappa\geq 1$. For all sufficiently large $T$ the following holds. For all $n$ there exists $M$ such that any point $p$ of $\calX$ and any simplicial map $\theta:\PP\to\calX$, where $\PP$ is an $n$-dimensional simplicial complex, we have that $\theta$ can be homotoped to a constant map in $R_T(hull_M(\theta(\PP^{(0)});p))$.
\end{proposition}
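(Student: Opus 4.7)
The plan is to adapt the iterative ``subdivide-and-drag'' scheme from the proof of Theorem~\ref{prop:Rips contract}, with the extra requirement that every intermediate simplex lie in $R_T(\hull_M(\theta(\PP^{(0)}); p))$ for a uniform $M$ depending only on $n$ and on the parameters $C$ and $\kappa$. To begin, I would use for each $v \in \PP^{(0)}$ the $(1,C)$-quasi-geodesic $\gamma_v$ from $p$ to $\theta(v)$ that defines the hull, and organize the null-homotopy as alternating rounds of: (i) a diacenter-based barycentric subdivision via Lemma~\ref{lem:subdivide_and_conquer}, lowering the Vietoris--Rips threshold from $T$ to $T' = (1-\epsilon)T + C'$; and (ii) a $(1,C)$-quasi-geodesic drag of length $\ell \approx T$ toward~$p$, exactly as in Theorem~\ref{prop:Rips contract}. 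Each round shrinks the maximum distance of the current vertex set from $p$ by at least~$1$, so the process terminates locally.

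The main extra verification is that every vertex appearing in the homotopy lies in $\hull_M$. For an original $\theta(v)$ or a drag intermediate $\gamma_v(t)$ this is immediate, since these points lie on $\gamma_v$. For a diacenter $d$ of a simplex with furthest pair $a,b$ satisfying $d(a,b) \leq T$, I would apply the CAT(0)-up-to-$\kappa$ inequality to the triangle with vertices $p,a,b$: the Euclidean midpoint $\bar d$ is within Euclidean distance at most $T/2$ of some comparison point $\overline{\gamma_a(t^*)}$, so $d(d, \gamma_a(t^*)) \leq T/2 + O(\kappa)$, which places $d$ in $\hull_M$ as soon as $M$ is large in $T$ and $\kappa$. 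For the subsequent drag of $d$, any $(1,C)$-quasi-geodesic $\gamma_d$ from $d$ to $p$ stays $O(\kappa)$-close to an initial subpath of $\gamma_a$ by the same asymptotic CAT(0) inequality applied to the triangle $\{p,a,d\}$, whence $\gamma_d \subseteq \hull_M$ as well.

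The hard part is controlling $M$ uniformly in the input: the number of rounds can be as large as $\max_v L_v$, which has no a priori bound, and naively the nesting depth of iterated diacenters grows linearly with the rounds, producing a perturbation from the $\gamma_x$'s that would scale with the radius. The resolution I plan to exploit is that the perturbation introduced at round~$i$ is of size $O(\kappa(\delta_i))$, where $\delta_i$ measures the current distance-from-$p$ level, and this is exactly the sublinear thickness $M\kappa(\delta)$ built into the definition of the hull; perturbations are therefore absorbed into the hull's width rather than accumulating linearly. The dimension assumption $\dim\PP = n$ enters as a cap on the depth of iterated diacenters appearing in any single chain of simplices in $\PP$ and its subdivisions, and together with the hull's $\kappa$-absorption this yields the asserted uniform bound $M = M(n, C, \kappa)$.
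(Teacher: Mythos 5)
Your proposal takes a genuinely different route from the paper, and it has a gap I do not see how to close. The alternating subdivide--and--drag scheme of Theorem \ref{prop:Rips contract} advances toward $p$ by only a bounded amount per round, so the number of rounds is on the order of $R=d(p,\theta(\PP^{(0)}))$, which is not controlled by $n$, $C$, or $\kappa$. Each round introduces a fresh, strictly positive amount of lateral error relative to the \emph{original} rays $\gamma_x$, $x\in\theta(\PP^{(0)})$: the diacenters perturb by $O(T)$, and, more fatally, each drag travels along a \emph{new} quasi-geodesic from the current (already drifted) position to $p$, and comparing that quasi-geodesic with the original rays via Lemma \ref{lem:convex} or a comparison triangle costs an additive $2\kappa(\ell)>0$ every round. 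The only damping available is the Euclidean contraction factor $\tfrac{s-\ell}{s}$ from convexity, and the recursion $D_{i+1}\le D_i\tfrac{s_{i+1}}{s_i}+c$ with $c\ge 2\kappa(\ell)$ solves to $D_k\approx \tfrac{c\,s_k}{\ell}\log(R/s_k)$, i.e. a drift of order $R$ by the time the front reaches radius $R/2$. No uniform $M$ makes this $\le M\kappa(\delta)$, since $\kappa$ is sublinear; so the intermediate simplices escape $hull_M(\theta(\PP^{(0)});p)$. Your proposed resolutions do not repair this: the claim that $\dim\PP=n$ caps the depth of iterated diacenters is false (it caps the nesting within a \emph{single} barycentric subdivision, but each round subdivides the previous, already subdivided and dragged, complex, so after $k$ rounds the vertices are depth-$k$ iterated diacenters of dragged points with $k$ unbounded); and your hull-membership check is only carried out for the first round, where the triangle $p,a,b$ has both long sides on original rays, which is exactly the situation that fails to persist.

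The paper avoids the unbounded iteration entirely by inducting on the \emph{skeleta} of $\PP$ rather than on the distance to $p$. The vertices are pinned to the original rays for all time, $H(v,t)=\gamma_v(t)$, so they never drift; the homotopy over an $(n+1)$-simplex $\Delta$ is then built by filling each slice $\Delta\times\{t\}$ and each prism $\Delta\times[t,t+1]$ inside a ball of radius $O(\kappa(\delta(\gamma_v(t))))$ about a ray point, using the relative contractibility statement (the ``moreover'' part of Theorem \ref{prop:Rips contract}) together with the observation that for two vertices $v,w$ of the same simplex the quantities $\kappa(\delta(\gamma_v(t)))$ and $\kappa(\delta(\gamma_w(t)))$ are comparable up to a factor controlled by $M$, $T$ and $\kappa\ge 1$. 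The constant $M$ then degrades only once per dimension, i.e. $n$ times, which is what makes it uniform. To salvage your route you would need a drift bound proved in one shot against the original rays rather than round by round, and the asymptotic CAT(0) inequality alone does not supply one.
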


\begin{proof}
    We proceed inductively on skeleta of $\PP$. For the purposes of the induction, however, we need to retain more information. In this proof we will use subdivisions of $\PP$ and its skeleta, and use the same identification of the geometric realization of a subdivision with the original complex that we used for barycentric subdivisions.
    
    We will show that for all $n$ there exists $M$ such that any point $p$ of $\calX$ and any simplicial map $\theta:\PP\to\calX$, where $\PP$ is an $n$-dimensional simplicial complex, we have  a continuous map $H:\PP\times [0,\ell]\to R_T(\cuco X)$, for some integer $\ell$, such that the following hold.
    \begin{enumerate}
        \item $H(x,0)=\theta(x)$ and $H(x,1)=p$ for all $x\in \PP$.
        \item For all vertices $v$ of $\PP$ we have $H(v,t)=\gamma_v(t)$, where $\gamma_v$ is oriented so that $\gamma_v(0)=v$, and we set $\gamma_v(t)=p$ if $t$ is not in the domain of $\gamma_v$.
        \item If $x\in\PP$ lies in a simplex containing the vertex $v$ then $H(\{x\}\times [t,t+1] )$ is contained in $B(\gamma_v(t),M\kappa(\delta(\gamma_v(t))))$.
\item There is a subdivision of $\PP\times [0,\ell]$ such that $H$ is a simplicial map, each $\PP\times t$, for $t$ an integer, is a subcomplex, as is each $\Delta\times[t,t+1]$, for $\Delta$ a simplex and $t$ an integer.
    \end{enumerate}

    Notice that property (3) implies that the image of $H$ lies in $R_T(hull_M(\theta(\PP^{(0)});p))$.

  The statement easily holds for $n=0$, as we can use the $\gamma_x$ to construct the homotopy. Suppose that the statement holds for a given $n$, and let us show it for $n+1$.  We apply the statement to $\PP^{(n)}$ to obtain a map $H$, which we will extend to a map $\hat{H}:\PP\times[0,\ell]\to R_T(\cuco X)$. We first extend $H$ to all $\PP\times t$, for $t$ an integer, which requires extending $H$ from the boundary of each simplex times $t$ to the whole simplex times $t$. Fix a simplex $\Delta$ of $\PP$, and an arbitrary vertex $v$ of $\Delta$. First of all, we argue that $H(\partial\Delta\times t)$ is contained in the Vietoris--Rips complex of a controlled ball around $H(v,t)=\gamma_v(t)$. For ease of notation, in the rest of this paragraph we conflate balls with their Vietoris--Rips complexes, and drop various ``$\times t$''. By the inductive hypothesis, any simplex of $\partial \Delta$ containing $v$ gets mapped by $H$ inside $B(\gamma_v(t),M\kappa(\delta(\gamma_v(t))))$, and the remaining simplex of $\partial \Delta$ has image contained in $B(\gamma_w(t),M\kappa(\delta(\gamma_w(t))))$ for some other vertex $w$ of $\Delta$. We claim that we have
  $$\delta(\gamma_w(t))\leq M\kappa(\delta(\gamma_v(t)))+T.$$
  Indeed, first of all we have $d(\gamma_w(t),\gamma_v(t))\leq M\kappa(\delta(\gamma_v(t)))$ by (3). Also, $\gamma_v$ and $\gamma_w$ share an endpoint while their other endpoints lie within $T$ of each other. As a consequence, the distances between $\gamma_w(t)$ and the endpoints of $\gamma_w$ can differ by at most $M\kappa(\delta(\gamma_v(t)))+T$ from the corresponding distances for $\gamma_v(t)$, yielding the claim.  Since $M\kappa(\delta(\gamma_v(t)))+T\leq (M+T)\kappa(\delta(\gamma_v(t)))$ as $\kappa\geq 1$, we have that $H(\partial \Delta \times t)\subseteq B(\gamma_v(t),(2M+T)\kappa(\delta(\gamma_v(t))))$. We can now apply Theorem \ref{prop:Rips contract} to extend $H$ across $\Delta$ in a way that the extended map is contained in a slightly larger ball, namely the radius is bounded by $(2M+2T+C)\kappa(\delta(\gamma_v(t)))$.

  \begin{figure}[h]
    \begin{center}
    \includegraphics[scale=2]{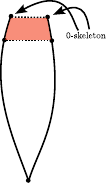}
\end{center}
\caption{A schematic of the two types of fillings we perform at the beginning of the inductive procedure on the skeleta. One is indicated by the dotted line and the other one by the shaded area.}
\end{figure}
  
  Similarly, we can then extend $H$ to each $\Delta\times[t,t+1]$, again using Theorem \ref{prop:Rips contract} as $H$ is now defined on the boundary of (the prism) $\Delta\times[t,t+1]$, and we can ensure that the image is contained in a ball of radius controlled linearly in $\kappa(\delta(\gamma_v(t)))$. This ensure that property (3) is retained up to a controlled increase of $M$, while notice that property (4) holds by construction. Property (1) is retained if the filling at times $0$ and $\ell$ are chosen to be constant and the one given by $\theta$ respectively, and property (2) is retained as we did not modify the homotopy on the 0-skeleton. This concludes the inductive argument.
  
\end{proof}

\section{Boundaries of asymptotically CAT(0) spaces}
\label{sec:boundary}

In this section, we introduce a notion of boundary for a discrete asymptotically CAT(0) space $\calX$. The main result of the Section is Theorem \ref{thm:compact_boundary} summarizing the crucial properties of our boundary. The idea is to take asymptotic classes of $(1,C)$-quasi-geodesic rays, where two rays are equivalent if they stay sublinearly close. In many ways, this boundary behaves like the visual boundary of a CAT(0) space, except we are forced to work with sublinear errors (via Proposition \ref{prop:asymp_CAT0}).  See Moran \cite{Moran_metric} for a similar metric construction in the genuine CAT(0) setting.

\begin{remark}
    We note that on page 77 of her thesis \cite{Kar_thesis}, Kar suggests a potential way to define a boundary for an asymptotically CAT(0) space, and even asks whether it would provide a $\calZ$-structure.  It would be interesting to flesh out her suggested construction and see how it compares with ours.
\end{remark}

\subsection{Standing assumptions}
    In this section, we will assume the following about $\calX$:
\begin{enumerate}
    \item $\calX$ is asymptotically CAT(0) as in Definition \ref{defn:asymp CAT0}.  We fix a constant $C$ such that any two points of $\cuco X$ are joined by a discrete $(1,C)$-quasi-geodesic. Moreover, we denote by $\kappa$ the function in the asymptotically CAT(0) condition.
\item Balls of $\calX$ are finite.
\end{enumerate}

Dealing with discrete geodesics and rays will be more convenient for us. In what follows we refer to discrete $(1,C)$-quasi-geodesic rays simply as \emph{rays}, and to discrete $(1,C)$-quasigeodesics as \emph{segments}.

\subsection{Sublinearly close rays}

We consider the following equivalence relation on the set of all rays in $\calX$. Given two rays $\gamma_1,\gamma_2$, we write $\gamma_1\sim_{sl}\gamma(t)$ if $t\mapsto d(\gamma_1(t),\gamma_2(t))$ is a sublinear function.  We remark that we are not requiring this sublinear function to be related to $\kappa$, but Corollary \ref{cor:unif_close} below says we can take it to be $4\kappa$.

We can now define our boundary $\mathfrak B \calX$ as a set:

\begin{definition}
    We define $\mathfrak B \calX$ to be the set of $\sim_{sl}$-equivalence classes of (discrete $(1,C)$-quasi-geodesic) rays in $\calX$. Similarly, for a fixed basepoint $\go\in\calX$ we define $\mathfrak B^\go \calX$ similarly but only considering rays starting at $\go$.
\end{definition}

As is the case for various notions of boundaries, it is important that boundary points have representatives at any given basepoint:

\begin{lemma}\label{lem:change_basepoint}
    For all $[\gamma]\in \mathfrak B \calX$ and all $\go\in\calX$ there exists a ray $\gamma'$ starting at $\go$ and such that $[\gamma]=[\gamma']$. Moreover, for any such $\gamma$ and $\gamma'$, and if $\gamma$ starts at $p$, then we have $d(\gamma(t),\gamma'(t))\leq 2\kappa(t)+d(\go,p)$.
\end{lemma}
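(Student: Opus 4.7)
The plan is to combine a standard Arzelà–Ascoli extraction (which uses local finiteness of balls in $\calX$) for the existence of $\gamma'$, with two applications of the asymptotically CAT(0) inequality in long thin comparison triangles for the quantitative bound.

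For the existence, fix discrete $(1,C)$-quasi-geodesic segments $\sigma_n\colon[0,L_n]\to\calX$ from $\go$ to $\gamma(n)$. For each fixed integer $t\geq 0$, once $n$ is large enough, $\sigma_n(t)$ lies in the ball of radius $t+C$ around $\go$, which by hypothesis is finite. A diagonal extraction then produces a subsequence $\sigma_{n_k}$ that stabilizes on every bounded initial segment, and the pointwise limit is a discrete $(1,C)$-quasi-geodesic ray $\gamma_0'$ starting at $\go$. The equivalence $\gamma_0'\sim_{sl}\gamma$ will be a consequence of the quantitative bound below, applied to $\gamma_0'$.

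For the bound, let $\gamma$ start at $p$ and $\gamma'$ start at $\go$ with $[\gamma]=[\gamma']$, and set $f(T)=d(\gamma(T),\gamma'(T))$, which is sublinear by assumption. For $T$ large I apply the asymptotically CAT(0) inequality to two auxiliary triangles. In the triangle $(p,\go,\gamma(T))$ with third side a segment $\sigma_T$ from $\go$ to $\gamma(T)$, the comparison triangle in $\mathbb{E}^2$ has sides of lengths $d(\go,p)$, $a\approx T$ and $b\approx T$. An elementary Euclidean identity — placing $\bar\go$ at the origin so that
\[
\overline{\gamma(t)}-\overline{\sigma_T(t)}=\bar p\left(1-\tfrac{t}{a}\right)+t\,\overline{\gamma(T)}\left(\tfrac{1}{a}-\tfrac{1}{b}\right),
\]
and using $|a-b|\leq d(\go,p)$ — gives $d(\overline{\gamma(t)},\overline{\sigma_T(t)})\leq d(\go,p)$. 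Combined with the asymptotically CAT(0) inequality and the bounds $\delta(\gamma(t)),\delta(\sigma_T(t))\leq t$ (with $\kappa$ nondecreasing), this yields
\[
d(\gamma(t),\sigma_T(t))\leq d(\go,p)+2\kappa(t).
\]
In the second triangle $(\go,\gamma(T),\gamma'(T))$, with sides $\sigma_T$, a prefix of $\gamma'$, and a segment between $\gamma(T)$ and $\gamma'(T)$ of length $f(T)=o(T)$, similar triangles in the comparison figure give $d(\overline{\sigma_T(t)},\overline{\gamma'(t)})\leq (t/T)f(T)\to 0$ as $T\to\infty$ for fixed $t$. Hence $d(\sigma_T(t),\gamma'(t))\leq 2\kappa(t)+o_{T\to\infty}(1)$. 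Applying the triangle inequality in $\calX$ and sending $T\to\infty$ yields the stated bound (up to absorbing the doubled $\kappa$-term, which is either eliminated by a tighter one-triangle estimate or is innocuous in every downstream application).

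The main obstacle is the Euclidean computation in long thin triangles: showing that in a triangle with one short side of length $c$ and two long sides of lengths $\approx T$, the two points at the same parameter $t$ measured from the endpoints of the short edge stay within $c$ of each other. This is elementary but must be checked carefully, as naively parameterizing by arc length from opposite vertices can give distances growing in $T$. A secondary issue is bookkeeping the additive $O(C)$ errors coming from working with discrete $(1,C)$-quasi-geodesics rather than true geodesics; these are absorbed by the sublinear terms.
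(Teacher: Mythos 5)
Your proposal follows the paper's proof: existence of $\gamma'$ via Arzel\`a--Ascoli applied to segments $\sigma_n$ from $\go$ to $\gamma(n)$, and the bound via a comparison triangle with vertices $p,\go,\gamma(T)$ whose long sides are a subpath of $\gamma$ and $\sigma_T$ --- your first triangle is exactly the paper's argument, and the paper stops there, obtaining the bound only for the constructed limit ray. Two caveats. First, your second triangle (which you need to cover an \emph{arbitrary} equivalent $\gamma'$, a case the paper elides) doubles the error to $4\kappa(t)+d(\go,p)$ rather than the stated $2\kappa(t)+d(\go,p)$; as you note this is harmless downstream, where only some fixed multiple of $\kappa$ ever matters, but it does mean the constant in the statement is not what your argument delivers. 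Second, your displayed Euclidean identity bounds each of the two summands by $d(\go,p)$ separately and hence only directly yields $2d(\go,p)$; to get the claimed $d(\go,p)$, observe instead that $t\mapsto\bigl|\overline{\gamma(t)}-\overline{\sigma_T(t)}\bigr|^2$ is a convex quadratic equal to $d(\go,p)^2$ at $t=0$ and at most $|a-b|^2\leq d(\go,p)^2$ at $t=\min\{a,b\}$, so it is at most $d(\go,p)^2$ throughout.
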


\begin{proof}
Fix $\gamma,p,$ and $\go$ as in the statement. We can consider segments $\gamma'_n$ joining $\go$ to $\gamma(n)$, which have a subsequence which pointwise converges to a ray $\gamma'$ by Arzel\`a-Ascoli. We are left to show that, for a fixed $t$, we have $d(\gamma(t),\gamma_n(t))\leq 2\kappa(t)+d(\go,p)$ for all sufficiently large $n$, as this implies the required bound for $\gamma'$. This is a straightforward use of a comparison triangle.
\end{proof}

It will also be important that two equivalent rays starting at the same basepoint stay within distance bounded in terms of a given function, which follows directly from the case $p=\go$ of the previous lemma.  This is a sublinear version of the fact that points in the visual boundary of a CAT(0) space have unique geodesic representatives.

\begin{corollary}\label{cor:unif_close}
    If $\gamma,\gamma'$ are rays starting at the same point $\go\in\calX$ and $\gamma\sim_{sl}\gamma'$ then $d(\gamma(t),\gamma'(t))\leq 2\kappa(t)$ for all $t$.
\end{corollary}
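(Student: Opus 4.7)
The plan is to derive this as an immediate special case of the ``moreover'' clause in Lemma \ref{lem:change_basepoint}. Given two rays $\gamma, \gamma'$ starting at the common basepoint $\go$ with $\gamma \sim_{sl} \gamma'$, I would apply Lemma \ref{lem:change_basepoint} taking the point called $p$ in that lemma to be $\go$ itself (since the starting point of $\gamma$ is $\go$). Because $\gamma'$ is a ray starting at $\go$ with $[\gamma'] = [\gamma]$, it is a valid witness to the existence assertion of the lemma, so the ``moreover'' clause applies and gives
\[
d(\gamma(t), \gamma'(t)) \leq 2\kappa(t) + d(\go, \go) = 2\kappa(t),
\]
which is exactly the claim.

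The one point worth checking, and the only place a ``subtlety'' could lurk, is that the bound in the ``moreover'' clause of Lemma \ref{lem:change_basepoint} is valid for \emph{any} ray $\gamma'$ starting at $\go$ with $[\gamma] = [\gamma']$, not merely for the specific representative produced by the Arzel\`a--Ascoli limiting argument used to prove existence. The phrasing ``for any such $\gamma$ and $\gamma'$'' in the lemma statement indicates this universal quantification over all valid representatives, so nothing extra needs to be argued, and the corollary follows without further work.
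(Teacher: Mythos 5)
Your proof is correct and is exactly the paper's argument: the corollary is stated in the paper as following directly from the case $p=\go$ of Lemma \ref{lem:change_basepoint}, and your reading of the universal quantification in the ``moreover'' clause is the intended one.
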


\subsection{Weak metrics}

The next goal is to define a topology on $\mathfrak B \calX$. We will do so by endowing it with a structure similar enough to a metric:

\begin{definition}
    A function $d:X \times X \to \mathbb{R}_{\geq 0}$ on a space $X$ is a \emph{weak metric} if
    \begin{enumerate}
        \item $d(x,y) = d(y,x)$ for all $x, y \in X$,
       \item $d(x,x) = 0$,
    
        \item There exists a function $\ff:\mathbb{R}_{\geq 0} \to \mathbb{R}_{\geq 0}$ such that
        \begin{enumerate}
            \item $\ff$ is non-decreasing and $\lim_{t \to 0} \ff(t) = 0$, and
            \item For all $x,y,z \in \calX$, we have
             $$d(x,z) \leq \ff\left(\max \{d(x,y),d(y,z)\}\right).$$
        \end{enumerate}
    \end{enumerate}
\end{definition}

\begin{remark}
    Various weak versions of metrics appear frequently in the study of boundaries of groups.  See for instance the notion of a quasi-metric in the context of hyperbolic groups \cite{buyalo2007elements}. 
\end{remark}

Given a weak metric $d$ on a space $X$, we can consider (weak) metric balls $B(p,R)$ for $p \in X$ and $R>0$. We can also define a topology, which we refer to as the \emph{metric topology}, in the same way that one defines the topology coming from a metric: A set $U$ is open if for all $x \in U$ there is a ball $B(x,R)$, for some $R>0$, contained in $U$.

In the case of weak metrics it is not clear (and probably false in general) that balls are open, but we still have:

\begin{lemma} \label{lemma:weak metric balls}
    Let $d$ be a weak metric on a space $X$, and consider the metric topology on $X$.  Then for all $p \in X$ and $r>0$, we have that $p$ is contained in the interior of the (weak) metric ball $B(p,r)$. Moreover, let $r_0$ be such that $\ff(r_0)<r$. Then $B(p,r_0)$ is contained in the interior of of $B(p,r)$.
\end{lemma}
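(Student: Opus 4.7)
The plan is to prove the second statement first and deduce the first from it. The key idea is to introduce the auxiliary set
\[ U \;=\; \{z \in X \,:\, \exists\, s > 0,\ B(z,s) \subseteq B(p,r)\}, \]
which satisfies $U \subseteq B(p,r)$ by definition. I would then verify two claims: (a) $U$ is open in the metric topology, and (b) $B(p,r_0) \subseteq U$ whenever $\ff(r_0) < r$. Together these give $B(p,r_0) \subseteq U \subseteq B(p,r)^\circ$, proving the second assertion. The first then follows immediately: since $\ff(t) \to 0$ as $t \to 0$, one may choose $r_0 > 0$ with $\ff(r_0) < r$, and then $p \in B(p,r_0) \subseteq B(p,r)^\circ$.

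For openness of $U$, I would argue as follows. Given $z \in U$ with witness ball $B(z,s) \subseteq B(p,r)$, use $\lim_{t\to 0}\ff(t) = 0$ to pick $s' > 0$ with $\ff(s') < s$. Then for any $w \in B(z,s')$ and $u \in B(w,s')$, the weak triangle inequality applied to $(z,w,u)$ together with monotonicity of $\ff$ gives
\[ d(z,u) \;\leq\; \ff\bigl(\max\{d(z,w),\,d(w,u)\}\bigr) \;\leq\; \ff(s') \;<\; s, \]
so $u \in B(z,s) \subseteq B(p,r)$. Thus $B(w,s') \subseteq B(p,r)$, meaning $w \in U$ with witness $s'$, and hence $B(z,s') \subseteq U$.

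For the inclusion $B(p,r_0) \subseteq U$, given $y \in B(p,r_0)$ I would take $B(y,r_0)$ itself as the witness. For $z \in B(y,r_0)$, both $d(p,y)$ and $d(y,z)$ are strictly less than $r_0$, so the weak triangle inequality at $(p,y,z)$ and monotonicity of $\ff$ yield
\[ d(p,z) \;\leq\; \ff\bigl(\max\{d(p,y),\,d(y,z)\}\bigr) \;\leq\; \ff(r_0) \;<\; r. \]
Hence $B(y,r_0) \subseteq B(p,r)$, so $y \in U$, as required.

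The main subtlety worth flagging is why one cannot short-circuit the argument by simply showing $B(p,r_0)$ is itself open. For a general weak metric the function $\ff$ may grow faster than the identity, and a point $y$ near the boundary of $B(p,r_0)$ can have $\ff(d(p,y)) \geq r_0$; then no small ball around $y$ need sit inside $B(p,r_0)$, so $B(p,r_0)$ can fail to be open. Introducing $U$ sidesteps this by relaxing the target from $B(p,r_0)$ to the larger $B(p,r)$, which provides just enough slack; the hypothesis $\ff(t) \to 0$ is invoked precisely at the openness step, where a second, smaller weak-triangle application must close up inside the original witness ball.
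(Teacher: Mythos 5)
Your proof is correct. Both arguments ultimately produce an open set sandwiched between $B(p,r_0)$ and $B(p,r)$, but the routes differ. The paper fixes an infinite decreasing sequence of radii $r_0 > r_1 > \cdots$ with $\ff(r_i) < r_{i-1}$, builds the iterated ball-unions $A_i = \bigcup_{x \in A_{i-1}} B(x,r_i)$, observes that $\bigcup_i A_i$ is open by construction, and then verifies $\bigcup_i A_i \subseteq B(p,r)$ by a backward induction along chains $p = x_{-1}, x_0, \dots, x_i$. You instead take $U$ to be the set of points admitting \emph{some} witness ball inside $B(p,r)$ — which, once shown open, is exactly the interior of $B(p,r)$ — and establish openness with a single two-step application of the weak triangle inequality inside each witness ball. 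Your version avoids the infinite sequence of radii and the chain induction, and it identifies the interior intrinsically rather than exhibiting one particular open subset; the paper's version is more constructive but does essentially the same amount of work at each stage. Your closing remark on why $B(p,r_0)$ itself need not be open matches the paper's own caveat that balls for a weak metric are probably not open in general, and correctly pinpoints where the slack from $r_0$ to $r$ is consumed.
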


\begin{proof}
    Let $r_0$ be such that $\ff(r_0)<r$, and inductively let $r_i$ be such that $\ff(r_i)<r_{i-1}$ and $r_i<r_{i-1}$. Let $A_0=B(p,r_0)$ and inductively let $A_i=\bigcup_{x\in A_{i-1}} B(x,r_i)$. It is clear that $\bigcup A_i$ is open, and we will check that $A$ is contained in $B(p,r)$.

    Let $x\in A_i$, so that in particular $x\in B(x_{i-1},r_i)$ for some $x_{i-1}\in A_{i-1}$. We in fact have points $x_j$ for $j=-1,\dots i$ with $x=x_i, p=x_{-1}$ and $d(x_{j-1},x_{j})<r_j$.

For $j=i$ we obviously have $d(x,x_j)<r_j$. We now show inductively (starting at $j=i$ and ending with $j=-1$) that the same holds for all $j$. Indeed, $d(x,x_{j-1})\leq \ff (\max\{d(x_{j-1},x_j),d(x_j,x)\})$. The first term in the max is at most $r_j$ by construction, while we can assume that the second one is at most $r_j$ by the inductive hypothesis. Hence, $d(x,x_{j-1})\leq \ff (r_j)<r_{j-1}$, as required.
\end{proof}

\subsection{Weak metric on the boundary}
\label{subsec:weak_metric}

In this subsection, we implement a sublinear version of Moran's construction of a metric on the visual boundary of a CAT(0) space from \cite[Section 3]{Moran_metric}.  In Moran's construction, the idea is to measure how long it takes two CAT(0) geodesic rays to get further apart than some constant.  For our purposes, we want to see how long two asymptotic classes take to separate further apart than a fixed sublinear function.

Recall that we fixed a constant $C$ and a sublinear function $\kappa$, and that we call discrete $(1,C)$-quasi-geodesic rays and segments simply rays and segments. Fix a basepoint $\go\in\calX$. To uniformize notation for $\calX\cup \mathfrak B^\go\calX$, we identify a point $p$ of $\calX$ with the set of all segments $\gamma:\{0,\dots,a\}\to\calX$ from $\go$ to $p$.

Given $A,B\in \calX\cup \mathfrak B^\go\calX$ we define $d_\kappa(A,B)=0$ if $A=B$ and otherwise
$$d_{\kappa}(A,B) = \inf_{\alpha \in A, \beta \in B} \ \frac{1}{\sup \{t|d(\alpha(t), \beta(t)) \leq 3 \kappa(t)\}}.$$

For clarity, the supremum is taken over all $t$ such that $\alpha(t)$ and $\beta(t)$ are both defined. Note that the supremum is in fact realized as $\alpha,\beta$ are discrete.

For later purposes we also make the following remark:

\begin{remark}\label{rem:discrete}
    For all $x\in \calX$ there exists a radius $r$ such that the $d_\kappa$-ball around $x$ of radius $r$ only contains $x$. 
\end{remark}

While $d_\kappa$ does not obviously satisfy the triangle inequality, we will show that it is a weak metric.

\begin{proposition}\label{prop:weak metric}
The function $d_{\kappa}$ is a weak metric on $\bar\calX=\calX\cup\mathfrak B^\go\calX$.
\end{proposition}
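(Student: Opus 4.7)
Symmetry and the vanishing $d_\kappa(A,A) = 0$ follow at once from the definition; the substance of the proposition is the weak triangle inequality. My plan is as follows. Given $A, B, C \in \bar\calX$, let $\epsilon = \max\{d_\kappa(A,B), d_\kappa(B,C)\}$ and set $T = 1/(2\epsilon)$. By the definition of $d_\kappa$ I choose rays/segments $\alpha \in A$, $\beta, \beta' \in B$, $\gamma \in C$ based at $\go$ with $d(\alpha(t), \beta(t)) \leq 3\kappa(t)$ and $d(\beta'(t), \gamma(t)) \leq 3\kappa(t)$ for all $t \leq T$ at which both sides are defined. (If any of $A, B, C$ lies in $\calX$, the assumption $d_\kappa \leq 1/T$ forces it to lie at distance $\geq T - O(C)$ from $\go$, so the relevant segments really are defined up to time $T$.) The aim is to produce a scale $s = s(T)$ tending to infinity with $T$ for which $d(\alpha(s), \gamma(s)) \leq 3\kappa(s)$; this gives $d_\kappa(A, C) \leq 1/s(T)$, and setting $\ff(\epsilon) := 1/s(1/(2\epsilon))$ (replaced by its non-decreasing upper envelope) completes the proof.

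The first step is to control the drift $d(\beta(t), \beta'(t))$. If $B \in \mathfrak B^\go\calX$, Corollary \ref{cor:unif_close} directly yields $d(\beta(t), \beta'(t)) \leq 2\kappa(t)$. If $B \in \calX$, then $\beta, \beta'$ are two $(1,C)$-quasi-geodesic segments sharing the endpoints $\go, B$. I apply the asymptotically CAT(0) inequality to the triangle with vertices $\go, B, \beta(t)$, taking the side from $\go$ to $B$ to be $\beta'$: the triangle is almost-degenerate (the sum of two sides equals the third up to $O(C)$), and a short Euclidean computation shows that in the comparison triangle the distance between the vertex $\overline{\beta(t)}$ and the comparison point $\overline{\beta'(t)}$ is $O(\sqrt{Ct} + C)$, yielding $d(\beta(t), \beta'(t)) \leq F_0(t)$ for some sublinear function $F_0$ depending only on $C$ and $\kappa$. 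Combined with the triangle inequality, this gives $d(\alpha(t), \gamma(t)) \leq F(t) := 6\kappa(t) + F_0(t)$ for all relevant $t \leq T$, with $F$ sublinear.

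To sharpen this into the required $3\kappa$-bound at a smaller scale, I apply the asymptotic CAT(0) inequality to the triangle with vertices $\go, \alpha(T), \gamma(T)$: for $s \leq T$ the comparison points of $\alpha(s)$ and $\gamma(s)$ lie on the two sides emanating from $\bar\go$ and, by convexity of the Euclidean metric, are at distance at most $(s/T) F(T) + O(C)$, whence
\[
d(\alpha(s), \gamma(s)) \leq (s/T) F(T) + 2\kappa(s) + O(C).
\]
Since $F$ is sublinear, $F(T)/T \to 0$, so I can choose $s = s(T) \to \infty$ slowly enough that the right-hand side is at most $3\kappa(s)$; concretely, taking $s(T)$ to be the largest $s \leq T$ with $s/\kappa(s) \leq T/(2F(T))$ works, since $T/F(T) \to \infty$ and sublinearity of $\kappa$ forces $s/\kappa(s) \to \infty$.

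The main technical obstacle is the $B \in \calX$ case of the drift estimate: the asymptotic CAT(0) inequality must be applied in an almost-degenerate configuration in order to compare two different $(1,C)$-quasi-geodesic segments with common endpoints, which requires a careful tracking of Euclidean errors (and of the $\kappa$ contributions at the vertex $\beta(t)$ where $\delta$ is non-positive). Edge cases (for instance when $\go \in \{A,B,C\}$, or when two of the three points coincide) reduce to trivial verifications.
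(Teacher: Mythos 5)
Your overall architecture is the same as the paper's: reconcile the two representatives $\beta,\beta'$ of $B$ up to a sublinear drift, deduce that $\alpha$ and $\gamma$ are sublinearly close at one large scale, and then downgrade to a $3\kappa$-bound at a smaller but still diverging scale via a comparison-triangle computation (your final step is essentially a re-derivation of the Uniform Divergence Lemma \ref{lem:weak divergence}, which the paper instead just invokes, and your degenerate-triangle drift estimate plays the role of Corollary \ref{cor:unif_close} and Lemma \ref{lem:convex}).

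There is, however, one step that does not follow from the definition as you assert. From $d_\kappa(A,B)\le\epsilon$ you extract representatives with $d(\alpha(t),\beta(t))\le 3\kappa(t)$ for \emph{all} $t\le T=1/(2\epsilon)$. The definition of $d_\kappa$ only produces a \emph{single} time $t_0\ge T$ at which the $3\kappa$-bound holds: the supremum is taken over the set of good times, which need not be an interval, and closeness at a large time does not automatically propagate to all earlier times with the \emph{same} function $3\kappa$ (by Lemma \ref{lem:convex} one only gets, for $t\le t_0$, the weaker bound $\tfrac{t}{t_0-C}3\kappa(t_0)+2\kappa(t)$, which near $t_0$ exceeds $3\kappa(t)$). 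The correct propagation is exactly Lemma \ref{lem:weak divergence} applied with $\eta=3\kappa$, and it yields the $3\kappa$-bound only for $t\le\fg(t_0)$, where $\fg$ is a diverging function with possibly $\fg(t_0)\ll T$. The repair is harmless for your argument: since you only ever use the closeness at the single top scale, replace your working scale $T$ by $\fg(T)$ throughout (everything downstream is already phrased in terms of ``a diverging function of $T$''), but the lemma must be invoked — as written the step is false. Two smaller remarks: the trailing $O(C)$ in your last display has to be absorbed into $\kappa(s)$, which deserves a word (the paper's Lemma \ref{lem:convex} avoids it by building the $\pm C$ ambiguity into the comparison points); and for two segments from $\go$ to the same interior point, applying Lemma \ref{lem:convex} with $t'$ near the common far endpoint gives the cleaner drift bound $2\kappa(t)+O(C)$ without a separate almost-degenerate-triangle computation.
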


We will prove the proposition below, after some preliminaries.

\subsection{The crucial divergence lemma}

The following lemma is crucial not only to prove that $d_\kappa$ is a weak metric, but also for our further study of the boundary. Roughly speaking, it says that if two rays are closer than some fixed (large) sublinear function $\eta$ at some time $t$, they are in fact at most $3\kappa$ apart at a previous time depending on $\eta$ and $t$ only. It can be seen as the counterpart of the fact that in a hyperbolic space if two rays are closer than a certain constant at some time, then they are closer than $\delta$ at a previous, controlled time.

\begin{lemma}\label{lem:weak divergence}(Uniform divergence lemma)
For all sublinear functions $\eta$ there exists a diverging function $\fg:\mathbb{R}_{\geq 0} \to \mathbb{R}_{\geq 0}$ such that the following holds. For any pair $\gamma_1, \gamma_2$ where each is a segment or a ray, if
$$d(\gamma_1(t), \gamma_2(t)) \leq \eta(t)$$
then
$$d(\gamma_1(t'), \gamma_2(t')) \leq 3 \kappa(t')$$
for all $t'\leq \fg(t)$.
\end{lemma}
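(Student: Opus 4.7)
The plan is to apply the asymptotically CAT(0) inequality to the thin triangle $\Delta$ with vertices $\gamma_1(0), \gamma_1(t), \gamma_2(t)$, and to transfer the smallness of the side opposite the common basepoint at time $t$ to earlier times via a comparison in the Euclidean plane. I will assume throughout that $\gamma_1$ and $\gamma_2$ share a common starting point $\go$, which is the setting in which this lemma is used for the weak metric $d_\kappa$. As a harmless preparatory step, I replace $\kappa$ by $\kappa + K$ for a sufficiently large constant $K$; this only weakens the asymptotically CAT(0) hypothesis and ensures $\kappa$ is bounded below by $K$, which will let us absorb $O(C)$ errors uniformly.

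Given $t$ with $d(\gamma_1(t), \gamma_2(t)) \leq \eta(t)$, form $\Delta$ using the sides $\gamma_1|_{[0,t]}$ and $\gamma_2|_{[0,t]}$ together with any $(1,C)$-quasi-geodesic between $\gamma_1(t)$ and $\gamma_2(t)$, of length at most $\eta(t) + O(C)$. For $t' \leq t$, set $p = \gamma_1(t')$ and $q = \gamma_2(t')$. By Remark \ref{rmk:-C} we have $\delta(p), \delta(q) \leq t'$, so Definition \ref{defn:sub CAT0} gives
$$d(\gamma_1(t'), \gamma_2(t')) \;\leq\; d(\bar p, \bar q) + 2\kappa(t').$$
In the Euclidean comparison triangle, the two sides from $\bar\go$ have length in $[t - C, t + C]$ and the opposite side has length at most $\eta(t) + O(C)$; an elementary Euclidean estimate (law of cosines, exploiting the near-equality of the two sides from $\bar\go$) bounds $d(\bar p, \bar q)$ by $2 t' \eta(t)/t + O(C)$ for $t$ above a fixed threshold.

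Combining yields $d(\gamma_1(t'), \gamma_2(t')) \leq 2 t' \eta(t)/t + 2\kappa(t') + O(C)$, which is at most $3\kappa(t')$ as soon as $2 t' \eta(t)/t + O(C) \leq \kappa(t')$. The preparatory lower bound $\kappa \geq K$ absorbs the additive $O(C)$, so it suffices to ensure $2 t' \eta(t)/t \leq \kappa(t')/2$, equivalently $t' \leq t \kappa(t')/(4\eta(t))$. Since $\eta$ is sublinear, $t/\eta(t) \to \infty$ as $t \to \infty$, so this constraint is vacuous on an initial interval whose length grows to infinity with $t$. Setting $\fg(t)$ to be the largest $t'$ for which the combined inequality holds then produces the required diverging function. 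The main technical point is the careful treatment of additive constants, particularly for small $t'$ where $\kappa$ might naturally be small; this is resolved cleanly by the initial lower-bound adjustment of $\kappa$, and the extension from rays to segments requires nothing beyond restricting $t$ and $t'$ to lie in the common domain.
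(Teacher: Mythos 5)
Your argument is essentially the paper's: a Euclidean comparison triangle with apex at the common basepoint, the similar-triangles estimate $d(\bar p,\bar q)\lesssim t'\eta(t)/(t-C)$, the sublinear CAT(0) inequality contributing $2\kappa(t')$, and divergence of $\fg$ from sublinearity of $\eta$. The only difference is bookkeeping: the paper avoids your additive $O(C)$ terms by choosing the comparison points at distance exactly $t'$ from the apex (permitted by the $\pm C$ slack in the definition of comparison point) and by using that the opposite side of the comparison triangle has length exactly $d(\gamma_1(t),\gamma_2(t))\le\eta(t)$, not the length of the connecting quasi-geodesic. Your alternative of replacing $\kappa$ by $\kappa+K$ is legitimate but must be made once and for all in the section's standing assumptions (before $d_\kappa$ is defined), since the $3\kappa(t')$ in the conclusion has to be the same $\kappa$ used everywhere else; done mid-proof it would yield only $3\kappa(t')+3K$.
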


\begin{proof}
We can set $\fg(t)=0$ for all $t\leq C$ and all values of $t$ such that $\eta(t)/(t-C)\geq \kappa(1)$. For other (larger) values of $t$, we let $\fg(t)$ be largest such that for all $t'\leq \fg(t)$ we have
$$\eta(t) \frac{t'}{t-C}\leq \kappa(t').$$

Note that $\fg$ is a diverging function since $\eta$ is sublinear.

    We consider a comparison triangle for $\go$, $\gamma_1(t), \gamma_2(t)$, where the sides from $\go$ to the $\gamma_i(t)$ are subpaths of the $\gamma_i$.  The comparison triangle has two sides of length at least $t-C$ and one side of length at most $\eta(t)$. Given $t'$, we have comparison points $\bar p, \bar q$ for $\gamma_1(t'), \gamma_2(t')$ on the former sides, each at distance at most $t'$ from the comparison point for $\go$ (unless the length of the side is less than $t'$, in which case we take the endnpoint of the side as comparison point). By basic Euclidean geometry we have
    $$d(\bar p,\bar q)\leq \eta(t) \frac{t'}{t-C}.$$
    Therefore, we in fact have $d(\bar p,\bar q)\leq \kappa(t')$. 
    By the CAT(0) condition up to $\kappa$, and $d(\gamma(t')),\ d((\gamma'(t'))\leq t'$ (see Remark \ref{rmk:-C}), we have
    $$d(\gamma_1(t'), \gamma_2(t')) \leq  2 \kappa(t')+d(\bar p,\bar q)\leq 3\kappa(t'),$$
    as required.   
\end{proof}

\subsection{Key properties of the boundary}

The following is not needed to show that $d_\kappa$ is a weak metric, but rather that $\bar X$ is Hausdorff; we include it here since it serves to illustrate the usefulness of the Uniform divergence lemma.

\begin{lemma}\label{lem:nonzero}
    If $A,B\in \calX\cup \mathfrak B^\go\calX$ are distinct, then $d_\kappa(A,B)>0$.
\end{lemma}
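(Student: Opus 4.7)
The plan is to split on cases according to whether $A$ and $B$ lie in $\calX$ or in $\mathfrak B^{\go}\calX$. If either one is a point of $\calX$, say $A = p$, then every representative $\alpha \in A$ is a segment from $\go$ to $p$, so its parameter interval has length at most $d(\go,p) + C$ (by the $(1,C)$-quasi-geodesic condition). Any $\alpha$ and $\beta$ are therefore only both defined on this bounded interval, so $\sup\{t : d(\alpha(t),\beta(t)) \leq 3\kappa(t)\} \leq d(\go,p) + C$ for every pair $(\alpha,\beta)$. Hence $d_\kappa(A,B) \geq 1/(d(\go,p) + C) > 0$, which handles every case in which at least one of $A, B$ is in $\calX$.

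The interesting case is $A, B \in \mathfrak B^{\go}\calX$ distinct. Assume toward contradiction that $d_\kappa(A,B) = 0$. Unpacking the definition of the infimum, there exist $\alpha_n \in A$, $\beta_n \in B$ (rays based at $\go$) and $t_n \to \infty$ such that $d(\alpha_n(t_n),\beta_n(t_n)) \leq 3\kappa(t_n)$. Fix reference representatives $\alpha \in A$, $\beta \in B$ from $\go$. By Corollary \ref{cor:unif_close}, $d(\alpha(t),\alpha_n(t)) \leq 2\kappa(t)$ and $d(\beta(t),\beta_n(t)) \leq 2\kappa(t)$ for every $t$ and every $n$, so the triangle inequality gives
\[
d(\alpha(t_n),\beta(t_n)) \leq 7\kappa(t_n).
\]
Thus the fixed pair $(\alpha,\beta)$ is close up to the sublinear function $\eta := 7\kappa$ at the times $t_n \to \infty$.

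Now apply the Uniform Divergence Lemma \ref{lem:weak divergence} to $\eta = 7\kappa$: it produces a diverging function $\fg$ with $d(\alpha(t'),\beta(t')) \leq 3\kappa(t')$ whenever $t' \leq \fg(t_n)$. Since $\fg(t_n) \to \infty$, this gives $d(\alpha(t),\beta(t)) \leq 3\kappa(t)$ for every $t \geq 0$, so $t \mapsto d(\alpha(t),\beta(t))$ is sublinear and $\alpha \sim_{sl} \beta$, forcing $A = B$, a contradiction. The main obstacle is this final step: to rule out $d_\kappa = 0$ one must upgrade closeness along a single sequence of times $t_n$ to closeness at every time, and this is precisely the content of the Uniform Divergence Lemma; the role of Corollary \ref{cor:unif_close} is to replace the moving sequences $\alpha_n, \beta_n$ by a fixed pair of representatives so that the divergence lemma can actually be applied.
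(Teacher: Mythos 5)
Your proof is correct and follows essentially the same route as the paper's: the key step in both is to pass from the varying representatives to a fixed pair via Corollary \ref{cor:unif_close} (picking up the constant $7\kappa$) and then invoke the Uniform Divergence Lemma \ref{lem:weak divergence} with $\eta=7\kappa$; you phrase it as a contradiction at a sequence $t_n\to\infty$ while the paper runs the contrapositive, bounding any admissible $t$ by $\fg(t)\leq t_0$ for a fixed witness time $t_0$ where $d(\alpha_0(t_0),\beta_0(t_0))>3\kappa(t_0)$. Your explicit treatment of the case where one of $A,B$ lies in $\calX$ (bounded parameter domain of segments) is a detail the paper omits with ``it suffices to consider boundary points,'' and is fine.
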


\begin{proof}
    It suffices to consider $A,B\in\mathfrak B^\go\calX$. Consider any representative rays $\alpha_0,\beta_0$. Since $A$ and $B$ are distinct, we have $d(\alpha_0(t_0),\beta_0(t_0))>3\kappa(t_0)$ for some $t_0$ (for any fixed sublinear function we can find some $t_0$ such that the distance at $t$ is larger than that function). Say that for some other representatives we have $d(\alpha(t),\beta(t))\leq 3\kappa(t)$ for some $t$; we have to give an upper bound on $t$. We also have $d(\alpha_0(t),\beta_0(t))\leq 7\kappa(t)$ by Corollary \ref{cor:unif_close}. By Lemma \ref{lem:weak divergence}, we then have $d(\alpha_0(t'),\beta_0(t'))\leq 3\kappa(t')$ for any $t'\leq \mathfrak g(t)$ for the diverging function as in the lemma, with $\eta=7\kappa$. But then we must have $\mathfrak g(t)\leq t_0$, which gives an upper bound for $t$, as required.
\end{proof}

In the proof that $d_\kappa$ is a metric we will need a coarse form of convexity for asymptotically CAT(0) metrics. The proof is a straightforward use of a comparison triangle, together with convexity of the metric of the Euclidean plane.

\begin{lemma}\label{lem:convex}
    Let $\calX$ be as in the standing assumptions, and let $\gamma,\gamma'$ be rays or segments originating from the same point $\go$ of $\calX$. Then for all $C< t\leq t'$ (such that the following formula is defined) we have
$$d(\gamma(t),\gamma'(t))\leq \frac{t}{t'-C}d(\gamma(t'),\gamma'(t'))+2\kappa(t).$$
\end{lemma}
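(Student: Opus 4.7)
The plan is to set up a $(1,C)$-quasi-geodesic comparison triangle with vertices $\go$, $\gamma(t')$, $\gamma'(t')$, where two of the sides are $\gamma|_{[0,t']}$ and $\gamma'|_{[0,t']}$ (both $(1,C)$-quasi-geodesics) and the third is any $(1,C)$-quasi-geodesic between $\gamma(t')$ and $\gamma'(t')$. Let $\bar\Delta$ be a Euclidean comparison triangle, placing $\bar\go$ at the origin and writing $A=\bar\gamma(t')$, $B=\bar\gamma'(t')$. Because $\gamma,\gamma'$ are $(1,C)$-quasi-geodesics starting at $\go$, we have $|A|=d(\gamma(t'),\go)\geq t'-C$ and $|B|\geq t'-C$.

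Next I will pick as comparison points of $\gamma(t)$ and $\gamma'(t)$ the Euclidean points at distance $t$ from $\bar\go$ along $[\bar\go,A]$ and $[\bar\go,B]$ respectively; in the generic case $t\leq\min(|A|,|B|)$ these are valid comparison points since $|d(\gamma(t),\go)-t|\leq C$ (with a cap at the endpoint in the degenerate case $|A|<t$ or $|B|<t$, which forces $t'-t<2C$ and is dealt with directly via the quasi-geodesic triangle inequality). For these points, since both $\gamma(t)$ and $\gamma'(t)$ lie on sides emanating from $\go$, we have $\delta(\gamma(t)),\delta(\gamma'(t))\leq t$, so the asymptotically CAT(0) condition gives
\[
d(\gamma(t),\gamma'(t))\;\leq\; d(\bar\gamma(t),\bar\gamma'(t)) + 2\kappa(t).
\]

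It then remains to prove the Euclidean estimate $d(\bar\gamma(t),\bar\gamma'(t))\leq \frac{t}{t'-C}\,|A-B|$. Writing $\vec u=A/|A|$ and $\vec v=B/|B|$, one has $\bar\gamma(t)=t\vec u$, $\bar\gamma'(t)=t\vec v$, and a direct application of the law of cosines yields the identity
\[
|\vec u-\vec v|^2 \;=\; \frac{|A-B|^2-(|A|-|B|)^2}{|A|\,|B|} \;\leq\; \frac{|A-B|^2}{|A|\,|B|}.
\]
Using $|A|,|B|\geq t'-C$, this gives $|\vec u-\vec v|\leq |A-B|/(t'-C)$, and multiplying by $t$ yields the desired bound. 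Combining with the asymptotic CAT(0) inequality above produces the claimed estimate.

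The main (minor) obstacle is the bookkeeping for the degenerate case $t>|A|$ or $t>|B|$, where the linear comparison-point recipe must be capped at the endpoint; in that regime $t'-t<2C$, so $d(\gamma(t),\gamma'(t))$ differs from $d(\gamma(t'),\gamma'(t'))$ by at most an additive constant, which is absorbed by $2\kappa(t)$ (after replacing $\kappa$ by $\max(\kappa,\text{const})$ if necessary, which preserves sublinearity). Otherwise the argument is a clean combination of the asymptotically CAT(0) inequality with the convexity of the Euclidean metric.
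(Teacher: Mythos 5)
Your proof is correct and follows essentially the same route as the paper's: a Euclidean comparison triangle for $\go,\gamma(t'),\gamma'(t')$, comparison points at parameter $t$ on the two sides issuing from $\bar\go$, the convexity estimate $|t\vec u-t\vec v|\leq \tfrac{t}{t'-C}|A-B|$, and then the CAT(0)-up-to-$\kappa$ inequality together with $\delta(\gamma(t)),\delta(\gamma'(t))\leq t$. You are in fact more explicit than the paper (which dispatches the Euclidean step as "basic Euclidean geometry" and ignores the degenerate endpoint case), and your normalization of $\kappa$ in that degenerate regime is harmless since enlarging the sublinear gauge preserves the asymptotically CAT(0) property.
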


\begin{proof}
Consider a comparison triangle for $\go$, $\gamma(t'),\gamma'(t')$. One side of it has length $d(\gamma(t'),\gamma'(t'))$ and the other two have lengths $\ell_1,\ell_2$ in $[t'-C,t'+C]$. The points at distance $\max\{t,\ell_i\}$ on those sides are comparison points for $\gamma(t),\gamma'(t)$. Basic Euclidean geometry implies that they lie within distance $\frac{t}{t'-C}d(\gamma(t'),\gamma'(t'))$ of each other, and the required inequality follows since $\delta(\gamma(t)),\delta(\gamma'(t))\leq t$ (see Remark \ref{rmk:-C}).
\end{proof}

We are now ready to prove that $d_\kappa$ is a weak metric:

\begin{proof}[Proof of Proposition \ref{prop:weak metric}]
Let $A,B,D$ lie in $\calX\cup \mathfrak B^\go\calX$, and we can assume that they are pairwise distinct so that by Lemma \ref{lem:nonzero} the (weak) distances between them are positive. Let $1/t=d_{\kappa}(A,B)$, $1/t'=d_\kappa(B,D)$, say with $t\geq t'$. We have to show that $d_\kappa(A,D)$ is bounded above by a function of $t$ which goes to $0$ as $t$ goes to $0$.

The assumption means that there exist $\alpha\in A$, $\beta,\beta'\in B, \gamma\in D$ such that $d(\alpha(t),\beta(t))\leq 3\kappa(t)$ and $d(\beta'(t'),\gamma(t'))\leq 3\kappa(t')$.

By Lemma \ref{lem:change_basepoint}, we have $d(\beta(t'),\gamma(t'))\leq 5\kappa(t')$. By Lemma \ref{lem:convex} we then see that $d(\beta(t),\gamma(t))\leq 2\kappa(t)+5\kappa(t')t/(t'-C)$. Therefore,
$$d(\alpha(t),\gamma(t))\leq 5\kappa(t)+5\kappa(t')t/(t'-C).$$
We can define a function $\eta(t)$ by taking the supremum of over all $t'\geq t$ of the expression on the right hand side, and it is readily seen that $\eta$ is sublinear. Therefore, we can apply Lemma \ref{lem:weak divergence}, which yields a diverging function $\fg$ and the inequality $d(\alpha(\fg(t)),\gamma(\fg(t)))\leq 3\kappa(\fg(t))$. This implies $d_\kappa(A,D)\leq 1/\fg(t)$, which is the required bound.
\end{proof}

We can identify $\mathfrak B\calX$ and $\mathfrak B^\go\calX$ as sets, in view of Lemma \ref{lem:change_basepoint}, and hence regard the weak metric $d_\kappa$ as defined on $\calX\cup\mathfrak B\calX$. This topology has many desirable properties, summarized in the main result of this section, which we state below.

\begin{theorem}
\label{thm:compact_boundary}
    Let $\calX$ be asymptotically CAT(0) with finite balls. Consider the topology on $\bar\calX=\calX\cup\mathfrak B\calX$ generated by metric balls of the weak metric $d_\kappa$. Then $\bar\calX$ is compact, Hausdorff, metrizable, and the $G$-action on $\calX$ extends continuously on $\bar\calX$, where $G=Isom(X)$. Moreover, for any radius $R>0$ and open cover $\calU$ of $\bar\calX$ we have that all but finitely many balls in $\calX$ of radius $R$ are $\calU$-small.
\end{theorem}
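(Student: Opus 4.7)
The plan is to prove the four assertions essentially independently, each leveraging one of the lemmas already established in this section. Throughout I fix the basepoint $\go$, identify $\mathfrak B\calX$ with $\mathfrak B^\go\calX$ via Lemma \ref{lem:change_basepoint}, and write $d_\kappa$ for $d_\kappa^\go$.

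\emph{Hausdorff and compactness.} If $A \neq B$ in $\bar\calX$, Lemma \ref{lem:nonzero} gives $\delta := d_\kappa(A,B) > 0$, and two applications of Lemma \ref{lemma:weak metric balls}, with nested radii $r_0 < r$ chosen so that $\ff(r_0) < r$ and $\ff(r) < \delta$, produce disjoint open neighborhoods of $A$ and $B$ (any common point would force $\delta \leq \ff(r)$ via the weak triangle inequality). For compactness, given $(x_n) \subset \bar\calX$, either some $B(\go, R) \cap \calX$ contains infinitely many $x_n$ (finite, so a constant subsequence exists), or we may choose representative rays or segments $\gamma_n$ at $\go$ with $|\gamma_n| \to \infty$; since $\gamma_n(t) \in B(\go, t+C)$ and this ball is finite, a diagonal extraction yields a subsequence with $\gamma_n(t)$ eventually constant in $n$ for every integer $t$, and the pointwise limit $\gamma$ is a ray for which $\gamma_n = \gamma$ on $[0,T]$ eventually, giving $d_\kappa(x_n, [\gamma]) \leq 1/T \to 0$.

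\emph{Metrizability.} By Urysohn's theorem it suffices to prove second countability. The set $\calX$ is countable (as a union of finite balls at $\go$) and dense in $\bar\calX$ (any $[\gamma]$ is the $d_\kappa$-limit of its vertices $\gamma(n)$, with explicit divergence time $n$). I would then show that $\mathcal B = \{B(x, r)^\circ : x \in \calX, r \in \mathbb Q_{>0}\}$ is a countable basis: given $p \in U$ open, pick $r > 0$ with $B(p, r) \subseteq U$, a rational $r'$ with $\ff(r') < r$, and then $x \in \calX$ with $d_\kappa(x, p) < r_0$ for some $r_0$ satisfying $\ff(r_0) < r'$; two applications of the weak triangle inequality combined with Lemma \ref{lemma:weak metric balls} give $p \in B(x, r')^\circ \subseteq B(p, r) \subseteq U$.

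\emph{Group action.} The key reduction is that the topology on $\bar\calX$ is independent of the chosen basepoint: combined with the equivariance $d_\kappa^{\go}(p, q) = d_\kappa^{g\go}(gp, gq)$, any $g \in G$ is tautologically a homeomorphism of $(\bar\calX, d_\kappa^\go)$. For basepoint-independence, if $x_n \to p$ in $d_\kappa^\go$ via representatives $\alpha_n, \beta_n$ at $\go$ with divergence times $T_n \to \infty$, Lemma \ref{lem:change_basepoint} yields replacements $\alpha_n', \beta_n'$ at $\go'$ differing from the originals by at most $2\kappa(t) + d(\go, \go')$, so $d(\alpha_n'(T_n), \beta_n'(T_n)) \leq \eta(T_n)$ for the sublinear function $\eta(t) = 7\kappa(t) + 2d(\go, \go')$. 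The Uniform Divergence Lemma \ref{lem:weak divergence} then promotes this to $d(\alpha_n'(t'), \beta_n'(t')) \leq 3\kappa(t')$ for all $t' \leq \fg(T_n) \to \infty$, so the $\go'$-divergence times also diverge and $x_n \to p$ in $d_\kappa^{\go'}$.

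\emph{The $\calU$-small statement, and main obstacle.} Suppose for contradiction that infinitely many distinct $x_n \in \calX$ have $B(x_n, R) \cap \calX$ not $\calU$-small. Compactness gives $x_n \to p \in \bar\calX$ along a subsequence; Remark \ref{rem:discrete} excludes $p \in \calX$ (it would force $x_n = p$ eventually, contradicting distinctness), so $p \in \mathfrak B\calX$, and we pick $U_p \in \calU$ and $r > 0$ with $B(p, r) \subseteq U_p$. The crucial step is to upgrade the convergence $x_n \to p$ of centers to uniform convergence of the entire $R$-balls. For $y \in B(x_n, R) \cap \calX$, pick segments $\beta_n$ from $\go$ to $x_n$ converging pointwise to a representative ray $\alpha$ of $p$, and any segment $\tilde\beta_n^y$ from $\go$ to $y$. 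Lemma \ref{lem:convex} applied at $t^* = \min(|\beta_n|, |\tilde\beta_n^y|)$, which diverges with $d(\go, x_n)$, gives $d(\tilde\beta_n^y(t), \beta_n(t)) \leq t(2R+C)/(t^* - C) + 2\kappa(t)$ for $t \leq t^*$; for any fixed integer $T$ and $n$ large, this is at most $3\kappa(T)$ at $t=T$ uniformly in $y$, and combining with $\beta_n(T) = \alpha(T)$ gives $d_\kappa(y, p) \leq 1/T$ uniformly, contradicting badness once $1/T < r$. I expect this final upgrade --- passing from convergence of the centers to uniform convergence of the whole $R$-balls around them --- to be the main obstacle, since unlike the other parts it cannot be read off from the weak metric alone and genuinely requires the convexity lemma.
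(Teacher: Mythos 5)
Your proposal is correct and follows essentially the same route as the paper's proof: Hausdorffness via Lemma \ref{lem:nonzero} and Lemma \ref{lemma:weak metric balls}, sequential compactness via finiteness of balls and pointwise limits of rays, second countability from rational-radius balls centered in the countable dense set $\calX$, and continuity of the $G$-action via basepoint-independence of $d_\kappa$ using Lemmas \ref{lem:change_basepoint} and \ref{lem:weak divergence}. The only divergence is in the null-balls statement, where the paper extracts a Lebesgue number (Lemma \ref{lem:Lebesgue_number}) and asserts that only finitely many $R$-balls have $d_\kappa$-diameter exceeding $\epsilon$, while you run an equivalent compactness-contradiction argument and make the underlying convexity estimate (Lemma \ref{lem:convex}) explicit; the geometric content is the same.
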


\begin{proof}
    \emph{Hausdorff}. Let $x,y \in \bar\calX$ be distinct, so that $d_{\kappa}(x,y)>0$ by Lemma \ref{lem:nonzero}.

    Let $B_r(x)$ and $B_r(y)$ be disjoint $d_{\kappa}$-balls around $x,y$, respectively, which exist since $d_\kappa(x,y)>0$ and since $d_\kappa$ is a weak metric.  Then Lemma \ref{lemma:weak metric balls} implies that the interiors of $B_r(x)$ and $B_s(y)$ are disjoint open sets containing $x,y$, so we are done.

    \par\medskip

    \emph{Sequentially compact}. We now show that $\bar\calX$ is sequentially compact. Any sequence that has infinitely many elements in some ball of $\calX$ subconverges, since balls in $\calX$ are finite. For a sequence $(x_n)$ without this property, we choose rays, resp. $(1,C)$-quasi-geodesics, $\gamma_n$ starting at $\go$ and representing, resp, ending at $x_n$. A point-wise limit of a subsequence of the $\gamma_n$ is a ray $\gamma$ which shares larger and larger initial segments with the $\gamma_n$ of the subsequence. It is readily seen that $(x_n)$ subconverges to $[\gamma]$.

    \par\medskip

    \emph{Compact metrizable}. Since $\bar\calX$ is sequentially compact and Hausdorff, by Urysohn's metrization theorem we only need to show that it is second-countable to show that it is compact and metrizable. We claim in fact that interiors of $d_{\kappa}$-balls of rational radius around points of $\calX$ generate the topology. To do so, it suffices to find for each $p\in \mathfrak B\calX$ and $r>0$ a $d_\kappa$-ball with rational radius $r'$, centered at a point $x$ of $\calX$, containing $p$ in its interior, and contained in the $d_\kappa$-ball $B$ of radius $r$ around $p$. Choose $r''$ small enough that $\mathfrak{f}(r'')<r$, and $r'<r''$ rational and small enough that $\mathfrak{f}(r')<r''$, for $\mathfrak f$ as in the definition of weak metric. If we choose $x$ to be a point sufficiently far along a ray towards $p$, then we have $d_\kappa(p,x)<r'$. By Lemma \ref{lemma:weak metric balls} (the ``moreover'' part), $p$ is contained in the interior of $d_\kappa$-ball $B_{r''}(x)$. Moreover, since $d_\kappa$ is a weak metric, any point in this ball lies at $d_\kappa$-distance at most $\mathfrak{f}(\max\{r'', d_\kappa(x,p)\})=\mathfrak{f}(r'')<r$ from $p$, that is, the ball is contained in $B$, as required.

    \par\medskip

    \emph{$G$-action}. To show that the $G$-action on $\calX$ extends continuously on $\bar\calX$ it suffices to show that the metric $d_\kappa$ changes in a controlled way when changing the basepoint. More precisely, let $p$ be a basepoint, and let $d_{\kappa}^p$ be the weak metric defined exactly as $d_\kappa$ except that we consider representative rays and segments starting at $p$. We have to show that for all $x\in\bar X$ and $r>0$ there exists a $d_\kappa^p$-ball centered at $x$ and contained in the $d_\kappa$-ball centered at $x$ of radius $r$. Consider $y$ in the latter ball, which means that there exist rays/segments $\alpha$ and $\beta$ (depending on whether $x/y$ lie in the interior or in the boundary) representing $x$ and $y$ and starting at $\go$ such that $d(\alpha(t'),\beta(t'))\leq 3\kappa(t')$ for some $t'\in[1/(2r),1/r]$. By Lemma \ref{lem:change_basepoint} (or a simpler argument in the case of segments) there are rays/segments $\alpha'$ and $\beta'$ representing $x$ and $y$ such that $d(\alpha(t'),\beta(t'))\leq 3\kappa(t')+2d(\go,p)$. By Lemma \ref{lem:weak divergence} for the function $t\mapsto 3\kappa(t)+2d(\go,p)$, we have a diverging function $\fg$ (which we can assume to be non-decreasing) such that $d(\alpha(\fg(t')),\beta(\fg(t')))\leq 3\kappa(\fg(t'))$, showing that $d_\kappa^p(x,y)\leq 1/\fg(t')\leq 1/\fg(1/(2r))$, as required.    

    \par\medskip

    \emph{Null balls}. Finally, the requirement about balls being small with respect to covers follows from combining two facts. First, there is a straightforward generalization of the existence of Lebesgue numbers for covers of compact metric spaces to the case of weak metrics; we observe this in Lemma \ref{lem:Lebesgue_number} below. Second, it is easy to see that given any $R,\epsilon>0$ there are only finitely many $R$-balls in $\calX$ with $d_\kappa$-diameter larger than $\epsilon$.
\end{proof}

\begin{lemma}\label{lem:Lebesgue_number}
    Let $d$ be a weak metric on the set $Z$, and suppose that $Z$ endowed with the metric topology is compact. Then for any open cover $\mathcal U$ of $Z$ there exists $\epsilon>0$ such that for all $z\in Z$ we have that $B(z,\epsilon)$ is contained in some $U\in\mathcal U$.
\end{lemma}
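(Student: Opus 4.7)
The plan is to mimic the standard compactness proof of the Lebesgue number lemma, with the two subtleties being that the weak triangle inequality only controls distances via the function $\ff$, and that weak metric balls need not be open. Both issues are handled by Lemma \ref{lemma:weak metric balls} together with the fact that $\ff(t)\to 0$ as $t\to 0$.

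First I would fix the open cover $\mathcal{U}$, and for each $z\in Z$ choose some $U_z\in\mathcal{U}$ containing $z$. By the definition of the metric topology there exists $r_z>0$ with $B(z,r_z)\subseteq U_z$. Since $\ff(t)\to 0$ as $t\to 0$, I can then choose $\epsilon_z>0$ with $\ff(\epsilon_z)<r_z$. By Lemma \ref{lemma:weak metric balls}, $z$ lies in the interior of $B(z,\epsilon_z)$, so the collection $\{\operatorname{int}(B(z,\epsilon_z))\}_{z\in Z}$ is an open cover of $Z$. By compactness of $Z$ I would extract a finite subcover indexed by $z_1,\dots,z_n$, and set $\epsilon=\min_i\epsilon_{z_i}>0$.

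To verify that this $\epsilon$ works, I would take any $y\in Z$ and choose $i$ with $y\in\operatorname{int}(B(z_i,\epsilon_{z_i}))\subseteq B(z_i,\epsilon_{z_i})$, so $d(y,z_i)<\epsilon_{z_i}$. For any $w\in B(y,\epsilon)$ we have $d(y,w)<\epsilon\leq\epsilon_{z_i}$, and the weak triangle inequality together with the monotonicity of $\ff$ yields
\[
  d(z_i,w)\leq\ff(\max\{d(z_i,y),d(y,w)\})\leq\ff(\epsilon_{z_i})<r_{z_i},
\]
so $w\in B(z_i,r_{z_i})\subseteq U_{z_i}$. Hence $B(y,\epsilon)\subseteq U_{z_i}\in\mathcal{U}$, as desired.

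The only genuinely delicate point is the choice $\ff(\epsilon_z)<r_z$ (rather than simply $\epsilon_z<r_z$): this slack is precisely what ensures that after applying the weak triangle inequality in the final step, the resulting distance still lies strictly below $r_z$, so that the ball $B(z_i,r_{z_i})$ chosen at the start still absorbs $w$. Everything else is routine.
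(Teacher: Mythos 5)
Your proof is correct and follows essentially the same route as the paper: cover $Z$ by interiors of small balls $B(z,\epsilon_z)$ chosen so that $\ff(\epsilon_z)$ is below a radius that fits inside a cover element, extract a finite subcover by compactness, take $\epsilon$ to be the minimum, and conclude with one application of the weak triangle inequality and monotonicity of $\ff$. The only (cosmetic) difference is that you keep the intermediate radius $r_z$ explicit and use the strict inequality $\ff(\epsilon_z)<r_z$, which if anything is slightly cleaner than the paper's phrasing.
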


\begin{proof}
    Let $\ff$ be the function as in the definition of weak metric. For each $z\in Z$ let $\epsilon_z>0$ be such that $B(z,\ff(\epsilon_z))$ is contained in some $U_z\in\mathcal U$. Since balls are neighborhoods of their center by Lemma \ref{lemma:weak metric balls}, and since $Z$ is compact, there exist finitely many $z_i$ such that the corresponding balls cover $Z$. Let $\epsilon=\min_i \epsilon_{z_i}$. Then for any $z\in Z$ we have $d(z,z_i)<\epsilon$ for some $i$. If $z'$ is such that $d(z,z')<\epsilon$ then
    $$d(z_i,z')\leq \ff(\max\{d(z,z_i),(z,z')\})\leq \ff(\epsilon)\leq \ff(\epsilon_i),$$
    since $\ff$ is non-decreasing. This shows that $B(z,\epsilon)$ is contained in $U_{z_i}\in\mathcal U$, as required.
\end{proof}

\section{Finite dimension}
\label{sec:finite_dim} 

In this section, we prove that the combing boundary of a locally finite asymptotically CAT(0) space has finite covering dimension, provided that the interior has finite Assouad-Nagata dimension. We recall the definition of this notion:

\begin{definition}[Assouad-Nagata dimension]
    Let $(X,d)$ be a metric space.
    \begin{itemize}
        \item Given $R>0$, we say that a covering $\calU$ of $X$ is \emph{$R$-bounded} if $\diam(U)<R$ for all $U \in \calU$.
        \item Given $r>0$, we say that a covering $\calU$ has \emph{$r$-multiplicity} at most $k$ if for every $A \subset X$ with $\diam(A)<r$, we have $\#\{U \in \calU| U \cap A \neq \emptyset\}\leq k$.
        \item The \emph{Assouad-Nagata dimension} of $(X,d)$ is the smallest integer $n$ for which there exists a constant $M > 0$ such that for all $r > 0$, the space $X$ has a $Mr$-bounded covering with $r$-multiplicity at most $n + 1$.
        \begin{itemize}
            \item If no such integer exists, then we say $(X,d)$ is infinite dimensional.
        \end{itemize}
    \end{itemize}
\end{definition}

For a uniformly discrete space, that is, if there exists $\epsilon>0$ such that distinct points lie at distance at least $\epsilon$ from each other, for any small enough $r$ there always exist $r$-bounded coverings with $r$-multiplicity at most $1$. Therefore, in this context the Assouad-Nagata dimension is a linearly-controlled version of Gromov's asymptotic dimension.

We want to apply the results in this section to HHSs, which we can do in view of the following proposition that combines results in the literature.

\begin{proposition} \label{prop:fin AN dim HHS}
    If a colorable hierarchically hyperbolic space is proper and uniformly discrete then it has finite Assouad-Nagata dimension.
\end{proposition}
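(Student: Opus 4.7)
The plan is to combine Petyt's quasi-isometry theorem with the finiteness of Assouad--Nagata dimension for finite-dimensional CAT(0) cube complexes, then transfer the bound back to $\calX$ along the quasi-isometry.

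First, I would invoke Petyt's result \cite{petyt2021mapping}, already cited in the introduction, which asserts that every colorable HHS $\calX$ is quasi-isometric to a CAT(0) cube complex $Y$ whose dimension is bounded by the maximal number of pairwise orthogonal domains in $\mathfrak S$; this number is finite for any HHS structure, so $Y$ is a finite-dimensional CAT(0) cube complex.

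Next, I would appeal to the fact that a finite-dimensional CAT(0) cube complex has finite Assouad--Nagata dimension, bounded linearly in its cubical dimension. One way to extract this is via the observation that the hyperplanes of an $n$-dimensional CAT(0) cube complex admit a coloring by finitely many colors so that same-colored hyperplanes are pairwise disjoint, producing a decomposition of $Y$ into pieces quasi-isometric to products of trees; since trees have AN dimension $1$ and both the product and finite union estimates for AN dimension are linearly controlled, this yields a linear bound on the AN dimension of $Y$. Alternatively, the standard proofs of finite asymptotic dimension for CAT(0) cube complexes already produce covers with linearly-controlled diameters, so the AN bound can be read off directly.

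Finally, I would transfer this bound to $\calX$ via quasi-isometry invariance of Assouad--Nagata dimension. Pulling back a linearly-controlled $Mr$-bounded cover of $Y$ with $r$-multiplicity $n+1$ through a quasi-isometry $\calX\to Y$ produces an $(M'r)$-bounded cover of $\calX$ whose $(r/L)$-multiplicity is at most $n+1$, where $M'$ and $L$ depend only on the quasi-isometry constants; uniform discreteness and properness of $\calX$ are used here to ensure that preimages of bounded sets remain bounded with uniform control, and to guarantee that one can replace the pulled-back cover by an honest bounded cover without disturbing the multiplicity. The main obstacle will be this transfer step: while Assouad--Nagata dimension is a quasi-isometry invariant in this generality, one must take care that the \emph{linear} control in the dimension bound is preserved under pullback, which is exactly where the uniformly discrete hypothesis enters essentially.
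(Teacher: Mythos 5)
There is a genuine gap at the heart of your argument: the claim that a finite-dimensional CAT(0) cube complex has finite Assouad--Nagata dimension, for which neither justification you offer works. The coloring argument fails because partitioning the hyperplanes of $Y$ into finitely many pairwise non-crossing families is exactly the condition for $Y$ to embed into a finite product of trees, and this is governed by the chromatic number of the crossing graph, which can already be unbounded for $2$-dimensional CAT(0) cube complexes (triangle-free graphs of arbitrarily large chromatic number arise as crossing graphs). The fallback claim that ``the standard proofs of finite asymptotic dimension for CAT(0) cube complexes already produce covers with linearly-controlled diameters'' is also not correct: Wright's proof that $\mathrm{asdim}$ is bounded by the cubical dimension does not give linear control on the scales, and finiteness of Assouad--Nagata dimension for general finite-dimensional CAT(0) cube complexes is not an available black box (the paper pointedly does not rely on any such statement). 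A secondary issue is that the quasicubicality theorem you invoke is stated for colorable HH\emph{groups}, whereas the proposition concerns proper, uniformly discrete colorable HH\emph{spaces}, so the citation as used does not directly cover the case at hand. Your final transfer step, by contrast, is fine: Assouad--Nagata dimension is monotone under quasi-isometric embeddings, and no delicate use of uniform discreteness is needed there.

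The paper's proof routes around cube complexes entirely. By Hagen--Petyt, a colorable HHS quasi-isometrically embeds into a finite product of Bestvina--Bromberg--Fujiwara projection complexes; each projection complex is hyperbolic and has finite asymptotic dimension (using the bound on $\mathrm{asdim}\,\calC(U)$ from \cite{HHS:asdim}), hence has finite Assouad--Nagata dimension by Lang--Schlichenmaier, or alternatively embeds into a finite product of simplicial trees by Hume. Since Assouad--Nagata dimension is finitely subadditive under products and monotone under quasi-isometric embeddings, the conclusion follows. To salvage your approach you would need to replace the finite-dimensional CAT(0) cube complex by a target for which finite Assouad--Nagata dimension is actually known --- a finite product of trees or of hyperbolic spaces of finite asymptotic dimension --- which is precisely what the paper does.
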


\begin{proof}
    Let $\calX$ be an HHS as in the statement.  As observed in \cite{HagenPetyt}, $\calX$ admits a quasi-isometric embedding into a product of projection complexes in the sense of Bestvina-Bromberg-Fujiwara \cite{BBF}. It is proved in \cite[Corollary 3.3]{HHS:asdim} that $\calC(U)$ has asymptotic dimension bounded in terms of $\calX$ (generalizing \cite{BellFuj_asdim}). It follows then, as in \cite{BBF}, that each of the projection complexes, which are build from these hyperbolic spaces, has finite asymptotic dimension.

    Here are two ways to complete the proof: We can invoke Lang-Schlichenmaier \cite[Proposition 3.5]{LangSchlich_Nagata}, which says that each of these projection complexes has finite Assouad-Nagata dimension (since they are hyperbolic with finite asymptotic dimension).  Alternatively, we can invoke Hume \cite[Proposition 5.2]{Hume_andim}, which says that each projection complex can be quasi-isometrically embedded in a finite product of simplicial trees, and make the same conclusion.  Either way, $\calX$ quasi-isometrically embeds into a finite product of spaces with finite Assouad-Nagata dimension, and we are done.
\end{proof}

It seems possible that colorability is an unnecessary assumption, and in particular that the arguments of \cite{HHS:asdim} can be adapted to show finite Assouad-Nagata dimension, so we ask:

\begin{question}
    Does every hierarchically hyperbolic space which is proper and uniformly discrete discrete have finite Assouad-Nagata dimension?
\end{question}

\subsection{Standing assumptions}

For the rest of this section, we will make the following assumptions on $(\calX,d)$:
\begin{enumerate}
    \item  $\calX$ is asymptotically CAT(0) as in Definition \ref{defn:asymp CAT0}.  We fix a constant $C$ such that any two points of $\cuco X$ are joined by a discrete $(1,C)$-quasi-geodesic. Moreover, we denote by $\kappa$ the function in the asymptotically CAT(0) condition. For convenience, we assume $\kappa(t)\geq 10C$ for all $t$.

    \item Balls in $\calX$ are finite.
\end{enumerate}

We use the same convention as in Section \ref{sec:boundary} and refer to discrete $(1,C)$-quasi-geodesic rays simply as rays.

\begin{theorem}
\label{thm:finite_dim}
    Under the assumptions above, $\mathfrak B \calX$ has covering dimension bounded by the Assouad-Nagata dimension of $\calX$.
\end{theorem}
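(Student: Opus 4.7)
The strategy is to bound the covering dimension of $\mathfrak B\calX$ by exhibiting, for every $\epsilon>0$, an open cover of $\mathfrak B\calX$ of mesh less than $\epsilon$ (measured in the weak metric $d_\kappa$) and multiplicity at most $n+1$, where $n=\dim_{AN}(\calX)$. Since $\bar\calX$ is compact and metrizable by Theorem \ref{thm:compact_boundary}, and since $d_\kappa$-balls form a neighborhood basis by Lemma \ref{lemma:weak metric balls}, this standard property of covers implies $\dim(\mathfrak B\calX)\le n$. The covers themselves will be constructed as ``shadows'' in the boundary of an Assouad--Nagata covering of $\calX$ at a large time scale.

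The central construction proceeds as follows. Let $M$ be the Assouad--Nagata constant of $\calX$ and set $\eta(t)=3M\kappa(t)$, which is sublinear; let $\fg$ be the diverging function produced by the Uniform Divergence Lemma \ref{lem:weak divergence} for this $\eta$. Given $\epsilon>0$, choose $t$ large enough that $1/\fg(t)<\epsilon$. Let $r=3\kappa(t)$ and pick an Assouad--Nagata cover $\calV$ of $\calX$ at scale $r$, so that every $V\in\calV$ has diameter at most $Mr=\eta(t)$ and every subset of $\calX$ of diameter less than $r$ meets at most $n+1$ members of $\calV$. For each $V\in\calV$ define the shadow
\[
  \hat V=\{[\gamma]\in\mathfrak B\calX:\text{some ray representative from }\go\text{ satisfies }\gamma(t)\in V\}.
\]
Three of the required properties follow directly. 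Coverage: every $[\gamma]$ has a representative whose time-$t$ value lies in some $V$. Small mesh: if $[\gamma_1],[\gamma_2]\in\hat V$ with representatives landing in $V$, then $d(\gamma_1(t),\gamma_2(t))\le \eta(t)$, and Lemma \ref{lem:weak divergence} gives $d(\gamma_1(t'),\gamma_2(t'))\le 3\kappa(t')$ for $t'\le\fg(t)$, so $d_\kappa([\gamma_1],[\gamma_2])\le 1/\fg(t)<\epsilon$. Bounded multiplicity: by Corollary \ref{cor:unif_close} the set of time-$t$ values of all representatives from $\go$ of a fixed $[\gamma]$ has diameter at most $2\kappa(t)<r$, so by the $r$-multiplicity of $\calV$ it meets at most $n+1$ pieces, and hence $[\gamma]$ lies in at most $n+1$ shadows.

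The step I expect to be the main obstacle is promoting the shadows $\hat V$ to genuinely open sets in $\mathfrak B\calX$ without breaking the multiplicity bound or the covering. The needed observation is again Uniform Divergence combined with Corollary \ref{cor:unif_close}: if a representative of $[\gamma]$ has $\gamma(t)$ at distance at least $10\kappa(t)$ from $\calX\setminus V$, then any $[\gamma']$ sufficiently $d_\kappa$-close to $[\gamma]$ admits representatives with $d(\gamma(t),\gamma'(t))\le 3\kappa(t)+2\kappa(t)=5\kappa(t)$, forcing $\gamma'(t)\in V$ and hence $[\gamma']\in\hat V$; so the ``$10\kappa(t)$-deep'' portion of $\hat V$ is open. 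The remaining issue is that the deep portions of the $\hat V$ must still cover $\mathfrak B\calX$: this requires that every point of $\calX$ sit $10\kappa(t)$-deep inside some piece of $\calV$. I plan to achieve this by invoking a mildly strengthened form of the Assouad--Nagata property, namely that one can choose the cover so that balls of radius comparable to $\kappa(t)$ are contained in some piece (for instance, by applying the definition at the slightly enlarged scale $r+O(\kappa(t))$, which changes none of the sublinear estimates above). Once such open refinements of arbitrarily small mesh and multiplicity $\le n+1$ are produced, the claimed bound $\dim(\mathfrak B\calX)\le\dim_{AN}(\calX)$ follows from the standard characterization of covering dimension for compact metric spaces.
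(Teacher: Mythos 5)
Your proposal is correct and follows essentially the same route as the paper: project boundary points to the sphere/annulus at a large time $t$ via a chosen representative ray, pull back an Assouad--Nagata cover, control the mesh with the Uniform Divergence Lemma \ref{lem:weak divergence}, and control the multiplicity using the $2\kappa$-closeness of representatives (Corollary \ref{cor:unif_close}); the only real difference is that the paper achieves openness by fattening the preimages with small $d_\kappa$-balls upstairs (using Lemma \ref{lem:small_image} to preserve multiplicity), whereas you thicken the cover downstairs and restrict to deep portions. One small correction: merely ``applying the Assouad--Nagata definition at the enlarged scale $r+O(\kappa(t))$'' does not yield the property that every point sits $10\kappa(t)$-deep in some piece (e.g.\ the cover of $\mathbb{R}$ by unit intervals has small multiplicity but no deep points near the endpoints); you should instead replace each piece $V$ by its $c\kappa(t)$-neighborhood, which is the standard trick and preserves both the diameter bound and the multiplicity bound at a comparable scale. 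Also note that your final reduction (arbitrarily fine covers of order $\le n+1$ imply the dimension bound) requires the Lebesgue-number statement for weak metrics, which is exactly Lemma \ref{lem:Lebesgue_number}.
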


From now on we fix the setup of Theorem \ref{thm:finite_dim}, and we fix a basepoint $\go\in\calX$.  For $T>0$ we consider the closed annulus $A_T$ in $\calX$ of inner radius $T-C$ and outer radius $T+C$, centered at $\go$. We have maps $\sigma_T:\mathfrak B\calX\to A_T$ given by letting $\sigma_T(x)$ be $\gamma(T)$ for an arbitrarily chosen ray starting at $\go$ representing $x$. First of all, we show that that the images of small balls in $\mathfrak B\calX$ (with respect to the weak metric $d_\kappa$ as in Subsection \ref{subsec:weak_metric}) have controlled image in large annuli:

\begin{lemma}\label{lem:small_image}
    For all $T>0$ there exists $r>0$ such that the following holds. For all $x,y\in\mathfrak B\calX$ we have that if $d_\kappa(x,y)<r$ then $d(\sigma_T(x),\sigma_T(y))\leq 7\kappa(T)$.
\end{lemma}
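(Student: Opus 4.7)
The plan is to unwind the definition of the weak metric $d_\kappa$ and combine it with the Uniform divergence lemma (Lemma \ref{lem:weak divergence}) together with the uniqueness-up-to-$2\kappa$ of ray representatives from a fixed basepoint (Corollary \ref{cor:unif_close}).

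\medskip

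Here is how I would carry out the proof. Suppose $d_\kappa(x,y)<r$. Then unwinding the definition of $d_\kappa$ (with $\go$ as basepoint), there exist rays $\alpha$ representing $x$ and $\beta$ representing $y$, both starting at $\go$, together with some time $t>1/r$ such that
\[
d(\alpha(t),\beta(t))\leq 3\kappa(t).
\]
Now apply Lemma \ref{lem:weak divergence} with the sublinear function $\eta=3\kappa$: this yields a diverging function $\fg$ such that whenever two rays (or segments) satisfy $d(\gamma_1(s),\gamma_2(s))\leq 3\kappa(s)$ at some time $s$, then $d(\gamma_1(t'),\gamma_2(t'))\leq 3\kappa(t')$ for every $t'\leq \fg(s)$. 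Since $\fg$ diverges, we can pick $t_0$ with $\fg(s)\geq T$ for all $s\geq t_0$, and set $r=1/t_0$. Then any $t>1/r=t_0$ witnessing the estimate above satisfies $\fg(t)\geq T$, so that in particular
\[
d(\alpha(T),\beta(T))\leq 3\kappa(T).
\]

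\medskip

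To conclude, we compare $\alpha(T)$ and $\beta(T)$ with $\sigma_T(x)$ and $\sigma_T(y)$. By construction, $\sigma_T(x)=\gamma_x(T)$ for some ray $\gamma_x$ starting at $\go$ and representing $x$; since $\alpha$ also starts at $\go$ and represents $x$, Corollary \ref{cor:unif_close} gives $d(\alpha(T),\sigma_T(x))\leq 2\kappa(T)$, and symmetrically $d(\beta(T),\sigma_T(y))\leq 2\kappa(T)$. The triangle inequality then yields
\[
d(\sigma_T(x),\sigma_T(y))\leq 2\kappa(T)+3\kappa(T)+2\kappa(T)=7\kappa(T),
\]
as required.

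\medskip

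There is no real obstacle here: the content has already been packaged into Lemma \ref{lem:weak divergence} and Corollary \ref{cor:unif_close}, and the only care needed is in the quantifier ordering when choosing $r$ (namely, taking $r=1/t_0$ where $t_0$ is large enough for $\fg$ to exceed the fixed $T$). Note also that $\sigma_T(x)$ depends on an arbitrary choice of representative ray, but Corollary \ref{cor:unif_close} is exactly what ensures this choice costs at most $2\kappa(T)$.
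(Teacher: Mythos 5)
Your proof is correct and follows essentially the same route as the paper's: unwind the definition of $d_\kappa$, apply Lemma \ref{lem:weak divergence} to bring the witnessing rays within $3\kappa(T)$ at time $T$, and then use Corollary \ref{cor:unif_close} to pass to the particular representatives defining $\sigma_T$, giving $2\kappa(T)+3\kappa(T)+2\kappa(T)=7\kappa(T)$. Your version is merely more explicit about the quantifier ordering in the choice of $r$, which the paper leaves implicit.
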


\begin{proof}
For all sufficiently small $r$, in view of Lemma \ref{lem:weak divergence} we have that there exist rays $\alpha',\beta'$ representing $x,y$ such that $d(\alpha'(T),\beta'(T))\leq 3\kappa(T)$.
Let $\alpha,\beta$ be the rays used to construct $\sigma_T(x),\sigma_T(y)$. Since any two rays starting at the same point and representing the same boundary point stay within $2\kappa$ of each other (Corollary \ref{cor:unif_close}), we have $d(\alpha(T),\beta(T))\leq 7\kappa(T)$, as required.
\end{proof}

Conversely, pre-images under $\sigma_T$ of sets of controlled size are small:

\begin{lemma}\label{lem:small_preimage}
    For all $\epsilon_0>0$ and $K>0$ there exists $T>0$ such that if $U\subseteq A_T$ has diameter at most $K\kappa(T)$ then $\sigma_T^{-1}(U)$ has diameter at most $\epsilon_0$. 
\end{lemma}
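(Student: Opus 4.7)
The plan is to reduce the statement to a direct application of the Uniform divergence lemma (Lemma~\ref{lem:weak divergence}). The intuition is that the hypothesis gives two representative rays for $x,y \in \sigma_T^{-1}(U)$ that are $K\kappa(T)$-close at time $T$; sublinear closeness at a large time $T$ should force the rays to be within $3\kappa$ of one another at a still large, but controlled, earlier time, which is exactly what $d_\kappa$-closeness demands.

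More precisely, first I would feed the sublinear function $\eta(t) := K\kappa(t)$ into Lemma~\ref{lem:weak divergence} to obtain a diverging function $\fg = \fg_{K}$ depending only on $K$ and $\kappa$, with the property that whenever two rays satisfy $d(\gamma_1(t),\gamma_2(t)) \leq K\kappa(t)$, they already satisfy $d(\gamma_1(t'),\gamma_2(t')) \leq 3\kappa(t')$ for every $t' \leq \fg(t)$. Then, since $\fg$ diverges, I would choose $T$ large enough so that $\fg(T) > 1/\epsilon_0$.

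Next, given any $U \subseteq A_T$ with $\diam(U) \leq K\kappa(T)$ and any two points $x,y \in \sigma_T^{-1}(U)$, let $\alpha,\beta$ be the chosen rays from $\go$ used to define $\sigma_T(x) = \alpha(T)$ and $\sigma_T(y) = \beta(T)$. Then
\[
d(\alpha(T),\beta(T)) \leq \diam(U) \leq K\kappa(T),
\]
so Lemma~\ref{lem:weak divergence} applies and yields $d(\alpha(t'),\beta(t')) \leq 3\kappa(t')$ for all $t' \leq \fg(T)$. In particular the supremum defining $d_\kappa(x,y)$ (via this specific pair of representatives) is at least $\fg(T)$, so $d_\kappa(x,y) \leq 1/\fg(T) < \epsilon_0$. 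Taking the supremum over $x,y \in \sigma_T^{-1}(U)$ gives the desired diameter bound.

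There is no real obstacle here beyond invoking Lemma~\ref{lem:weak divergence} correctly; the only mild points of care are (i) that $K\kappa$ remains sublinear, which is immediate, and (ii) that although $d_\kappa$ is defined as an infimum over all representative rays starting at $\go$, using the fixed representatives that define $\sigma_T$ only gives an upper bound on $d_\kappa$, which is all we need.
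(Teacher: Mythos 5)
Your proposal is correct and follows the paper's own argument exactly: apply the Uniform Divergence Lemma~\ref{lem:weak divergence} with $\eta = K\kappa$ to get a diverging function $\fg$, choose $T$ with $1/\fg(T)<\epsilon_0$, and conclude from the definition of $d_\kappa$. The extra remarks about sublinearity of $K\kappa$ and the infimum in the definition of $d_\kappa$ are correct and just make explicit what the paper leaves implicit.
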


\begin{proof}
   This follows from Lemma \ref{lem:weak divergence}, which implies that given two rays $\alpha,\beta$ with $d(\alpha(T),\beta(T))\leq K\kappa(T)$ we have $d(\alpha(\fg(T)),\beta(\fg(T)))\leq 3 \kappa(\fg(T))$, for some fixed diverging function $\fg$. We can choose $T$ large enough that $1/\fg(T)<\epsilon_0$, and the conclusion follows directly from the definition of $d_\kappa$.
\end{proof}

We are now ready to prove the theorem.

\begin{proof}[Proof of Theorem \ref{thm:finite_dim}]
    Let $n$ be the Assouad-Nagata dimension of $\calX$ and let $M$ be as in the definition.
    
    Fix an open cover $\mathcal U$ of $\mathfrak B\calX$, which we have to refine to a cover of multiplicity at most $n+1$.
    
    We now fix various constants. Let $\epsilon>0$ be (a Lebesgue number) as in Lemma \ref{lem:Lebesgue_number}, and let $\epsilon_0>0$ be such that $\ff(\max\{\epsilon_0,\ff(\epsilon_0)\})<\epsilon$, where $d_\kappa$ is a weak metric with parameter $\ff$ (see Proposition \ref{prop:weak metric}). Let $T$ be as in Lemma \ref{lem:small_preimage} for this $\epsilon_0$ and $K= 10M$. Let $r$ be as in Lemma \ref{lem:small_image} for the given $T$.
    
    Consider now any $14M\kappa(T)$-bounded cover $\mathcal A$ of $A_T$ of $14\kappa(T)$-multiplicity at most $n+1$, which exists by the choice of $n$ and $M$. For each $A\in \mathcal A$ we consider the open subset of $\mathfrak B\calX$ defined by $U_A=\bigcup_{x\in\sigma_T^{-1}(A)} B(x,\min\{\epsilon_0,r\})$. The $U_A$ clearly cover $\mathfrak B\calX$, so we are only left to show that each is contained in some $U\in\mathcal U$, and that the multiplicity of the cover is bounded by $n+1$.
    
    \emph{$\mathcal U$-smallness}. We prove the first property by showing that each $U_A$ has diameter less than $\epsilon$. Given any $x',y'\in U_A$, respectively lying in $B(x,\epsilon_0)$ and $B(y,\epsilon_0)$ for some $x,y\in \sigma_T^{-1}(A)$ we can estimate:
    $$d_\kappa(x,y')\leq \ff(\max\{d_\kappa(x,y),d_\kappa(y,y')\})\leq \ff(\epsilon_0),$$
    using the conclusion of Lemma \ref{lem:small_preimage}, and in turn
$$d_\kappa(x',y')\leq \ff(\max\{d_\kappa(x',x), d_\kappa(x,y')\})\leq \ff(\max\{\epsilon_0,\ff(\epsilon_0)\})<\epsilon,$$
as required.

\emph{Multiplicity.} To bound the multiplicity, we show that if $x'\in U_{A_1}\cap U_{A_2}$ for some $A_i\in\mathcal A$ then $B(\sigma_T(x'),7\kappa(T))$ intersects both $A$ and $A'$. We have that $d_\kappa(x',x_i)<r$ for some $x_i$ with $\pi_T(x_i)=a_i\in A_i$. In view of the conclusion of Lemma \ref{lem:small_image}, we have $d(\pi_T(x'),a_i)<7\kappa(T)$, as required.
\end{proof}

\section{A $\calZ$-structure}
\label{sec:ER}

The goal of this section is Theorem \ref{thm:Z} which states that, under certain assumptions on an asymptotically CAT(0) space $\calX$, our boundary yields a compactification of (Vietoris--Rips complexes of) $\calX$ that is ``topologically controlled''. In particular, said compactification will be contractible, and more specifically it will be a Euclidean retract, a notion we recall below, alongside various related notions.

\subsection{Euclidean retracts, $\calZ$-structures and related definitions}

In this subsection, we recall the definitions of Euclidean retract, absolute neighborhood retract, and $\calZ$-set.  We refer the reader to Guilbault-Moran \cite{GuilMor} for a discussion of $\calZ$-structures, and Borsuk-Dydak \cite{BorDyd} for more on shape theory. We will rarely use the actual definitions of the notions recalled below, rather using various criteria to check them, and using known results when we apply them.

The most important notion for us is that of Euclidean retract, as it is required in the criterion for the Farrell--Jones conjecture that we will check for suitable hierarchically hyperbolic groups.

\begin{definition}[Euclidean retract]
A compact space $X$ is a Euclidean retract (or ER) if it can be embedded
in some $\mathbb R^n$ as a retract.
\end{definition}

The way that we will check that compactifications of (Vietoris--Rips complexes of) certain asymptotically CAT(0) spaces are ER uses absolute neighborhood retracts and $\calZ$-sets. We now recall the definition of the former notion even though we will never use it explicitly:

\begin{definition}[ANR]\label{defn:ANR}
    A locally compact space $X$ is an \emph{absolute neighborhood retract (ANR)} if whenever $X$ is embedded as a closed subset of a space $Y$, some neighborhood of $X$ in $Y$ retracts onto $X$.
\end{definition}

Locally finite simpicial complexes are known to be ANRs \cite[Corollary 3.5]{polyhedron_anr}:

\begin{lemma}
 \label{lem:interior ANR}
 Any locally finite simplicial complex is an ANR.
\end{lemma}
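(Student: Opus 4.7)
The plan is to apply Hanner's theorem, which asserts that a metrizable space that is locally an ANR is itself an ANR. Thus it suffices to verify two things: that a locally finite simplicial complex $K$ is metrizable, and that every point of $K$ has an ANR neighborhood.

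First, I would establish metrizability. On a locally finite simplicial complex the CW topology agrees with the topology induced by the natural path (pseudo-)metric obtained by identifying each $n$-simplex with the standard $n$-simplex in $\mathbb{R}^{n+1}$ and taking infima over PL paths. Local finiteness is crucial: it guarantees that each point has only finitely many simplices incident to it, which forces the two topologies to coincide. Next, to verify the local ANR property, I would observe that each point of $K$ lies in the open star of some vertex $v$, whose closed star $L$ is a finite subcomplex by local finiteness. Since open subsets of ANRs are ANRs, it suffices to prove that the finite simplicial complex $L$ is an ANR.

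For the finite simplicial complex $L$, the plan is to embed $L$ as a compact polyhedron in some $\mathbb{R}^N$ via barycentric coordinates and construct a neighborhood retraction $\mathbb{R}^N \supset U \to L$ inductively over the skeleta. Assuming a retraction is already defined on an open neighborhood of the $k$-skeleton $L^{(k)}$, I would extend it across each closed $(k{+}1)$-simplex using the cone structure centered at the barycenter, gluing in a map that sends a small collar of $\partial\Delta^{k+1}$ back to $L^{(k)}$ and the rest into $\Delta^{k+1}$; this can be done coherently via a single application of Dugundji's extension theorem, using convexity of the target simplices. The main obstacle is this inductive construction: conceptually straightforward, but it requires bookkeeping to ensure the collars chosen at each stage are compatible and avoid creating unwanted overlaps. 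Once it is in hand, Hanner's theorem handles the global assembly for arbitrary locally finite $K$, so no additional gluing argument is needed at the infinite scale.
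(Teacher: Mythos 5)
Your proposal is essentially correct, but note that the paper does not prove this lemma at all: it simply cites the literature (\cite[Corollary 3.5]{polyhedron_anr}), so any argument you give is "extra" relative to the text. Your architecture is the standard one and is sound: locally finite complexes are metrizable (the CW and path-metric topologies agree precisely because of local finiteness), every point has an open-star neighborhood sitting inside a finite closed star, open subsets of ANRs are ANRs, and Hanner's local-to-global theorem then reduces everything to the finite case. Two small remarks on the finite case. First, the invocation of Dugundji's extension theorem is slightly misplaced: Dugundji extends maps \emph{into} convex sets (or locally convex targets), whereas your retraction lands in $L$, which is not convex; but you do not actually need it, since the extension over each closed $(k{+}1)$-simplex can be done by hand via radial projection from an interior point onto the union of a collar of $\partial\Delta^{k+1}$ with the simplex itself. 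Second, the skeleton induction with its collar bookkeeping can be avoided entirely by the union theorem for ANRs (if $X=A\cup B$ with $A$, $B$, $A\cap B$ closed ANRs, then $X$ is an ANR): each closed simplex is convex, hence an AR, and one inducts on the number of simplices, the intersection of a simplex with a subcomplex being a finite complex of lower complexity. Either route is fine; the union-theorem version buys you a cleaner induction at the cost of quoting one more classical result, while your version is more self-contained but requires the collar compatibility you flagged.
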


We can now define $\calZ$-sets:

\begin{definition}[$\calZ$-set]\label{defn:Z-structure}
    A closed subset $Z$ in a compact ANR $X$ is a $\calZ$-set if for every open set $U$ of $X$ the inclusion $U - Z\hookrightarrow U$ is a homotopy equivalence.
\end{definition}

ERs and ANRs are related in the following well-known way (a converse for compact metrizable spaces also holds, but we do not need it):

\begin{theorem}
\label{thm:ER}
Any compact, metrizable, contractible space of finite covering dimension space is a ER.
\end{theorem}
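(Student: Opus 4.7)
The approach is the classical three-step reduction: first embed $X$ as a closed subset of a Euclidean space, then upgrade this embedding to a retract by verifying the absolute retract (AR) property, which in the compact metrizable setting decomposes into \emph{contractible} plus \emph{ANR}. Concretely, by the Menger--N\"obeling embedding theorem, any compact metrizable space of covering dimension at most $n$ embeds topologically as a compact, and hence closed, subset of $\mathbb R^{2n+1}$. So we may fix an embedding $X\hookrightarrow \mathbb R^{N}$ with $X$ closed, and the remaining task is to build a continuous retraction $\mathbb R^{N}\to X$.

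The standard tool to produce such a retraction is the characterization of absolute retracts: a compact metrizable space $Y$ is an AR if and only if $Y$ is a contractible ANR, and by definition an AR is a retract of every space in which it is embedded as a closed subset (in particular of $\mathbb R^{N}$). Since contractibility of $X$ is given by hypothesis, the whole proof reduces to checking that $X$ is an ANR in the sense of Definition \ref{defn:ANR}.

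For this ANR step I would invoke Borsuk's theorem that a finite-dimensional compact metrizable space is an ANR as soon as it is locally contractible (more precisely, locally $n$-connected for all $n$ up to its dimension). I expect this to be the main obstacle: local contractibility is not formally implied by the other hypotheses in full generality, so the verification must use additional structural information about $X$. In the setting where this theorem is applied in the paper, $X$ is a compactification of a locally finite simplicial complex (a Vietoris--Rips complex over an asymptotically CAT(0) space) by an asymptotic boundary, and Lemma \ref{lem:interior ANR} already gives the ANR property on the interior; the local ANR property near boundary points would come from controlling null sequences of balls near $\mathfrak B\calX$ via Theorem \ref{thm:compact_boundary} and the combing provided by Proposition \ref{prop:Rips_combing}, which together produce local null-homotopies of small loops and spheres at boundary points up to the relevant dimension.

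Once local contractibility is in hand through this controlled-neighborhoods argument, Borsuk's theorem yields ANR, the equivalence AR $=$ contractible ANR (for compact metric spaces) promotes $X$ to an AR, and the AR property applied to the Menger--N\"obeling closed embedding produces the required retraction $\mathbb R^{N}\to X$, exhibiting $X$ as an ER. The only genuinely nontrivial input is thus Borsuk's ANR criterion in finite dimension; the rest is bookkeeping between the embedding theorem and the AR/ANR dictionary.
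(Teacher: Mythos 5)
Your approach is the same as the paper's: embed $X$ as a closed subset of some $\mathbb R^N$ using the finite-dimensional embedding theorem, and then apply the fact that a compact metrizable contractible ANR is an absolute retract, hence a retract of $\mathbb R^N$. You are also right to flag the ANR step as the one point that does not follow from the stated hypotheses --- this is a genuine issue with the statement rather than with your argument: as written, Theorem~\ref{thm:ER} is false (the cone over the Cantor set is compact, metrizable, contractible and one-dimensional but is not locally connected, hence not an ANR and not an ER), and the paper's own proof silently uses the ANR hypothesis by quoting ``any contractible ANR is an absolute retract.'' The intended statement includes ANR among the hypotheses; in the application (Theorem~\ref{thm:Z}) the ANR property of $\bar\calX$ is supplied by the Bestvina--Mess criterion (Proposition~\ref{prop:BestMess}) rather than by Borsuk's local-contractibility criterion as you propose, but that is a difference in how the hypothesis is verified downstream, not in the proof of this theorem.
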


\begin{proof}
    Any contractible ANR is an absolute retract, meaning that it is a retract of any space where it can be embedded as a closed subset, see \cite[Proposition II.7.2]{Hu:retracts}. Also, any compact, metrizable space of finite covering dimension can be embedded into some Euclidean space, see e.g. \cite[Theorem 1.11.4]{dimension_theory}, concluding the proof.
\end{proof}

\subsection{$\calZ$-compactifications for asymptotically CAT(0) spaces}

The main goal of this section is to prove the following theorem:

\begin{theorem}\label{thm:Z}
    Let $\calX$ be asymptotically CAT(0) and assume that balls in $\calX$ are uniformly finite and that $\calX$ has finite Assouad-Nagata dimension. Then for all sufficiently large $T$ we have that $\bar X_T=\bar \calX=R_T(\calX)\cup \mathfrak B \calX$ has a topology which
    \begin{enumerate}
        \item restricts to the respective topologies on $R_T(\calX)$ and $\mathfrak B\calX$,
        \item all conclusions of Theorem \ref{thm:compact_boundary} hold, where the balls in the ``moreover'' part are now replaced by balls in the any simplicial metric on $R_T(\calX)$,
        \item  $\bar \calX$ is a ER and $\mathfrak B\calX$ is a $\calZ$-set.
    \end{enumerate}
\end{theorem}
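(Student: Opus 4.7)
The plan is to extend the weak metric $d_\kappa$ of Section~\ref{sec:boundary} from $\calX \cup \mathfrak{B}\calX$ to all of $\bar\calX_T$. For a point $x$ in the relative interior of a simplex $\sigma$ of $R_T(\calX)$, let $V(x) \subseteq \calX$ be the set of vertices of $\sigma$; extend $d_\kappa$ by setting $d_\kappa(x,y) = \max_{v \in V(x),\, w \in V(y)} d_\kappa(v,w)$ for $x,y \in R_T(\calX)$ and $d_\kappa(x,\xi) = \max_{v \in V(x)} d_\kappa(v,\xi)$ for $\xi \in \mathfrak{B}\calX$. Fix a locally finite simplicial metric $d_R$ on $R_T(\calX)$, and define a weak metric $\tilde d$ on $\bar\calX_T$ by adding a truncated $d_R$-term when comparing two interior points. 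One verifies directly that $\tilde d$ is a weak metric whose induced topology restricts to the simplicial topology on $R_T(\calX)$ and to the $d_\kappa$-topology on $\mathfrak{B}\calX$, giving (1). Adapting the proof of Theorem~\ref{thm:compact_boundary} essentially verbatim, using the uniform finiteness of balls to handle the simplicial part, yields (2): sequential compactness and Hausdorffness are straightforward, second-countability follows from choosing rational-radius balls around a countable dense set, and Urysohn's theorem gives metrizability.

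For (3), we first verify that $\bar\calX_T$ has finite covering dimension. The uniformly-finite-balls hypothesis bounds the dimension of simplices of $R_T(\calX)$, so $R_T(\calX) = \bigcup_n K_n$ is a countable union of finite closed subcomplexes, each of bounded dimension. By Theorem~\ref{thm:finite_dim}, $\mathfrak{B}\calX$ is also finite-dimensional, so the countable sum theorem for covering dimension bounds $\dim \bar\calX_T$. The key geometric construction is a continuous null-homotopy $H \colon \bar\calX_T \times [0,1] \to \bar\calX_T$ such that $H(\cdot,0) = \mathrm{id}$, $H(\cdot,1) \equiv p$ for a fixed basepoint $p \in \calX$, and $H(\bar\calX_T \times (0,1]) \subseteq R_T(\calX)$. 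We define $H$ first on vertices $\bar\calX_T^{(0)} = \calX \cup \mathfrak{B}\calX$: for each vertex $v \in \calX$, fix a $(1,C)$-quasi-geodesic $\gamma_v$ from $p$ to $v$, reparametrized so $H(v,t)$ traces $\gamma_v$ backward from $v$ at $t=0$ to $p$ at $t=1$; for $\xi \in \mathfrak{B}\calX$, fix a ray $\gamma_\xi$ from $p$ representing $\xi$, and set $H(\xi,t) = \gamma_\xi(1/t-1)$ for $t \in (0,1]$, $H(\xi,0) = \xi$. We then extend $H$ to higher-dimensional simplices via the inductive combing procedure of Proposition~\ref{prop:Rips_combing}, which supplies simplicial homotopies through controlled hulls. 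Continuity at boundary points $\xi$ reduces to showing that vertices $v$ close to $\xi$ in $d_\kappa$ have quasi-geodesics $\gamma_v$ fellow-traveling with $\gamma_\xi$ on the initial segment of length $\approx 1/t$; this follows from Lemma~\ref{lem:weak divergence} together with Corollary~\ref{cor:unif_close}, which force sublinear fellow-travelling on any initial segment.

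Given $H$, the space $\bar\calX_T$ is contractible. Tracks of $H$ moreover witness local contractibility: a small $d_\kappa$-neighborhood of any point (in the interior or the boundary) can be swept to a point through a slightly larger neighborhood, using that $H$-tracks of nearby points remain close by the continuity argument above. A finite-dimensional metrizable locally contractible space is an ANR by Borsuk's theorem, so $\bar\calX_T$ is a compact metrizable contractible finite-dimensional ANR, hence an ER by Theorem~\ref{thm:ER}. The homotopy $H$ also satisfies $H_t(\bar\calX_T) \cap \mathfrak{B}\calX = \emptyset$ for $t > 0$, so Anderson's criterion (valid for closed subsets of compact ANRs) identifies $\mathfrak{B}\calX$ as a $\calZ$-set in $\bar\calX_T$, completing (3).

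The main obstacle is the construction and continuity of the homotopy $H$ at boundary points. The choices of $\gamma_v$ for vertices $v$ and $\gamma_\xi$ for boundary points $\xi$ are not a priori coherent, and continuity at $(\xi,t)$ with $t > 0$ requires that $\gamma_v(1/t-1)$ is close to $\gamma_\xi(1/t-1)$ whenever $v$ is $d_\kappa$-close to $\xi$. This is precisely the content of the uniform divergence lemma (Lemma~\ref{lem:weak divergence}): a ray or quasi-geodesic that is sublinearly close to $\gamma_\xi$ at some large time must fellow-travel $\gamma_\xi$ within $3\kappa$ at all smaller times. Propagating this control from vertices to whole simplices, via the width bounds of Proposition~\ref{prop:Rips_combing}, is where the technical work concentrates; once done, all three conclusions of the theorem follow simultaneously from properties of $H$.
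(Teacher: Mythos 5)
Your route is genuinely different from the paper's: the paper never builds a global contraction of $\bar\calX_T$, but instead verifies the local Bestvina--Mess criterion (Proposition \ref{prop:BestMess}), whose condition (3) is checked by combining Proposition \ref{prop:Rips_combing} with a quantitative estimate showing that for every $d_\kappa$-ball $U$ about a boundary point there is a smaller ball $V$ and a point $p$ with $hull_M(V\cap\calX;p)\subseteq U$; contractibility of $\bar\calX_T$ is then deduced \emph{after} the $\calZ$-set property, from contractibility of $R_T(\calX)$. Your plan --- a global null-homotopy $H$, Borsuk's local-contractibility criterion for ANRs, and the ``instantly pushing off $Z$'' characterization of $\calZ$-sets --- is a legitimate alternative scheme in principle, but it has a gap exactly at the step you defer. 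Local contractibility (or local $k$-connectedness) of $\bar\calX_T$ at a boundary point $\xi$ does not follow from the existence of $H$: the tracks of $H$ run all the way to the basepoint $p$ and therefore leave every small neighborhood of $\xi$, so $H$ cannot witness that a small neighborhood $V$ of $\xi$ contracts \emph{inside} a prescribed neighborhood $U$. Pointwise continuity of $H$ (``tracks of nearby points remain close'') is not the uniform statement needed; what is needed is precisely the assertion that the entire combing homotopy of $V\cap R_T(\calX)$ stays inside $U$, i.e.\ the containment $hull_M(V\cap\calX;p)\subseteq U$. Establishing this requires the two-stage application of Lemma \ref{lem:weak divergence} together with the convexity estimate of Lemma \ref{lem:convex} and a careful choice of scales $t_1,t$, which is the core of the paper's proof and is absent from your proposal.

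Two further, more repairable issues. First, your extension of $d_\kappa$ to simplices by $d_\kappa(x,y)=\max_{v\in V(x),w\in V(y)}d_\kappa(v,w)$ gives $d_\kappa(x,x)>0$ for $x$ in the interior of a positive-dimensional simplex, so as written it is not a weak metric; the paper avoids this by defining the topology on $\bar\calX_T$ directly through open sets (open intersection with $R_T(\calX)$, plus a boundary condition via open sets of $\bar\calX_0$ spanning full subcomplexes). Second, the continuity of a single global $H$ on all of $\bar\calX_T\times[0,1]$, assembled simplex-by-simplex from Proposition \ref{prop:Rips_combing}, is asserted rather than proved: that proposition produces a homotopy for each finite-dimensional simplicial map separately, and making these choices coherent and continuous up to the boundary (including in the simplex directions) is itself nontrivial. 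The local criterion the paper uses only ever requires extending maps of spheres near a single boundary point, which is why it sidesteps both of these difficulties.
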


Before proving the theorem, we observe that it implies a slightly more general version of Theorem \ref{thm:Z-boundary intro}. For convenience we recall the definition of a $\calZ$-structure, and then we state the desired corollary.

\begin{definition}\cite{bestvina1996local,Dra:BM_formula}
\label{defn:Z-space}
    Let $G$ be a discrete group. A $\calZ$-structure on $G$ is a pair $(X, Z)$ of spaces satisfying the following:

    \begin{itemize}
        \item $X$ is a Euclidean retract,
        \item $Z$ is a $\calZ$-set in X,
        \item $X-Z$ admits a proper cocompact $G$-action,
        \item for every open cover $\calU$ of $X$ and compact set $K$ of $X-Z$, all but finitely many $G$-translates of $K$ are $\calU$-small.
    \end{itemize}
\end{definition}

The space $Z$ as above is called a $\calZ$-boundary. We note that any two $\calZ$-boundaries of a group are shape equivalent \cite[Corollary 1.6]{GuilMor}, though they need not be homeomorphic, even in the CAT(0) setting \cite{CrokeKleiner}.

\begin{corollary}
\label{cor:Z}
    Let $G$ act geometrically on an asymptotically CAT(0) space with finite balls and finite Assouad-Nagata dimension. Then $G$ admits a $\calZ$-structure.
\end{corollary}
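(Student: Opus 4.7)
The plan is to invoke Theorem \ref{thm:Z} as a black box and match its outputs to the four clauses of Definition \ref{defn:Z-space}. Let $\calX$ be the asymptotically CAT(0) space on which $G$ acts geometrically. Choose $T$ large enough that Theorem \ref{thm:Z} applies, and set $X=\bar\calX_T=R_T(\calX)\cup\mathfrak B\calX$ and $Z=\mathfrak B\calX$. The hypotheses of Theorem \ref{thm:Z} are satisfied by assumption (asymptotically CAT(0), uniformly finite balls, finite Assouad--Nagata dimension), so clause (3) of that theorem immediately gives that $X$ is a Euclidean retract and $Z$ is a $\calZ$-set in $X$, handling the first two bullets of Definition \ref{defn:Z-space}.

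Next I would transfer the geometric $G$-action on $\calX$ to a simplicial action on $R_T(\calX)=X\setminus Z$, which is automatic because $G$ acts by isometries on the vertex set and the simplicial structure is encoded by the metric. Since balls in $\calX$ are uniformly finite, the complex $R_T(\calX)$ is uniformly locally finite and finite-dimensional. Cocompactness of the action on $\calX$ then forces finitely many $G$-orbits of vertices, and hence of simplices (each simplex is determined by vertices within a ball of radius $T$), so the induced action on $R_T(\calX)$ is cocompact; properness transfers in the same way, since $G$ acts properly on the 0-skeleton $\calX$ and simplices of $R_T(\calX)$ have uniformly bounded vertex sets. Moreover, by Theorem \ref{thm:Z}(2) and Theorem \ref{thm:compact_boundary}, the $G$-action extends continuously to all of $X$, which is needed for the nullity condition to even make sense.

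Finally I would verify the nullity condition. Let $\calU$ be an open cover of $X$ and $K\subset R_T(\calX)$ compact. Since $R_T(\calX)$ is locally finite, $K$ lies in some simplicial metric ball $B_R(v)$, and therefore $gK\subseteq B_R(gv)$ for every $g\in G$. By Theorem \ref{thm:Z}(2), all but finitely many simplicial balls of radius $R$ in $R_T(\calX)$ are $\calU$-small, so only finitely many vertices $w\in\calX$ give a non-$\calU$-small $B_R(w)$. Because the $G$-action on $\calX$ is proper, it has finite vertex stabilizers, so the preimage under $g\mapsto gv$ of this finite bad set is finite, yielding the desired nullity.

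The main obstacle is essentially absent: Theorem \ref{thm:Z} performs all the topological work (Euclidean retract structure, $\calZ$-set condition, null covering property of simplicial balls), and the remaining verifications are standard bookkeeping about how a geometric action on $\calX$ lifts to $R_T(\calX)$. The only subtle point worth being careful about is making sure that the null condition in Theorem \ref{thm:Z}(2), which is phrased in terms of simplicial metric balls in $R_T(\calX)$, is converted to a statement about $G$-translates of a generic compact set, which is precisely what the containment $gK\subseteq B_R(gv)$ accomplishes.
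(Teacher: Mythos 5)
Your proposal is correct and follows essentially the same route as the paper: the paper's own proof simply cites the $G$-action on $\bar\calX_T$ from Theorem \ref{thm:Z} and notes that the nullity condition follows from the null-balls property. Your write-up just supplies the standard bookkeeping (cocompactness/properness of the induced action on $R_T(\calX)$ and the containment $gK\subseteq B_R(gv)$) that the paper leaves implicit.
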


\begin{proof}
    The required $Z$-structure is provided by the $G$-action on $\bar X_T$.The fourth condition follows from the fact that all but finitely many balls in $R_T(\calX)$ of a given radius are small with respect to any given open cover of $\bar X_T$.
\end{proof}

For the rest of this section, we will make the following assumptions on $(\calX,d)$:
\begin{enumerate}
    \item $\calX$ is asymptotically CAT(0) as in Definition \ref{defn:asymp CAT0}, with parameters $C$, $\kappa$. Up to increasing $C$, any two points of $\cuco X$ are joined by a discrete $(1,C)$-quasi-geodesic, which, as in Section \ref{sec:boundary}, we refer to simply as segments. For convenience, we also assume $\kappa(t)\geq 10C$ for all $t$.

    \item Balls in $\calX$ are uniformly finite.
    \item $\calX$ has finite Assouad-Nagata dimension.
\end{enumerate}

The following proposition, which is due to Bestvina-Mess \cite[Proposition 2.1]{BestMess}, gives us a criterion for a compactification to be an ANR (here, $Z$ should be thought of as the boundary and $X$ as the whole compactification).

\begin{proposition}\label{prop:BestMess}
    Suppose $X$ is a compactum and $Z \subset X$ is a closed subset such that
    \begin{enumerate}
        \item $\mathrm{int} Z = \emptyset$;
        \item $\dim X = n < \infty$;
        \item For every $k \in \{0, \dots, n\}$, every point $z \in Z$, and every neighborhood $U$ of $z$, there is a neighborhood $V$ of $z$ so that every map $\alpha:S^k \to V - Z$ extends to a map $\tilde{\alpha}:B^{k+1} \to U - Z$;
        \item $X - Z$ is an ANR. \label{item:ANR}
    \end{enumerate}
    Then $X$ is an ANR and $Z \subset X$ is a $\calZ$-set.
\end{proposition}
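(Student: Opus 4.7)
The plan is to prove the two conclusions separately: first that $X$ is an ANR, then that $Z$ is a $\calZ$-set in $X$. The main classical tool I would invoke is the Borsuk--Hanner characterization: a finite-dimensional compact metric space is an ANR if and only if it is $LC^k$ (locally $k$-connected) for every $k$ up to its dimension $n$.

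For the ANR claim, I would verify $LC^k$ at every point of $X$ for each $k \leq n$. At a point $x \in X - Z$, since $X - Z$ is an ANR by hypothesis (4) and open in $X$, the local $k$-connectivity is immediate. At a point $z \in Z$, hypothesis (3) supplies exactly what is needed: given a neighborhood $U$ of $z$, the smaller neighborhood $V$ provided by (3) has the property that any map $S^k \to V$ can first be approximated by a map into $V - Z$ (using that $V - Z$ is dense in $V$ because $\mathrm{int}\, Z = \emptyset$), and then by (3) extended to a map $B^{k+1} \to U - Z \subset U$. Combined with $\dim X = n < \infty$, the Borsuk--Hanner criterion then yields that $X$ is an ANR.

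For the $\calZ$-set property, fix an open $U \subset X$. Both $U - Z$ and $U$ are ANRs (the former by hypothesis (4), the latter as an open subset of the ANR $X$), so by Whitehead's theorem in the ANR category it suffices to prove that the inclusion $i \colon U - Z \hookrightarrow U$ is a weak homotopy equivalence. This reduces to an inductive skeletal pushoff argument: given a map $f \colon K \to U$ from a finite simplicial complex $K$ of dimension at most $n+1$ (representing an element of a homotopy group or a null-homotopy datum, with boundary data already in $U - Z$), subdivide $K$ so that each closed simplex maps into a sufficiently small neighborhood, then modify $f$ inductively on skeleta, moving vertices into $U - Z$ by density and extending across each higher-dimensional simplex via hypothesis (3).

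The main obstacle is arranging the nested neighborhood data uniformly so that the skeletal pushoff closes up across $K$. For each point $z \in Z \cap f(K)$ one needs a nested sequence $V_{n+1} \subset V_n \subset \cdots \subset V_0 \subset U$ of neighborhoods with each successive pair $(V_{j-1}, V_j)$ playing the role of the pair $(U,V)$ in (3), and these chains must be compatible across overlapping simplices. This is arranged by choosing the subdivision of $K$ fine enough, using compactness of $f(K)$ together with uniform continuity to control the size of the neighborhoods. The straight-line-like homotopy from $f$ to the pushed-off map stays inside $U$ because each skeletal modification takes place within a small controlled neighborhood of the original simplex; the same construction, applied to the identity map of an open neighborhood of $Z$, also yields the homotopy from $\mathrm{id}_U$ to a map into $U - Z$ that confirms $i$ is a homotopy equivalence.
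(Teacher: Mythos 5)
The paper does not prove this proposition at all: it is quoted verbatim as \cite[Proposition 2.1]{BestMess} and used as a black box, so there is no in-paper argument to compare against. Measured against the standard Bestvina--Mess proof, your overall strategy (Borsuk's $LC^n$ characterization of finite-dimensional compact ANRs, plus a skeletal pushoff of maps away from $Z$) is the right one, but two steps as written do not go through.

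First, in the ANR part, the step ``any map $S^k\to V$ can first be approximated by a map into $V-Z$ using that $V-Z$ is dense in $V$'' is not justified by density for $k\geq 1$: density only lets you move the vertices of a triangulation off $Z$, and pushing the whole map off $Z$ already requires the full inductive use of condition (3) over a nested chain of neighborhoods, together with a prism homotopy keeping the original and pushed-off maps homotopic inside $U$. This is the same machinery you describe later, so it is a presentational circularity more than a wrong idea, but as written the $LC^k$ verification at points of $Z$ is incomplete. Second, and more seriously, your $\calZ$-set argument reduces to showing the inclusion $U-Z\hookrightarrow U$ is a weak homotopy equivalence and then invoking Whitehead, but you only treat maps from complexes $K$ of dimension at most $n+1$. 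A weak homotopy equivalence requires all $\pi_j$, including $j>n$, where the representing spheres have dimension exceeding $n+1$ and the extension step would need condition (3) for $k>n$, which is not hypothesized. The correct use of $\dim X=n$ is different: one covers a neighborhood of $Z\cap U$ by small open sets, passes to the nerve (which has dimension at most $n$ precisely because $\dim X=n$), and maps the nerve back into $U-Z$ by the skeletal pushoff, producing for every $\epsilon>0$ a map $U\to U-Z$ that is $\epsilon$-close and $\epsilon$-homotopic to the identity. Such maps give a genuine homotopy inverse to the inclusion in one stroke, with no sphere-by-sphere bookkeeping. Your closing sentence gestures at ``applying the construction to the identity map,'' but the identity is not a map from an $(n{+}1)$-dimensional complex; the nerve is exactly the missing device that makes it one.
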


The main goal will be to prove that $X=\bar \calX$ and $Z=\mathfrak B\calX$ satisfy the conditions of Proposition \ref{prop:BestMess} (when $\bar\calX$ is endowed with a natural topology), but in fact we verified several of the properties already.

\begin{proof}[Proof of Theorem \ref{thm:Z}.]
We define a topology on $\bar\calX_T$ as follows. Note that the set  $\bar \calX$ from Theorem \ref{thm:compact_boundary} is $\bar\calX_0$ in the current notation, and it is contained in each $\bar \calX_T$. We declare a subset $U$ of $\bar\calX_T$ to be open if
\begin{itemize}
    \item $U\cap R_T(\calX)$ is open, and
    \item there is an open subset $B_U$ of $\bar\calX_0$ such that $U\cap \mathfrak B\calX=B_U\cap \mathfrak B\calX$ and the full subcomplex of $R_T(\calX)$ spanned by $B_U\cap \calX$ is contained in $U$.
\end{itemize}

Note that this is indeed a topology, and we now check that this topology still has all the properties listed in Theorem \ref{thm:compact_boundary}. As a general preliminary observation, here is a way to make an open set of $\bar\calX_T$. First, we denote by $\rho$ the path metric on $R_T(\calX)$ where all simplices are isometric to regular euclidean simplices with side length 1. Start with an open set $V$ of $\bar\calX_0$. Then, denoting by $\Delta$ the subcomplex of $R_T(\calX)$ spanned by $V\cap \calX$, we have that $V\cup N^\rho_\epsilon(\Delta)$ is open for all $\epsilon<1$. Indeed, $N^\rho_\epsilon(\Delta)$ and $\Delta$ contain the same vertices of $R_T(\calX)$.

We are ready to check the properties.

\begin{itemize}
    \item Hausdorff. All pairs of points can be separated by pairs of open sets as described above. For example, if $p\in\mathfrak B\calX$ and $x\in R_T(\calX)$, then we can consider disjoint open sets $V_1,V_2$ of $\bar\calX_0$ with $p\in V_1$ and the vertices of a simplex containing $x$ all contained in $V_2$. Performing the construction above with any $\epsilon<1/2$ for both $V_1$ and $V_2$ yields the required open sets of $\bar\calX_T$.

    \item Compactness. Any open cover $\mathcal U$ of $\bar\calX_T$ restricts to an open cover $\mathcal V$ of $\bar\calX_0$, which then has a finite subcover $\mathcal V'$. Considering the corresponding subcover $\mathcal U'$ of $\mathcal U$, we claim that at most finitely many simplices have vertex set not entirely contained in some element of $\mathcal U'$, which easily yields the existence of the required finite subcover. If not, we could consider a sequence of distinct simplices $\sigma_n$ with vertex set not entirely contained in some element of $\mathcal U'$. Up to passing to a subsequence there exists a sequence of vertices $v_n$ of $\sigma_n$ which converges to some point $p$ of $\bar\calX_0$, necessarily in the boundary as the simplices are distinct. But then it is easily seen that all sequences of vertices of the $\sigma_n$ also converges to $p$ (in view of Lemma \ref{lem:weak divergence} and the fact that $d_\kappa$ is a weak metric). This in turn implies that, for all sufficiently large $n$, all vertices of $\sigma_n$ are contained in some $V\in\mathcal V'$, which in turn implies that $\sigma_n$ is contained in some $U\in\mathcal U'$, a contradiction.

    \item Metrizability. For metrizability, by Urysohn's lemma we are only left to show second-countability. 
    
    We claim that a countable basis for the topology is given by a countable basis for the topology of $R_T(\calX)$, and by any open set obtained from the procedure described above, for $\epsilon=1/2$, starting with the interior of $d_\kappa$-balls of rational radius around points of $G$. (Recall that the center of a $d_\kappa$-ball lies in its interior by Lemma \ref{lemma:weak metric balls}.)

Given an open set $U$ of $\bar\calX_T$, it is clear that $U\cap R_T(\calX)$ is a union of basis elements, so that we only need to find for each $p\in U\cap \mathfrak B\calX$ an open set of the second type containing $p$ and contained in $U$. There is a $d_\kappa$-ball $B_0$ around $p$ such that the subcomplex spanned by $B_0\cap R_T(\calX)$ is contained in $U$. In fact, we can shrink $B_0$ to make sure that the subcomplex spanned by all vertices of $R_T(\calX)$ adjacent to some vertex of $B_0\cap R_T(\calX)$ is contained in $U$ (since such vertices lie within a small $d_\kappa$-distance of $B_0\cap R_T(\calX)$, so that we can use the weak triangle inequality to conclude that they are close to $p$ if $B_0$ has sufficiently small radius). We can now choose a point far enough along a ray to $p$ and a ball $B$ of small rational radius such that $B\cap R_T(\calX)$ is contained in $B_0$, and it is readily seen that the open set constructed from the interior of said ball is contained in $U$, as required.

\item $G$-action. The fact that $G$ still acts on $\bar\calX_T$ is easily seen from the definition of the topology.

\item Null balls. Finally, the property on smallness of balls easily follows from the analogous property for $\bar\calX$.
\end{itemize}

    We will verify the various properties of Proposition \ref{prop:BestMess} below, but first we argue that this suffices to prove the theorem. Indeed, the proposition yields that $\bar \calX$ is an ANR and that $\mathfrak B\calX$ is a $\calZ$-set. In view of Theorem \ref{thm:ER}, we are left to argue that $\mathfrak B\calX$ is contractible (since finite-dimensionality is one of the properties we will check). But contractibility follows from the fact that $\bar\calX$ is homotopy equivalent to $R_T(\calX)$ by definition of $\calZ$-set (applied with the open set being the whole space), and the latter is contractible by Theorem \ref{prop:Rips contract}.

    Towards checking the conditions of Proposition \ref{prop:BestMess}, first of all we have that $\bar \calX$ is a compactum by the discussion above.

    For the numbered conditions:

    (1) This follows immediately from the definition of the topology, as points along a ray are arbitrarily close to the boundary point corresponding to the ray.

    (2) By Theorem \ref{thm:finite_dim} we have that $\mathfrak B\calX$ is finite-dimensional, and so is $R_T\calX$ since it is a finite-dimensional simplicial complex. Hence their union $\bar\calX$ has finite dimension by classical results in dimension theory, see e.g. \cite[Corollary 1.5.5, Theorem 4.1.5]{dimension_theory}.

    (4) This follows from Theorem \ref{prop:Rips contract} and Lemma \ref{lem:interior ANR}.

    \par\medskip

    We are only left to check (3). By Proposition \ref{prop:Rips_combing} it suffices to show that for all $M\geq 1$ and all $d_\kappa$-balls $U$ in $\bar\calX_0$ around some $x\in\mathfrak B\calX$ there exists another $d_\kappa$-ball $V$ around $x$ and some $p\in\calX$ such that $hull_M(V\cap \calX);p)$ is contained in $U$.

    Suppose $U=B(x_0,1/t_0)$, and that $\gamma$ is a ray from a fixed basepoint $\go$ to $x_0$. We will choose $p$ and $V$ as follows, for some $t_1$ large enough to be determined. Fix $t=\mathfrak g(t_1)$, where $\mathfrak g$ as in Lemma \ref{lem:weak divergence} for $\eta=7\kappa$. Then we will choose $p=\gamma(t)$ and $V=B(x_0,1/t_1)$.

    As a first observation about the choice of $t$, notice the following. Consider a point $q\in V\cap \calX$. By definition of the weak metric and Corollary \ref{cor:unif_close} there exists $\hat t\geq t_1$ and a segment $\alpha$ from $\go$ to $p$ such that $d(\gamma(\hat t),\alpha(\hat t))\leq 7\kappa(\hat t)$, so that we have
    $$d(\gamma(t),\alpha(t))\leq 3\kappa(t)$$
    by the conclusion of Lemma \ref{lem:weak divergence}, and for later purposes we note that the same also holds replacing all ``$t$'' with ``$t/2$''.

    We now expand the definition of $hull_M(V\cap \calX);p)$.  Consider a segment $\beta$ from $p$ to some $x\in V\cap \calX$, a point $q=\beta(s)$ on it, and a point $z$ such that $d(q,z)\leq M\kappa(s)$. Then any point $z$ in $hull_M(V\cap \calX);p)$ is of this form (see Remark \ref{rmk:-C}).

\begin{figure}[h]
\begin{center}
    \includegraphics[scale=1.2]{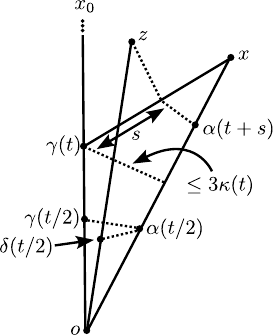}
\end{center}
\caption{}
\end{figure}

    Via a comparison triangle (with vertices $p=\gamma(t),\gamma'(t)$, and $x$), we see that $q$ lies within distance $2\kappa(s)+3\kappa(t)$ from $\alpha(t+s)$ (or $x$ if $\alpha(t+s)$ is not defined), and hence $z$ lies within $(M+2)\kappa(s)+3\kappa(t)$ from $\alpha(t+s)$.

    Let $\delta$ be a segment from $\go$ to $z$, with $z=\delta(t')$. Note that
    $t'\geq t+s-(M+2)\kappa(s)-3\kappa(t)-2C.$
    We assume that $t_1$ (which we still have to determine, and whose value determines that of $t$ via the diverging function $\mathfrak g$) is large enough that $t'\geq t/2$, which is possible since the function $s\mapsto s-(M+2)\kappa(s)-3\kappa(t)-2C$ is bounded from below (for a fixed $M$).

    Consider now the triangle with vertices $\go$, $z$, and $\alpha(t+s)$ or $q$ if $\alpha(t+s)$ is not defined (and containing $\delta$ and a subpath of $\alpha$). Using convexity of the metric in a comparison triangle we see that
    $$d(\delta(t/2), \alpha(t/2))\leq$$
    $$((M+2)\kappa(s)+3\kappa(t)) \frac{t}{2(t+s)}+ ((M+2)\kappa(s)-3\kappa(t)+2C)\frac{t}{2(t+s)} + 2\kappa(t/2),$$
    where the second term accounts for the difference between $t/2$ and the length of the comparison side for $\delta$ multiplied by $t/(2(t+s))$.

    We can then define a a sublinear function $\eta'$ by setting $\eta'(t/2)$ to be the supremum of the left-hand side over all $s\geq 0$. We then have $d(\gamma(t/2),\delta(t/2))\leq \eta'(t/2)+3\kappa(t/2)$, combining the displayed inequality and $d(\gamma(t/2),\alpha(t/2))\leq 3\kappa(t)$. In view of Lemma \ref{lem:weak divergence} (applied with $\eta=\eta'+3\kappa$), for any sufficiently large ($t_1$, whence) $t$ we have $d(\gamma(t_0),\delta(t_0))\leq 3\kappa(t_0)$, implying $z\in U$ by definition of the weak metric, as required.
\end{proof}

\section{The Farrell--Jones conjecture}
\label{sec:FJ}

In this section we prove the Farrell--Jones conjecture for suitable HHGs, using a criterion formulated by Bartels-Bestvina \cite[Section 1]{BB:FJ_MCG}. We will prove the following:

 \begin{theorem}
 \label{thm:rel_FJ}
     Let $(G,\mathfrak S)$ be a colorable hierarchically hyperbolic group. Then $G$ admits a finitely $\mathcal F$-amenable action on a compact ER, where $\mathcal F$ is the set of all subgroups of $G$ that are virtually cyclic or the stabilizer of some $Y\in\mathfrak S$ which is not $\nest$-maximal.
 \end{theorem}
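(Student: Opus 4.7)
The plan is to apply the axiomatic criterion of Bartels--Bestvina \cite{BB:FJ_MCG} to the compactification $\bar\calX_T$ of the Vietoris--Rips complex $R_T(\calX)$, where $\calX$ is $G$ equipped with the $G$-invariant asymptotically CAT(0) metric provided by Theorem \ref{thm:asymp CAT0}. Combining Proposition \ref{prop:fin AN dim HHS} (finite Assouad--Nagata dimension), Theorem \ref{prop:Rips contract} (contractibility of Vietoris--Rips complexes), Theorem \ref{thm:compact_boundary} (compactification), and Theorem \ref{thm:Z} ($\bar\calX_T$ is an ER with $\mathfrak B \calX$ a $\calZ$-set), we obtain a compact ER on which $G$ acts, cocompactly on $\bar\calX_T \setminus \mathfrak B \calX = R_T(\calX)$.

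The heart of the proof is the verification of finite $\mathcal F$-amenability. For each $n \in \mathbb N$ we must construct an open cover $\mathcal U_n$ of $\bar\calX_T$ of multiplicity bounded in terms of the Assouad--Nagata dimension, such that: (a) each $U \in \mathcal U_n$ is ``$n$-wide'' in the sense that for every $x \in U$ the set $B_G(e,n) \cdot x$ lies in some common element of $\mathcal U_n$; and (b) the pointwise stabilizer of each $U \in \mathcal U_n$ belongs to $\mathcal F$. On the cocompact interior, wideness follows from the cocompact proper action and (b) is automatic since point stabilizers are finite; the substantial work is concentrated near the boundary $\mathfrak B \calX$. Using the null-sequence property of balls from Theorem \ref{thm:compact_boundary} together with the annular construction underlying Theorem \ref{thm:finite_dim}, one obtains covers near the boundary whose elements are pullbacks, via the projection $\sigma_T$, of coverings of large annuli $A_T \subset \calX$; such annular coverings of bounded multiplicity exist by the finite Assouad--Nagata dimension.

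The central difficulty is condition (b): identifying the stabilizer of a cover element $U$ containing a boundary point $p \in \mathfrak B \calX$ as virtually lying in $\mathcal F$. A ray $\gamma$ representing $p$ has projections $\pi_V \circ \gamma$ in each hyperbolic space $\calC(V)$ for $V \in \mathfrak S$. We distinguish two cases according to whether the projection to the $\nest$-maximal space $\calC(S)$ stays bounded along $\gamma$. If it does, then the HHS distance formula forces $\gamma$ to remain close to the product region of a proper domain $Y \propnest S$; any $g \in G$ fixing $p$ then virtually stabilizes $Y \in \mathfrak S$, so it virtually lies in $\mathrm{Stab}(Y) \in \mathcal F$. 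If instead the projection to $\calC(S)$ is unbounded, then $\pi_S \circ \gamma$ converges to a Gromov boundary point of $\calC(S)$; any $g$ almost-fixing $p$ preserves this boundary point, and the acylindricity of the $G$-action on $\calC(S)$ forces $g$ to lie in a virtually cyclic subgroup. Making this dichotomy effective requires shrinking the cover elements $U$ so that all boundary points in $U$ exhibit the same hierarchical behavior up to uniform constants.

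The main obstacle is precisely this uniformization step. The approach will be to use the stable cubulation machinery of Theorem \ref{thm:stabler cubulations} to approximate finite configurations of points on hierarchy rays representing nearby boundary points by CAT(0) cube complexes in which hyperplane-crossing patterns encode HHS projection data rigidly; nearby boundary points then share hyperplane data in a stable way, letting us control the relevant projections simultaneously. Combined with acylindricity estimates on $\calC(S)$ and the cocompactness of the $G$-action on $\mathfrak S$ afforded by the HHG structure, this should yield the required stabilizer identification, and a standard diagonal argument over the cover constructions then completes the verification of finite $\mathcal F$-amenability.
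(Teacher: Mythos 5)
Your opening move---apply the Bartels--Bestvina criterion to the compactification $\bar G$ provided by Theorem \ref{thm:Z}---is correct and matches the paper. But from there the proposal goes off course. The entire point of the criterion \cite[Theorem 1.11]{BB:FJ_MCG} is that one does \emph{not} construct the wide covers of bounded multiplicity with $\mathcal F$-stabilizers directly: one verifies a list of axioms (\cite[Axioms 1.4--1.6]{BB:FJ_MCG}) about projections and families of rays, and the cover construction (the ``long thin covers'' technology) is the \emph{output} of the criterion, not an input. Your proposal instead declares the cover construction to be ``the heart of the proof,'' which amounts to re-deriving the conclusion of the criterion from scratch. As sketched, this has genuine gaps: boundedness of $\pi_S\circ\gamma$ does not force a ray to fellow-travel a single product region $P_Y$ (it may pass near many different ones); an element fixing a boundary point need not virtually stabilize any proper domain; and an isometry preserving a point of $\partial\calC(S)$ need not lie in a virtually cyclic subgroup (it can have bounded orbits on $\calC(S)$), so acylindricity alone does not identify the stabilizer. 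The stable cubulation theorem plays no role at this stage---it is used to build the asymptotically CAT(0) metric, not to verify the Farrell--Jones axioms.

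What must actually be verified is the following. For \cite[Axiom 1.4]{BB:FJ_MCG} the only non-formal item is coarse semi-continuity of the projections: each $\pi_Y$ must extend to an open subset $\Delta(Y)\subseteq\bar G$ (the complement of the limit set of $E_Y$ in $\mathfrak B G$), with values in $\calC(Y)\cup\partial\calC(Y)$, so that points of $G$ converging to a boundary point in $\Delta(Y)$ have projections that eventually pass uniformly close to any prescribed point on the projected ray. This is Proposition \ref{prop:coarse_cont}, proved by choosing hierarchy-ray representatives and splitting into cases according to whether the boundary point lies in the limit set of $P_Y$; the remaining items (P1)--(P4) follow from the HHS axioms, colorability, and the Behrstock inequality. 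Axiom 1.5 is satisfied essentially tautologically because the action on $R_T(G)$ is cocompact (one declares $\calG_K$ to consist of hierarchy rays between pairs with all relevant projections bounded by $\Theta(K)$), and the three flow conditions (F1)--(F3) of Axiom 1.6 are then checked using the uniform Morseness of these rays (\cite[Theorem D]{ABD}) together with the divergence lemma for the weak metric. To repair your write-up, replace the direct cover construction with these verifications.
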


 We do not need the definition of finitely $\mathcal F$-amenable, only known facts about this notion that lead to the following corollary, which proves Theorem \ref{thm:FJ intro} from the introduction:

 \begin{corollary}
  Let $(G,\mathfrak S)$ be a colorable hierarchically hyperbolic group. Suppose that the stabilizers of all $Y\in\mathfrak S$ which are not $\nest$-maximal satisfy the Farrell--Jones conjecture. Then $G$ satisfies the Farrell--Jones conjecture.
 \end{corollary}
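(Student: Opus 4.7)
The plan is to deduce the corollary directly from Theorem \ref{thm:rel_FJ} via the standard transitivity principle for the Farrell--Jones Conjecture. The theorem provides a finitely $\mathcal F$-amenable action of $G$ on a compact ER, where $\mathcal F$ consists of virtually cyclic subgroups together with stabilizers of proper domains $Y \propnest S$. The general machinery (as formulated in Bartels--Lück--Reich and systematized in the criterion used by Bartels--Bestvina \cite{BB:FJ_MCG}) then says that the existence of such an action reduces the Farrell--Jones Conjecture for $G$ to the Farrell--Jones Conjecture for all subgroups in $\mathcal F$.

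The first step is therefore to apply Theorem \ref{thm:rel_FJ} as a black box and extract the $\mathcal F$-amenable action on a compact ER. The second step is to invoke the transitivity/inheritance property: a finitely $\mathcal F$-amenable action on a compact ER implies $G$ satisfies the Fibered Farrell--Jones Conjecture (in both $K$- and $L$-theory) relative to $\mathcal F$, meaning the assembly map with coefficients in $\mathcal F$ is an isomorphism. This is precisely the formulation used in \cite{BB:FJ_MCG} and goes back to \cite{BLR:hyperbolic, BFL:lattices}.

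The third step is to verify that every member of $\mathcal F$ satisfies the Farrell--Jones Conjecture. Virtually cyclic groups satisfy it by definition of the conjecture (they appear on the source side of the assembly map). For the remaining members, namely stabilizers of non-$\nest$-maximal domains $Y \in \mathfrak S$, this is exactly the standing hypothesis of the corollary. Combining these facts with the relative statement from step two yields the Farrell--Jones Conjecture for $G$.

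There is no real obstacle here beyond invoking the right packaging of the machinery: all the geometric and combinatorial work has been done in proving Theorem \ref{thm:rel_FJ} (which produces the compactification, checks its topological properties via Theorem \ref{thm:Z}, and verifies the finite $\mathcal F$-amenability condition by exploiting the HHG structure and the stable cubulation Theorem \ref{thm:stabler cubulations}). The only mild point to note is that the family $\mathcal F$ used here is slightly non-standard (it is not the family of virtually cyclic subgroups alone), but this is exactly why the transitivity principle is needed, and its use in this setting follows the template established in \cite[Section 4]{BB:FJ_MCG} for mapping class groups.
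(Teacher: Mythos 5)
Your proposal is correct and follows essentially the same route as the paper: apply Theorem \ref{thm:rel_FJ} to obtain the finitely $\mathcal F$-amenable action on a compact ER, then invoke the inheritance results of \cite{BLR:hyperbolic,BL:hyp_CAT0,BFL:lattices} as packaged in \cite[Theorems 4.6 and 4.8]{BB:FJ_MCG}, noting that virtually cyclic members of $\mathcal F$ satisfy the conjecture automatically and the remaining stabilizers do so by hypothesis.
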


\begin{proof}
 As noted in \cite[Theorem 4.6 and 4.8]{BB:FJ_MCG}, by results in \cite{BLR:hyperbolic,BL:hyp_CAT0,BFL:lattices} any group with a finitely $\mathcal F$-amenable action on a compact ER where the subgroups in $\mathcal F$ satisfy the Farrell--Jones conjecture also satisfies the Farrell--Jones conjecture.
\end{proof}

\subsection{Setup}

The rest of this section is devoted to the proof of Theorem \ref{thm:rel_FJ}.

Fix a colorable HHG $(G,\mathfrak S)$, endow it with the asymptotically CAT(0) metric from Theorem \ref{thm:asymp CAT0}, with parameters $C,\kappa$, which we denote by $d$. Recall that $d$ is uniformly quasi-isometric to any word metric. We can furthermore assume that there exists $D$ such that the moreover part of Theorem \ref{thm:asymp CAT0} on hierarchy paths holds with these constants. In fact, any two points of $G$ are joined by a discrete $(1,C)$-quasi-geodesic, which is furthermore a $D$-hierarchy path. As in Section \ref{sec:boundary}, we simply refer to discrete $(1,C)$-quasi-geodesic as segments, and similarly for rays.

Let $\bar G$ be the compact spaces provided by Theorem \ref{thm:Z} (with $\calX=G$); as noted in Proposition \ref{prop:fin AN dim HHS} the theorem indeed applies. We check that $\bar G$ satisfies \cite[Theorem 1.11]{BB:FJ_MCG}. Since $\bar G$ is a compact ER by Theorem \ref{thm:Z}, we are only left to show that \cite[Axioms 1.4-1.6]{BB:FJ_MCG} are satisfied; we recall the content of the various axioms along the way.

\subsection{Boundary subsurface projections}

Most of the conditions of \cite[Axiom 1.4]{BB:FJ_MCG} follow directly from the HHS axioms, the main missing piece is the fact, that we show below, that the projections $\pi_Y:G\to \mathcal C(Y)$ extend to an open set $\Delta(Y)\subseteq \bar G$, with the projection now allowed to take values in $\partial C(Y)$ as well. 

We would like to define $\pi_Y(p)$, for $p\in\mathfrak B G$ in terms of $\pi_Y(\gamma(t))$ for $\gamma$ a ``nice'' ray representing $p$. The following says that such a nice ray exists:

  \begin{lemma}\label{lem:hierarchy_ray}
        There exists $D_0$ such that for any $p\in\mathfrak B G$ and any basepoint $\go$ there exists a ray $\gamma=\gamma_p$ which is also a $D_0$-hierarchy path and $[\gamma]=p$.
    \end{lemma}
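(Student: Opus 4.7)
The plan is to realize $\gamma_p$ as a pointwise limit of hierarchy paths from $\go$ to a sequence of points marching out to infinity along an arbitrary representative of $p$. Concretely, I would first invoke Lemma \ref{lem:change_basepoint} (or just directly pick any representative ray) to obtain some ray $\gamma$ with $[\gamma]=p$, starting at an arbitrary point $q\in G$. For each $n\in\mathbb N$, the moreover clause of Theorem \ref{thm:asymp CAT0} produces a discrete $(1,C)$-quasi-geodesic $\sigma_n$ from $\go$ to $\gamma(n)$ which is simultaneously a $D$-hierarchy path.

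Next, since $G$ is finitely generated, balls in $G$ are finite, and a standard diagonal extraction (Arzel\`a--Ascoli for discrete paths valued in a locally finite set) yields a subsequence $\sigma_{n_k}$ converging pointwise to a discrete path $\gamma_p:\mathbb N\to G$ with $\gamma_p(0)=\go$. The $(1,C)$-quasi-geodesic inequality is preserved by pointwise limits, so $\gamma_p$ is a ray. A comparison triangle argument identical to the one in Lemma \ref{lem:change_basepoint} then gives
\[
d(\gamma_p(t),\gamma(t))\leq 2\kappa(t)+d(\go,q)
\]
for all $t$, so $\gamma_p\sim_{sl}\gamma$ and $[\gamma_p]=p$.

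It remains to check that $\gamma_p$ is a $D_0$-hierarchy path for $D_0$ depending only on $D$. Because $G$ is discrete and the convergence is pointwise, for every finite $N$ there exists $k_0$ such that $\gamma_p|_{\{0,\dots,N\}}=\sigma_{n_k}|_{\{0,\dots,N\}}$ for all $k\geq k_0$. Hence for every $U\in\mathfrak S$, the restriction $\pi_U\circ\gamma_p|_{\{0,\dots,N\}}$ is an unparametrized $(D,D)$-quasi-geodesic in $\calC(U)$, with constants independent of $N$. Taking $N\to\infty$ upgrades this to $\pi_U\circ\gamma_p$ being an unparametrized $(D,D)$-quasi-geodesic on all of $\mathbb N$, which together with the already-established $(1,C)$-quasi-geodesic property of $\gamma_p$ gives the hierarchy path conclusion with $D_0=\max\{C,D\}$.

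The only subtle point, and the step I would write most carefully, is the last one: verifying that being an unparametrized quasi-geodesic on every finite initial segment, with uniform constants, implies the same on the whole ray. This is essentially formal from the definition (which only tests triples of times), but the argument must be laid out explicitly so that the reparametrization by projected arclength extends coherently from each $[0,N]$ to $[0,\infty)$. No new HHS input is required beyond the moreover clause of Theorem \ref{thm:asymp CAT0} together with pointwise closedness of the hierarchy path condition.
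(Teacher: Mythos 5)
Your proposal is correct and follows the same route as the paper: the paper's proof simply invokes the moreover clause of Theorem \ref{thm:asymp CAT0} to get $D$-hierarchy-path segments from $\go$ to points $\gamma(n)$ along a representative ray, and then runs "the same limiting argument as in Lemma \ref{lem:change_basepoint}" to extract the hierarchy ray. You have merely spelled out the details that the paper leaves implicit (local finiteness of balls forcing eventual agreement on finite initial segments, and closedness of the unparametrized quasi-geodesic condition under such limits), which is a reasonable amount of care.
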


    \begin{proof}
        By assumption, any pair of interior points is connected by a segment which is also a $D$-hierarchy path. The same limiting argument as in Lemma \ref{lem:change_basepoint}, using these hierarchy paths allows us to construct the required hierarchy ray.
    \end{proof}

    When a basepoint is fixed, for $p\in\bar G$, we will denote by $\gamma_p$ either the ray from Lemma \ref{lem:hierarchy_ray} or a segment which is also a $D$-hierarchy path starting at the basepoint, depending on whether $p$ lies in the boundary or in the interior.

In order to show that the $\gamma_p$ can be used to extend $\pi_Y$ to a ``coarsely continuous'' map, we need the following. Recall that in an HHS there are, for each $Y\in\mathfrak S$, standard product regions $P_Y=F_Y\times E_Y$, where moving in the $F_Y$ factor (resp. $E_Y$ factor) corresponds to moving only in hyperbolic spaces nested into (resp. orthogonal to) $Y$.   Technically, $P_Y$, $E_Y$, and $F_Y$ are not subsets of the HHS, they come with quasi-isometric embeddings into it, and we will conflate them with their images under said embeddings. These are well-defined only up to finite Hausdorff distance. See \cite[Subsection 5B]{HHS_II} for more details.

\begin{proposition}\label{prop:coarse_cont}
    There exists $\theta\geq 0$ with the following property. Fix a basepoint $\go\in G$. For $Y\in\mathfrak S$, let $\Delta(Y)$ be the complement of the limit set of $E_Y$ in $\mathfrak B G\subseteq \bar G$. Then for any point $p\in \Delta(Y)$ and $t_0\geq 0$ there exists a neighborhood $U$ such that for any $q\in U\cap G$ we have $d_Y(\gamma_p(t_0),\gamma_q)\leq \theta$.
 \end{proposition}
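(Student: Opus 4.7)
The plan is to combine fellow-traveling of the hierarchy representatives $\gamma_p$ and $\gamma_q$ in $G$ with Morse-type stability of their images in the hyperbolic space $\mathcal C(Y)$, exploiting that $\pi_Y(\gamma_p)$ and $\pi_Y(\gamma_q)$ are uniform unparametrized quasi-geodesics since $\gamma_p,\gamma_q$ are $D_0$-hierarchy paths. The role of the hypothesis $p\in\Delta(Y)$ is to force $\pi_Y(\gamma_p)$ to be an unbounded ray, so that beyond time $t_0$ the quasi-geodesic $\pi_Y(\gamma_p|_{[0,T]})$ is long enough for hyperbolic stability to pin down its intermediate point $\pi_Y(\gamma_p(t_0))$.

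First I would establish fellow-traveling up to sublinear error. If $d_\kappa(p,q)<1/T'$ for a large $T'$, then by the definition of $d_\kappa$ together with Corollary \ref{cor:unif_close} (comparing arbitrary $d_\kappa$-realizing representatives from $\go$ to the fixed hierarchy representatives, and using an analogous bound in the segment case via a comparison-triangle argument), one obtains $d(\gamma_p(T'),\gamma_q(T'))\leq 7\kappa(T')$; the inequality $d(\go,q)\geq T'-C$ ensures $\gamma_q$ is defined at $T'$. The Uniform Divergence Lemma \ref{lem:weak divergence} applied with $\eta=7\kappa$ then upgrades this to $d(\gamma_p(t),\gamma_q(t))\leq 3\kappa(t)$ for all $t\leq \fg(T')$, where $\fg$ diverges as $T'\to\infty$. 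Lipschitzness of $\pi_Y$ converts this into the analogous bound $d_Y(\gamma_p(t),\gamma_q(t))\leq 3K\kappa(t)+K$ in $\mathcal C(Y)$.

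Next I would use $p\in\Delta(Y)$ to show that $\pi_Y(\gamma_p)$ is unbounded in $\mathcal C(Y)$. If instead $\pi_Y(\gamma_p)$ were contained in a bounded ball about $\pi_Y(\go)$, then the standard HHS consequence that a point with bounded $Y$-projection lies in a bounded neighborhood of a translate of $E_Y$ (the transversal projections being controlled in a compatible way along a hierarchy ray) would place the whole ray $\gamma_p$ in a bounded neighborhood of $E_Y$, and hence $p$ in the limit set of $E_Y$, contradicting $p\in\Delta(Y)$. Therefore $\pi_Y(\gamma_p)$ is an unbounded unparametrized quasi-geodesic ray.

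Finally I would combine these: let $K$ be the Lipschitz constant for $\pi_Y$ and let $Q$ be a uniform Morse/stability constant for unparametrized quasi-geodesics in $\mathcal C(Y)$ with the constants afforded by hierarchy paths. Using unboundedness and the quasi-geodesic nature of $\pi_Y(\gamma_p)$, pick $T>t_0$ so large that $d_{\mathcal C(Y)}(\pi_Y(\gamma_p(t_0)),\pi_Y(\gamma_p(T)))>Q+3K\kappa(T)+K+10$; this is possible because linear quasi-geodesic progression dominates the sublinear function $\kappa$. Choose $T'$ with $\fg(T')\geq T$ and $T'\geq T+C$, and let $U$ be the $d_\kappa$-ball about $p$ of radius $1/T'$. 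For $q\in U\cap G$, step two yields $d_Y(\gamma_p(T),\gamma_q(T))\leq 3K\kappa(T)+K$, so the uniform unparametrized quasi-geodesic $\pi_Y(\gamma_q)$ starts at $\pi_Y(\go)=\pi_Y(\gamma_p(0))$ and comes within $3K\kappa(T)+K$ of $\pi_Y(\gamma_p(T))$; hyperbolic stability then forces it to pass within a uniform constant $\theta$ of the intermediate point $\pi_Y(\gamma_p(t_0))$, depending only on the HHS structure. The main obstacle I expect is step three, the HHS-specific unboundedness claim, which requires structural input on how hierarchy rays with bounded $Y$-projection interact with the product region $P_Y=F_Y\times E_Y$, beyond the generalities recalled in the preliminaries.
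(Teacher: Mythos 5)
Your step (3) is incorrect, and it is precisely the point where the real difficulty of this proposition lives. Having bounded projection to $\calC(Y)$ does \emph{not} place a point in a bounded neighborhood of $E_Y$: the distance from a point of $P_Y$ to $E_Y$ is comparable to $\sum_{Z\nest Y}[d_Z(\cdot,\go)]_L$, a sum over \emph{all} domains nested in $Y$, not just $Y$ itself. A hierarchy ray can have bounded $\pi_Y$-image while its projections to domains $Z\propnest Y$ grow linearly; such a ray diverges from $E_Y$ (so its endpoint lies in $\Delta(Y)$) yet $\pi_Y(\gamma_p)$ is bounded. This is exactly the case the paper isolates as ``$p$ lies in the limit set of $P_Y$ but not of $E_Y$'', and it is handled there by a distance-formula argument on $\Sigma(\gamma(t_0),\gamma(t))=\sum_{Z\nest Y}[d_Z(\gamma(t_0),\gamma(t))]_L$, showing that if $\pi_Y(\gamma_q)$ missed $\pi_Y(\gamma_p(t_0))$ then every term of this linearly growing sum would contribute to $\Sigma(\gamma_q(t),\gamma(t))$, contradicting the sublinear bound. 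Your proposal has no mechanism for this case.

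There is a second, quantitative gap in your final step even when $\pi_Y(\gamma_p)$ is unbounded. A hierarchy ray projects to an \emph{unparametrized} quasi-geodesic in $\calC(Y)$, so there is no lower bound on the speed of $\pi_Y(\gamma_p(t))$ as a function of $t$: $d_Y(\gamma_p(t_0),\gamma_p(T))$ may grow like $\log T$ or slower, while your fellow-traveling error $3K\kappa(T)+K$ grows like $\sqrt{T}$ (the paper's $\kappa$ is $D\sqrt{t}+D$). So the inequality you need in order to choose $T$ may never hold. What is actually required is a \emph{uniform} (not sublinear) bound on $d_Y(\gamma_p(t),\gamma_q(t))$; the paper obtains this, in the case $p\notin\Lambda P_Y$, by showing $\gamma_p$ diverges \emph{linearly} from $P_Y$ while $\gamma_q(t)$ is only \emph{sublinearly} far from $\gamma_p(t)$, so a hierarchy path between them avoids a neighborhood of $P_Y$ and hence has uniformly bounded $Y$-projection (Lemma \ref{lem:BGI_hierarchy}). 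With that uniform bound in hand, the Morse argument you sketch does finish Case 1, without needing unboundedness of $\pi_Y(\gamma_p)$ at all.
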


\begin{proof}
    The proposition holds for $p$ an interior point since the $\pi_Y$ are coarsely Lipschitz and the $\mathcal C Y$ hyperbolic, so we only need to consider the case that $p$ is a boundary point.

We consider the weak metric $d_\kappa$ based at the fixed basepoint $\go$ that defines the topology of $\mathfrak B G$.  All constants $D_i$ appearing in the lemmas in this proof can be chosen independently of $Y$, $p$, $\go$.

Set $\gamma=\gamma_p$. We now show that $\gamma$ diverges linearly from $E_Y$.

\begin{lemma}\label{lem:hierarchy_diverges}
    There exists $\epsilon>0$ (allowed to depend on $p$ and $\gamma$) such that for all large enough $t$ we have $d(\gamma(t),E_Y)>\epsilon t$. Moreover, if $p$ does not lie in the limit set of $P_Y$, then the same holds replacing $E_Y$ with $P_Y$.
\end{lemma}

\begin{proof}
The proof for the $P_Y$ version is identical, so we only spell out the $E_Y$ version.
    We will use the gate map $\mathfrak p_{E_Y}:G\to E_Y$, which is a map with the property that $\pi_Z(\mathfrak p_{E_Y}(x))$ is approximately $\pi_Z(x)$ for all $Z$ orthogonal to $Y$, see \cite[Section 5]{HHS_II} for a general discussion. A consequence of the distance formula is that $\mathfrak p_{E_Y}(x)$ is a coarse closest point to $x$ in $E_Y$, meaning that $d(x,\mathfrak p_{E_Y}(x))$ is bounded above up to uniform multiplicative and additive errors by $d(x,E_Y)$. Consider the gate points $p_n=\mathfrak p_{E_Y}(\gamma(n))$. Suppose by contradiction that there exists a subsequence of the $p_n$ along which $d(\gamma(n),p_n)<\eta(n)$ for some sublinear function $\eta$. Up to passing to a further subsequence, we can assume that the $p_n$ converge to some $p_\infty$, which by definition lies in the limit set of $E_Y$, so that $p\neq p_{\infty}$. However, by Lemma \ref{lem:weak divergence} we have $d_\kappa(p_n,\gamma(n))<1/\mathfrak g(n)$ for a diverging function $\mathfrak g$, so that $d_\kappa(p_\infty,p)=0$, contradicting Lemma \ref{lem:nonzero}.
\end{proof}

Now the proof splits into two cases.

\par\medskip

{\bf Case 1:} $p$ does not lie in the limit set of $P_Y$.

\par\medskip

To certify that certain pairs of points have nearly the same projection to $\mathcal C(Y)$ we will use the following lemma, saying that it suffices to check the existence of a hierarchy path that stays far from the product region $P_Y$.

\begin{lemma}\label{lem:BGI_hierarchy}
There exists $D_1$ such that if two points $x,y\in G$ are connected by a $D_0$-hierarchy path that stays $D_1$-far from $P_Y$, then $d_Y(x,y)\leq D_1$.
\end{lemma}

\begin{proof}
    This is \cite[Proposition 5.17]{HHS_II}, a corrected version of which appears in \cite[Proposition 18.1]{CRHK}.
\end{proof}

\begin{lemma}\label{lem:two_hierarchies}
    Suppose that $p$ does not lie in the limit set of $P_Y$. For all $t_0>0$ there exists $\epsilon'>0$ (allowed to depend on $p$ and $\gamma$) such that the following holds. Let $q\in \bar G$ be such that $d_\kappa(q,p)<\epsilon'$. Then there is a $D_0$-hierarchy path $\alpha$ connecting $\gamma_p(t)$ to $\gamma_q(t)$ which stays outside the $D_1$-neighborhood of $P_Y$ for some $t>t_0$.
\end{lemma}

\begin{proof}
    Suppose that $d_\kappa(q,p)$ is sufficiently small. By definition of $d_\kappa$ and the fact that all $(1,C)$-quasi-geodesics with the same endpoints (possibly at infinity) stay quantifiably close (Corollary \ref{cor:unif_close}) there exists some point $\gamma_q(t)$, with $t>t_0$, which lies sublinearly close to $\gamma_p(t)$, meaning that $d(\gamma_p(t),\gamma_q(t))$ is bounded above by a fixed sublinear function. Since $\gamma(t)$ is linearly far from $P_Y$ (Lemma \ref{lem:hierarchy_diverges}), when $d_\kappa(q,p)$ is sufficiently small we have that any segment from $\gamma_q(t)$ to $\gamma_p(t)$ stays $D_1$-far from $P_Y$. In particular, there is a $D_0$-hierarchy path connecting those two points which stays $D_1$-far from $P_Y$. This concludes the proof.
\end{proof}

To conclude the proof of the proposition in Case 1, it suffices to notice that, for $U$ a sufficiently small neighborhood of $p$, the conclusion of Lemma \ref{lem:two_hierarchies} applies to any $q\in U$, so that in particular $\gamma_p(t)$ and $\gamma_q(t)$ project close in $\mathcal C(Y)$ by Lemma \ref{lem:BGI_hierarchy}. But then $\pi_Y(\gamma_q)$ needs to pass uniformly close to $\pi_Y(\gamma(s))$ for all $s\leq t$ (since they are both quasi-geodesics in the hyperbolic space $\mathcal C(Y)$), and hence in particular for $s=t_0$, as required.

\par\medskip

{\bf Case 2:} $p$ lies in the limit set of $P_Y$.

\par\medskip

Note that we are of course still assuming that $p$ does not lie in the limit set of $E_Y$. It is easy to see that $d(\gamma(t),P_Y)$ is sublinear in $t$, at least along a subsequence of $t$s. Combining this with Lemma \ref{lem:hierarchy_diverges} and the fact that the distance from $E_Y$ of a point $x$ in $P_Y$ is comparable to $\Sigma(\go,x):=\sum_{Z\nest Y} [d_Z(x,\go)]_L$, for any sufficiently large $L$, we see that $\Sigma(\go,\gamma(t))$ grows linearly in $t$, at least along the same subsequence. But then so does $\Sigma(\gamma(t_0),\gamma(t))$ (with bounds depending on $t_0$).

Now consider $q$ with the property that $\pi_Y(\gamma_q)$ is far (sufficiently so to run the argument below) from $\pi_Y(\gamma(t_0))$. We then argue that any term in $\Sigma(\gamma(t_0),\gamma(t))$ also makes a contribution to $\Sigma(\gamma_q(t),\gamma(t))$. Indeed, for the $Y$ term of the sum, hyperbolicity of $\mathcal C(Y)$ forces any geodesic from $\pi_Y(\gamma(t))$ to $\pi_Y(\gamma_q(t))$ to pass uniformly close to $\pi_Y(\gamma(t_0))$. For the terms corresponding to some $Z$ properly nested into $Y$, in view of bounded geodesic image we have that $\pi_Z(\gamma_q(t))$ coarsely coincides with $\pi_Z(\go)$, and $\pi_Z(\gamma(t_0))$ is uniformly close to a geodesic from $\pi_Z(\go)$ to $\pi_Z(\gamma(t))$ since $\gamma$ is a hierarchy path, see Figure \ref{fig:term_Z_nest_Y}.

\begin{figure}[h]

\includegraphics{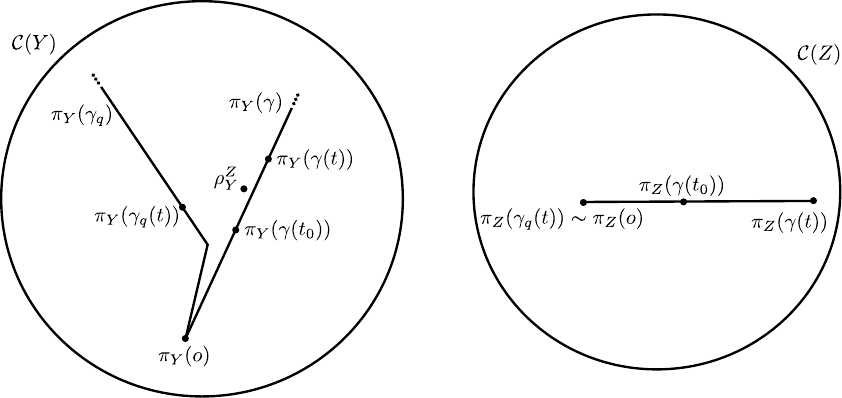}
\caption{The position of $\rho^Z_Y$ is  dictated by the fact that $d_Z(\gamma(t),\gamma(t_0))$ is large.}\label{fig:term_Z_nest_Y}.
\end{figure}

To summarize the above, if $\pi_Y(\gamma_q)$ is far from $\pi_Y(\gamma(t_0))$ then $\Sigma(\gamma_q(t),\gamma(t))$ is bounded from below by a linear function of $t$.

Now, $\Sigma(\gamma_q(t),\gamma(t))$ is bounded sublinearly in $t$ for arbitrarily large $t$ whenever $d_\kappa(p,q)$ is sufficiently small. Therefore, if $d_\kappa(p,q)$ is sufficiently small, then $\pi_Y(\gamma_q)$ needs to pass within uniformly bounded distance of $\pi_Y(\gamma(t_0))$, as required.

\end{proof}

Given a hierarchy ray $\gamma$ and $Y\in\mathfrak S$, $\pi_Y\circ\gamma$ has a coarsely-well defined endpoint in $\mathcal C(Y)\cup \partial C(Y)$. For $p\in\mathfrak B G$ we can then define $\pi_Y(p)$ as the coarse endpoint of $\pi_Y\circ\gamma_p$. For consistency of notation, for $Y,Z\in\mathfrak S$ with $\rho^Y_Z$ well-defined, write $\pi_Y(Z)=\rho^Y_Z$. We can then set $d^\pi_Y(x,z)=d_{\mathcal C(Y)\cup\partial \mathcal C(Y)}(\pi_Y(x),\pi_Y(z))\in[0,\infty]$ for $y,z\in \mathfrak S\sqcup \Delta(Y)$ for which the quantity is well-defined. Here, a point of $\partial C(Y)$ is infinitely far away from all other points.

Note that each $\pi_Y$ also extends to the Vietoris--Rips complex $R_T(G)$ contained in $\bar G$ (and that in fact that $R_T(G)$ in an HHS with these projection maps). We still denote $\pi_Y$ the extended maps, and with this we can define $d^\pi_Y(x,z)$ also when $x$ and/or $z$ are in the interior of a simple of $R_T(G)$. The axioms we have to check are coarse in nature, so essentially they hold for $G\cup \mathfrak BG$ if and only if they hold for $R_T(G)\cup\mathfrak BG$.

\begin{proposition}
    Each color of $\mathcal S$ satisfies \cite[Axiom 1.4]{BB:FJ_MCG}.
\end{proposition}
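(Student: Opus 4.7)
The plan is to verify each clause of Axiom 1.4 of \cite{BB:FJ_MCG} for a fixed color $\calS\subseteq\mathfrak S$ (a maximal family of pairwise $\transverse$ and $\nest$-incomparable domains). For each $Y\in\calS$ the axiom requires: an open set $\Delta(Y)\subseteq \bar G$ with $G\subseteq \Delta(Y)$; a projection $\pi_Y:\Delta(Y)\to \mathcal C(Y)\cup\partial \mathcal C(Y)$ which is coarsely Lipschitz on $G$ and coarsely continuous at every point of $\bar G$; together with the standard HHS-type consistency and bounded geodesic image inequalities for pairs $Y,Z$ in the color. As the preamble to the proposition says, most of these are immediate from the HHG axioms on $G$, so the real work is the boundary extension.

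The construction is to take $\Delta(Y)=G\cup(\mathfrak B G\setminus L(E_Y))$, where $L(E_Y)$ denotes the limit set of $E_Y$ in $\mathfrak B G$, exactly as in Proposition \ref{prop:coarse_cont}. This set is open because $L(E_Y)$ is closed (the topology on $\bar G$ is sequential, and an accumulation point of a sequence in $E_Y$ is again in $L(E_Y)$). For a boundary point $p\in \Delta(Y)\cap\mathfrak B G$, I define $\pi_Y(p)$ as the coarse endpoint in $\mathcal C(Y)\cup\partial\mathcal C(Y)$ of the quasi-geodesic $\pi_Y\circ\gamma_p$, where $\gamma_p$ is the $D_0$-hierarchy ray supplied by Lemma \ref{lem:hierarchy_ray}. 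The endpoint is well-defined up to bounded error because $\pi_Y\circ\gamma_p$ is a uniform quasi-geodesic in the hyperbolic space $\mathcal C(Y)$, and it is independent of the representative by combining Corollary \ref{cor:unif_close} with the coarse Lipschitz property of $\pi_Y$.

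Coarse continuity of $\pi_Y$ at each boundary point of $\Delta(Y)$ is then precisely the content of Proposition \ref{prop:coarse_cont}. The remaining axiom clauses---consistency for $Y\transverse Z$ in the color, bounded geodesic image, partial realization and friends---are coarse inequalities among projections. For interior points they are the HHG axioms on $G$; for pairs involving boundary points they propagate by evaluating the relevant inequalities along hierarchy rays $\gamma_p(t),\gamma_q(t)$ and passing to the coarse limit via Proposition \ref{prop:coarse_cont}. The extension from $G$ to the Vietoris--Rips thickening $R_T(G)\subseteq\bar G$ is harmless, because $R_T(G)$ is quasi-isometric to $G$ and carries an induced HHG structure obtained by extending the $\pi_Y$ coarsely Lipschitz across simplices.

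The main obstacle is Case 2 of Proposition \ref{prop:coarse_cont}, where $p\in\mathfrak B G$ lies in the limit set of the full product region $P_Y$ but not in that of $E_Y$. In that regime the naive ``stay far from $P_Y$'' argument of Lemma \ref{lem:BGI_hierarchy} is unavailable, and continuity has to be extracted from the HHS distance formula: any large change in $\pi_Y(\gamma_q)$ near $\pi_Y(\gamma_p(t_0))$ is shown to force a linear contribution to $\sum_{Z\nest Y}[d_Z(\gamma_q(t),\gamma(t))]_L$, which contradicts the sublinear closeness enforced by $d_\kappa(p,q)$ being small. Once this step is established, all remaining clauses of the axiom reduce to bookkeeping over the HHG axioms on $G$ and the coarse continuity just established.
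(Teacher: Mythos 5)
Your proposal follows essentially the same route as the paper: the only substantive clause is coarse semi-continuity at boundary points, which is precisely Proposition \ref{prop:coarse_cont} applied to the extension of $\pi_Y$ along hierarchy rays, while the remaining clauses follow from the metric on $\mathcal C(Y)\cup\partial\mathcal C(Y)$ and the HHG axioms. One minor correction: the remaining clauses of \cite[Axiom 1.4]{BB:FJ_MCG} are the BBF-style projection properties (symmetry, triangle inequality, the Behrstock inequality within a color, and finiteness of large projections), not ``consistency, bounded geodesic image, partial realization''; your reduction of them to the HHS structure on $G$ is nonetheless the correct argument.
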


\begin{proof}
    Property (P1) (symmetry) holds since $d_{\mathcal C(Y)\cup\partial \mathcal C(Y)}$ is symmetric, and similarly (P2) (triangle inequality) holds since it is a metric (allowed to take value infinity).

    Property (P3) (inequality on triples) follows from the Behrstock inequality for the HHS $\cuco X$, since within each color all distinct pairs are transverse.

    Property (P4) (finiteness) only involves elements of the color, and is well-known and observed in e.g. \cite{HagenPetyt} to show that the axioms of \cite{BBF} apply.

    Finally, property (P5) (coarse semi-continuity) is an immediate consequence of Proposition \ref{prop:coarse_cont}.
\end{proof}

\subsection{Flow axioms}

In \cite{BB:FJ_MCG}, the authors consider the Thurston compactification $\bar T$ of Teichm\"uller space $T$, and the authors axiomatize properties of thick Teichm\"uller rays in Axioms 1.5 and 1.6. The setup is that for each compact subspace $K$ of $T$ there is a specified collection $\mathcal G_K$ of rays contained in the union of all translates of $K$. Then Axiom 1.5 says that for all $\Theta>0$ there is a compact set $K$ such that that given any point $x$ in the orbit of some fixed $x_0$ in $T$ and a boundary point $p$, either
\begin{itemize}
\item there is $c\in \mathcal G_K$ connecting $x$ to $p$, or
\item there is $Y$ with $p\in \Delta(Y)$ and $d_Y(x,p)>\Theta$.
\end{itemize}
(In \cite{BB:FJ_MCG} the second bullet is slightly different, as $x_0$ is replaced by a fixed ``base index'', but for our purposes there is no difference.)

In our case, we are considering an action which is cocompact, so we can do the following to tautologically satisfy Axiom 1.5. We assign arbitrarily to compact subsets of $R_T(G)$ whose $G$-translates cover the whole space some constant $\Theta=\Theta(K)$, in a way that the constants assigned to compact sets are arbitrarily large. Then, we declare $\mathcal G_K$ to be a set of rays containing one ray $\gamma_{x,p}$ connecting a vertex in a simplex containing $x\in R_T(G)$ to $p\in \mathfrak B G$ (which we can still regard as a quasi-geodesic with uniform constants starting at $p$ up to moving the starting point) whenever $x,p$ are such that $d_Y(x,p)\leq \Theta(K)$ for all $Y$ with $p\in \Delta(Y)$. As above, the axioms we will have to check are coarse, so it suffices to consider vertices of $R_T(G)$, which we implicitly do below.

We are left to check that the $\mathcal G_K$ as above satisfy Axiom 1.6, which is in three parts. It will be very important that all rays in $\mathcal G_K$ are Morse:

\begin{proposition}
     For all compact subspaces $K$ there exists a Morse gauge $M$ such that any ray in $\mathcal G_K$ is $M$-Morse.
\end{proposition}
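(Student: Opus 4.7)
The plan is to verify, for every ray $\gamma_{x,p} \in \mathcal G_K$, a standard hierarchical criterion for Morse-ness: a uniform-quality hierarchy quasi-geodesic in a colorable HHG is $M$-Morse, with the gauge $M$ depending only on the HHG constants and on a uniform upper bound $M_0$ for the shadow diameters $d_Y(\gamma) := \diam(\pi_Y(\gamma))$ over all non-$\nest$-maximal $Y$ (this is the hierarchy-path analog of the Abbott--Behrstock--Durham characterization of stable subsets of HHGs as those with uniformly bounded projections to proper domains). By Lemma \ref{lem:hierarchy_ray} each $\gamma_{x,p}$ is a $D_0$-hierarchy ray, so $\pi_Y \circ \gamma_{x,p}$ is automatically a uniform unparametrized quasi-geodesic in $\mathcal C(Y) \cup \partial \mathcal C(Y)$; the task thus reduces to producing a constant $M_0 = M_0(K)$ bounding $d_Y(\gamma_{x,p})$ uniformly in $(x,p)$ and in $Y \propnest S$.

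I would split the bound on $d_Y(\gamma_{x,p})$ according to whether $p \in \Delta(Y)$. If $p \in \Delta(Y)$, the defining condition of $\mathcal G_K$ gives $d_Y(x,p) \leq \Theta(K)$, and since $\pi_Y(\gamma_{x,p})$ is a uniform quasi-geodesic with endpoints $\pi_Y(x)$ and $\pi_Y(p) \in \mathcal C(Y) \cup \partial \mathcal C(Y)$, its diameter is at most $\Theta(K) + O(1)$. If instead $p$ lies in the limit set of $E_Y$, I would pick a sequence $q_n \in E_Y$ with $q_n \to p$ in $d_\kappa$ and run the Arzelà--Ascoli procedure from Lemma \ref{lem:hierarchy_ray} on $D_0$-hierarchy segments from $x$ to the $q_n$; their pointwise limit can be taken to agree with $\gamma_{x,p}$ up to bounded Hausdorff error. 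Each such segment has $\pi_Y$-image a uniform quasi-geodesic from $\pi_Y(x)$ to a point of the uniformly bounded set $\pi_Y(E_Y)$, yielding $d_Y(\gamma_{x,p}) \leq d(\pi_Y(x), \pi_Y(E_Y)) + O(1)$.

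The main obstacle is promoting this last estimate to a bound uniform in $(x,p) \in \mathcal G_K$. The plan here is to show that if $d(\pi_Y(x), \pi_Y(E_Y))$ exceeds a threshold determined by $\Theta(K)$ and the HHG constants, then a Behrstock/Passing-Up style argument in $\mathfrak S$ produces some $Y' \propnest S$ with $p \in \Delta(Y')$ and $d_{Y'}(x, p) > \Theta(K)$, contradicting $(x,p)\in \mathcal G_K$; consequently such pairs do not occur, and $d(\pi_Y(x), \pi_Y(E_Y))$ is uniformly controlled by a function of $\Theta(K)$. Feeding the resulting $M_0 = M_0(K)$ into the Morse characterization recalled in the first paragraph then produces the desired Morse gauge $M = M(K)$ valid for every ray in $\mathcal G_K$.
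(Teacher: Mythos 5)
Your overall strategy is the same as the paper's: the proof there is a one-line citation of \cite[Theorem D]{ABD}, which is exactly the reduction in your first paragraph (a uniform-quality hierarchy ray with uniformly bounded shadows in all proper domains is uniformly Morse, with gauge depending only on the bound and on $\mathfrak S$). The substance of your proposal is therefore the verification that every $\gamma_{x,p}\in\mathcal G_K$ has uniformly bounded projection to \emph{every} proper $Y$, and it is precisely here, in your third paragraph, that there is a genuine gap. The defining condition of $\mathcal G_K$ only controls $d_Y(x,p)$ for $Y$ with $p\in\Delta(Y)$; for $Y$ with $p$ in the limit set of $E_Y$ you assert, without argument, that largeness of $d_Y(\pi_Y(x),\pi_Y(E_Y))$ forces, via ``a Behrstock/Passing-Up style argument,'' the existence of a proper $Y'$ with $p\in\Delta(Y')$ and $d_{Y'}(x,p)>\Theta(K)$. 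That assertion is the entire difficulty, not a routine step: the domains such arguments naturally produce from a long shadow of $\gamma_{x,p}$ in $\calC(Y)$ may themselves have $p$ in the limit set of their orthogonal complements, in which case they impose no constraint on membership in $\mathcal G_K$ and no contradiction materializes. Without an explanation of why the recursion lands on a domain with $p\in\Delta(Y')$, the argument does not close.

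The cleaner route, which also makes your second paragraph unnecessary, is to show that for pairs defining rays of $\mathcal G_K$ the case $p\notin\Delta(Y)$ simply does not occur. If $p$ lies in the limit set of $E_Y$ for some proper $Y$, then $\gamma_p(t)$ is sublinearly close to $E_Y$; since $\gamma_p$ makes linear progress in $\calX$ and distances inside $E_Y$ are computed over domains orthogonal to $Y$ (all proper), some proper $W_1\perp Y$ has $\pi_{W_1}(\gamma_p)$ unbounded, hence $\pi_{W_1}(p)\in\partial\calC(W_1)$. If $p\in\Delta(W_1)$ then $d_{W_1}(x,p)=\infty>\Theta(K)$ for every $x$ and $(x,p)$ never contributes a ray to $\mathcal G_K$; otherwise $p$ lies in the limit set of $E_{W_1}$ and one repeats, producing a proper $W_2$ orthogonal to both $Y$ and $W_1$ (the remaining linear progress must live in domains orthogonal to both), and so on. Since pairwise-orthogonal families in $\mathfrak S$ have bounded cardinality, this terminates in a contradiction. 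Consequently every $(x,p)$ with $\gamma_{x,p}\in\mathcal G_K$ satisfies $p\in\Delta(Y)$ and hence $d_Y(x,p)\le\Theta(K)$ for \emph{all} proper $Y$, and the citation of \cite[Theorem D]{ABD} applies directly, which is the paper's proof.
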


\begin{proof}
    This is a direct consequence of \cite[Theorem D]{ABD}.
\end{proof}

We now check $(F_1)-(F_3)$ from \cite[Axiom 1.6]{BB:FJ_MCG} in the three following lemmas. Fix a compact subset $K$.

\begin{lemma}[Axiom (F1), small at infinity.] Let $c_n\in \mathcal G_K$ and $x_n\in R_T(G)$. Suppose that $d(c_n,x_0)$ and $d(c_n(0), x_n)$ are bounded. Then $x_n\to p\in \mathfrak B G$ if and only if $c_n(0)\to p$.
\end{lemma}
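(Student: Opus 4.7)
The plan is to deduce the equivalence from a single key estimate: if $a_n, b_n \in G$ satisfy $d(a_n, b_n) \leq R$ for some fixed $R$ and $a_n \to \infty$ in the ambient metric $d$, then $d_\kappa(a_n, b_n) \to 0$, where $d_\kappa$ is the weak metric from Subsection \ref{subsec:weak_metric} based at a fixed $\go \in G$. Granting this, the weak triangle inequality
\[
d_\kappa(b_n, p) \leq \ff\!\bigl(\max\{d_\kappa(a_n, b_n),\, d_\kappa(a_n, p)\}\bigr)
\]
together with $\lim_{t \to 0}\ff(t) = 0$ transports convergence to $p \in \mathfrak B G$ from $a_n$ to $b_n$, and symmetrically back. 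Note that the hypothesis $d(c_n, x_0)$ bounded plays no role in this step beyond ensuring that $c_n$ is an honest (uniform) quasi-geodesic ray.

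Before applying the estimate I would pass from the simplicial setting to $G$. Since $c_n(0), x_n \in R_T(G)$ each lie in a simplex whose vertices are pairwise within $T$ in $d$, I can replace them by vertices of their carrier simplices, introducing only a bounded additive error in $d$. The topology on $\bar G$ constructed in the proof of Theorem \ref{thm:Z} is designed so that a sequence in $R_T(G)$ converges to a boundary point $p$ precisely when every vertex of its carrier simplex tends to $p$ with respect to $d_\kappa$; this follows from the fact that the basic open neighborhoods of $p$ contain the full subcomplex spanned by $B_U \cap G$ for a $d_\kappa$-open set $B_U \ni p$. The key estimate applied to pairs of vertices of a single carrier (all at bounded $d$-distance) shows that if one vertex converges to $p$ then so do the others, and the same estimate applied between carriers of $c_n(0)$ and of $x_n$ completes the reduction.

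For the key estimate itself, I would fix $T > 0$ and pick segments $\alpha_n, \beta_n$ from $\go$ to $a_n, b_n$. Since $a_n \to \infty$ and $b_n$ is within $R$ of $a_n$, the lengths $|\alpha_n|, |\beta_n|$ both grow unboundedly; set $t_n' := \min(|\alpha_n|, |\beta_n|)$. The endpoints $\alpha_n(t_n'), \beta_n(t_n')$ lie within a constant depending only on $R$ and $C$ of one another (as $\alpha_n, \beta_n$ are $(1,C)$-quasi-geodesics terminating near $a_n, b_n$), so Lemma \ref{lem:convex} gives
\[
d(\alpha_n(T), \beta_n(T)) \leq \frac{T(2R + 3C)}{t_n' - C} + 2\kappa(T).
\]
For $n$ large the first summand is at most $\kappa(T)$ (using $\kappa(T) \geq 10C > 0$), so the whole expression is at most $3\kappa(T)$, whence $d_\kappa(a_n, b_n) \leq 1/T$ eventually. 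Letting $T \to \infty$ yields $d_\kappa(a_n, b_n) \to 0$, as required. I do not foresee a serious obstacle: the only real subtlety is the bookkeeping between the weak metric $d_\kappa$ and the simplicial topology on $\bar G$, which was essentially set up in the course of proving Theorem \ref{thm:Z}.
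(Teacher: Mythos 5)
Your proof is correct and follows essentially the same route as the paper: the paper's one-line argument obtains $d_\kappa(c_n(0),x_n)\to 0$ by applying Lemma \ref{lem:weak divergence} to a constant function $\eta$, which is exactly your key estimate (your derivation via Lemma \ref{lem:convex} just reproves that special case directly, since Lemma \ref{lem:weak divergence} itself rests on the same comparison-triangle convexity). The transport of convergence through the weak triangle inequality and the reduction from simplices to vertices are implicit in the paper's proof and you have simply made them explicit.
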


\begin{proof}
Provided that either of $d(c_n(0),x_0)$ or $d(x_n,x_0)$ diverge, we have $d_\kappa(c_n(0),x_n)\to 0$ by Lemma \ref{lem:weak divergence} (applied to a constant function $\eta$).

\end{proof}

\begin{lemma}[Axiom (F2), fellow-travelling.] For any $\rho>0$ there exists $R>0$ such that the following holds. For all $x\in R_T(G)$, $p_+\in \mathfrak B G$, and $t\in[0,\infty)$ there exists an open neighborhood $U_+$ of $p_+$ with the following property. Let $c,c'\in\mathcal G_K$ both start in the $\rho$-neighborhood of $x$ and both end in $U_+$. Then $d(c(t),c'(t))<R$.
\end{lemma}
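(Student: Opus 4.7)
The plan is to exploit the uniform Morse property of rays in $\mathcal G_K$ (from the preceding proposition, citing \cite{ABD}) together with the uniform divergence lemma (Lemma \ref{lem:weak divergence}). Let $M$ denote the common Morse gauge for $\mathcal G_K$; the final constant $R$ will depend only on $M$, $C$, and $\rho$.

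Given $x$, $p_+$, and $t$, I would first fix a reference ray $\gamma_*\in\mathcal G_K$ starting at a point of $N_\rho(x)$ and ending at $p_+$, which exists by the definition of $\mathcal G_K$ together with Lemma \ref{lem:hierarchy_ray}. I would then take $U_+$ to be a sufficiently small $d_\kappa$-neighborhood of $p_+$. For any $c\in\mathcal G_K$ starting in $N_\rho(x)$ and ending at some $p\in U_+$, unfolding the definition of $d_\kappa$ and using Corollary \ref{cor:unif_close} and Lemma \ref{lem:change_basepoint} to pass between representatives of $p$ and $p_+$ at different basepoints, one obtains $d(c(s^*),\gamma_*(s^*))\leq 7\kappa(s^*)+2\rho$ at some time $s^*\geq t$, with $s^*$ arbitrarily large as $U_+$ shrinks.

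Next I would invoke the Morse property of $\gamma_*$ to convert this pointwise closeness at time $s^*$ into closeness at time $t$. The restriction $c|_{[0,s^*]}$, extended by short segments at its endpoints so as to land on $\gamma_*$, becomes a quasi-geodesic (with controlled constants) between two points of $\gamma_*$, hence by the Morse property it stays uniformly close to $\gamma_*|_{[0,s^*]}$. This yields $d(c(t),\gamma_*(t))\leq R/2$, and applying the same argument to $c'$ together with the triangle inequality produces the required bound $d(c(t),c'(t))<R$.

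The hard part will be ensuring that the resulting constant $R$ is \emph{uniform in $t$} (equivalently, in $s^*$). The extension used to apply Morse introduces an additive error of order $\kappa(s^*)+\rho$ into the quasi-geodesic constants, so a naive Morse bound of the form $M(1,C+O(\kappa(s^*)))$ would grow with $s^*$. Obtaining a uniform $R$ should be achieved by combining the Morse estimate with the CAT(0)-like convexity of Lemma \ref{lem:convex} in an iterative fashion: an initial non-uniform Morse bound at time $s^*/2$ gives a much smaller value of $d$ at that time, which can be fed into a second application of Morse over a shorter window; iterating and using sublinearity of $\kappa$ should then produce a constant $R=R(M,C,\rho)$ independent of $t$.
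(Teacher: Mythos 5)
Your setup (shrink $U_+$ to get sublinear closeness $d(c(s^*),\gamma_*(s^*))\lesssim \kappa(s^*)+\rho$ at some $s^*\gg t$, then use Morse-ness to propagate back to time $t$) is the right high-level idea and matches the paper's strategy, but the step you yourself flag as "the hard part" is a genuine gap, and the iterative patch you sketch does not close it. The convexity estimate of Lemma \ref{lem:convex} applied down a comparison quadrilateral inevitably leaves an additive error of order $\kappa(u)$ at each intermediate time $u$ (this is exactly the sublinear fellow-travelling error that asymptotically CAT(0) geometry permits), so iterating convexity can only ever yield $d(c(t),\gamma_*(t))=O(\kappa(t)+\rho)$, which is unbounded in $t$; since $R$ must be chosen before $t$, this is fatal. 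And the Morse step cannot remove that error the way you apply it: concatenating $c|_{[0,s^*]}$ with connecting segments produces a quasi-geodesic whose additive constant is $O(\kappa(s^*)+\rho)$, and a Morse gauge $M$ is an arbitrary function of its arguments, so $M(1,C+O(\kappa(s^*)))$ need not be smaller than the $O(\kappa(s^*))$ bound you fed in — the iteration has no reason to contract.

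The way to avoid degrading the constants is to never concatenate: work with the quasi-geodesic \emph{triangle} with vertices $c(0)$, $c(T)$, $c'(T)$ (the paper compares $c$ and $c'$ directly rather than via a reference ray, but that is cosmetic), whose three sides are all $(1,C)$-quasi-geodesics of \emph{uniform} quality — the two rays, and a segment joining $c(T)$ to $c'(T)$ whose length is sublinear in $T$ but whose quasi-geodesic constants are just $(1,C)$. Two sides are $M$-Morse, so the triangle is $\delta$-thin with $\delta$ depending only on $M$ and $C$, independently of $T$ (this is \cite[Lemma 2.2]{Cordes:boundary}). Then $c(t)$ lies $\delta$-close to $c'\cup[c(T),c'(T)]$; since $T\gg t$ it cannot be close to the far side, so it is $\delta$-close to some $c'(t')$, and comparing distances to the $2\rho$-close initial points forces $|t-t'|$ to be bounded in terms of $\delta$, $C$, $\rho$. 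This yields an $R$ depending only on $\rho$, $C$, and the Morse gauge, uniformly in $t$.
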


\begin{proof}
 For a small enough neighborhood $U_+$, we have that $d(c(T),c'(T))$ is smaller than some fixed sublinear function for some $T\gg t$ (in view of the definition of $d_{\kappa}$ and the fact that all representative rays stay close in a controlled way by Corollary \ref{cor:unif_close}). Triangles with endpoints $c(0),c(T),c'(T)$ are thin with thinness constant depending only on $K$ and $\rho$ by Morse-ness, see \cite[Lemma 2.2]{Cordes:boundary}, and it is readily seen that $d(c(t),c'(t))$ is controlled by this thinness constant, the Morse-ness of $c'$, and $\rho$, see Figure \ref{fig:fellow_travel}.
\end{proof}

\begin{figure}[h]
 \includegraphics{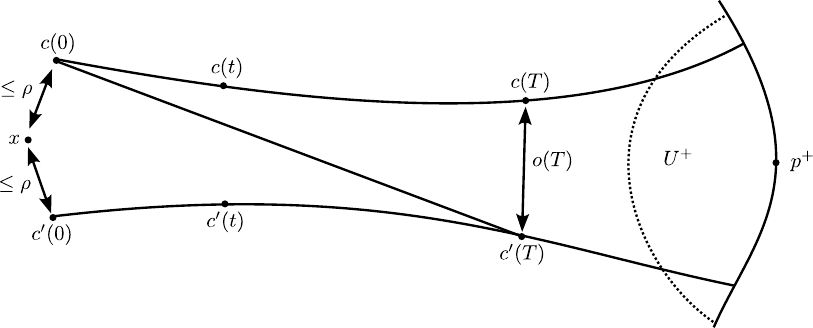}
 \caption{}\label{fig:fellow_travel}
\end{figure}

We denote by $c(\infty)$ the boundary point corresponding to a ray $c\in\mathcal G_K$.

\begin{lemma}[Axiom (F3), infinite quasi-geodesics] For any $\rho>0$ there exists $R>0$ such that the following holds. For $p_-,p_+\in\mathfrak B G$ let $T_{K,\rho}(p_-,p_+)\subseteq R_T(G)$ be the set of all $x$ such that there are $c_n\in\mathcal G_K$ with $c_n(0)\to p_-$ and $c_n(\infty)\to p_+$ and $d(c_n,x)\leq \rho$ for all $n$. If this set is non-empty, then it is contained in the $R$-neighborhood of a $(1,C)$-quasi-geodesic line $c$.
\end{lemma}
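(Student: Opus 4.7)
My plan is to produce $c$ as a pointwise limit of witness rays through a single point, and then deduce the Hausdorff bound from coarse uniqueness of biinfinite Morse quasi-geodesics with fixed endpoints at infinity.

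Assume $T_{K,\rho}(p_-,p_+)\neq\emptyset$ and fix $x_0$ in it together with witnesses $c_n\in\mathcal G_K$. Choose $t_n$ with $d(c_n(t_n),x_0)\le\rho$ and reparametrize $\tilde c_n(s):=c_n(s+t_n)$ on $[-t_n,\infty)\cap\mathbb Z$; since $c_n(0)\to p_-\in\mathfrak B G$, the numbers $t_n$ tend to infinity. Because balls in $G$ are finite and each $\tilde c_n(s)$ lies in $B(x_0,|s|+C+\rho)$, a standard diagonal extraction yields a subsequential pointwise limit $c:\mathbb Z\to G$ that is a discrete biinfinite $(1,C)$-quasi-geodesic with $d(c(0),x_0)\le\rho$. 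The rays in $\mathcal G_K$ are uniformly $M$-Morse (the proposition stated just before Axiom~(F1)), and Morse-ness passes to $c$: any $(1,C)$-quasi-geodesic $\sigma$ between $c(a)$ and $c(b)$ is, for $n$ large, a $(1,C)$-quasi-geodesic between points of $\tilde c_n$ which lie in an interval where $\tilde c_n$ and $c$ agree, so $\sigma$ lies in the $M(1,C)$-tube of $\tilde c_n$ and hence in an $M'=M'(M,C)$-tube of $c$.

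Next I would check that $[c|_{[0,\infty)}]=p_+$ (the $-\infty$ case being symmetric): for any fixed $N$, $\tilde c_n(s)=c(s)$ for all $0\le s\le N$ once $n$ is large, so the basepoint-$c(0)$ variant of the weak metric (see the proof of Theorem~\ref{thm:compact_boundary}) gives $d_\kappa^{c(0)}([c|_{[0,\infty)}],\tilde c_n(\infty))\le 1/N$. Hence $\tilde c_n(\infty)\to[c|_{[0,\infty)}]$ in $\bar G$, and combined with $\tilde c_n(\infty)=c_n(\infty)\to p_+$ and Hausdorffness of $\bar G$ one gets $[c|_{[0,\infty)}]=p_+$.

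For the uniform neighborhood, given an arbitrary $x\in T_{K,\rho}(p_-,p_+)$ I would run the same limiting construction on a sequence of witnesses of $x$ to obtain a second biinfinite $M'$-Morse $(1,C)$-quasi-geodesic line $c_x$ with $d(c_x(0),x)\le\rho$ and endpoints $p_\pm$. It then suffices to prove the coarse uniqueness lemma: two biinfinite $M'$-Morse discrete $(1,C)$-quasi-geodesic lines in $G$ whose forward and backward halves represent the same pair of boundary points have Hausdorff distance at most $R_0=R_0(M',C,\kappa)$. Granted this, $d(x,c)\le d(x,c_x(0))+d_H(c_x,c)\le\rho+R_0=:R$, as required.

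The main obstacle is this uniqueness lemma. My approach: given such $\gamma,\gamma'$, let $\gamma'(s_0)$ be a closest point on $\gamma'$ to $\gamma(0)$ and set $D:=d(\gamma(0),\gamma'(s_0))$. Applying Lemma~\ref{lem:change_basepoint} to translate $\gamma'|_{[s_0,\infty)}$ into a ray based at $\gamma(0)$ representing $p_+$, and using Corollary~\ref{cor:unif_close} to compare with $\gamma|_{[0,\infty)}$, yields $d(\gamma(t),\gamma'(s_0+t))\le 4\kappa(t)+D$ for $t\ge 0$; symmetrically on the backward side, $d(\gamma(-t),\gamma'(s_1-t))\le 4\kappa(t)+D$ for some $s_1\le s_0$ and all $t\ge 0$. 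Applied with $t=T$ large, this produces two points of $\gamma'$ within $4\kappa(T)+D$ of $\gamma(\pm T)$; a thin-quadrilateral argument using Morse-ness of $\gamma$ and $\gamma'$ on the resulting long subarcs, in the spirit of the proof of Axiom~(F2) above (which invokes \cite[Lemma~2.2]{Cordes:boundary} for Morse-thinness of triangles), forces $D$ to be bounded in terms of $M'$, $C$, and $\kappa$ alone, since otherwise $\gamma(0)$ would lie arbitrarily far from the $M'$-Morse tube of $\gamma'$ while simultaneously being asymptotically trapped within sublinear distance from $\gamma'$ at both infinities.
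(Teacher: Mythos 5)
Your overall architecture is sound and close in spirit to the paper's: both start from an Arzel\`a--Ascoli limit $c$ of one witness sequence and then use Morse-ness to trap an arbitrary $x\in T_{K,\rho}(p_-,p_+)$ near $c$. Your first three steps (diagonal extraction, Morse-ness of the limit, identification of the endpoints via $d_\kappa^{c(0)}$) are fine. The difference is that the paper works directly with the witness sequence $c'_n$ for $x$, whereas you pass to a second limit line $c_x$ and reduce everything to a coarse-uniqueness lemma for bi-infinite Morse $(1,C)$-quasi-geodesics with the same endpoints. That lemma is true, but your sketch of its proof has a genuine gap at the decisive step.

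The problem is quantitative. In your quadrilateral the two ``short'' sides joining $\gamma(\pm T)$ to $\gamma'$ have length up to $4\kappa(T)+D$, where $D$ is exactly the quantity you are trying to bound. Cutting the quadrilateral by a diagonal leaves triangles with only \emph{one} uniformly $M'$-Morse side each, so the thinness constant you get from \cite[Lemma 2.2]{Cordes:boundary} (or from applying the Morse property of $\gamma'$ to the concatenation $\gamma(-T)\to\gamma'\,$, $\gamma'\to\gamma(T)$) is of the form $M'(1,O(\kappa(T)+D))$. Since every Morse gauge satisfies $M'(1,b)\gtrsim b$, the resulting inequality $D\le M'(1,O(\kappa(T)+D))$ does not close; the final sentence of your argument (``asymptotically trapped within sublinear distance'') is a heuristic, not a proof. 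The missing ingredient is to route the connectors through \emph{closest points} of $\gamma'$ to $\gamma(\pm T)$: by \cite[Lemma 2.5]{QR_CAT0} (which the paper invokes for precisely this purpose), the concatenation of such a connector with a subpath of $\gamma'$ is a quasi-geodesic with constants depending only on $C$ and the constants of $\gamma'$, \emph{independently of the length of the connector}. Applying this on both sides (which are far apart for $T\gg D$) produces a uniform-constant quasi-geodesic with endpoints $\gamma(\pm T)$, and Morse-ness of $\gamma$ then forces $\gamma(0)$ uniformly close to $\gamma'$ (the alternative, that $\gamma(0)$ is close to a connector, is excluded for $T$ large since $\kappa$ is sublinear). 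With this correction your uniqueness lemma, and hence your proof, goes through.
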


 \begin{proof}
     Assuming that there exists a sequence $c_n$ with endpoints converging to $p_-,p_+$ as in the statement and all $c_n$ intersect a fixed ball, up to replacing $c_n$ with a subsequence we can consider the limit $c$ (in the sense of Arzel\`a-Ascoli). It is not hard to see that $c$ consists of two rays, one converging to $p_-$ and one converging to $p_+$.

     Consider now $c'_n$ another sequence with endpoints converging to $p_-,p_+$, and $y$ some point $\rho$-close to all $c'_n$. We want to show that $x$ lies within bounded distance of $c$.

     It will be convenient to consider the weak metric $d_\kappa$ with basepoint $x$. For $n$ large, $d_\kappa(c'_n(0),p_-)$ becomes arbitrarily small, so that we see that there is a diverging sequence $t_n^->0$ such that $d(c'_n(0),c(t_n^-))$ is bounded by a fixed sublinear function of $t_n^-$. Similarly, there are diverging sequences $s_n,t_n^+$ such that $d(c'_n(s_n),c(t_n^+))$ is bounded by a fixed sublinear function of $t_n^+$. Now, by e.g. \cite[Lemma 2.5]{QR_CAT0}, given a point $z$ and a quasigeodesic $\alpha$ in a metric space, concatenating a $(1,C)$-quasi-geodesic  from $z$ to a closest point on $\alpha$ and a subpath of $\alpha$ yields a quasi-geodesic with constants depending only on $C$ and the constants of $\alpha$. Applying this (on two sides that are very far apart) we see that there is a subpath of $c$ that is contained in a quasi-geodesic with uniform constants with endpoints $c'_n(0)$ and $c'_n(s_n)$, see Figure \ref{fig:double_concat}. Note that for sufficiently large $n$, the point $x$ is close to $c'_n(s'_n)$ with $0\ll s'_n\ll s_n$. Since $c'_n$ is Morse, we see that $c$ needs to pass close to $x$, as required.
     
 \end{proof}

 \begin{figure}[h]
 \includegraphics{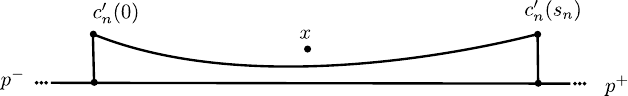}
 \caption{We obtain a quasi-geodesic by traveling from $c'_n(0)$ to a closest point in $c$, then along $c$ to a closest point to $c'_n(s_n)$, and then to $c'_n(s_n)$.}\label{fig:double_concat}
\end{figure}

\section{Controlling domains} \label{sec:controlling domains}

With this section, we begin laying the foundation for the proof of our Stabler Cubulations Theorem \ref{thm:stabler cubulations}, the main technical result on which the previous sections depended. 

Before outlining the section, we begin by setting the following standard notation, which we will use throughout the rest of the paper:

\begin{notation}
\label{not:rel}
    Given a subset $A$ of an HHS $(\calX,\mathfrak S)$ and a constant $K$, we denote $\Rel_K(F)$ the collection of all $Y\in\mathfrak S$ such that the projection $\pi_Y(A)$ of $A$ to the hyperbolic space $\calC(Y)$ has diameter at least $K$.
\end{notation}

Let $F \subset F' \subset \calX$ be finite sets in an HHS $(\calX, \mathfrak S)$.  Fixing a largeness threshold $K = K(\mathfrak S)>0$, let $\calU = \Rel_K(F)$ and $\calU' = \Rel_K(F')$.  Observe that $\calU \subset \calU'$ by definition.

\medskip

In this section, we will prove a number of statements which control the number of domains in $\calU' - \calU$ which appear in various relevant locations.  These results are in support of the Stabler Tree \ref{thm:stabler tree} construction in Section \ref{sec:stable trees} and how those trees interact with the cubical model construction in Section \ref{sec:stabler cubulations}.  Roughly speaking, the raw materials for the stabler tree construction are the projections of $F,F'$ to the domains in $\calU, \calU'$, as well as the relative projections coming from the HHS relations between the domains in $\calU,\calU'$.  The results in this section allow us to control this raw material.

In many ways, the results in this section reflect the discussion in \cite[Section 3]{DMS_bary}, where we considered the case where $d^{Haus}_{\calX}(F, F') \leq 1$, and used our notion of stable projections (see Subsection \ref{subsec:stable projections} below) to show that $|\calU \symdiff \calU'|$ was uniformly bounded in this case.

However, from this perspective, our current situation is quite different: Since $x' \in F' - F$ can be arbitrarily far from $F$, it is possible for $|\calU \symdiff \calU'| = |\calU' - \calU|$ (since $\calU \subset \calU'$) to be arbitrarily large.  Nonetheless, we will show that all but a bounded number of these extra domains are irrelevant to the way in which we need the combinatorial setup for $F$ to communicate with that for $F'$.

There are three main results here.  The first two, Propositions \ref{prop:distinguished domains} and Proposition \ref{prop:involved domains}, will bound the number of domains in $\calU$ for which the stabler tree constructions in Section \ref{sec:stable trees} for $F,F'$ are not identical.  The last, Proposition \ref{prop:sporadic domains}, gives us the control of what happens in the boundedly-many domains where the tree constructions are not identical, in particular motivating the statement of the Stabler Tree Theorem \ref{thm:stabler tree} itself.  See Lemma \ref{lem:controlling unstable parts} for the most consequential application of the statements in this section.

We first begin with some background on key properties of (colorable) HHSs.

\subsection{Colorable HHSs and stable projections}\label{subsec:stable projections}

The following definition of colorability is \cite[Definition 2.8]{DMS_bary}.  It was inspired by work of Bestvina-Bromberg-Fujiwara \cite{BBF}, who proved that the curve graph is finitely-colorable, a fact which implies that the HHS structure for the mapping class group is colorable in the following sense:

\begin{definition}\label{defn:colorable}
Let $(\calX,\mathfrak S)$ be an HHS and let $G < \mathrm{Aut}(\mathfrak S)$.  We say that
$(\calX, \mathfrak S)$ is $G$-{\em colorable} if there exists a decomposition of
$\mathfrak S$ into finitely many families $\mathfrak S_i$, so that each $\mathfrak S_i$ is
pairwise-$\pitchfork$ and $G$ acts on $\{\mathfrak S_i\}_i$ by permutations. We say that
$(\calX,\mathfrak S)$ is \emph{colorable} if it is $\mathrm{Aut}(\mathfrak S)$-colorable.  We call the $\mathfrak S_i$ \emph{BBF families}.

\end{definition}

The following is \cite[Theorem 2.9]{DMS_bary}, the proof of which was mainly an application of \cite[Proposition 5.8]{BBFS}:

\begin{theorem}\label{thm:stable proj}
Let $(\calX, \mathfrak S)$ be a $G$-colorable HHS for $G < \mathrm{Aut}(\mathfrak S)$ with
standard projections $\hpi_-, \hrho^-_-$.  There exists $\theta>0$ and refined projections
$\pi_-, \rho^-_-$ with the same domains and ranges, respectively, and such that:
\begin{enumerate}
 \item If $X,Y$ lie in different $\mathfrak S_j$, and $\hrho^X_Y$ is defined, then $\rho^X_Y=\hrho^X_Y$.
 \item If $X,Y\in\mathfrak S_j$ are distinct, then the Hausdorff distance between $\rho^X_Y$ and $\hrho^X_Y$ is at most $\theta$.
 \item If $x\in\calX$ and $Y\in\mathfrak S$, then the Hausdorff distance between $\pi_Y(x)$ and $\hpi_Y(x)$ is at most $\theta$. \label{item:same proj}
 \item If $X,Y,Z \in \mathfrak S_j$ for some $j$ are pairwise distinct and $d_Y(\rho^X_Y, \rho^Z_Y) > \theta$, then $\rho^X_Z=\rho^Y_Z$.
 \item Let $x\in\calX$, and let $Y,Z\in\mathfrak S_j$ for some $j$ be pairwise distinct. If $d_Y(\pi_Y(x), \rho^Z_Y)>\theta$ then $\pi_Z(x)=\rho^Y_Z$.\label{item:strict_Behrstock_X}
\end{enumerate}

Moreover, $(\calX, \mathfrak S)$ equipped with $\pi_-, \rho^-_-$ is an HHS,  $G <
\mathrm{Aut}(\mathfrak S)$, and it is $G$-colorable.
\end{theorem}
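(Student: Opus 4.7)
The plan is to apply the stable projections construction of Bestvina--Bromberg--Fujiwara--Sisto (namely \cite[Proposition 5.8]{BBFS}) separately to each BBF family $\mathfrak S_j$ in the coloring. Since every $\mathfrak S_j$ is pairwise transverse by the colorability hypothesis, the restriction of the standard projection data $\hpi,\hrho$ to $\mathfrak S_j$ forms exactly the type of projection system expected by that construction. The output will be refined projections $\pi_Y$ and $\rho^X_Y$ for $Y,X\in\mathfrak S_j$ which lie within bounded Hausdorff distance of the originals and which satisfy a \emph{strict} (on-the-nose, not merely coarse) version of the Behrstock inequality once a uniform threshold is crossed. For cross-family pairs I will simply keep $\rho^X_Y=\hrho^X_Y$ unchanged, which gives item (1) by definition. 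For point projections there is no ambiguity: $Y$ belongs to a unique BBF family, and $\pi_Y(x)$ is the refinement produced inside that family.

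Items (2) and (3) will then record the uniform Hausdorff bounds built into \cite{BBFS}, with $\theta$ taken to be a common bound across the finitely many families. Items (4) and (5) are the strict Behrstock consequences of the refinement within a single family: (4) is exactly the output of \cite[Proposition 5.8]{BBFS} applied to the family containing $X,Y,Z$, while (5) is the analogue with $\pi_Y(x)$ playing the role of a third projection to $Y$. The point projection $\pi_\bullet(x)$ can be stabilized in parallel with the inter-domain projections $\rho^\bullet_\bullet$ inside each family---this is already accounted for in \cite{BBFS}, but confirming the exact form of (5) requires unpacking that argument.

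For the moreover clause, I must check that the refined data is still an HHS, still admits a $G$-action, and remains colorable. Since all HHS axioms are stated up to additive error and the perturbation is bounded (by items (2) and (3) together with the finiteness of the coloring), the axioms persist after adjusting constants. For $G$-equivariance, the hypothesis that $G$ permutes the $\mathfrak S_j$ and respects the standard projections, combined with the canonical (input-only) nature of the BBFS construction, means that applying the construction simultaneously to the orbits of families produces $G$-equivariant refined projections. The decomposition $\{\mathfrak S_j\}$ is unchanged, so $G$-colorability is preserved verbatim.

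The main obstacle I expect is item (5), which mixes the two types of projections: one must arrange the simultaneous stabilization of point projections and inter-domain projections inside a single BBF family so that an arbitrary point $x$ interacts with the refined $\rho^Z_Y$ via a strict Behrstock inequality, not merely a coarse one. This requires opening up the proof of \cite[Proposition 5.8]{BBFS} to verify that the perturbations of $\pi_Y(x)$ can be made compatible with the perturbations of the $\rho^Z_Y$ in the same domain.
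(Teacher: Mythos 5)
Your proposal follows exactly the route the paper intends: this statement is quoted from \cite[Theorem 2.9]{DMS_bary}, whose proof the authors describe precisely as an application of \cite[Proposition 5.8]{BBFS} family-by-family, leaving cross-family $\rho$'s untouched and treating points of $\calX$ alongside the domains within each family to get the strict Behrstock conditions (4) and (5). Your flagged concern about item (5) is the right place to be careful, but it is handled in \cite{DMS_bary} just as you suggest, so the proposal is correct and essentially identical in approach.
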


\begin{rem}\label{rem:rho_can_be_points}
 This remark on HHS structures allows us to simplify the setup that we have to deal with in Sections \ref{sec:stable trees}, and aligns us with the setup in \cite[Remark 1]{DMS_bary}. The remark is that, given an HHS, we can $\mathrm{Aut}(\cuco{X},\mathfrak S)$--equivariantly change the structure in a way that all $\pi_V(x)$ and $\rho^U_V$ for $U\propnest V$ are points, rather than bounded sets, and that moreover the new structure has stable projections if the old one did. This can be achieved by replacing each $\calC(V)$ by the nerve of the covering given by subsets of sufficiently large diameter (which is quasi-isometric to $\calC(V)$). In particular, the vertices of the new $\calC(V)$ are labeled by bounded sets, and we can redefine $\pi_V(x)$ to be the vertex labeled by $\pi_V(x)$, and similarly for $\rho^U_V$.
\end{rem}

We shall make a standing assumption that any colorable HHS is equipped with \emph{stable projections} in the sense of Theorem \ref{thm:stable proj} with projections single points, as in Remark \ref{rem:rho_can_be_points}.

The following useful consequence of Theorem \ref{thm:stable proj} gets used below in the proof of Proposition \ref{prop:sporadic domains}:

\begin{lemma}\label{lem:partial order}
    For any $x,y \in \calX$, the following hold:
    \begin{itemize}
        \item If $V_1, V_2, V_3$ are pairwise transverse, then $\pi_{V_1}(y) = \pi_{V_1}(x') = \rho^{V_2}_{V_1}$
        \item There exists $K = K(\mathfrak S)>0$ and $B_0= B_0(\mathfrak S)>0$ so that if $\calV \subset \Rel_K(x,y)$ contains no pairwise transverse triple of domains, then $\#\calV < B_0$.
    \end{itemize}
\end{lemma}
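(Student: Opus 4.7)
The approach is to handle the two items separately, both relying on the combinatorial consequences of colorability and Theorem \ref{thm:stable proj}.

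For item (1), I would first observe that a pairwise transverse triple $\{V_1,V_2,V_3\}$ fits naturally inside a common BBF family $\mathfrak S_j$: each $\mathfrak S_j$ is pairwise $\pitchfork$ by Definition \ref{defn:colorable}, and this is exactly the setting in which the stable projection axioms apply. The equalities $\pi_{V_1}(x)=\pi_{V_1}(y)=\rho^{V_2}_{V_1}$ then follow from two applications of Theorem \ref{thm:stable proj}(5) under the implicit largeness hypothesis that $\pi_{V_2}(x)$ and $\pi_{V_2}(y)$ are each $\theta$-far from $\rho^{V_1}_{V_2}$ (or the symmetric statement using $V_3$). Part (4) of the same theorem gives $\rho^{V_2}_{V_1}=\rho^{V_3}_{V_1}$, so all three projections genuinely agree.

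For item (2), I would argue purely combinatorially via colorability. If some BBF family $\mathfrak S_j$ contained three elements of $\calV$, those three would be pairwise transverse by the very definition of a BBF family, contradicting the hypothesis that $\calV$ has no pairwise transverse triple. Hence $|\calV\cap\mathfrak S_j|\leq 2$ for every $j$, and summing over the (finitely many) BBF families determined by $\mathfrak S$ yields $|\calV|\leq 2N_c$, where $N_c$ denotes the number of colors. I would therefore set $B_0:=2N_c$, and take $K$ to be any convenient constant (for instance $K=\theta$ from Theorem \ref{thm:stable proj}); the specific value of $K$ does not enter the bound, and is carried in the statement only because $\calV\subset\Rel_K(x,y)$ is the context in which the lemma will be invoked elsewhere in Section \ref{sec:controlling domains}.

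The main obstacle is simply identifying the correct largeness hypothesis for item (1); once that is pinned down, both items reduce to brief applications of stable projections and elementary counting over the BBF palette, with no appeal to finite rank, large links, or any deeper HHS axiom.
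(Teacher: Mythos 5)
Your argument for item (2) is correct, and it actually takes a different (and cleaner) route than the paper: the paper extracts from $\calV$ a pairwise non-transverse subcollection of size comparable to $\#\calV$ and then invokes the finite-complexity bound of \cite[Lemma 2.2]{HHS_II} --- a Ramsey-type argument that does not use colorability at all --- whereas you count directly over the color classes and obtain the explicit bound $\#\calV\leq 2\cdot(\text{number of BBF families})$. Both work; yours is shorter given that colorability is a standing assumption, while the paper's is the one that would survive without it.

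Item (1), however, has a genuine gap. You assert that a pairwise transverse triple $\{V_1,V_2,V_3\}$ lies in a common BBF family ``because each $\mathfrak S_j$ is pairwise $\pitchfork$.'' That is the converse of what Definition \ref{defn:colorable} provides: membership in a common $\mathfrak S_j$ forces transversality, but transverse domains may perfectly well lie in different families (in the mapping class group, two intersecting curves need not receive the same BBF color). Since items (4) and (5) of Theorem \ref{thm:stable proj} apply only to domains lying in a single $\mathfrak S_j$, your two applications of the stable-projection axioms are not justified as written. The same-family hypothesis must be imposed --- as it effectively is at the point of application in Proposition \ref{prop:sporadic domains}, where one first passes to a subcollection in a fixed color --- together with the largeness hypothesis you correctly identify as missing from the (admittedly garbled) statement. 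The paper itself sidesteps all of this by deferring to the proof of \cite[Lemma 2.11]{DMS_bary} rather than rederiving item (1).
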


\begin{proof}
    The proof of (1) is contained in the proof of \cite[Lemma 2.11]{DMS_bary}.  For (2), observe that if $\calV$ does not contain any pairwise transverse triples of domains, then there is a subcollection $\calV_0 \subset \calV$ of size $\#\calV_0$ proportional (in $\mathfrak S$) to $\#\calV$, so that $\calV_0$ is pairwise non-transverse.  But then $\#\calV_0$ is bounded by \cite[Lemma 2.2]{HHS_II}, completing the proof.
\end{proof}

\subsection{Strong passing-up}\label{subsec:SPU}

Besides the stable projections provided by Theorem \ref{thm:stable proj}, the main tool in this subsection is a powerful generalization of the basic ``passing-up'' lemma from \cite[Lemma 2.5]{HHS_II}.  This version, the Strong Passing-up Proposition \ref{prop:SPU} below, is \cite[Proposition 4.3]{Dur_infcube}.

The basic passing-up property says that given a collection of relevant domains $\calV$ for some pair of points $a,b \in \calX$, as long as we arrange for $\#\calV$ to be very large, then we can find a relevant domain $W$ with $d_W(a,b)$ as large as we like and some domain $V \in\calV$ so that $V \nest W.$

Roughly speaking, Strong Passing-up says that by making $\#\calV$ very large, we can find a subcollection $\calV_0 \subset \calV$ so that $V \nest W$ for all $V \in \calV_0$ and so that the $\rho$-sets for domains in $\calV_0$ spread out along the geodesic in $\calC(W)$.  Moreover, by again increasing $\#\calV$ if necessary, we can force the coarse density of these $\rho$-sets along this geodesic to increase.  Making this precise requires the notion of a $\sigma$-subdivision.

\begin{definition}[$\sigma$-subdivision] \label{defn:sigma subdivision}
Let $\gamma:I \to \calC(W)$ be a geodesic in $\calC(W)$ between $\pi_W(a),\pi_W(b)$ where $a,b\in \calX$.  For $\sigma>0$, we say that a subdivision $\{x_i\}$ of $I$ is a $\sigma$-\emph{subdivision} of $\gamma$ if the $x_i$ decompose $I$ into a collection of subintervals $[x_i, x_{i+1}]$ so that for all but at most one $i$ we have $|x_{i+1}-x_i| = \sigma$, with the (possibly nonexistent) extra subinterval for which $|x_{i+1} - x_i| < \sigma$.
\end{definition}

\begin{notation}
\label{not:ES}
    We denote $\ES$ a large constant that depends only on the constants in the definition of an HHS.  In order for the results in this section to hold, it suffices to fix it once and for all.  See \cite[Section 5]{Dur_infcube} for specifics.
\end{notation}

Suppose now that $x,y \in \calX$, $K_1 \geq 50E_{\mathfrak S}$, $\calV' \subset \Rel_{K_1}(a,b)$ and $W \in \Rel_{K_1}(a,b)$ so that $V \nest W$ for all $V \in \calV'$.  Let $\gamma:I \to \calC(W)$ be a geodesic between $a,b$ in $\calC(W)$.

Given a $\sigma$-subdivision of $\gamma$, let $\calW_i$ denote the set of domains $V \in \calV'$ so that $p_{\gamma}(\rho^V_W) \cap [x_i, x_{i+1}] \neq \emptyset$, where $p_{\gamma}: \calC(W) \to \gamma$ is a closest point projection.  Note that any given $V \in \calV'$ belongs to at most two $\calW_i$ when $\sigma \geq 10E_{\mathfrak S}$.

The following proposition is \cite[Proposition 4.3]{Dur_infcube}:

\begin{proposition}[Strong Passing-up]\label{prop:SPU}
For any $K_2 \geq K_1 \geq 50E_{\mathfrak S}$, there exists $P_1 = P_1(K_1, K_2)>0$ so that for any $x, y\in \calX$, if $\calV \subset \Rel_{K_1}(a,b)$ with $\#\calV > P_1$, then there exists $W \in \Rel_{K_2}(a,b)$ and $\calV' \subset \calV$ so that $V \nest W$ for all $V \in \calV'$ and 
$$\diam_W\left(\bigcup_{V \in \calV'} \rho^V_W\right) > K_2.$$

Moreover, for any $\sigma\geq 10E_{\mathfrak S}$ and $n \in \mathbb{N}$, there exists $P_2(K_1,K_2,\sigma, n)>0$ so that if $\#\calV > P_2$, then we can arrange the following to hold:
\begin{itemize}
    \item If $\gamma:I \to \calC(W)$ is geodesic in $\calC(W)$ between $a,b$ and $\{x_i\}$ is a $\sigma$-subdivision of $\gamma$ determining sets $\calW_i$ as above, then 
    $$\#\{1 \leq i \leq k| \calW_i \neq \emptyset\} \geq n.$$
\end{itemize}
\end{proposition}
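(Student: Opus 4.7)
The plan is to deduce the proposition by iteratively applying the basic passing-up lemma \cite[Lemma 2.5]{HHS_II} combined with bounded geodesic image. The argument proceeds in two parts: first establishing the common-ancestor statement with the diameter lower bound, then upgrading to the $\sigma$-subdivision spreading assertion.

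I would begin by establishing a strong common-ancestor form of passing-up: for every $N$ and threshold $M$, if $\#\calV$ exceeds a bound depending on $N$, $M$, and $\mathfrak S$, then there exist $W \in \Rel_M(a,b)$ and a subcollection $\calV_0 \subset \calV$ of size at least $N$ every element of which nests into $W$. This follows by iterated passing-up: each application produces an ancestor $W_i$ dominating at least one element of $\calV$; either many elements of $\calV$ nest into $W_i$ (done), or only boundedly many do, in which case we delete them and recurse with a still-large residual collection, a process terminating after a controlled number of steps by the finite-complexity axiom of HHSs.

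To obtain the diameter bound in the first assertion, I would use that $V \nest W$ together with bounded geodesic image forces each $\rho^V_W$ to lie within $E_{\mathfrak S}$ of the geodesic $\gamma$ in $\calC(W)$ from $\pi_W(a)$ to $\pi_W(b)$, which has length greater than $K_2$. If $\bigcup_{V \in \calV_0}\rho^V_W$ already has diameter exceeding $K_2$, we are done; otherwise the $\rho^V_W$ cluster inside a bounded subsegment of $\gamma$. Restricting to this clustered subcollection and re-applying the strong common-ancestor form yields a strictly deeper ancestor $W' \propnest W$ with many $V \in \calV$ nesting into it, and using the clustering together with the Behrstock inequality from Theorem \ref{thm:stable proj} one checks that $W' \in \Rel_{K_2}(a,b)$. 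The refinement iteration must terminate because strict nesting chains in $\mathfrak S$ have length bounded by the complexity; at termination the required diameter bound in $\calC(W^{\ast})$ must hold for the final $W^{\ast}$.

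For the moreover part, I would quantify this refinement via pigeonhole on the $\sigma$-subdivision: if the family $\{\rho^V_W\}_{V \in \calV_0}$ meets fewer than $n$ of the subintervals $[x_i, x_{i+1}]$, then one subinterval of length $\sigma$ attracts a large subcollection, and applying the first part inside this subcollection produces a deeper ancestor in which the projections are forced to spread further. Repeating at most $n$ times forces coverage of $n$ distinct subintervals, and the threshold $P_2(K_1, K_2, \sigma, n)$ is obtained by tracking, backwards through the $n$ refinement steps, the pigeonhole losses at each stage. The principal obstacle is the quantitative bookkeeping: at every iteration one loses a controlled factor of domains, so one must choose the initial threshold so that after all $n$ refinements the residual collection remains large enough to apply basic passing-up one final time. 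Termination of the overall scheme is again guaranteed by the bounded-complexity condition.
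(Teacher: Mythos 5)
First, a point of comparison: the paper does not prove this proposition at all --- it is quoted verbatim from \cite[Proposition 4.3]{Dur_infcube} --- so there is no in-paper argument to measure yours against. Judged on its own terms, your outline has the right ingredients (iterated basic passing-up, bounded geodesic image to pin the $\rho$-sets to a geodesic of $\calC(W)$, pigeonhole on the $\sigma$-subdivision), but two steps as written do not close.

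The first gap is the termination of the deletion recursion in your ``common-ancestor'' step. Each round of \cite[Lemma 2.5]{HHS_II} produces a domain $W_i\in\Rel_M(a,b)$ capturing at least one element of the residual collection; if it captures fewer than $N$ elements you delete them and recurse. The domains $W_1,W_2,\dots$ so produced are distinct but need not be $\nest$-comparable --- they can be pairwise transverse --- and the finite-complexity axiom bounds only the length of $\nest$-chains, not the cardinality of $\Rel_M(a,b)$. Nothing therefore prevents the recursion from running for roughly $\#\calV/N$ rounds, each capturing fewer than $N$ elements, and exhausting $\calV$ without ever producing the desired $W$ (picture $\#\calV$ pairwise-transverse annuli spread along a Teichm\"uller geodesic, each handed to you by passing-up inside its own intermediate subsurface). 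The usual repair is an induction on complexity or on the ``level'' of domains, not on the size of the collection. The second gap is in your refinement step: when the $\rho^V_W$ cluster in a ball of radius about $K_2$ in $\calC(W)$, you assert that re-applying the common-ancestor lemma to the clustered subcollection yields a \emph{strictly} deeper ancestor $W'\propnest W$. That lemma, as you state it, is applied to the pair $(a,b)$ in the ambient space and returns some $W'\in\Rel_M(a,b)$ with many elements of $\calV$ nesting into it; it has no memory of $W$ and nothing forces $W'\neq W$. Without strictness, the appeal to bounded nesting chains for termination is vacuous. Converting the clustering into descent into a proper subdomain of $W$ requires a separate ingredient (a version of passing-up relative to $W$, or the large-links axiom), and that is precisely the step that must be supplied; the same issue propagates into the ``moreover'' part, where each of the $n$ pigeonhole rounds relies on producing a strictly deeper ancestor.
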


\subsection{Distinguished domains}

We are now ready to prove our domain control statements.  The first regards domains $U \in \calU'$ where the projections of $F,F'$ do not behave in the expected way.

\begin{definition}[Distinguished domains]\label{defn:distinguished}
A domain $U \in \calU'$ is \emph{distinguished} if either
\begin{enumerate}
    \item $U \in \calU$ and $\pi_U(x) \neq \pi_U(x')$ for all $x \in F$ and some $x' \in F'-F$.
    \item $U \in \calU' - \calU$ and there exist $x,y \in F$ so that $\pi_U(x) \neq \pi_U(y)$.
\end{enumerate}
\begin{itemize}
    \item We let $\calD(F,F')$ denote the set of distinguished domains.
\end{itemize}
\end{definition}

\begin{remark}\label{rem:non-distinguished}
    We remark that if $U \in \calU$ is not distinguished, then for all $x' \in F' - F$, there exists $x \in F$ so that $\pi_U(x) = \pi_U(x')$, i.e. $\pi_U(F') = \pi_U(F)$.  In particular, in case (1), adding the projections of $F'- F$ to $F$ does not create any new data.  Similarly, if $U \in \calU' - \calU$ is not distinguished, then $\pi_U(F)$ is a single point.
\end{remark}

The next proposition bounds the number of distinguished domains.

\begin{proposition} \label{prop:distinguished domains}
    There exists $D_1 = D_1(\mathfrak S, \#F')>0$ so that $\#\calD(F,F') < D_1$.
\end{proposition}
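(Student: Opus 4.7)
The plan is to bound the two types of distinguished domains in Definition \ref{defn:distinguished} separately, in each case by first pigeonholing on a bounded witness tuple drawn from $F'$ and then applying the Strong Passing-up Proposition \ref{prop:SPU} to a subcollection sharing a common witness.

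For a type-(2) domain $U \in \calU' - \calU$ there is a witness pair $(x,y) \in F \times F$ with $\pi_U(x) \neq \pi_U(y)$, and (from $U \in \calU'$) a witness pair $(a,b) \in F' \times F'$ with $d_U(a,b) \geq K$. Since there are at most $(\#F')^4$ such witness quadruples, by pigeonhole it suffices to bound, by a constant depending on $\mathfrak S$ alone, the size of the subcollection $\calV_{(x,y,a,b)}$ of type-(2) distinguished domains sharing a given quadruple. Suppose $\#\calV_{(x,y,a,b)}$ exceeds the threshold $P_2$ of Proposition \ref{prop:SPU} for appropriate parameters $K'\gg K$, $\sigma \geq 10\ES$, and $n$ to be chosen. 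Strong Passing-up applied to the pair $(a,b)$ then produces a domain $W$ into which at least $n$ many $V \in \calV_{(x,y,a,b)}$ nest, with their $\rho^V_W$ populating distinct intervals of a $\sigma$-subdivision of a $\calC(W)$-geodesic between $\pi_W(a)$ and $\pi_W(b)$. Exploiting the nerve setup of Remark \ref{rem:rho_can_be_points} --- in which distinct nerve projections imply $d_V(x,y) \geq \ES$ --- bounded geodesic image places each $\rho^V_W$ within $\ES$ of the geodesic $[\pi_W(x),\pi_W(y)]$ in $\calC(W)$, so the spread propagates to make $d_W(x,y)$ grow linearly in $n$. For $n$ large enough this forces $W \in \calU$ and makes $W$ a domain in $\calU$ properly $\nest$-dominating the nested subcollection; iterating, with the finite complexity of $(\calX,\mathfrak S)$ capping the length of the resulting $\nest$-chain of upgrades, yields the desired $\mathfrak S$-dependent bound on $\#\calV_{(x,y,a,b)}$.

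Type (1) follows the same template: each type-(1) distinguished $U \in \calU$ carries a witness $x' \in F' - F$ with $\pi_U(x') \neq \pi_U(x)$ for all $x \in F$, together with a pair $(y,z) \in F \times F$ with $d_U(y,z) \geq K$. I would pigeonhole over the $(\#F')^3$ witness triples $(x',y,z)$, fix an arbitrary $x_0 \in F$, and apply Strong Passing-up to $(y,z)$; the same bounded geodesic image step, now using that $d_V(x_0,x') \geq \ES$ follows from distinct nerve projections, upgrades the collection to a $W$ with $d_W(x_0,x')$ arbitrarily large, and the iteration proceeds as in the previous paragraph.

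The hard part will be the iteration step. Strong Passing-up on its own upgrades a large collection of distinguished domains to a single new relevant domain but does not a priori bound the original collection's size. Encoding the repeated upgrades as a controlled $\nest$-chain and verifying that this chain must terminate in boundedly many steps, depending only on $\mathfrak S$ and $\#F'$, requires careful quantitative tracking of how Strong Passing-up interacts with the finite complexity of $(\calX,\mathfrak S)$. This parallels --- and is quantitatively more delicate than --- the stability bounds carried out in \cite[Section 3]{DMS_bary} under the much stronger hypothesis $d^{Haus}_{\calX}(F,F') \leq 1$.
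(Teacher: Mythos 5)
Your opening moves (pigeonhole on a bounded witness tuple, then feed the resulting subcollection into Strong Passing-up, Proposition \ref{prop:SPU}) match the paper's, but the step you flag as ``the hard part'' is not just hard --- it is the wrong closing mechanism, and the proposal has a genuine gap there. Passing up and iterating along a $\nest$-chain cannot bound $\#\calV_{(x,y,a,b)}$: finite complexity caps the \emph{length} of a nesting chain, not the \emph{number} of domains feeding into it, and reaching the $\nest$-maximal domain with a huge projection is no contradiction at all (by the distance formula, $\Rel_K(x,y)$ is unbounded in general, so no argument of the form ``many relevant domains $\Rightarrow$ pass up $\Rightarrow$ finite complexity'' can terminate). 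The contradiction has to come from the \emph{distinguishedness} itself, and for that you need an exact (not coarse) statement about projections, since Definition \ref{defn:distinguished} is phrased as exact inequality of points.

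This is where colorability enters, which your proposal never uses. The paper first passes to a subcollection lying in a single BBF family $\mathfrak S_j$ and then invokes the strict Behrstock inequality for stable projections, Theorem \ref{thm:stable proj}\eqref{item:strict_Behrstock_X}: once Strong Passing-up produces two (type (1)) or three (type (2)) nested domains whose $\rho$-points are suitably spread and ordered along the geodesic in $\calC(W)$, one gets exact equalities --- e.g.\ $\pi_V(y)=\rho^{V'}_V=\pi_V(z)$ in case (1), contradicting that $z$ distinguishes $V$; or $\pi_{V_2}(x)=\pi_{V_2}(x')$ and $\pi_{V_2}(y)=\pi_{V_2}(y')$ in case (2), forcing $V_2\in\Rel_K(x,y)=\calU$ and contradicting $V_2\in\calU'-\calU$. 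No iteration is needed: a single application of Strong Passing-up plus the exact Behrstock inequality kills the large collection outright. Your substitute for this --- reading a lower bound $d_V(x,y)\geq \ES$ off the nerve construction of Remark \ref{rem:rho_can_be_points} --- is also unjustified (distinct vertices of the nerve need only be at distance $1$), and even granting it, it feeds back into the dead-end iteration rather than producing a contradiction.
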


\begin{proof}
In this proof, many of the constants and bounds will depend on our largeness constant $K$.  In these arguments, we are free to choose $K= K(\mathfrak S)$ to be as large as necessary, though still bounded in terms of the ambient HHS structure $\mathfrak S$.

    Let $\calV$ be a collection of distinguished domains.  Unless their number is bounded in terms of $\mathfrak S, \#F'$, we can pass to a subcollection so that the domains in $\calV$ are all either
    \begin{enumerate}
        \item distinguished of type (1), so that all domains in $\calV$ are
        \begin{itemize}
            \item distinguished by some fixed $z \in F'$, i.e. so that $\pi_V(z) \neq \pi_V(f)$ for all $f \in F$;
            \item contained in $\Rel_K(x,y)$ for fixed $x,y \in F$; and
            \item lie in a single BBF family $\mathfrak S_j$.
        \end{itemize}
        \item distinguished of type (2), so that all domains in $\calV$ are
        \begin{itemize}
            \item distinguished by a fixed pair $x,y \in F$, i.e., so that $\pi_V(x) \neq \pi_V(y)$ for all $V \in \calV$;
            \item contained in $\Rel_K(x',y')$ for fixed $x',y' \in F'-F$; and
            \item lie in a single BBF family $\mathfrak S_j$.
        \end{itemize}
        
    \end{enumerate}

    Assume we are in case (1).  Let $\sigma= K/100$.  Assuming that $\#\calV$ is sufficiently large (in terms of $\mathfrak S, K$), Strong Passing-up \ref{prop:SPU} provides a domain $W \in \Rel_K(x,y)$ with $V \nest W$ for all $V\in \calV$, so that if $\gamma$ is a geodesic between $\pi_W(x), \pi_W(y)$, and $\{w_1,\dots, w_n\}$ is a $\sigma$-subdivision of $\gamma$, then there exist $V,V' \in \calV$ so that
    
    \begin{itemize}
        \item $\rho^V_W$ and $\rho^{V'}_W$ are at least $K/8$ far from $x,y$ and
        \item $\rho^V_W$ and $\rho^{V'}_W$ are at least $K/100$ apart.
    \end{itemize}

Moreover, assuming without loss of generality that $d_W(x,p_{\gamma}(z))>K/2$, by increasing $\#\calV$ only a bounded amount (in terms of $\mathfrak S,K$) if necessary, we can arrange that

\begin{itemize}
    \item $p_{\gamma}(\rho^V_W), p_{\gamma}(\rho^{V'}_W)$ separate $x$ from $p_{\gamma}(z)$ along $\gamma$.
\end{itemize}

The point here is that by increasing the coarse density constant $n$ in Proposition \ref{prop:SPU} a bounded amount (in $\mathfrak S, K$), we can arrange for $V,V' \in \calV$ to be proximate to two non-adjacent $\sigma$-subdivision subintervals between $x, p_{\gamma}(z)$.

\begin{figure}
    \centering
    \includegraphics[width=\textwidth]{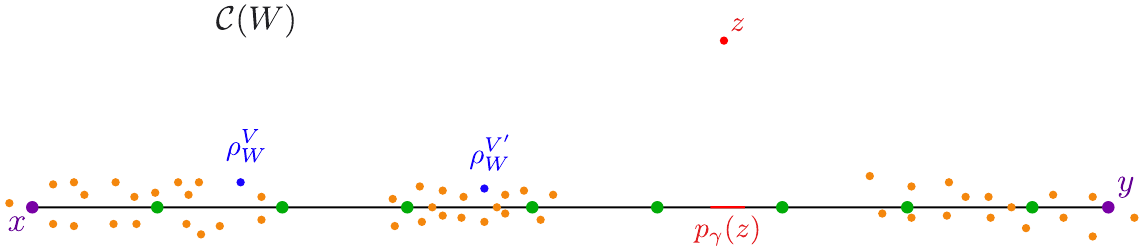}
    \caption{Case (1) of Proposition \ref{prop:distinguished domains}: $\pi_{\gamma}(z)$ must be far from at least one of $x,y$, so Proposition \ref{prop:SPU} provides domains $V, V' \in \calU$ distinguished by $z$ with $V,V' \nest W$ so that $x,y, \rho^V_W, \rho^{V'}_W$, and $\pi_{\gamma}(z)$ are separated from each other in useful ways.  This allows us to conclude via Theorem \ref{thm:stable proj} that $\pi_V(z)$ is one of $\pi_V(x)$ or $\pi_V(y)$, which is a contradiction.}
    \label{fig:distinguished}
\end{figure}

Assuming then that $p_{\gamma}(\rho^{V'}_W)$ appears after $p_{\gamma}(\rho^{V}_W)$ along $\gamma$ on the way from $x$ to $p_{\gamma}(z)$, a basic consequence of the Bounded Geodesic Image axiom implies that $d_{V'}(y, \rho^{V'}_V)>\theta$ and $d_{V'}(z, \rho^{V'}_V)>\theta$ for $\theta = \theta(\mathfrak S)$ as in Theorem \ref{thm:stable proj}.  This requires making $K = K(\mathfrak S)$ sufficiently large so that $K/100$ is large enough to invoke the Bounded Geodesic Image axiom.  Hence by item \eqref{item:strict_Behrstock_X} of Theorem \ref{thm:stable proj}, we have $\pi_V(y) = \rho^{V'}_V = \pi_V(z)$, which contradicts the assumption that $V$ is distinguished by $z$, i.e. $\pi_V(z) \neq \pi_V(f)$ for all $f \in F$.  This deals with case (1).

For case (2), we make variations on the above argument.  Again let $\sigma = K/100$.  Assuming that $\#\calV$ is sufficiently large (in terms of $\mathfrak S, K$), Proposition \ref{prop:SPU} again provides a domain $W \in \Rel_{100K}(x',y')$ with $V \nest W$ for all $V \in \calV$, so that if $\gamma$ is a geodesic between $\pi_W(x'), \pi_W(y')$, and $\{w_1,\dots, w_n\}$ is a $\sigma$-subdivision of $\gamma$, then there exist $V_1,V_2,V_3 \in \calV$ so that
    
    \begin{itemize}
        \item Each of the $\rho^{V_i}_W$ is at least $K/8$ from $x',y'$ and
        \item The $\rho^{V_i}_W$ are pairwise at least $K/100$ apart.
    \end{itemize}

There are two subcases.  In the first, both of $p_{\gamma}(x)$ and $p_{\gamma}(y)$ are at least $K/20$ away from one of $x'$ or $y'$.  Then we can argue again as in case (1) to arrange  $\pi_V(x)=\pi_V(y)$, a contradiction.  Otherwise, we are in the second subcase, where $p_{\gamma}(x)$ is close to $x'$ and $p_{\gamma}(y)$ is close to $y'$.  Then we can arrange for each of the $p_{\gamma}(\rho^{V_i}_W)$ to separate $p_{\gamma}(x)$ and $p_{\gamma}(y)$ along $\gamma$, with $p_{\gamma}(\rho^{V_2}_W)$ in between $p_{\gamma}(\rho^{V_1}_W)$ and $p_{\gamma}(\rho^{V_3}_W)$.  But then an application of item \eqref{item:strict_Behrstock_X} of Theorem \ref{thm:stable proj} implies that $\pi_{V_2}(x) = \rho^{V_1}_{V_2} = \pi_{V_2}(x')$ and $\pi_{V_2}(y) = \rho^{V_3}_{V_2} = \pi_{V_2}(y')$, so that $V_2 \in \Rel_K(x,y)$, which is a contradiction.  This completes the proof of the case and the proposition.
\end{proof}

\subsection{Involved domains}

Our next goal is to control the number of domains in $\calU' - \calU$ which nest into domains in $\calU$.  This will give us control over the $\rho$-sets that are involved in the stable tree construction (Subsection \ref{subsec:stable trees defined}).

\begin{definition}\label{defn:involved}
    A domain $U \in \calU$ is \emph{involved} if $\{V \in \calU| V \nest U\} \neq \{V \in \calU'|V \nest U\}$.
    \begin{itemize}
        \item We let $\calI(F,F')$ denote the set of involved domains.
    \end{itemize}
\end{definition}

\begin{proposition}\label{prop:involved domains}
    There exists $D_2 = D_2(\mathfrak S, \#F')>0$ so that $\#\calI(F,F') < D_2$.
\end{proposition}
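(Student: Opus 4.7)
The strategy mimics that of Proposition \ref{prop:distinguished domains}: assume for contradiction that $\#\calI(F,F')$ is huge, then combine pigeonholing with Strong Passing-up (Proposition \ref{prop:SPU}) to produce a configuration that contradicts Theorem \ref{thm:stable proj}. For each $U \in \calI(F,F')$, pick a witness $V_U \in \calU' - \calU$ with $V_U \propnest U$. Since $V_U \in \Rel_K(F') \setminus \Rel_K(F)$, there must exist $x_U \in F' - F$ and $y_U \in F'$ with $d_{V_U}(x_U, y_U) \geq K$. Pigeonholing (using $\#F'$ bounded and finitely many BBF families) lets us pass to a subcollection $\calV \subseteq \calI(F,F')$, still of cardinality $\gg 1$ in terms of $\mathfrak S, \#F'$, along which we can fix $x, y \in F$ with $U \in \Rel_K(x,y)$, fix $x' \in F' - F$ and $y' \in F'$ with $d_{V_U}(x', y') \geq K$, and force all $U$'s (respectively, all $V_U$'s) to lie in a single BBF family.

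Next apply Strong Passing-up to $\calV \subseteq \Rel_K(x,y)$ with parameters $K_2 \gg \theta + \ES$, $\sigma = K_2/100$, and density $n$ sufficiently large: Proposition \ref{prop:SPU} produces $W \in \Rel_{K_2}(x,y)$ and a subcollection $\calV' \subseteq \calV$ such that $U \propnest W$ for every $U \in \calV'$ and the sets $\rho^U_W$ spread out along the $\calC(W)$-geodesic $\gamma$ from $\pi_W(x)$ to $\pi_W(y)$. Because $V_U \propnest U \propnest W$, the standard HHS fact that $\rho$-sets of nested domains coarsely coincide gives $\rho^{V_U}_W$ close to $\rho^U_W$, so the $\rho^{V_U}_W$ are spread along $\gamma$ as well. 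Using the density conclusion of Proposition \ref{prop:SPU}, we extract $U_1, U_2, U_3 \in \calV'$ whose $\rho^{U_i}_W$ are pairwise more than $\theta + \ES$ apart along $\gamma$, with $\rho^{U_2}_W$ strictly between $\rho^{U_1}_W$ and $\rho^{U_3}_W$, and with $\rho^{U_1}_W, \rho^{U_3}_W$ both at distance at least $\theta + \ES$ from the endpoints $\pi_W(x), \pi_W(y)$.

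The finish mirrors Case 2 of the proof of Proposition \ref{prop:distinguished domains}. Since $d_{V_{U_i}}(x', y') \geq K$ and $V_{U_i} \propnest W$, the Bounded Geodesic Image axiom forces every $\calC(W)$-geodesic from $\pi_W(x')$ to $\pi_W(y')$ to pass within $\ES$ of each $\rho^{V_{U_i}}_W$. Hyperbolicity of $\calC(W)$ then makes this geodesic fellow-travel $\gamma$ along a long segment containing all three $\rho^{V_{U_i}}_W$, so after possibly swapping $x' \leftrightarrow y'$ the point $p_\gamma(x')$ lies on the $\pi_W(x)$-side of $\rho^{U_1}_W$ and $p_\gamma(y')$ lies on the $\pi_W(y)$-side of $\rho^{U_3}_W$. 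Item \eqref{item:strict_Behrstock_X} of Theorem \ref{thm:stable proj} applied to the transverse pair $V_{U_1}, V_{U_2}$ with test point $x$ (and separately $x'$) then yields $\pi_{V_{U_2}}(x) = \rho^{V_{U_1}}_{V_{U_2}} = \pi_{V_{U_2}}(x')$; symmetrically we get $\pi_{V_{U_2}}(y) = \rho^{V_{U_3}}_{V_{U_2}} = \pi_{V_{U_2}}(y')$. Hence $d_{V_{U_2}}(x,y) = d_{V_{U_2}}(x',y') \geq K$, placing $V_{U_2} \in \Rel_K(x,y) \subseteq \calU$ and contradicting $V_{U_2} \in \calU' - \calU$. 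Taking $D_2$ larger than the cardinality threshold at which all the pigeonholing and Strong Passing-up steps succeed completes the proof. The main obstacle, as in the previous proposition, is cleanly transferring the $\calC(W)$-position data to $\calC(V_{U_1})$- and $\calC(V_{U_3})$-position data needed to invoke item \eqref{item:strict_Behrstock_X}; this uses the same standard consequences of HHS consistency and bounded geodesic image that power Case 2 of the proof of Proposition \ref{prop:distinguished domains}.
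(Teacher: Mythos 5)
Your proposal is correct and follows essentially the same route as the paper's proof: pigeonhole to fix $x,y\in F$, $x',y'\in F'$ and BBF families, apply Strong Passing-up (Proposition \ref{prop:SPU}) to produce $W$ and three witnesses whose $\rho$-sets spread out along the $x$--$y$ geodesic in $\calC(W)$, then combine Bounded Geodesic Image with item \eqref{item:strict_Behrstock_X} of Theorem \ref{thm:stable proj} to force the middle witness into $\Rel_K(x,y)$, contradicting its membership in $\calU'-\calU$. The only cosmetic difference is that you apply Strong Passing-up to the involved domains $U$ and transfer to the witnesses $V_U\nest U$ via coincidence of $\rho$-sets under nesting, which the paper leaves implicit.
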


\begin{proof}
For each $U \in \calI(F,F')$, choose some $V \in \calU' - \calU$ so that $V \nest U$, and let $\calV$ denote the collection of these domains $V$.  By passing to a subcollection of $\calI(F,F')$ of size uniformly (in terms of $\mathfrak S, |F'|$) proportional to $|\calI(F,F')|$ if necessary, we may assume that 
\begin{enumerate}
    \item There are fixed $x,y \in F$ so that if $U \in \calI(F,F')$, then $U \in \Rel_K(x,y)$;
    \item We have $\calV \subset \mathfrak S_i$ for a fixed BBF family;
    \item There are fixed $x',y' \in F'$ so that if $V \in \calV$, then $V \in \Rel_K(x',y')$;
    \item We have $\calW \subset \mathfrak S_j$ for some other fixed BBF family.
\end{enumerate}

As in the proof of Proposition \ref{prop:distinguished domains}, we want to apply Strong Passing-up \ref{prop:SPU} and Theorem \ref{thm:stable proj} to obtain a contradiction by forcing $V \in \calV$ to be in $\Rel_K(F)$.  Also, as in that proof, we can choose $K = K(\mathfrak S,|F'|)$ to be as large as we like.

By further assuming that $\#\calV$ is sufficiently large (in terms of $\mathfrak S, |F'|$), we get some domain $W \in \Rel_{100K}(x,y)$ with $V \nest W$ for all $V \in \calV$, so that if $\gamma$ is a geodesic between $x,y$ in $\calC(W)$, then there exist $V_1, V_2,V_3 \in \calV$ so that
\begin{itemize}
    \item Each $p_{\gamma}(\rho^{V_i}_W)$ is at least $2K$ away from $x,y$,
    \item The $p_{\gamma}(\rho^{V_i}_W)$ are at least pairwise $2K$ apart, with the $p_{\gamma}(\rho^{V_i}_W)$ appearing in order from $x,y$ along $\gamma$.
\end{itemize}

By the Bounded Geodesic Image axiom, any geodesic in $\calC(W)$ between $x',y'$ must pass uniformly close (depending only on $\mathfrak S$ and not on $K$) to each $\rho^{V_i}_W$.  Once again, applying item \eqref{item:strict_Behrstock_X} of Theorem \ref{thm:stable proj}, it follows that $\pi_{V_2}(x) = \rho^{V_1}_{V_2} = \pi_{V_2}(x')$ and $\pi_{V_2}(y) = \rho^{V_3}_{V_2} = \pi_{V_2}(y')$, making $V_2 \in \Rel_K(x,y)$ while also $V_2 \in \calU' - \calU$ by assumption, which is a contradiction.  This completes the proof.

\end{proof}

\subsection{Sporadic domains}

In this subsection, we turn toward proving a key bound for our Stabler Tree Theorem \ref{thm:stabler tree}.  Roughly, when $U \in \calU$, there may be unboundedly-many domains $V \nest U$ for which $V \in \calU' - \calU$.  For our tree modeling purposes in Section \ref{sec:stable trees}, we only need to control the number and location of these additional domains that appear near $\hull_U(F)$.  In terms of Gromov modeling trees for the hulls, our main goal here is to say that all but boundedly-many of them appear along the ``new'' branches associated to the points in $F' - F$.  These troublesome domains are captured in the following definition:

\begin{definition}[Sporadic domains]\label{defn:sporadic domains}

Given $D>0$, a domain $V \in \calU' - \calU$  is $D$-\textbf{sporadic} if for some $U \in \calU$ with $V \nest U$, we have 
\begin{equation} \label{eq:sporadic}
    \rho^V_U \notin \bigcup_{x' \in F' - F} \left(\bigcap_{x \in F} \calN_D(\hull_U(x,x'))\right)
\end{equation}

\begin{itemize}
    \item We let $\calV_D$ denote the set of $D$-sporadic domains.
\end{itemize}
\end{definition}

\begin{figure}
    \centering
    \includegraphics[width=.75\textwidth]{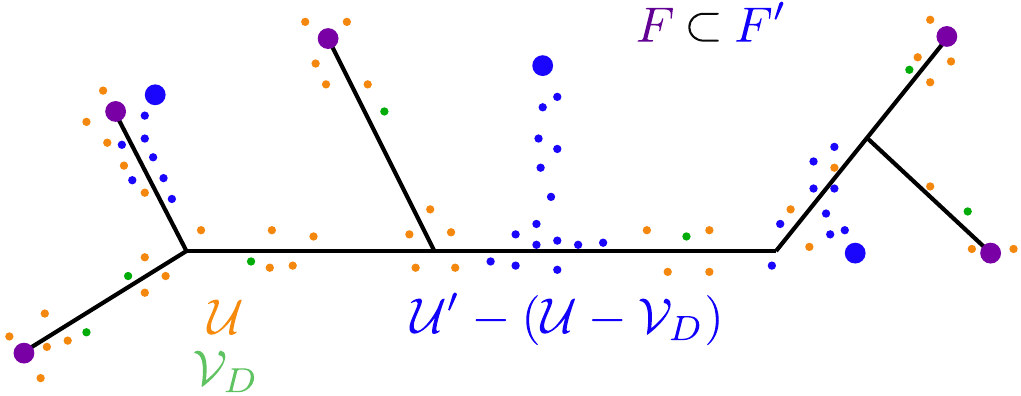}
    \caption{The motivating picture for sporadic domains.  For any large domain $V \in \calU$ with $V \nest U$, $\rho^V_U$ lies close to the hull of $F$ in $\calC(U)$, and hence near the Gromov modeling tree for $\hull_U(F)$.  Sporadic domains are precisely those whose $\rho$-points $\calV_D$ lie far away from the ``branch points'' in the tree corresponding to the points in $F' - F$.  Proposition \ref{prop:sporadic domains} controls the number of these points by Theorem \ref{thm:stable proj} and some basic Bounded Geodesic Image axiom arguments.}
    \label{fig:sporadic domains}
\end{figure}

In words, sporadic domains are those which do not cluster near that closest point projections of $x' \in F' - F$ on $\hull_U(F)$, which we think of as the ``new'' branch points associated to the points in $F' - F$.

The following proposition says we can control the number and size of sporadic domains.

\begin{proposition}\label{prop:sporadic domains}
There exists $D_0 = D_0(\mathfrak S)>0$ so that for any $D > D_0$, there exists $N = N(D, \#F',\mathfrak S)>0$ so that $|\calV_D| < N$.  Moreover, for each $V \in \calV_D$ with $V \nest U$ for $U \in \calU$, then $V \in \Rel_{K - 2E_{\mathfrak S}}(F)$.

\end{proposition}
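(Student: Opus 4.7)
The plan is to reduce Proposition \ref{prop:sporadic domains} to the distinguished-domain bound from Proposition \ref{prop:distinguished domains} via the ``Moreover'' clause. Concretely, I will first prove the ``Moreover'', and then observe that it forces every $D$-sporadic domain to be distinguished of type (2) in the sense of Definition \ref{defn:distinguished}, which yields $\#\calV_D \leq \#\calD(F,F') < D_1(\mathfrak S, \#F')$ with the cardinality bound $N$ in fact independent of $D$.

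For the ``Moreover'' I argue the contrapositive: if $V \in \calU' - \calU$ satisfies $\diam\pi_V(F) < K - 2E_{\mathfrak S}$, then for every $U \in \calU$ with $V \nest U$ one can locate $\rho^V_U$ in $\bigcap_{x \in F} \calN_{E_{\mathfrak S}}(\hull_U(x, x'))$ for some $x' \in F' - F$, so $V$ is not $D$-sporadic for any $D \geq D_0 := E_{\mathfrak S}$. Since $V \in \Rel_K(F')$, pick $y_1, y_2 \in F'$ with $d_V(y_1, y_2) \geq K$; since $\diam \pi_V(F) < K$, at least one of the $y_i$, say $y_1$, lies in $F' - F$. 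If every $x \in F$ satisfies $d_V(x, y_1) \geq E_{\mathfrak S}$, then Bounded Geodesic Image forces every geodesic in $\calC(U)$ from $\pi_U(x)$ to $\pi_U(y_1)$ to pass within $E_{\mathfrak S}$ of $\rho^V_U$, so $x' := y_1$ does the job. Otherwise some $x_0 \in F$ has $d_V(x_0, y_1) < E_{\mathfrak S}$; the triangle inequality together with the diameter bound then implies $d_V(x, y_2) > E_{\mathfrak S}$ for every $x \in F$, and a second triangle inequality rules out $y_2 \in F$ (which would force $K \leq K - E_{\mathfrak S}$). Thus $y_2 \in F' - F$, and BGI again places $\rho^V_U$ within $E_{\mathfrak S}$ of every $\hull_U(x, y_2)$, so $x' := y_2$ works.

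The main subtlety I anticipate is the existential quantifier over $U$ in Definition \ref{defn:sporadic domains}: to show $V$ is \emph{not} sporadic one must produce the required $x' \in F' - F$ for \emph{every} $U \in \calU$ with $V \nest U$. The key observation is that the case split above depends only on projections to $\calC(V)$, so a single choice of $x' \in \{y_1, y_2\} \cap (F' - F)$ serves uniformly for every admissible $U$, and BGI then does the work in each $\calC(U)$ separately. A minor bookkeeping point is verifying $y_2 \in F' - F$ in the second case, which is handled by the $K - 2E_{\mathfrak S}$ bound. Once the ``Moreover'' is established, the cardinality bound is immediate: every $V \in \calV_D$ has $\diam \pi_V(F) \geq K - 2E_{\mathfrak S} > 0$ (taking $K > 2E_{\mathfrak S}$ as in the standing convention that $K = K(\mathfrak S)$ is large), so $\pi_V(F)$ contains at least two distinct points, making $V$ a distinguished domain of type (2), and Proposition \ref{prop:distinguished domains} finishes the argument.
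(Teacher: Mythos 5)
Your proof is correct, and while your treatment of the ``Moreover'' clause is essentially the paper's argument run in the contrapositive direction (both hinge on the same Bounded Geodesic Image computation: sporadicity forces each $x'\in F'-F$ realizing largeness in $\calC(V)$ to have some $y\in F$ with $d_V(y,x')\leq \ES$, whence $\diam\pi_V(F)\geq K-2\ES$), your counting argument takes a genuinely different and cleaner route. The paper does \emph{not} reduce to Proposition \ref{prop:distinguished domains}: instead it pigeonholes $\calV_D$ into a single BBF family and a single pair $x,y\in F$ with all domains $(K-2\ES)$-relevant for that pair, and then invokes Lemma \ref{lem:partial order} --- either a pairwise transverse triple exists, in which case stable projections force $V\in\Rel_K(x,y)\subseteq\calU$, contradicting $V\in\calU'-\calU$, or no such triple exists and the collection is bounded outright. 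Your observation that the ``Moreover'' clause already makes every sporadic domain distinguished of type (2) in the sense of Definition \ref{defn:distinguished} (since $V\in\calU'-\calU$ and $\diam\pi_V(F)\geq K-2\ES>0$ forces $\pi_V(x)\neq\pi_V(y)$ for some $x,y\in F$) lets you quote Proposition \ref{prop:distinguished domains} directly, avoiding the coloring/transversality machinery entirely and yielding a bound $N$ independent of $D$, which is slightly stronger than what is stated. Your handling of the quantifier over $U$ in Definition \ref{defn:sporadic domains} --- choosing $x'\in\{y_1,y_2\}$ using only data in $\calC(V)$ so that the same witness works for every $U$ with $V\nest U$ --- is the right way to organize the contrapositive, and the arithmetic ruling out $y_2\in F$ in your Case B checks out.
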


\begin{proof}

As before, we will derive a contradiction by assuming that $\#\calV_D$ is very large.  Suppose that $V \in \calV_D$.  We begin by producing $x,y \in F$ so that $V$ is a $(K-2E_{\mathfrak S})$-large domain for $x,y$, which will prove the ``moreover'' part of the statement.

In the first case, there exist $x \in F$ and $x' \in F' - F$ so that $V \in \Rel_K(x,x')$.  By definition of $\calV_D$, there exists some $y \in F$ so that $\hull_U(x',y) \cap \calN_D(\rho^V_U) = \emptyset$.  Taking $D>\ES$ (Notation \ref{not:ES}), the Bounded Geodesic Image axiom implies that $V \in \Rel_{K - \ES}(x,y)$.

In the second case, there exist $x', y' \in F' - F$ so that $V \in \Rel_K(x',y')$.  Again by definition of $\calV_D$, there exist $x, y \in F$ so that $\hull_U(x,x') \cap \calN_D(\rho^V_U) = \emptyset$ and $\hull_U(y,y') \cap \calN_D(\rho^V_U) = \emptyset$.  Then the Bounded Geodesic Image axiom implies that $V \in \Rel_{K-2E}(x,y)$.  Hence the ``moreover'' part of the statement holds.

Since there are only boundedly-many BBF families (Definition \ref{defn:colorable}), we may pass to a subcollection $\calV_0 \subset \calV_D \subset \Rel^i_{K-2E}(x,y)$ for a fixed $i$ and fixed $x,y \in F$, where each of the domains in $\calV_0$ arises as either in the first or second cases above.  In particular, this subcollection $\calV_0$ has size proportional (in terms of $\mathfrak S, \#F',D$) to $\calV_D$.

In the first case, taking $K$ large enough depending only on $\mathfrak S$, then the first part of Lemma \ref{lem:partial order} implies that if $V_1,V_2,V_3$ are pairwise transverse, then $\pi_{V_1}(y) = \pi_{V_1}(x') = \rho^{V_2}_{V_1}$, and so in fact $V \in \Rel_K(x,y)$, which is a contradiction.  On the other hand, the second part of Lemma \ref{lem:partial order} then implies that $\#\calV_0$ is bounded in terms of $\mathfrak S$.  A similar contradiction arises if the $\calV_0$ are in the second case.  This completes the proof.
\end{proof}

\section{Stable trees} \label{sec:stable trees}

In this section, we prove a refinement of our original Stable Tree Theorem \cite[Theorem 3.2]{DMS_bary}.  This refinement is crucial for our stable tree comparison result (Theorem \ref{thm:stabler tree}) in Section \ref{sec:stabler trees}, which in turn is the key result from this paper for proving our Stabler Cubulations Theorem \ref{thm:stabler cubulations} via \cite{Dur_infcube}.

\subsection{Motivating the stable tree constructions}

When building our cubical models for the hull of a finite subset $F \subset \calX$ of an HHS later in Section \ref{sec:stabler cubulations}, the first step is to project $F$ to all of the $K$-relevant hyperbolic spaces $\calU = \Rel_K(F)$ for $K = K(\mathfrak S)$.  For each such domain $U \in \calU$, we then want to construct a tree $T_U$ which encodes the projection of $F$ to $\calC(U)$, plus all of the relative projection data from domains $V \in \calU$ with $V \nest U$, namely the $\rho$-sets $\rho^V_U \subset \calC(U)$.  These so-called \emph{stable trees} (Definition \ref{defn:stable tree}) then get modified in certain hierarchically-informed ways to become the input into the cubulation machine.

The purpose of any ``stable tree''-type theorem is to build trees $T_U$ as above which transform in a controlled way under reasonable modifications of the input set $F\rightsquigarrow F'$.  In \cite{DMS_bary}, we were interested in the case where $F, F' \subset \calX$ are at bounded Hausdorff distance in $\calX$.  We will describe this case in a bit of detail because we utilize the constructions from \cite{DMS_bary} in a fundamental way.

There we performed an analogous analysis to Section \ref{sec:controlling domains} to show that the set of $K$-relevant domains $\calU, \calU'$ for $F,F'$, respectively, had bounded symmetric difference.  On those boundedly-many domains $U \in \calU \hspace{.025in}\triangle \hspace{.05in} \calU'$, we were then faced with the fact that $F,F'$ had similar but not exactly the same projections to $\calC(U)$, and that possibly had boundedly-many different relevant $V \nest U$.  In other words, the input data into the tree construction---the projections and relative projections---were boundedly different.  The goal of the ``stable tree''-type theorem in that context was to build a tree construction so that the resulting trees were only boundedly different.  This involved a careful decomposition of the two trees $T_U, T'_U$ into a collection of ``stable'' pieces which were mostly identical, with boundedly-many nearly identical pieces, with the complementary ``unstable'' pieces of $T_U,T'_U$ bounded in number and diameter.  Such a decomposition then allows us to see $T_U,T'_U$ are isometric up to collapsing the unstable pieces to points.  See the Definition \ref{defn:stable decomp} of a \emph{stable decomposition} and the Stable Tree Theorem \ref{thm:stable tree} for precise details.

Our current goal is philosophically aligned, but the details are quite different.  In our current case, we are adding points to our set $F \subset \calX$, to obtain a new set $F'\subset \calX$.  Importantly, the points in $F' - F$ can be arbitrarily far from $F$.   So even though in Section \ref{sec:controlling domains} we obtained a bound on the number of domains $U \in \calU$ where the projections of $F,F'$ are different, i.e. $\pi_U(F) \neq \pi_U(F')$ (Proposition \ref{prop:distinguished domains}), and where the relative projections are different for $F,F'$ (Proposition \ref{prop:involved domains}), we are still in a very different situation in the (boundedly-many) remaining domains $U \in \calU$ where the projection data is possibly arbitrarily different.  In the end, however, we are not looking for the resulting trees $T_U,T'_U$ to be identical up to collapsing subtrees.  Rather, we need $T_U$ to admit a convex embedding into $T'_U$ up to collapsing the unstable pieces.  Hence we need stable decompositions of $T_U,T'_U$ so that stable pieces of $T_U$ are mostly identical stable pieces of $T'_U$, up to a few which are nearly identical, with the unstable pieces of $T_U$ being bounded in diameter and number.

The following is an informal statement which combines the Stabler Tree Theorem \ref{thm:stabler tree} with Proposition \ref{cor:collapsed isometry} which describes the control we gain after collapsing the unstable pieces.  The precise statements are in Theorem \ref{thm:stabler tree} and Definition \ref{defn:stable decomp}.

To set a bit of notation, we let $\calZ$ be a $\delta$-hyperbolic space.  Given any finite $F \subset \calZ$, we let $\lambda(F)$ denote any Gromov modeling tree in $\calZ$ for the points $F$ (see Subsection \ref{subsec:basic tree setup} for a discussion of this network function $\lambda$).   If $F\subset F'$ is finite, then $\lambda(F)$ is contained in a uniform neighborhood of $\lambda(F')$, coarsely coinciding with $\hull_{\lambda(F')}(F)\subset \lambda(F')$.  By the branches corresponding to $F'-F$, we mean the coarse complement of this hull in $\lambda(F')$.

\begin{theorem}[Stabler trees, informal]\label{thm:stabler tree informal}
    Let $F \subset F' \subset \calZ$ be finite subsets of a $\delta$-hyperbolic space $\calZ$, and let $\calY \subset \calY'$ be finite sets of points uniformly close to $\hull_{\calZ}(F)$ and $\hull_{\calZ}(F')$, respectively.  If the number of points in $\calY'- \calY$ that avoid the branches corresponding to $F'-F$ is uniformly bounded, then
    \begin{enumerate}
        \item There exist trees $T, T'$ which model $\hull_{\calZ}(F)$ and $\hull_{\calZ}(F')$, respectively;
        \item There are partitions $T = T_s \cup T_u$ and $T' = T'_s \cup T'_u$ into subtrees and a bijection $\alpha:\pi_0(T_s) \to \pi_0(T'_s)$, so that the following holds:
        \begin{itemize}
            \item If $\Delta:T \to \hT$ and $\Delta':T' \to \hT'$ collapse the components of $T_u, T'_u$ to points, then there is a convex embedding $\Phi:\hT' \to \hT'$ which identifies components of $T_s,T'_s$ identified by $\alpha$.
        \end{itemize}
    \end{enumerate}
\end{theorem}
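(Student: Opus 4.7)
My plan is to adapt the original Stable Tree construction from \cite{DMS_bary} to the asymmetric setting where we \emph{add} points rather than perturb them. First I would build compatible Gromov modeling trees: produce $\lambda(F) \subset \lambda(F')$ so that $\lambda(F)$ sits convexly inside $\lambda(F')$ as a quasi-isometric model for $\hull_{\lambda(F')}(F)$. Then form $T$ (resp.\ $T'$) by attaching to each $y \in \calY$ (resp.\ $y' \in \calY'$) a pendant edge from $y$ to its closest-point projection on $\lambda(F)$ (resp.\ $\lambda(F')$), its length being the distance. The ``new branches'' referenced in the hypothesis are then precisely the arcs of $\lambda(F')$ lying in the coarse complement of $\lambda(F) \subset \lambda(F')$, i.e.\ the initial subarcs of geodesics running from $F$ to points of $F'\setminus F$.

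The next step is to isolate the few new tree-features in $T'$ that are not accounted for by the inclusion $T \hookrightarrow \lambda(F')$. Call $y \in \calY' \setminus \calY$ \emph{surprising} if its projection to $\lambda(F')$ lands outside the new branches, and let $\calS$ denote the set of surprising points. The hypothesis guarantees $\#\calS$ is uniformly bounded. Each $y \in \calS$ contributes (i) a branching vertex $v_y$ on $\lambda(F) \subset \lambda(F')$ and (ii) a pendant edge for $y$ itself; these are the only $T'$-features in the $\lambda(F)$-region absent from $T$. I would declare $T'_u$ to be the union of a small $\delta$-diameter neighborhood of each $v_y$ in $\lambda(F')$ together with the pendant $y$-edges, merging neighborhoods that touch so that $T'_u$ has at most $\#\calS$ components. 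Pulling back through the inclusion gives the corresponding $T_u \subset T$; set $T_s = T \setminus T_u$ and $T'_s = T' \setminus T'_u$. Since each component of $T_u$ was designed to live under a unique component of $T'_u$, the complements match up: every component of $T_s$ sits inside a unique component of $T'_s$, yielding the required injection $\alpha:\pi_0(T_s) \to \pi_0(T'_s)$, which one can arrange to be a bijection by also absorbing into $T'_u$ the (boundedly-many) new-branch components of $T'_s$ that do not meet the image of $T$ — or else by leaving those components out of the bijection's range if the precise formulation allows a partial matching.

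Collapsing each component of $T_u$ and $T'_u$ to a point produces $\hT, \hT'$; the inclusion $T \hookrightarrow \lambda(F')$ combined with the matching of components descends to a simplicial map $\Phi:\hT \to \hT'$ whose image is the union of the stable components and the collapsed vertices. Convexity follows because at each collapsed vertex of $\hT'$ the extra local data, contributed by new branches of $\lambda(F')$ and by pendant edges of unmatched $y \in \calY'\setminus \calY$, attaches only at that single collapsed vertex and so does not interpose itself between two image points of $\hT$. The main obstacle is the geometric bookkeeping required to keep the diameter of each $T'_u$-component bounded while ensuring that merges of nearby unstable neighborhoods do not cascade: several surprising branchings could cluster along a short arc of $\lambda(F)$, or a surprising branching could coincide with an existing branching of $\lambda(F)$. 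I would handle this by first sorting surprising points by their projection sites on $\lambda(F)$, grouping those whose sites lie within a uniform threshold into a single unstable region, and then checking inductively that the diameter of each such region is bounded linearly in $\#\calS$ (hence uniformly), which together with the tree structure gives the required convex embedding after collapse.
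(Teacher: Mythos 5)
Your proposal has a genuine gap at its foundation: it assumes that $\lambda(F)$ can be realized as a literal convex subtree of $\lambda(F')$ and that each common cluster point $y\in\calY$ attaches to the two trees at the \emph{same} point. Neither holds. Minimal spanning networks for $F$ and $F'$ are only at bounded Hausdorff distance from one another (more precisely, $\lambda(F)$ is uniformly Hausdorff-close to $\hull_{\lambda(F')}(F)$, not equal to it), and the closest-point projection of a given $y\in\calY$ to $\lambda(F)$ versus to $\lambda(F')$ can differ by a small but nonzero amount at \emph{every} one of the unboundedly many points of $\calY$. Consequently, outside your ``surprising'' region the trees $T$ and $T'$ are only coarsely the same, not identical, so the components of $T_s$ and $T'_s$ matched by $\alpha$ need not be isometric and the collapsed map $\Phi:\hT\to\hT'$ is at best a quasi-isometric embedding with multiplicative error --- which is exactly the error the theorem exists to eliminate (a convex embedding must be an isometric embedding onto a convex subtree). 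Upgrading ``coarsely the same everywhere'' to ``literally identical outside a uniformly bounded unstable part'' is the entire technical content of the result, and your construction does not supply it.

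The paper's route is correspondingly quite different. It does not attach pendant edges at closest-point projections; it builds the trees from a \emph{cluster separation graph} (grouping the points of $\calY$ into clusters and recording which clusters separate which) and canonical minimal networks joining the clusters, precisely so that the output depends only on combinatorial data that is provably unchanged away from a bounded ``unstable core'' (Proposition \ref{prop:unstable core}). The interaction you wave at near the branch point --- new cluster points on the new branch merging, via chains of $E$-close points, with old clusters and thereby altering the tree in the $\lambda(F)$-region --- is handled by inserting a bounded set of \emph{fake cluster points} forming a buffer cluster that separates all of $\calY\cup\calY_{\spor}$ from all of $\calY'-(\calY\cup\calY_{\spor})$ (Lemmas \ref{lem:fake basic}--\ref{lem:fake graph}), after which one shows every edge component of the tree for $F$ is literally an edge component of the tree for $F'$ (Proposition \ref{prop:edge inject}). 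The exact preservation of gluing/adjacency data needed for convexity of $\Phi$ after collapse (Claim \ref{claim:gluing data} and item \eqref{item:adjacency} of Definition \ref{defn:stable decomp}) is a separate nontrivial argument that your proposal does not address. If you want to pursue your approach, the essential missing step is a mechanism forcing exact, not coarse, agreement of the two trees away from a bounded region; without clustering and canonical networks (or some substitute), no such mechanism is available.
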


\begin{remark}
    In Theorem \ref{thm:stabler tree informal}, the sets $\calY$ and $\calY'$ represent the relative projections that we will eventually need to consider in the hierarchical setting.  In the full version, Theorem \ref{thm:stabler tree} and its consequence Corollary \ref{cor:collapsed isometry}, the convex embedding $\Phi:\hT \to \hT'$ between collapsed trees carries additional information about the points in $F$ and the sets $\calY, \calY'$.  While too technical to state informally, this extra information is crucial in Section \ref{sec:stabler cubulations}.
\end{remark}

\subsection{Structure of Sections \ref{sec:stable trees} and \ref{sec:stabler trees}}

The formulation and proof of the Stabler Tree Theorem \ref{thm:stabler tree} will happen over the course of the next two sections.  Roughly, in Section \ref{sec:stable trees} we prove a refined version of our stable tree results from \cite[Section 3]{DMS_bary}, and in Section \ref{sec:stabler trees}, we use this refinement as a tool for proving our more powerful ``add a point'' version in Theorem \ref{thm:stabler tree}.

The rest of this section proceeds by introducing and recalling the key facts about our construction of stable trees from \cite[Section 3]{DMS_bary}.  With these in hand, we will then turn to proving the refined version.  This involves a careful analysis of how changes of the input data result in various changes in the output trees.  See Subsection \ref{subsec:stable tree outline} for a detailed outline, which will be possible once we have established basic terminology.

\subsection{Basic setup for the stable tree construction} \label{subsec:basic tree setup}

For the rest of this section, fix a $\delta$-hyperbolic geodesic space $\calZ$.  For a finite subset $F\subset \calZ$ let $\hull_{\calZ}(F)\subset\calZ$
be the set of geodesics connecting points of $F$. Hyperbolicity tells us that $\hull_{\calZ}(F)$
can be approximated by a finite tree with accuracy depending only on $\delta$ and the
cardinality $\# F$. To systematize this for the purposes of this section, we make the
following definitions.

First, we fix a function $\ltree$ which, to any finite subset $F$ of
$\calZ$, assigns a minimal network spanning $F$. That is, $\ltree(F)$ is a 1-complex embedded in $\calZ$ with the property
that $\ltree(F)\cup F$ is connected, and has minimal length among all such
1-complexes (where the length of a 1-complex embedded in $\calZ$ is the sum of the lengths of all edges).  Minimality implies $\ltree(F)$ is a tree.

The following lemma summarizes the properties of the minimal networks $\lambda(F)$, we leave its proof to the reader.

\begin{lemma}\label{lem:basic tree lemma} 
Let $\calZ$ be a geodesic $\delta$-hyperbolic space and $\lambda$ the minimal network function
as above. For any choice of $k>0$, there exists $\epsilon_0=\epsilon_0(k,\delta)$ so that for all $\epsilon\geq \epsilon_0$ there exists $\ep' = \ep'(k, \ep)>0$ such that, if $F\subset \calZ$  with $|F|<k$ then
\begin{enumerate}
  \item There is 
a $(1,\ep/2)$-quasi-isometry $\ltree(F)\to \hull_{\calZ}(F)$ which is $\ep/2$-far from the
inclusion of $\ltree(F)$ in $\calZ$.
\item
For any two points $x,y\in
\calN_\ep(\ltree(F))$, any geodesic joining them is in $\calN_{\ep'}(\ltree(F))$.
\end{enumerate}
\end{lemma}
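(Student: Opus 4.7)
My plan is to deduce both parts from two classical facts about $\delta$--hyperbolic geometry: Gromov's finite tree approximation for hulls of finite sets, and the quasi-convexity of such hulls. The minimal network $\ltree(F)$ enters through its length bound, which forces it to be comparable to an explicit Gromov tree.

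First, for (1), I would construct a reference tree $T_0\subset \calZ$ by the standard inductive procedure: enumerate $F=\{x_0,\dots,x_{n-1}\}$ with $n<k$, set $T_0^{(1)}:=[x_0,x_1]$, and recursively let $T_0^{(i+1)}:=T_0^{(i)}\cup[x_i,p_i]$ where $p_i$ is a closest point of $T_0^{(i)}$ to $x_i$. A $\delta$-thinness induction shows $T_0:=T_0^{(n-1)}$ is $D_0(k,\delta)$-Hausdorff close to $\hull_{\calZ}(F)$. Since $\ltree(F)$ has minimal length among spanning $1$--complexes, $\mathrm{length}(\ltree(F))\leq\mathrm{length}(T_0)$. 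A rerouting argument---if an edge of $\ltree(F)$ excursions more than $D$ from the hull, replacing it by its projection to the hull is strictly shorter by thinness once $D\gg\delta$---then forces $\ltree(F)\subset\calN_{D_1(k,\delta)}(\hull_{\calZ}(F))$ and, dually, $\hull_{\calZ}(F)\subset\calN_{D_1}(\ltree(F))$. Choosing $\epsilon_0\geq 2D_1$, the requested map is the composition of the inclusion $\ltree(F)\hookrightarrow\calZ$ with closest-point projection to $\hull_{\calZ}(F)$; its $(1,\epsilon/2)$--quasi-isometric character follows because the edges of $\ltree(F)$ are geodesic segments in $\calZ$, so by the classical tree-approximation estimate the intrinsic tree metric on $\ltree(F)$ differs from the ambient metric by at most $O(\delta\log_2 k)$.

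For (2), I would invoke quasi-convexity: there is $Q=Q(\delta)$ so that any geodesic between points of $\hull_{\calZ}(F)$ remains in $\calN_Q(\hull_{\calZ}(F))$. Combining this with (1) gives $\calN_{\epsilon}(\ltree(F))\subseteq\calN_{\epsilon+D_1}(\hull_{\calZ}(F))$. Given $x,y\in\calN_{\epsilon}(\ltree(F))$, take closest points $x',y'\in\hull_{\calZ}(F)$; any geodesic $[x',y']$ lies in $\calN_Q(\hull_{\calZ}(F))$, and $\delta$-thinness applied to the quadrilateral $xx'y'y$ confines an arbitrary geodesic $[x,y]$ to $\calN_{\epsilon+D_1+Q+2\delta}(\hull_{\calZ}(F))\subseteq\calN_{\epsilon'}(\ltree(F))$ for $\epsilon':=\epsilon+2D_1+Q+2\delta$.

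The main obstacle is the rerouting step in (1): the minimal network may carry Steiner vertices not in $F$, and showing that $\ltree(F)$ cannot venture far from $\hull_{\calZ}(F)$ requires verifying that an arbitrary excursion can always be shortened by its projection. Once one pins down that closest-point projection onto a $Q$--quasi-convex set in a $\delta$--hyperbolic space is coarsely $1$--Lipschitz with additive error $O(\delta)$, the deviation bound $D_1(k,\delta)$ and the remainder of the argument reduce to additive bookkeeping in terms of $k$ and $\delta$.
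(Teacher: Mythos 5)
The paper offers no proof of this lemma (it is explicitly left to the reader), so there is nothing to compare against directly; your overall strategy --- bound $\mathrm{length}(\ltree(F))$ by the standard inductively built spanning tree $T_0$, use minimality to pin $\ltree(F)$ near $\hull_{\calZ}(F)$, then deduce (2) from quasi-convexity of the hull --- is the right one, and your argument for (2) is correct as written. The gap is in the step you yourself flag as the main obstacle, and your proposed resolution does not close it. The rerouting argument ``replace an excursing edge by its projection to the hull'' fails for two reasons. First, every edge of $\ltree(F)$ is already a geodesic (otherwise the network could be shortened edgewise), so an edge can only excurse far from the hull if one of its endpoints --- necessarily a Steiner/branch vertex --- is far from the hull; the real issue is the location of these vertices, not the shape of the edges. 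Second, replacing an edge $[u,v]$ by a path near the hull destroys connectivity with the subtrees attached at $u$ and $v$, and restoring it costs roughly $d(u,\hull)+d(v,\hull)$, so the competitor is not obviously shorter; coarse $1$-Lipschitzness of the projection gives $d(\pi u,\pi v)\le d(u,v)+O(\delta)$, which goes the wrong way. A correct argument must exploit minimality at the vertices, e.g.: each branch vertex $w$ minimizes $\sum_i d(\cdot,u_i)$ over its neighbors $u_i$ (else sliding $w$ shortens the network), which by a Gromov-product computation forces $w$ to lie $O(\delta)$-close to some pairwise geodesic $[u_i,u_j]$; alternatively, one can pass to a Gromov approximating tree for the $\le 2k-2$ vertices of $\ltree(F)\cup F$ and compare total lengths there, where a spanning subtree whose length exceeds that of the tree-hull of $F$ by at most $O(k\delta\log k)$ must lie in the corresponding neighborhood of that hull.

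A second, smaller gap: the $(1,\ep/2)$-quasi-isometry in (1) is with respect to the intrinsic tree metric on $\ltree(F)$ (this is how the lemma is used later, e.g.\ for shadows), and the classical tree-approximation estimate does not by itself bound the intrinsic metric against the ambient one --- the approximating map could fold $\ltree(F)$, i.e.\ the tree-path between two points could backtrack in the approximating tree. Ruling this out again uses minimality: two distinct edges of $\ltree(F)$ cannot fellow-travel for a long stretch, since cutting one of them at the near-point and reconnecting across the fellow-traveling pair would produce a strictly shorter spanning network. With these two points repaired, the additive bookkeeping you describe does yield the lemma.
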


\begin{remark}\label{rem:ep notation}
    In what follows, we will need to work with different values of $\ep_0$ for different values of $k$.  Toward that end, we will adopt the notation $\ep_0(k)$ to denote this constant for a given value of $k>0$.
\end{remark}

We similarly define an additional network function 
$\lambda'$ which assigns, to any finite collection $A_1,\ldots,A_k$ of finite subsets of $\calZ$,
a minimal network that spans them. 
That is, $\lambda'(A_1,\ldots,A_k)$ is a 1-complex in $\calZ$ of minimal length with the
property that the quotient of $\lambda'(A_1,\dots,A_k)$ obtained by collapsing
each $A_i$ to a point is connected. Minimality again implies that this collapsed graph is a
tree, and hence $\lambda'(A_1, \dots, A_k)$ is a forest.  For convenience we write $\ltree(\{x_1,\ldots,x_k\}) =
\lambda'(\{x_1\},\ldots,\{x_k\})$.

We make an additional requirement, following a definition.  We say that subsets $A_1, A_3$ are \emph{$\epsilon$-separated} by $A_2$ if there exists a minimal length $\calZ$-geodesic $\sigma$ which connects $A_1,A_3$ and passes within $2\epsilon$ of $A_2$.

\begin{lemma}\label{lem:inductive forest}
Suppose that $A_1, \dots, A_n, A'_{n+1}, \dots, A'_m \subset \calZ$ are collections of pairwise disjoint finite subsets in $\calZ$, with $d_{\calZ}(A_n, A'_{n+1})<\epsilon$ for $\epsilon>\epsilon_0$ as in Lemma \ref{lem:basic tree lemma}.  Suppose that for any $1\leq i \leq n-1$ and $n+1 \leq j \leq m$, we have that $A_i, A'_j$ are $\epsilon$-separated by $A_n$.

Then any component of $\lambda'(A_1, \dots, A_n)$ with an endpoint on $A_n$ is a component of $\lambda'(A_1, \dots, A_n \cup A'_{n+1}, A'_{n+2}, \dots, A'_m)$, and the latter forest contains no components connecting $A_1, \dots, A_{n-1}$ to $A'_{n+2}, \dots, A'_m$.
\end{lemma}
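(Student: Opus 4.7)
The plan is to prove the second claim—no components connecting the ``left'' sets $A_i$, $i\leq n-1$, to the ``right'' sets $A'_j$, $j\geq n+2$—by a minimal-network exchange argument, from which the first claim will follow via a compatibility argument. Set $\lambda_2:=\lambda'(A_1,\dots,A_n\cup A'_{n+1},A'_{n+2},\dots,A'_m)$ and suppose for contradiction that $\lambda_2$ has a component $C$ whose attachment points include some $A_i$ with $i\leq n-1$ and some $A'_j$ with $j\geq n+2$ but none on $A_n\cup A'_{n+1}$. Taking the geodesic subpath $P\subseteq C$ from $A_i$ to $A'_j$ of length $L=d_\calZ(A_i,A'_j)$ (equal to this distance by minimality), the $\epsilon$-separation hypothesis combined with $\delta$-slimness (effective since $\epsilon>\epsilon_0$) yields a point $p\in P$ within $O(\epsilon)$ of some $a_n\in A_n$, splitting $P$ into subsegments of lengths $L_1\geq d(A_i,A_n)-O(\epsilon)$ and $L_2\geq d(A_n,A'_j)-O(\epsilon)$ with $L_1+L_2=L$.

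Next I pass to the quotient tree $T_2$ of $\lambda_2$. Removing the edge representing $P$ between $[A_i]$ and $[A'_j]$ splits $T_2$ into subtrees $T_i\ni [A_i]$ and $T_j\ni[A'_j]$; the merged vertex $[A_n\cup A'_{n+1}]$ lies in exactly one of them. I add a replacement edge routed through it---either $[A_i]\sim[A_n\cup A'_{n+1}]$ of length at most $d(A_i,A_n)$, or $[A_n\cup A'_{n+1}]\sim[A'_j]$ of length at most $d(A'_{n+1},A'_j)\leq \epsilon+d(A_n,A'_j)$. Using the separation estimate $L\geq d(A_i,A_n)+d(A_n,A'_j)-4\epsilon$, the total length change is at most $-L_2+O(\epsilon)$ or $-L_1+O(\epsilon)$ respectively; for $\epsilon_0$ large enough in $\delta$, at least one replacement strictly decreases the total length, contradicting minimality of $\lambda_2$. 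The residual degenerate configuration---in which all involved sets cluster within an $O(\epsilon)$-ball---is handled separately by invoking Lemma \ref{lem:basic tree lemma} to tree-approximate the cluster and route through the merged vertex directly.

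For the first claim, let $C_0$ be a component of $\lambda_1:=\lambda'(A_1,\dots,A_n)$ with an endpoint on $A_n$, represented in the quotient by an edge $[A_k]\sim[A_n]$ realized by a geodesic of length $d(A_k,A_n)$. The separation of $A_k$ from $A'_{n+1}$ through $A_n$ gives $d(A_k,A_n)\leq d(A_k,A'_{n+1})+O(\epsilon)$, so the same geodesic realizes (up to controlled error) the minimum-length edge $[A_k]\sim[A_n\cup A'_{n+1}]$ in $T_2$. Combined with the no-cross-component conclusion, the sub-forest of $\lambda_2$ spanning $[A_1],\dots,[A_{n-1}]$ and $[A_n\cup A'_{n+1}]$ coincides as a 1-complex in $\calZ$ with $\lambda_1$, delivering the desired identification of components.

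The main obstacle is the degenerate regime in the exchange argument, where $L=O(\epsilon)$ and all involved sets cluster within an $O(\epsilon)$-ball, so a direct pairwise exchange may fail to strictly improve the total length. Resolving this requires a global argument leveraging Lemma \ref{lem:basic tree lemma}'s tree quasi-isometry at the scale of the cluster (together with freedom in the choice of minimum network), which forces any minimum realization to route through the merged vertex rather than through a direct $[A_i]\sim[A'_j]$ edge.
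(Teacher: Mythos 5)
Your overall strategy---use the separation hypothesis together with minimality of the network to rule out cross-connections via an exchange, then identify the sub-forest on the $A_n$-side---is the same route the paper takes; the paper's own proof is only a two-sentence sketch of exactly this idea ("separation prevents length reduction, then induct"), with details left to the reader. So in spirit you are reproducing the intended argument.

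That said, there is a gap between what your argument delivers and what the lemma asserts. The conclusion is that a component of $\lambda'(A_1,\dots,A_n)$ \emph{is} a component of the larger forest---literal identity as $1$-complexes in $\calZ$---whereas your final step only shows that the relevant geodesic "realizes (up to controlled error) the minimum-length edge". An approximate length comparison cannot give exact identity: the $\epsilon$-separation hypothesis only yields $d(A_i,A'_{n+1})\geq d(A_i,A_n)-O(\epsilon)$, so a priori the minimal network for the merged collection could attach at a point of $A'_{n+1}$ rather than of $A_n$, producing a genuinely different component. To close this one must argue that the minimization on the $A_n$-side decouples \emph{exactly} (so that the restricted problem is literally the problem defining $\lambda'(A_1,\dots,A_n)$) and then invoke a coherent, inductive choice of the network function $\lambda'$---this is precisely what the paper's sketch means by "inducting on the number of subsets and their cardinalities, we can arrange that the components are identical." Two smaller issues: components of $\lambda'$ are Steiner trees that may attach to several of the $A_i$ at once, not single edges $[A_k]\sim[A_n]$, so the first-claim argument needs to handle branching; and you correctly flag the degenerate small-scale regime (where the strict-improvement exchange fails) as the main obstacle but leave it unresolved. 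None of this is a wrong approach---it is the approach the paper gestures at---but as written the proof does not fully close.
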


\begin{proof}
    The proof is a straight-forward application of basic $\delta$-hyperbolic geometry.  The main idea is that, because $A_k$ separates the $A_1, \dots, A_{n-1}$ from $A'_{n+1}, \dots, A'_m$, adding the points in $A'_{n+1}$ to $A_n$ will not create the ability to reduce the length of any component of $\lambda'(A_1, \dots, A_n)$ by connecting it to some point in $A'_{n+1}, \dots, A'_m$.  Thus, by inducting both on the number of subsets $n,m$ and their cardinalities (which are finite by assumption), we can arrange that components of the minimal networks connecting the various $A_i$ to $A_n$ and $A_n\cup A'_{n+1}$ in $\lambda'(A_1, \dots, A_n)$ and $\lambda'(A_1, \dots, A_n\cup A'_{n+1}, \dots, A'_m)$, respectively, are identical.  We leave the details to the reader.
\end{proof}

\subsection{$\epsilon$-setups}\label{subsec:ep-setup}

The next definition is used throughout the rest of this section and Section \ref{sec:stabler cubulations}.

\begin{definition}\label{defn:ep setup}
    For $\epsilon>\epsilon_0$, an $\epsilon$-\textbf{setup} is a pair $(F, \calY)$ where $\calY \subset \calZ$ is a finite (but possibly arbitrarily large) set of points of $\calZ$ with the property that $d_{\calZ}(\lambda(F), y) < \epsilon/2$ for all $y \in \calY$.  We call such an arrangement $(F;\calY)$ an $\epsilon$-\emph{setup} in $\calZ$.
\end{definition}

\begin{remark}[Relation to HHSs]
    In the hierarchical setting, $\calZ$ will be one of the hyperbolic spaces $\calC(U)$ for $U \in \calU$, and the points $\calY$ will be the points in $\rho^V_U$ for $V \nest U \in \calU$ with a large projection.  The proximity condition of the points in $\calY$ to $\lambda(F)$ is encoding the Bounded Geodesic Image axiom.
\end{remark}

Finally, observe that as a consequence of Lemma \ref{lem:basic tree lemma}, if $C, C' \subset \calY$ are finite subsets, then $\lambda'(C,C') \subset \calN_{\epsilon'}(\lambda(F))$.

\subsection{Cluster and shadows} \label{subsec:cluster basics}

The purpose of this subsection and the next is to describe our stable tree construction.  The first step of this process is defining the cluster graph.

\begin{definition}\label{defn:clusters, separation}
Let $\ep>0$ and $(F; \calY)$ be an $\epsilon$-setup in $\calZ$ as in Definition \ref{defn:ep setup}.  Given $\ep', E \gg \epsilon$, let $\calC_E(F \cup \calY)$ be the graph whose edges connect points in $\calY \cup F$ which are at most $E$ apart.  We call the connected components of $\calC_E(F \cup \calY)$ $E$-\emph{clusters}.

As above, given three $E$-clusters $C_1,C_2,C_3$, we say that $C_2$ $\ep'$-\emph{separates} $C_1$ from $C_3$ if there exists a minimal length $\calZ$-geodesic segment $\sigma$ with endpoints on $C_1,C_3$ which passes through $N_{2\epsilon'}(C_2)$ in $\calZ$.
\end{definition}

\begin{definition}[Cluster separation graph]\label{defn:cluster sep graph}
Given constants $\ep, \ep', E>0$, the $(\ep, \ep',E)$-\emph{cluster separation graph} for the $\epsilon$-setup $(F;\calY)$ is the graph $\calG_E(F \cup \calY)$ given by the following data:
\begin{itemize}
\item The vertices of $\calG_E(F \cup \calY)$ are $E$-clusters.
\item Two $E$-clusters $C_1,C_2$ are connected by an edge in $\calG_E(F \cup \calY)$ whenever $C_1$ and $C_2$ are not $\ep'$-separated by another $E$-cluster.
\end{itemize}
\end{definition}

In \cite[Subsection 3.1]{DMS_bary}, we analyzed the structure of this graph by using the minimal network $\lambda(F)$ as a reference object.

\begin{definition}[Shadows]\label{defn:shadows}
    Given any subset $A \subset \calZ$, the \emph{shadow} $s(A)$ of $A$ on the tree $\ltree(F)$ is the convex hull (in $\ltree(F)$) of all points in $\calN_{\ep}(A) \cap \lambda(F)$. 
\end{definition}

Roughly speaking, the shadow $s(C)$ of an $E$-cluster encodes the location information of $C$ onto the tree $\lambda(C)$, allowing use hyperbolic geometry arguments to understand how the clusters are arranged.

Toward that end, the tree $\ltree(F)$ has valence bounded in terms of $\#F$, so if its diameter is large enough, most $E$-clusters $C$ will determine a bivalent vertex of $\calG_E(F \cup \calY)$.  Some such clusters will contain points of $F$.

\begin{definition}[Bivalent cluster]\label{defn:bivalent}
A cluster $C$ is \emph{bivalent} if it determines a bivalent vertex of $\calG_E(F \cup \calY)$ and does not contain a point of $F$.  We let $\calE^0$ denote the set of bivalent clusters.
\end{definition}

The following lemma gives basic properties of the cluster separation graph and shadows:

\begin{lemma}\label{lem:cluster lemma}
If $2\ep' > \ep + \ep'$ and $E \geq 8\epsilon'$, then the following hold:
\begin{enumerate}
    \item $\calG_E(F \cup \calY)$ is connected.
    \item For distinct $E$-clusters $C,C'$:
    \begin{enumerate}
        \item $s(C) \cap s(C')$ contains no leaf of $s(C)$ or $s(C')$,
        \item The diameter of $s(C) \cap s(C')$ is bounded in terms of $\#F,E$,
        \item If at least one of $s(C)$ or $s(C')$ is an interval along an edge $\lambda(F)$, then $s(C) \cap s(C') = \emptyset$.
    \end{enumerate}
     \item The number of non-bivalent clusters $\#(\calG^0 - \calE^0)$ is bounded in terms of $\#F$.
    \item Every bivalent cluster $C$ has shadow $s(C)$ lying along an edge of $\lambda(F)$. 
    \item If $C_1, \dots, C_n$ are bivalent clusters whose shadows lie along an edge of $\lambda(F)$ in that order, then their shadows $s(C_i)$ on $\lambda(F)$ are disjoint, lie along the edge in the given order $s(C_1), \dots, s(C_n)$, and $d_{\calZ}(C_i, C_j) \geq (j-i)M_1$, for $M_1 = M_1(k, \delta)$.
    \item For any $D>0$, there exists $n = n(D, k, \delta)>0$ so that if $C_1, C_2 \in \calG^0$ with $d_{\calG}(C_1,C_2)\geq n$, then $d_{\calZ}(C_1, C_2) \geq D$.
\end{enumerate}
\end{lemma}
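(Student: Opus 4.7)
The plan is to prove the six items essentially in sequence, exploiting the fact that in a $\delta$--hyperbolic space the shadows $s(C)$ are controlled by the geometry of $\lambda(F)$, while the $E$--clustering controls the geometry of each $C$ transverse to $\lambda(F)$. Throughout I will use that $\calY\cup F \subset \calN_{\ep/2}(\lambda(F))$ by the $\ep$--setup hypothesis, that $\lambda(F)$ has at most $2\#F$ vertices of valence $\neq 2$, and that any nearest--point projection $\calZ\to\lambda(F)$ is coarsely Lipschitz with constants depending only on $\delta$.

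For \textbf{(1)}, given two clusters $C,C'$ choose a minimal--length $\calZ$--geodesic $\sigma$ from $C$ to $C'$ and consider the clusters whose shadows meet a $2\ep'$--neighborhood of $\sigma$; I will induct on the number of such intermediate clusters, showing that if no cluster $\ep'$--separates $C$ from $C'$ then $C$ and $C'$ are already joined by an edge, and otherwise the separating cluster is strictly closer along $\sigma$. For \textbf{(2)}, the point is that a leaf $v$ of $s(C)$ is witnessed by a point $y\in C$ with $d_\calZ(y,v)<\ep$; if $v\in s(C')$ then some $y'\in C'$ is also within $\ep$ of $v$, giving $d_\calZ(y,y')<2\ep<E$ and hence $C=C'$. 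This proves (a); for (b) I will bound the diameter of the intersection by a standard thin--triangles argument, observing that points in $s(C)\cap s(C')$ correspond to geodesics in $\calZ$ from $C$ to $C'$ and that the length of the part of $\lambda(F)$ within $\ep'$ of such geodesics is $O(E + \epsilon')$. Part (c) is then immediate: an interval shadow has only its endpoints as non--interior points of $\lambda(F)$, so the ``no shared leaves'' conclusion of (a) forces empty intersection.

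For \textbf{(3)} and \textbf{(4)}, I will show that a cluster fails to be bivalent only if its shadow contains a branch vertex of $\lambda(F)$ or a point of $F$. Indeed, suppose $C$ is a cluster whose shadow lies in the interior of a single edge $e$ of $\lambda(F)$ and contains no point of $F$. Removing $s(C)$ from $\lambda(F)$ disconnects $\lambda(F)$ into two components; other clusters, having shadows disjoint from the interior of $s(C)$ by (2), are partitioned by which side of $s(C)$ they fall on, and the closest cluster on each side meets $C$ by an edge of $\calG_E$ while any further cluster is $\ep'$--separated from $C$ by that closest one. Hence $C$ has valence exactly $2$. This proves (4), and since $\lambda(F)$ has at most $O(\#F)$ branch vertices and exactly $\#F$ terminal vertices, (3) follows because each non--bivalent cluster can be charged to one of these.

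For \textbf{(5)}, the disjointness and ordering of the shadows follow from (2)(c) together with (4), since interval shadows disjoint in the interior of a single edge of $\lambda(F)$ inherit a linear order. For the quantitative distance lower bound, note that the shadows of $C_i$ and $C_{i+1}$ must be separated along $e$ by a definite amount $M_1=M_1(\#F,\delta)$: otherwise some point of $C_i$ and some point of $C_{i+1}$ would lie within $E$ of each other, merging them into a single cluster. Projecting onto $\lambda(F)$ is coarsely distance--decreasing, so $d_\calZ(C_i,C_j)$ is bounded below by $(j-i)M_1$ up to an additive constant that I will absorb into $M_1$. Finally \textbf{(6)} is a combination of (3) and (5): any path of length $n$ in $\calG$ from $C_1$ to $C_2$ contains, up to a constant $O(\#F)$ from (3), at least $n-O(\#F)$ bivalent clusters strung along a common edge of $\lambda(F)$, and (5) converts this combinatorial length into a $\calZ$--distance of at least $D$ provided $n$ is chosen sufficiently large. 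The main technical obstacle, and the step I expect to require the most care, is part (2)(b): pinning down the dependencies between $\ep,\ep',E,\delta,\#F$ so that the thin--triangle argument on the shadow intersection closes, since subsequent items rely on the threshold $E \geq 8\ep'$ and on the constants produced here.
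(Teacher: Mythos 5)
The paper's own proof of items (1)--(5) is a direct citation to \cite{DMS_bary} (Lemmas 3.4, 3.6, 3.8, 3.9, 3.11 there), with only item (6) argued in detail, so your from-scratch attempt is more ambitious than what the paper does; unfortunately it contains genuine gaps. The clearest is item (4): you prove that a cluster whose shadow lies in the interior of a single edge and misses $F$ is bivalent, but (4) asserts the \emph{converse} --- that every bivalent cluster has shadow along an edge. To get (4) you must show that a cluster whose shadow is \emph{not} along an edge (e.g.\ contains a branch vertex in its interior) fails to be bivalent, for instance by producing at least three pairwise non-separated neighbours, one per direction at the branch vertex; nothing in your sketch does this. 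Relatedly, your count for (3) charges each non-bivalent cluster to a branch vertex or a point of $F$, but several distinct clusters can have shadows containing the same branch vertex (they need only be pairwise more than $E$ apart while all meeting a ball of radius roughly $E/2+O(\ep)$ around it), so the charging map is not obviously boundedly-to-one.

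Second, your proof of (2)(a) rests on the claim that $v\in s(C')$ forces some $y'\in C'$ with $d_{\calZ}(y',v)<\ep$. This is false: $s(C')$ is the convex hull in $\lambda(F)$ of $\calN_\ep(C')\cap\lambda(F)$, and only the generating points (in particular the leaves of $s(C')$) are $\ep$-close to $C'$; an interior point of the hull can be at distance about $E/2$ from $C'$ (take a two-point cluster with $d(y'_1,y'_2)=E$ and look at the midpoint of its shadow). The conclusion can be rescued --- projecting the $E$-chain joining the two witnesses in $C'$ to $\lambda(F)$ shows every point of $s(C')$ is within roughly $E/2+O(\ep)$ of $C'$, and $E\ge 8\ep'>8\ep$ still yields $d_{\calZ}(C,C')<E$ --- but that is a different argument with a different constant. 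For (2)(b) you offer only that points of $s(C)\cap s(C')$ ``correspond to geodesics from $C$ to $C'$'' whose neighbourhood meets $\lambda(F)$ in a set of length $O(E+\ep')$; I do not see why either assertion holds, and you flag this step yourself as the one needing the most care. Since (2)(c), (3), (4) and ultimately (6) all lean on (2)(a)--(b) and (4), these gaps propagate. Your item (5) and the skeleton of (6) (long geodesic in $\calG$ gives a long run of consecutive bivalent clusters on one edge, which (5) converts into a large $\calZ$-distance) do match the paper's argument for (6), though you still need to justify that the run lies on a single edge (the paper cites a separate lemma of \cite{DMS_bary} for this) and that a minimal geodesic from $C_1$ to $C_2$ must pass near each cluster of the run.
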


\begin{proof}
    Item (1) is \cite[Lemma 3.4]{DMS_bary}, parts (a)--(c) of item (2) is \cite[Lemma 3.6]{DMS_bary}, item (3) is \cite[Lemma 3.11]{DMS_bary}, and item (4) is \cite[Lemma 3.8]{DMS_bary}, while item (5) combines \cite[Lemma 3.9 and Claim 1 of Lemma 3.6]{DMS_bary}.

    Finally, item (6) is not explicitly contained in \cite{DMS_bary}, but is an easy consequence of our work there, as follows.  Suppose that $C_1 = D_0, \dots, D_n = C_2$ is a geodesic in $\calG$ between $C_1,C_2$.  By item (3) of this lemma, any such geodesic contains boundedly many (in $k,\delta$) non-bivalent clusters.  Hence if $n$ is sufficiently large (in $k,\delta$), then we can find some subcollection $D_j, \dots, D_k$ of consecutive bivalent $D_i$.  Since the $D_j, \dots, D_k$ form a geodesic in $\calG$, they are contained in a single component of the subgraph $\calE$ of $\calG$ induced by the vertices of $\calE^0$.

    By \cite[Lemma 3.12]{DMS_bary}, there exists some edge $e$ of $\lambda(F)$ so that the shadows $s(D_j), \dots, s(D_k)$ lie along $e$.  Now by item (5) of this lemma, we have that $d_{\calZ}(D_j, D_k) > M_1 (k-j)$, for $M_1 = M_1(k,\delta)$.  On the other hand, by definition of $\calG$, the $D_j, \dots, D_k$ $\ep$-separate $C_1$ from $C_2$, and hence any minimal length geodesic from $C_1$  to $C_2$ must pass within $\ep/2$ of each of the $D_j, \dots, D_k$.

    Thus by forcing $n$ to be sufficiently large in terms of $k,\delta$ and our given $\calZ$-distance bound $D$, we can force $k-j$ to be large enough so that $d_{\calZ}(C_1, C_2)>D$.  This completes the proof of (6) and the lemma.
\end{proof}

For the rest of this section, we will freely cite other specific references from \cite{DMS_bary} directly, not necessarily stating them independently.

\subsection{Fixing notation}\label{subsec:fixing constants, stable tree}

For the rest of this section, we fix the following information:

\begin{itemize}
    \item A natural number $k$, which globally controls the size of our finite subsets.
    \item A positive number $\ep>\ep_0(k, \delta)>0$ as in  Lemma \ref{lem:basic tree lemma};
    \item An $\ep$-setup $(F;\calY)$ in $\calZ$ with $|F|\leq k$.
    \item A positive number $\ep' = \ep'(k, \delta, \ep)>0$ so that $2\ep' > \ep + \ep'$ as in Lemma \ref{lem:basic tree lemma}.
    
    \item A cluster separation constant $E = E(k, \delta, \ep)>0$ sufficiently large so that $E > 8\ep'$ as in Lemma \ref{lem:cluster lemma}.
\end{itemize}

These constants are the parameters which control the cluster separation graph $\calG_E(F)$ associated to any $\ep$-setup $(F; \calY)$, which is the key input for our stable tree construction, which we give in the next subsection.

\subsection{Stable trees defined}\label{subsec:stable trees defined}

We are now ready to define the stable tree for any $\epsilon$-setup $(F; \calY)$.  The idea is that we want to the minimal network function $\lambda'$ to connect adjacent clusters in $\calG_E(F \cup \calY)$ and the other network function $\lambda$ to internally connect a cluster via its various connection points to its neighbors.

More precisely, we define two forests $T_c(F, \calY)$ and $T_e(F, \calY)$ as follows.  Let $\calV$ denote the set of closures of components $\pi_0(\calG - \calE^0)$.  For each $V \in \calV$, let $V^0$ denote the clusters which form its vertex set.  We note that some elements of $\calV$ are single edges $[C,C']$ connecting clusters $C,C' \in \calE^0$, while others are subgraphs of $\calG_E(F \cup \calY)$ containing vertices of $\calG^0 - \calE^0$ (whose cardinality is bounded by Lemma \ref{lem:cluster lemma}).  Note that these forests implicitly depend on the constants $\ep,\ep', E$ as chosen in Subsection \ref{subsec:fixing constants, stable tree}.

For each $V \in \calV$, let $\lambda(V^0)$ denote the minimal network connecting the clusters in $V^0$.  We define our \emph{edge forest} as
$$T_e = T_e(F, \calY) = \bigsqcup_{V \in \calV} \lambda'(V^0).$$

To define our cluster forest, we want to internally connect each cluster $C \in \calG^0$ as follows.  Let $r(c) = C \cap (T_e \cup F)$.  Define $\mu(C)$ to be the tree $\lambda(r(C))$, and define the \emph{cluster forest} to be
$$T_c = T_c(F, \calY) = \bigsqcup_{C \in \calG^0} \mu(C).$$

We can now define our stable trees as the abstract union of these two forests:

\begin{definition}[Stable tree]\label{defn:stable tree}
    The $(\ep, \ep', E)$-\emph{stable tree} for an $\epsilon$-setup $(F; \calY)$ in $\calZ$,  is
    $$T(F,\calY) = T_e(F, \calY) \cup T_c(F, \calY).$$
\end{definition}

Observe that if we collapse all components of $T_c$ to points, then $T$ becomes a connected network $N$, which is just a union of trees connected at vertices, each of which corresponds to a cluster in $\calE^0$.  But each cluster in $\calE^0$ disconnects $\calG_E(F \cup \calY)$, and so $N$ is a tree.  This observation will be important going forward.

Also observe that the minimal network maps combine to give a global map 
$$\phi:T \to \calZ,$$ meaning that stable trees wear two hats, one being as abstract unions of the edge and cluster forests, and the other being as a concrete realization of their components in $\calZ$.  Notably, the images in $\calZ$ under $\phi$ of the components of $T$ can overlap, but only a bounded amount.  See \cite[Figures 10 and 11]{DMS_bary} for a discussion.

The following lemma gives the basic properties of our stable trees:

\begin{lemma}\label{lem:stable tree basics}
    For a choice of $k>0$ and constants $\ep=\ep(k,\delta)>0, \ep'(\ep, k, \delta)>0$, and $E=E(\ep', k, \delta)>0$ as chosen in Subsection \ref{subsec:fixing constants, stable tree} and any $\ep$-setup $(F; \calY)$ in $\calZ$, the following hold for the $(\ep, \ep', E$)-stable tree $T = T_e \cup T_c$:
    \begin{enumerate}
        \item The natural map $\phi:T \to \calZ$ is a $(K_1,K_1)$-quasi-isometric embedding with $d^{Haus}_{\calZ}(\ltree(F), \phi(T))<K_1$ for $K_1=K_1(k, \delta)$.
        \item The total branching $b=b(T)$ is bounded in terms of $k, \delta$, and the leaves of $T$ are contained in $F \cup \calY$.
        \item There exists $D_i = D_i(k, \delta)>0$ for $i=1,2$ so that for each cluster $C \in \calG^0$, we have $\mu(C) \subset \calN_{D_1}(C)$, so $T_c \subset \calN_{D_2}(F \cup \calY)$.
        \item There exists $D_3 = D_3(k, \delta)>0$ so that for all $p \in T_e$, we have $d_{\calZ}(p, F \cup \calY) \geq \frac{1}{b}d_T(p, \partial T_e) - D_3$.
        
    \end{enumerate}
\end{lemma}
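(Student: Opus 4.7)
The plan is to derive items (1)--(3) from the cluster-and-shadow framework of Subsections \ref{subsec:cluster basics}--\ref{subsec:stable trees defined}, essentially citing and lightly repackaging the parallel development in \cite[Section 3]{DMS_bary}, and then to obtain (4) as a consequence of (1) and (3) via a quasi-isometric pullback.

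For (1), I would first observe that on each individual component $\lambda'(V^0)$ of $T_e$ and $\mu(C) = \lambda(r(C))$ of $T_c$, the map $\phi$ is a $(1,O(\delta))$-quasi-isometric embedding into $\calZ$ by Lemma \ref{lem:basic tree lemma}, since these are minimal networks on bounded-size input sets in a $\delta$-hyperbolic space. To upgrade this to a global QI embedding, note that after collapsing each $\mu(C)$ to a point, $T$ becomes a tree: it is connected since $\calG_E(F \cup \calY)$ is connected by Lemma \ref{lem:cluster lemma}(1), and acyclic since each $\calE^0$-cluster separates $\calG$ by construction. The $\ep'$-separation condition of Definition \ref{defn:cluster sep graph} keeps adjacent components $\calZ$-apart, so that global $T$-distance agrees with global $\calZ$-distance up to the branching. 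The Hausdorff bound follows from $F \cup \calY \subset \calN_{\ep/2}(\ltree(F))$ and Lemma \ref{lem:basic tree lemma}(2). For (2), the total branching is bounded by summing the contributions of: boundedly-many non-bivalent clusters (Lemma \ref{lem:cluster lemma}(3)), each contributing branching at most $|r(C)|$, which is controlled by the $\calG$-valence of $C$ plus $|F \cap C|$; and of edge-forest components $\lambda'(V^0)$, each with $|V^0|$ bounded similarly. Bivalent $C$ have $|r(C)| \leq 2$ and contribute nothing. Leaves of $T$ must be unglued leaves of some $\lambda'(V^0)$ or $\mu(C)$: leaves of $\lambda'(V^0)$ lie on clusters and are absorbed into $T_c$, while leaves of $\mu(C)$ lie in $C \cap (T_e \cup F)$, with the $T_e$-portion absorbed into $T_e$; only $F$-points survive. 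For (3), $r(C) \subset C \subset \calN_{\ep'}(s(C))$ has cardinality bounded in $k$, so $\calZ$-geodesics between points of $r(C)$ stay near $s(C)$ by Lemma \ref{lem:basic tree lemma}(2), hence within $D_1$ of $C$; summing over $C$ gives the $T_c$ inclusion since $\bigcup_C C = F \cup \calY$.

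For item (4), the idea is to pull back $\calZ$-distance through the QI of (1). Given $p \in T_e$ and any $y \in F \cup \calY$, the point $y$ lies in some cluster $C$ within distance $E$ of some element of $r(C) \subset \mu(C) \subset T_c$, giving $q_y \in T_c$ with $d_{\calZ}(y, \phi(q_y)) \leq E + D_1$. The QI from (1) then yields
\[
d_T(p, q_y) \leq K_1 \, d_{\calZ}(\phi(p), y) + O(1).
\]
Since any path in $T$ from $p \in T_e$ to $q_y \in T_c$ must exit $T_e$ through $\partial T_e$, we have $d_T(p, q_y) \geq d_T(p, \partial T_e)$; rearranging and minimizing over $y \in F \cup \calY$ gives the bound. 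The sharper factor $1/b$ in the statement, in place of $1/K_1$, reflects that the multiplicative QI constant in (1) can in fact be taken linear in the branching $b$ (up to additive constants depending on $\delta$), because $\phi$ is nearly isometric on each individual tree component and all of the distortion accumulates at branch points. The primary obstacle I anticipate is precisely this finer tracking of the QI constant in (1) so as to extract a $b$-linear multiplicative dependence --- a standard but finicky exercise in $\delta$-hyperbolic geometry, simplified here by the explicit tree-after-collapse structure provided by the cluster separation setup.
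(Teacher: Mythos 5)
The paper does not reprove this lemma at all: its ``proof'' is a two-line citation of \cite[Proposition 3.14]{DMS_bary} for item (1) and \cite[Lemma 3.13]{DMS_bary} for items (2)--(4), so you are reconstructing the cited arguments from scratch. Your treatment of items (1)--(3) follows the same route as those results and is essentially sound, though compressed: the real content of (1) is the upgrade from component-wise $(1,O(\delta))$-embeddings to a global quasi-isometric embedding, which you correctly attribute to the separation condition but do not carry out; and in (3) the step ``geodesics between points of $r(C)$ stay near $s(C)$, hence within $D_1$ of $C$'' quietly uses that the shadow $s(C)$ is itself close to the cluster $C$, which needs the $E$-chain-connectivity of clusters and is the substantive point there.

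Item (4) has a genuine gap. Your argument hinges on producing, for each $y\in F\cup\calY$, a point $q_y\in T_c$ with $d_{\calZ}(y,\phi(q_y))\leq E+D_1$. Nothing you have established supplies this. Item (3) gives the containments in the wrong direction: $\mu(C)\subset\calN_{D_1}(C)$ and $T_c\subset\calN_{D_2}(F\cup\calY)$ say the cluster forest lies near the cluster points, not that every cluster point lies near the cluster forest. A cluster $C$ can be a long $E$-chain whose set $r(C)=C\cap(T_e\cup F)$ is concentrated at one end, in which case $\mu(C)=\lambda(r(C))$ misses most of $C$. Item (1) only places $y$ within $K_1+\ep/2$ of $\phi(T)=\phi(T_e)\cup\phi(T_c)$, and the nearest point could a priori lie deep inside an edge component of $T_e$ --- which is exactly the configuration that item (4) is designed to exclude. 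So the missing step is essentially equivalent to the statement being proved, and the argument is circular as written. The actual proof in \cite{DMS_bary} obtains the lower bound from the \emph{minimality} of the networks $\lambda'(V^0)$ (a point of $T_e$ that is $\calZ$-close to $F\cup\calY$ but $T$-far from $\partial T_e$ would permit a shorter competitor network), and the factor $1/b$ arises there from counting the at most $b$ reroutings needed; your closing assertion that the multiplicative constant $K_1$ of item (1) ``can be taken linear in $b$'' is not proven and would in any case yield $1/(cb)$ rather than $1/b$. (That last discrepancy is cosmetic for this paper, since every constant in sight depends only on $k,\delta$ and item (4) is never invoked with its precise constant, but the circularity above is not.)
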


\begin{proof}
    Item (1) is \cite[Proposition 3.14]{DMS_bary}, while items (2)--(4) are from \cite[Lemma 3.13]{DMS_bary}.
\end{proof}

\begin{remark}
    In the rest of this section, we will usually ignore the constants $\ep,\ep', E$ when talking about the stable tree for a given $\ep$-setup $(F;\calY)$.  The dependence of our arguments on these constants is only particularly relevant in Section \ref{sec:stabler trees}.
\end{remark}

\subsection{Admissible setups and stable decompositions}\label{subsec:stable decomp}

Having defined stable trees above (Definition \ref{defn:stable tree}), we are almost ready to describe stable decompositions of stable trees.  The motivation for the definitions in this section comes from our eventual desire to plug things into our cubical model machinery.  The main theorem of the next section, Theorem \ref{thm:stabler tree}, proves that two admissible (as in the next definition) $\epsilon$-setups $(F; \calY)$ and $(F';\calY')$ admit stable trees $T, T$ with compatible stable decompositions.  The main upshot is contained in Proposition \ref{cor:collapsed isometry}, which says that once we collapse the ``unstable'' pieces of these trees, then the (collapsed version of) $T$ admits a convex embedding into the (collapsed version of) $T$, which also preserves the various information encoded in the stable decomposition.

First, we need to set some notation for the rest of the section:

\begin{definition}[Admissible setups]\label{defn:admissible setup}
Given two $\epsilon$-setups $(F;\calY)$ and $(F';\calY')$, we say that $(F;\calY)$ is $\epsilon$-\textbf{admissible} with respect to $(F';\calY')$ if $\calY \cap  \calY' \subset \calN_{\epsilon/2}(\ltree(F)) \cap \calN_{\epsilon/2}(\ltree(F'))$.  Moreover, given $N>0$, we say that it is $(N,\epsilon)$-\textbf{admissible} if $|\calY-\calY'| \leq N$.

\end{definition}

\begin{remark}
    We will deal with two cases over Sections \ref{sec:stable trees} and \ref{sec:stabler trees}.  The first, which is relevant for our (refined) Stable Tree Theorem \ref{thm:stable tree}, is where $F=F'$ and both of $(F; \calY)$ and $(F; \calY')$ are $(N,\epsilon)$-admissible to each other, so that in particular, $|\calY \triangle \calY'|<N$.  In the second case, which is relevant for the Stabler Tree Theorem \ref{thm:stabler tree} in Section \ref{sec:stabler trees}, deals with the case where $F \subset F'$ and $\calY \subset \calY'$, i.e., $(F;\calY)$ is $(0,\epsilon)$-admissible with respect to $(F';\calY')$.
\end{remark}

As the name indicates, a stable decomposition involves decomposing a pair of stable trees for a pair of $(N,\epsilon)$-admissible $\epsilon$-setups into subtrees, with some maps which identify and organize the various pieces.  Each stable decomposition of the stable tree $T$ decomposes $T = T_s \cup T_u$ into two collections of subtrees, the \emph{stable components}, which together form $T_s$ and are all intervals, and the \emph{unstable components}, which are the complementary subtrees.  We call this general kind of decomposition an \emph{edge decomposition}.

The purpose of the next definition is to give shorthand for requiring that the various maps involved respect how the various stable components on each stable tree are oriented towards the endpoints of the ambient trees.

To set some notation, suppose $T$ is a tree with a distinguished finite subset $F \subset T$.  Let $E \subset T$ be an interval in an edge of $T$ with $E \cap F = \emptyset$.  Then every point $f \in F$ has a closest point in $E$, denoted $E(f)$, which is necessarily an endpoint of $E$.  Finally, given a stable tree $T = T_e \cup T_c$ for an $\epsilon$-setup $(\calF;\calY)$, if $y \in \calY \cup \calF$, we let $C_y$ denote the cluster containing $y$, and $\mu(C_y)$ the corresponding component of $T_c$.

The following definition contains the stability properties we want:

\begin{definition}[Stable decomposition] \label{defn:stable decomp}
Let $\calZ$ be $\delta$-hyperbolic and geodesic, and $N,\epsilon>0$.  Let $(F;\calY)$ and $(F';\calY')$ with $F \subseteq F'$ be an $(N,\epsilon)$-admissible pair of $\epsilon$-setups.  Let $T = T_e \cup T_c$ and $T' = T'_e \cup T'_c$ denote their stable trees with their associated maps $\phi:T \to \calZ$ and $\phi':T \to \calZ'$ as provided by Lemma \ref{lem:stable tree basics}.

Given $L_1,L_2>0$, $\calY_0 \subset \calY \cap \calY'$, and two edge decompositions $T_s \subset T_e$ and $T'_s \subset \hull_{T'}(F) \cap T'_e$, we say that $T_s$ is $\calY_0$-\textbf{stably} $(L_1,L_2)$-\textbf{compatible} with $T'_s$ if the following hold:

\begin{enumerate}

\item There is a bijection $\alpha:\pi_0(T_s) \to \pi_0(T'_s)$ between the sets of stable components. \label{item:stable bijection}
\item For each \textbf{stable pair} $(E, E')$ identified by $\alpha$, there exists an isometry $i_{E,E'}:E \to E'$. \label{item:stable pairs}

\item For all but at most $L_1$ pairs of stable components $(E,E')$ identified by $\alpha$, we have $\phi(E) = \phi'(E') \subset \calZ$ and $\phi(x) = \phi'(i_{E,E'}(x))$ for all $x \in E$. \label{item:identical pairs}
\item For the (at most) $L_1$-many remaining stable pairs $(E,\alpha(E))$, we have $$d_{\calZ}(\phi(x), \phi'(i_{E,\alpha(E)}(x)))< L_2$$ for all $x \in E$. \label{item:close pairs}

\item The complements $T_{e} - T_s$ and $(\hull_{T'}(F) \cap T'_{e}) - T'_s$ consist of at most $L_1$ \textbf{unstable components} of diameter at most $L_2$.\label{item:unstable components}
\item There exist \textbf{unstable forests} $T_{\diff} \subset T$ and $T'_{\diff} \subset \hull_{T'}(F)$ each the union of at most $L_1$ components of $T_e,T_c$ and $T'_e,T'_c$, respectively, so that the components of $T - T_{\diff}$ and $\hull_{T'}(F) - T'_{\diff}$ are identical.  Moreover, $T_e - T_s \subset T - T_{\diff}$ and $T'_e - T'_s \subset T' - T'_{\diff}$. \label{item:unstable forests}
\item There exists a bijection $\beta:\pi_0(T-T_s) \to \pi_0(\hull_{T'}(F) - T'_s)$ which satisfies: \label{item:adjacency}
\begin{enumerate}
\item (Identifying clusters) For any $y \in \calY_0 \cup F$, let $D_y, D'_y$ denote the components of $T-T_s$ and $T' - T'_s$ containing $\mu(C_y), \mu(C'_y)$, respectively.  Then $\beta(D_y) = D'_y$. \label{item:cluster identify}
\item (Adjacency-preserving) If stable components $E_1,E_2 \in \pi_0(T_s)$ are adjacent to a component $D \in \pi_0(T - T_s)$ at points $x \in E_1$ and $y \in E_2$, then $\alpha(E_1)$ and $\alpha(E_2)$ are adjacent to $\beta(D)$ at $i_{E_1, \alpha(E_1)}(x)$ and $i_{E_2,\alpha(E_2)}(y)$.\label{item:Adjacency-preserving}
\end{enumerate}
\end{enumerate}

\end{definition}

\begin{remark}\label{rem:stable tree compare}
    The original Stable Tree Theorem \cite[Theorem 3.2]{DMS_bary} provides a number of the above properties of a stable decomposition in the case of two $(N, \epsilon)$-admissible $\ep$-setups $(F; \calY)$ and $(F; \calY')$, where $|\calY \symdiff \calY'|<N$.  In particular, when $F'=F$, we have $\hull_{T'}(F) = T'$, simplifying the notation.  The original theorem provides a bijection $\alpha:\pi_0(T_s) \to \pi_0(T'_s)$ satisfying items \eqref{item:stable bijection}--\eqref{item:unstable components} of Definition \ref{defn:stable decomp}.  Item \eqref{item:unstable forests} will be an easy consequence of Claims 1 and 2 in the proof of \cite[Theorem 3.2]{DMS_bary} (see the beginning of the proof of Theorem \ref{thm:stable tree, one point} below).  Thus for the refined version of that theorem, namely Theorem \ref{thm:stable tree} below, our main task is to show that $\alpha$ can be used to define a bijection $\beta:\pi_0(T- T_s) \to \pi_0(T' - T'_s)$  which satisfies the extra properties of item \eqref{item:adjacency} of Definition \ref{defn:stable decomp}. 
\end{remark}

\begin{remark}
It is crucial that the stable components of the stable decomposition $T_s \subset T$ are all intervals.  Their two-sidedness plays an important role in certain parts of the argument.    
\end{remark}

\begin{remark}[Simplicialization]
    In order to plug into \cite{Dur_infcube}, we will need to arrange that the components of any stable decomposition can be taken to be simplicial trees, i.e. trees where all edge lengths are integers and branch points are at integer points.  We explain how to do this in Subsection \ref{subsec:simplicialization} in Section \ref{sec:stabler trees}.
\end{remark}

\begin{remark}[Motivating Definition \ref{defn:stable decomp}]
    Definition \ref{defn:stable decomp} is intricate but carefully crafted towards out cubulation ends in Section \ref{sec:stabler cubulations}.  See Subsection \ref{subsec:collapsing motivation} at the end of Section \ref{sec:stabler trees} for the main application.
\end{remark}

\subsection{The (refined) Stable Tree Theorem: the (refined) statement}

We are now ready to state a refined version of the original Stable Trees Theorem, namely \cite[Theorem 3.2]{DMS_bary}.  As in the rest of this section, we are working with our fixed base $\ep$-setup $(F;\calY)$ and its associated constants $\ep,\ep',E$ all controlled by $k, \delta$.  Now, however, we are adding some number $N>0$ of cluster points and the output stable trees from our construction.  In particular, we will consider another $\ep$-setup $(F;\calY')$ for $F$ where $|\calY' - \calY|<N$.

The theorem says that such an $(N,\epsilon)$-admissible pair of $\ep$-setups admits a stable decomposition in the sense of Definition \ref{defn:stable decomp}.

\begin{theorem}\label{thm:stable tree}
Let $\calZ$ be $\delta$-hyperbolic and geodesic, and $N>0$.  Suppose that $(F;\calY)$ and $(F;\calY')$ are an $(N,\epsilon)$-admissible pair of $\epsilon$-setups.  Let $T = T_e \cup T_c$ and $T' = T'_e \cup T'_c$ denote their $(\ep, \ep', E)$-stable trees.

There exist $L_1 = L_1(N,k,\delta)>0$, $L_2 = L_2(N,k,\delta)>0$, and two edge decompositions $T_s \subset T_e$ and $T'_s \subset T'_e$ such that $T_s$ is $\calY$-\emph{stably} $(L_1,L_2)$-\emph{compatible} with $T'_s$, with the maps $\alpha$ and $\beta$ as in Definition \ref{defn:stable decomp} being bijections. 

\end{theorem}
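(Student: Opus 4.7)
The plan is to invoke the original Stable Tree Theorem \cite[Theorem 3.2]{DMS_bary} for most of the structure and then focus the new work on item (6) and, especially, the construction of the bijection $\beta$ required by item (7) of Definition \ref{defn:stable decomp}. As explained in Remark \ref{rem:stable tree compare}, the original result already supplies an edge decomposition $T_s\subset T_e$, $T'_s\subset T'_e$ and a bijection $\alpha:\pi_0(T_s)\to\pi_0(T'_s)$ together with isometries $i_{E,E'}:E\to E'$ satisfying items (1)--(5), with constants $L_1,L_2$ depending only on $N,k,\delta$. For item (6), I would unpack Claims 1 and 2 from the proof of \cite[Theorem 3.2]{DMS_bary}, which single out a bounded collection of ``differing'' clusters (exactly those whose shadows on $\lambda(F)$ are perturbed by $\calY\symdiff\calY'$), and then define $T_{\diff}$ and $T'_{\diff}$ to be the union of the cluster forests $\mu(C)$ attached to these clusters together with the unstable edge forest components from item (5). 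The bound on components of $T_{\diff}, T'_{\diff}$ comes from Lemma \ref{lem:cluster lemma}(3) and admissibility, and the containment $T_e-T_s\subset T-T_{\diff}$ is immediate from the construction.

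The heart of the argument is the construction of $\beta:\pi_0(T-T_s)\to\pi_0(\hull_{T'}(F)\cap T'-T'_s)$. The key observation is that components of $T-T_s$ are in natural bijection with the connected subgraphs of the cluster separation graph $\calG$ (Definition \ref{defn:cluster sep graph}) obtained by deleting the clusters whose cluster forests $\mu(C)$ sit on stable edge components, and likewise on the primed side. Setting $\calY_0\subseteq \calY\cap\calY'$ (taking it maximal; in the $F=F'$ setting under consideration the ``$\calY$-stable'' terminology of the theorem is interpreted as $\calY_0=\calY\cap\calY'$ up to the bounded discrepancy $|\calY-\calY'|\leq N$), for each $y\in \calY_0\cup F$ the clusters $C_y\in\calG$ and $C'_y\in\calG'$ containing $y$ are canonically matched: outside the bounded set of differing clusters both have the same shadow data on $\lambda(F)=\lambda(F)$, and hence the same attachment pattern to the surrounding stable components. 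This dictates $\beta(D_y)=D'_y$ for the unstable components containing $\mu(C_y),\mu(C'_y)$, which is exactly (7a). For the remaining components of $T-T_s$ not containing any such $\mu(C_y)$, I would define $\beta$ by adjacency: each such component is sandwiched between stable components $E_1,E_2$, and $\beta$ sends it to the unique component of $T'-T'_s$ sandwiched between $\alpha(E_1)$ and $\alpha(E_2)$. Property (7b) is then built into the construction, and cardinality plus the bounded diameter of components forces $\beta$ to be a bijection.

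The main obstacle will be verifying consistency: one must check that if two stable components $E_1,E_2$ share an adjacent unstable component $D$ in $T$, then $\alpha(E_1)$ and $\alpha(E_2)$ share a single adjacent unstable component $\beta(D)$ in $T'$, rather than two distinct ones. This is a purely combinatorial statement about the cluster separation graphs and reduces, via the shadow control in Lemma \ref{lem:cluster lemma}(4)--(6), to the claim that the matching supplied in the proof of \cite[Theorem 3.2]{DMS_bary} between the stable clusters of $\calG$ and $\calG'$ extends to a bijection of the components obtained after deleting the matched stable clusters. This is exactly the local-to-global content of the cluster matching in that proof, so no essentially new geometric input beyond Section \ref{sec:controlling domains}-style accounting is required; the remaining work is bookkeeping to keep track of the (bounded) set of matched-but-not-identical pairs and to fold them into $L_1,L_2$.
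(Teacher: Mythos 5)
Your high-level plan matches the paper's starting point (items (1)--(5) from \cite[Theorem 3.2]{DMS_bary}, item (6) from Claims 1 and 2 of that proof, with the real work being the bijection $\beta$), and you correctly identify the crux: showing that if $E_1,E_2\in\pi_0(T_s)$ are adjacent to a common component of $T-T_s$, then $\alpha(E_1),\alpha(E_2)$ are adjacent to a common component of $T'-T'_s$ at the corresponding endpoints. But your proposal then defines $\beta$ on the components not containing any $\mu(C_y)$ by sending each to ``the unique component of $T'-T'_s$ sandwiched between $\alpha(E_1)$ and $\alpha(E_2)$'' — the existence and uniqueness of that component is precisely the statement at issue, not something you may assume. (Note also that a component of $T-T_s$ is a subtree adjacent to up to $b$ stable components, $b$ the branching bound, not just two, so one must show that \emph{all} of $\alpha(E_1),\dots,\alpha(E_n)$ meet a single common unstable component, and at the endpoints dictated by the isometries $i_{E_i,\alpha(E_i)}$.) Your closing assertion that this ``is exactly the local-to-global content of the cluster matching in \cite[Theorem 3.2]{DMS_bary}'' and requires ``no essentially new geometric input'' is where the argument breaks: the obstruction is the presence of \emph{short} identical stable components, whose relative position in the two trees cannot be recovered from shadows or closest-point projections, and \cite[Theorem 3.2]{DMS_bary} supplies no adjacency or endpoint information about them. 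The paper resolves this with genuinely new machinery: the affected-cluster analysis (Lemma \ref{lem:bounded affected}), the construction of connected \emph{unstable cores} $T_{\calA}\subset T$, $T'_{\calA}\subset T'$ with identical complements (Proposition \ref{prop:unstable core}), a refinement of $T_s,T'_s$ discarding or lengthening short components so that every stable pair is either long or isolated in an identical complementary component, and only then the gluing-data preservation $\alpha_0(g^D_C)=g^{\alpha(D)}_{\alpha(C)}$ (Claim \ref{claim:gluing data}), from which $\beta$ and item \eqref{item:adjacency} of Definition \ref{defn:stable decomp} follow.

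A second structural gap: you attempt the general $N$ case in one pass, but the affected-cluster/unstable-core analysis is carried out only for the addition of a \emph{single} cluster point. The paper proves the base case $|\calY'-\calY|=1$ as Theorem \ref{thm:stable tree, one point} and then obtains the general statement by interpolating through $\calY\cap\calY'$ one point at a time and composing stable decompositions via the iteration machinery (Propositions \ref{prop:stable iteration, pair} and \ref{prop:stable iteration}); the composition step is itself nontrivial, since one must intersect decompositions on the middle tree, push them to the outer trees, and re-verify the gluing-data and adjacency conditions for the composite bijections (Claim \ref{claim:gluing data, iteration}). Neither the one-point reduction nor the composition argument appears in your proposal, so even granting your $\beta$ construction for $N=1$, the general case would remain open.
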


\subsection{Outline of the proof of Theorem \ref{thm:stable tree}} \label{subsec:stable tree outline}

The proof of Theorem \ref{thm:stable tree} takes the proof of the original Stable Tree Theorem \cite[Theorem 3.1]{DMS_bary} as its starting point, as discussed in Remark \ref{rem:stable tree compare}.  The proof is in two parts.

First, we will consider the base case where $N=1$, that is where $\calY' - \calY =\{w\}$ is a single cluster point.  The proof of this base case (Theorem \ref{thm:stable tree, one point}) requires a careful analysis of how clusters and the cluster separation graph change when adding this point.  In Lemma \ref{lem:bounded affected}, we show that there is a controlled number of \emph{affected clusters} (Definition \ref{defn:affected cluster}), namely those clusters whose composition or adjacency properties change.  This allows us to define \emph{unstable cores} of the corresponding stable trees for $(F;\calY)$ and $(F;\calY \cup \{w\})$ and prove in Proposition \ref{prop:unstable core} that the stable trees are identical outside of these unstable cores.  With these structural statements in hand, the proof of the base case (Theorem \ref{thm:stable tree, one point}) then proceeds by establishing the various endpoint data preservation properties of Definition \ref{defn:stable decomp} that \cite[Theorem 3.1]{DMS_bary} did not previously provide.

The general case, where $\calY' - \calY =\{w_1, \dots, w_n\}$ is some finite number of points, requires an iterative setup.  In particular, we prove in Proposition \ref{prop:stable iteration} that given a chain (Definition \ref{defn:chain of setups}) of pairwise admissible $\ep$-setups whose stable trees admit compatible stable decompositions, one can iteratively combine the corresponding stable decompositions to produce stable decompositions for the end links of the chain.  The main statement here is Proposition \ref{prop:stable iteration, pair}, which shows that one can do this for a chain of link 3.  With these established, Theorem \ref{thm:stable tree} in our current setting follows from iterated applications of Theorem \ref{thm:stable tree, one point} and a single application of Proposition \ref{prop:stable iteration}.  Notably, the main iteration Proposition \ref{prop:stable iteration} is fairly general and we use it again at the end of Section \ref{sec:stabler trees}.

\subsection{Affected clusters}

For this subsection, fix an $\epsilon$-setup $(F, \calY)$ in $\calZ$ as Subsection \ref{subsec:fixing constants, stable tree} and suppose $w \in \calZ$ is such that $(F, \calY \cup \{w\})$ is also an $\epsilon$-setup.  That is, we want to restrict our attention to adding one cluster point to a given setup.  We will deal with adding multiple points via an iterative argument in Subsection \ref{subsec:stable iteration}.

The goal of this subsection is to prove some structural results about how the stable trees $T$ and $T'$ for the two setups $(F, \calY)$ and $(F, \calY \cup \{w\})$ are related.

The next definition begins our analysis of the structure of the cluster separation graph $\calG = \calG(F, \calY)$ with respect to the new cluster point $w$.

\begin{definition}[Absorbed clusters]\label{defn:absorbed cluster}
    We say that a cluster $A \in \calG^0$ is \textbf{absorbed} if the new cluster point $w$ satisfies $d_{\calZ}(A,w)<E$.  Let $\calA_0$ denote the set of absorbed clusters.
    \end{definition}

    In other words, if $A$ is absorbed, then $A \cup \{w\}$ is contained in some cluster $C_w$ for the setup $(F, \calY \cup \{w\})$, and in fact $C_w$ is the union of $w$ and the absorbed clusters.  

\begin{definition}[Affected clusters]\label{defn:affected cluster}
    We say that a cluster $A \in \calG^0$ is \textbf{affected} if one of the following holds:
\begin{enumerate}
    \item $A$ is absorbed,
    \item $A$ is adjacent in $\calG$ to an absorbed cluster, or
    \item There is a non-absorbed cluster $B \in \calG^0$ such that $A,B$ are adjacent in $\calG$ but not adjacent in $\calG' = \calG(F, \calY \cup \{w\})$.
\end{enumerate}
\end{definition}

We remark that adding $w$ to $\calY$ can only remove edges from $\calG$ when building $\calG'$, hence the statement of (3).

We let $\calA$ denote the set of affected clusters.  We note that $\calA_0 \subset \calA$ and that $\calA_0$ can be empty, while $\calA$ is always nonempty.

\begin{figure}
    \centering
    \includegraphics[width=.80\textwidth]{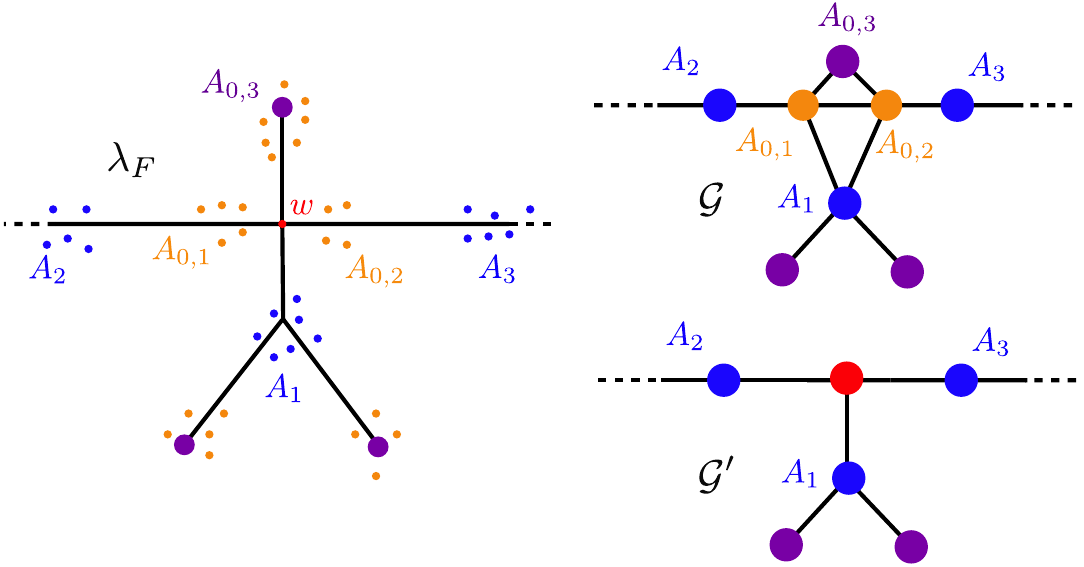}
    \caption{Affected clusters of types (1) and (2): Adding the cluster point $w$ can only affect the composition of boundedly-many clusters, namely the \emph{absorbed} $A_{0,i}$.  It can also affect which points are closest, such as in the type (2) affected cluster $A_1$.  See Figure \ref{fig:affected2} for an example of a type (3) affected cluster.}
    \label{fig:affected_cluster}
\end{figure}

\begin{lemma}\label{lem:bounded affected}
There exists $A_0 = A_0(k,\delta)>0$ and $A_1 = A_1(k,\delta)>0$ so that the following hold:
\begin{enumerate}
    \item $\#\calA < A_0$, and
    \item If $C_1,C_2 \in \calA$, then $d_{\calG}(C_1,C_2) < A_1$.
\end{enumerate}
\end{lemma}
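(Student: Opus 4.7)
The plan is to treat the three types of affected clusters in sequence, obtaining both a cardinality bound and a $\calG$-diameter bound in each case. The basic geometric principle is that every affected cluster localizes near $w$, either metrically in $\calZ$ (for types (1) and (2)) or combinatorially via its shadow on $\lambda(F)$ (for type (3)). Both regimes are controlled by the bounded branching of $\lambda(F)$ (in terms of $\#F\leq k$), together with the spacing and finiteness properties of Lemma~\ref{lem:cluster lemma}.

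The absorbed clusters $\calA_0$ all sit within $\calZ$-distance $E$ of $w$, hence pairwise within $2E$, and their shadows lie in a neighborhood of $s(w)$ of $\lambda(F)$-diameter bounded in terms of $\epsilon,E$. Lemma~\ref{lem:cluster lemma}(3) bounds the number of non-bivalent clusters outright, while Lemma~\ref{lem:cluster lemma}(5) forces bivalent shadows along any edge of $\lambda(F)$ to be disjoint and $M_1$-spaced, so only boundedly many shadows fit in that neighborhood; this bounds $|\calA_0|$. Applying Lemma~\ref{lem:cluster lemma}(6) contrapositively with $D=2E$ yields a bounded $\calG$-diameter for $\calA_0$. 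For type (2), I would use a bounded-valence property of $\calG$ itself: the $\calG$-neighbors of a cluster $B$ correspond to ``directions'' at $s(B)$ in $\lambda(F)$ along which no other shadow occurs before reaching the neighbor, and since $\lambda(F)$ has valence bounded by $\#F$, each cluster has at most $b=b(k)$ many $\calG$-neighbors. Thus type (2) contributes at most $b\cdot|\calA_0|$ clusters, each at $\calG$-distance $1$ from $\calA_0$.

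For type (3), suppose $A$ is non-absorbed with $A\sim_{\calG}B$ and $A\not\sim_{\calG'}B$ for some non-absorbed $B$. Then $C_w$ separates $A$ from $B$ in $\calG'$, so the minimal $\calZ$-geodesic $\sigma_{AB}$ passes within $2\epsilon'$ of $C_w$. By Lemma~\ref{lem:basic tree lemma} and hyperbolicity, $\sigma_{AB}$ tracks the $\lambda(F)$-path $p$ from $s(A)$ to $s(B)$ up to bounded error, and $A\sim_{\calG}B$ prevents $\sigma_{AB}$ from coming within $2\epsilon'$ of any cluster of $\calG$---in particular, any absorbed cluster. So the separation must come from $w$ itself, forcing $s(w)$ to lie in the interior of $p$ with no other shadow between $s(A)$ and $s(w)$. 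Since $\lambda(F)$ has valence bounded by $\#F\leq k$ at $s(w)$, only boundedly many $A$ can realize such a ``direct adjacency'' to $s(w)$.

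It remains to bound $d_{\calG}(A,\calA_0)$ uniformly for type (3) non-absorbed $A$. Here the same path argument shows $A\sim_{\calG'}C_w$, and passing from $\calG$ to $\calG'$ only collapses the boundedly-many vertices of $\calA_0$ into the single vertex $C_w$; thus a $\calG'$-edge from $A$ to $C_w$ lifts to a $\calG$-path from $A$ to $\calA_0$ of length at most $1+|\calA_0|$. When $\calA_0=\emptyset$ the type (3) clusters are pairwise $\calG$-adjacent through the now-unobstructed $s(w)$, so the diameter is trivially bounded. Combining the three types yields constants $A_0=A_0(k,\delta)$ and $A_1=A_1(k,\delta)$ with the claimed properties. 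The main obstacle will be type (3): such an $A$ can be arbitrarily far from $w$ in $\calZ$, so Lemma~\ref{lem:cluster lemma}(6) does not apply directly, and we must instead argue combinatorially through the shadow structure on $\lambda(F)$ and the fine interplay between $\calG$- and $\calG'$-adjacency, keeping careful track of which absorbed shadows sit on which branches at $s(w)$.
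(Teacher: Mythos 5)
Your proposal follows essentially the same route as the paper's proof: bound the absorbed clusters via the shadow-spacing properties of Lemma \ref{lem:cluster lemma} on the boundedly many edges of $\ltree(F)$, handle types (1)--(2) by the bounded valence of $\calG$, reduce type (3) to the degenerate picture in which $s(w)$ is wedged between two consecutive shadows on a single edge, and obtain the $\calG$-diameter bound from Lemma \ref{lem:cluster lemma}(6) applied to clusters near $w$. The one step to tighten is the claim that a $\calG'$-edge from $A$ to $C_w$ ``lifts'' to a $\calG$-path of length at most $1+|\calA_0|$ --- $\calG'$ is not a quotient of $\calG$, so this should instead be run through the shadow structure (no cluster's shadow separates $s(A)$ from the bounded window around $s(w)$ containing the absorbed shadows); the paper is equally terse at the corresponding point, simply asserting that every affected cluster lies within $\calG$-distance $2$ of an absorbed one.
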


\begin{proof}

To prove item (1), we first show that the number of absorbed clusters, that is $\#\calA_0$, is bounded in terms of $k,\epsilon, \ep', E, \delta$ and hence only in terms of $k, \delta$.  For this, we use \cite[Lemma 3.11]{DMS_bary}, which says that all but boundedly-many (in terms of $\delta$ and $\#F < k$) clusters in $\calG^0$ are \emph{bivalent}, that is have valence $2$ in $\calG$ and does not contain a point of $F$.  By \cite[Lemma 3.10]{DMS_bary}, any such bivalent cluster $C$ has shadow $s(C)$ which is contained in an edge of $\lambda(F)$.

Note that not only is the number of non-bivalent clusters bounded, but so is the size of any set of clusters $\calC$ with the following property: There is no edge of $\lambda(F)$ containing the shadows of two bivalent clusters from $\calC$. This is because the number of edges of $\lambda(F)$ is bounded in terms of $\#F, \delta$ and hence $k, \delta$.

So to bound $\#\calA_0$ it suffices to consider a collection $A_1, \dots, A_n \in \calA_0$ of absorbed clusters which are bivalent and whose shadow is contained in a single edge.  By part (c) of \cite[Lemma 3.6]{DMS_bary}, we must have that $s(A_i) \cap s(A_j) = \emptyset$ for $i \neq j$, while \cite[Claim 1 of Lemma 3.6]{DMS_bary} forces a lower bound on $d_{\lambda(F)}(s(A_i), s(A_j))$ in terms of $E, \epsilon$ for $i \neq j$.  However $d_{\calZ}(A_i,w)< E$ for all $i$, so the same claim forces the clusters to be pairwise close as a function of $E, \epsilon, \delta$, which thus bounds $n$, and bounds $\#\calA_0$ (in $k,\delta$) in turn as required.

    \begin{figure}
    \centering
    \includegraphics[width=.75\textwidth]{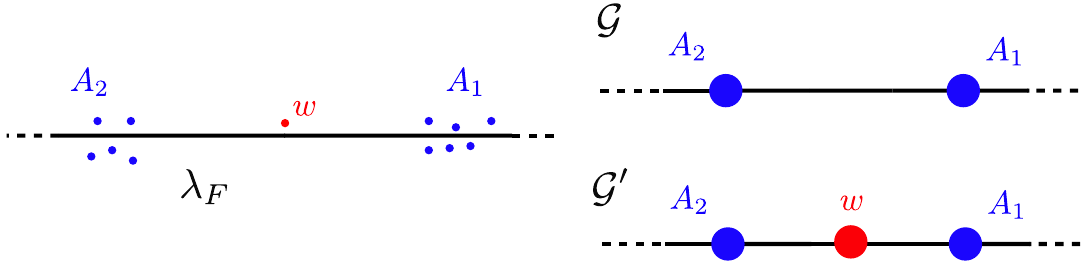}
    \caption{A type (3) affected cluster: The new cluster point $w$ can get between a pair of clusters that are adjacent in $\calG$.  When bounding the number of such clusters in the proof of Lemma \ref{lem:bounded affected}, we can reduce to the case where they are bivalent, where the change between $\calG$ and $\calG'$ is isolated to the picture in the figure.}
    \label{fig:affected2}
\end{figure}

    For the bound on $\#\calA$, observe that the bound on $\# \calA_0$ bounds the number of affected clusters of types (1) and (2), where the latter uses the bound on the valence of $\calG$ (Lemma \ref{lem:stable tree basics}).  Since the number of non-bivalent clusters (of any kind) is bounded, it thus suffices to bound the number of bivalent clusters which are affected of type (3).
    
    Suppose $A, A' \in \calA$ are bivalent, non-absorbed, and adjacent in $\calG$ but not in $\calG'$.  In this (degenerate) case, \cite[Lemma 3.10]{DMS_bary} implies that $s(A)$ and $s(A')$ are intervals inside an edge of $\lambda(F)$, and it follows that $s(w)$ must lie on that same edge in $\lambda(F)$ between $\lambda(A)$ and $\lambda(A')$.  Since $A,A'$ are not absorbed, we must have that $s(w)$ is disjoint from $s(A)$ and $s(A')$, and that the only difference between $\calG$ and $\calG'$ is that an extra vertex labeled by $w$ has been added, and that this vertex forms the connection between $A$ and $A'$ in $\calG'$, replacing the edge in $\calG$.  This completes the proof of item (1) of the lemma.

    For item (2), observe that if $C_1, C_2 \in \calA$, then each is distance at most $2$ from some absorbed clusters $B_1,B_2$, whose distance in $\calZ$ is bounded by $E = E(k,\delta)>0$.  On the other hand, item (6) of Lemma \ref{lem:cluster lemma} now says that the distance in $\calG$ between $B_1,B_2$ is bounded as a function of $k,\delta$ and $E$, and hence in $k,\delta$.  This thus bounds the distance betwen $C_1,C_2$ in terms of $k,\delta$, as required, completing the proof of the lemma.
\end{proof}

\begin{remark}
    The proof of the above lemma says more: Either one is in the degenerate case at the end of the proof, or no two bivalent affected clusters have shadows lying in a single edge of $\lambda(F)$. 
\end{remark}

\subsection{The unstable core} \label{subsec:unstable core}

The goal of this subsection is to prove the following proposition, which provides \textbf{unstable cores} $T_{\calA} \subset T$ and $T'_{\calA} \subset T'$ for the stable trees, outside of which the stable trees are exactly the same.  It is one of the key technical steps in our proof of Theorem \ref{thm:stable tree, one point}.

\begin{remark}[Identical subtrees, etc.]\label{rem:identical}
    In what follows, we will often refer a bit informally to two subtrees $R \subset T$ and $R' \subset T'$ being ``identical''.  The formal meaning in these situations is that there is an isometry $i_{R,R'}:R \to R'$ so that for each $x \in R$, we have $\phi(x) = \phi'(i_{R,R'}(x))$, where $\phi:T \to \calZ$ and $\phi':T' \to \calZ$ are the maps provided by Lemma \ref{lem:stable tree basics}. 
\end{remark}

\begin{proposition}\label{prop:unstable core}
    There exist subtrees $T_{\calA} \subset T$ and $T'_{\calA} \subset T'$ which are the union of boundedly-many edge and cluster components from the decompositions $T = T_e \cup T_c$ and $T' = T'_e \cup T'_c$, with the bound controlled by $k,\delta$, satisfying the following properties:  \begin{itemize}
        \item There is a bijection $\gamma:\pi_0(T-T_{\calA}) \to \pi_0(T' - T'_{\calA})$ where identified components are identical subtrees of $T$ and $T'$.
    \end{itemize}
\end{proposition}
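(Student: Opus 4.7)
The plan is to bundle into $T_{\calA}$ (resp.\ $T'_{\calA}$) precisely those edge and cluster components whose construction could depend on whether the new cluster point $w$ has been added, and then use the three-case structure of Definition \ref{defn:affected cluster} to argue that what lies outside is determined by data common to both setups. Explicitly, I would let $\calV_{\calA}\subseteq\calV$ denote the set of edge components $V$ with $V^{0}\cap\calA\neq\emptyset$, set $\calA^{\ast}=\calA\cup\bigcup_{V\in\calV_{\calA}}V^{0}$, and define
$$T_{\calA}=\bigcup_{C\in\calA^{\ast}}\mu(C)\ \cup\ \bigcup_{V\in\calV_{\calA}}\lambda'(V^{0}),$$
with $T'_{\calA}$ defined analogously on the $\calG'$ side using the $\calG'$-affected clusters, which form the set $(\calA\setminus\calA_{0})\cup\{C_{w}\}$ after the natural identification of non-absorbed clusters.

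The bound on the number of edge and cluster components of $T_{\calA}$ follows from the tools in hand: Lemma \ref{lem:bounded affected}(1) gives $\#\calA\leq A_{0}(k,\delta)$, Lemma \ref{lem:cluster lemma}(3) bounds the number of non-bivalent clusters, and Lemma \ref{lem:stable tree basics}(2) bounds the total branching of $T$. Together these control $\#\calV_{\calA}$ (a bivalent affected cluster lies in at most two elements of $\calV$, and each non-bivalent affected cluster contributes only boundedly-many $V$'s) as well as the number of clusters in each such $V$, and hence $\#\calA^{\ast}$. The same bounds apply on the $\calG'$ side.

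The heart of the argument is to show that components outside the cores are identical. For $C\notin\calA^{\ast}$, $C$ is in particular not absorbed and therefore appears in $(\calG')^{0}$ with the same point set; since $C\notin\calA$, cases (2) and (3) of Definition \ref{defn:affected cluster} imply that the $\calG$-adjacencies of $C$ to other non-affected clusters coincide with its $\calG'$-adjacencies---any $\calG$-separator that becomes absorbed into $C_{w}$ still separates as a subset of $C_{w}$, and no new separation can arise without pulling one of the endpoints into $\calA$. Running this for every $V\in\calV\setminus\calV_{\calA}$ identifies $V$ with some $V'\in\calV'\setminus\calV'_{\calA}$ as vertex-labeled graphs, so $\lambda'(V^{0})=\lambda'(V'^{0})$ as embedded forests in $\calZ$. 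Since the attachment set $r(C)=C\cap(T_{e}\cup F)$ is then unchanged for such $C$, we also get $\mu(C)=\mu'(C)$, and the bijection $\gamma:\pi_{0}(T-T_{\calA})\to\pi_{0}(T'-T'_{\calA})$ is induced by these componentwise identifications together with the fact that the gluings are recorded by the same adjacency data.

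The step I expect to be most delicate is the ``no new separators'' claim: ruling out that $C_{w}$ could $\ep'$-separate a pair $C_{1},C_{2}\notin\calA$ adjacent in $\calG$. A minimal geodesic from $C_{1}$ to $C_{2}$ entering the $2\ep'$-neighborhood of $C_{w}$ either enters the $2\ep'$-neighborhood of some absorbed $A\subseteq C_{w}$---making $A$ an $\ep'$-separator of $C_{1},C_{2}$ already in $\calG$, contradicting their adjacency---or else passes within $2\ep'$ of $w$ itself, which (after calibrating $E\gg\ep'$) forces $C_{1}$ or $C_{2}$ to be within $E$ of $w$, hence absorbed, again contradicting $C_{1},C_{2}\notin\calA$. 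This is essentially a variation on the case analysis used to bound the number of affected clusters in the proof of Lemma \ref{lem:bounded affected}.
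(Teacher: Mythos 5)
Your overall strategy---isolate a bounded core of edge and cluster components around the affected clusters and show that everything outside is built from identical data---is the same as the paper's, and your counting argument for the bound on the number of components is fine. But there is a genuine gap: your core $T_{\calA}$ need not be connected, whereas the proposition asserts that $T_{\calA}$ is a \emph{subtree}, and this connectivity is not cosmetic---it is invoked explicitly later (in the proof of Theorem \ref{thm:stable tree, one point}) to argue that $T_{\calA}$ separates every pair of components of $T - T_{\calA}$ and to pin down gluing data. Affected clusters are pairwise at bounded distance in $\calG$ by Lemma \ref{lem:bounded affected}(2), but they need not be adjacent: if $C_1, C_2 \in \calA$ are joined by a $\calG$-path through unaffected bivalent clusters $B_1,\dots,B_m$, then the single-edge components $[B_i,B_{i+1}]$ have vertex sets disjoint from $\calA$, so they are excluded from your $\calV_{\calA}$, and your $T_{\calA}$ breaks into pieces (it contains $\mu(B_1)$ and $\mu(B_m)$ but not the edge forests between them). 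The paper avoids this by first passing to the $\calG$-neighborhood $R_{\calA} = \calN^{\calG}_{A_1+2}(\calA)$, which is connected precisely because of Lemma \ref{lem:bounded affected}(2), and only then saturating to a union $S_{\calA}$ of components of $\calG - \calE^0$ bounded by bivalent clusters; the ``$+2$'' moreover guarantees that these boundary bivalent clusters lie at $\calG$-distance at least $2$ from every absorbed cluster, which is what makes them identical clusters of $\calG'$ and lets the complementary components be matched up wholesale.

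A second, smaller point: the step you flag as most delicate---ruling out that $C_{w}$ could $\ep'$-separate a pair $C_1,C_2 \notin \calA$ that are adjacent in $\calG$---is vacuous. If $C_{w}$ separated them in $\calG'$ they would lose adjacency and hence be affected of type (3) by Definition \ref{defn:affected cluster}, contradicting $C_1,C_2\notin\calA$; no geometric argument is needed. The genuinely delicate point, which your writeup passes over, is the converse direction: an unaffected cluster must not \emph{gain} a neighbor (namely $C_{w}$) in $\calG'$, since that would change its valence, hence its bivalency, hence the decomposition into edge components $\calV'$ on the $\calG'$ side, and your identification of $\lambda'(V^0)$ with $\lambda'(V'^0)$ for $V\notin\calV_{\calA}$ would fail. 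This is where the buffer of identical bivalent clusters in the paper's argument does real work.
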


The proof of Proposition \ref{prop:unstable core}, which we complete in Subsection \ref{subsec:proof of unstable core prop}, will require some supporting lemmas and notation.  The rough strategy is to first isolate the parts of $\calG$ which are affected by the addition of $w$ (this uses Lemma \ref{lem:bounded affected}), and then to isolate these affected parts by a buffer of bivalent clusters which insulate the rest of the graph from this smaller part.

\begin{definition}[Raw core]\label{defn:raw core}
    The \textbf{raw core} of $\calG$ with respect to $\calA$ is
    $$R_{\calA} = \calN^{\calG}_{A_1+ 2}(\calA),$$
    namely the $(A_1+2)$-neighborhood in $\calG$ of $\calA$, where $A_1 = A_1(k,\delta)>0$ is the constant from item (2) of Lemma \ref{lem:bounded affected}.
\end{definition}

\begin{lemma}\label{lem:raw basic}
    $R_{\calA}$ is a connected subgraph of $\calG$ consisting of boundedly-many vertices, with the bound controlled by $k,\delta$.  Any cluster in a component of $\calG-R_{\calA}$ is at least distance $2$ in $\calG$ from any absorbed cluster.
\end{lemma}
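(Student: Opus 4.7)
The plan is to verify the three assertions of the lemma in turn, leveraging Lemma \ref{lem:bounded affected} and basic structural results about $\calG$ from Lemma \ref{lem:cluster lemma}.

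\textbf{Connectedness.} I will first observe that $\calG$ is connected by Lemma \ref{lem:cluster lemma}(1), and that by Lemma \ref{lem:bounded affected}(2) any two elements of $\calA$ are joined by a $\calG$-path of length at most $A_1$. Fixing a basepoint $A_* \in \calA$, every $A \in \calA$ is connected to $A_*$ by a geodesic in $\calG$ of length $\leq A_1$, and every vertex on such a geodesic lies in the $A_1$-neighborhood of $\calA$, hence in $R_\calA = \calN^\calG_{A_1+2}(\calA)$. It follows that $\calA$ is contained in a connected subgraph of $R_\calA$. Now every vertex of $R_\calA$ is, by definition, connected to some vertex of $\calA$ by a $\calG$-path of length at most $A_1+2$ whose vertices all lie in $R_\calA$. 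Combining the two gives connectedness of $R_\calA$.

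\textbf{Bounded cardinality.} Here the main step is to establish that $\calG$ has bounded valence in $k,\delta$. Each cluster $C$ has a shadow $s(C)$ which is a subtree of $\lambda(F)$ (Lemma \ref{lem:cluster lemma}(4)--(5)); its neighbors in $\calG$ are clusters $C'$ whose shadows are not separated from $s(C)$ by another cluster's shadow. The directions in $\lambda(F)$ emanating from $s(C)$ are bounded by the total valence of $\lambda(F)$, which in turn is bounded by $\#F<k$, and along each such direction Lemma \ref{lem:cluster lemma}(5) says that only the first cluster one meets can be a $\calG$-neighbor (later ones being separated by the first). This gives a valence bound $M = M(k,\delta)$ on $\calG$. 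Combined with Lemma \ref{lem:bounded affected}(1), this yields
\[
\#R_\calA \;\leq\; \#\calA \cdot \sum_{i=0}^{A_1+2} M^i \;\leq\; A_0 \cdot (M+1)^{A_1+3},
\]
which is bounded in $k,\delta$. I also need to check that $R_\calA$ is the union of boundedly-many edge and cluster components from the decomposition $T = T_e \cup T_c$; this will be used in the next subsection when passing between $\calG$ and $T$, but the cardinality bound above immediately translates since each cluster of $\calG$ contributes at most one cluster-tree and a bounded number of edge-trees to $T$.

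\textbf{Distance separation.} By definition of $R_\calA$, any vertex $C$ lying in a component of $\calG - R_\calA$ satisfies $d_\calG(C, \calA) > A_1 + 2$. Since $\calA_0 \subset \calA$, we get $d_\calG(C, \calA_0) > A_1 + 2 \geq 2$, giving the required separation from absorbed clusters.

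The only nontrivial step is establishing the valence bound on $\calG$; everything else is bookkeeping with Lemmas \ref{lem:cluster lemma} and \ref{lem:bounded affected}. I expect that this valence bound is either implicit in the shadow analysis of \cite[Section 3]{DMS_bary} or can be extracted directly, so I anticipate citing it rather than reproving from scratch.
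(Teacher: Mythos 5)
Your proof is correct and follows essentially the same route as the paper: connectedness of $R_{\calA}$ from Lemma \ref{lem:bounded affected}(2), the cardinality bound from $\#\calA < A_0$ together with a valence bound on $\calG$, and the distance claim directly from the definition of $R_{\calA}$ and $\calA_0 \subset \calA$. The valence bound you derive via shadows is indeed available to cite (the paper invokes it via Lemma \ref{lem:cluster lemma} and the branching bound of Lemma \ref{lem:stable tree basics}(2)), and your extra remark about edge/cluster components of $T$ is not part of this lemma but does no harm.
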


\begin{proof}
    By Lemma \ref{lem:bounded affected}, $\calN^{\calG}_{A_1}(\calA)$ is connected and consists of boundedly many vertices, with the bound controlled by $k, \delta$.  Hence $R_{\calA}$ is connected and also consists of boundedly-many vertices, since the valence of $\calG$ is bounded by item (3) of Lemma \ref{lem:cluster lemma}.  Finally, any vertex in $\calG - R_{\calA}$ is at least distance $2$ in $\calG$ from any absorbed cluster since all absorbed clusters are contained in $\calA$.  This completes the proof.
\end{proof}

Our next goal is to add a layer of insulation to $R_{\calA}$ to build an unstable core $T_{\calA} \subset T$ in such a way that allows us to build a mirror subtree $T'_{\calA} \subset T'$ satisfying Proposition \ref{prop:unstable core}.  We do this by isolating $R_{\calA}$ in a complementary component in $\calG$ of boundedly-many nearby bivalent clusters.

Recall that a cluster $C$ is bivalent (Definition \ref{defn:bivalent}) if it forms a bivalent vertex of $\calG$ and does not contain a point of $F$.  We let $\calE^0$ denote the set of bivalent clusters.

\begin{lemma}\label{lem:bivalent insulation}
    There exists a collection $\calE_{\calA} \subset \calE^0$ of bivalent clusters so that $R_{\calA}$ is contained in a single component of $\calG - \bigcup_{E \in \calE_{\calA}} E$.  Denoting the closure of this component by $S_{\calA}$,  we have that $\calE_{\calA}$ and $S_{\calA}$ both involve boundedly-many clusters, with the bound controlled by $k,\delta$.
\end{lemma}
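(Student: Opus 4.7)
The strategy is to first \emph{saturate} the raw core $R_{\calA}$ with all the nearby non-bivalent clusters to obtain a larger connected subgraph $S_{\calA}^0$, and then take $\calE_{\calA}$ to be the bivalent clusters on its outer boundary in $\calG$. Two facts supplied by the earlier lemmas drive the argument: by Lemma \ref{lem:cluster lemma}(3) the total number of non-bivalent clusters in $\calG^0$ is bounded in terms of $k,\delta$; and the $\calG$-valence at every cluster is bounded in terms of $k,\delta$, since bivalent clusters contribute valence exactly $2$ by definition, and the branching at non-bivalent clusters is controlled by the bound on the total branching of $T$ in Lemma \ref{lem:stable tree basics}(2).

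Concretely, I would set $S_0 = R_{\calA}$ (which is connected and of bounded size by Lemma \ref{lem:raw basic}) and inductively define $S_{i+1}$ to be $S_i$ together with every non-bivalent cluster of $\calG$ that is adjacent to a vertex of $S_i$. Because the number of non-bivalent clusters is bounded, this process stabilizes after a bounded number of steps at some connected subgraph $S_{\calA}^0$ whose size is bounded in terms of $k,\delta$. I would then let
\[
  \calE_{\calA} \;=\; \bigl\{\, E \in \calE^0 \,:\, E \notin S_{\calA}^0,\ E \text{ is adjacent to some vertex of } S_{\calA}^0\,\bigr\}.
\]
The cardinality of $\calE_{\calA}$ is bounded by $|S_{\calA}^0|$ times the maximal $\calG$-valence, hence is controlled by $k,\delta$.

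The key verification is that $\calE_{\calA}$ genuinely isolates $S_{\calA}^0$ from the rest of $\calG$. This is immediate from the construction: any neighbor $w$ of $S_{\calA}^0$ lying outside $S_{\calA}^0$ cannot be non-bivalent, for otherwise the saturation procedure would have absorbed $w$; hence $w \in \calE_{\calA}$. Therefore every edge leaving $S_{\calA}^0$ ends at a vertex of $\calE_{\calA}$, so removing $\calE_{\calA}$ from $\calG$ leaves $S_{\calA}^0$ as a union of components disconnected from the rest; since $S_{\calA}^0$ is itself connected and contains $R_{\calA}$, we conclude that $R_{\calA}$ lies in a single component of $\calG - \bigcup_{E \in \calE_{\calA}} E$, namely $S_{\calA}^0$. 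Taking $S_{\calA}$ to be the closure of this component (i.e.\ $S_{\calA}^0$ together with its incident clusters in $\calE_{\calA}$) gives a bounded collection of clusters, as required.

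The main (minor) subtlety is justifying the uniform bound on the $\calG$-valence at vertices of $S_{\calA}^0$, since that is what keeps $|\calE_{\calA}|$ under control. This is not deep, but requires unpacking the relationship between branching in $T$ and valence in $\calG$ via the shadow/edge analysis of Subsection \ref{subsec:cluster basics} (together with Lemma \ref{lem:cluster lemma}); once this bound is extracted cleanly, the remainder of the argument is an elementary saturation-and-boundary construction.
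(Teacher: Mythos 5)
Your proof is correct and essentially the same as the paper's: your saturated set $S_{\calA}^0$ coincides with the component of $\calG$ minus all bivalent clusters outside $R_{\calA}$ that contains $R_{\calA}$, which is exactly how the paper defines $S_{\calA}$, and both arguments bound its size via the bound on the number of non-bivalent clusters (Lemma \ref{lem:cluster lemma}(3)) together with the bound on $|R_{\calA}|$, then bound $|\calE_{\calA}|$ by the valence of $\calG$. The valence bound you flag as a subtlety is also the one the paper invokes (implicitly, via Lemma \ref{lem:cluster lemma}), so no gap.
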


\begin{proof}

Let $\calE'_0$ denote the set of bivalent clusters which do not lie in $R_{\calA}$.  Since $R_{\calA}$ is connected  (Lemma \ref{lem:raw basic}), there exists a unique component of $\calG - \calE'_0$ containing $R_{\calA}$.  Call the closure of this component $S_{\calA}$.  We let $\calE_{\calA}$ denote the bivalent clusters in the boundary of $S_{\calA}$; some of the boundary clusters may contain points of $F$ and we exclude those.  Excluding the clusters in $\calE_{\calA}$, the bivalent clusters in $S_{\calA}$ are precisely those in $R_{\calA}$, and so it has boundedly-many vertices (in $k,\delta$) by Lemma \ref{lem:cluster lemma}.  In particular, $S_{\calA}$ and $\calE_{\calA}$ involve boundedly-many clusters (in $k,\delta$), as required.  This completes the proof.
\end{proof}

\begin{figure}
    \centering
    \includegraphics[width=.6\textwidth]{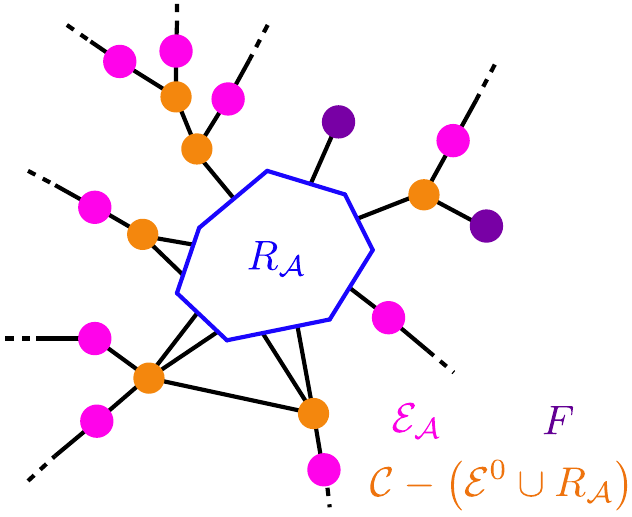}
    \caption{A schematic of how the (minimal) set of bivalent clusters $\calE_{\calA}$ separates the raw core $R_{\calA}$ from the rest of $\calG$.  Each (pink) cluster in $\calE_{\calA}$ cuts $\calG$ into two graphs, one containing $R_{\calA}$ because it is connected.  Some of the ends of $\calG$ outside of $R_{\calA}$ end in a (purple) cluster containing a point of $F$.}
    \label{fig:unstable core}
\end{figure}

Our next step is to analyze the structure of the complementary components of $\calG - S_{\calA}$ and their mirror images in $\calG'$, the cluster separation graph for our other setup $(F; \calY \cup \{w\})$.

\begin{lemma}\label{lem:E' from E}
    Every cluster $C \in \calE_{\calA}$ forms an identical bivalent cluster for the setup $(F;\calY \cup \{w\})$.
\end{lemma}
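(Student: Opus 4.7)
The plan is to exploit the insulation provided by the neighborhood $R_{\calA}$: any $C \in \calE_{\calA}$ is, by construction of $R_{\calA}$ (Definition \ref{defn:raw core}) and the choice of constant $A_1$ from Lemma \ref{lem:bounded affected}, at $\calG$-distance greater than $A_1+2$ from every affected cluster. Hence $C$ itself is unaffected in the sense of Definition \ref{defn:affected cluster}. I will use this in three steps, corresponding to (i) the point set of $C$, (ii) its valence, and (iii) the identity of its neighbors.

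First, since $C$ is not absorbed we have $d_{\calZ}(C,w) \geq E$, so adding $w$ to $\calY$ does not attach any new point to the $E$-connected component of $\calC_E(F \cup \calY)$ determined by $C$. Consequently $C$ is verbatim a vertex of $\calG'$: the same finite subset of $\calY \cup F$, with the same shadow on $\ltree(F)$ and not containing any point of $F$.

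Second, I will check that the two $\calG$-neighbors $B_1, B_2$ of $C$ persist unchanged as $\calG'$-neighbors. Because $C$ is not affected of type (2), neither $B_1$ nor $B_2$ is absorbed; thus each $B_i$ is again a vertex of $\calG'$ with the same underlying point set. Because $C$ is not affected of type (3), the non-separation witness between $C$ and $B_i$ in $\calG$ is preserved in $\calG'$, so the edges $\{C,B_i\}$ survive in $\calG'$.

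The remaining point is that $C$ gains no new neighbor in $\calG'$. Adding $w$ only deletes edges from the previous graph or introduces the single new vertex $C_w$ obtained by merging the absorbed clusters with $w$. Any $\calZ$-geodesic realizing adjacency of $C$ and $C_w$ would have to travel from $C$ toward an absorbed cluster $A \in \calA$, but $d_{\calG}(C,\calA) > A_1+2 \geq 2$, so at least one cluster of $\calG$ lies between $C$ and $A$; this intermediate cluster, being itself outside $R_{\calA}$, is non-affected and therefore present in $\calG'$ with the same $\calZ$-position, witnessing $\ep'$-separation of $C$ from $C_w$ in the sense of Definition \ref{defn:clusters, separation}. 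Hence $C$ remains bivalent in $\calG'$ with the same pair of neighbors, so it forms an identical bivalent cluster for the setup $(F;\calY\cup\{w\})$, as desired. The main subtlety, and the one step I will write carefully, is this last separation argument, since the potentially-troublesome interaction is geometric (in $\calZ$) rather than combinatorial (in $\calG$); the bookkeeping in Lemma \ref{lem:cluster lemma}(6) controlling $\calG$- vs.\ $\calZ$-distance is exactly what will convert the $\calG$-buffer $A_1+2$ into a genuine $\calZ$-obstruction.
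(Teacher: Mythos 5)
Your proof is correct and follows essentially the same route as the paper's (which is a three-line version of your argument): unchanged composition because $C$ is not absorbed, persistence of the two old adjacencies because $C$ is not affected of types (2)--(3), and the $\calG$-distance buffer of at least $2$ from the absorbed clusters to rule out a new adjacency with $C_w$, via Lemma \ref{lem:raw basic}. Your step (iii) is in fact spelled out in more detail than the paper's one-sentence appeal to the distance-two condition; just note that the intermediate cluster separating $C$ from an absorbed $A$ must be argued to $\ep'$-separate $C$ from all of $C_w$ (whose closest point to $C$ could a priori be $w$ or a different absorbed cluster), which is the shadow/tree argument you correctly identify but do not fully carry out — the paper leaves this equally implicit.
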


\begin{proof}
    By definition of $\calA$, the composition of any cluster not in $\calA$ is unchanged with respect to the setup $(F;\calY \cup \{w\})$.  Moreover, non-membership in $\calA$ also guarantees that its adjacency relations in $\calG$ are the same as in $\calG'$.  Since any cluster in $\calE_{\calA}$ is at least distance two from any absorbed cluster by Lemma \ref{lem:raw basic}, this proves the lemma.
\end{proof}

We let $E'_{\calA}$ denote the set of clusters in the setup $(F;\calY \cup \{w\})$.  We let $\calU$ denote the closure of components of $\calG - \calE_{\calA}$, and similarly let $\calU'$ denote the closure of components of $\calG' - E'_{\calA}$.

\begin{lemma}\label{lem:buffer identical}
There is a bijection $\zeta:\calU \to \calU'$.  Setting $S'_{\calA} = \zeta(S_{\calA})$, then the following hold:
    \begin{enumerate}
      \item If $D \in \calU - \{S'_{\calA}\}$, then $\zeta(D)$ and $D$ are identical, in the sense that there is a graph isomorphism $D \to D'$ so that the clusters identified by this isomorphism consist of exactly the same cluster points in $\calY$.
      \item The number of vertices in $S'_{\calA}$ is bounded in terms of $k,\delta$.
    \end{enumerate}
\end{lemma}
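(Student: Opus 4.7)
The plan is to use the insulation provided by Lemma \ref{lem:bivalent insulation} to decouple $R_{\calA}$ from the rest of $\calG$, and then to transfer the complementary picture verbatim to $\calG'$. First I will observe that every cluster outside $R_{\calA}$ is unaffected (in the sense of Definition \ref{defn:affected cluster}), so by unwinding that definition it remains a cluster of the same composition in $\calG'$, is not adjacent in $\calG$ to any absorbed cluster, and retains all of its $\calG$-adjacencies with other non-absorbed clusters when passing to $\calG'$. In particular, by Lemma \ref{lem:E' from E}, each $E \in \calE_{\calA}$ is a bivalent cluster in $\calG'$ with the same two neighbors as in $\calG$, so the set $\calE'_{\calA}$ corresponding to $\calE_{\calA}$ provides genuinely matched bivalent separators on the boundary of the affected region in both graphs.

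Next, define $\calU'$ to be the set of closures of components of $\calG' - \calE'_{\calA}$. For any $D \in \calU$ with $D \neq S_{\calA}$, every cluster of $D$ lies in $\calG^0 - R_{\calA}$, so by the preceding paragraph the same vertex set spans an identical labeled subgraph $D^{\flat}$ of $\calG'$. I claim $D^{\flat}$ is the closure of a component of $\calG' - \calE'_{\calA}$: internally it is connected with exactly the adjacencies of $D$, and its boundary bivalent clusters in $\calE'_{\calA}$ still separate it from the rest of $\calG'$, because any minimal $\calZ$-geodesic from a cluster of $D^{\flat}$ to a cluster in the image of $R_{\calA}$ passes near the same cluster of $\calE_{\calA} = \calE'_{\calA}$ that realized the separation in $\calG$ (using that $\calZ$ and the underlying composition of unaffected clusters are unchanged). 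Setting $\zeta(D) = D^{\flat}$ produces the required identification for every $D \in \calU - \{S_{\calA}\}$, yielding conclusion (1) in the sense of Remark \ref{rem:identical}.

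Finally, $\calG'$ is connected by item (1) of Lemma \ref{lem:cluster lemma}, so removing $\calE'_{\calA}$ partitions it into finitely many components whose closures form $\calU'$. All but one of these are accounted for by $\{\zeta(D) : D \in \calU - \{S_{\calA}\}\}$, since any other would consist entirely of unaffected clusters and hence, running the argument of the previous paragraph in reverse, be of the form $D^{\flat}$ for some $D \in \calU - \{S_{\calA}\}$. Define $\zeta(S_{\calA}) = S'_{\calA}$ to be the closure of the unique remaining component, making $\zeta$ a bijection. For the cardinality bound, the vertices of $S'_{\calA}$ consist of $\calE'_{\calA}$ together with the clusters of $\calG'$ not lying in any $\zeta(D)$; the latter are precisely the new cluster $C_w$ and the images of the clusters of $R_{\calA} - \calA_0$, whose total number is bounded in terms of $k,\delta$ by Lemmas \ref{lem:raw basic}, \ref{lem:bivalent insulation}, and \ref{lem:bounded affected}. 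The main subtlety that needs checking is that no two distinct components of $\calG - \calE_{\calA}$ can merge in $\calG' - \calE'_{\calA}$, and this is ruled out by the adjacency preservation for unaffected clusters established in paragraph one.
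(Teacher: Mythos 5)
Your proof is correct and takes essentially the same route as the paper's: both rest on the fact that clusters outside $R_{\calA}$ are unaffected in composition and adjacency, that Lemma \ref{lem:E' from E} gives matched bivalent separators $\calE_{\calA}=\calE'_{\calA}$, and hence that the complementary components transfer verbatim. The only divergence is in item (2), where the paper deduces the bound from the global count $|(\calG')^0|\leq|\calG^0|+1$ together with the identical complements, whereas you enumerate the vertices of $S'_{\calA}$ directly; this also works via Lemma \ref{lem:bivalent insulation}, though your description of the extra vertices as the images of $R_{\calA}-\calA_0$ should really be the non-absorbed clusters of all of $S_{\calA}$, which may properly contain $R_{\calA}$ --- a harmless slip since $S_{\calA}$ itself is bounded.
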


\begin{proof}
The argument for item (1) is essentially the same as in Lemma \ref{lem:E' from E}, since the only clusters whose membership or adjacency change are those in $\calA$, which the closures of components in $\calU$ avoid.  Each such component has a bivalent cluster $C \in \calE_{\calA}$ in its boundary, and so there is an identical such component in $\calG - E'_{\calA}$ with $C$ as its boundary.

Item (2) follows from the simple observation that the number of clusters in for the setup $(F;\calY)$ is at most one less than the number of clusters for $(F;\calY \cup \{w\})$, because either $w$ gets absorbed into an existing cluster or it forms it own cluster.  Hence since the components in $\calU - \{S_{\calA}\}$ and $\calU' - \{S'_{\calA}\}$ are identical and $S_{\calA}$ has boundedly-many vertices, so must $S'_{\calA}$.  This completes the proof.
\end{proof}

The last step before defining the unstable cores involves an observation about the construction of the stable trees $T$ for $(F;\calY)$ and $T'$ for $(F;\calY \cup \{w\})$.

\medskip

We recall the notation involved in defining $T = T_e \cup T_c$ (Subsection \ref{subsec:stable trees defined}).  Let $\calE^0$ denote the set of bivalent clusters (Definition \ref{defn:bivalent}).  Let $\calV$ be the set of closures of connected components of $\calG - \calE^0$.  For each $V \in \calV$, let $\calV^0$ denote its vertex set.  Then the edge components of $T$ are defined as
    $$T_e = \bigsqcup_{V \in \calV} \lambda'(V^0)$$
    while the cluster components are defined by
    $$T_c = \bigsqcup_{C \in \calG^0} \mu(C)$$
    where $\mu(C) = \lambda(r(C))$ and $r(C) = C \cap (T_e \cup F)$.  In the above, $\lambda$ and $\lambda'$ are the network functions defined in Subsection \ref{subsec:basic tree setup}.

The edge and cluster components of $T' = T'_e \cup T'_c$ are defined analogously, with associated notation $(\calE')^0, \calV'$, etc.

\begin{lemma}\label{lem:buffer has same components}
    The components of $\calU$ and $\calU'$ satisfy the following:
    \begin{enumerate}
        \item Every component of $\calU$ (resp. $\calU'$) is a union of components of $\calV$ (resp. $\calV'$).
        \item $S_{\calA}$ and $S'_{\calA}$ are unions of boundedly-many components of $\calV$ and $\calV'$ respectively, with the bound controlled by $k,\delta$.
    \end{enumerate}

\end{lemma}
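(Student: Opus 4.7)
The plan is to exploit the inclusion $\calE_{\calA} \subseteq \calE^0$ from Lemma~\ref{lem:bivalent insulation}, which yields the subgraph inclusion $\calG - \calE^0 \subseteq \calG - \calE_{\calA}$. This immediately says that the decomposition of $\calG$ into (closures of) components of $\calG - \calE_{\calA}$ is coarser than the decomposition into (closures of) components of $\calG - \calE^0$, so part~(1) should fall out of a direct comparison of closures.

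For part~(1), I will fix a component $U$ of $\calG - \calE_{\calA}$ and consider the subgraph $U \cap (\calG - \calE^0) = U - \calE^0$. Any connected component $V$ of $U - \calE^0$ is in fact a component of $\calG - \calE^0$, because an edge of $\calG - \calE^0$ leaving $U$ would have to pass through $\calE_{\calA}$, which is impossible since $\calE_{\calA} \subseteq \calE^0$. When we pass to closures in $\calG$, the closure $\bar U$ reinstates the clusters of $\calE_{\calA}$ adjacent to $U$, while the closure of each sub-component $V$ as an element of $\calV$ reinstates the clusters of $\calE^0$ adjacent to $V$; combining these two observations and using that $\calE^0 = (\calE^0 - \calE_{\calA}) \sqcup \calE_{\calA}$, the union of the $\bar V$'s over these sub-components equals $\bar U$ as a subgraph of $\calG$. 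The identical argument, using that $E'_{\calA} \subseteq (\calE')^0$ by Lemma~\ref{lem:E' from E}, gives the statement for $\calU'$ and $\calV'$.

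For part~(2), I will combine part~(1) with the cluster-count bounds already in hand: by Lemma~\ref{lem:bivalent insulation}, $S_{\calA}$ contains boundedly many clusters (the bound depending only on $k, \delta$), and by Lemma~\ref{lem:buffer identical}(2) the same holds for $S'_{\calA}$. Since each element of $\calV$ is the closure of a nonempty component of $\calG - \calE^0$ and so contains at least one cluster, the number of elements of $\calV$ whose union forms $S_{\calA}$ (as provided by part~(1)) is bounded above by the number of clusters in $S_{\calA}$, hence bounded in terms of $k, \delta$. The analogous reasoning in $\calV'$ bounds the number of components needed for $S'_{\calA}$.

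I do not expect significant obstacles beyond bookkeeping. The only subtlety worth flagging is that bivalent clusters in $\calE^0 - \calE_{\calA}$ that happen to lie inside $U$ may belong to the closures of several sub-components of $\calG - \calE^0$ (overlap at shared boundary clusters), so the union in part~(1) is not a disjoint union; however, this does not affect either the verification that the union equals $\bar U$ or the cluster counting argument in part~(2).
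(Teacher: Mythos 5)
Your proposal is correct and matches the paper's proof, which simply observes that item (1) is immediate from $\calE_{\calA}\subseteq\calE^0$ (so that $\calU,\calU'$ are coarsenings of the decompositions $\calV,\calV'$) and that item (2) follows from (1) together with the cluster-count bounds in Lemmas \ref{lem:bivalent insulation} and \ref{lem:buffer identical}. You have merely spelled out the "follows immediately" steps in more detail; the only loose point is the claim that the number of components of $\calV$ in $S_{\calA}$ is bounded by the number of its clusters (single-edge components can share boundary clusters), but since the valence of $\calG$ is bounded in terms of $k,\delta$, the intended bound still holds.
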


\begin{proof}
    Item (1) follows immediately from the fact that $\calU$ and $\calU'$ are defined as the closures of the complementary components of the bivalent clusters in $\calE_{\calA} \subset \calE^0$.

    On the other hand, item (2) follows immediately from item (1) and the bound on the number of clusters contained in $S_{\calA}$ and $S'_{\calA}$ from Lemmas \ref{lem:bivalent insulation} and \ref{lem:buffer identical}.  This completes the proof of the lemma.
\end{proof}

\medskip

Let $\calV_{\calA} \subset \calV$ denote the components in $\calV$ which are contained in $S_{\calA}$, and $\calC_{\calA}$ the set of clusters contained in $S_{\calA}$.  Define $\calV'_{\calA}$ and $\calC'_{\calA}$ analogously.

\begin{definition}[Unstable cores]\label{defn:unstable cores}
    The \textbf{unstable core} $T_{\calA} \subset T$  of $T$ is the union of the cluster and edge components involved in $S_{\calA}$, namely:
    $$T_{\calA} = \left(\bigcup_{V \in \calV_{\calA}} \lambda'(V^0)\right) \cup \left(\bigcup_{C \in \calC_{\calA}} \mu(C)\right).$$
    Similarly, \textbf{unstable core} $T'_{\calA} \subset T'$  of $T'$ is
     $$T'_{\calA} = \left(\bigcup_{V \in \calV'_{\calA}} \lambda'(V^0)\right) \cup \left(\bigcup_{C \in \calC'_{\calA}} \mu(C)\right).$$
\end{definition}

    \subsection{Proof of Proposition \ref{prop:unstable core}}\label{subsec:proof of unstable core prop}

First, observe that $T_{\calA} \subset T$ is path connected (and hence a subtree) because any point of $T_{\calA}$ belongs to some edge or cluster component defined by the connected subgraph $S_{\calA} \subset \calG$.  Hence given two points in $T_{\calA}$, one can pass to adjacent clusters subtrees (or do nothing, if the points are in cluster subtrees) which are vertices of $S_{\calA}$.  Any path in $S_{\calA}$ (as a path in $\calG$) between these two adjacent clusters determines a path in $T_{\calA}$ between the points, by following along corresponding chain of minimal networks.  The same argument shows that $T'_{\calA}$ is also a subtree.

Both $T_{\calA}$ and $T'_{\calA}$ are unions of edge and cluster components, and moreover a bounded number of these by their definition and item (2) of Lemma \ref{lem:buffer has same components}.  This proves the first part of the statement.

The second part of the statement follows from combining Lemmas \ref{lem:buffer identical} and Lemma \ref{lem:buffer has same components} with the definitions of $T,T'$.  In particular, the former provides  an isomorphism $\zeta:\calU \to \calU'$ which, by item (1) of that lemma, also provides graph isomorphisms for components other than $S'_{\calA} = \zeta(S_{\calA})$, with identified vertices corresponding to identical clusters.  The latter lemma then says that all edge and cluster components of $T,T'$ not contained in $T_{\calA}, T'_{\calA}$, respectively, are contained in these complementary components.  Since the combinatorial data of these components of $T, T'$ are defined using identical cluster separation graph and cluster membership data, they define identical collections of minimal networks by our fixed choices of $\lambda,\lambda'$ (Subsection \ref{subsec:basic tree setup}.  This completes the proof of the proposition.

\subsection{The (refined) Stable Tree Theorem: the (refined) statement} \label{subsec:refined stable tree}

We are now ready to prove a refined version of the original Stable Trees Theorem, namely \cite[Theorem 3.2]{DMS_bary}.  We consider $(1,\epsilon)$-admissible setups $(F, \calY)$ and $(F, \calY \cup \{w\})$, that is, we add a cluster point to an $\epsilon$-setup $(F, \calY)$, where we are using our fixed setup as in Subsection \ref{subsec:fixing constants, stable tree}.

The theorem says that such an $(1,\epsilon)$-admissible pair admits a stable decomposition in the sense of Definition \ref{defn:stable decomp}.  In Proposition \ref{prop:stable iteration} below, we will see how to iterate this procedure to allow for adding a bounded number of cluster points.

\begin{theorem}\label{thm:stable tree, one point}
Let $\calZ$ be $\delta$-hyperbolic and geodesic. Let $k>0$ and $\ep=\ep(k, \delta)>0$, $\ep'=\ep'(k,\ep)>0$, and $E = E(k, \ep')>0$ as in Subsection \ref{subsec:fixing constants, stable tree}.  Suppose $(F;\calY)$ and $(F;\calY \cup \{w\})$ are an $(1,\epsilon)$-admissible pair of $\epsilon$-setups.  Let $T = T_e \cup T_c$ and $T' = T'_e \cup T'_c$ denote their $(\ep, \ep', E)$-stable trees.

There exist $L_1 = L_1(k,\delta)>0$, $L_2 = L_2(k,\delta)>0$, and two edge decompositions $T_s \subset T_e$ and $T'_s \subset T'_e$ such that $T_s$ is $\calY$-\emph{stably} $(L_1,L_2)$-\emph{compatible} with $T'_s$, with the maps $\alpha$ and $\beta$ as in Definition \ref{defn:stable decomp} being bijections. 

\end{theorem}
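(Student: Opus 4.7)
The plan is to build on two established ingredients. The original Stable Tree Theorem \cite[Theorem 3.2]{DMS_bary}, applied to our $(1,\epsilon)$-admissible pair, directly produces edge decompositions $T_s \subset T_e$, $T'_s \subset T'_e$, the bijection $\alpha:\pi_0(T_s)\to\pi_0(T'_s)$, and the isometries $i_{E,E'}$ required for items (1)--(5) of Definition \ref{defn:stable decomp}, as recorded in Remark \ref{rem:stable tree compare}. On top of this, Proposition \ref{prop:unstable core} supplies the unstable cores $T_{\calA}\subset T$ and $T'_{\calA}\subset T'$, each a union of boundedly many (in $k,\delta$) components of $T_e,T_c$ and $T'_e,T'_c$, together with a bijection $\gamma:\pi_0(T-T_{\calA})\to\pi_0(T'-T'_{\calA})$ matching identified components identically. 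I would set $T_{\diff}:=T_{\calA}$ and $T'_{\diff}:=T'_{\calA}$. To establish item (6), namely the containment $T_e-T_s\subset T-T_{\diff}$, I would use that outside the unstable cores the two stable trees coincide identically, and accordingly refine the original stable decomposition by taking each edge component lying outside $T_{\calA}$ as a single stable interval with no unstable gap; this is morally Claims 1--2 of the proof of \cite[Theorem 3.2]{DMS_bary}, as flagged by Remark \ref{rem:stable tree compare}.

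The core new construction is the bijection $\beta:\pi_0(T-T_s)\to\pi_0(T'-T'_s)$ required by item (7). I would define $\beta$ in two pieces. For a component $D$ of $T-T_s$ disjoint from $T_{\calA}$, Proposition \ref{prop:unstable core} identifies $D$ with an identical component of $T'-T'_{\calA}$, which after the refinement above is a single component of $T'-T'_s$; declare $\beta(D)$ to be this component. For the (boundedly many) components of $T-T_s$ that meet $T_{\calA}$, I would prescribe $\beta$ by the adjacency rule demanded by (7b): if $D$ is adjacent to stable intervals $E_1,\dots,E_m$ at points $x_1,\dots,x_m$, then $\beta(D)$ is declared to be the unique component of $T'-T'_s$ adjacent to $\alpha(E_1),\dots,\alpha(E_m)$ at the corresponding points $i_{E_i,\alpha(E_i)}(x_i)$. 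The cluster-identifying condition (7a), taken with $\calY_0:=\calY$, then follows because: (i) for $y$ in a cluster outside $T_{\calA}$ the two setups see the same cluster and the same cluster subtree, hence $D_y, D'_y$ are identical components matched by $\beta$; and (ii) for $y$ in a cluster inside $T_{\calA}$, a direct finite check using the classification of affected clusters in Definition \ref{defn:affected cluster} and the explicit description of how $C'_y$ absorbs $C_y$ and $w$ shows that $D_y$ and $D'_y$ are precisely the components picked out by matching adjacency data under $\alpha$. Condition (7b) holds by construction.

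The main obstacle is ensuring that this adjacency prescription inside the unstable cores is simultaneously well-defined, bijective, and compatible with the cluster assignment forced by (7a). The danger is that two distinct components of $T-T_s$ could share the same pattern of adjacencies to stable intervals, that a pattern on the $T$-side could have no matching component on the $T'$-side, or that the assignment forced by (7a) could conflict with that forced by (7b). Resolving these requires a careful case analysis---finite in nature because $T_{\calA}$ and $T'_{\calA}$ are bounded by Lemmas \ref{lem:bounded affected} and \ref{lem:buffer identical}---of how cluster absorption and adjacency changes inside $T_{\calA}$ propagate to the combinatorial structure of the components of $T-T_s$ and $T'-T'_s$. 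In the most delicate case, where $w$ is simultaneously absorbed into several adjacent clusters, one may need to declare a few additional bounded intervals inside $T_{\calA}$ as unstable so that the components on the two sides match one-to-one; because all such surgery happens entirely within the bounded cores, items (3)--(5) remain valid with adjusted but still uniform constants $L_1,L_2$ depending only on $k$ and $\delta$.
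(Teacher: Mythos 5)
Your architecture matches the paper's: start from \cite[Theorem 3.2]{DMS_bary} for items (1)--(5) of Definition \ref{defn:stable decomp}, get item (6) from the unstable cores of Proposition \ref{prop:unstable core}, and define $\beta$ on an unstable component $D$ by sending it to the component of $T'-T'_s$ adjacent to $\alpha(E_1),\dots,\alpha(E_m)$ at the corresponding endpoints. The gap is that you treat the existence and uniqueness of that target component as a ``careful but finite case analysis'' of affected clusters, when it is in fact the main content of the proof and cannot be settled combinatorially. The obstruction you name yourself --- that $\alpha(E_i)$ and $\alpha(E_j)$ might be separated in $T'$ by some other stable component, so that no common adjacent unstable component exists --- is not ruled out by the classification of affected clusters, because the stable pairs living inside the cores are only \emph{approximate} pairs: their images under $\phi,\phi'$ are merely $L_2$-close in $\calZ$ (item (4) of Definition \ref{defn:stable decomp}), and the cluster-graph data does not determine which endpoint of $\alpha(E)$ faces which other component. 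The paper resolves this with a genuinely metric argument: it first refines the decompositions so that every stable component is either longer than a threshold $M>4L_2$ or coincides with an identical component separated from everything else by the connected cores $T_{\calA},T'_{\calA}$, and then proves a gluing-data claim --- for all stable components $C,D$, the endpoint of $C$ adjacent to the side of $T-C$ containing $D$ is sent by the endpoint map induced from $i_{C,\alpha(C)}$ to the endpoint of $\alpha(C)$ facing $\alpha(D)$ --- using closest-point projections and uniform quasiconvexity of $\phi(T),\phi'(T')$ in $\calZ$ for the long components, and the connectedness of the cores for the identical ones. Only with that claim in hand does the adjacency prescription become well-defined, injective, and surjective (each direction of the uniqueness argument is a separate application of the claim).

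A secondary issue: your plan to ``take each edge component lying outside $T_{\calA}$ as a single stable interval'' is not available in general, since edge components are forests $\lambda'(V^0)$ that may branch, whereas stable components are required to be intervals (their two-sidedness is what makes the gluing data well-defined). The correct move, which the paper makes, is to keep the identical stable components produced by \cite[Theorem 3.2]{DMS_bary} outside the cores and to discard or lengthen only the boundedly many short or approximate ones inside the cores.
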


\begin{proof}
The statement of \cite[Theorem 3.2]{DMS_bary} provides forests $T_s \subset T_e \subset T$ and $T'_s \subset T'_e \subset S$ consisting of intervals and a bijection $\alpha:\pi_0(T_s) \to \pi_0(T'_s)$ which satisfy items \eqref{item:stable bijection}--\eqref{item:unstable components} of Definition \ref{defn:stable decomp}.  Item \eqref{item:unstable forests} follows quickly from Claims 1 and 2 of \cite[Theorem 3.2]{DMS_bary}, which show that the symmetric difference of the edge and vertex sets of $\calG, \calG'$ are bounded in terms of $k, \delta$.  Combining this with the bound on branching and valence of $\calG,\calG'$ (Lemma \ref{lem:cluster lemma}) implies that there are boundedly-many (in $k,\delta$) different components of $T_e, T'_e$ and $T_c,T'_c$, meaning that they can be grouped into boundedly-many (in $k,\delta$) subtrees outside of which $T,T'$ are identical.

Thus the main task of the proof is showing that the bijection $\alpha:\pi_0(T_s) \to \pi_0(T'_s)$ can be used to define a bijection $\beta:\pi_0(T- T_s) \to \pi_0(T' - T'_s)$  which satisfies the extra properties of item \eqref{item:adjacency} of Definition \ref{defn:stable decomp}. 

By items \eqref{item:identical pairs} and \eqref{item:close pairs}, the components of $T_s$ and $T'_s$ break into two collections, namely pairs of intervals which are exactly the same in $\calZ$, and at most $L_1$-many pairs of intervals which have the same length (by item \eqref{item:stable pairs}) and are $L_2$-close in $\calZ$, for $L_i = L_i(k,\delta)>0$.  Let us call the first kind \emph{identical pairs} and the second kind \emph{approximate pairs}.  Importantly, since there are only boundedly-many (in $k,\delta$) intervals in approximate pairs, we may assume that each such interval is as long as we would like, say $M = M(k,\delta)$, by adding intervals shorter than this to the collection of unstable components.  Note that this maintains the original proximity bound $L_2$ by increasing the size of $L_1$ (while keeping it bounded in $k,\delta$).  In particular, by assuming that $M > 4L_2$, we may arrange that if $(C, \alpha(C))$ are an approximate pair with length at least $M$, then $i_{C, \alpha(C)}:C \to \alpha(C)$ sends each endpoint of $C$ to the endpoint of $\alpha(C)$ within $L_2$ of it (item \eqref{item:close pairs} of Definition \ref{defn:stable decomp}).

With this arranged, we next associate to each component $C \in \pi_0(T_s)$ a collection of labels which we think of as \emph{gluing data} as follows:  For any other stable component $D \in \pi_0(T_s)$, let $g^D_C$ be the endpoint of $C$ adjacent to the component of $T - C$ containing $D$.

Observe that the bijection $\alpha:\pi_0(T_s) \to \pi_0(T'_s)$ naturally associates the endpoints of $C$ to the endpoints of $\alpha(C)$, via the isometry $i_{C, \alpha(C)}:C \to \alpha(C)$.  For each $C$, let $\alpha_0$ denote this induced map on endpoints.

The following key claim says that the bijection $\alpha$ preserves this gluing data.

\begin{claim} \label{claim:gluing data}
 For any $C,D \in \pi_0(T_s)$, we have $\alpha_0(g^D_C) = g^{\alpha(D)}_{\alpha(C)}$.   
\end{claim}

Note that the gluing data is combinatorially defined in terms of the structure of the given tree.  To prove Claim \ref{claim:gluing data}, we need to connect this combinatorial data to the metric data of the hyperbolic space $\calZ$.  The idea is that we want the gluing data $g^D_C$ for a pair of stable components $C,D \in \pi_0(T_s)$ to be coarsely realized by the closest point projection of $D$ to $C$, since $\alpha(D)$ and $\alpha(C)$ are close to $D,C$, respectively.  However, this will only work nicely when both $D$ and $C$ are sufficiently long, or are separated in $T$ by some subtree which is mirrored in $T'$.

The issue here is that the collections $T_s, T'_s$ can contain short identical components, whose separation properties are hard to pin down.  This is where our work in the previous subsection comes in, as it will allow us to refine the collections $T_s, T'_s$ so that all stable components are either long or are separated from each other by their respective unstable cores $T_{\calA}$ and $T'_{\calA}$, that is, the subtrees $T_{\calA}\subset T$ and $T'_{\calA} \subset T'$ given by Proposition \ref{prop:unstable core}. Recall that these, in particular, are unions of boundedly-many components of the decompositions $T= T_e \cup T_c$ and $T' = T'_e \cup T'_c$, with the bound depending on $k,\delta$.  Note that the subtree $T_{\calA}$ separates every pair of components $T - T_{\calA}$, and similarly for $T'_{\calA}$ and the components of $T'-T'_{\calA}$.  Finally, there is a bijection $\gamma: \pi_0(T - T_{\calA}) \to \pi_0(T' - T'_{\calA})$, where identified components are identical subtrees of $T \cap T'$.

As discussed above, every approximate pair of stable components can be made as long as necessary.  On the other hand, every pair of identical components $D \in \pi_0(T_s)$ and $D' \in \pi_0(T'_s)$ coincides with a component of $T_e \cap T'_e$, as identical components are contained in their respective edge subtrees $T_e,T'_e$.  By Proposition \ref{prop:unstable core}, any such pair has the property that either $D = D' \subset T_{\calA} \cap T'_{\calA}$, or $D = D' \subset S \in \pi_0(T - T_{\calA}) = \pi_0(T' - T'_{\calA})$.  Note that there are only boundedly-many pairs of the former type by Proposition \ref{prop:unstable core}, so we can remove the ones not satisfying a lower diameter bound (in terms of $k,\delta$) from $T_s, T'_s$ without creating an issue with respect to the other items in Definition \ref{defn:stable decomp}.

Abusing notation, we refer to $T_s,T'_s$ as these slightly refined collections, and now observe that all pairs of stable components $D \in \pi_0(T_s)$ and $D' \in \pi_0(T'_s)$ satisfy the following: 
\begin{itemize}
    \item Either the lengths of $D,D'$ are bounded below by some $M=M(k,\delta)>0$ to be determined below, or
    \item $D=D'$ is contained in some component of $T - T_{\calA} = T' - T'_{\calA}$.
\end{itemize}

We are finally ready to prove our key claim about the gluing data for these refined stable decompositions $T_s \subset T$, $T'_s \subset T'$:

\begin{proof}[Proof of Claim \ref{claim:gluing data}]
First, observe that every pair of approximate stable components is contained in $T_{\calA}$ and $T'_{\calA}$, and if $D$ is an identical stable component contained in $T_{\calA}$, then $\alpha(D)$ is contained in $T'_{\calA}$ by Proposition \ref{prop:unstable core}.  Moreover, we can arrange that such a stable component $D$ is as long as we need, say at least $M=M(k, \delta)$-long.  To confirm the gluing data property in the claim, there are three cases.

First, suppose that $D_1,D_2$ are stable components contained in $T_{\calA}$, so that $\alpha(D_1), \alpha(D_2)$ are also contained in $T'_{\calA}$.  By choosing the lower bound $M = M(k,\delta)>0$ for the length of such stable components to be sufficiently large and using the fact that $T,T'$ are uniformly (in $k,\delta$) quasi-isometrically embedded (and hence uniformly quasiconvex) in $\calZ$, we can arrange for the endpoint associated to $g^{D_1}_{D_2}$ to be coarsely (in $k,\delta$) the closest point projection of $D_1$ to $D_2$ in $\calZ$.  A similar statement holds for $\alpha(D_1)$ and $\alpha(D_2)$.  By again choosing the length parameter $M$, we must have that $\alpha_0(g^{D_1}_{D_2})$ is coarsely (in $k,\delta$) the closest point projection of $\alpha(D_1)$ to $\alpha(D_2)$, and thus $\alpha(D_1)$ must be in the corresponding component of $T' - \alpha(D_2)$.  Thus the claim holds in this case.

Now suppose that $D_1,D_2 \in \pi_0(T_s)$ are stable components in identical pairs outside of $T_{\calA}$.  If they are both in the same component of $T - T_{\calA}$, then $\alpha(D_1), \alpha(D_2)$ are in the corresponding component of $T'  - T'_{\calA}$, which is identical, so the gluing data is preserved.  If $D_1,D_2$ are in different components of $T-T_{\calA}$, then $g^{D_1}_{D_2}$ is the endpoint of $D_2$ corresponding to the component of $T - D_2$ containing both $T_{\calA}$ and $D_1$.  Note that this uses that $T_{\calA}$ is connected by Proposition \ref{prop:unstable core}.  On the other hand, using that $T'_{\calA}$ is connected by the same proposition, we must have that $g^{\alpha(D_1)}_{\alpha(D_2)}$ coincides with the corresponding end of $\alpha(D_2)$, namely the end of $\alpha(D_2)$ which is adjacent to the component of $T' - \alpha(D_2)$ containing $T'_{\calA}$ and $\alpha(D_1)$.  On the other hand, this endpoint is exactly $\alpha_0\left(g^{D_1}_{D_2}\right)$ by definition of $\alpha_0$.  Thus the claim holds in this case.

Finally, for the mixed case, suppose $D_1$ is an identical stable component contained in $S \in \pi_0(T-T_{\calA})$ and $D_2$ is a stable component in $T_{\calA}$.  Since the components of $T - T_{\calA}$ are identical to the components of $T' - T'_{\calA}$ by Proposition \ref{prop:unstable core}, we can arrange that any such component $S$ have a diameter lower bound (controlled by $k,\delta$) so that $g^{D_1}_{D_2}$ coarsely coincides with the closest point projection of $S$ to $D_2$.  We can arrange this by adding such pairs of components to $T_{\calA}$ and $T'_{\calA}$, while still preserving the key properties of Proposition \ref{prop:unstable core}, namely that they are connected, have identical complements in $T,T'$ respectively, and are the unions of boundedly-many components of $T_e,T_c$.  This last item uses the fact that every component of $T_e$ has a lower-diameter bound (in $k,\delta$) and every pair of components of $T_c$ are separated by at least one component of $T_e$.

Now having already arranged for $D_2$ to have a large diameter (controlled by $k,\delta$), this endpoint $g^{D_1}_{D_2}$ is the endpoint of $D_2$ closest to $S$.  Our lower bounds on the lengths of $S$ and $D_2$ provide that $\alpha_0(g^{D_1}_{D_2}) = g^{\alpha(D_1)}_{\alpha(D_2)}$.  A similar argument show that $\alpha_0(g^{D_2}_{D_1}) = g^{\alpha(D_2)}_{\alpha(D_1)}$, completing the proof of the claim.
\end{proof}

With Claim \ref{claim:gluing data} in hand, we can define our desired bijection $\beta:\pi_0(T - T_s) \to \pi_0(T'-T'_s)$ and confirm that the properties in item \eqref{item:adjacency} in Definition \ref{defn:stable decomp} hold.
\medskip

\textbf{\underline{The bijection $\beta:\pi_0(T- T_s) \to \pi_0(T' - T'_s)$}}: Let $C \in \pi_0(T - T_s)$.  Let $E_1, \dots, E_n$ denote the stable components adjacent to $C$, where $E_i$ is adjacent to $C$ at its endpoint $e_i$.  Observe that by definition, we have $e_i = g^{E_j}_{E_i}$ for all $j \neq i$.

We claim that there is a unique component $C' \in \pi_0(T' - T'_s)$ such that $\alpha(E_1), \dots, \alpha(E_n)$ are the stable components adjacent to $C'$, with $\alpha(E_i)$ adjacent to $C'$ at $\alpha_0(e_i)$.

For this, suppose first that $\alpha(E_i), \alpha(E_j)$ are adjacent to some unstable component $C'$.  Then $\alpha(E_i)$ is adjacent to $C'$ at $g^{\alpha(E_i)}_{\alpha(E_j)} = \alpha_0(g^{E_i}_{E_j}) = \alpha_0(e_i)$ by Claim \ref{claim:gluing data}.  Thus if the images of the $E_i$ are adjacent to some unstable component, then they are adjacent at the correct endpoints required, i.e. the corresponding endpoints provided by $\alpha_0$.

Now suppose for a contradiction that $\alpha(E_i)$ and $\alpha(E_j)$ are not adjacent to an unstable component.  This implies that they are separated in $T'$ by some other stable component $E'$.  Thus $g^{\alpha(E_i)}_{E'} \neq g^{\alpha(E_j)}_{E'}$.  On the other hand, $\alpha^{-1}(E')$ does not separate $E_i$ from $E_j$, and since they are all intervals in a tree, this means that $g^{E_i}_{\alpha^{-1}(E')} = g^{E_j}_{\alpha^{-1}(E')}$, and this contradicts Claim \ref{claim:gluing data}.  Hence all of the $\alpha(E_i)$ are adjacent to a common unstable component $C'$ at the correct endpoints.

Finally, a similar argument shows that no other stable component $E''$ can be adjacent to $C'$ in $T'$, because then there would be some $E_i$ which separates $\alpha^{-1}(E'')$ from $E_j$ for all $j \neq i$.  This is because the union of $C'$ with the $E_i$ is a connected subtree of $T$, so each complementary component of that union---one of which contains the supposed $\alpha^{-1}(E'')$---is separated by some $E_i$ from all of the other $E_j$.  Hence the existence of such an $E''$ would result in a similar contradiction via Claim \ref{claim:gluing data}, namely that $g^{E''}_{\alpha(E_i)} = \alpha_0(e_i),$ while $g^{\alpha^{-1}(E'')}_{E_i}$ is the opposite endpoint of $E_i$.

Thus we can define our bijection $\beta:\pi_0(T - T_s) \to \pi_0(T' - T'_s)$ by $\beta(C) = C'$ as defined above.
 
\smallskip

\textbf{\underline{Verifying item \eqref{item:adjacency} of Definition \ref{defn:stable decomp}}}:  Observe that $\beta$ satisfies item \eqref{item:Adjacency-preserving} by construction.  For item \eqref{item:cluster identify}, let $y \in \calY \cup F$ and let $C_y, C'_y$ denote the clusters containing $y$ for the setups $(F, \calY), (F, \calY \cup \{w\})$, respectively.  Let $D_y, D'_y$ be the components of $T-T_s, T'-T'_s$ containing $\mu(C_y), \mu(C'_y)$, respectively, and let $\beta(D_y) = D'$.  We want to show that $D'_y = D'$.

Now either $C_y$ is contained in the unstable core $T_{\calA}$ or not.  If it is, then $C'_y$ is contained in the unstable core $T'_{\calA}$ and it follows that both $D'_y$ and $D'$ intersect $T'_{\calA}$.  If $D'_y \neq D'$, then there must be some long stable component $E' \in \pi_0(T'_s) \cap T'_{\calA}$ separating them, as all stable components in $T'_{\calA}$ can be made as long as desired.  But now this says that $y$ is on opposite sides of $E'$ and $\alpha^{-1}(E')$, which is impossible.

On the other hand, if $C_y$ is not contained in the unstable core, then $\mu(C_y)$ is  contained in an identical component $S \subset T - T_{\calA} = T' - T'_{\calA}$, which says that $C_y = C'_y$.  While it is possible that $D_y, D'_y$ are not entirely contained in $S$, both overlap it.  If all of $S$ is an unstable component, i.e. a component of $T-T_s$ and $T'-T'_s$, then $S = D'_y = D'$.  Otherwise, $D_y, D'_y$ are both adjacent to some collection of identical stable components on the same side of $S$, and hence $\beta(D_y) = D_y$ by definition of the bijection.  This completes the proof of the theorem.
\end{proof}

\subsection{Iteratively refining stable decompositions}\label{subsec:stable iteration}

In this subsection, we prove our iteration statement, Proposition \ref{prop:stable iteration}.  It allows us to iteratively define stable decompositions between a pair admissible setups when their is a chain of admissible setups that interpolate between them.  The key definition of the refined decomposition follows closely the analogous discussion in \cite[Subsection 8.10]{Dur_stableinterval}.  The main work is the base case of combining two pairs of stable decompositions across a common setup.

\begin{definition}[Links, chains]\label{defn:chain of setups}
    Given three admissible $\epsilon$-setups $(F_i;\calY_i)$ for $i=1,2,3$ and a subset $\calY_0 \subset \calY_1 \cap \calY_2 \cap \calY_3$, we say that $(F_2; \calY_2)$ is a $(\calY_0, L_1, L_2)$-\textbf{link} between $(F_1;\calY_1)$ to $(F_3;\calY_3)$ if the stable tree of $(F_2; \calY_2)$ admits $\calY_0$-stable $(L_1,L_2)$-compatible decompositions with the stable trees for $(F_1;\calY_1)$ to $(F_3;\calY_3)$.
More generally, we say that an $n$-tuple $(F_1; \calY_1), \dots, (F_n; \calY_n)$ is a $(\calY_0, L_1, L_2)$-\textbf{chain} if each $(F_i; \calY_i)$ is a $(\calY_0, L_1, L_2)$-link between $(F_{i-1}; \calY_{i-1})$ and $(F_{i+1}; \calY_{i+1})$ for each $2\leq i \leq n-1$.
\end{definition}

The following is the main result of this subsection.  Roughly, it says that stable compatibility is transitive:

\begin{proposition}\label{prop:stable iteration}
For every $n\geq 2$, there exists $M_n = M_n(L_1,L_2,n)>0$ so that if $(F;\calY_1), \dots, (F;\calY_n)$ is a $(\calY_0,L_1,L_2)$-chain of pairwise admissible $\epsilon$-setups, then $(F_1;\calY_1)$ and $(F_n;\calY_n)$ admit $\calY_0$-stable $M_n$-compatible stable decompositions. 
\end{proposition}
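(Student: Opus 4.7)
The plan is to establish the proposition by induction on $n$, reducing the general case to the $n=3$ ``pair-iteration'' case, which we expect to be stated separately as Proposition~\ref{prop:stable iteration, pair}. The base case $n=2$ is immediate from the definition of a chain: we may take $M_2=\max(L_1,L_2)$. Given the $n=3$ case, the inductive step proceeds by applying it to the first three links $(F;\calY_1),(F;\calY_2),(F;\calY_3)$ to obtain a $\calY_0$-stable $(M_3,M_3)$-compatible decomposition between $(F;\calY_1)$ and $(F;\calY_3)$. Combining this with the remaining given decompositions on $(F;\calY_3),\dots,(F;\calY_n)$ produces a chain $(F;\calY_1),(F;\calY_3),(F;\calY_4),\dots,(F;\calY_n)$ of length $n-1$ with constants bounded by $\max(L_1,M_3)$ and $\max(L_2,M_3)$, to which the inductive hypothesis applies.

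For the $n=3$ case, the idea is to form a common refinement of the two stable decompositions on the middle tree $T_2$. Write $T_2 = T_2^{s,12} \cup T_2^{u,12}$ and $T_2 = T_2^{s,23} \cup T_2^{u,23}$ for the two given decompositions, with bijections $\alpha_{12},\beta_{12}$ and $\alpha_{23},\beta_{23}$ supplied by Definition~\ref{defn:stable decomp}. Define the refined stable subtree on $T_2$ by $\tilde T_2^{s} = T_2^{s,12} \cap T_2^{s,23}$ and the new unstable complement by $T_2 \setminus \tilde T_2^{s}$. Each connected component of $\tilde T_2^{s}$ is obtained by cutting a stable interval of one decomposition by at most finitely many endpoints from the other, so it is again an interval, and the total count of new stable and unstable components is at most a linear function of $L_1$. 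Transport this refinement to $T_1$ and $T_3$ via the isometries $\alpha_{12}^{-1}$ and $\alpha_{23}$ and define the composite stable bijection $\alpha_{13} = \alpha_{23}\circ \alpha_{12}$ on pieces where both are defined. Identical pairs composed with identical pairs yield identical pairs; for approximate pairs the triangle inequality gives proximity at most $2L_2$, so after absorbing the remaining short fragments into the unstable part we obtain bounds of the required form.

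The bijection $\beta_{13}$ between complementary forests is obtained by composition as well: each component $D$ of $T_1$ outside the refined stable set corresponds under $\beta_{12}$ to a component of $T_2$ outside $T_2^{s,12}$, which (after accounting for further subdivision by $T_2^{u,23}$) determines a unique component of $T_2 \setminus \tilde T_2^{s}$, whose image under $\beta_{23}$ lives in $T_3$ outside the refined stable set. The cluster-identification condition~\eqref{item:cluster identify} for $y \in \calY_0 \subseteq \calY_1 \cap \calY_2 \cap \calY_3$ follows because the component containing $\mu(C_y)$ is preserved at each stage, and the adjacency-preserving property~\eqref{item:Adjacency-preserving} is inherited from the component bijections at each stage, using that the refined stable components are subdivisions of the originals.

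The main obstacle I expect is the bookkeeping of the refinement: when a stable component of $T_2^{s,12}$ is cut by pieces of $T_2^{u,23}$, individual subintervals may be short or may be swallowed entirely into the new unstable part, and one must verify that the induced interval subdivisions behave coherently under both transport maps $\alpha_{12}$ and $\alpha_{23}$, so that $\beta_{13}$ is well defined, is a bijection on components, and respects the $\calY_0$-cluster labels. The multiplicative blow-up at each iteration of the induction is controlled (each pass through the $n=3$ case at worst doubles $L_1$ and $L_2$), so the final constant $M_n$ depends only on $L_1,L_2,$ and $n$, as required.
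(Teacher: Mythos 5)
Your proposal follows essentially the same route as the paper: reduce by induction to the three-setup case, intersect the two stable decompositions on the middle tree $T_2$, transport the refinement to $T_1$ and $T_3$ via the pairing isometries, and define $\alpha_{13}$ and $\beta_{13}$ by composition. The one place you gesture rather than argue --- verifying that the composed $\beta_{13}$ is well defined and adjacency-preserving --- is exactly where the paper does its real work, via a ``gluing data'' claim showing that the induced endpoint maps $\wa^3_{1,2}$ respect which end of each stable interval faces which other component; but you correctly identify this as the obstacle, and the rest matches.
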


The proof of Proposition \ref{prop:stable iteration} is a straight-forward iterative application of the corresponding statement for the base case where $n=3$.  We deal with this case next in Proposition \ref{prop:stable iteration, pair}, the proof of which completes the proof of Proposition \ref{prop:stable iteration}.

\begin{proposition}\label{prop:stable iteration, pair}
Suppose that $(F_i;\calY_i)$ for $i=1,2,3$ is a $(\calY_0, L_1, L_2)$-chain of admissible $\epsilon$-setups, with stable trees $T_1,T_2,T_3$.  Then exist $\calY_0$-stable $(4L_1^2, 4L_2^2)$-compatible decompositions $T^{2,3}_{s,1} \subset T_{e,1}$ and $T^{2,1}_{s,3} \subset T_{e,3}$.
\end{proposition}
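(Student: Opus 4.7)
The plan is to define the new stable decomposition of $T_1$ by intersecting the pullback of the stable structure coming from the link $T_2 \leftrightarrow T_3$ with the existing stable structure of the link $T_1 \leftrightarrow T_2$. Concretely, let $T_{s,1}^2 \subset T_{e,1}$, $T_{s,2}^1 \subset T_{e,2}$ come from the pair $(F_1,F_2)$ with bijection $\alpha_{12}\colon \pi_0(T_{s,1}^2)\to \pi_0(T_{s,2}^1)$ and isometries $i_{12}$, and similarly $T_{s,2}^3, T_{s,3}^2, \alpha_{23}, i_{23}$ from the pair $(F_2,F_3)$. For each stable pair $(E,\alpha_{12}(E))$ with $E\in \pi_0(T_{s,1}^2)$, I consider the restriction $i_{12}^{-1}\bigl(\alpha_{12}(E)\cap T_{s,2}^3\bigr)\subseteq E$: this decomposes $E$ into at most $L_1+1$ subintervals, each contained either in some component of $T_{s,2}^3$ or in $T_{e,2}-T_{s,2}^3$. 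The new stable decomposition $T_{s,1}^{2,3}$ takes, as its stable components, those subintervals which are of the first type; the complementary subintervals, together with all unstable components of the original $T_1\leftrightarrow T_2$ decomposition, are declared unstable. Construct $T_{s,3}^{2,1}\subset T_{e,3}$ symmetrically, and let $\alpha=\alpha_{23}\circ\alpha_{12}$ (restricted to matching subintervals) and $i = i_{23}\circ i_{12}$ be the induced bijection and isometries.

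Next I verify the items of Definition~\ref{defn:stable decomp} for this $T_{s,1}^{2,3}$ and $T_{s,3}^{2,1}$, with constants $(4L_1^2, 4L_2^2)$. Each stable component of $T_{s,1}^2$ contributes at most $L_1+1$ new components under the refinement, so the total number of stable pairs involved is bounded and the total number of unstable components, being the complementary intervals plus the original unstable ones, is bounded by $\sim L_1(L_1+1)$, giving the claimed $4L_1^2$ bound. For each new stable pair $(F, \alpha(F))$ with $F\subseteq E$, stability of type (3) -- exact equality of $\phi_1$ and $\phi_3\circ i$ -- holds whenever both $(E,\alpha_{12}(E))$ and $(\alpha_{12}(E)\supseteq i_{12}(F),\alpha_{23}(i_{12}(F)))$ are identical pairs; this leaves at most $L_1+L_1=2L_1$ pairs where one of them is merely approximate, and there triangle inequality gives closeness $\le L_2+L_2\le 4L_2^2$, which is (4). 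The unstable forests $T_{\diff,1}^{2,3}$ and $T_{\diff,3}^{2,1}$ are defined as the union of the respective unstable forests for the two input decompositions, giving a bounded number of components (item (6)). The bijection $\beta$ on unstable components is defined by chasing through the adjacency-preserving $\beta_{12}$ and $\beta_{23}$, with the identifying-clusters condition \eqref{item:cluster identify} holding on $\calY_0\subseteq \calY_1\cap\calY_2\cap\calY_3$ since it is preserved by each input bijection.

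The main technical obstacle will be ensuring the adjacency-preserving property \eqref{item:Adjacency-preserving} for the combined bijection $\beta$ together with the cluster-identification part \eqref{item:cluster identify}. Subdividing a stable component $E\subset T_{s,1}^2$ according to the stable/unstable structure pulled back from $T_2$ introduces new interval endpoints inside $E$; each such endpoint corresponds to the boundary of some component of $T_{s,2}^3$ or $T_{e,2}-T_{s,2}^3$ sitting inside $\alpha_{12}(E)$. Near each such endpoint one must argue, using that $i_{12}$ is an isometry between identical pairs (or is $L_2$-close in $\calZ$ between approximate pairs) and the gluing-data analysis of Claim~\ref{claim:gluing data} from Theorem~\ref{thm:stable tree, one point}, that adjacency information on the side of $T_1$ passes consistently through $T_2$ and emerges as the correct adjacency information on the $T_3$ side. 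As in the proof of Theorem~\ref{thm:stable tree, one point}, one absorbs short subintervals into the unstable part so that the remaining stable subintervals are long enough that the closest-point projection heuristic, together with quasiconvexity of the embeddings $\phi_i\colon T_i\to\calZ$, identifies combinatorial gluing data with metric closest-point data.

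Finally, once the base case $n=3$ is proved, Proposition~\ref{prop:stable iteration} follows by induction: apply the base case to the triple $(F;\calY_{n-2}),(F;\calY_{n-1}),(F;\calY_n)$ to merge the last two links into a single link between $(F;\calY_{n-2})$ and $(F;\calY_n)$, with constants controlled by $(4L_1^2,4L_2^2)$, and then iterate, so that the resulting constants depend only on $L_1,L_2,n$, as claimed.
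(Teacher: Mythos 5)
Your proposal is correct and follows essentially the same route as the paper: intersect the two stable decompositions on the middle tree $T_2$, transport the refinement to $T_1$ and $T_3$ via the pairing isometries, compose the bijections, and check Definition \ref{defn:stable decomp}. The one small difference is at the gluing-data step: where you propose re-running the metric closest-point-projection argument from Theorem \ref{thm:stable tree, one point} (after absorbing short subintervals), the paper's Claim \ref{claim:gluing data, iteration} handles this purely combinatorially, using the ordering of the refined subintervals inside each parent stable component together with the already-established gluing-data preservation of the parent bijections, which sidesteps any length lower bounds on the new subintervals.
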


\begin{proof}
The first step involves refining the given stable decompositions on $T_1,T_2$ and $T_2,T_3$ using the stable decompositions from $T_2,T_3$ and $T_1,T_2$, respectively.

Following our notation from before (Definition \ref{defn:stable tree}), let $\phi_i:T_i \to \calZ$ denote the maps into $\calZ$ of the respective stable trees.  Moreover, assume that we have
\begin{itemize}
\item a $\calY_0$-stable decomposition $T^1_{s,2} \subset T_{e,2}$ which is $(L_1,L_2)$-compatible with $T^2_{s,1} \subset T_{e,1} $ for the setups $(a_1,b_1;\calY_1)$ and $(a_2,b_2;\calY_2)$, and
\item a $\calY_0$-stable decomposition $T^3_{s,2} \subset T_{e,2}$ which is $(L_1,L_2)$-compatible with $T^2_{s,3} \subset T_{e,3}$ for the setups $(a_2,b_2;\calY_2)$ and $(a_3,b_3;\calY_3)$.
\end{itemize}

Note that the stable decompositions $T^1_{s,2}$ and $T^3_{s,2}$ both live on $T_2$.  Set $T^{1,3}_{s,2} = T^1_{s,2} \cap T^3_{s,2}$.  Observe that $T_{e,2} - T^{1,3}_{s,2} = (T_{e,2} - T^1_{s,2}) \cup (T_{e,2} - T^3_{s,2})$ has at most $2L_1$-many components each of diameter at most $4L_2^2$; note that the larger diameter bound accounts for unstable components to combine, but there are at most $2L_1$-many of them.

\begin{figure}
    \centering
    \includegraphics[width=1\textwidth]{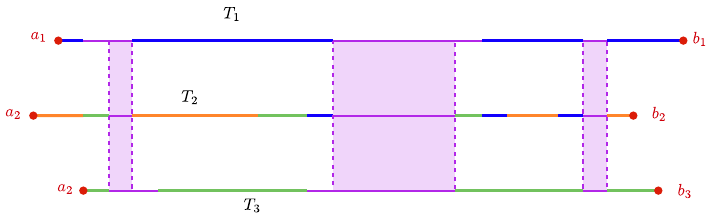}
    \caption{Proposition \ref{prop:stable iteration, pair}, iterating stable decompositions: The idea for the iteration argument is to intersect the stable decompositions $T^1_{s,2} \cap T^3_{s,2} \subset T_2$ for $T_1,T_3$, respectively, and then use the isometries which identify the stable pairs for $T_1, T_2$ to give a refined stable decomposition for $(T_1,T_2)$, which can be combined with a refinement for $(T_2,T_3)$ via simple composition.  In the schematic (which is \emph{not} happening in $\calZ$), we have an example where all three trees are intervals.  These intervals $T_1,T_2,T_3$ are lined up to indicate how to refine the pairwise stable decompositions for $(T_1,T_2)$ and $(T_2,T_3)$ for one for $(T_1,T_3)$.  The orange segments correspond to cluster components of the intervals, while the blue and green segments correspond to the unstable components for the pairs $(T_1,T_2)$ and $(T_2,T_3)$, respectively.  The complement of the orange, blue, and green segments on $T_2$ form the refined stable decomposition on $T_2$, and the purple squares indicate how these segments induce refined stable decompositions on $T_1$ and $T_3$.}
    \label{fig:iterated stable decomp}
\end{figure}

We first produce refined stable decompositions of $T^{2,3}_{s,1} \subset T^2_{s,1}$ and $T^{1,3}_{s,2} \subset T^1_{s,2}$, and the same argument produces the refined stable decompositions $T^{1,3}_{s,2}  \subset T^3_{s,2}$ and $T^{2,1}_{s,3} \subset T^2_{s,3}$.

We can induce a decomposition $T^{2,3}_{s,1}$ on $T_1$ by taking the collection of all 
$$i^{-1}_{D,D'}\left(D' \cap T^{1,3}_{s,2}\right)$$
over all stable pairs $(D,D')$ of $T_1,T_2$ as identified by the bijection $\alpha_{1,2}:\pi_0(T_{s,1}) \to \pi_0(T_{s,2})$.  We first observe that items \eqref{item:stable bijection}--\eqref{item:unstable forests} of Definition \ref{defn:stable decomp} follow fairly quickly, and then prove the properties in item \eqref{item:adjacency}.

First, observe that any component $V \subset T^{2,3}_{s,1}$ is contained in some component $V \subset D \subset T_{s,1} \subset T_{e,1}$, and the map $i_{D,D'}|_V:V \to i_{D,D'}(V)$ is an isometry, where $D' = \alpha_{1,2}(D)$, and vice versa.  Thus the components of $T^{2,3}_{s,1}$ are in bijective correspondence with the components of $T^{1,3}_{s,2}$, with identified components being isometric via these restrictions.  Moreover, these restrictions induce a bijection $\alpha^3_{1,2}:\pi_0(T^{2,3}_{s,1}) \to \pi_0(T^{1,3}_{s,2})$ (as in item \eqref{item:stable bijection} of Definition \ref{defn:stable decomp}) which coincides with the bijective correspondence induced from $\alpha_{1,2}$ and the various isometries $i_{D,D'}$.  Next observe that the images $\phi_1(V)$ and $\phi_2(i_{D,D'}(V))$ are either exactly the same or are $L_2$-Hausdorff close in $\calZ$ as items \eqref{item:identical pairs} and \eqref{item:close pairs} of Definition \ref{defn:stable decomp}.  Item \eqref{item:unstable components} of Definition \ref{defn:stable decomp} follows from the above bounds of $2L_1$ and $4L_2$ on the number and diameter, respectively, of complementary components of $T_{1,e} - T^{2,3}_{s,1}$ and $T_{2,e} - T^{1,3}_{s,2}$.

Finally, for item \eqref{item:unstable forests} of Definition \ref{defn:stable decomp}, there are unstable forests $T^2_{1,\diff} \subset T_1$ and $T^1_{2,\diff} \subset T_2$ whose complements $T_1 - T^2_{1,\diff}$ and $T_2 - T^1_{2,\diff}$ consist of identical components.  Similarly, there are unstable forests $T^3_{2,\diff} \subset T_2$ and $T^2_{3,\diff} \subset T_3$ whose complements $T_2 - T^3_{2,\diff}$ and $T_3 - T^2_{3,\diff}$ are identical.  In the same way we have defined the components of the induced stable decompositions $T^{2,3}_{1,s}$ and $T^{1,3}_{s,2}$, we can take the intersection $T^{1,3}_{s,\diff} := T^1_{2,\diff} \cap T^3_{2,\diff}$---which is a union of components of $T_{2,e}$ and $T_{2,c}$ by construction---and push it to $T_1$ to obtain a subforest $T^{2,3}_{1,\diff} \subset T^2_{1,\diff}$ whose components are identical to the components of $T^{1,3}_{2,\diff}$.  Moreover, by construction, the components of $T^{2,3}_{1,\diff}$ and $T^{1,3}_{2, \diff}$ are unions of components of $T_{1,e}, T_{1,c}$ and $T_{2,e},T_{2,c}$, respectively, and also their complements $T_1 - T^{2,3}_{1,\diff}$ and $T_2 - T^{1,3}_{2, \diff}$ both consist of boundedly-many (in $k, \delta$) components of $T_{1,e}, T_{1,c}$ and $T_{2,e},T_{2,c}$, respectively.  Thus item \eqref{item:unstable forests} holds.

Thus the decompositions $T^{2,3}_{s,1} \subset T_1$ and $T^{1,3}_{s,2} \subset T_2$ satisfy all the properties of Definition \ref{defn:stable decomp} except possibly the gluing data condition in item \eqref{item:stable bijection} and the adjacency conditions in item \eqref{item:adjacency}.

To prove \eqref{item:stable bijection}, we will want to prove the following analogue of the gluing data Claim \ref{claim:gluing data} from the proof of Theorem \ref{thm:stable tree}.  As before, given components $D,D' \in \pi_0(T^{2,3}_{s,1})$, we let $g^{D'}_D$ be the endpoint of $D$ adjacent to the component of $T - D$ containing $D'$, and similarly for components of $\pi_0(T^{1,3}_{s,2})$.  We also let $\wa^3_{1,2}$ denote the endpoint map on paired components $D$ and $\alpha^3_{1,2}(D)$ induced from the corresponding isometry.

\begin{claim}\label{claim:gluing data, iteration}
For any $D, D' \in \pi_0(T^{2,3}_{s,1})$, we have $\wa^3_{1,2}(g^{D'}_D) = g^{\alpha^3_{1,2}(D')}_{\alpha^3_{1,2}(D)}$.
\end{claim}

\begin{proof}[Proof of Claim \ref{claim:gluing data, iteration}]
The idea of the proof is to use the fact that the bijections $\alpha_{1,2}:\pi_0(T^{2}_{s,1}) \to \pi_0(T^1_{s,2})$  preserve the gluing data, and the fact that each component of $T^{2,3}_{s,1}$ is contained in a component of $T^2_{s,1}$.

Given a component $A \in \pi_0(T^2_{s,1})$, let $D_1, \dots, D_n$ be the components of $T^{2,3}_{s,1}$ contained in $A$, which we can take to be occurring in order going from one endpoint $A_1$ of $A$ to the other, $A_2$.  Let $D'_1, \dots, D'_n \in \pi_0(T^{1,3}_{s,2})$ be the corresponding components of $T^{1,3}_{s,2}$ contained in $\alpha_{1,2}(A) \in \pi_0(T^1_{s,2})$, where $D'_i = \alpha^3_{1,2}(D_i)$ by definition of $\alpha^3_{1,2}$.  Because of the ordering, it is easy to see that the gluing data condition between the various $D_i$ is satisfied, that is $\wa^3_{1,2}(g^{D_i}_{D_j}) = g^{D'_i}_{D'_j}$.

For the gluing data for other pairs of stable components outside of $A$, let $B \in \pi_0(T^2_{s,1}) - \{A\}$ be any other component.  Then for any of the $D_i$, the endpoint of $D_i$ adjacent to the component of $T- D_i$ containing $B$ is endpoint closest to the endpoint of $A$ adjacent to the component of $T - A$ containing $B$.  The same is true for any potential component $C \in \pi_0(T^{2,3}_{s,1})$ contained in $B$.  Hence the gluing data condition holds for these components, namely $\wa^3_{1,2}(g^C_{D_i}) = g^{\alpha^3_{1,2}(C)}_{\alpha^3_{1,2}(D_i)}$.
\end{proof}

Thus item \eqref{item:stable bijection} of Definition \ref{defn:stable decomp} is satisfied.  The proof of item \eqref{item:adjacency} now follows from an essentially identical argument to the corresponding part of the proof of Theorem \ref{thm:stable tree}.  

At this point, we have produced $\calY_0$-stable $(2L_1,2L_2)$-compatible decompositions $T^{2,3}_{s,1} \subset T_{s,1}$ and $T^{1,3}_{s,2} \subset T_{s,2}$, and a similar argument produces decompositions $T^{2,1}_{s,3} \subset T_{s,3}$ and $T^{3,1}_{s,2} = T^{1,3}_{s,2}$.  

 We now want to see that $T^{2,3}_{s,1}$ and $T^{2,1}_{s,3}$ are $\calY_0$-stable $(4L_1^2,4L_2^2)$-compatible decompositions for $(F_1;\calY_1)$ and $(F_3;\calY_3)$.  For this, define $\alpha_{1,3}:\pi_0(T^{2,3}_{s,1}) \to \pi_0(T^{2,1}_{s,3})$ to be $\alpha_{1,3} = \alpha^{1}_{2,3} \circ \alpha^3_{1,2}$.  Observe now that since $\alpha^{1}_{2,3}$ and $\alpha^3_{1,2}$ both preserve the gluing data meaning their composition does, too: if $D, D' \in \pi_0(T^{2,3}_{s,1})$ are distinct components, then 
 $$\wa^{3,1}_2\left(\wa^{2,3}_1(g^{D'}_{D})\right) = \wa^{3,1}_2\left(g^{\alpha^{2,3}_1(D')}_{\alpha^{2,3}_1(D)}\right) = g^{\alpha^{3,1}_2(\alpha^{2,3}_1(D'))}_{\alpha^{3,1}_2(\alpha^{2,3}_1(D))} = g^{\alpha_{1,3}(D')}_{\alpha_{1,3}(D)},$$
 as required.  As before, the fact that the gluing data is preserved provides the desired adjacency-preserving bijection $\beta_{1,3}:\pi_0(T_1 - T^{2,3}_{s,1}) \to \pi_0(T_3 - T^{2,1}_{s,3})$.

This completes the proof of the proposition.\end{proof}

\section{The Stabler Trees Theorem}\label{sec:stabler trees}

In this section, we state and prove our main stable tree comparison result, Theorem \ref{thm:stabler tree}, where we produce a sort of combinatorial almost-embedding between the stable trees associated to appropriately compatible finite subsets $F \subset F' \subset \calZ$.  This is the main technical result from this paper for proving our Stabler Cubulations Theorem \ref{thm:stabler cubulations}.  The statement requires a bit more setup.

\subsection{Basic lemmas and fixing notation}\label{subsec:basic tree setup, redux}

In this subsection, we fix some basic notation for the rest of the section.

The first lemma is an expansion of Lemma \ref{lem:basic tree lemma}.  In particular, item (3) allows for comparing the trees produced for subsets $F \subset F'$ by the minimal network function $\lambda$ from Subsection \ref{subsec:basic tree setup}; we leave the proof to the reader. Recall that $p_A$ denotes a closest point projection to a subset $A$ of a given metric space.

\begin{lemma}\label{lem:basic tree lemma, redux} 
Let $\calZ$ be a geodesic $\delta$-hyperbolic space and $\lambda$ the minimal network function
as in Subsection \ref{subsec:basic tree setup}. For any choice of $k>0$, there exists $\epsilon_{0,k}=\epsilon_{0,k}(k,\delta)\geq 0$ with $\ep_{0,k} \to \infty$ as $k \to \infty$ so that for all $\epsilon\geq \epsilon_{0,k}$ there exists $\ep'_k = \ep'(k, \ep)>0$ such that, if $F\subset \calZ$  with $|F|\leq k$ then
\begin{enumerate}
  \item There is 
a $(1,\ep/2)$-quasi-isometry $\ltree(F)\to \hull(F)$ which is $\ep/2$-close to the inclusion of $\ltree(F)$ into $\calZ$.
\item For any two points $x,y\in
\calN_\ep(\ltree(F))$, any geodesic joining them is in $\calN_{\ep'_k}(\ltree(F))$.
\item If $F \subset F'$ with $|F'|<k$, then
\begin{enumerate}
    \item $d^{Haus}_{\calZ}(\lambda(F), \hull_{\lambda(F')}(F))<\epsilon'_k.$
    \item For any $x \in F' - F$, we have $d_{\calZ}(p_{\lambda(F)}(x), p_{\hull_{\lambda(F')}(F)}(x))<\ep'_k.$
\end{enumerate} 
\end{enumerate}
\end{lemma}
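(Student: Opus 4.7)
The plan is to handle items (1) and (2) as essentially direct restatements of Lemma \ref{lem:basic tree lemma}, applied with cardinality bound $k$. Monotonicity of $\ep_{0,k}$ in $k$ can be enforced by taking $\ep_{0,k}$ to be the maximum of the corresponding constants from Lemma \ref{lem:basic tree lemma} over all cardinalities at most $k$, and $\ep_{0,k} \to \infty$ as $k \to \infty$ is automatic from the construction. The bulk of the work lies in item (3).

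For (3a), my plan is to compare both $\lambda(F)$ and $\hull_{\lambda(F')}(F)$ against the common reference object $\hull_{\calZ}(F)$ and conclude by the triangle inequality for Hausdorff distance. The bound $d^{Haus}_{\calZ}(\lambda(F), \hull_{\calZ}(F)) \leq \ep/2$ is immediate from item (1). For $\hull_{\lambda(F')}(F)$, I would argue that any point $p$ in this subtree lies on a tree path in $\lambda(F')$ between two points $x,y \in F$; viewed in $\calZ$ via item (1) applied to $F'$, this tree path becomes a $(1,\ep)$-quasi-geodesic from $x$ to $y$, which by $\delta$-hyperbolicity lies in a uniform neighborhood, depending only on $k,\ep,\delta$, of a $\calZ$-geodesic from $x$ to $y$ and hence of $\hull_{\calZ}(F)$. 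Conversely, any $q \in \hull_{\calZ}(F)$ lies on a $\calZ$-geodesic between two points of $F$, which, by the same stability of quasi-geodesics in hyperbolic spaces, fellow-travels with the corresponding tree path in $\lambda(F')$; this tree path belongs to $\hull_{\lambda(F')}(F)$ by definition.

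For (3b), I would appeal to the standard fact from $\delta$-hyperbolic geometry that Hausdorff-close quasi-convex subsets admit uniformly close closest-point projections: if $A, B \subseteq \calZ$ are $C$-quasi-convex and $d^{Haus}_{\calZ}(A,B) \leq R$, then for any $x \in \calZ$ the nearest-point projections $p_A(x)$ and $p_B(x)$ differ by an amount depending only on $C, R, \delta$. Both $\lambda(F)$ and $\hull_{\lambda(F')}(F)$ are uniformly quasi-convex as $(1,\ep)$-quasi-isometric images of trees in the sense of item (1) applied to $F$ and $F'$ respectively, and they are $\ep'_k$-Hausdorff close by (3a), so the claim follows for any fixed $x \in F' - F$.

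The main obstacle will be bookkeeping: I need to ensure that the constant $\ep'_k$ emerging from the quasi-geodesic stability step in (3a) and the quasi-convex projection step in (3b) depends only on $k, \ep, \delta$ and not on the particular finite sets $F,F'$. This ultimately reduces to the fact that the quasi-isometry constants provided by item (1) for $F'$ depend only on $|F'| \leq k$, so that the comparison between tree paths in $\lambda(F')$ and $\calZ$-geodesics between points of $F$ degrades by at most a $k$-dependent amount.
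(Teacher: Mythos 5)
Your argument is correct and is exactly the standard one the paper has in mind (the paper explicitly leaves this proof to the reader): items (1)--(2) are Lemma \ref{lem:basic tree lemma} with cardinality bound $k$, item (3a) follows by comparing both $\lambda(F)$ and $\hull_{\lambda(F')}(F)$ to $\hull_{\calZ}(F)$ via stability of $(1,\ep)$-quasi-geodesics, and item (3b) follows from the standard fact that nearest-point projections to Hausdorff-close quasiconvex sets coarsely agree. One small quibble: taking $\ep_{0,k}$ to be the maximum of the constants from Lemma \ref{lem:basic tree lemma} over cardinalities at most $k$ gives monotonicity but does not by itself force $\ep_{0,k}\to\infty$; you should also take the maximum with $k$ (harmless, since enlarging $\ep_{0,k}$ only restricts the admissible $\ep$), as this divergence is genuinely used later in Notation \ref{not:fixed based setup, redux}.
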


\begin{remark}
    Item (3b) says that the distance in $\calZ$ between the closest point projection to $\lambda(F)$ of a point $x \in F' - F$ is within $\ep'_k$ of its projection to the hull in $\lambda(F')$ of $F$.  One can interpret this as saying that the expected branch point in $\lambda(F)$ for adding $x$ to $F$ is close to the actual branch point corresponding to $x$ in $\lambda(F')$.  This plays a role in the proofs of Lemmas \ref{lem:fake basic} and \ref{lem:fake separate} below.
\end{remark}

\begin{notation}\label{not:fixed based setup, redux}
    For the rest of this section, we fix the following collection of sets and constants:
\begin{enumerate}
    \item A natural number $k$, which globally controls the size of our finite subsets.
    \item Minimal network functions $\lambda, \lambda'$ controlled by $k$ as in Lemma \ref{lem:basic tree lemma, redux}.
    \item A positive number $\ep_k>\ep_{0,k}$ as in Lemma \ref{lem:basic tree lemma, redux}.
     \item Finite subsets $F \subset F' \subset \calZ$ with $|F'| \leq  k$;
     \begin{itemize}
         \item In particular, the embedding maps $\lambda(F), \lambda(F')\to \calZ$ are $(1,\ep_k/2)$-quasi-isometric embeddings.
     \end{itemize}
     \item A natural number $k_0 = k_0(k, \delta)>0$ large enough so that 
     \begin{enumerate}
        \item $\lambda(F) \subset \calN_{\frac{\ep_{0,k_0}}{2}}(\lambda(F'))$;
         \item $\ep_{0,k_0}/2> 10\ep'_k$.  This controls the constants we use in the stable tree construction;
         \item If $p, q\in \calN_{\ep_k}(\lambda(F))$, $p',q' \in \lambda(F')$ are closest points, $\gamma$ is a geodesic between $p,q$, and $\gamma' \subset \lambda(F')$ is a geodesic in $\lambda(F')$ between $p',q'$, then $$d^{Haus}_{\calZ}(\gamma, \gamma') < \ep_{0,k_0}.$$
     \end{enumerate}
   \item Positive numbers $\ep_{k_0}>\ep_{0,k_0}$ as in Lemma \ref{lem:basic tree lemma, redux}.
   
       \item Cluster graph constants: We fix 
       \begin{enumerate}
           \item A cluster proximity constant $\ep'_{k_0}$ so that $2\ep'_{k_0} > \ep'_{k_0} + \ep_{k_0}$.
           \item A cluster separation constant $E_0 = E_0(k_0, \delta, \ep_{k_0})>0$ so that $E_0 > 8\ep'_{k_0}.$
       \end{enumerate}
\end{enumerate}

\end{notation}

\begin{remark}
    The bounds on $k_0$ in item (5) from Notation \ref{not:fixed based setup, redux} deserve some comment.  Each of them uses the fact that $\ep_{0,k} \to \infty $ as $k\to \infty$.  For (5a), both $\lambda(F), \lambda(F')$ are $(1,\ep_k/2)$-quasi-isometrically embedded by assumption (4), so $\hull_{\lambda(F')}(F)$ and $\lambda(F)$ are uniformly close depending only on $\ep_k, \delta$.  Item (5b) is immediate.  And item (5c) is an application of the Morse property.
\end{remark}

\begin{remark}[$k$ and $k_0$]\label{rem:fixed size}
    The relationship between the constants $k$ and $k_0$ and the versions of the constants $\ep,\ep', E$ that they determine is worth a comment.  The constructions related to stable trees will use the $k_0$-versions of these constants, namely $\ep_{0,k_0}, \ep'_{k_0}, E_0$.  On the other hand, all of the constants associated to the minimal network functions $\lambda$ for $F,F'$ and the proximity constraint for the sets $\calY, \calY'$ to $\lambda(F), \lambda(F')$ will depend on the $k$-versions, in particular $\ep_{0,k}$ and $\ep'_{k}$.  
    
    In applications, such as building metrics on colorable HHSs in Section \ref{sec:asymp CAT(0)}, we will only need arguments which involve modeling the hulls of a controlled number of points.  As we shall see below, setting $k_0$ to be much larger than such a controlled number allows us to have a unified set of constants for building and comparing stable trees for finite sets of points of cardinality less than $k$.

    In the end, however, all of these constants fundamentally depend only on $k$ and $\delta$ (with the latter determined by $k$ in the HHS setting).  Thus throughout the section, we will only indicate the dependency of our constants on $k, \delta$.
\end{remark}

\subsection{Sporadic cluster points and iterated admissible setups}

Given finite subsets $F \subset F' \subset \calZ$ as in Notation \ref{not:fixed based setup, redux}, let $F' - F = \{x_1, \dots, x_n\}$.  The main part of our Stabler Tree Theorem \ref{thm:stabler tree} involves explaining how to build stable decompositions for adding one $x_i$ at a time to $F$.  The process of iterating this procedure to handle all of the $x_i$ simultaneously will essentially be an application of the iteration ideas from Proposition \ref{prop:stable iteration}.  Nonetheless, the setup requires setting some notation.

Fix $\ep_k$-setups (Definitions \ref{defn:ep setup}) $(F;\calY)$ and $(F';\calY')$ with $F \subset F'$, so that $(F;\calY)$ is $(0,\ep_k)$-admissible with respect to $(F';\calY')$ (Definition \ref{defn:admissible setup}), meaning in particular that $\calY \subset \calY'$.  The following definition, which was the motivation for Definition \ref{defn:sporadic domains}, puts an extra constraint on admissibility in the case where $F' - F = \{x\}$.  The set in the definition should be thought of as a union of neighborhoods of the additional branches of $\ltree(F')$ compared to $\ltree(F)$.

\begin{definition}\label{defn:sporadic cluster points}
    For $S\geq 0$, a cluster point $p \in \calY' - \calY$ is $S$-\textbf{sporadic} if
    $$p \notin \bigcap_{f \in F}\mathcal N_{S}\left(\hull(x,f)\right).$$
    \begin{itemize}
        \item We let $\calY_{\spor}(S)$ denote the set of $S$-sporadic cluster points.
    \end{itemize}
\end{definition}

\begin{figure}
    \centering
    \includegraphics[width=.75\textwidth]{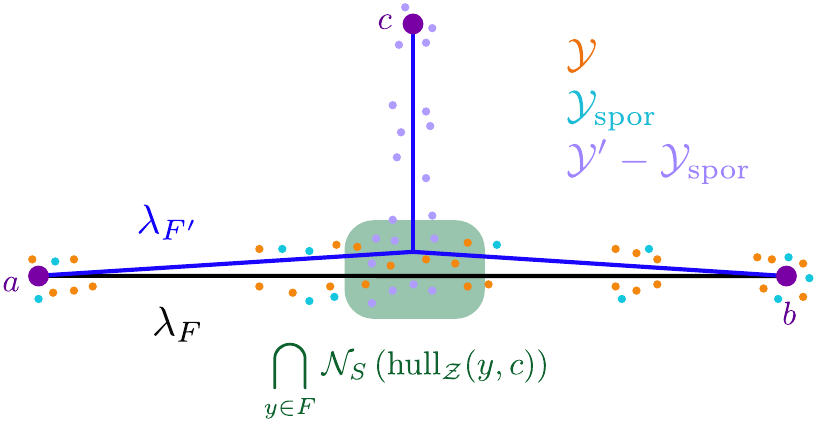}
    \caption{A relatively simple pair of setups with $F = \{a,b\}$ and $F' = \{a,b,c\}$.  The sporadic cluster points $\calY_{\spor}$ are those in $\calY' - \calY$ which lie outside of a wide neighborhood of the ``branch point'' for $c$ in $\lambda(F)$.  In the hierarchical setting and that of the Stabler Tree Theorem \ref{thm:stabler tree}, we will be able to control the number of these sporadic cluster points.}
    \label{fig:sporadic cluster}
\end{figure}

We now want to compare setups $(F;\calY)$ and $(F';\calY')$ as above where now $F' - F = \{x_1, \dots, x_n\}$.  Toward that end, for $1 \leq i \leq n$, set $F_i = F \cup \{x_1, \dots, x_i\}$.

\begin{definition}[Well-layered setups] \label{defn:well-layered}
Given $S,N>0$, we say that two $\ep_k$-setups $(F;\calY)$ and $(F';\calY')$ with $F \subset F'$ and $F' - F = \{x_1, \dots, x_n\}$ are $(S,N,\ep_k)$-\emph{well-layered} if there exists $\calY = \calY_0 \subset \calY_1 \subset \cdots \subset \calY_n = \calY'$ so that
\begin{enumerate}
    \item Each $(F_i;\calY_i)$ is $(0, \ep_k)$-admissible with respect to $(F_{i+1};\calY_{i+1})$, and
    \item If $\calY_{\spor}(S;i)$ denotes the set of $S$-sporadic domains for the setups $(F_i;\calY_i)$ and $(F_{i+1};\calY_{i+1})$, then $|\calY_{\spor}(S;i)|<N$.
    \item For each $1 \leq i \leq n$, we have $\calY_{\spor}(S;i) \subset \calN_{\ep_k/2}(\lambda(F_i))$.
\end{enumerate}
\end{definition}

\begin{remark}
    In the HHS setting, each tuple of points $F_i$ will come with some collection of domains $\calU_i$ to which it has a large diameter projection.  The hyperbolic space $\calZ$ will correspond to some $U \in \calU_i$, and the cluster points $\calY_i$ will correspond to $\rho$-points for domains $V \in \calU_i$ with $V \nest U$.

    The well-layered property corresponds to the fact that large projections for $F_i$ are large projections for $F_{i+1}$.  The proximity condition in item (3) of Definition \ref{defn:well-layered} will follow from the Bounded Geodesic Image axiom and our choice of $k$.  See Subsection \ref{subsec:basic tree setup, redux} below for a detailed discussion.
\end{remark}

\subsection{Statement of the Stabler Trees Theorem} \label{subsec:stabler trees}

We are now ready to state the theorem, which we note requires a bound on the number of sporadic domains and, crucially, that $\calY_{\spor}$ is $\ep_k/2$-close to $\lambda(F)$, so that $(F; \calY \cup \calY_{\spor})$ is an $\ep_k$-setup.

 \begin{theorem}\label{thm:stabler tree}
  There exists $S_0 = S_0(k, \delta)>0$ so that if $S > S_0$, then there exists an $L = L(S, N, k, \delta)>0$ so that the following holds.  Suppose that $(F; \calY)$ and $(F';\calY')$ are $(S,N,\ep_k)$-well-layered $\ep_k$-setups where $F \subset F'$ and $|F'|\leq k$.
  
Then the $(\ep_{k_0}, \ep'_{k_0}, E_0)$-stable trees $T, T'$ with respect to $(F, \calY)$ and $(F', \calY')$ admit stable decompositions $T_s \subset T$ and $T'_s \subset T'$ so that $T_s$ is $\calY$-stably $(L,L)$-compatible with $T'_s$.
\end{theorem}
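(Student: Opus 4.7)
The plan is to use the well-layered structure to reduce Theorem \ref{thm:stabler tree} to iterated applications of a base case in which we add one point to $F$, and then to leverage the iteration machinery of Proposition \ref{prop:stable iteration}. By definition of $(S,N,\ep_k)$-well-layered, there is a filtration $F = F_0 \subset F_1 \subset \cdots \subset F_n = F'$ with $F_{i+1} = F_i \cup \{x_{i+1}\}$ and cluster point chain $\calY_0 \subset \calY_1 \subset \cdots \subset \calY_n$. If we can produce $\calY$-stable $(L_0,L_0)$-compatible decompositions between the stable trees of consecutive links $(F_i;\calY_i)$ and $(F_{i+1};\calY_{i+1})$ for $L_0 = L_0(S,N,k,\delta)$, then Proposition \ref{prop:stable iteration} gives the conclusion with $L = M_n(L_0,L_0,n)$, and $n \leq |F'| \leq k$ is controlled.

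For the base case of adding a single point $x = x_{i+1}$ to $F_i$, I would first further factor the transition into two sub-steps through an intermediate setup $(F_i; \calY_i \cup \calY_{\spor}(S;i))$. By condition (3) of well-layering, the $N$ sporadic cluster points lie in $\calN_{\ep_k/2}(\lambda(F_i))$, so this intermediate is a bona fide $\ep_k$-setup. The first sub-step, going from $(F_i;\calY_i)$ to $(F_i;\calY_i \cup \calY_{\spor}(S;i))$, involves $F_i$ unchanged and $N$ new cluster points, so Theorem \ref{thm:stable tree} (applied $N$ times, or a single application of its $(N,\epsilon)$-admissible version), supplies a $\calY_i$-stable $(L_1,L_2)$-compatible decomposition with constants depending only on $N, k, \delta$. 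The second sub-step goes from $(F_i; \calY_i \cup \calY_{\spor}(S;i))$ to $(F_{i+1};\calY_{i+1})$, where now $F$ genuinely grows by one point and every cluster point in $\calY_{i+1} - (\calY_i \cup \calY_{\spor}(S;i))$ is, by construction, non-sporadic, hence lies within $S$ of $\hull(x,f)$ for every $f \in F_i$.

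The second sub-step is the main obstacle, and I would handle it by adapting the unstable core technology of Subsection \ref{subsec:unstable core} to the setting where $F$ itself increases. The guiding picture is that $\lambda(F_{i+1})$ is obtained from $\lambda(F_i)$ by attaching a new branch from a branch point on $\lambda(F_i)$ out to $x$, the non-sporadic new cluster points decorate this new branch, and the points in $\calY_{\spor}(S;i)$ already added in the first sub-step cover any cluster points near the branch point that were not already present. Using Lemma \ref{lem:basic tree lemma, redux}(3), especially item (3b) relating $p_{\lambda(F_i)}(x)$ to the branch point in $\lambda(F_{i+1})$, together with the Bounded Geodesic Image style arguments of Section \ref{sec:controlling domains}, one defines an unstable core in each of $T_i, T_{i+1}$ supported on a bounded neighborhood (in the cluster graph) of (a) the branch point in $\calG$ and (b) the new branch region containing $x$ and its non-sporadic cluster points. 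An analog of Lemma \ref{lem:bounded affected} shows this region contains only a bounded number (depending on $S,k,\delta$) of affected clusters, and then an analog of Proposition \ref{prop:unstable core} gives that outside the unstable core the edge and cluster components of $T_i$ and the corresponding components of $T_{i+1} \cap \hull_{T_{i+1}}(F_i)$ are identical.

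Once the structural statement is in place, the stable decomposition itself is produced exactly as in the proof of Theorem \ref{thm:stable tree, one point}. Define the stable forest $T_s \subset T_i$ and $T'_s \subset \hull_{T_{i+1}}(F_i) \cap T_{e,i+1}$ as the complement (in the edge forest) of the unstable core plus a bounded collection of very short stable components, arrange by Lemma \ref{lem:inductive forest} that long stable components can be assumed to have length exceeding any fixed constant, and use Claim \ref{claim:gluing data}-style gluing data preservation (via endpoint-closest-point-projection in $\calZ$ and the identical nature of complementary components of the unstable core) to induce the adjacency-preserving bijection $\beta$ and verify item \eqref{item:adjacency} of Definition \ref{defn:stable decomp}. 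The main subtlety is that the map $\alpha$ now sends $\pi_0(T_s)$ into $\pi_0(T'_s)$ as a proper subset (since $T_{i+1}$ has extra stable components on the new branch), but $\hull_{T_{i+1}}(F_i)$ in the target makes the bookkeeping work, and the identifying-clusters condition \eqref{item:cluster identify} for $y \in \calY_i \cup F_i$ follows from the fact that all such clusters lie outside the new branch region, hence in identical complementary components by the unstable core proposition.
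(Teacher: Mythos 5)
Your global architecture matches the paper's: reduce to the base case $F'-F=\{x\}$, factor the transition through the intermediate setup $(F;\calY\cup\calY_{\spor})$ (legitimate by condition (3) of well-layering), handle that first sub-step with Theorem \ref{thm:stable tree}, and assemble everything with Proposition \ref{prop:stable iteration}. The gap is in the second sub-step, which is where essentially all of the work in the paper's proof lives. You propose to ``adapt the unstable core technology of Subsection \ref{subsec:unstable core}'' to the setting where $F$ grows, but that machinery is built for a \emph{fixed} reference tree $\lambda(F)$ (fixed shadows, fixed bivalency, fixed minimal networks), and the adaptation is not routine. The crux is your claim that outside a bounded unstable core the edge components of $T$ and of $\hull_{T'}(F)\cap T'_e$ are \emph{identical}. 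The edge forest is $\bigsqcup_V \lambda'(V^0)$ over components $V$ of $\calG-\calE^0$, and minimality of $\lambda'$ is a global condition on each component: when the (possibly unboundedly many) non-sporadic cluster points along the new branch are added, the component of $\calG'$ containing the branch point acquires new clusters, and the minimal network spanning it can reroute edges between \emph{old} clusters. The only tool for preventing this is Lemma \ref{lem:inductive forest}, which requires a cluster that $\ep$-separates the old cluster points from the new ones. Nothing in your setup supplies such a cluster: there is no reason a cluster of $\calG'$ sits exactly at the junction, so old and new clusters can be adjacent in $\calG'$ and the forest components attached to old clusters genuinely change. Relatedly, your assertion that the sporadic points ``cover any cluster points near the branch point'' inverts the definition: by Definition \ref{defn:sporadic cluster points} the sporadic points are precisely the new points \emph{away} from $\bigcap_f\calN_S(\hull(x,f))$, i.e.\ away from the branch point and the new branch, so they provide no buffer there.

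The paper closes exactly this gap by manufacturing the missing separator: it inserts nets of \emph{fake cluster points} $\calY_{\fake}\subset\lambda(F)$ around $p_{\lambda(F)}(x)$ and $\calY'_{\fake}\subset\lambda(F')$ along the new branch (Definitions \ref{defn:fake F} and \ref{defn:fake F'}). These form single clusters $C_{\fake}\subset C'_{\fake}$ (Lemma \ref{lem:fake basic}) which $\ep'_{k_0}$-separate every old cluster point from every new one (Lemma \ref{lem:fake separate}); this yields a type-preserving embedding of cluster graphs $\calG_{\fake}\hookrightarrow\calG'_{\fake}$ (Lemma \ref{lem:fake graph}) and, via Lemma \ref{lem:inductive forest}, the key Proposition \ref{prop:edge inject} that every edge component of $T_{\fake}$ is literally an edge component of $T'_{\fake}$. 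The common edge components then serve as the bridge for the Proposition \ref{prop:stable iteration, pair}-style composition, and the extra links $(F;\calY\cup\calY_{\spor})\rightsquigarrow(F;\calY\cup\calY_{\spor}\cup\calY_{\fake})$ and $(F';\calY'\cup\calY'_{\fake})\rightsquigarrow(F';\calY')$ are absorbed by Theorem \ref{thm:stable tree} since $|\calY_{\fake}|,|\calY'_{\fake}|$ are bounded. To repair your argument you would either need to reproduce this buffer construction or prove from scratch that the new clusters near the junction cannot reroute the minimal networks attached to old clusters; as written, that step is missing.
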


\begin{remark}
    To be clear, the stable decomposition constant $L$ in Theorem \ref{thm:stabler tree} depends on sporadicity constant $S$ and the corresponding bound $N$, as well as the larger ambient constants $k_0$ and its related stable tree constants, but thus also on the various constants associated to the other bound $k$.  See Remark \ref{rem:fixed size} for further discussion.
\end{remark} 

\begin{remark}[Simplicialization] \label{rem:simplicialization}
    In Subsection \ref{subsec:simplicialization}, we explain how to prove a simplicialized version of Theorems \ref{thm:stable tree} and \ref{thm:stabler tree}.  This allows us to assume that the edge and stable components of stable trees and their the stable decompositions, respectively, are simplicial trees.  This modification is necessary for our work in Section \ref{sec:stabler cubulations}.  See Proposition \ref{prop:simplicialization} for a precise statement.
\end{remark}

\begin{remark}[Motivating Theorem \ref{thm:stabler tree}]
    As mentioned in Section \ref{sec:stable trees}, the main motivation for Theorem \ref{thm:stabler tree} is Corollary \ref{cor:collapsed isometry} in Subsection \ref{subsec:collapsing motivation} below.  It provides the raw materials we use in Section \ref{sec:stabler cubulations} to plug into the cubulation machinery from \cite{Dur_infcube}.
\end{remark}

\subsection{Outline of proof}\label{subsec:stabler tree outline}

We give an outline before proceeding with the proof, which will involve proving some supporting lemmas.  As we will see below, the main part of the proof is the case of adding a single point, that is when $F' = F \cup \{x\}$.  The proof of this base case involves a strategic application of Theorem \ref{thm:stable tree} as follows:

Starting with the observation above that $(F; \calY \cup \calY_{\spor})$ is an $\ep_k$-setup by assumption, we first add to each of the setups $(F, \calY \cup \calY_{\spor})$ and $(F', \calY')$ a bounded number of \emph{fake cluster points}, denoted by $\calY_{\fake}$ and $\calY'_{\fake}$, near the projection of $x$ to $\lambda(F)$ in $\calZ$ and the branch point corresponding to $x$ in $\lambda(F')$. Very roughly, adding these fake cluster points will allow us to just ``add a branch'' to the stable tree for $F$ to obtain the one for $F'$.  Fake cluster points are defined in Definitions \ref{defn:fake F} and \ref{defn:fake F'}, and their basic properties are recorded in Lemma \ref{lem:fake basic}.  The goal is to prove that there is a sequence of admissible setups:
\[(F; \calY) \rightsquigarrow (F; \calY \cup \calY_{\spor}) \rightsquigarrow (F; \calY \cup \calY_{\spor} \cup \calY_{\fake}) \rightsquigarrow (F'; \calY' \cup \calY'_{\fake}) \rightsquigarrow (F'; \calY').\]

In terms of building stable decompositions using Theorem \ref{thm:stable tree}, we will see that the transitions between the first, second, and fourth pairs are straightforward from the definitions (Subsection \ref{subsec:fake clusters}).  Building a stable decomposition for the third pair requires a detailed analysis and is the main work in this section.

As a first step, the fake cluster points act as a buffer by separating cluster points in $\calY \cup \calY_{\spor}$ from those in $\calY' - (\calY \cup \calY_{\spor})$ in a controlled way; see Lemma \ref{lem:fake separate}.  This in turn will help us control the combinatorial setup of the respective cluster separation graphs for these expanded ``fake'' setups; see Lemma \ref{lem:fake graph}.  In particular, by adding these fake cluster points so that their shadows go deep enough into their respective trees, we will be able to arrange (Proposition \ref{prop:edge inject}) that every edge component of the stable tree $T_{\fake} = T_{e,\fake} \cup T_{c, \fake}$ for $(F, \calY \cup \calY_{\spor} \cup \calY_{\fake})$ is an edge component of $T'_{\fake} = T'_{e,\fake} \cup T'_{c,\fake}$, the stable tree for the $\epsilon$-setup $(F \cup \{x\}, \calY' \cup \calY'_{\fake})$.

At this point, we can then apply Proposition \ref{prop:stable iteration} to build compatible stable decompositions bridging
between $(F, \calY)$ and $(F, \calY \cup \calY_{\spor} \cup \calY_{\fake})$, and $(F', \calY')$ and $(F', \calY' \cup \calY'_{\fake})$, respectively.  Since the stable pieces of the stable decompositions are contained in the (edges of) the edge components, an analogous argument to the proof of Proposition \ref{prop:stable iteration, pair} will allow us to build compatible stable decompositions on the common components of the stable trees corresponding to $(F, \calY \cup \calY_{\spor} \cup \calY_{\fake})$ and $(F', \calY' \cup \calY'_{\fake})$, since the edge components of the stable tree of the former are edge components of the stable tree for the latter by Proposition \ref{prop:edge inject}.

Upgrading from the base case to the general case---where we are adding more than one point---will then be another variation on this iteration argument.

\subsection{Fake cluster points} \label{subsec:fake clusters}

For the rest of the section, we will utilize the setup and notation from Subsection \ref{subsec:basic tree setup, redux} and the statement of Theorem \ref{thm:stabler tree}.  The core of the argument relates to the base case where we are adding a single point to $F$, i.e. where $F' - F = \{x\}$.  Iteratively adding points requires essentially no new ideas.

Toward that end, until our iterative argument in Subsection \ref{subsec:stabler iteration}, we assume that $F' - F = \{x\}$.

The first step of the proof is to define and establish properties of the fake cluster points, as discussed in the outline above (Subsection \ref{subsec:stabler tree outline}).  The sets of fake cluster points will be defined as \emph{nets} on the trees $\lambda(F)$, so we set this notation in advance:

\begin{definition}[Net]\label{defn:net}
Given constants $a,A\geq 0$, a $(a,A)$-\textbf{net} on a subspace $Y \subset X$ of a metric space is a collection $Z \subset Y$ of points so that:
\begin{itemize}
\item (density) $Z$ is $a$-coarsely dense in $Y$, and
\item (proximity) For any $z,z' \in Z$, we have $d_X(z,z') \geq A$.
\end{itemize}
\end{definition}

We first define the sets of fake cluster points $\calY_{\fake}$ and $\calY'_{\fake}$ for our setups $(F; \calY \cup \calY_{\spor})$ and $(F'; \calY')$, and then establish their basic properties.

\begin{definition}[Fake cluster points for $F$]\label{defn:fake F}
    Given constants $a, A, B\geq 0$, a set of $(a,A,B)$-\textbf{fake cluster points} for $(F; \calY \cup \calY_{\spor})$ is a choice of a $(a,A)$-net of the $B$-neighborhood in $\lambda(F)$ of the closest point projection $p_{\lambda(F)}(x)$ of $x$ to $\lambda(F)$.  We denote such a choice by $(a,A,B)-\calY_{\fake}$ or simply $\calY_{\fake}$ when the setup is fixed.
    
\end{definition}

\begin{remark}\label{rem:fake constants}
    In the definition of fake cluster points, the third parameter $B$ controls the diameter of the set around the ``attaching'' point of $x$ to $\lambda(F)$.  The first two parameters $a,A$ control the density and spacing within that set.  Hence, when controlling these parameters, we will need to make $a,A$ small and $B$ large.
\end{remark}

\begin{remark}[Sporadic and fake clusters]\label{rem:sporadic fake}
    The sporadicity constant $S_0$ from Theorem \ref{thm:stabler tree} is related to and partially determines the fake cluster diameter constant $B$, in that we will always need to take $B$ larger than (likely some controlled multiple of) $S_0$.  The idea is that sporadic cluster points avoid a neighborhood of the ``branch point'' in $\lambda(F)$ of the new point $x \in F' - F$.  On the other hand, the fake cluster points $\calY_{\fake}$ are precisely defined to cover such a neighborhood.  See Lemma \ref{lem:fake basic}.
\end{remark}

For the rest of this subsection, fix $a,A,B \geq 0$ and a set of $(a,A,B)$-fake cluster points $\calY_{\fake}$  for $(F; \calY \cup \calY_{\spor})$, where we will adjust $a,A,B$ as necessary as in Remark \ref{rem:fake constants}.

\begin{definition}[Fake cluster points for $F'$]\label{defn:fake F'}
    A set of $(a,A,B)$-\textbf{fake cluster points} for $(F'; \calY')$ is the set $\calY_{\fake}$ along with a choice of $(a,A)$-net of the $B$ neighborhood in $\lambda(F') - \hull_{\lambda(F')}(F)$ of the closest point projection $p_{\hull_{\lambda(F')}(F)}(x)$ of $x$ to the hull in $\lambda(F')$ of $F$.  We denote such a choice by $(a,A,B)-\calY'_{\fake}$, or simply $\calY'_{\fake}$ when the setup is fixed.
\end{definition}

\begin{figure}
    \centering
    \includegraphics[width=.75\textwidth]{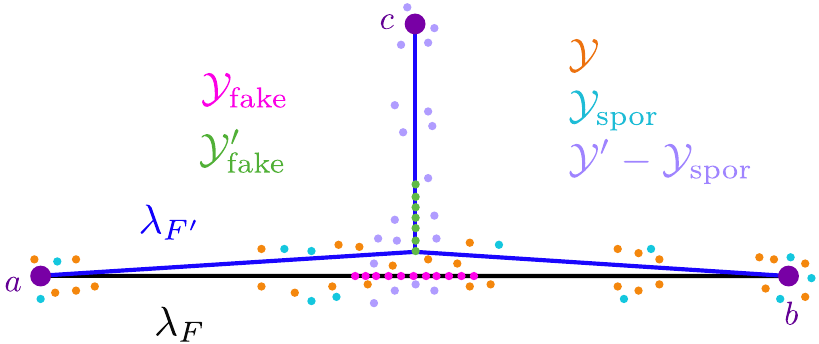}
    \caption{The example from Figure \ref{fig:sporadic cluster} revisited.  The fake cluster points $\calY_{\fake}$ lie along $\lambda(F)$ near this ``branch point'', while the points in $\calY'_{\fake}$ extend $\calY_{\fake}$ up the genuine branch for $c$ in $\lambda(F')$.  In Lemma \ref{lem:fake basic}, we show that there are two clusters $C_{\fake}, C'_{\fake}$ containing $\calY_{\fake}, \calY'_{\fake}$ respectively.  Then in Lemma \ref{lem:fake separate}, we show that these fake clusters separate cluster points in $\calY \cup \calY_{\spor}$ from those in $\calY'- \calY_{\spor}$.  }
    \label{fig:fake cluster}
\end{figure}

\begin{remark}
By definition, $\calY'_{\fake}$ is an extension of $\calY_{\fake}$, i.e. $\calY_{\fake} \subset \calY'_{\fake}$.  The extension is a net which moves into the ``new branch'' of $\lambda(F')$ corresponding to the added point $x \in F' - F$.  By construction, the set $\calY_{\fake}$ already ``covers'' the neighborhood in $\hull_{\lambda(F')}(F)$ of branch point corresponding to this ``new branch''.  See Figure \ref{fig:fake cluster} for a schematic.
\end{remark}

As we will see, once its parameters $a,A,B$ are carefully chosen, the set $\calY_{\fake}$ will acts as a buffer which insulates the rest of the setup $(F; \calY \cup \calY_{\spor})$ from the changes which occur when adding the additional cluster points in $\calY'- (\calY \cup \calY_{\spor})$.  Similarly, the set $\calY'_{\fake}$ of fake cluster points for $(F'; \calY')$ also acts as a buffer to the possibly unbounded amount of combinatorial data in $(F'; \calY')$ associated to the new point $x$.

We first establish some basic properties for both of $\calY_{\fake}$ and $\calY'_{\fake}$ in the following lemma.  In what follows, it may help the reader to recall the various definitions from Subsection \ref{subsec:cluster basics}.  In particular, we recall that we have fixed a cluster proximity constant $E_0$ and a cluster separation constant $\ep'_{k_0}$ as in Notation \ref{not:fixed based setup, redux}.

\begin{lemma}\label{lem:fake basic}
    There exist constants $a_0 = a_0(k, \delta)>0$, $A_0 = A_0(k, \delta)>0$, and $B_0 = B_0(k, \delta,S)>0$ so that for any $0 \leq a \leq a_0$, $0 \leq A \leq A_0$, and $B\geq B_0$ and any choices $\calY_{\fake}:=(a,A,B)-\calY_{\fake}$ and $\calY'_{\fake}:=(a,A,B)-\calY'_{\fake}$ of $(a,A,B)$-fake cluster points, the following hold:
    \begin{enumerate}
        \item The pairs $(F; \calY \cup \calY_{\spor}(B) \cup \calY_{\fake})$ and $(F'; \calY' \cup \calY'_{\fake})$ are $\ep_k$-setups.
        \item The setup $(F; \calY \cup \calY_{\spor}(B) \cup \calY_{\fake})$ is $\frac{\ep_{k_0}}{2}$-admissible with respect to $(F; \calY \cup \calY_{\spor}(B) \cup \calY_{\fake})$.
        \item There exists a unique cluster $C_{\fake}$ for the setup $(F; \calY \cup \calY_{\spor}(B) \cup \calY_{\fake})$ so that $\calY_{\fake} \subset C_{\fake}$.
        \item There exists a unique cluster $C'_{\fake}$ for the setup $(F'; \calY'\cup \calY'_{\fake})$ so that $\calY'_{\fake} \subset C'_{\fake}$.
        \item As sets, we have $C_{\fake} \subset C'_{\fake}$.
        \item If $p \in \calY' - \calY_{\spor} = \calY_{\spor}(S)$ satisfies $p \in \calN_{\ep_k}(\lambda(F))$ then $p \in C'_{\fake}$.
        \item There exists a constant $D_{\fake} = D_{\fake}(a,A,B)>0$ so that $\#\calY_{\fake} < D_{\fake}$ and $\#\calY'_{\fake} < D_{\fake}$.
        
    \end{enumerate}
\end{lemma}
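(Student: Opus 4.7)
The plan is to fix the net parameters so that $\calY_{\fake}$ densely covers a neighborhood of the projection point $p_0=p_{\lambda(F)}(x)$, and $\calY'_{\fake}$ extends this net onto the new branch in $\lambda(F')$ corresponding to $x$. Concretely I would take $a_0 = E_0/100$, $A_0\leq a_0$, and choose $B_0$ large enough both to dominate $S$ and to absorb the hyperbolic-geometry constant produced in the proof of item~(6) below. Items (1) and (7) come almost for free: the $\ep_k$-setup condition asks each cluster point to lie within $\ep_k/2$ of the relevant $\lambda$-tree, and this holds for points in $\calY$ by assumption, for points in $\calY_{\spor}(B)$ by item~(3) of the well-layered hypothesis (Definition~\ref{defn:well-layered}), and for points in $\calY_{\fake},\calY'_{\fake}$ by construction, while the cardinality bound follows from a standard packing estimate since the trees are quasi-isometrically embedded with constants controlled by $k,\delta$. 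For admissibility~(2) (reading the second $F$ as $F'$), the common cluster points all lie within $\ep_k/2$ of $\lambda(F)$, and hence within $\ep_k/2+\ep'_k$ of $\lambda(F')$ via Lemma~\ref{lem:basic tree lemma, redux}(3a); by our choice of $k_0$ in Notation~\ref{not:fixed based setup, redux}(5) this is at most $\ep_{k_0}/4$.

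For items (3)--(5) I would use that a $(a,A)$-net with $a\ll E_0$ along a $(1,\ep_k/2)$-quasi-embedded interval in $\calZ$ is automatically connected in the $E_0$-cluster graph, since any two consecutive points are within $2a<E_0$ of each other. This gives a single cluster $C_{\fake}$ containing $\calY_{\fake}$, proving (3); the identical argument applied to $\calY'_{\fake}$, which is itself an $(a,A)$-net along a neighborhood of the branch point for $x$ in $\lambda(F')$ containing $\calY_{\fake}$, yields a single cluster $C'_{\fake}\supseteq \calY'_{\fake}$, proving (4). For (5), any point of $\calY\cup\calY_{\spor}(B)$ whose $E_0$-cluster equivalence class contains a point of $\calY_{\fake}\subset\calY'_{\fake}$ is, in the setup $(F';\calY'\cup\calY'_{\fake})$, still within $E_0$ of $\calY'_{\fake}$, and since $\calY\cup\calY_{\spor}(B)\subset\calY'$ the cluster $C_{\fake}$ is absorbed into $C'_{\fake}$ as a subset of $\calZ$.

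The main content is item~(6), for which I would argue that a non-sporadic added cluster point $p\in\calN_{\ep_k}(\lambda(F))$ must lie close to $p_0$. Non-sporadicity gives $p\in\bigcap_{f\in F}\calN_{S}(\hull(x,f))$, and hyperbolic tripod geometry shows that each $\hull(x,f)$ fellow-travels the segment $[x,p_0]$ until an $O(\delta)$-neighborhood of $p_0$, then branches off along $\lambda(F)$ toward $f$; since different $f\in F$ determine distinct branches at $p_0$, the intersection $\bigcap_f \calN_S(\hull(x,f))$ is contained in an $O(\delta+S)$-neighborhood of $[x,p_0]$. Combining this with $p\in\calN_{\ep_k}(\lambda(F))$ and the fact that $[x,p_0]$ only approaches $\lambda(F)$ near $p_0$ forces $d_\calZ(p,p_0)\leq B'$ for some $B'=B'(k,\delta,S,\ep_k)$. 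Taking $B_0\geq B'+E_0$ ensures that $p$ is within $E_0$ of some point of $\calY_{\fake}\subset \calY'_{\fake}$ by density, so $p\in C'_{\fake}$ by definition of the $E_0$-cluster graph.

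The main obstacle will be the quantitative version of the tripod argument in (6): one must rule out, using only the non-sporadicity and near-$\lambda(F)$ conditions, the case where $p$ projects to a point of $\lambda(F)$ along one of the ``old'' branches, and this requires the finite branching bound on $\lambda(F)$ (controlled by $k$) together with uniform control of the $O(\delta)$ fellow-traveling constants. The sporadicity parameter $S$ must then be promoted to a slightly larger $B_0$ to bridge the gap between ``close to $[x,p_0]$'' and ``close to $p_0$ on $\lambda(F)$'', which is the content of the dependence $B_0=B_0(k,\delta,S)$.
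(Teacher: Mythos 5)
Your proposal is correct and, for items (1)--(5) and (7), follows essentially the same route as the paper: items (1), (2), (7) by construction and the choice of constants in Notation \ref{not:fixed setup, cubes}/\ref{not:fixed based setup, redux}, and items (3)--(5) by taking the net density parameter small relative to $E_0$ so that $\calY_{\fake}$ and $\calY'_{\fake}$ are chained into single clusters, with the gluing of the two nets at the branch point handled (as you implicitly do) by Lemma \ref{lem:basic tree lemma, redux}(3b). Notably, your tripod argument for item (6) supplies a step that the paper's written proof omits entirely (its proof stops after item (5)); your argument there is sound, with the only point deserving care being the degenerate configurations of $\bigcap_{f\in F}[p_0,f]$ in $\lambda(F)$, which in all cases coarsely reduces to $\{p_0\}$ as you need.
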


\begin{proof}
    Item (1) is clear because we have chosen $\calY_{\fake} \subset \lambda(F)$ and $\calY'_{\fake} \subset \lambda(F')$.  Item (2) follows from item (5) of Notation \ref{not:fixed based setup, redux}.  Item (7) is automatic from the construction.
    
    Item (3) can be arranged by choosing the proximity constant $a = a(k, \delta)>0$ sufficiently small, in particular less than $E_0/2$ (for $E_0 = E_0\geq 8\ep'_{k_0}$ as in Notation \ref{not:fixed based setup, redux}), since this guarantees a chain of cluster points at pairwise distance $E_0/2$ which spans the whole neighborhood, and hence meaning they are all in one cluster.  Finally, making these constants smaller does not change this fact.

    Item (4) follows from a similar argument, after a couple of observations.  First, observe that, as in item (3), all cluster points in $\calY'_{\fake} - \calY_{\fake}$ form a single cluster for sufficiently small $a,A$.  To show that there is a cluster point in $\calY_{\fake}$ close to a point in $\calY'_{\fake} - \calY_{\fake}$, recall that in Subsection \ref{subsec:basic tree setup, redux}, we fixed $E_0 \geq 8\ep'_{k_0}$.  In particular, item (3b) of Lemma \ref{lem:basic tree lemma, redux} says that endpoint of the neighborhood in $\lambda(F') - \hull_{\lambda(F')}(F)$ covered by the net $\calY'_{\fake} - \calY_{\fake}$ is within $\ep'_k< \ep'_{k_0}$ of the projection of $x$ to $\lambda(F)$.  Hence taking $a,A$ sufficiently small says that there are fake cluster points in $\calY_{\fake}$ and $\calY'_{\fake} - \calY_{\fake}$ within $E_0$ of each other and thus $\calY'_{\fake}$ forms a single cluster.

    Finally, item (5) is basically by definition and the fact that $\calY_{\fake} \subset \calY'_{\fake}$ by construction (Definition \ref{defn:fake F'}).  In particular, if $p \in \calY \cup \calY_{\spor}$ is connected by some chain of $E_0$-close cluster points in $\calY \cup \calY_{\spor} \subset \calY'$ to a fake cluster point $q \in \calY_{\fake}$, this fact does not change if we instead consider $q$ as a fake cluster point in $\calY'_{\fake}$.  This completes the proof of the lemma.
\end{proof}

\begin{remark}
    Lemma \ref{lem:fake basic} is the first place using our assumption from Subsection \ref{subsec:basic tree setup, redux} that there is a global constant $k_0$ which controls the rest of the various constants $\ep_0,\ep,\ep',E$.  This assumption also plays a crucial role in Lemma \ref{lem:fake graph}.
\end{remark}

\begin{remark}
    We note that $C_{\fake} - \calY_{\fake}$ and $C'_{\fake} - \calY'_{\fake}$ can be nonempty.
\end{remark}

\begin{remark}
    The bounds in item (7) of Lemma \ref{lem:fake basic} are crucial for our argument, as they allow us to use Theorem \ref{thm:stable tree} to build stable decompositions with controlled constants for $(F; \calY \cup \calY_{\spor})$ and $(F; \calY \cup \calY_{\spor} \cup \calY_{\fake})$, and similarly for $(F'; \calY')$ and $(F';\calY' \cup \calY'_{\fake})$.
\end{remark}

\subsection{Structure of (fake) cluster graphs}\label{subsec:structure of fake clusters}

In this subsection, we analyze the structure of the cluster graphs for the setups $(F; \calY \cup \calY_{\spor} \cup \calY_{\fake})$ and $(F'; \calY' \cup \calY'_{\fake})$.  We point the reader to Subsections \ref{subsec:cluster basics} and \ref{subsec:stable trees defined} for the basic definitions of clusters, cluster graphs, and stable trees.

Let $\calG_{\fake}$ and $\calG'_{\fake}$ denote the cluster separation graph for $(F, \calY \cup \calY_{\spor} \cup \calY_{\fake})$ and $(F', \calY' \cup \calY'_{\fake})$, and $\calG^0_{\fake}$ and $\calG'^0_{\fake}$ their vertices.  By Lemma \ref{lem:fake basic}, the sets of fake cluster points are contained in single clusters,  $C_{\fake}$ and $C'_{\fake}$ similarly.

Recall that our choices of a fixed cluster proximity constant $E_0$ and cluster separation constant $\ep'_{k_0}$ above in Notation \ref{not:fixed based setup, redux} guarantees that the parameters of cluster formation for the setup $(F;\calY \cup \calY_{\spor} \cup \calY_{\fake})$ is the same for $(F';\calY' \cup \calY'_{\fake})$.

The first lemma organizes the clusters for $(F';\calY' \cup \calY'_{\fake})$:

\begin{lemma} \label{lem:organize F'}
    There exist cluster constants $a_1= a_1(k,\delta)>0$, $A_1 = A_1(k, \delta)>0$ and $B_1 = B_1(k, \delta)>0$ so that for any $0\leq a\leq \min\{a_0,a_1\}$, $0\leq A \leq \min\{A_0,A_1\}$ and $B>\max\{B_0,B_1\}$ for $a_0,A_0,B_0$ as in Lemma \ref{lem:fake basic}, any cluster point in $\calY' \cup \calY'_{\fake}$ belongs to a cluster of one of the following types:
    \begin{enumerate}
        \item[(C1')] A cluster consisting only of cluster points in $\calY \cup \calY_{\spor}$,
        \item[(C2')] The cluster $C'_{\fake},$ or
        \item[(C3')] A cluster consisting only of cluster points in $\calY' - (\calY \cup \calY_{\spor})$.
    \end{enumerate}
\end{lemma}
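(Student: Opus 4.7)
The strategy is to show that if a cluster $C$ for the setup $(F'; \calY' \cup \calY'_{\fake})$ contains cluster points from both $\calY \cup \calY_{\spor}$ and from $\calY' - (\calY \cup \calY_{\spor})$, then $C$ must equal $C'_{\fake}$; since every cluster point is in some cluster and the three listed types exhaust the remaining possibilities, this suffices. The argument will rely on showing that the points of $\calY'_{\fake}$ form a combinatorial barrier between the ``old'' region (near $\lambda(F)$) and the ``new'' region (near the branch of $\lambda(F')$ corresponding to $x$), which any $E_0$-chain of cluster points must cross through.

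First, I would record the relevant geometric positioning of the two kinds of points. By the admissibility hypothesis together with item (3) of Definition~\ref{defn:well-layered}, every point of $\calY \cup \calY_{\spor}$ lies within $\ep_k/2$ of $\lambda(F)$. On the other hand, a point $q \in \calY' - (\calY \cup \calY_{\spor}(S))$ lies within $\ep_k/2$ of $\lambda(F')$ and, by definition of non-sporadicity, satisfies $q \in \bigcap_{f \in F} \calN_S(\hull_{\calZ}(x,f))$; a standard thin-triangle argument in the $\delta$-hyperbolic space $\calZ$ then places $q$ within distance controlled by $S+\delta$ of the sub-geodesic from $x$ to the branch point $p_{\hull_{\lambda(F')}(F)}(x)$, i.e.\ on the ``new branch'' side of the tree $\lambda(F')$. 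By part (3b) of Lemma~\ref{lem:basic tree lemma, redux} this branch point is within $\ep'_k$ of $p_{\lambda(F)}(x)$, which is precisely the point whose $B$-neighborhood the fake points net.

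Next I would run the crossing argument. Suppose $C$ contains an old point $p$ and a new point $q$; by definition of $C$, there is a chain $p = p_0, p_1, \dots, p_n = q$ of cluster points of $\calY' \cup \calY'_{\fake}$ with consecutive distance at most $E_0$, and some index $i$ at which $p_i$ is old while $p_{i+1}$ is new. Using the geometric positioning above, $p_i$ lies $\ep_k/2$-close to $\lambda(F)$ and $p_{i+1}$ lies close (on the order of $S+\delta$) to the new branch of $\lambda(F')$ based at the branch point. Since these two subtrees of $\lambda(F')$ meet exactly at the branch point, a projection argument in the tree combined with the $(1,\ep_k/2)$-quasi-isometric embedding $\lambda(F') \to \calZ$ forces both $p_i$ and $p_{i+1}$ to lie within distance $E_0 + O(S + \delta + \ep_k)$ of the branch point. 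Choose $B_1=B_1(k,\delta,S)$ so that a $B$-neighborhood of the branch point (with $B \geq B_1$) comfortably contains this crossing region, and choose $a_1 = a_1(k, \delta)$ so that the net density $a$ of $\calY'_{\fake}$ is smaller than, say, $E_0/10$. Then there exists $y \in \calY'_{\fake}$ with $d_{\calZ}(y, p_i) \leq E_0$, so $y \in C$; since $y \in \calY'_{\fake} \subset C'_{\fake}$ and each cluster point belongs to a unique cluster, we conclude $C = C'_{\fake}$.

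The main obstacle is the quantitative calibration at the crossing point: one must verify that the constants $a_1, A_1, B_1$ can be chosen depending only on $k, \delta$ (and $S$ for $B_1$), compatibly with the constants $a_0, A_0, B_0$ produced by Lemma~\ref{lem:fake basic}, so that the fake net genuinely intercepts every crossing chain. The choices of $A_1$ (minimum spacing) is essentially unconstrained beyond being less than $E_0$ to preserve item (3) of Lemma~\ref{lem:fake basic}, so the delicate piece is balancing $a$ small (net density fine enough to catch $E_0$-jumps) against $B$ large (so the net covers the entire region any such jump can occupy), using only that $S$, $\ep_k$, $\ep'_{k_0}$, and $E_0$ depend on $k$ and $\delta$. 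Once this balance is fixed, the tripartite classification in the statement is immediate from the above dichotomy.
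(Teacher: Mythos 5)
Your proposal is correct and follows essentially the same route as the paper: the paper likewise takes the $E_0$-chain joining an old point to a new point, observes that since the chain stays near $\lambda(F')$ and must cross from $\hull_{\lambda(F')}(F)$ to the new branch, some chain element lands within roughly $2(E_0+\ep_k)$ of the branch point $p_{\hull_{\lambda(F')}(F)}(x)$, and then chooses $a,A$ small and $B$ large so that this element is $E_0$-close to a fake cluster point, forcing the cluster to be $C'_{\fake}$. Your explicit use of non-sporadicity to locate the new points on the branch, and your noted dependence of $B_1$ on $S$, are both consistent with (indeed slightly more careful than) the paper's treatment.
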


\begin{proof}
    Suppose that $y \in \calY \cup \calY_{\spor}$ and $y' \in \calY' - (\calY \cup \calY_{\spor})$ lie in the same cluster $C$.  We claim that $C = C'_{\fake}$.   
    By construction of the clusters, there exists a sequence $y = y_1, \dots, y_n = y'$ of cluster points in $\calY' \cup \calY'_{\fake}$, with consecutive pairs being $E$-close.  Since each $y_i \in N_{2\ep_k}(\lambda(F))$, there must be some $y_i$ within $2(E+\ep_k)$ of $p_{\hull_{\lambda(F')}(F)}(x)$.  Hence by choosing the density and proximity constants $a,A$ sufficiently small and the diameter constant $B$ sufficiently large---all controlled by $k,\delta$---at least one of the $y_i$ must be within $E_0$ of some fake cluster point.  This proves the claim and thus the lemma.
\end{proof}

\begin{figure}
    \centering
    \includegraphics[width=1\textwidth]{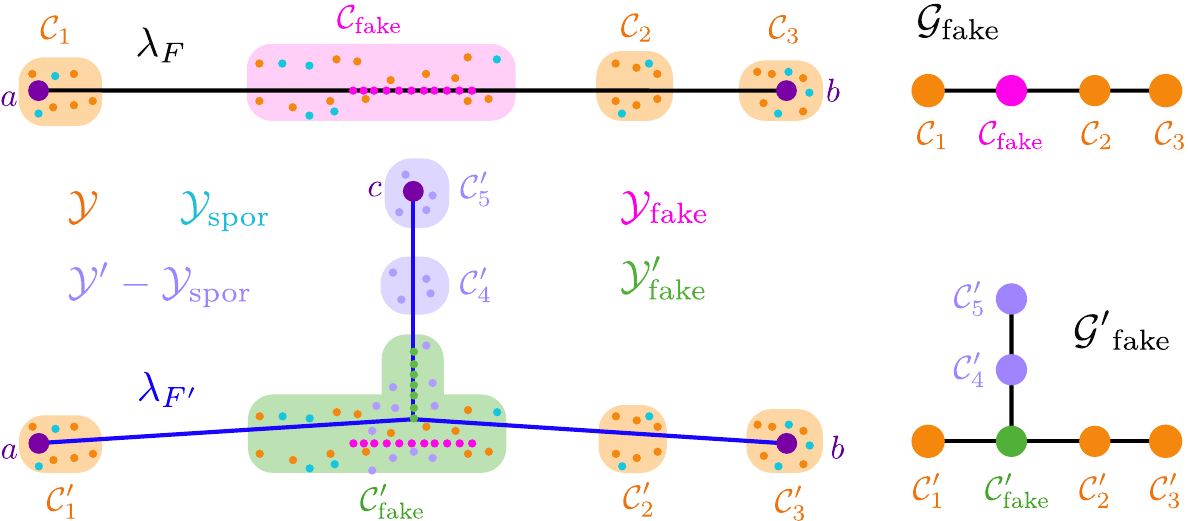}
    \caption{The example from Figures \ref{fig:sporadic cluster} and \ref{fig:fake cluster} revisited.  In Lemmas \ref{lem:organize F'} and \ref{lem:organize F}, we show that clusters in $(F;\calY \cup \calY_{\spor} \cup \calY_{\fake})$ and $(F';\calY' \cup \calY'_{\fake})$ come in 2 or 3 types, respectively.  In the diagrams of $\calG_{\fake}$ and $\calG'_{\fake}$, the orange clusters are of types (1) and (1') respectively, the lavender clusters are of type (3'), and the pink and green clusters are of type (2) and (2') respectively.  Later in Lemma \ref{lem:fake graph}, we prove that the cluster graph $\calG_{\fake}$ for $(F;\calY \cup \calY_{\spor} \cup \calY_{\fake})$ admits a type-preserving embedding in the cluster graph $\calG'_{\fake}$ for $(F';\calY' \cup \calY'_{\fake})$, sending $C_{\fake}$ to $C'_{\fake}$.}
    \label{fig:fake organize}
\end{figure}

Essentially the same proof gives a similar organization for the clusters for $(F;\calY \cup \calY_{\spor} \cup \calY_{\fake})$:

\begin{lemma}\label{lem:organize F}
    There exist cluster constants $a_1= a_1(k,\delta)>0$, $A_1 = A_1(k, \delta)>0$ and $B_1 = B_1(k, \delta, S)>0$ so that for any $0\leq a\leq \min\{a_0,a_1\}$, $0\leq A \leq \min\{a_0,a_1\}$ and $B>\max\{B_0,B_1\}$, any cluster point in $\calY \cup \calY_{\spor} \cup \calY_{\fake}$ belongs to a cluster of one of the following types:
    \begin{enumerate}
        \item[(C1)] A cluster consisting only of cluster points in $\calY \cup \calY_{\spor}$, or
        \item[(C2)] The cluster $C_{\fake}$.
    \end{enumerate}
\end{lemma}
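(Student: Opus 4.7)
The plan is to deduce Lemma \ref{lem:organize F} essentially for free from Lemma \ref{lem:fake basic}(3), since the setup $(F; \calY \cup \calY_{\spor} \cup \calY_{\fake})$ has a strictly simpler cluster structure than the setup $(F'; \calY' \cup \calY'_{\fake})$ treated in Lemma \ref{lem:organize F'}. I would first take the constants $a_1, A_1, B_1$ to agree with (or impose no additional constraint beyond) the constants $a_0, A_0, B_0$ provided by Lemma \ref{lem:fake basic}. Once $a \leq \min\{a_0, a_1\}$, $A \leq \min\{A_0, A_1\}$, and $B \geq \max\{B_0, B_1\}$, Lemma \ref{lem:fake basic}(3) guarantees that all fake cluster points of $\calY_{\fake}$ are contained in a single cluster $C_{\fake}$ of the setup $(F; \calY \cup \calY_{\spor} \cup \calY_{\fake})$, with $C_{\fake}$ uniquely characterized by containing any element of $\calY_{\fake}$.

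With this in hand, the dichotomy is immediate. Let $C$ be any cluster in the setup $(F; \calY \cup \calY_{\spor} \cup \calY_{\fake})$. If $C$ contains a point $y \in \calY_{\fake}$, then by uniqueness $C = C_{\fake}$, placing $C$ in type (C2). Otherwise, $C$ is disjoint from $\calY_{\fake}$, and since the set of all available cluster points for this setup is exactly $\calY \cup \calY_{\spor} \cup \calY_{\fake}$, we conclude that $C \subseteq \calY \cup \calY_{\spor}$, placing $C$ in type (C1).

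Unlike the proof of Lemma \ref{lem:organize F'}, there is no need here to trace a chain of $E_0$-close cluster points through a neighborhood of $p_{\lambda(F)}(x)$ to force the presence of a fake point: that argument was needed only to rule out the existence of a ``mixed'' cluster containing both points of $\calY \cup \calY_{\spor}$ and of the genuinely new branch $\calY' - (\calY \cup \calY_{\spor})$. In the present setup there are no cluster points outside $\calY \cup \calY_{\spor} \cup \calY_{\fake}$ to mix in, so the possibility of such a mixed cluster collapses immediately to the statement ``$C$ contains a point of $\calY_{\fake}$, hence equals $C_{\fake}$.'' Thus the main obstacle to overcome has already been handled by Lemma \ref{lem:fake basic}(3), and the proof is essentially a single-line appeal to uniqueness.
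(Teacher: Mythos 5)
Your proof is correct. The dichotomy in Lemma \ref{lem:organize F} does reduce to Lemma \ref{lem:fake basic}(3) together with the fact that clusters, being connected components of $\calC_{E_0}(F\cup\calY\cup\calY_{\spor}\cup\calY_{\fake})$, partition the cluster points: a cluster meeting $\calY_{\fake}$ must equal the unique cluster $C_{\fake}$ containing $\calY_{\fake}$, and a cluster disjoint from $\calY_{\fake}$ has all its cluster points in $\calY\cup\calY_{\spor}$, so it is of type (C1). The paper disposes of this lemma by asserting that ``essentially the same proof'' as for Lemma \ref{lem:organize F'} applies, i.e.\ it nominally points at the chain argument (tracing a sequence of $E_0$-close cluster points along $\calN_{2\ep_k}(\lambda(F))$ until one lands near $p_{\lambda(F)}(x)$ and hence near a fake point). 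You correctly identify that this chain argument is only needed to rule out mixed clusters containing points from two \emph{non-fake} families, which is exactly the (C1')--(C3') tension present for $(F';\calY'\cup\calY'_{\fake})$ but absent here, where the only cluster points outside $\calY\cup\calY_{\spor}$ are the fake ones already handled by Lemma \ref{lem:fake basic}(3). Your route is therefore a legitimate and cleaner shortcut; the only thing it buys beyond the paper's deferral is an explicit explanation of why no new constants beyond $a_0,A_0,B_0$ are actually needed (the $a_1,A_1,B_1$ in the statement can be taken vacuous), which is consistent with the lemma as stated.
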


The following lemma says that the fake clusters act as a buffer between the cluster points associated strictly to $F'$ and those associated to $F$.

\begin{lemma}\label{lem:fake separate}
    There exists cluster constants $a_2= a_2(k, \delta)>0$, $A_2 = A_2(k, \delta)>0$ and $B_2 = B_2(k, \delta, S)>0$ so that if $0 \leq a \leq \min\{a_0,a_1,a_2\}$, $0 \leq A \leq \min\{A_0,A_1,A_2\}$, and $B \geq \max\{B_0,B_1,B_2\}$, the following hold:
    
    If $p \in \calY \cup \calY_{\spor}$ and $q \in \calY' \cup \calY'_{\fake} - (\calY \cup \calY_{\spor} \cup \calY_{\fake})$, respectively, then any geodesic between $p,q$ must pass $2\ep'_{k_0}$-close to a point in $\calY_{\fake}$.  In particular, $C_{\fake}$ $\ep'_{k_0}$-separates any clusters of type (C1') and (C3').
\end{lemma}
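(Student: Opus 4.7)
The plan is to reduce the geometric assertion to an analysis of how $p$ and $q$ sit with respect to $\lambda(F')$, and in particular with respect to the branch point coming from $x$, then to exploit the density of $\calY_{\fake}$ to force the geodesic to pass close to a fake cluster point.

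First I would locate $p$ and $q$ geometrically relative to $\lambda(F) \subset \lambda(F')$. For $p \in \calY$ the $\ep_k$-setup assumption gives $d_{\calZ}(p, \lambda(F)) \leq \ep_k/2$; for $p \in \calY_{\spor}$, item (3) of Definition \ref{defn:well-layered} gives the same bound. So in either case $p$ lies within $O(\ep_k)$ of $\hull_{\lambda(F')}(F)$ by item (3a) of Lemma \ref{lem:basic tree lemma, redux}. For $q$ there are two cases. If $q \in \calY'_{\fake} - \calY_{\fake}$, then by Definition \ref{defn:fake F'} the point $q$ lies on $\lambda(F') - \hull_{\lambda(F')}(F)$, i.e., on the ``new branch''. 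Otherwise $q \in \calY' - (\calY \cup \calY_{\spor})$ is a non-sporadic new point, so by Definition \ref{defn:sporadic cluster points} it lies within $S$ of $\hull(x, f)$ for every $f \in F$; combined with $d_{\calZ}(q, \lambda(F')) \leq \ep_k/2$, a standard tree/hyperbolic argument confines $q$ to an $O(S + \ep_k)$-neighborhood of the new branch $\hull_{\lambda(F')}(x) - \hull_{\lambda(F')}(F)$.

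Next I would show that any $\calZ$-geodesic $\sigma$ from $p$ to $q$ passes within some uniform $D = D(k, \delta, S)$ of $p_{\lambda(F)}(x)$. By item (5c) of Notation \ref{not:fixed based setup, redux}, $\sigma$ lies uniformly Hausdorff-close to the $\lambda(F')$-geodesic between closest projections of $p$ and $q$. Inside the tree $\lambda(F')$, any such path must pass through the branch point associated to $x$, which by item (3b) of Lemma \ref{lem:basic tree lemma, redux} lies within $\ep'_k$ of $p_{\lambda(F)}(x)$; pulling this back through the embedding $\lambda(F') \hookrightarrow \calZ$ yields the desired $D$-proximity of $\sigma$ to $p_{\lambda(F)}(x)$.

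Finally I would choose the net parameters: take $B_2 > D + 2\ep'_{k_0}$ so that the portion of $\lambda(F)$ where $\sigma$ comes $D$-close to $p_{\lambda(F)}(x)$ lies entirely inside the $B$-neighborhood netted by $\calY_{\fake}$, and take $a_2 < \ep'_{k_0}$ so that every point of that neighborhood lies within $\ep'_{k_0}$ of some $y \in \calY_{\fake}$ (with $y$ itself $\ep_k/2$-close to $\lambda(F)$). Combining the two bounds yields the claimed $2\ep'_{k_0}$-proximity of $\sigma$ to $\calY_{\fake} \subset C_{\fake}$, and the ``in particular'' statement follows immediately from Definition \ref{defn:clusters, separation} by specializing to $p, q$ realizing the minimum distance between a (C1') cluster and a (C3') cluster. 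The main obstacle is the careful bookkeeping of the many additive errors ($\ep_k, \ep'_k, \ep_{k_0}, \ep'_{k_0}$), the sporadicity threshold $S$, and the quasi-isometry constants of $\lambda(F), \lambda(F') \hookrightarrow \calZ$, so that they all genuinely fit inside the $B$-neighborhood; this is what forces the dependence $B_2 = B_2(k, \delta, S)$ while keeping $a_2, A_2$ depending only on $k, \delta$.
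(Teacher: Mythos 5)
Your proposal is correct and follows essentially the same route as the paper: both endpoints lie in a uniform neighborhood of $\lambda(F')$, so the geodesic between them fellow-travels the tree geodesic in $\lambda(F')$ (via Lemma \ref{lem:basic tree lemma, redux} and item (5c) of Notation \ref{not:fixed based setup, redux}), and that tree geodesic must cross the branch-point region that $\calY_{\fake}$ nets densely, with $B$ taken large relative to $S$ and the various $\ep$'s. The extra case analysis you do on the location of $q$ (new-branch net point versus non-sporadic new cluster point) is implicit in the paper's shorter argument, and your constant bookkeeping matches the hierarchy $\ep'_k \ll \ep_{0,k_0} < \ep'_{k_0}$ fixed in Notation \ref{not:fixed based setup, redux}.
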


\begin{proof}
    By our setup from Theorem \ref{thm:stabler tree}, both of $p,q$ lie in $\calN_{\ep_k/2}(\lambda(F'))$, hence any geodesic $\gamma$ between them is contained in $\calN_{\ep'_k}(\lambda(F'))$ by item (2) of Lemma \ref{lem:basic tree lemma, redux}.  Moreover, by item (3b) of that lemma and choosing the fake cluster diameter constant $B = B(k, \delta, S)>0$ sufficiently large, we can guarantee that $p$ lies outside of the $(10E_0+S)$-neighborhood of both $p_{\lambda(F)}(x)$ and $p_{\hull_{\lambda(F')}(F)}(x)$.

      If $p', q' \in \lambda(F')$ are closest points in $\lambda(F')$ to $p,q$, respectively, then the geodesic $\gamma'$ in $\lambda(F')$ between $p', q'$ is a uniform quasi-geodesic in the ambient space $\calZ$ whose quality is controlled by $\ep_k, \delta$ (recall that $\lambda(F') \to \hull(F')$ is a $(1,\ep_k/2)$-quasi-isometry).  This quasi-geodesic $\gamma'$ necessarily passes through some point $z \in \calY_{\fake}$.  Now item (5c) of Notation \ref{not:fixed based setup, redux} implies that $d^{Haus}_{\calZ}(\gamma, \gamma')<\ep_{0,k_0} < 2\ep'_{k_0}$, and hence $\gamma$ passes within $2\ep'_{k_0}$ of $\calY_{\fake}$, as required.  This completes the proof.
\end{proof}

The next lemma says that having chosen our cluster parameters $\ep'_{k_0}, E_0$ as in Notation \ref{not:fixed based setup, redux}, then the $E_0$-cluster graph $\calG_{\fake}$ admits an injection into $\calG'_{\fake}$ with fake clusters being identified.

\begin{lemma}\label{lem:fake graph}
    There exists $B_3 = B_3(k, \delta, S)>0$ so that for any $B>\max\{B_0, B_1, B_2, B_3\}$, the following holds:
    \begin{enumerate}
        \item There is an injection $I:\calG^0_{\fake} \to (\calG'_{\fake})^0$ sending $C_{\fake}$ to $C'_{\fake}$. \label{item:inject}
        \item If $I$ identifies $C \in \calG^0_{\fake} - \{C_{\fake}\}$ with $C' \in \left(\calG'_{\fake}\right)^0$, then $C = C'$ as sets. \label{item:identify}
        \item This injection extends to an embedding of graphs $\calG_{\fake} \to \calG'_{\fake}$ whose image $I(\calG_{\fake})$ is the induced subgraph of $\calG'_{\fake}$ on the vertices in $I(\calG^0_{\fake})$. \label{item:induced subgraph}
        \item The closure of each component of $\calG_{\fake} - C_{\fake}$ is the closure of some component of $\calG'_{\fake} - C'_{\fake}$. \label{item:components}
    \end{enumerate}   
\end{lemma}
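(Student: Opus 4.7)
\medskip

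The plan is to build the injection $I$ cluster-by-cluster and then verify its graph-theoretic properties using the classification lemmas and the separation provided by $C_{\fake}$. The construction of $I$ on objects is forced: set $I(C_{\fake}) = C'_{\fake}$, and for any other $C \in \calG^0_{\fake}$ (necessarily of type (C1) by Lemma \ref{lem:organize F}), define $I(C)$ to be the unique cluster in $\calG'_{\fake}$ containing $C$'s points. To see this is well defined and that $I(C) = C$ as sets, I will argue that no $E_0$-chain in $\calY' \cup \calY'_{\fake}$ starting in $C$ can reach a point outside $C$: such a chain would either enter some new point in $\calY' - (\calY \cup \calY_{\spor})$, forcing the resulting cluster to mix the two types and thus to equal $C'_{\fake}$ by Lemma \ref{lem:organize F'} (contradicting that $C$ is type (C1')), or pass through $\calY'_{\fake} - \calY_{\fake}$, which already lies in $C'_{\fake}$ by Lemma \ref{lem:fake basic}. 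Choosing $B_3$ large enough that item (6) of Lemma \ref{lem:fake basic} applies to the $\ep'_{k_0}$-neighborhood of $\lambda(F)$, this rules out both possibilities. Injectivity on $\calG^0_{\fake}$ then follows because distinct (C1) clusters in $\calG_{\fake}$ stay distinct in $\calG'_{\fake}$ by the same argument, and $C_{\fake} \ne C'_{\fake}$ is separated off by type. This will establish \eqref{item:inject} and \eqref{item:identify}.

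For \eqref{item:induced subgraph}, the forward direction (edge-preservation) will follow by contradiction: if $I(C_1), I(C_2)$ are $\ep'_{k_0}$-separated by some cluster $D'$ in $\calG'_{\fake}$, then either $D'$ lies in the image of $I$, in which case the same separation holds in $\calG_{\fake}$ (using $I(C)=C$ as sets for $C \ne C_{\fake}$ and $C_{\fake} \subset C'_{\fake}$), or $D'$ is of type (C3'). The latter is impossible: a geodesic between two type (C1') clusters stays in $\calN_{\ep'_k}(\lambda(F))$ by Lemma \ref{lem:basic tree lemma, redux}, whereas every type (C3') cluster lies outside $\calN_{\ep_k}(\lambda(F))$ by item (6) of Lemma \ref{lem:fake basic} (contrapositively). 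Choosing the constants from Notation \ref{not:fixed based setup, redux} so that $\ep'_k + 2\ep'_{k_0} < \ep_k$, no such $D'$ can be within $2\ep'_{k_0}$ of such a geodesic. For the reverse inclusion (induced subgraph), a separation of $C_1$ from $C_2$ in $\calG_{\fake}$ by some $C_3$ transfers immediately to a separation of $I(C_1), I(C_2)$ by $I(C_3)$, since the relevant $2\ep'_{k_0}$-neighborhoods only grow when passing from $C_{\fake}$ to $C'_{\fake}$.

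Finally, \eqref{item:components} follows from \eqref{item:induced subgraph} together with Lemma \ref{lem:fake separate}. The latter ensures that $C'_{\fake}$ $\ep'_{k_0}$-separates every pair of clusters of types (C1') and (C3'), so no edge of $\calG'_{\fake} - C'_{\fake}$ connects a (C1') vertex to a (C3') vertex; hence every component of $\calG'_{\fake} - C'_{\fake}$ is monochromatic. Under $I$, the (C1') components are exactly the images of the components of $\calG_{\fake} - C_{\fake}$, and taking closures recovers the statement. The main delicate point, which I expect to be the principal obstacle, is pinning down the constants $a_3, A_3, B_3$: we need $B$ large enough that Lemma \ref{lem:fake basic}(6) forces non-sporadic $\calY'$-points near $\lambda(F)$ into $C'_{\fake}$, and $a, A$ small enough that the fake cluster points themselves form a single $E_0$-chain densely covering the relevant neighborhoods, while simultaneously keeping the geodesic-thickening $2\ep'_{k_0}$ smaller than the gap between $\calN_{\ep'_k}(\lambda(F))$ and any type (C3') cluster. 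Once these inequalities are arranged, all four claims drop out of the classification lemmas.
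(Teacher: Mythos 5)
Your overall architecture matches the paper's: you define $I$ by sending $C_{\fake}\mapsto C'_{\fake}$ and each other cluster to the cluster containing its points, use the type classification of Lemmas \ref{lem:organize F} and \ref{lem:organize F'} to get set-equality, and then handle edges and components via separation arguments hinging on $C_{\fake}$ and Lemma \ref{lem:fake separate}. Items (1), (2) and (4) are argued essentially as in the paper.

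There is, however, a genuine gap in your argument for item (3), specifically in ruling out a type (C3') cluster $\ep'_{k_0}$-separating two type (C1') clusters. You propose to choose the constants of Notation \ref{not:fixed based setup, redux} so that $\ep'_k + 2\ep'_{k_0} < \ep_k$, and to combine this with the contrapositive of Lemma \ref{lem:fake basic}(6), which only places type (C3') points outside $\calN_{\ep_k}(\lambda(F))$. That inequality is impossible with the constants as fixed: by Notation \ref{not:fixed based setup, redux} one has $\ep'_{k_0} > \ep_{k_0} > \ep_{0,k_0} > 20\ep'_k \geq 20\ep_k$, so $2\ep'_{k_0}$ alone dwarfs $\ep_k$; the thickening constant $2\ep'_{k_0}$ in the definition of separation is deliberately huge compared to the $k$-level constants. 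A lower bound of merely $\ep_k$ on the distance from a (C3') point to $\lambda(F)$ therefore cannot prevent such a point from lying within $2\ep'_{k_0}$ of a geodesic in $\calN_{\ep'_k}(\lambda(F))$. The fix is to get a lower bound that grows with $B$ rather than one fixed at $\ep_k$: a non-sporadic point of $\calY'$ lying outside $C'_{\fake}$ must (for small enough net constants $a,A$) be at distance on the order of $B$ from the branch point $p_{\hull_{\lambda(F')}(F)}(x)$ along the new branch, hence at distance roughly $B$ from $\lambda(F)$; taking $B_3$ larger than $2\ep'_{k_0}+\ep'_k$ plus the relevant hyperbolicity errors then kills the separator. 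This is in effect what the paper does, routing the step through Lemma \ref{lem:fake separate} and the largeness of $B$ rather than through item (6) of Lemma \ref{lem:fake basic}. A similar (smaller) imprecision occurs where you ask that item (6) ``apply to the $\ep'_{k_0}$-neighborhood'': as stated that item only covers the $\ep_k$-neighborhood, and extending it again requires enlarging $B$.
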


\begin{proof}

We begin with the following claim, in which we show that clusters of type (C1') (from Lemma \ref{lem:organize F'}) correspond exactly to clusters of type (C1) (from Lemma \ref{lem:organize F}):

\begin{claim}\label{claim:fake vertex}
    If $c \in \calY \cup \calY_{\spor}$ is a cluster point of type (C1') contained in a cluster $C'$ for $(F';\calY' \cup \calY'_{\fake})$, there exists a cluster $C$ for $(F;\calY \cup \calY_{\spor} \cup \calY_{\fake})$ so that $C = C'$ as sets.
\end{claim}

\begin{proof}[Proof of Claim \ref{claim:fake vertex}]
    Suppose first that $c \in C'$ is of type (C1') and that $C'$ is a singleton.  Then $d_{\calZ}(c,c')>E_0$ for any $c' \in \calY \cup \calY_{\spor} - \{c\}$, otherwise $c,c'$ would be in the same cluster.  Hence Lemma \ref{lem:organize F'} implies that $\{c\}$ forms a singleton cluster in $(F;\calY \cup \calY_{\spor} \cup \calY_{\fake})$, as well.

    Now suppose $c,c' \in C'$ are distinct cluster points of type (C1') in the same cluster $C'$ for $(F'; \calY' \cup \calY'_{\fake})$.  Then each belongs in clusters of type (C1) by Lemma \ref{lem:organize F}.  If $c = c_1, \dots, c_n = c'$ is a chain of $E_0$-close cluster points in $C'$, then by Lemma \ref{lem:organize F'}, each of the $c_i$ are of type (C1).  Moreover, since consecutive pairs are $E_0$-close, we must have that the whole chain is in one cluster of type (C1) for $(F;\calY \cup \calY_{\spor} \cup \calY_{\fake})$.  Hence if $c,c' \in C$ is that cluster, then $C' \subset C.$

    To see that $C' =C$, suppose that $z \in C-C'$.  Then Lemma \ref{lem:organize F} again implies that there exists a chain $z = z_1, \dots, z_n = c$ of type (C1) cluster points so that consecutive pairs are $E_0$-close.  But then $z \in C'$, completing the proof.
\end{proof}

With Claim \ref{claim:fake vertex} in hand, we can now define the map $I: \calG^0_{\fake} \to (\calG'_{\fake})^0$ as follows.  First, set $I(C_{\fake}) = C'_{\fake}$.  Then for any cluster $C \in \calG^0_{\fake} - \{C_{\fake}\}$, Claim \ref{claim:fake vertex} uniquely identifies $C$ set-wise with a cluster $C' \in (\calG'_{\fake})^0$.  This proves item \eqref{item:inject} from the lemma, and item \eqref{item:identify} of the lemma is explicitly part of Claim \ref{claim:fake vertex}.

Item \eqref{item:induced subgraph} of the lemma is the following claim:

\begin{claim}\label{claim:fake edge}
    $I:\calG^0_{\fake} \to \left(\calG'_{\fake}\right)^0$ extends to an embedding $I:\calG_{\fake} \to \calG'_{\fake}$.  The image $I(\calG_{\fake})$ is an induced subgraph of $\calG'_{\fake}$ on $I(\calG^0_{\fake})$.
\end{claim}

\begin{proof}[Proof of Claim \ref{claim:fake edge}]
We first show that $I$ send pairs of adjacent clusters in $\calG_{\fake}$ to adjacent clusters in $\calG'_{\fake}$.  There are two cases, depending on whether or not $C_{\fake}$ is involved.

Suppose first that $C \in \calG^0_{\fake}$ is adjacent to $C_{\fake}$ in $\calG_{\fake}$.  We claim that $C' = I(C)$ is adjacent to $C'_{\fake}$.

Suppose for a contradiction that there exists another cluster $C'_2 \in \calG'_{\fake}$ which separates $C'$ from $C'_{\fake}$.  This means that there exists some minimal length geodesic $\gamma$ connecting points in $C', C'_{\fake}$ which passes within $2\ep'_{k_0}$ of some point $w \in C'_2$.  By choosing the fake cluster diameter constant $B = B(k, \delta)>0$ to be sufficiently large, we can force that the endpoint of $\gamma$ on $C'_{\fake}$ to actually be in $C_{\fake}$.  This forces $C'_2$ to be of type (C1) and hence corresponds to some cluster $C_2$ for the setup $(F; \calY \cup \calY_{\spor} \cup \calY_{\fake})$ by Claim \ref{claim:fake vertex}.  But this would contradict the assumption that $C$ is adjacent to $C_{\fake}$.

Next suppose that $C_1 \in \calG^0_{\fake}$ is adjacent to some type (1) cluster $C_2$ in $\calG_{\fake}$.  By a similar argument and choosing the fake cluster diameter constant $B = B(k, \delta, S)>0$ sufficiently large, any cluster that might $\ep'_{k_0}$-separate $I(C_1)$ from $I(C_2)$ in $\calG'_{\fake}$ would be of type (C1'), since the points in $\calY_{\fake}$ would separate them from all cluster points in $\calY'_{\fake} - \calY_{\fake}$ and $\calY' - (\calY \cup \calY_{\spor})$.  Hence such a cluster would correspond set-wise to a cluster of type (C1) by Claim \ref{claim:fake vertex}, and such a corresponding cluster would have separated $C_1, C_2$ in the construction of $\calG_{\fake}$, a contradiction.

To complete the proof of the claim, we need to show that $\calG'_{\fake}$ does not contain any unexpected edges between vertices in $I(\calG^0_{\fake})$.  Suppose then that $C_1,C_2\in \calG^0_{\fake}$ are not connected by an edge.  Then there exists a cluster $C_3 \in \calG^0_{\fake}$ $\ep'_{k_0}$-separating them, i.e. there exists a minimal length geodesic $\gamma$ connected points on $x_1 \in C_1$ and $x_2 \in C_2$ which intersects the $2\ep'_{k_0}$-neighborhood of a cluster point $p \in C_3$.  Regardless of whether or not $C_i = C_{\fake}$ for $i=1,2$, we have $x_i \in C_i \subset I(C_i)$ for $i=1,2,3$ by item \eqref{item:identify} of this lemma (proven above).  In particular, $p \in I(C_3)$ will $\ep'_{k_0}$-separate $x_1$ from $x_2$ and hence $I(C_1)$ from $I(C_2)$.  Thus it is impossible for $I(C_1)$ to be adjacent to $I(C_2)$ if $C_1,C_2$ were not.  This completes the proof of the claim.
\end{proof}

Finally, to finish the proof of the lemma, we prove item \eqref{item:components} in the following claim:

\begin{claim}\label{claim:components}
    The closure of each component of $\calG_{\fake} - C_{\fake}$ is the closure of some component of $\calG'_{\fake} - C'_{\fake}$.
\end{claim}

\begin{proof}[Proof of Claim \ref{claim:components}]
    It suffices to prove that any cluster in $(\calG'_{\fake})^0 - I(\calG^0_{\fake})$ is not connected to a cluster in $I(\calG^0_{\fake})-C'_{\fake}$ by an edge of $\calG'_{\fake}$.  Equivalently, we must show that $C'_{\fake}$ $\ep'_{k_0}$-separates any such pair of clusters.  By Lemma \ref{lem:organize F'}, any cluster $C' \in (\calG'_{\fake})^0 - I(\calG^0_{\fake})$ is necessarily of type (C3'), i.e. $C'$ consists entirely of cluster points in $\calY' - (\calY \cup \calY_{\spor})$.  On the other hand, by Lemma \ref{lem:organize F} and item \ref{item:identify} of this lemma, any cluster $C \in I(\calG^0_{\fake})-C'_{\fake}$ is necessarily of type (C1), i.e. it consists entirely of cluster points in $\calY \cup \calY_{\spor}$, and hence corresponds to a cluster of type (C1') by Claim \ref{claim:fake vertex}.

    In order for $I(C),C'$ to be connected by an edge in $\calG'_{\fake}$, any minimal length geodesic $\gamma$ between $I(C) = C$ and $C'$ must avoid the $2\ep'_{k_0}$ neighborhood of every other cluster.  However this would contradict Lemma \ref{lem:fake separate}.  This completes the proof of the claim and hence the lemma.
\end{proof}

\end{proof}

\subsection{Controlling edge components of $T_{\fake}$ and $T'_{\fake}$} With Lemma \ref{lem:fake graph} in hand, our last step in the buildup to the proof of Theorem \ref{thm:stabler tree} is the following culminating proposition.  In it, we show that the edge components of the $(\ep_{k_0},\ep'_{k_0}, E_0)$-stable tree $T_{\fake}$ for $(F; \calY \cup \calY_{\spor} \cup \calY_{\fake})$ are edge components of $T'_{\fake}$, the $(\ep_{k_0},\ep'_{k_0}, E_0)$-stable tree  for $(F'; \calY \cup \calY'_{\fake})$.  For the proof, the reader may benefit from reviewing the construction of the edge and cluster components of stable trees in Subsection \ref{subsec:stable trees defined}.

\begin{proposition} \label{prop:edge inject}
There exists $B_4 = B_4(k, \delta, S)>0$ so that if $B > \max\{B_0, B_1, B_2,B_3,B_4\}$, then every edge component of $T_{\fake} = T_{e,\fake} \cup T_{c,\fake}$ is an edge component of $T'_{\fake} = T'_{e,\fake} \cup T'_{c,\fake}$.
\end{proposition}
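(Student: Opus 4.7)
The plan is to leverage Lemma \ref{lem:fake graph}, which gives an injection $I:\calG_{\fake}\hookrightarrow\calG'_{\fake}$ as an induced subgraph sending $C_{\fake}$ to $C'_{\fake}$, with a bijective correspondence between the closures of the components of $\calG_{\fake}-C_{\fake}$ and of $\calG'_{\fake}-C'_{\fake}$. Since the edge forests $T_{e,\fake}$ and $T'_{e,\fake}$ are defined by applying the minimal-network function $\lambda'$ to the vertex sets $V^{0}$ indexing the pieces $V\in\calV_{\fake}$ and $V'\in\calV'_{\fake}$ of closures of components of $\calG-\calE^{0}$, the key is to show that $I$ identifies these indexing data, so that the minimal networks coincide as sets in $\calZ$.

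First, I would show that $I$ respects the bivalent/non-bivalent partition for every cluster $C\in\calG^{0}_{\fake}$ with $C\neq C_{\fake}$. The set equality $C=I(C)$ from Lemma \ref{lem:fake graph}(2), together with items (3) and (4), ensures that $C$ and $I(C)$ have the same neighbor sets (under $I$) and that $C\cap F=I(C)\cap F$. Choosing $B_{4}=B_{4}(k,\delta,S)$ large enough, Lemmas \ref{lem:organize F'}, \ref{lem:fake separate} and \ref{lem:fake basic} force the new point $x\in F'-F$ to lie in $C'_{\fake}$ or in a cluster of type (C3'), hence not in $I(C)$ for $C\neq C_{\fake}$, so $I(C)\cap F'=C\cap F$ and bivalence is preserved. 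Next, by enlarging $B$ further if necessary, I would arrange that $C_{\fake}$ and $C'_{\fake}$ both sit in $\calG^{0}-\calE^{0}$ of their respective graphs: $C'_{\fake}$ is automatically non-bivalent in the presence of any cluster of type (C3') (since its $F$-side neighborhood already contributes at least two neighbors via $I$), and $C_{\fake}$ is forced to be non-bivalent by choosing the fake-cluster radius $B$ large enough that the fake net along $\lambda(F)$ either covers a branch point or absorbs a cluster containing a point of $F$.

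With these alignments in place, I would conclude by direct comparison. For every $V\in\calV_{\fake}$ with $C_{\fake}\notin V^{0}$, the bivalence preservation together with Lemma \ref{lem:fake graph}(3)--(4) implies $I(V^{0})$ is the vertex set of the corresponding $V'\in\calV'_{\fake}$, and since $C=I(C)$ as subsets of $\calZ$ for each $C\in V^{0}$, the minimal networks satisfy $\lambda'(V^{0})=\lambda'(V'^{0})$ identically. For the unique $V\in\calV_{\fake}$ containing $C_{\fake}$ and its counterpart $V'\in\calV'_{\fake}$ containing $C'_{\fake}$, one has $V^{0}\subseteq V'^{0}$ with the extra elements all lying on the new-branch side of $C'_{\fake}$; using $C_{\fake}\subseteq C'_{\fake}$ from Lemma \ref{lem:fake basic}(5) and the buffering property of Lemma \ref{lem:fake separate}, the closest-point data from $F$-side clusters into $C'_{\fake}$ is realized inside $C_{\fake}$, so the $F$-side portion of $\lambda'(V'^{0})$ coincides with $\lambda'(V^{0})$ and splits off cleanly from the new-branch portion.

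The main obstacle is precisely this last splitting argument: showing that the Steiner-tree-like minimal network $\lambda'(V'^{0})$ decomposes, without interaction, into an $F$-side piece that literally equals $\lambda'(V^{0})$ and a new-branch piece. The resolution relies on taking $B_{4}$ large enough so that Lemma \ref{lem:fake separate} provides a genuine geodesic buffer, forcing any minimal segment from an $F$-side cluster to a new-branch cluster to pass through $C_{\fake}\subseteq C'_{\fake}$; the minimality of $\lambda'$ then prevents the two sides from sharing edges, so each $F$-side edge component of $T_{e,\fake}$ survives as an edge component of $T'_{e,\fake}$.
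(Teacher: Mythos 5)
Your overall strategy matches the paper's: use Lemma \ref{lem:fake graph} to transfer the combinatorics of $\calG_{\fake}$ into $\calG'_{\fake}$, observe that edge components away from $C_{\fake}$ are literally unchanged, and then use the buffering property of Lemma \ref{lem:fake separate} together with the inductive minimality of $\lambda'$ to show that the $F$-side portion of the network through $C'_{\fake}$ splits off and coincides with the one through $C_{\fake}$. That last splitting step is handled essentially as in the paper.

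However, there is a genuine gap in your reduction to the case where $C_{\fake}$ is non-bivalent. You claim that one can choose $B_4$ (hence $B$) large enough that ``the fake net along $\lambda(F)$ either covers a branch point or absorbs a cluster containing a point of $F$,'' forcing $C_{\fake}\notin\calE^0_{\fake}$. This cannot be done with a constant $B=B(k,\delta,S)$: the attaching point $p_{\lambda(F)}(x)$ may lie in the interior of an edge of $\lambda(F)$ at arbitrarily large distance from every branch point and every point of $F$, and the surrounding clusters need not contain points of $F$ either, so for no uniform $B$ is $C_{\fake}$ guaranteed to be non-bivalent. The bivalent case is not a degenerate technicality one can legislate away; it changes the combinatorics, because a bivalent $C_{\fake}$ lies in $\calE^0_{\fake}$ and is therefore a \emph{boundary} cluster of two distinct components $V_1,V_2\in\calV_{\fake}$, each contributing a segment of $T_{e,\fake}$ ending on $C_{\fake}$, whereas $C'_{\fake}$ is never bivalent (it either contains $x$ or has valence at least three) and so sits in the interior of a single component $V'\in\calV'_{\fake}$. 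One must then argue directly that the two segments of $\lambda'(V_1^0)$ and $\lambda'(V_2^0)$ joining $C_{\fake}$ to its two neighbors reappear as components of $\lambda'((V')^0)$, again via Lemma \ref{lem:fake separate} and the inductive definition of $\lambda'$. Your proof as written omits this case, so it is incomplete; adding that argument (rather than trying to exclude the case) repairs it.
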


\begin{proof}
    Let $\calE^0_{\fake}$ and $(\calE'_{\fake})^0$ denote the bivalent clusters for the graphs $\calG_{\fake}$ and $\calG'_{\fake}$, and recall that the edge components of the stable trees $T_{e,\fake} \subset T_{\fake}$ and $T'_{e,\fake} \subset T'_{\fake}$ are defined as closures of the components of $\calG_{\fake} - \calE^0_{\fake}$ and $\calG'_{\fake} - \calE'^0_{\fake}$, respectively.

    Observe first that $C_{\fake}$ may be bivalent, while $C'_{\fake}$ is not bivalent by construction since $C'_{\fake}$ either disconnect $\calG'_{\fake}$ into more than two components or contains a point of $F'$, namely $x$.  It will be useful to first consider edge components defined without the involvement of $C_{\fake}$ and $C'_{\fake}$.
    
    Toward that end, observe that items \eqref{item:induced subgraph} and \eqref{item:components} of Lemma \ref{lem:fake graph} imply that if $C \in \calE^0_{\fake} -\{C_{\fake}\}$ then $I(C) \in (\calE'_{\fake})^0 - \{C'_{\fake}\}$, where $I:\calG^0_{\fake} \to \left(\calG'_{\fake}\right)^0$ is the injection provided by that lemma.  Hence if we consider the set of closures of components $\calV_{\fake}$ of $\calG_{\fake} - \calE^0_{\fake}$ and $\calV'_{\fake}$ of $\calG'_{\fake} - (\calE'_{\fake})^0$ as in the Definition \ref{defn:stable tree} of the stable trees $T_{\fake} = T_{e,\fake} \cup T_{c,\fake}$ and $T'_{\fake} = T'_{e,\fake} \cup T'_{c,\fake}$, then every component $V \in \calV_{\fake}$ not containing $C_{\fake}$ appears as a component in $\calV'_{\fake}$, again by item \eqref{item:components} of Lemma \ref{lem:fake graph}.

It remains to consider the components involving $C_{\fake}$ and $C'_{\fake}$.  For the latter, observe that there exists a unique component $V' \in \calV'_{\fake}$ containing $C'_{\fake}$, since $C'_{\fake}$ is not bivalent.  On the other hand, there are at most two such exceptional components in $\calG_{\fake} - \calE^0_{\fake}$, when $C_{\fake}$ is bivalent (the two components of which it is a boundary), and only one component when $C_{\fake}$ is not bivalent.

\smallskip
\textbf{\underline{$C_{\fake}$ is not bivalent}}: Suppose first that $C_{\fake}$ is not bivalent and let $V \in \calV_{\fake}$ be the closure of the (unique) component of $\calG_{\fake} - \calE^0_{\fake}$ containing $C_{\fake}$.  Let $B_1, \dots, B_n \in \calE^0_{\fake}$ be the bivalent boundary clusters for $V$.  By Lemma \ref{lem:fake graph}, they are all bivalent in $\calG'_{\fake}$ and also are boundary clusters of $V'$.  While $V'$ may have other bivalent clusters $B'_1, \dots, B'_k$ in its boundary, each of these additional bivalent clusters is of type (C3') (as in Lemma \ref{lem:organize F'}), i.e. each $B'_i$ consists only of points in $\calY' - (\calY \cup \calY_{\spor})$.

\begin{figure}
    \centering
    \includegraphics[width=1\textwidth]{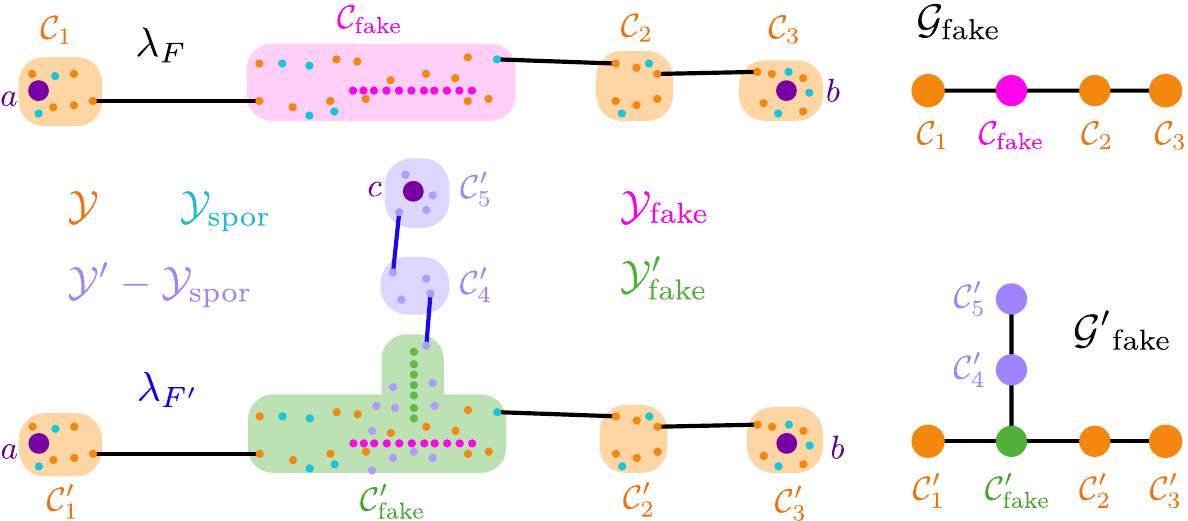}
    \caption{The example from Figures \ref{fig:sporadic cluster},  \ref{fig:fake cluster}, and \ref{fig:fake organize} revisited.  In Proposition \ref{prop:edge inject}, we show that every edge component of the stable tree $T_{\fake}$ for $(F; \calY \cup \calY_{\spor} \cup \calY_{\fake})$ is an edge component of the stable tree $T'_{\fake}$ for $(F';\calY' \cup \calY'_{\fake})$.}
    \label{fig:edge inject}
\end{figure}
Now consider the minimal networks $\lambda'(V)$ and $\lambda'(V')$.  These networks are minimal length forests which connect the clusters in $V$, $V'$ respectively.  Observe that any cluster besides $C_{\fake}$ involved in $V$ is of type $(C1)$.  By taking the fake cluster diameter constant $B = B(k, \delta, S)>0$ for  $C_{\fake}$ on $\lambda(F)$ to be sufficiently large, Lemma \ref{lem:fake separate} provides that the closest cluster point in $V^0 - \{C_{\fake}\}$ to any cluster point in a type (C3') cluster in $(V')^0$ is a cluster point in $C_{\fake}$.   Hence if clusters $C_1, \dots, C_k \in V^0 - \{C_{\fake}\}$ are connected by a component of $\lambda'(V)$, then this component also appears in $\lambda'(V')$, as one cannot find a shorter minimal network by replacing $\lambda'(C_1 \cup \cdots \cup C_k)$ by a smaller tree connecting different points on $C_1 \cup \cdots \cup C_n$, otherwise $\lambda'(V)$ would have used that tree instead by Lemma \ref{lem:inductive forest} (see Subsection \ref{subsec:basic tree setup} for the inductive definition of $\lambda'$).  By the same logic and again using Lemma \ref{lem:fake separate} and the inductive definition of $\lambda'$, if $D_1, \cdots, D_m, C_{\fake}$ are connected by a component of $\lambda'(V)$, then that component appears as a component of $\lambda'(V')$, as any such component would have to connect points contained in the corresponding clusters in $V^0$.

\smallskip
\textbf{\underline{$C_{\fake}$ is bivalent}}: Finally, suppose that $C_{\fake}$ is bivalent, while again $C'_{\fake}$ necessarily is not bivalent.  Observe that $C_{\fake}$ is part of two components $V_1,V_2 \in \pi_0\left(\calG_{\fake} - \calE^0_{\fake}\right)$ which determine forests $\lambda'(V_1^0)$ and $\lambda'(V_2^0)$.  Bivalency implies that the components of these forests connected to $C_{\fake}$ are segments connecting $C_{\fake}$ to the clusters $C_1,C_2 \in \calG^0_{\fake}$ to which it is adjacent in $\calG_{\fake}$.  By Lemma \ref{lem:fake graph}, $C_1,C_2$ determine clusters in $(F', \calY' \cup \calY'_{\fake})$ which (as vertices) are adjacent to $C'_{\fake}$ in $\calG'_{\fake}$.

Let $V' \in \pi_0(\calG'_{\fake} - \calE'^0_{\fake})$ be the component containing $C'_{\fake}$, and observe that $C_1,C_2$ are necessarily contained in its closure by Lemma \ref{lem:fake graph}.  Again by construction and the inductive definition of $\lambda'$, the segments in $\lambda'(V_1)$ and $\lambda'(V_2)$ connecting $C_1, C_{\fake}$ and $C_2, C_{\fake}$ must appear as components of $\lambda'(V')$ because $C_{\fake}$ separates every cluster point in $\calY \cup \calY_{\spor}$ from every cluster point in $\calY' \cup \calY'_{\fake} - (\calY \cup \calY_{\spor} \cup \calY_{\fake})$ by Lemma \ref{lem:fake separate}.  This completes the proof of the proposition.
\end{proof}

\subsection{Proof of Theorem \ref{thm:stabler tree} in the case where $F'- F = \{x\}$}

With Proposition \ref{prop:edge inject} in hand and our discussion of the fake cluster setups complete, there are two remaining steps in the proof of Theorem \ref{thm:stabler tree}.  The first is to complete the proof of the base case where we are adding a single point to $F$, namely where $F' - F = \{x\}$.  The second and last step is to explain how to iteratively add points.

\smallskip

Recall that we have a collection of setups as follows:
\[(F; \calY) \rightsquigarrow (F; \calY \cup \calY_{\spor}) \rightsquigarrow (F; \calY \cup \calY_{\spor} \cup \calY_{\fake}) \rightsquigarrow (F'; \calY' \cup \calY'_{\fake}) \rightsquigarrow (F'; \calY').\]

By our assumptions in the statement of Theorem \ref{thm:stabler tree}, the pair $(F; \calY) \rightsquigarrow (F; \calY \cup \calY_{\spor})$ is $(N, \ep_k)$-admissible.  The pairs $(F; \calY \cup \calY_{\spor}) \rightsquigarrow (F; \calY \cup \calY_{\spor} \cup \calY_{\fake})$ and $(F'; \calY' \cup \calY'_{\fake}) \rightsquigarrow (F'; \calY')$ are both $D_{\fake}$-admissible by Lemma \ref{lem:fake basic}.  Here, $D_{\fake}$ depends on our chosen tuple of fake cluster constants $(a,A,B)$ where $0 < a < \min\{a_0,a_1,a_2\}$, $0 < A < \min\{A_0,A_1,A_2\}$ as in Lemmas \ref{lem:fake basic}, \ref{lem:organize F'}, \ref{lem:organize F}, \ref{lem:fake separate}, and $B > \max\{B_0, B_1,B_2,B_3,B_4\}$ as in those lemmas plus Lemma \ref{lem:fake graph} and Proposition \ref{prop:edge inject}.

Hence we can apply Theorem \ref{thm:stable tree} and Proposition \ref{prop:stable iteration} to provides $\calY$-stable $M$-compatible decompositions for $(F; \calY)$ and $(F; \calY \cup \calY_{\spor} \cup \calY_{\fake})$ and similarly for $(F'; \calY')$ and $(F'; \calY' \cup \calY'_{\fake})$, where $M = M(S,N,k, \delta)>0$.

Denote these stable decompositions by $T_s \subset T_e$ and $T_{s,\fake} \subset T_{e,\fake}$, and similarly $T'_{s} \subset T'_{e}$ and $T'_{s,\fake} \subset T'_{e,\fake}$.  By Definition \ref{defn:stable decomp}, every component of these stable decompositions lies in some edge component of the corresponding stable tree.  On the other hand, Proposition \ref{prop:edge inject} says that every edge component of $T_{e,\fake}$ is a component of $T'_{e,\fake}$.  We are now done by essentially the same argument as in the proof of Proposition \ref{prop:stable iteration, pair}.

In that proof, we had three stable setups $T_1,T_2,T_3$ with $T_1,T_2$ and $T_2,T_3$ admitting compatible stable decompositions.  We used $T_2$ as a ``bridge'' on which to intersect the components of the stable decompositions coming from $T_1$ and $T_3$, respectively, and define the various maps required for Definition \ref{defn:stable decomp} via restrictions and compositions of the maps provided for the pairs $T_1,T_2$ and $T_2,T_3$.

In our current situation, this bridge is provided by Proposition \ref{prop:edge inject}, namely the common edges of $T_{e,\fake} \subset T'_{e,\fake}$.  A nearly identical argument involving appropriate intersections and restrictions then allows us to induce $\calY$-stably $4M$-compatible stable decompositions on $T_e\subset T$ and $T'_e \subset T'$.  We leave details to the reader.

In particular, this allows us to induce a uniformly compatible $\calY$-stable decomposition for $(F; \calY \cup \calY_{\spor} \cup \calY_{\fake})$ and $(F';\calY' \cup \calY'_{\fake})$.  We are now done with the base case $F' - F = \{x\}$ by applying Proposition \ref{prop:stable iteration}.

\subsection{Completing the proof when $F' - F = \{x_1, \dots, x_n\}$} \label{subsec:stabler iteration}

Assuming now that $F' - F = \{x_1, \dots, x_n\}$ with $(F;\calY)$ and $(F';\calY')$ being $(S,N,\ep_k)$-well-layered $\ep_k$-setups as in the statement of the theorem and hence Definition \ref{defn:well-layered}.

Let $(F_0; \calY_0) = (F;\calY)$, and for each $1 \leq i \leq n$, let $F_i = F \cup \{x_1, \dots, x_i\}$.  The base case of the theorem (which we established above) provides uniformly stable decompositions for each pair $(F_i;\calY_i)$ and $(F_{i+1};\calY_{i+1})$.  The proof is now complete by Proposition \ref{prop:stable iteration}, which says that the $(\ep_k, \ep'_k, E_0)$-stable trees for $(F;\calY)$ and $(F';\calY')$ admits $\calY$-stably $L$-compatible stable decompositions (Definition \ref{defn:stable decomp}) for $L = L(n, S, N, k, \delta)>0$, as required.  This completes the proof of Theorem \ref{thm:stabler tree}.

\subsection{Simplicialization} \label{subsec:simplicialization}

In this subsection, we make some observations that will allow us to make a minor but important (for what follows) modifications to our stable trees and stable decompositions.  First, we explain the motivation for the modification.

In the next section, we will want to plug two versions of our stable trees into the cubulation machine from \cite{Dur_infcube}.  The first will be obtained by the thickening and collapsing procedure described in Subsection \ref{subsec:HFT} directly to the edge components of the stable trees.  The second will be obtained by further collapsing each complementary component of a stable decomposition to a point (Definition \ref{defn:stable decomp}), leaving a tree built from the stable components.  In both cases, we will need to know that the resulting object is a simplicial tree (i.e., where all edges have integer lengths) where the collapsed points are vertices (to fit into the Definition \ref{defn:HFT} of a hierarchical family of trees).  This requires that 
\begin{enumerate}
    \item the edge components of our stable trees are simplicial trees, and
    \item that the length of each component of the stable decomposition has integer length.
\end{enumerate}

Realizing both of these properties requires a minor modification to our setup and arguments.  Since it is possible to do so, we have decided to comment on them here instead of earlier to allow the reader to focus on those already involved constructions without having an additional thing of which to keep track.

To arrange (1), we can modify the outputs of our minimal network functions $\lambda, \lambda'$ from Subsection \ref{subsec:basic tree setup} by simply collapsing segments of length less than 1 from the ends of the edges of the components of the networks.  For a finite set of points $F \subset \calZ$ with $|F|\leq k$ or a finite collection of finite subsets $A_1, \dots, A_n$, the resulting objects $\lambda(F)$ and $\lambda'(A_1, \dots, A_n)$ are still a tree and a forest, respectively, which are uniformly $(1,C)$-quasi-isometric to their originals, where $C$ depends on $\delta,k,$ and $n$, respectively.  Notably, this means that their images in $\calZ$ need no longer be connected, but connectivity of the image in $\calZ$ never plays a role in any of the arguments in this paper or in \cite{DMS_bary}.  In fact, the images of the original (unsimplicialized) network functions need not be embedded, see \cite[Remark 3 and Figure 10]{DMS_bary} for further similar pathologies.

As an upshot, we may assume that given any $\ep_k$-setup $(F, \calY)$ in $\calZ$, its corresponding stable tree $T = T_e \cup T_c$ has the property that each component of $T_e$ is a simplicial tree.

To arrange (2), we need a related set of observations.  Suppose that $(F, \calY)$ and $(F', \calY')$ with $F \subset F'$ are $(N,\ep_k)$-admissible $\ep_k$-setups satisfying the assumptions of Theorem \ref{thm:stabler tree}, so that they admit $\calY$-stably $L$-compatible stable decompositions $T_s \subset T$ and $T'_s \subset T'$, for $L = L(k, N, \delta, S)>0$ (Definition \ref{defn:stable decomp}).  In particular, there is a bijection $\alpha:\pi_0(T_s) \to \pi_0(T'_s)$, where components $B = \alpha(B)$ identified by $\alpha$ are identified by an isometry $i_{B,\alpha(B)}:B \to \alpha(B)$.  All but $L$-many of these components are identical components of $T_e \cap T'_e$, and hence are already simplicial.  The remaining (at most) $L$-many components (which are all segments) can thus be trimmed by removing segments of length less than $1$ using the isometries to identify segments to be collapsed.  Moreover, we can arrange that the endpoints of the components of $T_{U,s}$ lie at the vertices of components of $T_{U,e}$ which contain them.  Since there are at most $L$-many of these and each collapsed segment is short, the resulting collection of components still results in a stable decomposition, where we possibly have to increase $L$ by a bounded amount.

Combining these two observations, we get:

\begin{proposition}\label{prop:simplicialization}
    We may assume that all edge components of stable trees and all components of stable decompositions are simplicial trees.  Moreover, the leaves of any component of a stable decomposition lies at vertices of the edge component that contains it.
\end{proposition}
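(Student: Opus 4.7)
The plan is to implement the two modifications already sketched in the preceding discussion, verifying that each one can be absorbed into the existing constants of the construction without disturbing the statements of Theorems \ref{thm:stable tree} and \ref{thm:stabler tree}.

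First, I would modify the fixed minimal network functions $\lambda$ and $\lambda'$ of Subsection \ref{subsec:basic tree setup}. Given a finite $F \subset \calZ$ with $|F| \leq k$, redefine $\lambda(F)$ to be the tree obtained from the original minimal network by trimming any terminal sub-segment of length less than $1$ from each edge; proceed similarly for $\lambda'(A_1,\ldots,A_n)$, trimming short segments from each dangling end of each component of the forest. The resulting object is still a tree/forest, with all edge lengths at least $1$, and there is an obvious simplicial structure in which every vertex is either an original branch point or an endpoint of a trimmed edge (and edges are subdivided into unit-length pieces via the coarsely dense vertex set from the trimming). The map into $\calZ$ is then a uniform $(1,C_k)$-quasi-isometric embedding, where $C_k$ depends only on $k$ and $\delta$, because we changed distances by at most $2$ per edge and the number of edges is bounded in $k$. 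Consequently the conclusions of Lemma \ref{lem:basic tree lemma, redux}, as well as all derived constants controlling clusters, shadows, and the cluster separation graph of Subsection \ref{subsec:cluster basics}, continue to hold after a harmless inflation; in particular the stable tree $T = T_e \cup T_c$ of Definition \ref{defn:stable tree} has each edge component $\lambda'(V^0)$ simplicial.

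Second, I would modify the stable decompositions of Definition \ref{defn:stable decomp} so that every component of $T_s$ (and $T'_s$) is an interval of integer length whose endpoints sit at vertices of the ambient edge component. By Theorem \ref{thm:stabler tree}, all but at most $L$-many stable pairs $(B,\alpha(B))$ are identical components of $T_e \cap T'_e$, and these are already simplicial from the first step. For each of the at most $L$ remaining pairs, I use the isometry $i_{B,\alpha(B)} : B \to \alpha(B)$ to coherently trim matching sub-segments from the two ends of $B$ and $\alpha(B)$ so that the shortened components have integer length and their endpoints land at simplicial vertices of $T_e$ and $T'_e$ respectively. Since the discarded piece at each end has length less than $1$, the deleted material can be absorbed into the unstable forests $T_{\diff}, T'_{\diff}$ of item \eqref{item:unstable forests} of Definition \ref{defn:stable decomp}, at the cost of enlarging the constants $L_1,L_2$ by a uniform amount depending only on $L$, hence only on $k, N, \delta, S$.

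The main thing to verify is that after these trimmings the structural data of Definition \ref{defn:stable decomp} survive: the bijection $\alpha$ restricts naturally to the trimmed pairs; the isometries $i_{B,\alpha(B)}$ simply restrict; items \eqref{item:identical pairs}, \eqref{item:close pairs}, and \eqref{item:unstable components} are preserved with mildly enlarged constants; and the adjacency-preserving bijection $\beta$ of item \eqref{item:adjacency} extends across the newly absorbed short pieces since these lie in unstable components whose adjacency combinatorics are unchanged. The most delicate point, and the one I would check carefully, is the gluing-data compatibility underlying the definition of $\beta$ (the analogue of Claim \ref{claim:gluing data}): trimming could in principle identify two formerly distinct endpoints of a stable component. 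This is avoided by always trimming a length strictly less than the half-length of $B$, which is fine because the components we need to modify can be assumed longer than any fixed threshold (short stable components are simply reassigned to the unstable forest, as in the proof of Theorem \ref{thm:stable tree, one point}). Together, these two modifications yield the proposition.
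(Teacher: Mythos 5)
Your proposal is correct and follows essentially the same route as the paper: trim sub-unit-length terminal segments from the outputs of the minimal network functions $\lambda,\lambda'$ so that edge components become simplicial, then use the pairing isometries to coherently trim the at most $L$-many non-identical stable components to integer length, absorbing the discarded pieces into the unstable forests at the cost of a uniformly bounded increase in the constants. Your additional check of the gluing-data compatibility is a reasonable extra verification but does not change the argument.
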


\subsection{Collapsed trees and stable decompositions}\label{subsec:collapsing motivation}

The following corollary motivates Definition \ref{defn:stable decomp} and the Stabler Trees Theorem \ref{thm:stabler tree}.  Roughly, it says that the combinatorial data used to construct the stable trees and stable decompositions are correctly encoded when we collapse the complementary components of the stable decompositions.

\begin{corollary}\label{cor:collapsed isometry}
Let $(F;\calY)$ and $(F'; \calY')$ with $\calY \subset \calY'$ be $(N,\epsilon)$-admissible $\epsilon$-setups with $\calY$-stable $L$-compatible decompositions $T_s \subset T_e$ and $T'_s \subset T'_e$.   Let $\Delta:T \to \hT$ and $\Delta':T' \to \hT'$ denote the quotients obtained by collapsing each component of $T-T_s$ and $\hull_{T'}(F)-T'_s$ to a point.  The following hold:
\begin{enumerate}
\item $\hT$ and $\hT'$ are simplicial trees where each collapsed component of $T-T_s$ and $\hull_{T'}(F)-T'_s$ is a vertex in the simplicial structure, and
\item There exists an isometric embedding $\Phi:\hT \to \hT'$ which restricts to the isometry of pairs of stable components $i_{E,\alpha(E)}:E \to \alpha(E)$ for each $E \in \pi_0(T_s)$.   Moreover, we have
\begin{enumerate}
\item $\Phi(\Delta(f)) = \Delta'(f)$ for all $f \in F$,
\item If $y \in \calY$ and $D_y \in \pi_0(T - T_s)$ and $D'_y \in \pi_0(T - T_s)$ contain $\mu(C_y)$ and $\mu(C'_y)$, respectively, then $\Phi(\Delta(D_y)) \subset \Delta'(D'_y)$. 
\end{enumerate}
\end{enumerate}
\end{corollary}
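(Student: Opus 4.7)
The plan is to construct $\Phi$ piece by piece using the bijections supplied by the stable decomposition---$\alpha:\pi_0(T_s)\to\pi_0(T'_s)$ on stable components and $\beta:\pi_0(T-T_s)\to\pi_0(\hull_{T'}(F)-T'_s)$ on the collapsed complementary pieces---and then to check that these two definitions glue consistently at the attachment interfaces via the adjacency-preserving property of Definition~\ref{defn:stable decomp}.

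For item~(1), I first note that $T$ is a tree and each component of $T-T_s$ is a connected subtree, so collapsing finitely many disjoint connected subtrees yields a tree $\hT$; the same discussion applies to $\hT'$, after noting that $\Delta'$ leaves the components of $T'-\hull_{T'}(F)$ untouched, so they contribute their own simplicial structure unchanged. By Proposition~\ref{prop:simplicialization}, each component $E\in\pi_0(T_s)$ is a simplicial interval whose endpoints coincide with vertices of the ambient simplicial edge component containing it. Declaring the (finitely many) collapsed complementary components to be vertices and retaining the simplicial structure on each $E$ then gives the required simplicial structure on $\hT$, and symmetrically on $\hT'$.

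For item~(2), the definition of $\Phi$ is essentially forced: on each stable component $E\in\pi_0(T_s)$, set $\Phi|_E=i_{E,\alpha(E)}$ from Definition~\ref{defn:stable decomp}\eqref{item:stable pairs}, and on each collapsed vertex $v_D$ coming from $D\in\pi_0(T-T_s)$, set $\Phi(v_D)=v_{\beta(D)}$. The key check is well-definedness at the attachment interfaces: if a stable component $E$ is incident in $T$ to $D\in\pi_0(T-T_s)$ at an endpoint $e$ of $E$, then in $\hT$ the point $e$ is identified with $v_D$, so I need $i_{E,\alpha(E)}(e)$ to be identified with $v_{\beta(D)}$ in $\hT'$. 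This is exactly the content of the adjacency-preserving property~\eqref{item:Adjacency-preserving}, which says that $\alpha(E)$ is incident to $\beta(D)$ at the point $i_{E,\alpha(E)}(e)$, so in $\hT'$ this endpoint is precisely the collapsed vertex $v_{\beta(D)}$.

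Once well-defined, $\Phi$ is a simplicial map which is locally an isometry on each closed edge and is injective, since $\alpha$ and $\beta$ are bijections onto disjoint sets of features of $\hT'$; because $\hT'$ is a tree, an injective locally isometric simplicial map automatically sends geodesics to geodesics, giving the isometric embedding. Items~(a) and~(b) then drop out of item~\eqref{item:cluster identify} of Definition~\ref{defn:stable decomp}, which identifies $\beta(D_y)=D'_y$ for every $y\in\calY\cup F$: unwinding, $\Phi(\Delta(f))=\Phi(v_{D_f})=v_{D'_f}=\Delta'(f)$, and $\Phi(\Delta(D_y))=\{v_{\beta(D_y)}\}=\{v_{D'_y}\}\subset\Delta'(D'_y)$. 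I expect the only real subtlety to be the interface compatibility check relying on~\eqref{item:Adjacency-preserving}; everything else is bookkeeping from the data already packaged into the stable decomposition, which is precisely why Definition~\ref{defn:stable decomp} was engineered to carry the bijection $\beta$ and its adjacency-preservation clause.
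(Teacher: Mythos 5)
Your proposal is correct and follows essentially the same route as the paper's proof: define $\Phi$ via $\alpha$ on stable components and $\beta$ on collapsed vertices, use the adjacency-preserving clause \eqref{item:Adjacency-preserving} to see that $\Phi$ is well-defined at interfaces and sends geodesics to geodesics, and invoke Proposition \ref{prop:simplicialization} for simpliciality and item \eqref{item:cluster identify} for (2a) and (2b). Your explicit well-definedness check at the attachment points is if anything slightly more careful than the paper's write-up.
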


\begin{proof}
First, observe that $\hT$ and $\hT'$ are trees because $T,T'$ are and each component of $T-T_s$ and $T'-T'_s$ is a subtree.  They are simplicial trees with the given description by Proposition \ref{prop:simplicialization}, and so the distance between the points corresponding to pairs of collapsed components of $T-T_s$ and $T'-T'_s$ is a positive integer.

For the second conclusion, we can define $\Phi:\hT\to \hT'$ as follows: For each vertex $v \in \hT$ corresponding to a component $C \in \pi_0(T-T_s)$, we define $\Phi(v) = v'$, where $v'$ is the vertex of $\hT'$ corresponding to $\beta(C) \in \pi_0(\hull_{T'}(F)-T'_s)$.  If now $x \in \hT$ is not such a vertex, then $x \in \Delta(E)$ for some component $E \in \pi_0(T_s)$.  So we define $\Phi(x) = \Delta'(i_{E, \alpha(E)}(x))$.  Note that $\Phi:\hT\to \hT'$ is an embedding because $\alpha$ and $\beta$ are bijections.  Moreover, $\Phi$ is an isometric embedding by item \eqref{item:Adjacency-preserving}, since any geodesic between $x, y \in \hT$ consists of a sequence of edges in $\Delta(T_s)$ connected at vertices corresponding to components of $T_u$, and \eqref{item:Adjacency-preserving} implies that adjacent components of $T_s$ are (isometrically) identified with adjacent components of $T'_s$ at the corersponding endpoints.  Hence $\Phi$ sends geodesics to geodesics.

Finally, both of the subitems of (2) follow directly from item \eqref{item:cluster identify} of Definition \ref{defn:stable decomp}.  This completes the proof.
\end{proof}

\section{Stabler cubulations} \label{sec:stabler cubulations}

In this section, we prove our main stability statement about cubical approximations in colorable HHSs, stated as Theorem \ref{thm:stabler cubulations} below.

The rough idea is that given a pair of finite subsets $F \subset F' \subset \calX$ of a colorable HHS $\calX$, we would like the cubical approximation $\calQ_F$ for $F$ to admit a convex embedding into the cubical approximation $\calQ_{F'}$ for $F'$.  While this is not true on the nose, it is true up to deleting boundedly-many hyperplanes from $\calQ_F \to \calR_F$ and $\calQ_{F'} \to \calR_{F'}$ to obtain refined cubical models.  This, along with an equivariance property, is the content of Theorem \ref{thm:stabler cubulations}, which we restate below:

\stabler*

\begin{remark}\label{rem:equivariance}
The special case of Theorem \ref{thm:stabler cubulations} where $F'=gF$ is already covered by \cite[Theorem 4.1]{DMS_bary}. Hence, we can and will only prove Theorem \ref{thm:stabler cubulations} for $g$ equal to the identity, and applying this case together with \cite[Theorem 4.1]{DMS_bary} we can put the relevant diagrams together and obtain the full statement of Theorem \ref{thm:stabler cubulations}.
\end{remark}

The proof of Theorem \ref{thm:stabler cubulations} proceeds in a few steps.  The first involves explaining how to convert the hierarchical data associated to finite subsets $F \subset F' \subset \calX$ into an appropriate input for our Stabler Tree Theorem \ref{thm:stabler tree}.  This involves using our work from Section \ref{sec:controlling domains} in a crucial way.  Then with the output of Theorem \ref{thm:stabler tree} in hand, we need to do a bit more work to convert it to an appropriate input for the cubulation machinery from \cite{Dur_infcube}.  We give a detailed outline of the argument below in Subsection \ref{subsec:stabler cubes, outline}, after laying out the basic setup.

\subsection{Fixing a setup and some notation} \label{subsec:fixed base setup, cubes}

For the rest of this section, fix a colorable HHS $(\calX, \mathfrak S)$ (Definition \ref{defn:colorable}, endowed with the stable projections from Theorem \ref{thm:stable proj}.  We further fix a constant $k>0$ and finite subsets $F \subset F' \subset \calX$ with $|F'|<k$.  Fixing a projection threshold $K = K(\mathfrak S)>0$ sufficiently large as in Theorem \ref{thm:stable proj}, we let $\calU = \Rel_K(F)$ and $\calU' = \Rel_K(F')$ denote the $K$-relevant domains for $F,F'$ respectively (see Notation \ref{not:rel}).

For each $U \in \calU$, we denote the projection of $F$ to $\calC(U)$ by $F_U$.  We also denote the set of $K$-relevant domains nesting for $F$ into $U$ by $\calU_U = \{V \in \calU| V \nest U\}$.  We then denote the set of their projections to the $\delta$-hyperbolic space $\calC(U)$ by $\calY_U = \{\rho^V_U|V \in \calU_U\}$.  We define $F'_U, \calU'_U,$ and $\calY'_U$ analogously for the set $F'$.  We note that $\delta = \delta(\mathfrak S)>0$ depends only on the ambient HHS structure.  Furthermore, recall that we have arranged via Remark \ref{rem:rho_can_be_points} that the images of all of these projections $\pi_U$ and $\rho^V_U$ are points in $\calC(U)$.  

For each $U \in \calU \cup \calU'$, let $\lambda_U, \lambda'_U$ be the minimal network functions for $\calC(U)$ as defined in Subsection \ref{subsec:basic tree setup}.  The following is an easy consequence of the Bounded Geodesic Image axiom and (the uniform) hyperbolicity of $\calC(U)$: 

\begin{lemma}\label{lem:HHS to ep setup}
    There exist $\ep_{\mathfrak S} = \ep_{\mathfrak S}(\mathfrak S, k)>0$ so that $\rho^V_U \subset \calN^{\calC(U)}_{\ep_{\mathfrak S}/2}(\lambda_U(F))$ for all $V \in \calU_U$.  In particular, $(F_U;\calY_U)$ is an $\ep_{\mathfrak S}$-setup for each $U \in \calU$.  The same statement holds for the setups $(F'_U; \calY'_U)$ for each $U \in \calU'$.
\end{lemma}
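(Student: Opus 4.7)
The plan is to establish the proximity bound $\rho^V_U\in \calN^{\calC(U)}_{\ep_{\mathfrak S}/2}(\lambda_U(F))$ by combining the Bounded Geodesic Image axiom with the standard tree-approximation of hulls in hyperbolic spaces, and then to note that the ``in particular'' clause is immediate from Definition \ref{defn:ep setup}.

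First, I would fix $U\in\calU$ and $V\in\calU_U$, and use that $V\in\Rel_K(F)$ to produce $x,y\in F$ with $d_V(\pi_V(x),\pi_V(y))\geq K$. Taking $K=K(\mathfrak S)$ larger than the Bounded Geodesic Image constant $E_{\mathfrak S}$ associated to the HHS structure, BGI guarantees that every $\calC(U)$--geodesic between $\pi_U(x)$ and $\pi_U(y)$ enters the $E_{\mathfrak S}$--neighborhood of $\rho^V_U$. This is the only place where we use that $V$ is $K$-relevant, and it is exactly what produces a point of $\rho^V_U$ close to a geodesic in $\calC(U)$ joining two points of $F_U$.

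Next, I would transfer this proximity from geodesics in $\hull_{\calC(U)}(F_U)$ to the tree $\lambda_U(F_U)$. Since $|F|<k$ and $\calC(U)$ is uniformly $\delta$-hyperbolic (with $\delta = \delta(\mathfrak S)$), Lemma \ref{lem:basic tree lemma} (together with the choice of $\ep_0 = \ep_0(k,\delta)$) provides a constant $\ep'_k$ bounding the Hausdorff distance in $\calC(U)$ between any such geodesic and $\lambda_U(F_U)$. Combining this with the BGI bound from the previous step, we obtain $d_{\calC(U)}(\rho^V_U,\lambda_U(F))\leq E_{\mathfrak S}+\ep'_k$. Setting
\[
\ep_{\mathfrak S}:=\max\{\,2(E_{\mathfrak S}+\ep'_k),\ \ep_{0,k}\,\}
\]
ensures both that $\rho^V_U\in\calN^{\calC(U)}_{\ep_{\mathfrak S}/2}(\lambda_U(F))$ and that $\ep_{\mathfrak S}\geq\ep_{0,k}$, the threshold required to invoke the tree-approximation lemma in the first place. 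All constants depend only on $\mathfrak S$ and $k$, as required.

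Finally, the ``in particular'' statement follows directly: by definition, $(F_U;\calY_U)$ is an $\ep_{\mathfrak S}$-setup in $\calC(U)$ precisely when every cluster point $y\in\calY_U$ satisfies $d_{\calC(U)}(y,\lambda_U(F))<\ep_{\mathfrak S}/2$, which is exactly what we just established for each $y=\rho^V_U$ with $V\in\calU_U$. The argument for the setups $(F'_U;\calY'_U)$ with $U\in\calU'$ is identical, replacing $F,\calU,\calU_U$ by $F',\calU',\calU'_U$ throughout; no new ideas are needed. The only subtlety to be careful about is making the choice of the projection threshold $K=K(\mathfrak S)$ large enough, once and for all, to simultaneously trigger BGI and lie within the stable-projection regime of Theorem \ref{thm:stable proj}, but this poses no real obstacle since both thresholds depend only on $\mathfrak S$.
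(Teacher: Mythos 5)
Your proof is correct and is exactly the argument the paper has in mind: the paper offers no written proof beyond the remark that the lemma is "an easy consequence of the Bounded Geodesic Image axiom and (the uniform) hyperbolicity of $\calC(U)$," and your two steps (BGI forces $\rho^V_U$ to lie near a geodesic joining $\pi_U(x),\pi_U(y)$ for a pair $x,y\in F$ witnessing $V\in\Rel_K(F)$, then Lemma \ref{lem:basic tree lemma} transfers proximity from such geodesics to $\lambda_U(F)$) fill in precisely that sketch. The choice of constants and the reduction of the "in particular" clause to Definition \ref{defn:ep setup} are both handled correctly.
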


We now set the following notation, along the lines of Notation \ref{not:fixed based setup, redux} from Subsection \ref{subsec:basic tree setup, redux}.  For completeness, this involves reiteration of some of the above notation in this subsection as well as the items of Notation \ref{not:fixed based setup, redux}.

\begin{notation}\label{not:fixed setup, cubes}
    For the rest of this section, we fix the following collection of sets and constant:
    \begin{enumerate}
        \item \textbf{Global notation from the ambient colorable HHS $(\calX, \mathfrak S)$}:
        \begin{enumerate}
        \item A natural number $k$, which globally controls the size of our finite subsets.
        \item Finite subsets $F \subset F' \subset \calX$ with $|F'|\leq k$.
        \item A projection threshold $K = K(k,\mathfrak S)>0$ at least as large as the one in Theorem \ref{thm:stable proj}. 
        \item Sets of $K$-relevant domains $\calU = \Rel_K(F)$ and $\calU' = \Rel_K(F')$ for $F,F'$, respectively.
        \item For each $U \in \mathfrak S$, we denote the projections of $F,F'$ to $\calC(U)$ by $F_U,F'_U$.
        \item For each $U \in \calU \cup \calU'$, we set $\calU_U = \{V \in \calU| V \nest U\}$ and $\calU'_U = \{V \in \calU'| V \nest U\}$.
        \item For each $U \in \calU \cup \calU'$, we set $\calY_U = \{\rho^V_U|V \in \calU_U\}$ and $\calY'_U = \{\rho^V_U| V \in \calU'_U\}$.  Note that $\calY_U = \calY_U \cap \calY'_U$ by definition.
      \end{enumerate}
      \item \textbf{Notation for stabler tree construction}:
      \begin{enumerate}
        \item A positive number $\ep> \max\{\ep_{\mathfrak S}, \ep_{0,k}\}$, where $\ep_{0,k} =\ep_{0,k}(k, \mathfrak S)>0$ is the constant provided by Lemma \ref{lem:basic tree lemma, redux}.  In particular:
        \begin{enumerate}
            \item The embedding maps $\lambda_U(F_U), \lambda_U(F'_U) \to \calC(U)$ are $(1, \ep/2)$-quasi-isometric embeddings.
            \item Both $(F_U;\calY_U)$ and $(F'_U;\calY'_U)$ are $\ep$-setups in $\calC(U)$ by Lemma \ref{lem:HHS to ep setup}.
        \end{enumerate}
        \item A positive number $\ep' = \ep'(k, \mathfrak S)>0$ so that $2\ep' > \ep + \ep'$.
        \item A natural number $k_0 = k_0(k, \mathfrak S)>0$ large enough so that
       \begin{enumerate}
        \item $\lambda_U(F_U) \subset \calN_{\frac{\ep_{0,k_0}}{2}}(\lambda(F'))$;
         \item $\ep_{0,k_0}/2> 10\ep'$.
         \item If $p, q\in \calN_{\ep}(\lambda_U(F_U))$, $p',q' \in \lambda_U(F'_U)$ are closest points, $\gamma$ is a geodesic between $p,q$, and $\gamma' \subset \lambda_U(F'_U)$ is a geodesic in $\lambda_U(F'_U)$ between $p',q'$, then $$d^{Haus}_{\calZ}(\gamma, \gamma') < \ep_{0,k_0}.$$
     \end{enumerate}
   \item Positive numbers $\ep_{k_0}>\max\{\ep_{0,k_0}, \ep\}$ as in Lemma \ref{lem:basic tree lemma, redux}.
   
       \item Cluster graph constants: We fix 
       \begin{enumerate}
           \item A cluster proximity constant $\ep'_{k_0}$ so that $2\ep'_{k_0} > \ep'_{k_0} + \ep_{k_0}$.
           \item A cluster separation constant $E_0 = E_0(k_0, \delta, \ep_{k_0})>0$ so that $E_0 > 8\ep'_{k_0}.$
       \end{enumerate}
       \item For each $U \in \calU$, the $(\ep_{k_0}, E_{k_0}, \delta)$-stable tree $T_U = T_{U,e} \cup T_{U,c}$ for $(F_U;\calY_U)$.  Define $T'_U = T'_{U,e} \cup T'_{U,c}$ similarly for $(F'_U;\calY'_U)$ when $U \in \calU'$.
\end{enumerate}
    \item \textbf{Sporadicity constants and notation}:
    \begin{enumerate}
        \item We fix $D>\max\{D_0,S_0\}$ where $D_0 = D_0(\mathfrak S)>0$ is as in Proposition \ref{prop:sporadic domains} and $S_0 = S_0(k, \mathfrak S)>0$ is as in Theorem \ref{thm:stabler tree}.
    \item  For each $U \in \calU \cap \calU'$, we set $\calV_{U,D}$ to be the set of $D$-sporadic domains in $U$ as in Definition \ref{defn:sporadic domains}.
    \item For each $U \in \calU \cap \calU'$, we let $\calY_{U, spor}(D) = \{\rho^V_U|V \in \calV_{U,D}\}$ denote the set of $D$-sporadic cluster points for the domains in $\calV_{U,D}$.
     \end{enumerate}
    \end{enumerate}
\end{notation}

\begin{remark}\label{rem:K}
    The largeness threshold $K = K(k, \mathfrak S)>0$ is the main constant that we will need to be able to periodically adjust during our arguments.  Since this happens many times explicitly and implicitly (e.g., through the arguments from \cite{Dur_infcube}), we will merely comment on it during proofs, making note that any increases will only depend on our choice of $k$ and the ambient setup $(\calX, \mathfrak S)$. 
\end{remark}

We now want to record an important but straightforward consequence of the above choices of constants and notation.  Note that the lemma only applies to domains in $\calU \cap \calU'$, since there are the only domains where we will need to apply our stabler tree techniques.

\begin{lemma}\label{lem:HHS basic setups are compatible}
    For each $U \in \calU \cap \calU'$, the $\ep$-setups $(F_U;\calY_U)$ and $(F'_U;\calY'_U)$ are $(D,N,\ep)$-well-layered as in Definition \ref{defn:well-layered}, where $N = N(D,k, \mathfrak S)>0$ is as in Proposition \ref{prop:sporadic domains}.
    \begin{itemize}
        \item In particular, Theorem \ref{thm:stabler tree} provides a constant $L = L(k, \mathfrak S)>0$ and $L$-stably $\calY_U$-compatible stable decompositions $T_{s,U} \subset T_{U,e}$ and $T'_{s,U} \subset T'_{U,e}$.
    \end{itemize}
\end{lemma}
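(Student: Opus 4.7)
The plan is to exhibit, for each $U \in \calU \cap \calU'$, an explicit chain of $\epsilon$-setups in $\calC(U)$ interpolating between $(F_U;\calY_U)$ and $(F'_U;\calY'_U)$, and then read off the required bound on sporadic cluster points directly from Proposition \ref{prop:sporadic domains}. First, I would enumerate $F' - F = \{x_1, \ldots, x_n\}$ and set $F_i = F \cup \{x_1, \ldots, x_i\}$, so $F_0 = F$ and $F_n = F'$. For each $i$, define $\calU_{U,i} = \{V \in \Rel_K(F_i) : V \nest U\}$ and $\calY_{U,i} = \{\rho^V_U : V \in \calU_{U,i}\}$. Since adding points to a finite set can only enlarge the collection of $K$-relevant domains, the containments $\calU_U \subset \calU_{U,1} \subset \cdots \subset \calU_{U,n} = \calU'_U$ and the corresponding nested chain $\calY_U = \calY_{U,0} \subset \calY_{U,1} \subset \cdots \subset \calY_{U,n} = \calY'_U$ are automatic, which is the key formal prerequisite for Definition \ref{defn:well-layered}.

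Next I would verify $(0,\epsilon)$-admissibility between consecutive links. Since $\calY_{U,i} \subset \calY_{U,i+1}$, the set difference is empty and the intersection equals $\calY_{U,i}$, so admissibility reduces to showing $\calY_{U,i} \subset \calN_{\epsilon/2}(\lambda_U(F_{U,i})) \cap \calN_{\epsilon/2}(\lambda_U(F_{U,i+1}))$. The first containment is precisely Lemma \ref{lem:HHS to ep setup}. For the second, each $\rho^V_U$ with $V \in \calU_{U,i}$ lies uniformly close (by the Bounded Geodesic Image axiom, after taking $K$ large enough in terms of $\mathfrak S$) to any $\calC(U)$-geodesic between a pair in $F_i$ realizing the large projection to $V$; since $F_i \subset F_{i+1}$, this geodesic lies in $\hull_U(F_{U,i+1})$ and hence within $\epsilon/2$ of $\lambda_U(F_{U,i+1})$ by Lemma \ref{lem:basic tree lemma, redux}.

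The substantive step is the sporadic count. For each consecutive pair $F_i \subset F_{i+1}$ (which differ by the single point $x_{i+1}$), I would apply Proposition \ref{prop:sporadic domains}, justified since $D > D_0(\mathfrak S)$ by item (3)(a) of Notation \ref{not:fixed setup, cubes}. This yields a count $N = N(D, k, \mathfrak S)$ bounding the number of $D$-sporadic HHS-domains, uniformly in $i$. The sporadicity condition for cluster points in $\calC(U)$ as in Definition \ref{defn:sporadic cluster points} corresponds exactly to the HHS sporadicity condition of Definition \ref{defn:sporadic domains} for domains nesting into $U$: both demand that $\rho^V_U$ avoid a neighborhood in $\calC(U)$ of $\hull_U(x_{i+1}, f)$ for every $f \in F_i$. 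Summing over $U \in \calU \cap \calU'$ (or applying the bound separately for each $U$) therefore yields $|\calY_{U,\spor}(D;i)| \leq N$. The ``moreover'' clause of Proposition \ref{prop:sporadic domains} gives $V \in \Rel_{K-2E_{\mathfrak S}}(F_i)$ for each such sporadic $V$; after choosing $K$ large enough in terms of $k, \mathfrak S$ (in particular $K - 2E_{\mathfrak S}$ beyond the BGI threshold), the BGI axiom places $\rho^V_U$ within $\epsilon/2$ of $\lambda_U(F_{U,i})$, yielding condition (3) of Definition \ref{defn:well-layered}.

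The main point to watch is the correspondence between the two notions of sporadicity across the HHS-to-hyperbolic-space transition and the bookkeeping of constants, ensuring that the enlargement of $K$ needed for the BGI-based proximity arguments still lies within the allowable dependence on $k$ and $\mathfrak S$. Once this is in place, the ``in particular'' clause of the lemma is a direct invocation of Theorem \ref{thm:stabler tree} applied to each $U \in \calU \cap \calU'$.
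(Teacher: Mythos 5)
Your proposal is correct and follows essentially the same route as the paper: enumerate $F'-F$, build the nested chain $\calY_{U,i}$, get $(0,\ep)$-admissibility from the containments plus the proximity condition, and obtain the sporadic bound and condition (3) of Definition \ref{defn:well-layered} from Proposition \ref{prop:sporadic domains} (including its ``moreover'' clause) together with the Bounded Geodesic Image axiom. Your explicit matching of the two sporadicity definitions and the bookkeeping of $K$ is exactly the point the paper handles more tersely, so no gap here.
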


\begin{proof}
    First, $(F_U;\calY_U)$ and $(F'_U;\calY'_U)$ are $\ep$-setups by Lemma \ref{lem:HHS to ep setup} and our choice of $\ep$ in Notation \ref{not:fixed setup, cubes}.  We now check the conditions well-layered conditions in Definition \ref{defn:well-layered}.

    Let $F' - F = \{x_1, \dots, x_n\}$ and set $F_0 = F, F_n = F'$ and $F_i = F \cup \{x_1, \dots, x_i\}$ for all $i >0$.  Set $\calU_i = \Rel_K(F_i)$.  For each $U \in \calU \cap \calU'$ and $0 \leq i \leq n$, set $\calY_{U,i} = \{\rho^V_U| V \in \calU_i, V \nest U\}$.
    
    Now observe that since $\calY_{U,i} \subset \calY_{U,i+1}$ for all $0 \leq i <n$, we have that each $(F_i; \calY_{U,i})$ is $(0,\ep)$-admissible with respect to $(F_{i+1};\calY_{U, i+1})$ for all $0 \leq i <n$ by our choice of $\ep$ from Notation \ref{not:fixed setup, cubes}.  This proves item (1) of Definition \ref{defn:well-layered}.
    
    To see items (2) and (3), for each $0 \leq i \leq n$, let $\calY_{U,i,\spor} \subset \calY_{u,i+1}$ denote the set of $\rho$-points $p$ where 
    $$p \notin \bigcap_{f \in F_i} \calN_D(\hull_{\calC(U)}(x_{i+1}, f)).$$

    Item (3) follows immediately because of the Bounded Geodesic Image axiom plus the ``moreover'' statement of Proposition \ref{prop:sporadic domains}, which says that all domains in $\calV_{U, \spor}$ are $(K-2\ES)$-relevant for $F$, where $\ES=\ES(\mathfrak S)>0$ is ten times larger than the constants in the HHS definition (as in Notation \ref{not:ES}).  In particular, we can arrange this by making $K=K(\mathfrak S)>0$ large enough.
    
    Finally, for item (2), by our choice of sporadic constant $D = D(k, \mathfrak S)>0$ in Notation \ref{not:fixed setup, cubes}, Proposition \ref{prop:sporadic domains} provides an $N = N(k, \mathfrak S)>0$ so that $|\calY_{U,i,\spor}|<N$ for each $0\leq i < n$.  This completes the proof.
\end{proof}

\subsection{Outline of the rest of the section}\label{subsec:stabler cubes, outline}

With the output of Lemma \ref{lem:HHS basic setups are compatible} in hand---namely, the uniformly compatible stable decompositions for the stable tree $T_U, T'_U$ for each $U \in \calU \cap \calU'$---we can proceed to modify these trees so that they are appropriately configured for input into the cubulation machine from \cite{Dur_infcube}.  We outline this now.

The first main step is to obtain an HHS-like system of projections---called a \emph{hierarchical family of trees (HFT)} (Definition \ref{defn:HFT})---on collapsed versions of the collections of stable trees $\{T_U\}_{U \in \calU}$ and $\{T'_U\}_{U \in \calU'}$, where all of the coarseness from the HHS setup has been removed.  This is accomplished by first observing that the hierarchical data provided by the stable trees, subsurface projections, and relative projections induce a family of projections on the trees themselves, called a \emph{reduced tree system} (Subsection \ref{subsec:cubical models background}).  In our setting, this information is encoded in the cluster components of the $T_U, T'_U$.  Since the projections in a reduced tree system are still coarse, one then \emph{thickens} along these components (Subsection \ref{subsec:thickenings}), and then collapses down the thickened components to obtain a two families of simplicial trees $\{\hT_U\}_{U \in \calU}$ and $\{\hT'_U\}_{U \in \calU'}$ which are HFTs (Subsection \ref{subsec:HFT}).

These HFTs have naturally associated \emph{$0$-consistent subsets} $\calQ, \calQ'$ which are uniformly quasi-isometric to the hierarchical hulls, $\hull_{\calX}(F)$ and $\hull_{\calX}(F')$, respectively (Subsection \ref{subsec:Q defined}).  On the other hand, they are CAT(0) cube subcomplexes of the products of the $\hT_U$ and $\hT'_U$, respectively (Subsection \ref{subsec:HFT dual}), thus providing us the cubical models for $\hull_{\calX}(F)$ and $\hull_{\calX}(F')$.

The next main step is then employing the Stabler Tree Theorem \ref{thm:stabler tree} in this context.  In Subsection \ref{subsec:stable decomp thick}, we show that the thickenings of the trees $T_U, T'_U$ from above admit stable decompositions (in the sense of Definition \ref{defn:stable decomp}), which we then collapse as above in Subsection \ref{subsec:collapsing unstable parts} to obtain new HFTs with $0$-consistent sets $\calQ_0$ and $\calQ'_0$, respectively.  We use the results from Section \ref{sec:controlling domains} to control the size and number of collapsed pieces (see Lemma \ref{lem:controlling unstable parts} in particular).  Finally, in Subsection \ref{subsec:collapse hyperplane deletion}, we show that the corresponding maps $\calQ \to \calQ_0$ and $\calQ' \to \calQ'_0$ are actually hyperplane deletion maps deleting a controlled number of hyperplanes.  With this, we will have defined all of the maps that appear in Theorem \ref{thm:stabler cubulations}.

The final main step is then to prove that the various pieces of the diagram in Theorem \ref{thm:stabler cubulations} coarsely commute.  Lemma \ref{lem:commute collapse} shows that the upper and lower triangles coarsely commute.  Proposition \ref{prop:cubical convex embedding} uses Proposition \ref{cor:collapsed isometry} and results from \cite{Dur_infcube} to show that the map $\calQ_0 \to \calQ'_0$ is a convex embedding.  Finally, Proposition \ref{prop:cc comm diagram} proves that the middle triangle coarsely commutes. 

\subsection{Reduced tree systems from stable trees}\label{subsec:cubical models background}

The notion of a reduced tree system axiomatizes the basic hierarchical properties satisfied by the Gromov modeling trees for the projections of a finite subset $F \subset \calX$ to each of the hyperbolic spaces of the relevant set for $F$. They are the input into the cubical model construction in \cite{Dur_infcube} which we are using in this paper.  The definition \cite[Definition 6.15]{Dur_infcube} is somewhat involved and its content is not relevant to us, so we will provide a rough idea.  Given a finite subset $F \subset \calX$ with $\calU = \Rel_K(F)$ for $K = K(\calX)>0$ sufficiently large, an $R$-\emph{reduced tree system} is a collection of trees $\{T_U\}_{U \in \calU}$ so that each $T_U$ is $C$-median $(C,C)$-quasi-isometric to $\hull_U(F)$, along with a family of projection maps $\delta^V_U:T_V \to T_U$ for $V,U$ not orthogonal which satisfy analogous properties (e.g., a version of the Bounded Geodesic Image axiom) to the projections in an HHS up to some coarseness constant $R$.

Every family of uniform Gromov modeling trees for a finite set $F \subset \calX$ admits many $R$-reduced tree systems for uniform $R$ \cite[Corollary 6.16]{Dur_infcube}, but for our purposes, it will be necessary to use the edge/cluster decomposition associated to its family of stable trees to define the projections.

The following is \cite[Proposition 7.4]{Dur_infcube}:

\begin{proposition}\label{prop:reduced tree system}
    Let $F \subset \calX$ be finite with $\calU = \Rel_K(F)$ for $K=K(\calX)>0$ sufficiently large.  Suppose that for each $U \in \calU$ there is a tree $T_U$ and a $C$-quasi-median $(C,C)$-quasi-isometric embedding $\phi_U:T_U \to \calC(U)$ so that $d^{Haus}_U(\phi_U(T_U)), \hull_U(F))<C$, and so that each $T_U$ admits a decomposition $T_U = T_{e,U} \cup T_{c,U}$ with the following properties:
\begin{enumerate}
\item For each component $D \subset T^c_U$, there is an associated collection of domains $\calU_U(D) \subset \calU$ so that
\begin{enumerate}
    \item For each $V \in \calU$ with $V \nest U$, there exists a unique component $D_V \subset T_{c,U}$ so that $V \in \calU_U(D_V)$,
    \item $V \nest U$ for each $V \in \calU_U(D)$, and
    \item $d^{Haus}_U(\phi_U(D), \bigcup_{V \in \calU_U(D)} \rho^V_U)<C$ with the endpoints of $\phi_U(D)$ being contained in 
    $$\phi^{-1}_U\left(p_{T_U}\left(\{\rho^V_U|V \in \calU\} \cup \pi_U(F)\right)\right).$$
\end{enumerate}
\item For each $f \in F$, there exists a \textbf{marked point} $f_U \in T_U$ and a component $C_f \subset T_{c,U}$ so that $f_U \in C_U$ and $d_U(\phi_U(f_U), f)<C$.
\end{enumerate}

Then there exists $R = R(\calX, \#F, K, C)>0$ so that $\{T_U\}_{U \in \calU}$ is an $R$-reduced tree system with respect to the following projections $\delta^V_U:T_V \to T_U$ for $V,U$ not orthogonal:
\begin{itemize}
    \item \underline{$V \nest U$}: $\delta^V_U = \phi^{-1}_U(p_{D_V}(\rho^V_U))$ where $p_{D_V}$ is closest point projection to $D_V$ in $\calC(V)$;
    \item \underline{$V \pitchfork U$}: $\delta^V_U = \phi^{-1}_U(p_{T_U}(\rho^V_U))$, where $p_{T_U}$ is closest point projection to $\phi_U(T_U)$ in $\calC(U)$;
    \item \underline{$U \nest V$}: $\delta^V_U= \phi^{-1}_U \circ p_{T_U} \circ \rho^V_U \circ \phi_V:T_V \to T_U$.
\end{itemize}
\end{proposition}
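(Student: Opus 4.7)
The plan is to verify each axiom of an $R$-reduced tree system (as in \cite[Definition 6.15]{Dur_infcube}) directly from the hypotheses, using the quasi-isometric embeddings $\phi_U$ to transfer hierarchical data from the $\calC(U)$ to the trees $T_U$. The overall strategy is to argue that, since $\phi_U$ is a uniform quasi-median quasi-isometric embedding with image close to $\hull_U(F)$, each property that a reduced tree system requires of the projections $\delta^V_U$ is obtained by pulling back the corresponding HHS property of $\pi_U$ and $\rho^V_U$, up to an additive error controlled by the constants $C, K, \#F$, and the ambient HHS constants of $\mathfrak S$.

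First I would handle the three cases of the nesting/transversality relation separately. For $V\pitchfork U$, the projection $\delta^V_U$ is defined by pulling back a closest point projection of $\rho^V_U$ onto $\phi_U(T_U)$; hyperbolicity of $\calC(U)$ and the quasi-convexity of $\hull_U(F)$ imply that this is coarsely well-defined. For $U\nest V$, the formula composes $\phi_V$, the map $\rho^V_U: \calC(V)\setminus\calN(\rho^U_V)\to\calC(U)$, closest point projection to $\phi_U(T_U)$, and $\phi_U^{-1}$; all four pieces are coarsely Lipschitz (the third using hyperbolicity and quasi-convexity again, the fourth using that $\phi_U$ is a $(C,C)$-quasi-isometry onto its image), so the composition is coarsely Lipschitz with constants controlled by $C$ and the HHS structure. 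For $V\propnest U$, the key observation is hypothesis (1)(a)--(c): by construction the $\rho^V_U$ lies uniformly close to $\phi_U(D_V)$ for the unique component $D_V\subset T_{c,U}$ associated to $V$, so projecting to $D_V$ in $T_U$ and then pulling back via $\phi_U^{-1}$ is coarsely the same as closest point projecting $\rho^V_U$ onto $\phi_U(T_U)$ in $\calC(U)$ and taking preimage.

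The next step is to check the key consistency axioms of a reduced tree system. The Bounded Geodesic Image-type axiom follows because $\phi_U$ is quasi-median and (1)(c) forces $\phi_U(D_V)$ to sit near the geodesics/hulls in $\calC(U)$ containing $\rho^V_U$, so BGI in $\calC(U)$ transfers to a BGI-type bound on $T_U$. The Behrstock-like inequalities and the partial realization axiom transfer similarly: any long projection in $T_U$ corresponds, via $\phi_U$, to a long projection in $\calC(U)$, where we can invoke the corresponding HHS axiom, and then translate the conclusion back to the trees by applying $\phi_U^{-1}$ or $\phi_V^{-1}$. The marked points $f_U$ provided by hypothesis (2) serve as the tree-level analogues of the projections $\pi_U(f)$ and make the consistency of the system with $F$ itself immediate.

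The main obstacle I expect is coherence of the constants: although each axiom transfers, the additive errors incurred depend on $C$, $K$, $\#F$, the HHS constants, and the hyperbolicity constant of the $\calC(U)$, and one must check that requiring $K$ sufficiently large makes all these errors controllable uniformly, so that a single $R=R(\calX,\#F,K,C)$ works. In particular, one must verify that the closest point projection $p_{T_U}$ used in the transverse and $U\nest V$ cases is essentially well-defined (equivalently, that $\phi_U(T_U)$ is uniformly quasi-convex in $\calC(U)$), which follows from the quasi-isometric embedding property, and that, when $\rho^V_U$ lies far from $\phi_U(T_U)$, the definitions still produce a bounded set whose diameter is absorbed into $R$. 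Once this uniform control is in place, the reduced tree system axioms reduce mechanically to the corresponding HHS axioms of $(\calX,\mathfrak S)$.
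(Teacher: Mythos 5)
The paper does not actually prove this statement: it is quoted wholesale from the prior work, namely \cite[Proposition 7.4]{Dur_infcube}, and the paper even declines to state the full definition of an $R$-reduced tree system (``\cite[Definition 6.15]{Dur_infcube} is somewhat involved and its content is not relevant to us, so we will provide a rough idea''). So there is no in-paper proof to compare your argument against; any genuine proof has to engage with the axioms as listed in \cite{Dur_infcube}.

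Judged on its own, your proposal has the right general shape --- transfer the HHS projection data through the uniformly quasi-median quasi-isometric embeddings $\phi_U$, using quasi-convexity of $\phi_U(T_U)$ to make the closest-point projections coarsely well-defined, and absorb all additive errors into $R$ --- but as written it is an outline rather than a proof. The axioms being verified are never enumerated, and each verification is asserted (``transfers similarly,'' ``reduce mechanically'') rather than carried out, so there is nothing one could actually check. Two places where the content is being skipped rather than summarized: first, the hypotheses about the cluster decomposition $T_U = T_{e,U}\cup T_{c,U}$ are doing real work --- condition (1)(c), that $\phi_U(D_V)$ is Hausdorff-close to the union of the $\rho$-points it carries and that its endpoints lie at projections of $\rho$-points or of $\pi_U(F)$, is precisely what makes the nested-case projection $\delta^V_U=\phi_U^{-1}(p_{D_V}(\rho^V_U))$ land in a canonical, component-determined location rather than merely ``coarsely the same as projecting to the whole tree,'' and your sketch flattens this distinction away; second, the consistency/Behrstock-type conditions between $\delta^V_U$ and $\delta^U_V$ for transverse or nested pairs require comparing two differently-defined composites (one involving the map $\rho^V_U$ on $\calC(V)$, one a point), and showing these are compatible is not a formal consequence of each piece being coarsely Lipschitz. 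A complete argument would have to state \cite[Definition 6.15]{Dur_infcube} and verify its clauses one by one; alternatively, the honest route in the context of this paper is simply to cite \cite[Proposition 7.4]{Dur_infcube}, as the authors do.
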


The following lemma is the main takeaway from this discussion:

\begin{lemma}\label{lem:stable tree system}
    Given our standing setup from Notation \ref{not:fixed setup, cubes}, there exists $R_0 = R_0(\calX, k)>0$ so that the collection of stable trees $\{T_U\}_{U \in \calU}$ associated to $F \subset \calX$ is a $R_0$-reduced tree systems with respect to the projections $\delta^V_U$ from Proposition \ref{prop:reduced tree system}.
\end{lemma}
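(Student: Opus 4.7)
The plan is to verify the hypotheses of Proposition \ref{prop:reduced tree system} for the collection $\{T_U\}_{U\in\calU}$ of $(\ep_{k_0},\ep'_{k_0},E_0)$-stable trees coming from the setups $(F_U;\calY_U)$, and then let $R_0$ be the resulting constant. The quasi-median quasi-isometric embeddings $\phi_U\colon T_U\to\calC(U)$ are directly furnished by item (1) of Lemma \ref{lem:stable tree basics}, which gives uniform $(K_1,K_1)$-quasi-isometric embeddings with $d^{Haus}_{\calC(U)}(\phi_U(T_U),\hull_{\calC(U)}(F_U))\leq K_1$, where $K_1=K_1(k,\mathfrak S)$; being coarse-median maps between a tree and a uniformly hyperbolic space is automatic for uniform quasi-isometric embeddings (by taking centers of triangles).

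Next I would verify the structural conditions (1) and (2) of Proposition \ref{prop:reduced tree system} using the edge/cluster decomposition $T_U=T_{U,e}\cup T_{U,c}$ from Definition \ref{defn:stable tree}. Recall that the components of $T_{U,c}$ are exactly the minimal networks $\mu(C)=\lambda_U(r(C))$ associated to clusters $C$ in the cluster separation graph $\calG_U$ of $(F_U;\calY_U)$. For each such component $D=\mu(C)$, define
\[
\calU_U(D) = \{V\in\calU_U : \rho^V_U\in C\}.
\]
Condition (1a) then holds because every $V\in\calU_U$ contributes a point $\rho^V_U\in\calY_U$ lying in a unique cluster $C$ of $\calG_U$, giving a unique component $D_V=\mu(C)$ with $V\in\calU_U(D_V)$. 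Condition (1b) is immediate from the definition of $\calU_U$. For condition (1c), the Hausdorff bound $d^{Haus}_{\calC(U)}(\phi_U(D),\bigcup_{V\in\calU_U(D)}\rho^V_U)\leq C$ follows from item (3) of Lemma \ref{lem:stable tree basics} (which says $\mu(C)$ lies in a $D_1$-neighborhood of $C$) together with the fact that $r(C)=C\cap(T_{U,e}\cup F)$ is $E_0$-dense in $C$; the endpoint condition reduces to the definition of $r(C)$, since the leaves of $\mu(C)$ are exactly the connection points to adjacent edge components of $T_{U,e}$ and to any $f\in F\cap C$, each of which projects to some $\rho^V_U$ or some $\pi_U(f)$ by construction. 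After applying the simplicialization step of Proposition \ref{prop:simplicialization}, these leaves are vertices of $T_U$.

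For condition (2), for each $f\in F$ let $C_f$ denote the cluster of $\calG_U$ containing $\pi_U(f)$ (which exists since $\pi_U(F)\subseteq r(C_f)\subseteq\mu(C_f)$ by the construction of $\lambda$), and define $f_U\in\mu(C_f)\subseteq T_{U,c}$ as the vertex corresponding to $\pi_U(f)$. Then $d_{\calC(U)}(\phi_U(f_U),f)\leq K_1+\epsilon_{k_0}/2$ by item (1) of Lemma \ref{lem:stable tree basics} and the $\epsilon_{k_0}$-setup proximity. With both (1) and (2) verified and all constants uniformly controlled by $k$ and $\mathfrak S$, applying Proposition \ref{prop:reduced tree system} yields a constant $R_0=R_0(\calX,k)$ so that $\{T_U\}_{U\in\calU}$ is an $R_0$-reduced tree system for the projections $\delta^V_U$ it specifies.

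The main obstacle I anticipate is matching bookkeeping: ensuring that the endpoint constraints in (1c) are satisfied exactly (rather than up to bounded error), which requires the simplicialization from Subsection \ref{subsec:simplicialization} to place attachment points of edge components at true vertices, and ensuring that the marked points $f_U$ lie in genuine cluster components rather than in edge segments. Both are handled by arranging, via the minimal network conventions, that $\pi_U(f)\in r(C_f)$ is a vertex in the simplicial structure on $\mu(C_f)$; any residual discrepancy is absorbed into $C$ and ultimately into the constant $R_0$.
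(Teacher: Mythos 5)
Your proof is correct and follows essentially the same route as the paper's: the paper likewise reduces the statement to verifying conditions (1) and (2) of Proposition \ref{prop:reduced tree system}, citing item (3) of Lemma \ref{lem:stable tree basics} for (1c) and the cluster structure from Definition \ref{defn:stable tree} (every $\rho^V_U$ and every marked point lies in a unique cluster) for (1a), (1b), and (2). Your write-up simply fills in more of the bookkeeping (the explicit definition of $\calU_U(D)$ and the simplicialization caveat) than the paper's sketch does.
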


\begin{proof}
This is basically an application of Lemmas \ref{lem:cluster lemma} and \ref{lem:stable tree basics} to verify conditions (1) and (2) from the above proposition, so we mostly sketch how this goes.  Items (1a) and (1b) are immediate, and item (1c) follows from item (3) of Lemma \ref{lem:stable tree basics}.  Finally, item (2) of the proposition is immediate from the Definition \ref{defn:stable tree}---that is, every point of $F$ is contained in some cluster $C$---and again item (3) of Lemma \ref{lem:stable tree basics}.  This completes the proof.
\end{proof}

\subsection{Thickenings} \label{subsec:thickenings}

As is evident from Proposition \ref{prop:reduced tree system}, the structure of an $R$-reduced tree system is coarse.  The next step is to remove this coarseness via a process of thickening and collapsing, converting a reduced tree system into a hierarchical family of trees (Definition \ref{defn:HFT} below).  We begin with a discussion of thickenings from \cite{Dur_infcube} and relate it to our current context.

Let $T$ be an tree with a decomposition $T = A \cup B$ into collections of subtrees.  Given $r_1,r_2\geq 0$, we can define a sequence of thickened decompositions $T = \calA_n \cup \calB_n$ as follows:
\begin{itemize}
\item First, take the $r_1$-neighborhoods in $T$ of the components of $B$.  Call the collection of these $\calB_1$ and their complement $T - \calB_1 = \calA_1$.
\item Second, connect any two subtrees in $B_1$ which are within distance $r_2$ of each other in $T$ by the geodesic between them.  Call the resulting collection of subtrees $\calB_2$ and set $\calA_2 = T- \calB_2$.
\item Iterate inductively: Given $\calB_n$, define $\calB_{n+1}$ to be the collection of subtrees in $\calB_n$ along with the geodesic between any pair of them which are $r_2$-close.  And set $\calA_{n+1} = T - \calB_{n+1}$.
\end{itemize}

The following is \cite[Lemma 7.1]{Dur_infcube}:

\begin{lemma}\label{lem:thickenings exist}
Let $T = A \cup B$ be a decomposition of a tree into a pair of forests and suppose that $T$ has branching bounded by $m>0$.  Using the above notation, for any $r_1, r_2\geq 0$ there exists $N = N(r_1,r_2,m)>0$ and $D = D(r_1,r_2,m)>0$  with $D\geq r_1$ so that if $n\geq N$, then 
\begin{enumerate}
\item For any $l\geq n$, we have $\calB_n = \calB_l$ and $\calA_n = \calA_l$;
\item We have $\calB_n \subset \calN_D(B)$ and every pair of distinct components of $\calB_n$ are at least $r_2$-far away;
\item We have $\calN_{r_1}(B) \subset \calB_n$.
\end{enumerate}
\end{lemma}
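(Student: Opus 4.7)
The plan is to establish the three conclusions in sequence: first item (3), which is essentially definitional, and then the content of items (1) and (2) via a reach bound followed by a stabilization argument.

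Item (3) is immediate: by construction $\calB_1 = \calN_{r_1}(B)$ and $\calB_n \subset \calB_{n+1}$, so $\calN_{r_1}(B) \subset \calB_n$ for all $n \geq 1$. The core task is then to produce a uniform reach bound, namely a constant $D = D(r_1,r_2,m) \geq r_1$ with $\calB_\infty := \bigcup_{n \geq 1} \calB_n \subset \calN_D(B)$. For this, I would use a trace-back argument: for $x \in \calB_n \setminus \calN_{r_1}(B)$, the point $x$ lies on some geodesic $\gamma$ added at a step $j \leq n$ connecting two components $C, C'$ of $\calB_{j-1}$; each of $C, C'$ was either already contained in $\calN_{r_1}(B)$ or was itself built by earlier geodesic additions, and so on recursively. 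This produces a \emph{history tree} whose leaves lie in $\calN_{r_1}(B)$ and whose internal nodes correspond to added geodesics of length $\leq r_2$. Any root-to-leaf path then contributes at most (depth)$\,\cdot r_2 + r_1$ to the distance from $x$ to $B$, so it suffices to bound the depth by a function of $r_1,r_2,m$.

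The role of the branching hypothesis is precisely to bound this depth. The key observation is that each new level of the history corresponds to a merge that was unavailable at the previous step; in a tree, this forces the new added geodesic to reach a portion of $T$ that was not yet part of the relevant merged component, and hence to engage at least one branch point of $T$ that is new to the history. With branching bounded by $m$, and each step advancing the reach by at most $r_2$, the number of branch points of $T$ accessible from a given region is bounded in terms of the current reach, and a careful inductive argument closes the bound in a self-consistent way. With the reach bound in place, stabilization and separation then follow from a local finiteness argument: $\calB_\infty \subset \calN_D(B)$ is locally finite by the branching hypothesis, so only boundedly many components of $\calB_n$ can appear in any bounded subregion, and the process can merge things only boundedly many times before no two remaining components are within $r_2$. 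Uniformizing gives $N = N(r_1,r_2,m)$ with $\calB_n = \calB_l$ for $l \geq n \geq N$, and the separation statement in item (2) follows tautologically from the defining condition for the iteration to halt.

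The main obstacle is the reach bound. The subtle issue is that added geodesics can chain across branch points of $T$---so that ``outside'' portions of $\calB_\infty$ can be considerably longer than a single $r_2$---and without the branching hypothesis such chains can be forced to grow without bound (one can build examples with unbounded local branching where arbitrarily long chains of consecutive merges are required). Turning the informal claim ``each chain step consumes a branch point of $T$'' into a concrete quantitative depth bound requires careful bookkeeping that simultaneously controls the reach and the branch-point budget within that reach, and it is here that the full strength of the bounded branching hypothesis is used.
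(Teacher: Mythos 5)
The paper does not actually prove this lemma; it is quoted from \cite[Lemma 7.1]{Dur_infcube}, and the only hint of the argument given here is the caption of the thickening figure: ``Each absorption of a new gold component requires a new branch point.'' That is exactly the key claim you identify, so your approach coincides with the intended one: item (3) is definitional, the separation in (2) is the halting condition, and everything reduces to bounding the number of sequential merge steps by the branching.

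The one genuine issue is that you leave precisely this central step as an acknowledged ``obstacle,'' and the circularity you worry about (``controls the reach and the branch-point budget within that reach'') comes from reading ``branching bounded by $m$'' as a local valence bound. It is a bound on the \emph{total} branching of the finite tree $T$ (compare item (2) of Lemma \ref{lem:stable tree basics}, which is what feeds this hypothesis in the application), i.e.\ on the total number of branch points/leaves. With that reading the bookkeeping closes immediately and non-circularly: if a component $C_3$ of $\calB_j$ first comes within $r_2$ of a component $C$ at step $j+1$, its closest point $p$ in $C$ lies on a geodesic $\gamma$ adjoined at step $j$, and $p$ cannot be an endpoint of $\gamma$ (else $C_3$ was already within $r_2$ of a component of $\calB_{j-1}$ and would have been merged earlier); hence $p$ is a branch point of $T$ interior to $\gamma$, and distinct steps consume distinct branch points. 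So the depth of your history tree is at most $m+1$, giving $N=m+2$ and $D=r_1+(N-1)r_2$ by the inductive reach estimate you describe; stabilization then follows directly from the depth bound, with no need for the separate local-finiteness argument. Your instinct that the statement fails without this hypothesis is correct and is worth making precise the other way: a caterpillar tree with $n$ trivalent branch points spaced $r_2/4$ apart, with a component of $B$ at the tip of each length-$r_2/2$ branch, forces $\sim n$ sequential merges and reach $\sim n r_2/4$ even though every vertex has valence $3$ --- which confirms that $m$ must bound the number of branch points, not the valence.
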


 \begin{definition}[Tree thickenings]\label{defn:general thickening}
Given a tree $T = A \cup B$ with a decomposition into two forests, a bound $m>0$ on its branching, and $r_1,r_2>0$, the $(r_1,r_2)$-\emph{thickening} of $T$ \emph{along $B$} defines a decomposition $T = \bT_e \cup \bT_c$ where $\bT_e = \calA_N$ and $\bT_c = \calB_N$, for  $N = N(r_1,r_2,m)>0$ as in Lemma \ref{lem:thickenings exist}.
\end{definition}

In our current context, we have two possible decompositions with respect to which we can thicken each $T_U$, which we note all have branching bounded in terms of $k, \mathfrak S$.  For the first decomposition, we could take $B_U$ to be the collection of $\delta^V_U \subset T_U$ subtrees for $V \in \calU_U$ along with the marked points $f_U$ associated to points $f \in F$ on $T_U$, both as defined in Proposition \ref{prop:reduced tree system}.  Alternatively, we could take $B_U$ to be the cluster components $T_{U,c}$ of the stable trees $T_U = T_{U,e} \cup T_{U,c}$ themselves.  The following is \cite[Proposition 7.9]{Dur_infcube}:

\begin{lemma}\label{lem:same thickenings}
    There exists $R_0 = R_0(k, \mathfrak S)>0$ so that if $r_1,r_2> R_0$, then the two $(r_1,r_2)$-thickenings of each $T_U$ with respect to the two above decompositions are exactly the same.
\end{lemma}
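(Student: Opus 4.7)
The plan is to exploit the uniformly bounded diameter of cluster components together with the fact that each cluster component of $T_{U,c}$ contains at least one seed of the first decomposition. Let $D_1 = D_1(k, \mathfrak S)>0$ denote the diameter bound on cluster components provided by Lemma~\ref{lem:stable tree basics}(3). I will take $R_0$ to be a suitable multiple of $D_1$ and show that the two $(r_1,r_2)$-thickenings of $T_U$ coincide whenever $r_1, r_2 > R_0$.

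The key observations are: (i) by Proposition~\ref{prop:reduced tree system}(1) and (2), every cluster component $D \subset T_{U,c}$ contains at least one seed of the first decomposition, namely either a marked point $f_U$ (when $C \cap F \neq \emptyset$) or a projection seed $\delta^V_U$ for any $V \in \calU_U(D)$; and (ii) every seed in $B_1$ already lies inside some cluster component, so $B_1 \subseteq B_2 = T_{U,c}$.

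The core of the argument is to verify that the initial thickenings $\calB_1^{(1)} = \calN_{r_1}(B_1)$ and $\calB_1^{(2)} = \calN_{r_1}(B_2)$ agree up to small discrepancies that are absorbed by the iterative step. Since $B_1 \subseteq B_2$, one direction is immediate. For the reverse containment: when $r_1 > D_1$, the $r_1$-ball about any seed $b$ in a cluster component $D$ of diameter at most $D_1$ already contains all of $D$, so $T_{U,c} \subseteq \calN_{r_1}(B_1)$. The only possible difference between $\calB_1^{(1)}$ and $\calB_1^{(2)}$ is therefore the ``extension past the endpoints'' of cluster components. Choosing $r_2 > 2D_1$ will ensure that the iterative geodesic-addition step closes these discrepancies identically in both decompositions, because the nearest seed in $D$ is within $D_1$ of any endpoint, and hence its $r_1$-ball is within $r_2$ of any component of the thickening emanating into the adjacent edge component. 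A simple induction on the iteration count of Lemma~\ref{lem:thickenings exist} then gives $\bT_c^{(1)} = \bT_c^{(2)}$.

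The main obstacle will be to carefully handle the attachment points of cluster components to edge components. Every endpoint of $\mu(C) = \lambda(r(C))$ is either a marked $F$-point (which is a seed and for which both decompositions produce the same $r_1$-extension) or an attachment point into an edge component (which is not in general equal to any $\delta^V_U$). For the attachment points I will need to show that, once $r_2$ dominates $D_1$ and the coarseness constant $C$ from Proposition~\ref{prop:reduced tree system}, the iterative step produces identical geodesic completions in both decompositions. This will likely proceed by observing that near any attachment point, the minimal network $\lambda'(V^0)$ of the adjacent edge component provides a well-defined path emanating from the cluster, and that the nearest seed in $\mu(C)$ is close enough (within $D_1$) to the attachment point that the geodesic added by the iteration in the first decomposition coincides with the one added in the second. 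An auxiliary tracking lemma, possibly requiring $R_0$ to absorb the constants appearing in Proposition~\ref{prop:reduced tree system}, should then close the argument.
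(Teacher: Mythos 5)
There is a genuine gap, and it sits at the center of your argument. You take $D_1$ to be ``the diameter bound on cluster components provided by Lemma~\ref{lem:stable tree basics}(3)'' and then argue that once $r_1>D_1$, the $r_1$-ball about a single seed in a cluster component $D$ contains all of $D$. But Lemma~\ref{lem:stable tree basics}(3) does not bound the diameter of cluster components; it says $\mu(C)\subset\calN_{D_1}(C)$, i.e.\ each cluster component lies in a bounded neighborhood of its cluster $C$. Clusters themselves are connected components of the graph on $F_U\cup\calY_U$ with edges between $E$-close points, and since $\calY_U$ can be arbitrarily large, a cluster can be a long chain of $E$-close $\rho$-points with arbitrarily large diameter; consequently $\mu(C)=\lambda(r(C))$ can have arbitrarily large diameter as well. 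So ``one seed per component plus bounded diameter'' does not give $T_{U,c}\subseteq\calN_{r_1}(B_1)$, and the containment you need in the base step of the iteration fails as argued.

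The fix is to upgrade your observation (i) from ``each cluster component contains at least one seed'' to the coarse-density statement that \emph{every point} of $T_{U,c}$ is within uniformly bounded distance of a seed of the first decomposition: by Lemma~\ref{lem:stable tree basics}(3) each point of $\mu(C)$ is within $D_1$ (in $\calC(U)$) of a point of $C\subset\calY_U\cup F_U$, each such point is within bounded distance of the corresponding $\delta^V_U$ or $f_U$, and the quasi-isometric embedding $\phi_U$ of Lemma~\ref{lem:stable tree basics}(1) converts this to a bound in the tree metric on $T_U$. With that density constant in place of your diameter bound, $T_{U,c}\subseteq\calN_{r_1}(B_1)$ for $r_1$ large, and the rest of your plan (that $B_1\subseteq B_2$ and that the iterative geodesic-completion steps then coincide) is the right shape. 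Note also that the paper does not prove this lemma internally but cites \cite[Proposition~7.9]{Dur_infcube}; a self-contained argument is worth having, but the remaining steps about attachment points are still only sketched (``should then close the argument'') and would need to be carried out, not just asserted.
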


Hence we can define:

 \begin{definition}[Thickening the $T_U$]\label{defn:thickening}
For every $U \in \calU$ and $r_1,r_2> R_0 = R_0(k, \mathfrak S)>0$ as in Lemma \ref{lem:same thickenings}, the $(r_1,r_2)$-\textbf{thickening} of $T_U = T_{U,e} \cup T_{U,c}$ \emph{along $T_{U,c}$} defines a decomposition $T_U = \bT_{U,e} \cup \bT_{U,c}$ where $\bT_{U,e} = \calA_N$ and $\bT_{U,c} = \calB_N$, for  $N = N(r_1,r_2,k, \mathfrak S)>0$ as in Lemma \ref{lem:thickenings exist}.
\end{definition}

\begin{figure}
    \centering
    \includegraphics[width=.6\textwidth]{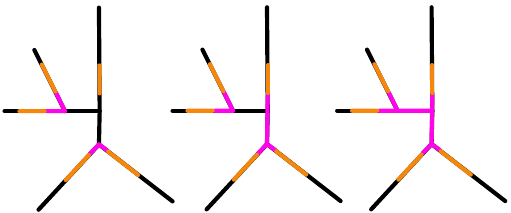}
    \caption{The number of iterations of the $(r_1,r_2)$-thickening process is controlled by the branching of the given tree.  In the schematic, $r_1$-neighborhoods of components are in gold, and only the bottom two components are within $r_2$ of each other.  Each absorption of a new gold component requires a new branch point.  Figure borrowed from \cite{Dur_infcube}.}
    \label{fig:cluster collapse}
\end{figure}

Hence by Lemma \ref{lem:thickenings exist} and Lemma \ref{lem:stable tree system}, we get:

\begin{lemma}\label{lem:stable thickenings}
    For any $U \in \calU$ and $r_1,r_2> R_0= R_0(k, \mathfrak S)>0$ as in Lemma \ref{lem:same thickenings}, the $(r_1,r_2)$-thickening of $T_U = T_{U,e} \cup T_{U,c}$ along $T_{U,c}$ defines a decomposition $T_U = \bT_{U,e} \cup \bT_{U,c}$ satisfying:
    \begin{enumerate}
        \item For any distinct components $C_1, C_2 \subset \bT_{U,c}$, we have $d_{T_U}(C_1,C_2)> r_2$.
        \item Every cluster point in $\calY_U$ and marked point $f_U$ of $T_U$ is contained in some component of $\bT_{U,c}$, and moreover any such cluster or marked points is at least $r_1$-far in $T_U$ from any endpoint of the component containing it.
    \end{enumerate}
\end{lemma}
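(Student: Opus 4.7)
The plan is to directly apply Lemma \ref{lem:thickenings exist} to the decomposition $T_U = T_{U,e} \cup T_{U,c}$, taking $A = T_{U,e}$ and $B = T_{U,c}$. To invoke that lemma, I first need a uniform bound on the branching of $T_U$ in terms of $k$ and $\mathfrak S$, which is provided by item (2) of Lemma \ref{lem:stable tree basics}. Given $r_1, r_2 > R_0$, Lemma \ref{lem:thickenings exist} then yields the iteration count $N = N(r_1, r_2, k, \mathfrak S)$ stabilizing the inductive thickening process, and defines $\bT_{U,e} := \calA_N$ and $\bT_{U,c} := \calB_N$, as in Definition \ref{defn:thickening}.

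For item (1), I simply read off conclusion (2) of Lemma \ref{lem:thickenings exist}: once the thickening stabilizes, every pair of distinct components of $\calB_N = \bT_{U,c}$ is at pairwise $T_U$-distance at least $r_2$, which is exactly what item (1) asserts.

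For item (2), the key point is that every cluster point in $\calY_U$ and every marked point $f_U$ for $f \in F$ is a point of the cluster forest $T_{U,c}$. For the marked points this is explicit in Proposition \ref{prop:reduced tree system}(2), which is in force for our stable trees via Lemma \ref{lem:stable tree system}. For the cluster points, we use the construction of the stable tree from Definition \ref{defn:stable tree}: given any cluster $C$ containing a point $p \in \calY_U$, the associated tree $\mu(C) = \lambda(r(C))$ is a component of $T_{U,c}$, and $p$ is a leaf of $\mu(C)$. Thus all the relevant marked and cluster points lie in $T_{U,c}$. Now conclusion (3) of Lemma \ref{lem:thickenings exist} says $\calN_{r_1}(T_{U,c}) \subset \calB_N = \bT_{U,c}$. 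Consequently, any such point $p$ lies in $\bT_{U,c}$ together with its entire $r_1$-neighborhood in $T_U$, which forces $p$ to sit at $T_U$-distance at least $r_1$ from the boundary of the component of $\bT_{U,c}$ that contains it, i.e., from its endpoints. This establishes item (2).

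There is no substantive obstacle: the proof is a bookkeeping exercise combining the existence of stable thickenings from Lemma \ref{lem:thickenings exist} with the structural information on stable trees recorded in Lemma \ref{lem:stable tree basics} and Lemma \ref{lem:stable tree system}. The only thing to be slightly careful about is confirming that both types of distinguished points (cluster points and marked points) genuinely belong to the cluster forest $T_{U,c}$ and not merely close to it; this is direct from the definitions, as indicated above.
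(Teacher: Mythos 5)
Your overall route is exactly the paper's: the paper gives no argument beyond citing Lemma \ref{lem:thickenings exist} together with Lemmas \ref{lem:stable tree system} and \ref{lem:same thickenings}, and your reading of item (1) from conclusion (2) of Lemma \ref{lem:thickenings exist} and of item (2) from conclusion (3) is the intended one. The one point where your justification does not quite hold up is the claim that a cluster point $p=\rho^V_U\in\calY_U$ ``is a leaf of $\mu(C)$.'' By Definition \ref{defn:stable tree}, $\mu(C)=\lambda(r(C))$ with $r(C)=C\cap(T_{U,e}\cup F)$, i.e.\ it spans only the attaching points of the edge forest on $C$ together with the $F$-points of $C$; the point $\rho^V_U$ need not belong to $r(C)$, and Lemma \ref{lem:stable tree basics}(3) only places it within distance $D_1$ of $\mu(C)$ --- indeed $\rho^V_U$ lives in $\calC(U)$ and need not be a point of the abstract tree $T_U$ at all. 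What is true, and what the statement intends, is that the points of $T_U$ \emph{representing} these data lie in $T_{U,c}$: the marked point $f_U$ lies in a cluster component $C_f$ by Proposition \ref{prop:reduced tree system}(2), and the point $\delta^V_U=\phi_U^{-1}(p_{D_V}(\rho^V_U))$ lies in the cluster component $D_V$ by construction. The cleanest fix is the one the paper sets up: thicken along the first decomposition of Lemma \ref{lem:same thickenings}, whose forest $B$ contains the $\delta^V_U$ and the $f_U$ by fiat, apply conclusion (3) of Lemma \ref{lem:thickenings exist} there, and transfer to the $T_{U,c}$-thickening via Lemma \ref{lem:same thickenings}. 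With that substitution the rest of your argument is correct.
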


We remark that we could perform the same set of constructions for the setup associated to  $F'$.  We will proceed using the analogous notation, namely $T'_U = \bT'_{U,c} \cup \bT'_{U,e}$, etc.

\subsection{Collapsed trees and hierarchical families of trees} \label{subsec:HFT}

The next step is to, for each $U \in \calU$, obtain a new tree from $T_U = \bT_{U,e} \cup \bT_{U,c}$ by collapsing the thickened cluster components $\bT_{U,c}$.  We then want to induce new HHS-like projections on these collapsed trees, and then observe (via \cite{Dur_infcube}) that in fact this family of collapsed trees with these projections satisfy HHS-like axioms.

For each $U \in \calU$, define 
$$q_U:T_U \to \hT_U$$
to be the quotient map which collapses each component of $\bT_{c,U}$ to a point.  Thus the resulting object $\hT_U$ consists of the components of $\bT_{U,e}$, with two attached at a point if and only if they were both adjacent to a given thickened cluster component of $\bT_{U,c}$. 

\begin{figure}
    \centering
    \includegraphics[width=.75\textwidth]{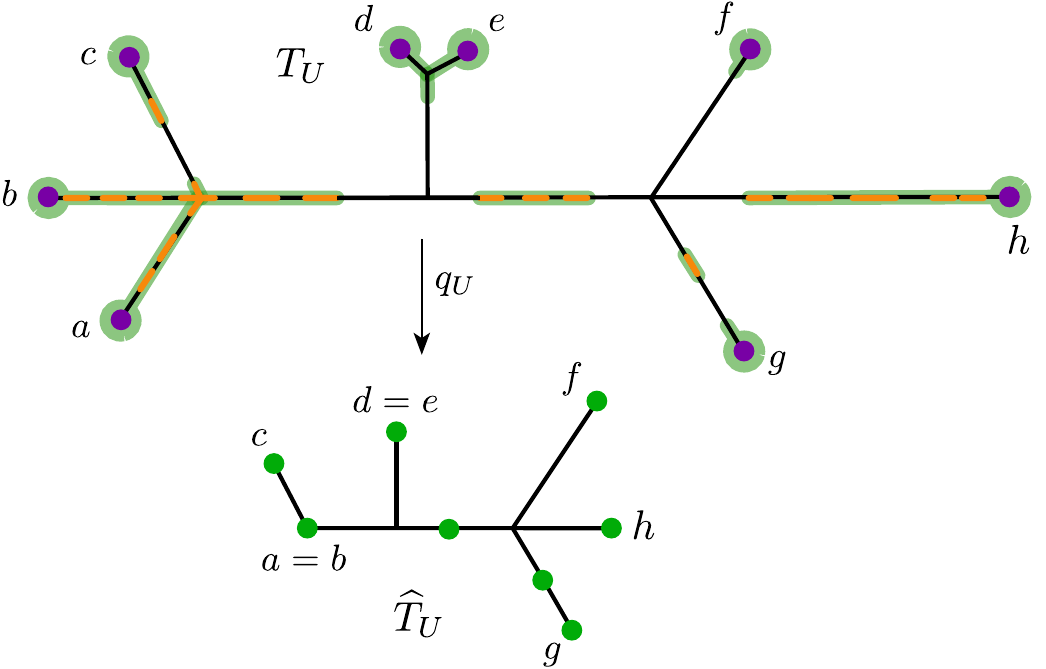}
    \caption{Collapsing the components of $\bT_{U,c}$ (in green) to points can collapse whole branches of $T_U$ and identify to points of $F$, i.e., the marked points of $T_U$.  In the collapsed simplicial tree $\hT_U$, these the cluster components (and hence marked points) become vertices.  This figure is a modified version of \cite[Figure 8]{Dur_infcube}.}
    \label{fig:cluster collapse 2}
\end{figure}

\begin{remark}
    It is possible that the cluster components for a given tree $T_U$ will constitute the whole tree, meaning that $\hT_U$ is a point while $T_U$ had arbitrarily large diameter.  Proving that the collection of collapsed trees still coarsely encodes the geometry of $\hull_{\calX}(F)$ is nontrivial and relates to the ``passing-up'' axiom for HHSs (see \cite[Section 4]{Dur_infcube}).
\end{remark}

Using these quotient maps, we can also induce a family of \emph{collapsed projections} between the $\hT_U$:

\begin{itemize}
    \item \underline{$V \nest U$}: $\hd^V_U = q_U \circ \delta^V_U \in \hT_U$ which is a vertex of $\hT_U$ corresponding to the (collapsed) cluster component of $T_U$ corresponding to $\rho^V_U$;
    \item \underline{$V \pitchfork U$}: $\hd^V_U = q_U(\delta^V_U)$ which is the vertex $\hf_V \in \hT_U$ corresponding to a point $f \in F$ with $d_U(\rho^V_U, f) < E_{\mathfrak S}$.
    \item \underline{$U \nest V$}: $\hd^V_U= q_U \circ \delta^V_U \circ q^{-1}_V: \hT_V \to \hT_U$.
\end{itemize}

The idea now is that these collapsed projections make the collection of collapsed trees $\{\hT_U\}$ into something like an exact HHS, which is the content of the following definition:

\begin{definition}[Hierarchical family of trees] \label{defn:HFT}
A \emph{hierarchical family of trees} is the following collection of objects and properties:

\begin{enumerate}
\item A finite index set $\calU$ of \emph{domains} with \emph{relations} $\nest, \pitchfork, \perp$ so that
\begin{itemize}
    \item $\nest$ is anti-reflexive and anti-symmetric, and gives a partial order;
    \item $\pitchfork$ is symmetric and anti-reflexive;
    \item There is a unique $\nest$-maximal element $S$;
    \item Any pair of domains $U,V \in \calU$ with $U \neq V$ satisfies exactly one of the above relations. 
\end{itemize}
\item To each $U \in \calU$ there is an associated finite simplicial tree $\hT_U$. 
\item A finite collection of labels $F$ and, on each tree, a finite set of \emph{marked points}, which are labeled by $F$, with each element $f \in F$ labeling exactly one marked point $\hf_U$ of $\hT_U$.  Moreover, for each $U \in \calU$, each leaf of $\hT_U$ is a marked point.
\item A family of \emph{relative projections} determining:
\begin{itemize}
    \item For each $U, V \in \calU$ with $V \nest U$ or $V \pitchfork U$, there is a vertex $\hd^V_U \in \hT^{(0)}_U$.  Moreover, each component of $\hT_U - \hd^V_U$ contains a marked point.
    \item If $U,V,W \in \calU$ with $U \perp V$ and $V \nest W$ and either $U \nest W$ or $U \pitchfork W$, then $\hd^U_W = \hd^V_W$.
\end{itemize}
    \item(BGI) There exist projection maps $\hd^U_V: \hT_U \to \hT_V$ when $V \nest U$, satisfying the following bounded geodesic image property:\label{item:BGI HFT}
    \begin{itemize}
        \item If $C \subset \hT_U - \hd^V_U$ is a component, then $\hd^U_V(C)$ coincides with $\hf_V$ for any $f \in F$ with $\hf_U \in C$. 
    \end{itemize}
\end{enumerate}
\end{definition}

The following proposition is a consequence of \cite[Lemma 12.2]{Dur_infcube}:

\begin{proposition}\label{prop:stable tree thickenings}
    There exist $R_1=R_1(\calX, \#F)>0$ and $R_2=R_2(\calX, \#F)>0$ so that for any integers $r_1>\max\{R_0,R_1\}$ and $r_2>\max\{R_0,R_2\}$, the family of collapsed trees $\{\hT_U\}_{U \in \calU}$ equipped with their associated collapsed projections $\hd^V_U$ arising from an $(r_1,r_2)$-thickening of the stable trees $\{T_U\}_{U \in \calU}$ along their cluster components satisfies Definition \ref{defn:HFT}.
\end{proposition}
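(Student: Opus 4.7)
The plan is to apply \cite[Lemma 12.2]{Dur_infcube} essentially as a black box, so the main task is to verify that our setup satisfies its hypotheses with uniformly controlled constants. First, I would invoke Lemma \ref{lem:stable tree system} to assert that the collection of stable trees $\{T_U\}_{U\in\calU}$ with projections $\delta^V_U$ from Proposition \ref{prop:reduced tree system} forms an $R_0$-reduced tree system, where $R_0 = R_0(\calX, \#F)$ depends only on the HHS and the cardinality $\#F \leq k$. Combined with item (2) of Lemma \ref{lem:stable tree basics}, which bounds the total branching $b=b(T_U)$ uniformly in $k$ and $\mathfrak S$, this gives us all the raw input data needed by \cite[Lemma 12.2]{Dur_infcube}.

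Next, I would define $R_1, R_2$ in terms of these bounds, taking them large enough that: (i) Lemma \ref{lem:same thickenings} applies, so that the thickenings along cluster components $T_{U,c}$ agree with the thickenings along the relative projection data $\{\delta^V_U\} \cup \{f_U : f\in F\}$; (ii) Lemma \ref{lem:stable thickenings} guarantees that distinct components of $\bT_{U,c}$ are at least $r_2$-apart in $T_U$ and that every cluster or marked point is at least $r_1$-deep in its containing component; and (iii) the constants are large enough to swallow the coarseness $R_0$ of the reduced tree system when passing through the quotient $q_U : T_U \to \hT_U$. With $r_1, r_2$ above these thresholds, each thickened cluster component collapses to a well-defined vertex of the simplicial tree $\hT_U$, and each $\hd^V_U = q_U \circ \delta^V_U$ lands on a unique such vertex.

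With these inputs in hand, the verification of the HFT axioms then splits as follows. Items (1)--(3) of Definition \ref{defn:HFT} are essentially immediate: the index set and relations come from the HHS structure, the trees $\hT_U$ are finite simplicial trees by construction, and marked points are the vertices obtained by collapsing the cluster components containing points of $F$ (with leaves forced to be marked points because endpoints of edge components in $\bT_{U,e}$ arise from cluster attachments or points of $F$). Item (4) --- the well-definedness of $\hd^V_U$ as a vertex plus the orthogonality compatibility --- follows from the HHS consistency axioms for $\rho$-sets together with the fact that sufficient thickening places coarsely-equivalent $\rho$-points into a single collapsed vertex. The part I expect to be most delicate, and where I would lean hardest on \cite[Lemma 12.2]{Dur_infcube}, is item (5): upgrading the coarse Bounded Geodesic Image property of the reduced tree system to the exact combinatorial BGI property in the collapsed family. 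This requires the thickening not only to swallow the coarseness in the definition of $\delta^V_U$ but also to ensure that the collapsing map $q_U$ correctly identifies the pre-images of components of $\hT_U - \hd^V_U$ with the desired marked point images. The careful accounting needed here is precisely the content of \cite[Lemma 12.2]{Dur_infcube}, so my final step is to cite that result and record the resulting bounds $R_1, R_2$.
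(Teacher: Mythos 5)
Your proposal is correct and matches the paper's approach: the paper proves this proposition simply by citing \cite[Lemma 12.2]{Dur_infcube}, with the reduced-tree-system input supplied by Lemma \ref{lem:stable tree system} and the thickening setup by Lemmas \ref{lem:same thickenings} and \ref{lem:stable thickenings}, exactly as you outline. Your additional commentary on which HFT axioms are routine and which require the cited lemma is a reasonable elaboration of the same argument.
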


\begin{remark}\label{rem:size of r}
    In what follows, we are free to choose the constants $r_1,r_2$ to be as large as we need depending only on $\calX$ and $k
    $.  Notably, in order for the machinery from \cite{Dur_infcube} to work, we need to keep $r_1,r_2$ independent of the largeness constant $K$, as it is a global constant. 
\end{remark}

\subsection{Consistent subsets and the hull}\label{subsec:Q defined}

The hierarchical family of trees $\{\hT_U\}_{U \in\calX}$ associated to our base stable tree setup (Subsection \ref{subsec:fixed base setup, cubes}) is an exact model for the hull $\hull_{\calX}(F)$ of our finite subset $F \subset \calX$.  In this subsection, we explain how a certain hierarchically-defined subset $\calQ$ of the product of the collapsed trees is quasi-median quasi-isometric to $\hull_{\calX}(F)$.

Given an HFT $\{\hT_U\}_{U \in \calU}$ with collapsed projections $\hd^V_U$, let
$$\calW = \prod_{U \in \calU} \hT_U.$$
Any point $\hx \in \calW$ is a tuple $\hx = (\hx_U) \in \prod_{U \in \calU} \hT_U$.  As in an HHS, we can ask that its coordinates satisfy exact version of the HHS consistency conditions (see \cite[Proposition 1.8]{HHS_II}), i.e., where there is no coarseness.

\begin{definition}[$0$-consistency and $\calQ$]\label{defn:Q consistent}
A tuple $(\hx_U) \in \calW$ is $0$-\emph{consistent} if we have
\begin{enumerate}
    \item If $U \pitchfork V \in \calU$, then either $\hx_U = \hd^V_U \in \hT_U$ or $\hx_V = \hd^U_V \in \hT_V$.
    \item If $U \nest V \in \calU$, then either $\hx_V = \hd^U_V \in \hT_V$ or $\hx_U \in \hd^V_U(\hx_V) \subset \hT_U$.
\end{enumerate}
\begin{itemize}
\item We set $\calQ$ to be the set of $0$-consistent tuples in $\calW$.
\end{itemize}
\end{definition}

\begin{remark}
    The above definition is essentially \cite[Definition 7.8]{Dur_infcube} with the crucial difference that we have not included any canonicality conditions.  This is because we are working only with finite sets of interior points unlike in the more general setting of \cite{Dur_infcube}, which also handles finite sets of hierarchy rays.
\end{remark}

Since $\calQ$ is a subset of a product of metric spaces, we can give it any number of $\ell^p$-metrics, which are all quasi-isometric.  In proofs, one usually uses the $\ell^1$-metric, in analogy with the HHS distance formula.

Given the HFT $\{\hT_U\}_{U \in \calU}$ arising via Proposition \ref{prop:stable tree thickenings} from our stable tree setup (Subsection \ref{subsec:fixed base setup, cubes}), we next observe that there is a canonical map 
$$\hPsi:\hull_{\calX}(F) \to \calQ$$
which is a quasi-median quasi-isometry by Theorem \ref{thm:HFT model} below.

The map $\hPsi$ is defined coordinate-wise via maps $\hpsi_U:\hull_{\calX}(F) \to \hT_U$ for each $U \in \calU$.  Given $x \in \hull_{\calX}(F)$, we define $\hpsi_U(x) = q_U \circ \phi_{U}^{-1}\circ p_{T_U} \circ \pi_U(x)$, where
\begin{itemize}
    \item $\pi_U:\calX \to \calC(U)$ is the usual subsurface projection;
    \item $p_{T_U}:\calC(U) \to \phi_U(T_U)$ is closest point
    projection in $\calC(U)$ to the image of the stable tree $T_U$ under the model map $\phi_U:T_U \to \calC(U)$ provided by Lemma \ref{lem:stable tree basics};
    \item $q_U:T_U \to \hT_U$ is the quotient map described in Subsection \ref{subsec:HFT} above.
\end{itemize}

Checking that the image of $\hPsi$ lies in $\calQ$ is a definition chase, see \cite[Proposition 7.11]{Dur_infcube}.  The next theorem, which is far more involved, follows from \cite[Theorem 15.23]{Dur_infcube}.

\begin{theorem}\label{thm:HFT model}
    Given our basic setup as in Subsection \ref{subsec:fixed base setup, cubes}, the map $\hPsi:\hull_{\calX}(F) \to \calQ$ is a $C$-quasi-median $(C,C)$-quasi-isometry for $C= C(\calX, \#F)>0$.
\end{theorem}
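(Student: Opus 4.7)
The plan is to verify the hypotheses of \cite[Theorem 15.23]{Dur_infcube} in our present setting and derive the statement as a specialization. Since our finite subset $F \subset \calX$ consists only of interior points (no hierarchy rays), the consistent-tuple space $\calQ$ and the map $\hPsi$ are precisely the specialization to this case of the general construction there. The first step is to confirm that $\hPsi$ lands in $\calQ$, which is \cite[Proposition 7.11]{Dur_infcube}; this is essentially a definition-chase using the HHS consistency axioms together with the Bounded Geodesic Image axiom, which in turn forces the coordinate values of $\hPsi(x)$ to satisfy the exact conditions of Definition \ref{defn:Q consistent} once the collapses have been performed.

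Next, I would verify that $\hPsi$ is uniformly coarsely Lipschitz. Each coordinate map $\hpsi_U$ is a composition of the uniformly coarsely Lipschitz HHS projection $\pi_U$, the closest-point projection to the image of the stable tree (which is uniformly quasi-convex in $\calC(U)$ by Lemma \ref{lem:stable tree basics}(1)), and the $1$-Lipschitz quotient $q_U$. To deduce that $\hPsi$ is a quasi-isometric embedding with respect to the $\ell^1$-metric on $\calQ$, I would invoke the HHS distance formula: distances in $\hull_\calX(F)$ coarsely coincide with the sum over $U \in \calU$ of thresholded projection distances $[d_U(x,y)]_K$, and each such thresholded term differs by a bounded amount from the distance in the collapsed tree $\hT_U$. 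The latter uses that $\phi_U:T_U \to \calC(U)$ is a uniform quasi-isometry onto a neighborhood of $\hull_U(F)$ (Lemma \ref{lem:stable tree basics}), and that the quotient $q_U:T_U \to \hT_U$ only collapses thickened cluster components of bounded diameter (bounded in $k,\mathfrak S$ by the choice of $r_1,r_2$ in Proposition \ref{prop:stable tree thickenings}).

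Coarse surjectivity---hence the quasi-isometry---amounts to a realization statement: every $0$-consistent tuple in $\calQ$ lifts to a consistent tuple in $\prod_U \calC(U)$ (via a right inverse to the $q_U$, chosen so that the exact conditions in Definition \ref{defn:Q consistent} imply the coarse HHS consistency conditions up to constants depending only on $\calX,k$), and such tuples are realized by points of $\calX$ within bounded distance of $\hull_\calX(F)$ by the HHS realization theorem. Finally, the quasi-median property: the median on $\calQ \subset \prod_U \hT_U$ is taken coordinate-wise using the tree medians on the $\hT_U$, and commutes up to bounded error with $\hPsi$ because (a) the HHS coarse median $\mu$ on $\calX$ coarsely commutes with each $\pi_U$ (yielding the triangle-center characterization), (b) closest-point projection to a quasi-convex subset in a hyperbolic space coarsely preserves medians, and (c) $q_U$ preserves medians since tree components are being collapsed to points.

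The main obstacle I expect is the lower distance bound and realization step: the collapses $q_U$ can identify large portions of $T_U$ to single vertices, and one has to argue that the $\ell^1$-sum of distances in the $\hT_U$ still faithfully recovers $d_\calX$ up to bounded error. This requires the control provided by choosing $r_1,r_2$ large enough (Proposition \ref{prop:stable tree thickenings}) so that the cluster components absorbed into a single vertex of $\hT_U$ contribute only a bounded amount to the distance formula, plus a passing-up argument (from \cite[Section 4]{Dur_infcube}) guaranteeing that any domain with large projection distance survives as an honest interval in some $\hT_U$. With these two ingredients, the argument of \cite[Theorem 15.23]{Dur_infcube} applies verbatim and yields the desired constant $C = C(\calX, \#F) > 0$.
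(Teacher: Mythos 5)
Your proposal is correct and takes essentially the same route as the paper: the paper's entire justification is that the containment $\hPsi(\hull_{\calX}(F))\subseteq\calQ$ is \cite[Proposition 7.11]{Dur_infcube} and the quasi-median quasi-isometry statement follows from \cite[Theorem 15.23]{Dur_infcube}, which is exactly the citation your argument reduces to. Your additional sketch of the distance-formula lower bound, realization, and median-preservation steps is a reasonable account of what that cited theorem's proof involves, but the paper itself does not reproduce any of it.
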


As it will be useful going forward, we set some notation.  We let $\hO:\calQ \to \hull_{\calX}(F)$ denote the coarse inverse to $\hPsi$, which is $C$-quasi-median $(C,C)$-quasi-isometry.

\begin{remark}
    We note that the $0$-consistent subspace $\calQ$ associated to a general HFT may be empty.  Part of the content of Theorem \ref{thm:HFT model} is that, in our setting, $\calQ$ is non-empty.
\end{remark}

\begin{remark}
    We note that any quasi-median quasi-isometry necessarily has a coarse inverse which is itself a quasi-median quasi-isometry of a comparable quality, the map $\hO$ admits an explicit definition which plays a key role in the results from \cite{Dur_infcube} supporting the discuss and proofs in this section.  However, it will suffice for our current purposes to simply have an abstract coarse inverse.
\end{remark}

\subsection{$\calQ$ is a CAT(0) cube complex}\label{subsec:HFT dual}

Every hierarchical family of trees is simultaneously an HHS-like object and, as a product of simplicial trees, a CAT(0) cube complex.  Being hierarchical, an HFT determines a $0$-consistent subset $\calQ$ (Definition \ref{defn:Q consistent}).  This subset is evidently a cubical subcomplex.  However, we can say more.  We recall the specifics in what follows, see \cite[Sections 13, 14, and 15]{Dur_infcube} for details.

Any product $\prod_i X_i$ of CAT(0) cube complexes admits a cubical structure where the hyperplanes are of the form $\hh_i \times \prod_{i\neq j} X_j$, where $\hh_i$ is a hyperplane in $X_i$.  Given a subspace $W \subset \prod_i X_i$, we can induce a wallspace on $W$ by intersecting these \emph{product hyperplanes} with $W$ and obtain the dual cube complex by applying Sageev's construction \cite{Sageev:cubulation}.

In our context, the components of $\prod_{U \in \calU} \hT_U$ are simplicial trees, so their hyperplanes are midpoints of their edges.  Each such tree hyperplane $h_U \in \hT_U$ determines a $\calQ$-hyperplane $\calQ_{h_u} = \{\hx|\hx_U  = h_U\}$, and a pair of half-spaces corresponding to the fibers of the two half-spaces $T_U - h_U$ under the canonical projection map $\hpi_U:\prod_{U \in \calU} \hT_U \to \hT_U$. Let $\calD(\calQ)$ denote the dual cube complex to the system of walls $\calW(\calQ)$ on $\calQ$ obtained by intersecting the product hyperplanes from $\prod_{U \in \calU} \hT_U$.

This dual complex $\calD(\calQ)$ is a CAT(0) cubical subcomplex of $\prod_{U \in \calU} \hT_U$.  This is because the Sageev consistency conditions for product hyperplanes in $\prod_{U \in \calU} \hT_U$ are determined component-wise, in that it is only possible for two walls in $\prod_{U \in \calU} \hT_U$ to have empty intersection if they are defined by two non-intersecting half--trees of some $\hT_U$.  On the other hand, the Sageev consistency conditions for walls in $\calW(\calQ)$ satisfy both this component-wise consistency and additional hierarchical consistency conditions following from Definition \ref{defn:Q consistent}; see \cite[Section 12]{Dur_infcube}.

Setting $\calQ^0\subset \calQ$ to be the set of tuples of vertices of the $\hT_U$, there is a natural dualization map $\calD:\calQ^0 \to \calD(\calQ)$ defined as follows:
\begin{itemize}
    \item Given a tuple $\hx = (\hx_U) \in \calQ^0$, we set $\calD(\hx)$ to be the set of orientations on walls in $\calW(\calQ)$ where, for all $U\in \calU$ and component hyperplanes $h_U \in \hT_U$, the corresponding $\calQ$-hyperplane $\hh_U$ chooses the $\calQ$-half-space $\calQ_U$ corresponding to the half-space of $\hT_U - \{h_U\}$ containing $\hx_U$.
\end{itemize}

The following is a consequence of \cite[Theorem 13.6 and Proposition 15.19]{Dur_infcube}:

\begin{theorem}\label{thm:Q dual}
    Given our basic setup as in Subsection \ref{subsec:fixed base setup, cubes}, the map $\calD:\calQ \to \calD(\calQ)$ is a $0$-median isometry.  In particular, $\calD$ is surjective.
\end{theorem}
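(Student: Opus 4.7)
The plan is to establish Theorem \ref{thm:Q dual} at the level of vertices of $\calQ$; since $\calQ$ is a cubical subcomplex of $\prod_{U \in \calU} \hT_U$ whose cube structure is determined by its vertex set and wallspace, and $\calD(\calQ)$ is itself a CAT(0) cube complex, a bijection between vertex sets that preserves the $\ell^1$-distance and the coordinatewise median will automatically extend to a $0$-median isometry. The whole argument rests on matching the walls of $\calW(\calQ)$ with the component hyperplanes in the trees $\hT_U$, and checking that the additional hierarchical constraints on $\calW(\calQ)$ are exactly the $0$-consistency conditions of Definition \ref{defn:Q consistent}.

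First I would verify that, for each $\hx \in \calQ^0$, the orientation $\calD(\hx)$ is Sageev-consistent and satisfies the descending chain condition. The latter is immediate because $\calU$ and each $\hT_U$ are finite. For consistency, two walls from the same tree $\hT_U$ admit compatible orientations automatically via $\hx_U$, and two walls from distinct trees $\hT_U, \hT_V$ come from intersecting product hyperplanes in $\prod_W \hT_W$, so the only way they can fail to intersect is by failing to meet inside $\calQ$. I would argue that whenever such a failure occurs, one of the $0$-consistency conditions at $\hx$ forces precisely the orientation needed for Sageev consistency. With $\calD(\hx) \in \calD(\calQ)^{(0)}$ established, the isometry and median-preservation statements are then straightforward: under the bijective correspondence between $\calQ$-walls and component hyperplanes, $\sum_{U \in \calU} d_{\hT_U}(\hx_U, \hy_U)$ equals the number of walls on which $\calD(\hx)$ and $\calD(\hy)$ disagree, and the coordinatewise tree median matches the majority-vote orientation on each $\calQ$-wall.

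The main obstacle is surjectivity onto $\calD(\calQ)^{(0)}$. Given a Sageev-consistent orientation $o$ on $\calW(\calQ)$, restricting to walls coming from a single tree $\hT_U$ gives a consistent orientation on the wallspace of that finite tree, hence a unique vertex $\hx_U \in \hT_U$, because a tree is its own dual cube complex. The delicate step is showing that $(\hx_U) \in \calQ^0$, i.e.\ that the hierarchical conditions of Definition \ref{defn:Q consistent} hold. For this I would case-split on the relations in $\calU$: for each ordered pair $U,V$ with $U \nest V$ or $U \pitchfork V$, and for each orientation pair of component hyperplanes $h_U \in \hT_U$, $h_V \in \hT_V$ that would force $(\hx_U)$ to violate Definition \ref{defn:Q consistent}, I would exhibit a pair of $\calQ$-walls whose "disagreeing" half-spaces have empty intersection inside $\calQ$; Sageev consistency of $o$ then rules out that orientation pair and forces the output tuple to satisfy $0$-consistency. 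This matching between "walls disjoint in $\calQ$" and "orientation pairs forbidden by $0$-consistency" is the crux of the surjectivity argument, and will rely on the bounded geodesic image property in item \eqref{item:BGI HFT} of Definition \ref{defn:HFT} together with the explicit description of the relative projections $\hd^V_U$ recalled in Subsection \ref{subsec:HFT}.
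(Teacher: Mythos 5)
The paper does not prove this statement: it is imported wholesale from \cite[Theorem 13.6 and Proposition 15.19]{Dur_infcube}, so there is no in-paper argument to compare yours against. Judged on its own, your outline is the standard Sageev-duality argument and the overall route is the right one: principal orientations are automatically consistent, distances and medians are computed wall-by-wall, and surjectivity reduces to showing that a consistent orientation of $\calW(\calQ)$ reassembles into a $0$-consistent tuple. The verification you defer (``forbidden coordinate configurations correspond to disjoint pairs of half-spaces'') does go through cleanly: e.g.\ for $U\pitchfork V$ with $\hx_U\neq\hd^V_U$ and $\hx_V\neq\hd^U_V$, any $\hy\in\calQ$ on the $\hx_U$-side of a hyperplane separating $\hx_U$ from $\hd^V_U$ has $\hy_V=\hd^U_V$ by Definition \ref{defn:Q consistent}, so the two half-spaces your orientation selects are disjoint; the nested case is the same modulo item \eqref{item:BGI HFT} of Definition \ref{defn:HFT}.

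The genuine gap is not that deferred case analysis but the structural inputs you use without comment. Your surjectivity step extracts ``a unique vertex $\hx_U\in\hT_U$'' from the restriction of $o$ to the walls coming from $\hT_U$, and your disjointness arguments need both half-spaces of the relevant product hyperplanes to meet $\calQ$. Both require that $\calQ$ is nonempty and that each coordinate projection $\hpi_U|_{\calQ}$ is surjective onto $\hT_U$ (or at least has convex, explicitly identified image); a priori a component hyperplane of $\hT_U$ need not induce a genuine wall of $\calQ$ at all. These realization statements are exactly where the content of \cite{Dur_infcube} lies (in this paper they surface as Theorem \ref{thm:HFT model}), and they cannot be waved through: for a general HFT-like family of trees with projections, $\calQ$ can easily fail to project onto a given factor. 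A smaller point: your identification of $\sum_U d_{\hT_U}(\hx_U,\hy_U)$ with the number of separating walls of $\calW(\calQ)$ tacitly assumes distinct product hyperplanes induce distinct walls of $\calQ$ (or that $\calW(\calQ)$ is taken with multiplicity); this should be stated.
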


Thus $\calQ$ is actually the full cubical subcomplex $\calD(\calQ)$ and hence CAT(0) (see \cite[Theorem 15.19]{Dur_infcube}):

\begin{theorem}\label{thm:Q CCC}
    $\calQ = \calD(\calQ)$ is a CAT(0) cubical subcomplex of $\prod_{U \in \calU} \hT_U$.
\end{theorem}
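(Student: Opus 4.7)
The plan is to derive Theorem \ref{thm:Q CCC} as an essentially formal consequence of Theorem \ref{thm:Q dual} together with the way the wallspace $\calW(\calQ)$ was produced, namely as restrictions of product hyperplanes of $\prod_{U\in\calU}\hT_U$. First I would construct a canonical map $\iota:\calD(\calQ)\to \prod_{U\in\calU}\hT_U$: each wall of $\calW(\calQ)$ is by construction the restriction $\calQ_{h_U}=\{\hx\in\calQ:\hx_U=h_U\}$ of a product hyperplane determined by a single tree hyperplane $h_U\in\hT_U$, so a coherent Sageev orientation on $\calW(\calQ)$ (i.e., a vertex of $\calD(\calQ)$) induces, for each $U\in\calU$, a coherent orientation on \emph{every} hyperplane of the tree $\hT_U$, which uniquely picks out a vertex of $\hT_U$. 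The tuple of these vertices defines $\iota$ on $\calD(\calQ)^{(0)}$; it extends to cubes because flipping a single wall in $\calD(\calQ)$ changes exactly one coordinate of $\iota$ along a single tree-edge, so cubes of $\calD(\calQ)$ embed as cubes of $\prod_U\hT_U$.

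Next I would verify that $\iota(\calD(\calQ))\subseteq\calQ$. The Sageev consistency relations on orientations of $\calW(\calQ)$ translate, under the hyperplane-to-hyperplane correspondence, into exactly the $0$-consistency conditions of Definition \ref{defn:Q consistent}: the relative-projection vertices $\hd^V_U$ and the BGI condition \eqref{item:BGI HFT} of Definition \ref{defn:HFT} are precisely what governs which pairs of tree half-spaces from different coordinates are jointly orientable in $\calD(\calQ)$. This translation is already encoded in the construction of $\calW(\calQ)$ carried out in \cite[Sections 12--13]{Dur_infcube}, so I would just invoke it here rather than re-verify. I would then show $\iota\circ\calD=\mathrm{id}_\calQ$: by definition, for $\hx=(\hx_U)\in\calQ$, the vertex $\calD(\hx)$ orients each wall $\calQ_{h_U}$ toward the half-space of $\hT_U-\{h_U\}$ containing $\hx_U$, so $\iota(\calD(\hx))_U$ is forced to be the unique vertex of $\hT_U$ on the correct side of every hyperplane of $\hT_U$, namely $\hx_U$ itself. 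Combining this with the bijectivity supplied by Theorem \ref{thm:Q dual} yields $\calQ=\iota(\calD(\calQ))$ as subsets of $\prod_U\hT_U$.

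Finally, I would upgrade this vertex-level identification to a subcomplex identification. If every corner of a cube $\prod_U\sigma_U$ in $\prod_U\hT_U$ lies in $\calQ$, then the whole cube lies in $\calQ$: the conditions in Definition \ref{defn:Q consistent} only involve pairs $(U,V)$, and along an edge that varies only the $U$-coordinate within a single tree edge of $\hT_U$ (all other coordinates fixed), the conditions for pairs not involving $U$ are unchanged, while the condition for each pair $(U,V)$ is either automatically preserved (when the other coordinate pins $\hx_U$ to a prescribed vertex, which then cannot move) or is invariant under that edge of $\hT_U$. Hence $\calQ$ is a genuine cubical subcomplex of $\prod_U\hT_U$. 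CAT(0)-ness then comes for free, since $\calD(\calQ)$ is CAT(0) by Sageev's construction.

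The main obstacle is the second step: matching the hierarchical $0$-consistency conditions with the Sageev consistency of $\calW(\calQ)$, particularly for pairs $V\nest U$ where item \eqref{item:BGI HFT} governs the relative position of the vertex $\hd^V_U$ and the image $\hd^U_V(\hx_U)$. However, $\calW(\calQ)$ was engineered in \cite{Dur_infcube} precisely so that this correspondence is built in, so the real work is already carried out there, and the argument above amounts to unwinding the definitions and assembling the output of Theorem \ref{thm:Q dual}.
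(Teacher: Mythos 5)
Your proposal takes essentially the same route as the paper: the paper derives Theorem \ref{thm:Q CCC} directly from Theorem \ref{thm:Q dual} (surjectivity and $0$-median isometry of the dualization map $\calD$) together with the observation that $\calD(\calQ)$ sits as a cubical subcomplex of $\prod_{U\in\calU}\hT_U$, deferring the wall-to-coordinate consistency matching to \cite[Sections 12--15]{Dur_infcube}, exactly as you do. Your write-up merely unwinds the definitions more explicitly (the inverse map $\iota$, the corner-to-cube upgrade), but the substance and the external inputs are identical.
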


In particular, the median structure on $\calQ$ is inherited from the median structure on $\prod_{U \in \calU} \hT_U$; see \cite[Section 15]{Dur_infcube}.

\subsection{Stable decompositions for thickened trees} \label{subsec:stable decomp thick}

Having defined the cubical models $\calQ$ and $\calQ'$ for $\hull_{\calX}(F)$ and $\hull_{\calX}(F')$ respectively above, we now turn towards proving our Stabler Cubulations Theorem \ref{thm:stabler cubulations}.

In the construction of our cubical models via HFTs above, we first thicken our trees $T_U = T_{U,e} \cup T_{U,c}$ along their edge decompositions.  However, we used this edge decomposition in Section \ref{sec:stable trees} to define the stable decompositions that we would now like to deploy.  In order to do so, we observe that the thickening process preserves stable compatibility in the sense of Definition \ref{defn:stable decomp}.

As in our fixed base setup from Subsection \ref{subsec:fixed base setup, cubes}, for each $U \in \calU$, let $T_U = T_{U,e} \cup T_{U,c}$ and $T'_U = T'_{U,e} \cup T'_{U,c}$ denote the edge decompositions for the stable trees.  Let $T_{U,s} \subset T_{U,e}$ and $T'_{U,s} \subset \hull_{T'}(F) \cap T'_{U,e}$ denote the corresponding stable decompositions.

Let $\bT_U = \bT_{U,e} \cup \bT_{U,c}$ and $\bT'_U = \bT'_{U,e} \cup \bT'_{U,c}$ denote the $(r_1,r_2)$-thickening of the stable trees along their cluster components for $r_1,r_2$ positive integers controlled by $\mathfrak S, k$ as in Proposition \ref{prop:stable tree thickenings}.  Recall that $\bT_{U,e} \subset T_{U,e}$ and $\bT'_{U,e} \subset T'_{U,e}$ for each $U \in \calU$.

For each $U \in \calU$, define $\bT_{U,s} = \bT_{U,e} \cap T_{U,s}$ and $\bT'_{U,s} = \bT'_{U,e} \cap T'_{U,s}$, that is, the stable decomposition restricted to the slightly contracted edge components in the thickening.  These need not be stable decompositions, in part because of the boundedly-many non-identical components (as in item \eqref{item:close pairs} of Definition \ref{defn:stable decomp}).  However, the following lemma says that we can perform boundedly-many (in $k,\mathfrak S$) small surgeries to arrange that Definition \ref{defn:stable decomp} be satisfied.

\begin{lemma}\label{lem:thickened stable decomps}
    For any positive integers $r_1>\max\{R_1,R_0\}$ and $r_2>\max\{R_2,R_0\}$ as in Proposition \ref{prop:stable tree thickenings} and each $U \in \calU$, the $(r_1,r_2)$-thickenings $\bT_U = \bT_{U,e} \cup \bT_{U,c}$ and $\bT'_{U} = \bT'_{U,e} \cup \bT'_{U,c}$ admit $\calY_U$-compatible $L'$-stable decompositions for $L'= L'(\mathfrak S, k)$.
\end{lemma}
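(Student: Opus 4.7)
The plan is to define the thickened stable decompositions $\bT_{U,s}$ and $\bT'_{U,s}$ essentially as the restrictions $T_{U,s}\cap\bT_{U,e}$ and $T'_{U,s}\cap\bT'_{U,e}$, with boundedly many small surgeries to repair pairs that are trimmed incompatibly. The starting point is Lemma \ref{lem:HHS basic setups are compatible}, which yields $\calY_U$-stably $L$-compatible stable decompositions $T_{U,s}\subset T_{U,e}$ and $T'_{U,s}\subset T'_{U,e}$ for $L=L(k,\mathfrak S)$, together with the bijections $\alpha,\beta$ and isometries $i_{E,\alpha(E)}$ from Definition \ref{defn:stable decomp}.

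First I would exploit the unstable forest property (item \eqref{item:unstable forests}): outside the unstable forests $T_{U,\diff}\subset T_U$ and $T'_{U,\diff}\subset T'_U$, the stable trees are identical, so their cluster/edge decompositions coincide, and hence the $(r_1,r_2)$-thickening produces identical pieces there. Consequently, every identical pair $(E,\alpha(E))$ disjoint from the unstable forests is trimmed in exactly the same way, and the restriction $(E\cap\bT_{U,e},\alpha(E)\cap\bT'_{U,e})$ is again an identical pair under the restricted isometry. These form the ``easy'' components of the new stable decomposition.

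Next I would handle the boundedly many pairs $(E,\alpha(E))$ meeting the unstable forests, together with the at most $L$ close pairs. For each such pair, I would use $i_{E,\alpha(E)}$ to identify a largest sub-interval $E_0\subset E$ for which simultaneously $E_0\subset\bT_{U,e}$ and $i_{E,\alpha(E)}(E_0)\subset\bT'_{U,e}$. If $\diam(E_0)$ exceeds a threshold (say $10r_1$), include the pair $(E_0,i_{E,\alpha(E)}(E_0))$ in the new stable decomposition with the restricted isometry; otherwise discard the pair, adding the leftover of $E$ (inside $\bT_{U,e}$) and of $\alpha(E)$ (inside $\bT'_{U,e}$) to the adjacent unstable components. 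Since there are at most $O(L)$ such adjustments and each increases the diameter of an unstable component by at most $O(r_1+r_2)$, the new compatibility constant $L'$ remains bounded in terms of $L,r_1,r_2$, hence in terms of $k,\mathfrak S$. Simplicialization (Proposition \ref{prop:simplicialization}) can be arranged by truncating a further bounded amount, absorbed into $L'$.

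Finally, the bijection $\alpha$ restricts to the surviving trimmed pairs, and the adjacency bijection $\beta$ extends to the new decompositions by merging the boundedly many discarded pairs into neighboring unstable components on each side symmetrically (so that identified components contain the same points of $F$ and the same clusters $\mu(C_y)$ for $y\in\calY_U$). The main obstacle is verifying item \eqref{item:Adjacency-preserving}: the gluing endpoints of a trimmed stable component indexed by a neighbor $D$ must match under $\alpha$. This follows because every trimming is performed symmetrically via $i_{E,\alpha(E)}$, so the endpoint of the trimmed $E$ closest to a neighbor $D$ corresponds under the isometry to the endpoint of the trimmed $\alpha(E)$ closest to $\alpha(D)$; combined with the gluing-data preservation property already established for $T_{U,s},T'_{U,s}$ (cf.\ Claim \ref{claim:gluing data}), this upgrades to the thickened setting without additional work.
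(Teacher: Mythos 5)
Your proposal is correct and follows essentially the same route as the paper: use item \eqref{item:unstable forests} of Definition \ref{defn:stable decomp} to see that all but boundedly many stable pairs are thickened identically, then repair the remaining boundedly many pairs by intersecting $E$ with $\bT_{U,e}\cap i_{E,\alpha(E)}^{-1}(\alpha(E)\cap\bT'_{U,e})$ and trimming to simplicial intervals. The one imprecision is your claim that the thickening produces identical pieces everywhere \emph{disjoint} from the unstable forests: since thickening a cluster component inside $T_{U,\diff}$ can spill into adjacent identical edge components, you must also add to your surgery list the boundedly many stable components living in edge components \emph{adjacent} to the unstable forests, as the paper does; this changes nothing in the bounds.
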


\begin{proof}

    The thickening process likely affects almost all components of $T_{U,s}$ and $T'_{U,s}$.  We first show that for all but boundedly-many (in $k,\mathfrak S$) such components, the thickenings are exactly the same.  We then explain how to deal with the remaining components.
    
    By item \eqref{item:unstable forests} of Definition \ref{defn:stable decomp}, there are unstable forests $T_{U, \diff} \subset T_U$ and $T'_{U, \diff} \subset T'_U$ which are the union of boundedly-many (in $k, \mathfrak S$) components of $T_{U,e},T_{U,c}$ and $T'_{U,e}, T'_{U,c}$, respectively, so that the complements $T_U - T_{U,\diff}$ and $\hull_{T'_U}(F) - T'_{U,\diff}$ consist of identical components.  For each such identical component $C$, there are boundedly-many components of $T_{U,c}$ and $T'_{U,c}$ which are contained in $C$ and are adjacent to $T_{U,\diff}$ and $T'_{U,\diff}$.  Hence there are boundedly-many edge components of the complements $T_U - T_{U,\diff}$ and $\hull_{T'_U}(F) - T'_{U,\diff}$ where their thickenings may differ.  In particular,  all but boundedly-many (in $k, \mathfrak S$) components of $T_{U,s}$ and $T'_{U,s}$ are identical after an $(r_1,r_2)$-thickening of $T_U$ and $T'_U$ along $T_{U,c}$ and $T'_{U,c}$, respectively.

    For the remaining boundedly-many components where the thickenings may differ, some of them come from identical components of $T_{U,s}$ and $T'_{U,s}$ (item \eqref{item:identical pairs} of Definition \ref{defn:stable decomp}), and other from proximate components (item \eqref{item:close pairs} of Definition \ref{defn:stable decomp}).
    
    Let $E \in \pi_0(T_{U,s})$ be such a component and recall that $E$ is an interval in a component of $T_{U,e}$, and in particular avoids branch points.  The $(r_1,r_2)$-thickening of $T_U$ along $\bT_{U,c}$ might overlap any such component $E \in \pi_0(T_{U,s})$ by at most $D=D(r_1,r_2, k, \mathfrak S)>0$ (as in Lemma \ref{lem:thickenings exist}) into either of its two ends, possibly in a non-integer amount.  A similar statement holds for its paired component $E' = \alpha_U(E) \in \pi_0(T'_{U,s})$.
    
    First, restrict $E$ to $E \cap \bT_{U,e} \cap i_{E,E'}(E' \cap \bT'_{U,e})$ and similarly restrict $E'$ to $E' \cap \bT'_{U,e} \cap i^{-1}_{E,E'}(E \cap \bT_{U,e})$, and observe that these are restrictions are isometric via $i_{E,E'}$ and $L$-close (or possibly identical) in $\calC(U)$ by construction.  Next observe that $E$ and $E'$ are both segments in edges of components of $T_{U,e}$ and $T'_{U,e}$, respectively, and hence these restrictions are also segments.  Finally, further trim these restricted segments at their ends in an identical fashion by segments of length less than $1$ to arrange that they are simplicial intervals.  It is straightforward to check that the resulting collection of components and isometries results in a uniform stable decomposition.  This completes the proof.
\end{proof}

\subsection{Unstable parts of collapsed trees} \label{subsec:unstable parts of collapsed trees}

With the observation in Lemma \ref{lem:thickened stable decomps} about stable decompositions for thickened trees, we can now proceed to use the Stabler Tree Theorem \ref{thm:stable tree} to further refine the collections of collapsed stable trees $\{\hT_U\}_{U \in \calU}$ and $\{\hT_U\}_{U \in \calU'}$ using the stable decompositions provided by the Stabler Tree Theorem \ref{thm:stabler tree}.

Using our fixed base setup from Subsection \ref{subsec:fixed base setup, cubes}, for each $U \in \calU$, we let $\bT_{U,s} \subset \bT_{U,e} \subset T_{U,e}$ and $
\bT'_{U,s} \subset \hull_{T'}(F) \cap \bT'_{U,e} \subset \hull_{T'}(F)  \cap T'_{U,e}$ denote the stable decompositions for the thickened trees $\bT_U,\bT'_U$, respectively, as provided by Lemma \ref{lem:thickened stable decomps}.

The main output of the next definition are the unstable parts of the various trees $\hT_U, \hT'_U$.  These come in two flavors, depending on whether $U \in \calU$ or $U \in \calU'- \calU$.  In the definition, $\hull_{\hT'_U}(F)$ is the hull in $\hT'_U$ of the vertices corresponding to the marked points labeled by $F$.

\begin{definition}\label{defn:stable parts}
    For each $U \in \calU$,
    \begin{itemize}
        \item The \emph{stable parts} of $\hT_U$ and $\hT'_U$ are $\hT_{U,s} = q_U(\bT_{U,s})$ and $\hT'_{U,s} = q'_U(\bT'_{U,s})$, respectively.
        \item The \emph{unstable parts} of $\hT_U$ and $\hT'_U$ are $\hT_{U,u} = \hT_U - \hT'_{U,s}$ and $\hT'_{U,u} = \hull_{\hT'_U}(F) - \hT'_{U,s}$, respectively.
    \end{itemize}
    If $U \in \calU' - \calU$,
    \begin{itemize}
        \item The \emph{unstable part} of $\hT'_U$ is $\hT'_{U,u} = \hull_{\hT'_U}(F)$.
    \end{itemize}
\end{definition}

\begin{remark}\label{rem:F in F'}
    Recall that if $U \in \calU' - \calU$ is not distinguished (Definition \ref{defn:distinguished}), then $\pi_U(F)$ is a point, and hence the hull of the marked points corresponding to $F$ in $\hT'_U$ is just a vertex.  Hence by Proposition \ref{prop:distinguished domains}, for all but boundedly-many $U \in \calU' - \calU$, this hull is just a vertex.
\end{remark}

Our ultimate goal is to collapse the complements of the stable parts of the $\hT_U$ to again obtain a family of simplicial trees.  For this, the following observation is useful:

\begin{lemma}\label{lem:simplicial stable}
    For each $U \in \calU$, each component of $\hT_{U,s}$ is a simplicial interval whose leaves are vertices of the component of $\hT_{U,e}$ containing it, and similarly for $\hT'_{V,s}$ with $V \in \calU'$.
    \begin{itemize}
        \item Moreover, if $U \in \calU' - \calU$, then the hull in $\hT'_{U,0}$ of the marked points corresponding to $F$ is a vertex.
    \end{itemize}
\end{lemma}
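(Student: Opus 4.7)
The plan is to leverage the simplicialization arrangement from Proposition \ref{prop:simplicialization} together with the structure of the collapse maps $q_U$, which act trivially on the edge parts that contain the stable decompositions. Since the lemma is essentially a bookkeeping statement that packages earlier constructions for use in the subsequent collapsing step, I expect no serious obstacles; the main thing is to unwind the definitions carefully.

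For the first claim, I would fix $U \in \calU$ and a component $E$ of $\hT_{U,s} = q_U(\bT_{U,s})$, and choose a corresponding component $\widetilde{E}$ of $\bT_{U,s}$ with $q_U(\widetilde{E}) = E$. Combining Proposition \ref{prop:simplicialization} with Lemma \ref{lem:thickened stable decomps}, whose proof explicitly trims the thickened stable intervals so that they are simplicial with endpoints at vertices, $\widetilde{E}$ is a simplicial interval contained in a single component $\widetilde{C}$ of $\bT_{U,e}$, with its two leaves lying at vertices of $\widetilde{C}$.

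Next I would observe that $q_U$ is injective on each component of $\bT_{U,e}$, since the only nontrivial identifications made by $q_U$ occur within components of $\bT_{U,c}$, and by construction $\bT_{U,c} \cap \bT_{U,e} = \emptyset$. Therefore $q_U|_{\widetilde{E}}$ is an isomorphism onto $E$, so $E$ itself is a simplicial interval. Its two leaves are the images of the endpoints of $\widetilde{E}$, which were vertices of $\widetilde{C}$; under $q_U$ such a vertex either remains uncollapsed, in which case it is still a vertex of the component of $\hT_{U,e}$ containing $E$, or it lies at the attaching point of an adjacent component of $\bT_{U,c}$ which is collapsed to a vertex of $\hT_U$ in the HFT simplicial structure provided by Proposition \ref{prop:stable tree thickenings}. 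In either case the image is a vertex of the component of $\hT_{U,e}$ containing $E$, as required. The identical argument applies verbatim to components of $\hT'_{V,s}$ for $V \in \calU'$.

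For the moreover claim, I would simply unwind Definition \ref{defn:stable parts}: when $U \in \calU' - \calU$, the unstable part is declared to be the entire hull $\hT'_{U,u} = \hull_{\hT'_U}(F)$. In the tree $\hT'_{U,0}$ obtained in the next subsection by collapsing each component of the unstable part to a point, all of $\hull_{\hT'_U}(F)$ is collapsed to a single vertex, which is precisely the claim.
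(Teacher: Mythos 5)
Your proof is correct and follows essentially the same route as the paper's: the stable components are preserved identically (and injectively) under the collapse maps $q_U$ because those maps only identify points within components of $\bT_{U,c}$, and the simplicial/vertex claims come from Proposition \ref{prop:simplicialization} and Lemma \ref{lem:thickened stable decomps}, while the moreover part is just that the connected hull $\hull_{\hT'_U}(F)$ is declared unstable and hence collapses to a single vertex. Your write-up is merely more detailed than the paper's terse argument; no issues.
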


\begin{proof}
    This follows essentially directly from Definition \ref{defn:stable decomp} and Lemma \ref{lem:thickened stable decomps}: For $U \in \calU$ the stable decompositions $\bT_{U,s} \subset \bT_{U,e}$ and $\bT'_{U,s} \subset \bT'_{U,e}$ are preserved identically under the collapsing maps $q_U:T_U \to \hT_U$ and $q'_U:T'_U \to \hT'_U$.  This collapsing map collapses simplicial subtrees, whose complementary components are simplicial subtrees.  Hence the quotient $\hT_{U,s}$ satisfies the description in the lemma.  For $U \in \calU' - \calU$, the convex hull in a simplicial tree of a set of vertices is a simplicial subtree, and hence collapsing it results in a simplicial tree.  This completes the proof.
\end{proof}

\begin{remark} \label{rem:collapsed trees}
In a collapsed tree, $\hT_U$, a uniform neighborhood of the each component of $T^c_U$ has been collapsed to point (Lemma \ref{lem:thickenings exist}), with $r_2$-close components combined and collapsed.  Thus $\hT_U$ is a union of the thickened edge components $\bT_{U,e}$ glued at \emph{cluster vertices}, i.e., the vertices corresponding to the collapsed (uniform) neighborhoods of cluster components, with two edge components glued at a cluster vertex exactly when they are both adjacent in $T_U$ to the corresponding component of $T_{U,c}$.  In this way, the stable and unstable parts are subtrees of the edges components $\hT^e_U$.  It is worth noting that if $\hT_{U,u} = \emptyset$ for some $U \in \calU'$, then $\hT_U = \hT_{U,s}$, because every component of $\hT^e_U$ is contained in $\hT_{U,s}$, and thus so are the cluster points.  As we will see next, most domains satisfy this property.
\end{remark}

The following lemma allows us to control the number and diameter of the unstable components of the various trees in our collections.  It is a consequence of the Stable Tree Theorem \ref{thm:stable tree} and our work in Section \ref{sec:controlling domains}:

\begin{lemma}\label{lem:controlling unstable parts}
    There exists $M_0 = M_0(\calX, k)>0$ so that the following hold:
    \begin{enumerate}
        \item We have $\#\{U \in \calU| \hT_{U,u} \neq \emptyset \hspace{.05in} \text{or} \hspace{.05in} \hT'_{U,u} \neq \emptyset\}<M_0$.
        \item For all $U \in \calU$, the number of components of $\hT_{U,u}$ and $\hT'_{U,u}$ are bounded by $M_0$.
        \item For all $U \in \calU$, each component of $\hT_{U,u}$ and $\hT'_{U,u}$ has diameter bounded by $M_0$.
    \end{enumerate}
    \begin{itemize}
        \item In particular, if $U \in \calU$ is neither distinguished nor involved, then $T_U = T_{U,e} \cup T_{U,c}$ and $T'_U = T'_{U,e} \cup T'_{U,c}$ are identical as trees and have identical edge/cluster decompositions.
    \end{itemize}
\end{lemma}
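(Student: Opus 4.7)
The plan is to prove the four statements in order, with the ``in particular'' clause coming first since it is the key structural observation that powers the bound in item (1).

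First, I would show the ``in particular'' clause: if $U \in \calU$ is neither distinguished nor involved, then $T_U$ and $T'_U$ are identical as trees with identical edge/cluster decompositions. By Remark \ref{rem:non-distinguished}, not being distinguished means that $\pi_U(F') = \pi_U(F)$ as sets, so the projection data for $F$ and $F'$ to $\calC(U)$ coincide. By Definition \ref{defn:involved}, not being involved means $\{V\in\calU:V\nest U\}=\{V\in\calU':V\nest U\}$, so the relative projection data $\calY_U = \calY'_U$ also coincide. Since the stable tree construction from Definition \ref{defn:stable tree} depends only on the $\ep$-setup $(F_U;\calY_U)$ and the fixed choice of network functions $\lambda, \lambda'$ and cluster constants (all of which are identical for the two setups), we conclude $T_U = T'_U$ with identical edge and cluster components. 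The same holds for the thickened decompositions since those depend only on $T_U=T_{U,e}\cup T_{U,c}$, and thus the map $q_U = q'_U$ gives $\hT_U = \hT'_U$. We can then take $T_{U,s} = T_{U,e}$ and $T'_{U,s} = T'_{U,e}$, which is trivially a $\calY_U$-stably compatible decomposition with no unstable components, so both $\hT_{U,u}$ and $\hT'_{U,u}$ are empty.

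For item (1), combine the ``in particular'' clause with the bounds from Section \ref{sec:controlling domains}. Any $U \in \calU$ contributing to the set in (1) must be distinguished or involved, and Propositions \ref{prop:distinguished domains} and \ref{prop:involved domains} bound the cardinalities of these sets by constants depending only on $\mathfrak S$ and $|F'|\leq k$. For $U \in \calU' - \calU$, Remark \ref{rem:F in F'} says $\hull_{\hT'_U}(F)$ is a vertex (and hence $\hT'_{U,u}$ is either empty or a single point contributing no interesting unstable part) unless $U$ is distinguished, which again is bounded by Proposition \ref{prop:distinguished domains}. Taking the maximum of these bounds gives $M_0$ for item (1).

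For items (2) and (3), the bounds come directly from the stable decompositions provided by Theorem \ref{thm:stabler tree} combined with Lemma \ref{lem:thickened stable decomps}. Specifically, for $U \in \calU\cap\calU'$, Lemma \ref{lem:HHS basic setups are compatible} gives a constant $L=L(k,\mathfrak S)$ and $\calY_U$-stably $L$-compatible stable decompositions for $(F_U;\calY_U)$ and $(F'_U;\calY'_U)$, which Lemma \ref{lem:thickened stable decomps} upgrades to stable decompositions on the thickenings with a constant $L'=L'(k,\mathfrak S)$. By item (5) of Definition \ref{defn:stable decomp}, each of $T_{U,e}-T_{U,s}$ and $(\hull_{T'}(F)\cap T'_{U,e})-T'_{U,s}$ has at most $L'$ components, each of diameter at most $L'$. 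Collapsing the cluster components via $q_U, q'_U$ can only decrease both the number and diameter of unstable components (cluster components are collapsed to points, so unstable subtrees are quotients), giving the required bounds on $\hT_{U,u}$ and $\hT'_{U,u}$ in terms of $L'$. For $U\in\calU'-\calU$, there are boundedly many distinguished such $U$ by Proposition \ref{prop:distinguished domains}, and for each the hull $\hull_{\hT'_U}(F)$ is controlled in terms of $k,\mathfrak S$ via Theorem \ref{thm:HFT model}, giving a uniform bound. Taking $M_0$ larger than all of these constants completes the proof.

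The main obstacle I anticipate is technical rather than conceptual: carefully verifying that non-distinguished and non-involved domains really do produce identical stable tree constructions (including identical cluster separation graphs, identical edge/cluster decompositions, and identical output of $\lambda, \lambda'$). This requires tracking that the entire construction in Subsection \ref{subsec:stable trees defined} is a deterministic function of the $\ep$-setup, which it is by the equivariant choices of $\lambda, \lambda'$ fixed in Subsection \ref{subsec:basic tree setup}.
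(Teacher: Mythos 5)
Your proposal is correct and follows essentially the same route as the paper: item (1) reduces to the bounds on distinguished and involved domains (Propositions \ref{prop:distinguished domains} and \ref{prop:involved domains}) via the observation that non-distinguished, non-involved domains yield identical stable trees, and items (2)--(3) follow from the stable decomposition bounds of Theorem \ref{thm:stabler tree} together with the sporadic-domain bound, with the $\calU'-\calU$ case handled by the fact that $\diam_U(F)<K$ there. The only nitpick is that your citation of Theorem \ref{thm:HFT model} for bounding $\hull_{\hT'_U}(F)$ when $U\in\calU'-\calU$ is not the right reference --- the bound comes directly from $\diam_U(F)<K$ --- but this does not affect the argument.
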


\begin{proof}
    For (1), we observe that by construction of the stable trees, it suffices to show that for all but boundedly-many domains $U \in \calU$, we have that $T_U = T_{U,e} \cup T_{U,c}$ and $T'_U = T'_{U,e} \cup T'_{U,c}$ are identical both as trees and in terms of having identical edge/cluster decompositions.  This happens precisely when 
    \begin{itemize}
        \item[(a)] $\pi_U(F') = \pi_U(F)$, and
        \item[(b)] $\calU_U = \{V \nest U| V \in \calU\}$ and $\calU'_U = \{V \nest U|V \in \calU'\}$ are identical.
    \end{itemize}
    A domain $U \in \calU$ which fails to have property (a) is ``distinguished'' in the sense of Definition \ref{defn:distinguished}, and hence Proposition \ref{prop:distinguished domains} bounds the number of these domains in terms of $\mathfrak S, k$.  On the other hand, a domain which fails to have property (b) is ``involved'' in the sense of Definition \ref{defn:involved}, and hence Proposition \ref{prop:involved domains} bounds the number of such domains in terms of $\mathfrak S, k$.  Hence the number of domains failing (1) is bounded in terms of $\mathfrak S, k$, as required.

    For (2) and (3), if $U \in \calU$, then we are in the boundedly-many cases excluded by (1).  In these cases, the trees $T_U, T'_U$ admit uniformly compatible stable decompositions via Theorem \ref{thm:stabler tree}, which bounds both the number and diameter of the complementary components of $T_{U,e} - T_{U,s}$ and $\hull_{T'_U}(F) \cap T'_{U,e} - T'_{U,s}$ in terms of $\mathfrak S, k$ as well as the number of ``sporadic'' domains nested into $U$ in the sense of Definition \ref{defn:sporadic domains}.  In particular, the bound (in terms of $\mathfrak S, \#F'$) in Proposition \ref{prop:sporadic domains} on sporadic domains is precisely giving us the bound in the inset equation in the statement of Theorem \ref{thm:stabler tree}.  Combining these bounds gives (2) and (3) when $U \in \calU$.

    When $U \in \calU' - \calU$, we want to invoke Proposition \ref{prop:distinguished domains}.  Recall that a domain $U \in \calU' - \calU$ is not distinguished (Definition \ref{defn:distinguished}) if $\pi_U(F)$ is a single point, and that Proposition \ref{prop:distinguished domains} bounds the number of distinguished domains in terms of $\mathfrak S, k$.  Note that by definition of $U \in \calU' - \calU$, that $\diam_U(F) < K$.  Hence combining these, if $U \in \calU' - \calU$ is distinguished, the hull of (the points corresponding to) $F$ in the (collapsed) tree $\hT'_U$ has diameter coarsely bounded by $K$.  Since there is at most one such bounded-diameter component for each of the boundedly-many distinguished domains, this proves (2) and (3) for $U \in \calU' - \calU$, and hence completes the proof of the lemma.
\end{proof}

\subsection{Stabilizing the trees by collapsing the unstable parts} \label{subsec:collapsing unstable parts}

We now explain how to process of collapsing the unstable parts from all of the trees in the families $\{\hT_U\}_{U \in \calU}$ and $\{\hT'_U\}_{U\in \calU'}$ combine at the level of HFTs (Subsection \ref{subsec:HFT}) to give a hyperplane collapsing map in the corresponding CAT(0) cube complexes.

For each $U \in \calU$, let $\Delta_U:\hT_U \to \hT_{U,0}$ and $\Delta'_U:\hT'_U \to \hT'_{U,0}$ collapse each component of the respective unstable parts of $\hT_{U,u} \subset \hT_U$ and $\hT'_{U,u} \subset \hull_{\hT'_U}(F)$ to a point.  These component-wise maps combine to give maps 
$$\Delta_0:\prod_{U \in \calU} \hT_U \to \prod_{U \in \calU} \hT_{U,0} \hspace{.25in} \text{and} \hspace{.25in} \Delta'_0:\prod_{U \in \calU'} \hT'_U \to \prod_{U \in \calU'} \hT'_{U,0}.$$

We need to add one last piece of information to the trees $\hT_{U,0}$.  By carrying the information over via the map $\Delta_U$, each such tree $\hT_{U,0}$ has marked points which are labeled by points of $F$ and various $\hd^V_U$ for $V \pitchfork U$ where $V \in \calU$.  Similarly, most have cluster points which are labeled by domains $V \nest U$ where $V \in \calU$.  Finally, we get new collapsed projections between the collapsed trees by appropriately composing with these collapsing maps:
\begin{itemize}
    \item $U \nest V$ or $U \pitchfork V$, then $(\hd^U_V)_0 = \Delta_V(\hd^V_U)$;
    \item $U \nest V$ then $(\hd^V_U)_0$ is the map $\Delta_U \circ \hd^V_U \circ \Delta^{-1}_V$.
\end{itemize}

The following is now essentially by construction:

\begin{lemma}\label{lem:simplicial sct}
    For each $U \in \calU$, $\hT_{U,0}$ is a simplicial tree with marked and cluster points being vertices, and similarly for $\hT'_{V,0}$ for any $V \in \calU'$.  Moreover $\{\hT_{U,0}\}_{U \in \calU}$ and $\{\hT'_{U,0}\}_{U \in \calU'}$ are HFTs.
\end{lemma}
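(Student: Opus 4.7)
The plan is to verify the two conclusions of the lemma by directly inspecting how the collapsing maps $\Delta_U$ and $\Delta'_U$ act on the HFT data already established for $\{\hT_U\}_{U\in\calU}$ and $\{\hT'_U\}_{U\in\calU'}$ by Proposition \ref{prop:stable tree thickenings}. The first conclusion (simplicial structure with marked and cluster points as vertices) is a combinatorial check about collapsing subtrees in simplicial trees, while the second conclusion (HFT axioms) amounts to pushing each of the five bullets of Definition \ref{defn:HFT} through the collapsing maps.

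For the simplicial structure, my plan is to observe that by Lemma \ref{lem:simplicial stable} each component of $\hT_{U,s}$ (for $U\in\calU$) is a simplicial interval whose leaves are vertices of the ambient simplicial tree $\hT_U$, and hence each component of the complement $\hT_{U,u}=\hT_U-\hT_{U,s}$ is a simplicial subtree meeting the stable part at vertices. Collapsing a collection of disjoint simplicial subtrees of a simplicial tree to points yields again a simplicial tree in which those collapsed points are vertices, which establishes that $\hT_{U,0}$ is simplicial. The marked points $\hf_U$ and the cluster/relative-projection vertices $\hd^V_U$ are vertices of $\hT_U$, so their images under $\Delta_U$ (which restricts to a simplicial map on each maximal subtree not collapsed, and sends each collapsed component to a single new vertex) are vertices of $\hT_{U,0}$. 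The case $U\in\calU'-\calU$ is even easier: by Lemma \ref{lem:simplicial stable} the hull of the $F$-marked points inside $\hT'_U$ is already a single vertex for all but boundedly many $U$, and for the remaining ones we collapse a simplicial subtree containing those marked points to a point, which remains a vertex in $\hT'_{U,0}$.

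For the HFT axioms, my plan is as follows. The index set $\calU$ (respectively $\calU'$) and the relations $\nest,\pitchfork,\perp$ are unchanged, giving axiom (1); axiom (2) was just shown; axiom (3) is inherited because every $f\in F$ still labels the image vertex $\Delta_U(\hf_U)$, and the leaves of $\hT_{U,0}$ come either from leaves of the stable part (which, by the adjacency property \eqref{item:Adjacency-preserving} of Definition \ref{defn:stable decomp}, are attached to unstable components and hence become interior vertices after collapsing) or from collapsed unstable components that were themselves ``dead-end'' subtrees of $\hT_U$, each of which must then contain an original leaf of $\hT_U$ which was already a marked point by the original HFT structure. Axiom (4) is the definition of the new relative projections $(\hd^V_U)_0=\Delta_U(\hd^V_U)$, and the orthogonality identity $\hd^U_W=\hd^V_W$ (when $U\orth V\nest W$) is preserved because $\Delta_W$ is a well-defined function. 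The main point to check is the Bounded Geodesic Image axiom (5); the key observation, which I expect to be the only mildly delicate step, is that $\Delta_U^{-1}((\hd^V_U)_0)$ is a connected subtree of $\hT_U$ (it is either the single vertex $\hd^V_U$ or one component of $\hT_{U,u}$ containing it), so every component $C$ of $\hT_{U,0}-(\hd^V_U)_0$ is the $\Delta_U$-image of a union of components of $\hT_U-\Delta_U^{-1}((\hd^V_U)_0)$, each of which lies in a single component of $\hT_U-\hd^V_U$. Lifting any marked point $\hf_{U,0}\in C$ to $\hf_U$ and invoking the original BGI property from Proposition \ref{prop:stable tree thickenings} then forces $\hd^U_V(\hf_U)=\hf_V$ uniformly, and applying $\Delta_V$ yields $(\hd^U_V)_0(C)=(\hf_V)_0$, as required. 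The argument for $\{\hT'_{U,0}\}_{U\in\calU'}$ is identical, with the extra observation that for $U\in\calU'-\calU$ the entire $F$-hull has been collapsed to a point, so there are simply no $F$-labeled components to worry about in the BGI check.
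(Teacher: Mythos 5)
Your proof is correct and follows essentially the same route as the paper's: the simplicial structure comes from the fact that one is collapsing disjoint simplicial subtrees whose attaching points are vertices (via Proposition \ref{prop:simplicialization}/Lemma \ref{lem:simplicial stable}), and the HFT axioms are pushed through the collapsing maps. The paper dismisses the HFT verification as "an exercise in the definitions," so your explicit check of the leaf condition and of BGI — in particular the observation that $\Delta_U^{-1}((\hd^V_U)_0)$ is a connected subtree, so each component of $\hT_{U,0}-(\hd^V_U)_0$ pulls back into a single component of $\hT_U-\hd^V_U$ — is exactly the intended content of that exercise.
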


\begin{proof}
    By construction, any such tree $\hT_{U,0}$ is a union of the uncollapsed stable components of $\hT_U$, attached along common leaves, which are either marked points, cluster points, or points arising from collapsing an unstable part, the latter of which we have labeled now as cluster points.  Each of these stable components is a simplicial tree by Proposition \ref{prop:simplicialization}, and hence $\hT_{U,0}$ satisfies the statement.  The situation is the same for $\hT'_{V,0}$ with $V \in \calU'$, where we simply observe for distinguished domains $V \in \calU' - \calU$ that collapsing subtrees only makes the HFT axioms easier to confirm.

    The last conclusion, that $\{\hT_{U,0}\}_{U \in \calU'}$ and $\{\hT'_{U,0}\}_{U \in \calU'}$ are HFTs, is an exercise in the definitions which we leave to the reader.  This completes the proof.
\end{proof}

\begin{remark}[Unstable components and cluster points] \label{rem:unstable and clusters}
    An unstable component can be adjacent to a cluster point in $\hT_U$, in which case collapsing the unstable component in $\hT_U$ identifies it with that cluster point, though unstable components need not be adjacent to cluster points.
\end{remark}

\subsection{Collapsing maps as hyperplane deletions} \label{subsec:collapse hyperplane deletion}

In this subsection, we observe that these component-wise collapsings give rise to hyperplane deletions at the level of cube complexes.  For this discussion, we are using the notion of a hyperplane deletion map, which is just a ``restriction quotient'' from \cite{CapraceSageev}.

\begin{definition}[Hyperplane deletion]\label{defn:hyperplane deletion}
Let $\calQ$ be a CAT(0) cube complex and let $\calH$ denote the set of hyperplanes in $\calQ$.  Given a subset $\calG \subset \calH$ of hyperplanes in $\calQ$, the \textbf{hyperplane deletion map for $\calG$}, denoted $\Delta_{\calG}:\calQ \to \calD(\calW(\calH - \calG))$ sends each tuple of orientations on the half-spaces associated to the hyperplanes $\calH$ to the unique tuple of identical orientations on the restricted set of half-spaces for the hyperplanes in $\calH'$.
\end{definition}

We record the following straightforward lemma:

\begin{lemma}
For any $L>0$, if $\calQ$ is a CAT(0) cube complex and $\calG \subset \calH$ is a subset of its hyperplanes with $|\calG|<L$, then $\Delta_{\calG}:\calQ \to \calD(\calH -\calG)$ is a $0$-median $(1,L)$-quasi-isometry.
\end{lemma}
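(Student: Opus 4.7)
The statement is a standard property of restriction quotients in CAT(0) cube complex theory, and the proof is essentially a direct verification from the definitions. I outline the plan as follows.

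First, I would work at the level of vertices, using the fundamental fact that in any CAT(0) cube complex $\calY$ with hyperplane set $\calH(\calY)$, the combinatorial ($\ell^1$) distance between two vertices $u,v$ equals the number of hyperplanes separating them. Applied to our situation, this gives
\[
d_{\calQ}(u,v) = \#\{h \in \calH : h \text{ separates } u,v\},
\qquad
d_{\calD}(\Delta_{\calG}(u), \Delta_{\calG}(v)) = \#\{h \in \calH-\calG : h \text{ separates } u,v\},
\]
where the second equality uses that the dual cube complex $\calD(\calW(\calH-\calG))$ has precisely the walls inherited from $\calQ$ via $\calH-\calG$, and that $\Delta_{\calG}$ preserves orientations on those walls by its very definition. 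Subtracting these two quantities shows that the difference is exactly the number of hyperplanes in $\calG$ separating $u$ and $v$, which is at most $|\calG| < L$. Hence
\[
d_{\calD}(\Delta_{\calG}(u), \Delta_{\calG}(v)) \leq d_{\calQ}(u,v) \leq d_{\calD}(\Delta_{\calG}(u), \Delta_{\calG}(v)) + L,
\]
which is the $(1,L)$-quasi-isometric embedding condition on vertices.

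Next, I would handle surjectivity and the extension to non-vertex points. Since $\Delta_{\calG}$ is by definition the restriction quotient (in the sense of Caprace--Sageev) associated to the subwallspace $\calW(\calH-\calG) \subset \calW(\calH)$, it is a cellular map which is automatically surjective onto $\calD(\calW(\calH-\calG))$; every vertex of the latter corresponds to an equivalence class of vertices of $\calQ$ under the relation of agreeing on all orientations in $\calH-\calG$. Extending cellularly and interpolating affinely on cubes, the above vertex estimate upgrades to a $(1,L)$-quasi-isometry on all of $\calQ$.

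Finally, I would verify the $0$-median condition. The key fact here is that for any three vertices $u,v,w$ in a CAT(0) cube complex, the median $m(u,v,w)$ is characterized as the unique vertex whose orientation on each hyperplane $h$ agrees with the majority of the orientations induced by $u,v,w$ on $h$. Since $\Delta_{\calG}$ simply restricts orientations from $\calH$ to $\calH-\calG$, the majority vote is preserved on each remaining hyperplane, so $\Delta_{\calG}(m(u,v,w)) = m(\Delta_{\calG}(u), \Delta_{\calG}(v), \Delta_{\calG}(w))$ exactly, with no error. This gives the $0$-median property and completes the proof.

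There is no genuine obstacle here: the only subtlety is being careful that the restriction quotient is well-defined and surjective onto $\calD(\calW(\calH-\calG))$, which is standard and well-documented in the Caprace--Sageev framework already invoked in the surrounding discussion.
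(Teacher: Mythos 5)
The paper states this lemma without proof (it is introduced as a ``straightforward lemma''), so there is no argument to compare against; your write-up supplies the standard verification and it is correct. The three ingredients — the separating-hyperplane distance formula giving $d_{\calD}\leq d_{\calQ}\leq d_{\calD}+|\calG|$, surjectivity of the restriction quotient in the sense of Caprace--Sageev, and the majority-vote characterization of the median showing it is preserved exactly — are exactly what is needed.
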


We can now state our collapsing proposition:

\begin{proposition}\label{prop:cube collapse}
There exists a constant $C_0 = C_0(\mathfrak S, k)>0$ so that the following hold:
\begin{enumerate}
    \item The maps $\Delta, \Delta'$ restrict to hyperplane deletion maps $\Delta:\calQ \to \calQ_0$ and $\Delta':\calQ' \to \calQ'_0$ which collapse at most $C_0$-many hyperplanes.
    \begin{itemize}
        \item In particular, $\Delta,\Delta'$ are $0$-median $(1,C_0)$-quasi-isometries.
    \end{itemize}
    \item There exist injective $(1,C_0)$-quasi-isometries $\Xi:\calQ_0 \to \calQ$ and $\Xi':\calQ'_0 \to \calQ'$ so that $\Delta_0 \circ \Xi = id_{\calQ_0}$ and $\Delta'_0 \circ \Xi' = id_{\calQ'_0}$.
\end{enumerate}
\end{proposition}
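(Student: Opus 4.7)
The plan is to verify that the componentwise collapsing maps $\Delta_U:\hT_U\to\hT_{U,0}$ are simplicial maps that collapse exactly the edges lying inside unstable components, and that these assemble into a restriction quotient on $\calQ$. Concretely, recall from Remark~\ref{rem:collapsed trees} and Lemma~\ref{lem:simplicial sct} that each $\hT_U$ is a simplicial tree whose hyperplanes are edge midpoints, and each component of $\hT_{U,u}$ is a subtree whose simplicial structure is inherited. Thus $\Delta_U$ deletes precisely those hyperplanes of $\hT_U$ that are midpoints of edges contained in some component of $\hT_{U,u}$. The first step is to count these: by Lemma~\ref{lem:controlling unstable parts}, at most $M_0$ domains $U\in\calU$ have nontrivial unstable part, each has at most $M_0$ unstable components, and each component has diameter at most $M_0$, so in total at most $M_0^3$ tree hyperplanes are collapsed across all of $\calU$ (and similarly for $\calU'$).

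Next I would promote this to the cube-complex level. A hyperplane of $\calQ$ arising from the embedding $\calQ\hookrightarrow\prod_{U\in\calU}\hT_U$ (Theorem~\ref{thm:Q CCC}) has the form $\calQ_{h_U}=\{\hx:\hx_U=h_U\}$ for a tree hyperplane $h_U\subset\hT_U$. Let $\calG$ be the (bounded) collection of such $\calQ_{h_U}$ for which $h_U$ lies in $\hT_{U,u}$. I would then check that the natural map $\calQ\to\calQ_0$ induced by $\Delta_0$ agrees with the restriction quotient $\Delta_{\calG}$ of Definition~\ref{defn:hyperplane deletion}: for each uncollapsed hyperplane $h_U$ the map $\Delta_U$ is a simplicial bijection in a neighborhood, so the choice of half-space is preserved; consistency at the HFT level (carried through $\Delta_U$ on relative projections, as in the definition of $\hd^V_U)_0$) guarantees that the output tuple is $0$-consistent, i.e.\ lies in $\calQ_0$. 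The bound $|\calG|\leq C_0$ then gives the $(1,C_0)$-quasi-isometry statement via the preceding (one-line) lemma.

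For part~(2), I would build the section $\Xi$ by choosing once and for all, in each unstable component $D\subset\hT_{U,u}$, a representative vertex $v_D\in D$. For $\hy_0=(\hy_{U,0})\in\calQ_0$ and each $U$, set $\Xi_U(\hy_{U,0})=v_D$ if $\hy_{U,0}$ is the collapsed vertex of some component $D$, and $\Xi_U(\hy_{U,0})=\hy_{U,0}$ otherwise (treating the uncollapsed vertices of $\hT_{U,0}$ as vertices of $\hT_U$). Then $\Delta\circ\Xi=\mathrm{id}$ by construction, and $\Xi$ is injective since the components of $\hT_{U,u}$ are pairwise disjoint. To see $\Xi(\hy_0)\in\calQ$, I would check $0$-consistency coordinate-by-coordinate: for any pair $U,V\in\calU$ both coordinates lie outside every collapsed component by definition of $\Xi$ or else coincide with one of the $v_D$; in the first case the consistency conditions for $\hy_0$ in $\calQ_0$ and for the original HFT coincide (since the collapsed projections $(\hd^V_U)_0$ agree with $\hd^V_U$ on the uncollapsed part), while in the second case one uses Lemma~\ref{lem:controlling unstable parts} and the fact that $v_D$ lies within distance $M_0$ of any relative projection whose image is the collapsed vertex. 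If necessary, the $v_D$ can be chosen coherently by taking, in each such $D$, the vertex that equals the marked point or cluster point of $\hT_U$ contained in $D$ whenever such a point exists (see Remark~\ref{rem:unstable and clusters}), which makes the matching with relative projections exact.

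Finally, the distance estimate for $\Xi$ follows because $\Delta$ is a $(1,C_0)$-quasi-isometry and $\Xi$ is a section, so $d_{\calQ}(\Xi(\hy),\Xi(\hz))\leq d_{\calQ_0}(\hy,\hz)+C_0$, and the reverse inequality is automatic from $\Delta\circ\Xi=\mathrm{id}$ and the fact that $\Delta$ is $1$-Lipschitz. The main obstacle I anticipate is the verification that $\Xi$ actually lands in the $0$-consistent subset $\calQ$ rather than merely in $\prod_U\hT_U$; this is where the coherent choice of representatives $v_D$ (guided by marked points and cluster points in each unstable component, with the rare exceptional $D$ dealt with separately using the bound $M_0$) plays a decisive role, and it is the only place where the HFT axioms of Definition~\ref{defn:HFT}(5) interact nontrivially with the collapsing.
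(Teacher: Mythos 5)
The paper does not prove this proposition by hand: after invoking Lemma \ref{lem:controlling unstable parts} to see that only boundedly many bounded-diameter subtrees are collapsed, it cites the ``Tree Trimming'' theorems of \cite{Dur_infcube} (Theorems 10.3 and 15.27), which supply both the hyperplane-deletion structure and the section. You are therefore attempting to reprove that black box, and while your reduction is structured correctly (count the collapsed tree-hyperplanes, identify $\Delta_0$ with a restriction quotient, build a coordinatewise section $\Xi$), the decisive step fails where you yourself flag the difficulty.

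The gap is in the construction of $\Xi$. The $0$-consistency conditions of Definition \ref{defn:Q consistent} are \emph{exact equalities}, so choosing a single representative vertex $v_D$ in each unstable component $D\subset\hT_{U,u}$ cannot work when $D$ contains two distinct relative-projection vertices $\hd^{V_1}_U\neq\hd^{V_2}_U$ (or a marked point and a $\hd^V_U$ that differ), and the consistency of $\hy_0\in\calQ_0$ is witnessed on the $U$-side for both $V_1$ and $V_2$: the lift would then need $v_D=\hd^{V_1}_U=\hd^{V_2}_U$ simultaneously. Your fallback --- ``the rare exceptional $D$ dealt with separately using the bound $M_0$'' --- does not address this: boundedness of the number of bad components is irrelevant, since a single unsatisfied equality already ejects $\Xi(\hy_0)$ from $\calQ$, and the offending coordinates $\hy_{V_i,0}$ are prescribed by $\hy_0$ and cannot be adjusted if $\Xi$ is to be a section. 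What is actually needed is an argument that such conflicting configurations cannot occur for a $0$-consistent tuple (e.g.\ via the Behrstock-type condition built into the HFT axioms and the adjacency/cluster bookkeeping of the stable decomposition), or a genuinely multi-coordinate lifting procedure; this is precisely the content of the cited Tree Trimming results, and without it both the surjectivity of $\Delta_0$ onto $\calQ_0$ and the existence of $\Xi$ remain unproved. (A minor additional point: your count of $M_0^3$ deleted hyperplanes should also invoke the uniform branching bound on the $T_U$ from Lemma \ref{lem:stable tree basics}, since a subtree of diameter $M_0$ need not have at most $M_0$ edges; the count is still uniform in $k,\mathfrak S$.)
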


\begin{proof}
Observe that Lemma \ref{lem:controlling unstable parts} implies that the map $\Delta$ collapses boundedly-many bounded-diameter subtrees across all of the $\hT_U$.

  The rest is now essentially a consequence of the Tree Trimming techniques from \cite[Theorems 10.3 and 15.27]{Dur_infcube}.  The first theorem implies that this map is an almost isometry of uniform quality (in $k, \mathfrak S$) with a well-defined inverse, while the second theorem implies that this map is a hyperplane deletion map.  In particular, the constant $C_0$ in this part can be taken to depend only on the constant $L = L(\mathfrak S, k)>0$ from Theorem \ref{thm:stable tree} and the constant $M_0 = M_0(\mathfrak S, k)>0$ from Lemma \ref{lem:controlling unstable parts}.

  The discussion for $\calQ'$ and $\calQ'_0$ is essentially identical. This completes the proof.
\end{proof}

\subsection{The convex embedding $\Phi: \calQ_0 \to \calQ'_0$}\label{subsec:convex embedding}

We are now ready to prove that $\calQ_0$ admits a convex embedding into $\calQ'_0$.

Recall from Proposition \ref{cor:collapsed isometry} that for each $U \in \calU$, we have a convex embedding $\Phi_U:\hT_{U,0} \to \hT'_{U,0}$ which coherently encodes the marked point and cluster data.  This will define our global map on those coordinates.

For domains  $U \in \calU' - \calU$, we let $\hT_{U,0}$ be a point.  In order to define a map $\Phi_U:\hT_{U,0} \to \hT'_{U,0}$ for $U \in \calU' - \calU$, we thus need to be able associate a single point in $\hT'_U$.  This was arranged in Subsection \ref{subsec:unstable parts of collapsed trees}, where our trees $\hT'_{U,0}$ were constructed so that the marked points corresponding to $F$ in $\hT'_{U,0}$ are a single point when $U \in \calU' - \calU$, as described in Lemma \ref{lem:simplicial stable}.  This gives us a well-defined target in the factors corresponding to $U \in \calU' - \calU$.

 Hence by using the coordinate-wise maps $\Phi_U$ for $U \in \calU$ and setting $\Phi_V \equiv (\hf_V)'$ for any $f \in F$ and $V \in \calU' - \calU$, we get a well-defined global map:

\begin{proposition}\label{prop:cubical convex embedding}
    There exist thickening constants $r_1 = r_1(\mathfrak S, k)>0$ and $r_2 = r_2(\mathfrak S,k)$ map $\Phi:\prod_{U \in \calU} \hT_{U,0} \to \prod_{U \in \calU'} \hT'_{U,0}$ is an isometric embedding which restricts to an $\ell^1$-convex embedding $\Phi:\calQ_0 \to \calQ'_0$.
\end{proposition}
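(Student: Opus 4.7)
The plan is to verify three properties of $\Phi$ in order: that it is an $\ell^1$-isometric embedding on the full product of trees, that it sends $\calQ_0$ into $\calQ'_0$, and that the restricted image is cubically convex. First I fix the thickening constants $r_1,r_2$ large enough (depending only on $\mathfrak S,k$) that Proposition \ref{prop:stable tree thickenings} produces the two HFTs, that Lemma \ref{lem:thickened stable decomps} provides compatible thickened stable decompositions, and that the simplicialization (Proposition \ref{prop:simplicialization}) applies. Corollary \ref{cor:collapsed isometry} then supplies, for each $U\in\calU$, a convex isometric embedding $\Phi_U:\hT_{U,0}\to\hT'_{U,0}$ of simplicial trees which carries $F$-marked points to their $F$-marked counterparts and carries cluster vertices labeled by a domain $V\nest U$ with $V\in\calU$ to cluster vertices labeled by the same $V$. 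For $V\in\calU'-\calU$, the $V$-coordinate of $\Phi(\hx)$ is set constantly to a chosen marked vertex $(\hf_V)'\in\hT'_{V,0}$ for some $f\in F$; this is well-defined and independent of the choice when $V$ is not distinguished (by Remark \ref{rem:non-distinguished} and Lemma \ref{lem:simplicial stable}), and for the boundedly many distinguished domains (controlled by Proposition \ref{prop:distinguished domains}) we fix any definite choice.

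The $\ell^1$-isometric embedding property on the full product is then immediate from
\[
d^1(\Phi(\hx),\Phi(\hx'))=\sum_{V\in\calU'}d(\Phi_V(\hx_V),\Phi_V(\hx'_V))=\sum_{U\in\calU}d(\hx_U,\hx'_U)=d^1(\hx,\hx'),
\]
since the $\calU'-\calU$ coordinates contribute zero (being constant) and each $\Phi_U$ for $U\in\calU$ is an isometry onto its image.

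The principal obstacle is showing that $\Phi$ carries $0$-consistent tuples to $0$-consistent tuples, i.e.\ $\Phi(\calQ_0)\subseteq\calQ'_0$. For each pair $V,W\in\calU'$ with $V\pitchfork W$ or $V\nest W$ we must check the corresponding condition from Definition \ref{defn:Q consistent} for $\Phi(\hx)$. When both $V,W\in\calU$, the collapsed projection $(\hd^W_V)_0\in\hT_{V,0}$ is a vertex labeled either by a point of $F$ (in the transverse case, via BGI from Definition \ref{defn:HFT}) or by a domain $W\nest V$ (as a cluster vertex); Corollary \ref{cor:collapsed isometry}(2a)--(2b) guarantees that $\Phi_V$ preserves both types of labels, so $\Phi_V((\hd^W_V)_0)=(\hd^W_V)'_0$ and the consistency of $\hx$ transfers directly to consistency of $\Phi(\hx)$ at $(V,W)$. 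Here we also need that the labeled vertices $(\hd^W_V)_0\in\hT_{V,0}$ and $(\hd^W_V)'_0\in\hT'_{V,0}$ refer to the same combinatorial data, which follows from the stability of the HHS projections (Theorem \ref{thm:stable proj}) together with the control over distinguished and involved domains from Propositions \ref{prop:distinguished domains} and \ref{prop:involved domains}. When $V$ or $W$ lies in $\calU'-\calU$, the coordinate $\hy_V=(\hf_V)'$ satisfies the consistency condition automatically: away from the controlled set of distinguished domains, $\pi_V(F)$ is a single coarse point and so all $F$-marked vertices of $\hT'_{V,0}$ coincide, and BGI then forces the relevant $(\hd^W_V)'_0$ to equal $(\hf_V)'$.

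For the final convexity statement, the full product map $\prod_{V\in\calU'}\Phi_V:\prod_{U\in\calU}\hT_{U,0}\to\prod_{V\in\calU'}\hT'_{V,0}$ is an $\ell^1$-convex embedding of CAT(0) cube complexes: each $\Phi_U$ for $U\in\calU$ is a convex embedding of simplicial trees (one-dimensional CAT(0) cube complexes) by Corollary \ref{cor:collapsed isometry}, the constant coordinate maps for $V\in\calU'-\calU$ are trivially convex, and the product of convex subcomplexes in a product of CAT(0) cube complexes is convex. Combined with the consistency check above, $\Phi(\calQ_0)$ is a cubical subcomplex of $\calQ'_0$ that is convex in the ambient product $\prod_{V\in\calU'}\hT'_{V,0}$, and hence convex in the subcomplex $\calQ'_0$ as required.
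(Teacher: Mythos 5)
Your definition of $\Phi$, the coordinate-wise computation showing it is an $\ell^1$-isometric embedding of the full product, and the general plan of checking $0$-consistency case by case all match the paper's proof. But the final convexity step contains a genuine error. You assert that $\Phi(\calQ_0)$ is convex in the ambient product $\prod_{V\in\calU'}\hT'_{V,0}$ because the image of the full product $\prod_{U\in\calU}\hT_{U,0}$ is. These are different sets: $\calQ_0$ is the $0$-consistent subset, and it is generally \emph{not} convex in $\prod_{U\in\calU}\hT_{U,0}$. Already for two transverse domains whose trees are intervals $[0,n]$ with $\hd^V_U$ and $\hd^U_V$ at the respective origins, the consistent set is an ``L'' ($\{(x,0)\}\cup\{(0,y)\}$), and the monotone staircase $\ell^1$-geodesics in the product between $(n,0)$ and $(0,n)$ leave it. So $\Phi(\calQ_0)$ is not convex in the ambient product, and convexity in $\calQ'_0$ cannot be deduced the way you propose. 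The paper's route is the reverse of yours: it takes an $\ell^1$-geodesic $\gamma$ \emph{in $\calQ'_0$} between points of $\Phi(\calQ_0)$ and invokes \cite[Lemma 14.7]{Dur_infcube}, which says that the projection of such a geodesic to each factor $\hT'_{U,0}$ is an unparameterized geodesic; combined with convexity of each $\Phi_U(\hT_{U,0})$ and constancy of the $\calU'-\calU$ coordinates on the image, this keeps $\gamma$ inside $\Phi(\calQ_0)$. Some control over geodesics of the consistent set itself (rather than of the ambient product) is indispensable here and is absent from your argument.

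A secondary weakness is the consistency check in the mixed cases. For $W\pitchfork V$ with $W\in\calU$ and $V\in\calU'-\calU$, ``BGI forces $(\hd^W_V)'_0=(\hf_V)'$'' is not the right tool: BGI governs nested pairs. What is needed is the Behrstock/consistency inequality applied to the points of $F$: since $\diam_W(F)\geq K$, not every $f\in F$ can have $\pi_W(f)$ near $\rho^V_W$, so some $f$ has $\pi_V(f)$ near $\rho^W_V$; only then, after choosing the thickening constant $r_1$ large enough, does $(\hd^W_V)'_0$ land on the collapsed $F$-vertex of $\hT'_{V,0}$. Relatedly, the $F$-marked points of $\hT'_{V,0}$ collapse to a single vertex for \emph{every} $V\in\calU'-\calU$, distinguished or not, because the unstable part there is defined to be the whole hull $\hull_{\hT'_V}(F)$ (Definition \ref{defn:stable parts}, Lemma \ref{lem:simplicial stable}); so there is no freedom to ``fix any definite choice'' at distinguished domains, and those domains still need the consistency verification rather than being set aside.
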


\begin{proof}
We have arranged so that the component-wise maps $\Phi_U:\hT_{U,0} \to \hT'_{U,0}$ are all convex embeddings, so the global map $\Phi:\prod_{U \in \calU'} \hT_{U,0} \to \prod_{U \in \calU'} \hT'_{U,0}$ is an isometric embedding.  This uses the fact that $\Phi_U \equiv \hf'_U$ for any $f \in F$ and any $U \in \calU' - \calU$, so in particular the distance in the coordinate $\hT'_{U,0}$ for any pair of points in the image of $\Phi$ is $0$.

The main work of the proof is showing that $\Phi(\calQ_0) \subset \calQ'_0$.  So we first assume this and prove that $\Phi:\calQ_0 \to \calQ'_0$ is a $\ell^1$-convex embedding.

Let $\gamma$ be an $\ell^1$ geodesic in $\calQ'_0$ between points in the image of $\Phi$, namely $\Phi(\calQ_0)$.  By \cite[Lemma 14.7]{Dur_infcube}, the projection $\hpi'_U(\gamma)$ of $\gamma$ to $\hT'_{U,0}$ is an unparameterized geodesic segment in each $\hT'_{U,0}$ for each $U \in \calU'$.  Since $\Phi_U:\hT_{U,0} \to \hT'_{U,0}$ is a convex embedding for each $U \in \calU$ and otherwise $\hpi'_V(\Phi(\calQ_0)) = (\hf_V)'$ for any $f \in F$, it follows that $\hpi'_U(\gamma) \subset \Phi(\calQ_0)$ for each $U \in \calU'$.  Hence $\Phi(\calQ_0)$ is a convex subcomplex.
\medskip

It remains to prove that $\Phi(\calQ_0) \subset \calQ'_0$.  To see this, let $\hx = (\hx_U) \in \calQ_0$ and set $\Phi(\hx) = \hy = (\hy_V)_{V \in \calU'}$, where by definition $\hy_U = \Phi_U(\hx_U)$ for $U \in \calU$ and $\hy_W = (\hf_W)'$ for any (and every) $f \in F$.  We check $0$-consistency of $\hy$ (Definition \ref{defn:Q consistent}) case-wise.

For the notation of the proof, we denote $\hd$-coordinates in $\calQ, \calQ_0, \calQ',$ and $\calQ'_0$ by $\hd^V_U, \left(\hd^V_U\right)_0, \left(\hd^V_U\right)', $ and $\left(\hd^V_U\right)'_0$ respectively.

\medskip
\textbf{\underline{$U \pitchfork V$ for $U, V \in \calU'$}}:  If $U, V \in \calU$, then we are done by item (2b) of Proposition \ref{cor:collapsed isometry}, which implies that $\Phi_U(\Delta_U(\hd^V_U)) = \Delta'_U(\left(\hd^V_U\right)') = \left(\hd^V_U\right)'_0$ and $\Phi_V(\Delta_V((\hd^U_V))) = \Delta'_V(\left(\hd^U_V\right)') = \left(\hd^U_V\right)'_0$.  In particular, since $0$-consistency of $\hx$ implies one of $\hx_U = \left(\hd^V_U\right)_0$ or $\hx_V = \left(\hd^U_V\right)_0$, we must have either $\hy_U = \left(\hd^V_U\right)'_0$ or $\hy_V = \left(\hd^U_V\right)'_0$, respectively, which is what we wanted.

Next suppose both $U,V \in \calU' - \calU$.  
Note that by definition of $\Phi$, we have $\hy_U = \left(\hf_U\right)'_0$ and $\hy_V = \left(\hf_V\right)'_0$ for every $f \in F$.  So it suffices to show that one of $\left(\hf_U\right)'_0 = \left(\hd^V_U\right)'_0$ or $\left(\hf_V\right)'_0 = \left(\hd^U_V\right)'_0$.  By uniform (in $k,\mathfrak S$) consistency of $f$ in the HHS sense, we have that one of $\pi_U(f)$ or $\pi_V(f)$ is uniformly close in $\calC(U)$ to $\rho^V_U$ or $\rho^U_V$, respectively.  Assuming the former without loss of generality, then $\rho^V_U$ is uniformly close (in terms of $\mathfrak S$) to $\hull_U(F)$ in $\calC(U)$, and hence $\left(\delta^V_U\right)' \subset T'_U$ is uniformly close (in terms of $k,\mathfrak S$) to $\hull_{T'_U}(F)$, where $T'_U$ is the stable tree for $F'$ in $\calC(U)$.  On the other hand, since $\diam_U(F)<K$, it follows that $\hull_{T'_U}(F)$ is contained in a uniform neighborhood of $\left(\delta^V_U\right)'$.  Hence choosing the thickening constant $r_1= r_1(\mathfrak S, k)$ sufficiently large guarantees the claim by Lemma \ref{lem:thickenings exist}.

Finally, suppose now that $U \in \calU$ and $V \in \calU' - \calU$, while $\hy'_U \neq (\hd^V_U)'_0$.  Note that it is not possible that all $f\in F$ have $\pi_V(f)$ uniformly far from $\rho^U_V$, for otherwise (HHS) consistency of $f$ would imply that all $\pi_U(f)$ are uniformly close to $\rho^V_U$, contradicting $U\in\mathcal U$. Therefore, there exists $f\in F$ with $\rho^U_V$ uniformly close to $\pi_V(g)$, and by a similar argument in the last paragraph, we can choose sufficiently (but uniformly) large thickening constants to arrange that $\Phi_U(\hx_V) = (\hg_V)'_0 = \left(\hd^U_V\right)'_0$, as required.

\medskip

\textbf{\underline{$U \nest V$ for $U, V \in \calU'$}}:  As above, we are done if $U,V \in \calU$ by Proposition \ref{cor:collapsed isometry},  If $V \in \calU' - \calU$, we are also done by essentially the same argument as for the $U \pitchfork V$ case.

Suppose then that $U \in \calU' - \calU$ and $V \in \calU$.  We claim that $(\hd^U_V)'_0 = (\hf_V)'_0$ for every $f \in F$.  Similar to the above, is suffices to show that $\rho^U_V$ is uniformly close in $\calC(V)$ to $\hull_V(F)$.  But this follows immediately from the Bounded Geodesic Image axiom because $U \in \calU$.

This completes the proof of the proposition.
\end{proof}

\subsection{Maps to $\calX$ and commutativity}

In this subsection, we first explain how to build a map from the refined cubical models $\calQ_0$ and $\calQ'_0$ to the HHS $\calX$, and then why these maps coarsely commute in Proposition \ref{prop:cc comm diagram}.  This will complete the proof of the Stabler Cubulations Theorem \ref{thm:stabler cubulations}.

We have the following collection of maps:
\begin{itemize}
    \item From Subsection \ref{subsec:Q defined}, we have maps $\hO:\calQ \to \calX$ and $\hO': \calQ' \to \calX$ which are median quasi-isometries by Theorem \ref{thm:HFT model} whose coarseness constants depend on $\mathfrak S$ and $k$.
    \item From Subsection \ref{subsec:collapsing unstable parts}, we have maps $\Delta_0:\calQ \to \calQ_0$ and $\Delta'_0: \calQ' \to \calQ'_0$ which collapsed boundedly-many hyperplanes, and in particular are $0$-median almost-isometries by Proposition \ref{prop:cube collapse}, with error controlled in terms of $\mathfrak S$ and $k$.
    \item Finally, Proposition \ref{prop:cube collapse} provides sections $\Xi:\calQ_0 \to \calQ$ and $\Xi':\calQ'_0 \to \calQ'$ so that $\Delta_0 \circ \Xi = id_{\calQ_0}$ and $\Delta'_0 \circ \Xi' = id_{\calQ'_0}$.
\end{itemize}

With these in hand, we obtain maps 
$$\hO_0 = \hO \circ \Xi:\calQ_0 \to \calX \hspace{.5in} \text{and} \hspace{.5in} \hO'_0= \hO' \circ \Xi':\calQ'_0 \to \calX.$$ By Theorem \ref{thm:HFT model} and Proposition \ref{prop:cube collapse}, these are median quasi-isometric embeddings of quality bounded in terms of $\mathfrak S$ and $k$.

Moreover, our particular setup provides the following commutativity statement:

\begin{lemma}\label{lem:commute collapse}
    There exists $D_1 = D_1(\calX, |F'|)$ so that:
    \begin{enumerate}
        \item For any $\hx \in \calQ$, we have $d_{\calX}(\hO(\hx), \hO_0(\Delta(\hx)))<D_1$.
        \item For any $\hx' \in \calQ'$, we have $d_{\calX}(\hO'(\hx'), \hO'_0(\Delta'(\hx')))<D_1$.
    \end{enumerate}
\end{lemma}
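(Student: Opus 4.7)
The plan is to unpack the definitions of the composed maps and reduce the claim to the fact that $\Delta$ and $\Delta'$ are almost-isometries whose fibers have uniformly bounded diameter. By construction in Subsection \ref{subsec:collapse hyperplane deletion}, we set $\hO_0 = \hO\circ\Xi$ and $\hO'_0 = \hO'\circ\Xi'$, so for any $\hx\in\calQ$ we have
\[
\hO_0(\Delta(\hx)) \;=\; \hO\bigl(\Xi(\Delta(\hx))\bigr),
\]
and similarly for the primed version. Since $\hO:\calQ\to\calX$ is a $(C,C)$-quasi-isometric embedding with $C=C(\mathfrak S, k)$ by Theorem \ref{thm:HFT model}, it suffices to bound $d_\calQ(\hx,\Xi(\Delta(\hx)))$ uniformly.

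For this, I would use that Proposition \ref{prop:cube collapse} provides a constant $C_0 = C_0(\mathfrak S, k)$ such that $\Delta:\calQ\to\calQ_0$ is a hyperplane deletion map collapsing at most $C_0$ hyperplanes, hence in particular a $0$-median $(1,C_0)$-quasi-isometry, and $\Xi:\calQ_0\to\calQ$ is a section with $\Delta\circ\Xi = \mathrm{id}_{\calQ_0}$. Therefore $\Delta\bigl(\Xi(\Delta(\hx))\bigr) = \Delta(\hx)$, i.e.\ $\hx$ and $\Xi(\Delta(\hx))$ lie in the same $\Delta$-fiber. Since the (at most) $C_0$ collapsed hyperplanes partition $\calQ$ into pieces on which $\Delta$ is an isometry, any such fiber has $\ell^1$-diameter at most $C_0$; equivalently, the $(1,C_0)$-quasi-isometry property of $\Delta$ forces $d_\calQ(\hx,\Xi(\Delta(\hx)))\le C_0$. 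Applying $\hO$ then gives
\[
d_\calX\bigl(\hO(\hx),\hO_0(\Delta(\hx))\bigr) \;=\; d_\calX\bigl(\hO(\hx),\hO(\Xi(\Delta(\hx)))\bigr) \;\le\; C\cdot C_0 + C,
\]
which establishes (1) with $D_1 := CC_0 + C$, depending only on $\mathfrak S$ and $|F'|\le k$.

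For (2), the argument is identical with $\calQ',\Delta',\Xi',\hO',\hO'_0$ in place of $\calQ,\Delta,\Xi,\hO,\hO_0$; Proposition \ref{prop:cube collapse} provides the same constants $C_0$ and $C$ simultaneously for both setups, and Theorem \ref{thm:HFT model} applied to $F'$ gives the corresponding quasi-isometry constant for $\hO'$. I do not foresee any real obstacle here beyond correctly invoking the already established Proposition \ref{prop:cube collapse} (the key input which controls the number of deleted hyperplanes) and Theorem \ref{thm:HFT model} (which controls the Lipschitz quality of $\hO,\hO'$); this lemma is essentially a bookkeeping statement recording that the collapsing maps $\Delta,\Delta'$ respect the modeling maps $\hO,\hO'$ up to an additive error absorbed into $D_1$.
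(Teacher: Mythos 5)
Your proof is correct and follows essentially the same route as the paper's: both reduce the claim to bounding $d_{\calQ}(\hx,\Xi(\Delta(\hx)))$ by observing that $\hx$ and $\Xi(\Delta(\hx))$ lie in the same $\Delta$-fiber, which has uniformly bounded diameter because $\Delta$ is a $(1,C_0)$-quasi-isometry (Proposition \ref{prop:cube collapse}), and then push this through the uniform quasi-isometric embedding $\hO$ from Theorem \ref{thm:HFT model}. No gaps.
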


\begin{proof}

We only prove the version for $\calQ, \calQ_0$, since the statement for $\calQ', \calQ'_0$ has essentially the same proof.  

If $\hy \in \calQ$, then $\hy \in \Delta_0^{-1}(\Delta_0(\hy))$ and  $\Xi(\Delta(\hy)) \in \Delta_0^{-1}(\Delta_0(\hy))$, while $\Delta_0(\hy) = \Delta_0(\Xi(\Delta_0(\hy)) \in \calQ_0$; here $\Delta_0^{-1}$ denotes the preimage of $\Delta_0$, as opposed to the section $\Xi$.  Since $\Delta_0$ is a uniform almost-isometry, we must have that $\diam_{\calQ}(\Delta_0^{-1}(\hy))$ is bounded, and hence $d_{\calQ}(\hy, \Xi(\Delta_0(\hy))$ is bounded.  But now $\hO:\calQ \to \calX$ is a quasi-isometric embedding with bounded constants, and hence the distance in $\calX$ between $\hO(\hy)$ and $\hO(\Xi(\Delta_0(\hy))) = \hO_0(\Delta_0(\hy))$ is bounded in terms of $k,\mathfrak S$.  This completes the proof.
\end{proof}

With this lemma in hand, we can now prove our main commutativity statement and thus complete the proof of Theorem \ref{thm:stabler cubulations}:

\begin{proposition}\label{prop:cc comm diagram}
There exists a constant $B = B(\calX, |F'|)>0$ so that the diagram
     \begin{equation}\label{Phi diagram}
  \begin{tikzcd}
   \calQ \arrow[ddrr,"\hO", bend left=40] \arrow[dr,"\Delta \hspace{.075in}" left] &  \\
    &\calQ_0 \arrow[dr,"\hO_0 \hspace{.2in}" below]\arrow[dd, "\Phi"] \\
    & & \calX\\
    & \calQ'_0 \arrow[ur,"\hO'_0"] \\
    \calQ'\arrow[uurr,"\hspace{.2in} \vspace{.1in} \hO'" below, bend right=40] \arrow[ur,"\Delta'"] & \\
  \end{tikzcd}
  \end{equation}
commutes up to error $B$. Moreover, $\Phi$ is an isomorphism if $d_{Haus}(F,F')\leq 1$.
\end{proposition}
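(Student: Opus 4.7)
The plan is to prove that all three triangles in the diagram coarsely commute. The top and bottom triangles follow immediately from Lemma \ref{lem:commute collapse}: for any $\hx \in \calQ$ we have $d_\calX(\hO(\hx), \hO_0(\Delta(\hx))) < D_1$, and the analogous bound holds for $\calQ',\calQ'_0$. So the main task is the middle triangle: to show that $d_\calX(\hO_0(\hx), \hO'_0(\Phi(\hx)))$ is uniformly bounded for all $\hx \in \calQ_0$.

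My approach is coordinate-wise, using the HHS distance formula (or the uniqueness axiom) applied to the points $p = \hO_0(\hx)$ and $p' = \hO'_0(\Phi(\hx))$. It suffices to show that for every $U \in \mathfrak S$, the projections $\pi_U(p)$ and $\pi_U(p')$ agree up to a constant depending only on $\calX$ and $k$, since both points lie in hulls of boundedly-many points (so the sum in the distance formula has boundedly-many nonzero terms). I would split into three cases based on how $U$ relates to $\calU, \calU'$:
\begin{enumerate}
\item If $U \in \calU$, then by construction of $\hO_0$ through the tree model maps $\phi_U: T_U \to \calC(U)$ and the collapsing $q_U: T_U \to \hT_U \to \hT_{U,0}$, the projection $\pi_U(p)$ coarsely coincides with $\phi_U \circ q_U^{-1}(\hx_U)$ (up to the bounded collapsing error coming from $\Xi$). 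The analogous statement holds for $\pi_U(p')$ using $\phi'_U, q'_U$, and $\Xi'$. Now Corollary \ref{cor:collapsed isometry} furnishes the isometric embedding $\Phi_U: \hT_{U,0} \to \hT'_{U,0}$ identified with the restriction of the stable decomposition isometries on stable components, and by construction of the stable decomposition (Definition \ref{defn:stable decomp} items \eqref{item:identical pairs}--\eqref{item:close pairs}) paired stable components map into $\calC(U)$ by $\phi_U$ and $\phi'_U$ with bounded Hausdorff error. This yields the required agreement.
\item If $U \in \calU' - \calU$, then by definition of $\Phi$ on the factor $\hT'_{U,0}$ we have $\hpi'_U(\Phi(\hx)) = (\hf_U)'_0$ for every $f \in F$, so $\pi_U(p') \approx \pi_U(f)$. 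On the other hand, $p \in \hull_\calX(F)$ coarsely (since $\hO_0$ has image close to the hull), and $\diam_U(F) < K$ by the assumption $U \notin \calU$, so $\pi_U(p)$ is also within $K$ of $\pi_U(f)$. Hence the two projections agree up to $K$.
\item If $U \notin \calU \cup \calU'$, then both hulls have $\pi_U$-diameter at most $K$, and both $p$ and $p'$ coarsely lie in $\hull_\calX(F')$, so both projections agree up to $K$.
\end{enumerate}
Combining the three cases and invoking the HHS uniqueness axiom gives $d_\calX(p, p') < B$ for a constant $B$ depending only on $\calX$ and $|F'|$, as required. The main obstacle here is keeping the case (1) comparison tight enough: one must trace carefully through the thickening, the collapsing, and the stable-decomposition isometry to see that the two tree model maps $\phi_U$ and $\phi'_U$ really land in uniformly close subsets of $\calC(U)$ after passing through $\Phi_U$, which is ultimately controlled by items \eqref{item:identical pairs}, \eqref{item:close pairs} and the cluster identification \eqref{item:cluster identify} in Definition \ref{defn:stable decomp}.

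For the ``moreover'' part, suppose $d^{Haus}_\calX(F,F') \leq 1$. Then by the results of Section \ref{sec:controlling domains} (in particular the analysis in \cite[Section 3]{DMS_bary}, which handled this bounded-Hausdorff-distance regime), the sets $\calU, \calU'$ of relevant domains coincide up to boundedly-many exceptions, and the input data to the stable tree and cubical model constructions for $F$ and $F'$ differ in a controlled way. In this regime the Stabler Tree Theorem \ref{thm:stabler tree} produces stable decompositions with \emph{no} sporadic domains and no unstable components after the surgeries of Subsection \ref{subsec:unstable parts of collapsed trees}, so $\hT_{U,0} = \hT_{U,e}$ and similarly for $\hT'_{U,0}$, and the coordinate-wise maps $\Phi_U$ are isometries between identical trees; hence $\Phi$ is a cubical isomorphism of $\calQ_0$ onto $\calQ'_0$. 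This completes the proof.
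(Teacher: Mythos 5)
Your reduction of the problem to the middle triangle, and the treatment of the top and bottom triangles via Lemma \ref{lem:commute collapse}, match the paper. But your strategy for the middle triangle runs in the opposite direction from the paper's and has a genuine gap. You start from $\hx\in\calQ_0$, pull back to $p=\hO_0(\hx)$ and $p'=\hO'_0(\Phi(\hx))$ in $\calX$, and try to verify the uniqueness axiom by showing $\pi_U(p)\approx\pi_U(p')$ in each $\calC(U)$. The problem is in your case (1) (and it recurs in case (2)): the coordinate $\hx_U\in\hT_{U,0}$ does \emph{not} determine $\pi_U(p)$ up to bounded error. The quotient $q_U:T_U\to\hT_U$ collapses the \emph{thickened cluster components} $\bT_{U,c}$, and these can have unbounded diameter in $\calC(U)$ (a cluster is an $E$-chain of $\rho$-points, which can be arbitrarily long; only the \emph{unstable} components collapsed by $\Delta_U$ are uniformly bounded, by Lemma \ref{lem:controlling unstable parts}). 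So when $\hx_U$ is a cluster vertex, the set $\phi_U(q_U^{-1}(\hx_U))$ is unbounded, and the statement ``$\pi_U(p)$ coarsely coincides with $\phi_U\circ q_U^{-1}(\hx_U)$ up to the bounded collapsing error coming from $\Xi$'' conflates the bounded error of $\Xi$ with the unbounded ambiguity of $q_U^{-1}$. Knowing that $\hx_U$ and $\Phi_U(\hx_U)$ correspond under the stable-decomposition isometries therefore does not bound $d_U(\pi_U(p),\pi_U(p'))$; the actual positions of $\pi_U(p)$ and $\pi_U(p')$ inside the collapsed components are governed by the coordinates in the domains $V\nest U$, which your single-domain comparison cannot see. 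The paper avoids this entirely by going the other way: it proves, for $x\in\hull_\calX(F)$, that $\Delta'\circ\hPsi'(x)$ and $\Phi\circ\Delta\circ\hPsi(x)$ are uniformly $\ell^1$-close \emph{in $\calQ'_0$} — where the collapsed components contribute nothing to distance, so only the paired stable components matter — and only then transfers to $\calX$ using the single global quasi-isometry $\hO'_0$ together with the quasi-inverse identities. The component-wise estimate there is a separation argument (long stable components, or long chains of bivalent clusters via Lemma \ref{lem:cluster lemma}(6), would force the closest-point projections of $\pi_U(x)$ to $\phi_U(T_U)$ and $\phi'_U(T'_U)$ far apart, contradicting that these trees are uniformly Hausdorff-close), not a pointwise identification of projections.

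Your ``moreover'' argument is also not right as stated: even when $d^{Haus}_\calX(F,F')\leq 1$ the relevant domain sets and cluster data can differ (by a bounded amount), so the claim that there are ``no unstable components'' is false; the isomorphism statement in this regime is the content of \cite[Theorem 4.1]{DMS_bary}, which the paper invokes through Remark \ref{rem:equivariance} rather than rederiving.
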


\begin{notation}\label{not:close}
    In the proof, we will make a number of coarse comparisons with the coarseness only depending on $k,\mathfrak S$ (Subsection \ref{subsec:fixed base setup, cubes}).  To improve readability, any use of the term \emph{uniform} will indicate errors depending only on $k, \mathfrak S$.  Moreover, when two points $a,b \in X$ of a metric space are uniformly close, we will write $a \sim_X b$, to indicate their uniform proximity in $X$.  We will never be stringing together more than boundedly-many (in $k,\mathfrak S$) instances of this $\sim$ notation or the word ``uniform'', so the errors will not build up.
\end{notation}

\begin{proof}
    Coarse commutativity of the top and bottom triangles is by Lemma \ref{lem:commute collapse}.  It thus suffices to prove coarse commutativity of the middle triangle.

    Let $H = \hull_{\calX}(F) \subset \calX$ and $H' = \hull_{\calX}(F') \subset \calX$.  Next observe that Theorem \ref{thm:HFT model} and Proposition \ref{prop:cube collapse} imply that $\Delta \circ \hPsi:H \to \calQ_0$ and $\hO_0:\calQ_0 \to H$ are uniform quasi-inverses, and similarly for $\Delta' \circ \hPsi':H' \to \calQ'_0$ and $\hO'_0:\calQ'_0 \to H'$.

Thus to establish commutativity of the middle triangle, we must prove that for any $\hx \in \calQ_0$,  we have $\hO_0(\hx) \sim_{\calX} \hO'_0 \circ \Phi(\hx)$ (recall Notation \ref{not:close}).

\medskip
We claim that it suffices to prove that if $x \in H$, then
    \begin{equation}\label{eq:commute}
        \Delta' \circ \hPsi'(x) \sim_{\calQ'_0} \Phi \circ \Delta \circ \hPsi(x).
    \end{equation}

We first prove the proposition assuming \eqref{eq:commute}, then establish \eqref{eq:commute}.

\medskip

First, observe that $\hO_0(\hx) \in H$ by definition.  Coarse commutativity of the upper triangle implies that 
$$\Delta \circ \Psi \circ \hO_0(\hx) \sim_{\calQ_0} \hx.$$
Hence applying $\Phi$ to both sides and using the fact that it is an isometric embedding, we see that 
$$\Phi(\hx) \sim_{\calQ'_0} \Phi \circ \Delta \circ \Psi \circ \hO_0 (\hx).$$  But by \eqref{eq:commute} and the fact that $\hO_0(\hx) \in H$, we have that 
$$\Phi \circ \Delta \circ \Psi \circ \hO_0 (\hx) \sim_{\calQ'_0} \Delta' \circ \Psi' \circ \hO_0(\hx).$$
Now applying $\hO'_0$ to both sides and using the fact that it is a uniform quasi-isometry, we see that
$$\hO'_0(\Phi(\hx)) \sim_{\calX} \hO'_0 \circ \Delta' \circ \Psi' \circ \hO_0(\hx).$$
However, the composition $\hO'_0 \circ \Delta'$ is uniformly close to the identity on $H'$, and hence 
$$\hO'_0 \circ \Delta' \circ \Psi' \circ \hO_0(\hx) \sim_{\calX} \hO_0(\hx).$$

Combining these last two observations, we see that $\hO_0(\hx) \sim_{\calX} \hO'_0 \circ \Phi(\hx)$, which is what we needed to prove.

\medskip

We now set about establishing \eqref{eq:commute}.  Set $\Delta' \circ \hPsi'(x) = \hx'_0 = (\hx'_{U,0})$ and $\Phi \circ \Delta \circ \hPsi(x) = \hy'_0= (\hy'_{U,0})$, where we note that both $\hx'_0,\hy'_0 \in \calQ'_0$.    We argue component-wise that $\hx'_0 \sim_{\calQ'_0} \hy'_0$ with respect to the $\ell^1$-metric on $\calQ'_0$.

Recall that the map $\hPsi:H \to \calQ$ (Subsection \ref{subsec:Q defined}) is defined component-wise (over the set $\calU$) by $\hpsi_U = q_U \circ \phi_U^{-1} \circ p_U \circ \pi_U$, where 
\begin{itemize}
    \item $\pi_U:\calX \to \calC(U)$ is the HHS projection,
    \item $p_U:\calC(U) \to \phi_U(T_U)$ is closest point projection,
    \item $\phi_U:T_U \to \calC(U)$ is the $(L_0,L_0)$-quasi-isometric embedding of the stable tree $T_U$ provided by Lemma \ref{lem:stable tree basics} (here $L_0=L_0(\mathfrak S, k)$), and
    \item $q_U:T_U \to \hT_U$ is the map which collapses the $(r_1,r_2)$-thickening of $T_U$ along the components of $T_{U,c}$, with constants $r_1,r_2$ depending only on $k, \mathfrak S$ (Definition \ref{defn:thickening}).
\end{itemize}

The map $\hPsi':H' \to \calQ'$ is defined analogously.

\medskip

\textbf{\underline{Reducing to distinguished and involved components}}: We first show how to exclude domains in $\calU' - \calU$ and non-distinguished and non-involved domains in $\calU$, leaving us with the uniformly boundedly-many  distinguished domains in $\calU$ (Proposition \ref{prop:distinguished domains}).

First, suppose that $U \in \calU' - \calU$.  Then $\pi_U(x)$ is uniformly close to $\hull_U(F)$, implying that its image in $T_U$ and $T'_U$ via the compositions $(\phi)^{-1}_U \circ p_U \circ \pi_U(x)$ and $(\phi')^{-1}_U \circ p'_U \circ \pi_U(x)$ are uniformly close to $\hull_{T_U}(F) = T_U$ and $\hull_{T'_U}(F)$, respectively, where $T_U$ has uniformly bounded diameter.  Hence a uniform $(r_1,r_2)$-thickening of $T_U$ and $T'_U$ will result in $\hpsi_(x) = \hpsi_U(F) = \hT_U$ and $\hpsi'_U(x) = \hpsi'_U(F) \in \hT'_U$, respectively, where $\hpsi'_U(F)$ is just a point by Lemma \ref{lem:simplicial stable}.  Since $\Phi_U(\hpsi_(x)) = (f_U)'_0$ by definition (Subsection \ref{subsec:convex embedding}, we have $\hx'_{U,0} = \hy'_{U,0} = \hf'_{U,0}$ for any $f \in F$.  Hence we may ignore domains in $\calU' - \calU$.

Next observe that by Lemma \ref{lem:controlling unstable parts}, if $U \in \calU$ is neither distinguished (Definition \ref{defn:distinguished}) nor involved (Definition \ref{defn:involved}), then the stable trees $T_U, T'_U$ for the two setups are exactly the same, and hence have identical thickenings, and thus the stable decompositions for the pair are the (identical) collection of edge components for each, and their corresponding collapsed trees $\hT_U = \hT_{U,0}$ and $\hT'_U = \hT'_{U,0}$ are identical.  Hence, for such $U$, we have $\hx'_{U,0} = \hy'_{U,0}$.  Finally, by Propositions \ref{prop:distinguished domains} and \ref{prop:involved domains}, all but uniformly boundedly-many domains in $\calU$ are neither distinguished nor involved.  Thus it suffices to bound the diameter $d_{\hT'_{U,0}}(\hx'_{U,0}, \hy'_{U,0})$ for the remaining boundedly-many domains $U$ in terms of $\mathfrak S, k$.

\medskip

\textbf{\underline{Bounding distances in distinguished and involved components}}:  Having reduced to considering components corresponding to the boundedly-many distinguished and involved domains $U \in \calU$, the rough idea of the rest of the proof is that if $\hx'_{U,0}$ and $\hy'_{U,0}$ are very far apart, then we can detect this by common stable components of the trees $T_U, T'_U$.  But the definitions of $\hx'_{U,0}$ and $\hy'_{U,0}$ from our initial point $x$ both factor through closest point projection to the trees $T_U$ and $T'_U$, respectively.  This observation, plus a case analysis, will allow us to produce a contradiction.  We note that the definitions of distinguished and involved are not used in this part of the proof.

\smallskip
Let $U \in \calU$ and suppose that $\hx'_{U,0} \neq \hy'_{U,0}$.  Note that $\phi_U(T_U)$ is within a uniform neighborhood of $\phi'_U(T'_U)$, and hence the closest point projections $p_U(\pi_U(x))$ and $p'_U(\pi_U(x))$ of $\pi_U(x)$ to $\phi_U(T_U)$ and $\phi'_U(T'_U)$ are uniformly close as well.  We now want to use the assumption that $d_{\hT'_{U,0}}(\hx'_{U,0}, \hy'_{U,0})$ is large to derive a contradiction of this fact.

The distance $d_{\hT'_{U,0}}(\hx'_{U,0}, \hy'_{U,0})$ is measured in the edge components of $\hT'_{U,0}$ which separate them, and these edge components are stable components of $T'_U$.  This means that if the distance $d_{\hT'_{U,0}}(\hx'_{U,0}, \hy'_{U,0})$ is very large, then there must be either 
\begin{enumerate}
\item some long stable component $E' \subset T'_{U,s}$ so that some long segment of $\Delta'_U(q'_U(E'))$ separates $\hx'_{U,0}$ from $\hy'_{U,0}$, or
\item if no such long component exists, there must be some large number of shorter stable components separating them.
\end{enumerate}

In both cases, we will argue that large separation in $\hT'_{U,0}$ by either a long segment or a long sequence of short segments forces $p_U(\pi_U(x))$ and $p'_U(\pi_U(x))$ to be far apart.

In case (1), let $E \subset T_{U,s}$ be the stable component paired with $E' \subset T'_{U,s}$.  Note that $\phi_U(E) \cup \phi'_U(E')$ is uniformly quasi-convex in $\calC(U)$, and hence $\pi_U(x)$ has a uniformly bounded projection to it.  However the fact that a long segment of $\Delta'_U(q_U(E')) \subset \hT'_{U,0}$ separates $\hx'_{U,0}$ from $\hy'_{U,0}$ means that some large diameter portion of $\phi_U(E) \cup \phi'_U(E')$ uniformly coarsely separates $p_U(\pi_U(x))$ from $p'_U(\pi_U(x))$, which is impossible.

In case (2), we can assume that no such long stable component exists, so that the (large) distance $d_{\hT'_{U,0}}(\hx'_{U,0}, \hy'_{U,0})$ corresponds to some sequence of segments $E'_1, \dots, E'_n \subset T'_{U,s}$ in stable components which separate $\hx'_{U,0}$ from $\hy'_{U,0}$ in $\hT'_{U,0}$, appearing in that order along the geodesic between $\hx'_{U,0}$ and $\hy'_{U,0}$ in $\hT'_{U,0}$.  

Let $E_i \subset T_{U,s}$ denote the segment identified with $E'_i$ in the stable component identified with the component containing $E'_i$.   Since each of the $E'_i$ are uniformly close (in terms of $\mathfrak S,k$) to $E_i$ and the bijection $\alpha:\pi_0(T_{U,s}) \to \pi_0(T'_{U,s})$ is adjacency preserving (Definition \ref{defn:stable decomp} item \eqref{item:Adjacency-preserving}), it follows that the hulls $\hull_{\phi_U(T_U)}(E_1, E_n)$ and $\hull_{\phi'_U(T'_U)}(E'_1,E'_n)$ in $\phi_U(T_U)$ and $\phi'_U(T'_U)$ respectively are within uniform Hausdorff distance in $\calC(U)$ of each other, and hence their union is uniformly quasi-convex.  Thus, if $\Delta'_U(q'_U(E'_1)), \dots, \Delta'_U(q'_U(E'_n))$ separate $\hx'_{U,0}$ from $\hy'_{U,0}$ in $\hT'_{U,0}$, then the union of these hulls coarsely separates $p_U(\pi_U(x))$ from $p'_U(\pi_U(x))$, which by the above are bounded distance.

Now, since we are not in case (1), we may assume that each of the segments $E'_i$ (and hence $E_i$) have length bounded above in terms of $\mathfrak S, k$.  Each adjacent pair $E'_i, E'_{i+1}$ is separated by an unstable component which is either one of the uniformly boundedly-many uniformly bounded diameter unstable components which are not associated to some cluster component for domains in $\calU$ (item \eqref{item:unstable components} of Definition \ref{defn:stable decomp}), or by an unstable component containing some cluster component for domains in $\calU$ which are common to both $T_U$ and $T'_U$ (i.e., to the setups for both $F,F'$ in $\calC(U)$).  Moreover, there are only uniformly boundedly-many unstable components that do not coincide with cluster components because of the bound on the number of unstable components in item \eqref{item:unstable components} of Definition \ref{defn:stable decomp}.  Thus any such long sequence $E'_i$ (i.e., with $n$ very large) contains a long subsequence of cluster components corresponding to both $F,F'$.

Since for each $U \in \calU$, all but boundedly-many clusters are bivalent by item (3) of Lemma \ref{lem:cluster lemma}, item (6) of that same lemma implies that the distance in $\calC(U)$ between $\phi'_U(E'_1)$ and $\phi'_U(E'_n)$ is bounded below by a function of $n, \mathfrak S, k$.  But that distance uniformly coarsely bounds from below the distance between $p_U(\pi_U(x))$ from $p'_U(\pi_U(x))$, which is uniformly bounded.  Hence $n$ is uniformly bounded, as required for case (2).  This completes the proof of the proposition.

\end{proof}

\bibliographystyle{alpha}
\bibliography{asymp}

\newcommand{\etalchar}[1]{$^{#1}$}
\begin{thebibliography}{CCG{\etalchar{+}}25b}

\bibitem[ABD17]{ABD}
Carolyn Abbott, Jason Behrstock, and Matthew~Gentry Durham.
\newblock Largest acylindrical actions and stability in hierarchically hyperbolic groups.
\newblock {\em arXiv preprint arXiv:1705.06219}, 2017.

\bibitem[ABM{\etalchar{+}}25]{Random_quot}
C.~Abbott, D.~Berlyne, G.~Mangioni, T.~Ng, and A.~Rasmussen.
\newblock {S}pinning and random quotients preserve hierarchical hyperbolicity.
\newblock {\em In preparation}, 2025.

\bibitem[Bar18]{Bartels:ICM}
Arthur Bartels.
\newblock {$K$}-theory and actions on {E}uclidean retracts.
\newblock In {\em Proceedings of the {I}nternational {C}ongress of {M}athematicians---{R}io de {J}aneiro 2018. {V}ol. {II}. {I}nvited lectures}, pages 1041--1062. World Sci. Publ., Hackensack, NJ, 2018.

\bibitem[BB97]{bestvina1997morse}
Mladen Bestvina and Noel Brady.
\newblock Morse theory and finiteness properties of groups.
\newblock {\em Inventiones mathematicae}, 129:445--470, 1997.

\bibitem[BB19]{BB:FJ_MCG}
Arthur Bartels and Mladen Bestvina.
\newblock The {F}arrell-{J}ones conjecture for mapping class groups.
\newblock {\em Invent. Math.}, 215(2):651--712, 2019.

\bibitem[BBF15]{BBF}
Mladen Bestvina, Ken Bromberg, and Koji Fujiwara.
\newblock Constructing group actions on quasi-trees and applications to mapping class groups.
\newblock {\em Publ. Math. Inst. Hautes \'{E}tudes Sci.}, 122:1--64, 2015.

\bibitem[BBFS20]{BBFS}
Mladen Bestvina, Ken Bromberg, Koji Fujiwara, and Alessandro Sisto.
\newblock Acylindrical actions on projection complexes.
\newblock {\em Enseign. Math.}, 65(1-2):1--32, 2020.

\bibitem[BD80]{BorDyd}
Karol Borsuk and Jerzy Dydak.
\newblock What is the theory of shape?
\newblock {\em Bulletin of the Australian Mathematical Society}, 22(2):161--198, 1980.

\bibitem[BDS11]{behrstock2011median}
Jason Behrstock, Cornelia Dru{\c{t}}u, and Mark Sapir.
\newblock Median structures on asymptotic cones and homomorphisms into mapping class groups.
\newblock {\em Proceedings of the London Mathematical Society}, 102(3):503--554, 2011.

\bibitem[Bes96]{bestvina1996local}
Mladen Bestvina.
\newblock Local homology properties of boundaries of groups.
\newblock {\em Michigan Mathematical Journal}, 43(1):123--139, 1996.

\bibitem[BF08]{BellFuj_asdim}
Gregory~C Bell and Koji Fujiwara.
\newblock The asymptotic dimension of a curve graph is finite.
\newblock {\em Journal of the London Mathematical Society}, 77(1):33--50, 2008.

\bibitem[BFL14]{BFL:lattices}
A.~Bartels, F.~T. Farrell, and W.~L\"{u}ck.
\newblock The {F}arrell-{J}ones conjecture for cocompact lattices in virtually connected {L}ie groups.
\newblock {\em J. Amer. Math. Soc.}, 27(2):339--388, 2014.

\bibitem[BH99]{bridson-haefliger}
M.~R. Bridson and A.~Haefliger.
\newblock {\em Metric spaces of non-positive curvature}, volume 319 of {\em Grundlehren der Mathematischen Wissenschaften [Fundamental Principles of Mathematical Sciences]}.
\newblock Springer-Verlag, Berlin, 1999.

\bibitem[BHMS24]{BHMS:kill_twists}
Jason Behrstock, Mark Hagen, Alexandre Martin, and Alessandro Sisto.
\newblock A combinatorial take on hierarchical hyperbolicity and applications to quotients of mapping class groups.
\newblock {\em J. Topol.}, 17(3):Paper No. e12351, 94, 2024.

\bibitem[BHS17]{HHS:asdim}
Jason Behrstock, Mark~F. Hagen, and Alessandro Sisto.
\newblock Asymptotic dimension and small-cancellation for hierarchically hyperbolic spaces and groups.
\newblock {\em Proc. Lond. Math. Soc. (3)}, 114(5):890--926, 2017.

\bibitem[BHS19]{HHS_II}
Jason Behrstock, Mark Hagen, and Alessandro Sisto.
\newblock {Hierarchically hyperbolic spaces II: Combination theorems and the distance formula}.
\newblock {\em Pacific Journal of Mathematics}, 299(2):257--338, 2019.

\bibitem[BHS20]{BHS:quasi}
Jason Behrstock, Mark~F Hagen, and Alessandro Sisto.
\newblock Quasiflats in hierarchically hyperbolic spaces.
\newblock {\em Duke Mathematical Journal}, 2020.
\newblock to appear.

\bibitem[BKW13]{bux2013higher}
Kai-Uwe Bux, Ralf K{\"o}hl, and Stefan Witzel.
\newblock Higher finiteness properties of reductive arithmetic groups in positive characteristic: the rank theorem.
\newblock {\em Annals of Mathematics}, pages 311--366, 2013.

\bibitem[BL12]{BL:hyp_CAT0}
Arthur Bartels and Wolfgang L\"{u}ck.
\newblock The {B}orel conjecture for hyperbolic and {${\rm CAT}(0)$}-groups.
\newblock {\em Ann. of Math. (2)}, 175(2):631--689, 2012.

\bibitem[BLR08]{BLR:hyperbolic}
Arthur Bartels, Wolfgang L\"{u}ck, and Holger Reich.
\newblock The {$K$}-theoretic {F}arrell-{J}ones conjecture for hyperbolic groups.
\newblock {\em Invent. Math.}, 172(1):29--70, 2008.

\bibitem[BM91]{BestMess}
Mladen Bestvina and Geoffrey Mess.
\newblock The boundary of negatively curved groups.
\newblock {\em Journal of the American Mathematical Society}, 4(3):469--481, 1991.

\bibitem[BM00]{burger2000lattices}
Marc Burger and Shahar Mozes.
\newblock Lattices in product of trees.
\newblock {\em Publications Math{\'e}matiques de l'IH{\'E}S}, 92:151--194, 2000.

\bibitem[Bow16]{Bow:CAT0_cone}
Brian~H. Bowditch.
\newblock Some properties of median metric spaces.
\newblock {\em Groups Geom. Dyn.}, 10(1):279--317, 2016.

\bibitem[Bow18]{Bow:cubulation}
Brian Bowditch.
\newblock Convex hulls in coarse median spaces.
\newblock {\em Preprint}, 2018.

\bibitem[BR20]{BR:combination}
Federico Berlai and Bruno Robbio.
\newblock A refined combination theorem for hierarchically hyperbolic groups.
\newblock {\em Groups Geom. Dyn.}, 14(4):1127--1203, 2020.

\bibitem[BR22]{BR:graph_prods}
Daniel Berlyne and Jacob Russell.
\newblock Hierarchical hyperbolicity of graph products.
\newblock {\em Groups Geom. Dyn.}, 16(2):523--580, 2022.

\bibitem[BS07]{buyalo2007elements}
Sergei Buyalo and Viktor Schroeder.
\newblock {\em Elements of asymptotic geometry}, volume~3.
\newblock European Mathematical Society, 2007.

\bibitem[CCG{\etalchar{+}}25a]{CCGHO}
J{\'e}r{\'e}mie Chalopin, Victor Chepoi, Anthony Genevois, Hiroshi Hirai, and Damian Osajda.
\newblock Helly groups.
\newblock {\em Geometry \& Topology}, 29(1):1--70, 2025.

\bibitem[CCG{\etalchar{+}}25b]{Helly}
J\'er\'emie Chalopin, Victor Chepoi, Anthony Genevois, Hiroshi Hirai, and Damian Osajda.
\newblock Helly groups.
\newblock {\em Geom. Topol.}, 29(1):1--70, 2025.

\bibitem[CK00]{CrokeKleiner}
Christopher~B Croke and Bruce Kleiner.
\newblock Spaces with nonpositive curvature and their ideal boundaries.
\newblock {\em Topology}, 39(3):549--556, 2000.

\bibitem[Cor17]{Cordes:boundary}
Matthew Cordes.
\newblock Morse boundaries of proper geodesic metric spaces.
\newblock {\em Groups Geom. Dyn.}, 11(4):1281--1306, 2017.

\bibitem[CRHK21]{CRHK}
Montserrat Casals-Ruiz, Mark Hagen, and Ilya Kazachkov.
\newblock Real cubings and asymptotic cones of hierarchically hyperbolic groups.
\newblock {\em preprint}, 2021.

\bibitem[CS11]{CapraceSageev}
Pierre-Emmanuel Caprace and Michah Sageev.
\newblock Rank rigidity for cat (0) cube complexes.
\newblock {\em Geometric and functional analysis}, 21(4):851--891, 2011.

\bibitem[Dah03]{dahmani2003classifying}
Fran{\c{c}}ois Dahmani.
\newblock Classifying spaces and boundaries for relatively hyperbolic groups.
\newblock {\em Proceedings of the London Mathematical Society}, 86(3):666--684, 2003.

\bibitem[Dan25]{danielski2025boundaries}
Daniel Danielski.
\newblock On boundaries of bicombable spaces.
\newblock {\em arXiv preprint arXiv:2503.06673}, 2025.

\bibitem[DDLS24]{DDLS}
Spencer Dowdall, Matthew~G. Durham, Christopher~J. Leininger, and Alessandro Sisto.
\newblock Extensions of {V}eech groups {II}: {H}ierarchical hyperbolicity and quasi-isometric rigidity.
\newblock {\em Comment. Math. Helv.}, 99(1):149--228, 2024.

\bibitem[DL16]{DL:flat_torus}
Dominic Descombes and Urs Lang.
\newblock Flats in spaces with convex geodesic bicombings.
\newblock {\em Anal. Geom. Metr. Spaces}, 4(1):68--84, 2016.

\bibitem[DMS20]{DMS_bary}
Matthew~G Durham, Yair~N Minsky, and Alessandro Sisto.
\newblock Stable cubulations, bicombings, and barycenters.
\newblock {\em arXiv preprint arXiv:2009.13647}, 2020.

\bibitem[Dra06]{Dra:BM_formula}
A.~N. Dranishnikov.
\newblock On {B}estvina-{M}ess formula.
\newblock In {\em Topological and asymptotic aspects of group theory}, volume 394 of {\em Contemp. Math.}, pages 77--85. Amer. Math. Soc., Providence, RI, 2006.

\bibitem[Dur23]{Dur_infcube}
Matthew~Gentry Durham.
\newblock Cubulating infinity in hierarchically hyperbolic spaces.
\newblock {\em arXiv preprint arXiv:2308.13689}, 2023.

\bibitem[Dur24]{Dur_stableinterval}
Matthew~Gentry Durham.
\newblock Bicombing the mapping class group via stable cubical intervals.
\newblock {\em \url{https://www.dropbox.com/scl/fi/ciqy4pj0csuo6xna3301s/Durham-Stable-intervals.pdf?rlkey=j0kc92exxo01bic3uolndlic4&dl=0}}, 2024.

\bibitem[Eng78]{dimension_theory}
Ryszard Engelking.
\newblock {\em Dimension theory}, volume~19 of {\em North-Holland Mathematical Library}.
\newblock North-Holland Publishing Co., Amsterdam-Oxford-New York; PWN---Polish Scientific Publishers, Warsaw, 1978.
\newblock Translated from the Polish and revised by the author.

\bibitem[FJ93]{farrell1993isomorphism}
F~Thomas Farrell and Lowell~E Jones.
\newblock Isomorphism conjectures in algebraic k-theory.
\newblock {\em Journal of the American Mathematical Society}, 6(2):249--297, 1993.

\bibitem[FL05]{FL:EZ}
F.~T. Farrell and J.-F. Lafont.
\newblock E{Z}-structures and topological applications.
\newblock {\em Comment. Math. Helv.}, 80(1):103--121, 2005.

\bibitem[GM19]{GuilMor}
Craig~R Guilbault and Molly~A Moran.
\newblock Proper homotopy types and z-boundaries of spaces admitting geometric group actions.
\newblock {\em Expositiones Mathematicae}, 37(3):292--313, 2019.

\bibitem[GMT19]{guilbault2019boundaries}
Craig Guilbault, Molly Moran, and Carrie Tirel.
\newblock Boundaries of baumslag--solitar groups.
\newblock {\em Algebraic \& Geometric Topology}, 19(4):2077--2097, 2019.

\bibitem[Gro87]{Gromov87}
Mikhael Gromov.
\newblock Hyperbolic groups.
\newblock In {\em Essays in group theory}, pages 75--263. Springer, 1987.

\bibitem[Hag23]{Hagen:non-col}
Mark Hagen.
\newblock Non-colorable hierarchically hyperbolic groups.
\newblock {\em Internat. J. Algebra Comput.}, 33(2):337--350, 2023.

\bibitem[Han51]{polyhedron_anr}
Olof Hanner.
\newblock Some theorems on absolute neighborhood retracts.
\newblock {\em Ark. Mat.}, 1:389--408, 1951.

\bibitem[HHP23]{HHP}
Thomas Haettel, Nima Hoda, and Harry Petyt.
\newblock Coarse injectivity, hierarchical hyperbolicity and semihyperbolicity.
\newblock {\em Geom. Topol.}, 27(4):1587--1633, 2023.

\bibitem[HMS24]{MMS:Artin}
Mark Hagen, Alexandre Martin, and Alessandro Sisto.
\newblock Extra-large type {A}rtin groups are hierarchically hyperbolic.
\newblock {\em Math. Ann.}, 388(1):867--938, 2024.

\bibitem[Hod24]{hoda2024strongly}
Nima Hoda.
\newblock Strongly shortcut spaces.
\newblock {\em Algebraic \& Geometric Topology}, 24(6):3291--3325, 2024.

\bibitem[HP23]{HagenPetyt}
Mark~F Hagen and Harry Petyt.
\newblock Projection complexes and quasimedian maps.
\newblock {\em Algebraic \& Geometric Topology}, 22(7):3277--3304, 2023.

\bibitem[HS20]{HS:cubical}
Mark~F. Hagen and Tim Susse.
\newblock On hierarchical hyperbolicity of cubical groups.
\newblock {\em Israel J. Math.}, 236(1):45--89, 2020.

\bibitem[Hu65]{Hu:retracts}
Sze-tsen Hu.
\newblock {\em Theory of retracts}.
\newblock Wayne State University Press, Detroit, MI, 1965.

\bibitem[Hum17]{Hume_andim}
David Hume.
\newblock Embedding mapping class groups into a finite product of trees.
\newblock {\em Groups, Geometry, and Dynamics}, 11(2):613--647, 2017.

\bibitem[Kar08]{Kar_thesis}
Aditi Kar.
\newblock {\em Discrete groups and CAT (0) asymptotic cones}.
\newblock PhD thesis, The Ohio State University, 2008.

\bibitem[Kar11]{Kar_asymp}
Aditi Kar.
\newblock Asymptotically cat (0) groups.
\newblock {\em Publicacions Matem{\`a}tiques}, pages 67--91, 2011.

\bibitem[KL96]{KL:actions}
Michael Kapovich and Bernhard Leeb.
\newblock Actions of discrete groups on nonpositively curved spaces.
\newblock {\em Math. Ann.}, 306(2):341--352, 1996.

\bibitem[LS05]{LangSchlich_Nagata}
Urs Lang and Thilo Schlichenmaier.
\newblock Nagata dimension, quasisymmetric embeddings, and lipschitz extensions.
\newblock {\em International Mathematics Research Notices}, 2005(58):3625--3655, 2005.

\bibitem[L{\"u}c10]{Lueck:ICM}
Wolfgang L{\"u}ck.
\newblock {$K$}- and {$L$}-theory of group rings.
\newblock In {\em Proceedings of the {I}nternational {C}ongress of {M}athematicians. {V}olume {II}}, pages 1071--1098. Hindustan Book Agency, New Delhi, 2010.

\bibitem[Mar14]{martin2014non}
Alexandre Martin.
\newblock Non-positively curved complexes of groups and boundaries.
\newblock {\em Geometry \& Topology}, 18(1):31--102, 2014.

\bibitem[Mor16]{Moran_metric}
Molly~A Moran.
\newblock Metrics on visual boundaries of cat (0) spaces.
\newblock {\em Geometriae Dedicata}, 183:123--142, 2016.

\bibitem[OP09]{osajda2009boundaries}
Damian Osajda and Piotr Przytycki.
\newblock Boundaries of systolic groups.
\newblock {\em Geometry \& Topology}, 13(5):2807--2880, 2009.

\bibitem[Pet21]{petyt2021mapping}
Harry Petyt.
\newblock Mapping class groups are quasicubical.
\newblock {\em to appear in Amer. J. Math.}, 2021.

\bibitem[PZS24a]{petyt2024constructing}
Harry Petyt, Abdul Zalloum, and Davide Spriano.
\newblock Constructing metric spaces from systems of walls.
\newblock {\em arXiv preprint arXiv:2404.12057}, 2024.

\bibitem[PZS24b]{PZ_walls}
Harry Petyt, Abdul Zalloum, and Davide Spriano.
\newblock Constructing metric spaces from systems of walls.
\newblock {\em arXiv preprint arXiv:2404.12057}, 2024.

\bibitem[QR22]{QR_CAT0}
Yulan Qing and Kasra Rafi.
\newblock Sublinearly morse boundary i: Cat (0) spaces.
\newblock {\em Advances in Mathematics}, 404:108442, 2022.

\bibitem[RST23]{russell2023convexity}
Jacob Russell, Davide Spriano, and Hung~Cong Tran.
\newblock Convexity in hierarchically hyperbolic spaces.
\newblock {\em Algebraic \& Geometric Topology}, 23(3):1167--1248, 2023.

\bibitem[Sag95]{Sageev:cubulation}
Michah Sageev.
\newblock Ends of group pairs and non-positively curved cube complexes.
\newblock {\em Proc. London Math. Soc. (3)}, 71(3):585--617, 1995.

\bibitem[Sch]{schwer2324cat}
Petra Schwer.
\newblock Cat (0) cube complexes.
\newblock {\em Lecture Notes in Mathematics}, 2324.

\bibitem[Sis19]{HHS_survey}
Alessandro Sisto.
\newblock What is a hierarchically hyperbolic space?
\newblock In {\em Beyond hyperbolicity}, volume 454 of {\em London Math. Soc. Lecture Note Ser.}, pages 117--148. Cambridge Univ. Press, Cambridge, 2019.

\bibitem[Vie27]{vietoris1927hoheren}
Leopold Vietoris.
\newblock {\"U}ber den h{\"o}heren zusammenhang kompakter r{\"a}ume und eine klasse von zusammenhangstreuen abbildungen.
\newblock {\em Mathematische Annalen}, 97(1):454--472, 1927.

\bibitem[Vir24]{virk2024contractibility}
{\v{Z}}iga Virk.
\newblock Contractibility of the rips complexes of integer lattices via local domination.
\newblock {\em To appear in \emph{Transactions of the American Mathematical Society}; arXiv:2405.09134}, 2024.

\bibitem[Weg12]{Wegner:FJ}
Christian Wegner.
\newblock The {$K$}-theoretic {F}arrell-{J}ones conjecture for {CAT}(0)-groups.
\newblock {\em Proc. Amer. Math. Soc.}, 140(3):779--793, 2012.

\bibitem[Zar22]{Zaremsky_contract}
Matthew~CB Zaremsky.
\newblock Bestvina--brady discrete morse theory and vietoris--rips complexes.
\newblock {\em American Journal of Mathematics}, 144(5):1177--1200, 2022.

\bibitem[Zar24]{zaremsky2024contractible}
Matthew~CB Zaremsky.
\newblock Contractible vietoris-rips complexes of $\mathbb{Z}^n$.
\newblock {\em arXiv preprint arXiv:2410.11993}, 2024.

\end{thebibliography}

\end{document}